\newtheorem{theorem}{Theorem}[section]
\newtheorem*{theoremA*}{Theorem A}
\newtheorem{proposition}[theorem]{Proposition}
\newtheorem{lemma}[theorem]{Lemma}
\newtheorem{corollary}[theorem]{Corollary}
\newtheorem{definition}[theorem]{Definition}
\newtheorem{remark}[theorem]{Remark}
\newtheorem{remarks}[theorem]{Remarks}
\numberwithin{equation}{section}
\theoremstyle{definition}
\begin{document}

\def\mp{\marginpar{\textcolor{red}{\qquad XX}}}
\def\bmp{\marginpar{\textcolor{blue}{\qquad XX}}}
\def\brmp{\marginpar{\textcolor{brown}{\qquad XX}}}

\def\a{\mathbf a}
\def\b{\mathbf b}
\def\c{\mathbf c}
\def\d{\mathbf d}
\def\e{\mathbf e}
\def\f{\mathbf f}
\def\g{\mathbf g}
\def\h{\mathbf h}
\def\i{\mathbf i}
\def\j{\mathbf j}
\def\k{\mathbf k}
\def\m{\mathbf m}
\def\n{\mathbf n}
\def\o{\mathbf o}
\def\p{\mathbf p}
\def\q{\mathbf q}
\def\r{\mathbf r}
\def\s{\mathbf s}
\def\t{\mathbf t}
\def\u{\mathbf u}
\def\v{\mathbf v}
\def\w{\mathbf w}
\def\x{\mathbf x}
\def\y{\mathbf y}
\def\z{\mathbf z}

\def\0{{\mathbf 0}}

\def\Abf{\mathbf A}

\def\A{\mathbb A}
\def\B{\mathbb B}
\def\C{\mathbb C}
\def\D{\mathbb D}
\def\E{\mathbb E}
\def\F{\mathbb F}
\def\G{\mathbb G}
\def\H{\mathbb H}
\def\I{\mathbb I}
\def\J{\mathbb J}
\def\K{\mathbb K}
\def\L{\mathbb L}
\def\M{\mathbb M}
\def\N{\mathbb N}
\def\O{\mathbb O}
\def\P{\mathbb P}
\def\Q{\mathbb Q}
\def\R{\mathbb R}
\def\S{\mathbb S}
\def\T{\mathbb T}
\def\U{\mathbb U}
\def\V{\mathbb V}
\def\W{\mathbb W}
\def\X{\mathbb X}
\def\Y{\mathbb Y}
\def\Z{\mathbb Z}

\def\AA{\mathcal A}
\def\BB{\mathcal B}
\def\CC{\mathcal C}
\def\DD{\mathcal D}
\def\EE{\mathcal E}
\def\FF{\mathcal F}
\def\GG{\mathcal G}
\def\HH{\mathcal H}
\def\II{\mathcal I}
\def\JJ{\mathcal J}
\def\KK{\mathcal K}
\def\LL{\mathcal L}
\def\MM{\mathcal M}
\def\NN{\mathcal N}
\def\OO{\mathcal O}
\def\PP{\mathcal P}
\def\QQ{\mathcal Q}
\def\RR{\mathcal R}
\def\SS{\mathcal S}
\def\TT{\mathcal T}
\def\UU{\mathcal U}
\def\VV{\mathcal V}
\def\WW{\mathcal W}
\def\XX{\mathcal X}
\def\YY{\mathcal Y}
\def\ZZ{\mathcal Z}

\def\CZ{Calder\'on-Zygmund }

\def\alphab{\boldsymbol{\alpha}}
\def\betab{\boldsymbol{\beta}}
\def\gammab{\boldsymbol{\gamma}}
\def\deltab{\boldsymbol{\delta}}
\def\epsilonb{\boldsymbol{\epsilon}}
\def\zetab{\boldsymbol{\zeta}}
\def\etab{\boldsymbol{\eta}}
\def\thetab{\boldsymbol{\theta}}
\def\iotab{\boldsymbol{\iota}}

\def\kappab{\boldsymbol{\kappa}}
\def\lambdab{\boldsymbol{\lambda}}
\def\mub{\boldsymbol{\mu}}
\def\nub{\boldsymbol{\nu}}
\def\xib{\boldsymbol{\xi}}

\def\pib{\boldsymbol{\pi}}
\def\rhob{\boldsymbol{\rho}}
\def\sigmab{\boldsymbol{\sigma}}
\def\taub{\boldsymbol{\tau}}
\def\upsilonb{\boldsymbol{\upsilon}}
\def\phib{\boldsymbol{\phi}}
\def\varphib{\boldsymbol{\varphi}}
\def\chib{\boldsymbol{\chi}}
\def\psib{\boldsymbol{\psi}}
\def\omegab{\boldsymbol{\omega}}
\def\varthetab{\boldsymbol{\vartheta}}

\def\dbar{\bar\partial}
\def\bx{\square}
\def \RE {\Re\text{\rm e}}
\def \IM {\Im\text{\rm m}}
\def\|{{|\!|}}
\def\[{{[\![}}
\def\]{{]\!]}}

\def\be{\begin{equation}}
\def\ee{\end{equation}}
\def\bes{\begin{equation*}}
\def\ees{\end{equation*}}
\def\bea{\begin{equation}\begin{aligned}}
\def\eea{\end{aligned}\end{equation}}
\def\beas{\begin{equation*}\begin{aligned}}
\def\eeas{\end{aligned}\end{equation*}}

\def\AAA{\mathbf A}
\def\BBB{\mathbf B}
\def\CCC{\mathbf C}
\def\DDD{\mathbf D}
\def\EEE{\mathbf E}
\def\EEEt{\widetilde{\EEE}}
\def\FFF{\mathbf F}
\def\tI{\tilde I}
\def\xt{\tilde{\x}}
\def\cdott{\,\tilde\cdot\,}


\def \bA {\mathbb A}
\def \bB {\mathbb B}
\def \bC {\mathbb C}
\def \bD {\mathbb D}
\def \bE {\mathbb E}
\def \bF {\mathbb F}
\def \bG {\mathbb G}
\def \bH {\mathbb H}
\def \bI {\mathbb I}
\def \bJ {\mathbb J}
\def \bK {\mathbb K}
\def \bL {\mathbb L}
\def \bM {\mathbb M}
\def \bN {\mathbb N}
\def \bO {\mathbb O}
\def \bP {\mathbb P}
\def \bQ {\mathbb Q}
\def \bR {\mathbb R}
\def \bS {\mathbb S}
\def \bR {\mathbb R}
\def \bR {\mathbb R}
\def \bT {\mathbb T}
\def \bU {\mathbb U}
\def \bV {\mathbb V}
\def \bW {\mathbb W}
\def \bX {\mathbb X}
\def \bY {\mathbb Y}
\def \bZ {\mathbb Z}

\def \cA {\mathcal A}
\def \cB {\mathcal B}
\def \cC {\mathcal C}
\def \cD {\mathcal D}
\def \cE {\mathcal E}
\def \cF {\mathcal F}
\def \cG {\mathcal G}
\def \cH {\mathcal H}
\def \cI {\mathcal I}
\def \cJ {\mathcal J}
\def \cK {\mathcal K}
\def \cL {\mathcal L}
\def \cM {\mathcal M}
\def \cN {\mathcal N}
\def \cO {\mathcal O}
\def \cP {\mathcal P}
\def \cQ {\mathcal Q}
\def \cR {\mathcal R}
\def \cS {\mathcal S}
\def \cR {\mathcal R}
\def \cR {\mathcal R}
\def \cT {\mathcal T}
\def \cU {\mathcal U}
\def \cV {\mathcal V}
\def \cW {\mathcal W}
\def \cX {\mathcal X}
\def \cY {\mathcal Y}
\def \cZ {\mathcal Z}

\def \fa {\mathfrak a}
\def \fb {\mathfrak b}
\def \fc {\mathfrak c}
\def \fd {\mathfrak d}
\def \fe {\mathfrak e}
\def \ff {\mathfrak f}
\def \fg {\mathfrak g}
\def \fh {\mathfrak h}
\def \mfi {\mathfrak i}
\def \fj {\mathfrak j}
\def \fk {\mathfrak k}
\def \fl {\mathfrak l}
\def \fm {\mathfrak m}
\def \fn {\mathfrak n}
\def \fo {\mathfrak o}
\def \fp {\mathfrak p}
\def \fq {\mathfrak q}
\def \fr {\mathfrak r}
\def \fs {\mathfrak s}
\def \ft {\mathfrak t}
\def \fu {\mathfrak u}
\def \fv {\mathfrak v}
\def \fw {\mathfrak w}
\def \fx {\mathfrak x}
\def \fy {\mathfrak y}
\def \fz {\mathfrak z}

\def \fA {\mathfrak A}
\def \fB {\mathfrak B}
\def \fC {\mathfrak C}
\def \fD {\mathfrak D}
\def \fE {\mathfrak E}
\def \fF {\mathfrak F}
\def \fG {\mathfrak G}
\def \fH {\mathfrak H}
\def \fI {\mathfrak I}
\def \fJ {\mathfrak J}
\def \fK {\mathfrak K}
\def \fL {\mathfrak L}
\def \fM {\mathfrak M}
\def \fN {\mathfrak N}
\def \fO {\mathfrak O}
\def \fP {\mathfrak P}
\def \fQ {\mathfrak Q}
\def \fR {\mathfrak R}
\def \fS {\mathfrak S}
\def \fR {\mathfrak R}
\def \fR {\mathfrak R}
\def \fT {\mathfrak T}
\def \fU {\mathfrak U}
\def \fV {\mathfrak V}
\def \fW {\mathfrak W}
\def \fX {\mathfrak X}
\def \fY {\mathfrak Y}
\def \fZ {\mathfrak Z}

\def \Sp {\text{\rm Sp}}
\def \sp {\frak {sp}}
\def \RE {\Re\text{\rm e}\,}
\def \IM {\Im\text{\rm m}\,}
\def \al {\alpha}
\def \la {\lambda}
\def \ph {\varphi}
\def \del {\delta}
\def \eps {\varepsilon}
\def \lan {\langle}
\def \ran {\rangle}
\def \de {\partial}
\def \trans{\,{}^t\!}
\def \half{\frac12}
\def \inv{^{-1}}
\def \rinv#1{^{(#1)}}
\def \supp {\text{\rm supp\,}}
\def \inter {\overset \text{\rm o} \to}
\def \deg {\text{\rm deg\,}}
\def \dim {\text{\rm dim\,}}
\def \span {\text{\rm span\,}}
\def \rad {\text{\rm rad\,}}
\def \tr {\text{\rm tr\,}}

\def\mbh{{\mathbf h}}
\def\mbi{{\mathbf i}}
\def\mbj{{\mathbf j}}
\def\mbk{{\mathbf k}}
\def\mbx{{\mathbf x}}
\def\mby{{\mathbf y}}
\def\mbz{{\mathbf z}}
\def\mbw{{\mathbf w}}
\def\mbu{{\mathbf u}}
\def\mbv{{\mathbf v}}
\def\mbs{{\mathbf s}}
\def\mbt{{\mathbf t}}
\def\mbX{{\mathbf X}}
\def\mb0{{\mathbf 0}}
\def\balpha{{\bar\alpha}}
\def\bxi{{\boldsymbol\xi}}
\def\beeta{{\boldsymbol\eta}}
\def \coB {{}^c\!B}

\def\|{\vert}


\title{Algebras of singular integral operators\\with kernels controlled by multiple norms}

\pagestyle{myheadings}

\author{Alexander Nagel\thanks{University of Wisconsin-Madison, Madison WI 53706, \texttt{nagel@math.wisc.edu}} \and Fulvio Ricci\thanks{Scuola Normale Superiore, Piazza dei Cavalieri 7, 56126 Pisa, \texttt{fricci@sns.it}} \and Elias M.~Stein \thanks{Princeton University, Princeton NJ 08544. \texttt{stein@math.princeton.edu}}\and Stephen Wainger \thanks{University of Wisconsin-Madison, Madison WI 53706, \texttt{wainger@math.wisc.edu}}}

%
%
%
%

\date{}
\maketitle
\thispagestyle{empty}

\setcounter{tocdepth}{2}
\tableofcontents

\section{Introduction}

The purpose of this paper is to study algebras of singular integral operators on $\R^{n}$ and nilpotent Lie groups that arise when one considers the composition of  Calder\'on-Zygmund operators with different homogeneities, such as operators that occur in sub-elliptic problems and those arising in elliptic problems.  For example, one would like to describe the  algebras containing the operators related to the Kohn-Laplacian for appropriate domains, or those related to inverses of H\"ormander sub-Laplacians, when these are composed with the more standard class of pseudo-differential operators. The algebras we study can be characterized in a number of different but equivalent ways, and consist of operators that are pseudo-local and bounded on $L^{p}$ for $1<p<\infty$. While the usual class of \CZ operators is invariant under a one-parameter family of dilations, the operators we study fall outside this class, and reflect a multi-parameter structure.

\smallskip
This paper is the second in the series begun with \cite{MR2949616}.

\medskip

\subsection{Background}\quad

\medskip
 
An initial impetus for the study of composition of singular integral operators of different types came from the study of the $\overline\partial$-Neumann problem in complex analysis. In the case of domains in $\C^{n}$ where matters are sufficiently well understood, the relevant ``Calder\'on operator'' for this boundary-value problem can be viewed as a composition of a sub-elliptic type operator with a standard pseudo-differential operator of order 0. Early studies of such compositions in the context of the Heisenberg group can be found \cite{MR0499319} and in \cite{MR648484}. In \cite{MR1312498} more general such operators appear as singular integrals with ``flag kernels''. The corresponding algebra was broad enough to contain, for example, all operators of the form $m(\LL,iT)$, where $\LL$ is the sub-Laplacian and $T$ is the central invariant vector field, with $m$ a multiplier of Marcinkiewicz-type. The notion of flag kernels (having singularities on appropriate flag varieties) and the properties of the corresponding singular integrals were then extended to the higher step case in \cite{MR1818111}, largely in the Euclidean setting, and then in \cite{MR2949616} in the context of automorphic flags on a general homogeneous group.  In this connection we should mention the general theory of Brian Street \cite{MR3241740} which treats aspects of both singular integrals and singular Radon transforms in the context of multi-parameter analysis.

However, the general nature of these results did not provide an entirely satisfactory answer to a main question that arises when composing singular integrals of different homogeneities: that of characterizing the resulting class of kernels. It turns out that the class in question is indeed narrower than the class of flag kernels: the kernels satisfy stricter differential inequalities, and in particular they are all smooth away from the origin, and the corresponding operators are thus pseudo-local.

A different approach to the study of flag kernels can be found in the work of G{\l}owacki; see \cite{MR2602167}, \cite{MR2679042}, \cite{MR3185206}. There has been recent interest in the study of Hardy spaces associated to flags and flag kernels (see for example \cite{MR2648281}, \cite{MR2789471}, \cite{MR2810164}, \cite{MR2984061}, \cite{MR3293443},  \cite{MR3192289}) and in the study of weighted norm inequalities (see for example \cite{MR3188508}).  Further recent references that deal with flag kernels include   \cite{MR2499336}, \cite{MR2591640} \cite{MR2607286}, \cite{MR2923798}, \cite{MR3001008}, \cite{MR3228630},   \cite{MR3188508},     and \cite{MR3289035}.

\subsection{Some motivating examples}\label{SME}\quad

\medskip
 
We begin by describing a particularly simple situation occurring in studying the heat equation on $\R^{n}\times\R$, or in studying convolution of operators arising on the non-Abelian  Heisenberg group $\H^{n}=\{(\z,t):\z=(z_{1}, \ldots, z_{n})\in \C^{n},\,t\in \R\}$. In the latter case, Calder\'on-Zygmund kernels adapted to the automorphic dilations $\delta\cdot(\z,t)=(\delta \z,\delta^{2}t)$ are distributions $\KK$ on $\H^{n}$ which, away from the origin, are given by integration against a smooth function $K$ satisfying
\be \label{1.1aa}
\left|\partial^{\beta}_{t}\partial^{\alpha}_{z,\bar z}K(\z,t)\right|\lesssim \big(|\z|+|t|^{\frac{1}{2}}\big)^{-2n-2-|\alpha|-2\beta}
\ee
and which satisfy appropriate cancellation conditions. Similarly the isotropic variants of these kernels (the building blocks of the standard pseudodifferential operators) satisfy instead the inequalities
\be \label{1.2aa}
\left|\partial^{\beta}_{t}\partial^{\alpha}_{z,\bar z}K(\z,t)\right|\lesssim \left(|\z|+|t|\right)^{-2n-1-|\alpha|-\beta}.
\ee

If we consider kernels with compact support, it turns out that the relevant algebra of kernels containing both kinds of distributions are those given by the mixed differential inequalities
\be \label{1.3aa}
\left|\partial^{\beta}_{t}\partial^{\alpha}_{z,\bar z}K(\z,t)\right|\lesssim \left(|\z|+|t|\right)^{-2n-|\alpha|}(|\z|^{2}+|t|)^{-1-\beta}
\ee 
for $(\z,t)$ in the unit ball, in addition to cancellation conditions.

\medskip

There are several ways of thinking about the estimates in equation (\ref{1.3aa}). One notices that the inequalities in (\ref{1.3aa}) are the best that can be satisfied on the unit ball for kernels $K$ that are assumed to be either of type (\ref{1.1aa}) or of type (\ref{1.2aa}), but this rather simple situation is not repeated in the higher step case\footnote{See Subsection \ref{2-Flag} below for an interesting example, and Section \ref{Integrability} for a more complete discussion}. However, one can make two more productive observations.
\begin{enumerate}[a)]

\smallskip

\item Operators satisfying (\ref{1.3aa}) can be understood in terms of the theory of \textit{flag kernels} studied in \cite{MR1818111} and \cite{MR2949616}.  Consider the following two sets of differential inequalities:
\begin{equation}\label{1.4aa}
\Big\vert\partial^{\beta}_{t}\partial^{\alpha}_{z,\bar z}K(\z,t)\Big\vert \lesssim |\z|^{-2n-|\alpha|}\big(|\z|^{2}+|t|\big)^{-1-\beta},
\end{equation}
\begin{equation}\label{1.5aa}
\Big\vert\partial^{\beta}_{t}\partial^{\alpha}_{z,\bar z}K(\z,t)\Big\vert \lesssim \big(|\z|+|t|\big)^{-2n-|\alpha|}|t|^{-1-\beta}.
\end{equation}
Equation (\ref{1.4aa}) gives the differential inequalities satisfied by flag kernels for the flag $(0)\subset \C^{n}\subset \C^{n}\oplus\R$, while equation (\ref{1.5aa}) give the differential inequalities for the opposite flag  $(0)\subset \R\subset \C^{n}\oplus\R$. Operators of type (\ref{1.1aa}) satisfy (\ref{1.4aa}), operators of type (\ref{1.2aa}) satisfy (\ref{1.5aa}), and locally (for $(\z,t)$ in compact sets), operators of type (\ref{1.1aa}) or (\ref{1.2aa}) satisfy \textit{both} (\ref{1.4aa}) and (\ref{1.5aa}). Moreover, we will see that a kernel satisfying both (\ref{1.4aa}) and (\ref{1.5aa}) automatically satisfies (\ref{1.3aa}). Thus one is led to the study of \textit{two-flag kernels} which locally are simultaneously flag kernels for two opposite flags.

\smallskip

\item A second related perspective is to view kernels $K$ satisfying the conditions of (\ref{1.3aa}) as satisfying differential inequalities appropriate to two different families of dilations on $\R^{N}$. Derivatives in $z$ or $\bar z$ are controlled by the dilations $(\z,t)\to (\delta \z,\delta t)$, while derivatives with respect to $t$ are controlled by the dilations $(\z,t) \to (\delta \z,\delta^{2}t)$. This suggests that, more generally, we should study operators whose kernels satisfy differential inequalities in which different derivatives are controlled by different families of dilations.
\end{enumerate}

\smallskip

\noindent In this paper we begin by adopting the second point of view. It will then turn out that the resulting class of distributions includes the class which belong to two flags.  One of the main goals of this paper is the study the composition of convolution operators with kernels of this form. In the Euclidean context this reduces to the study of the product of the Fourier multipliers, but in the non-Abelian situation the composition becomes a more serious issue.

\medskip

A second goal of this paper is to establish regularity of two-flag kernels. The inequalities in equations (\ref{1.4aa}) and (\ref{1.5aa}) are the differential inequalities for a kernel belonging simultaneously to two opposite $2$-step flags, and it is easy to see that these imply the inequalities in equation (\ref{1.3aa}). However in the case of higher step, the results are not so simple, and in fact are rather surprising. Consider, for example, the case of two opposite $3$-step flags in $\R^{3}$:
\beas
\FF:&& &(0) \subset \{(x,y,z):y=z=0\}\subset \{(x,y,z):z=0\}\subset \R^{3}, \\
\FF^{\perp}:&& &(0) \subset \{(x,y,z):x=y=0\}\subset \{(x,y,z):x=0\}\subset \R^{3}.
\eeas
A flag kernel $\KK$ for the flag $\FF$ with homogeneity $\lambda\cdot(x,y,z)=(\lambda x, \lambda y, \lambda z)$ satisfies the differential inequalities
\bea\label{1.11iou}
\big\vert\partial^{a}_{x}\partial^{b}_{y}\partial^{c}_{z}K(x,y,z)\big\vert\leq C_{a,b,c}\,|x|^{-1-a}(|x|+|y|)^{-1-b}(|x|+|y|+|z|)^{-1-c}.
\eea
A flag kernel $\KK$ for the flag $\FF^{\perp}$ with homogeneity $\lambda\cdot(x,y,z)=(\lambda x, \lambda^\frac12 y, \lambda^\frac13 z)$ satisfies the differential inequalities
\bea\label{1.12iou}
\big\vert\partial^{a}_{x}\partial^{b}_{y}\partial^{c}_{z}K(x,y,z)\big\vert\leq C_{a,b,c}\,(|x|+|y|^2+|z|^3)^{-1-a}(|y|+|z|^\frac32)^{-1-b}|z|^{-1-c}.
\eea
Suppose that $\KK$ is a distribution with compact support which is a flag kernel for both flags. Looking only at the inequalities in equations (\ref{1.11iou}) and (\ref{1.12iou}), these estimates provide no information about $K(x,y,z)$ if $x=z=0$ and $y\neq 0$;   in terms of the assumed size estimates alone, the distribution $\KK$ could be singular away from the origin. Unexpectedly, it turns out that
\beas
\big\vert\partial^{a}_{x}\partial^{b}_{y}\partial^{c}_{z}K(x,y,z)\big\vert\leq C_{a,b,c}\,(|x|+|y|^2+|z|^3)^{-1-a}(|x|+|y|+|z|^\frac32)^{-1-b}(|x|+|y|+|z|)^{-1-c},
\eeas
and the distribution $\KK$ is indeed pseudo-local. It is important to note that this is a consequence of both the differential inequalities and the cancellation conditions.

\smallskip

\subsection{Plan of the paper}\quad

\medskip

Our results can be divided roughly into two kinds: those concerning properties of the kernels and those concerning the resulting convolution operators.

\begin{enumerate}[1.]

\smallskip

\item We begin by studying a class $\PP(\EEE)$ of distributions on $\R^{N}$ given away from the origin by integration against a smooth function. These functions are required to satisfy differential inequalities adapted to a set of $n\leq N$ families of dilations on $\R^{N}$ prescribed by an $n\times n$ matrix $\EEE$. Distributions $\KK\in \PP(\EEE)$ must also satisfy appropriate cancellation conditions. 

\smallskip

\item We are primarily interested in the local behavior of these distributions\footnote{We will see below in Section \ref{Integrability} that if the rank of the matrix $\EEE$ is greater than $1$, then the kernels $\KK\in \PP(\EEE)$ are integrable at infinity.}, and it will be convenient to modify the class of kernels outside the unit ball so that they and all their derivatives are rapidly
decreasing at infinity. Under these hypotheses, we shall denote the class of modified kernels  by $\PP_{0}(\EEE)$.

\smallskip

\item We then characterize these distributions in terms of their Fourier transform, and also in terms of decompositions as infinite dyadic sums of dilates of normalized bump functions.  This material is discussed in Sections \ref{Kernels} - \ref{Decompositions}. In  Section \ref{Integrability} we show that these distributions fall outside the class of standard \CZ kernels. In Section \ref{CZKernels} we find the smallest class $\PP_0(\EEE)$ containing the convolution of Calder\'on-Zygmund kernels with different homogeneities. In Section \ref{Multi-Flag} we show that two-flag kernels provide examples of the classes $\PP(\EEE)$.

\smallskip

\label{proper}

\medskip

After establishing the basic properties of the distributions $\KK\in \PP_{0}(\EEE)$, we fix a homogeneous nilpotent Lie group $G$ whose underlying space is $\R^{N}$ and thus has a given automorphic family of dilations. (Of course this includes the special case in which the group $G$ is the abelian $\R^{n}$.)

\medskip

\item 

If the $n$ families of dilations on $\R^{N}$ given by the matrix $\EEE$ are compatible\footnote{A precise definition is given in Subsection \ref{Compatibility} below.} with the automorphic dilations, we show that convolution on $G$ with a distribution $\KK\in \PP(\EEE)$ defines a bounded operator on $L^{p}(G)$ for $1<p<\infty$, and that the collection of these convolution operators form an algebra under composition. This material is discussed in Sections \ref{Groups} - \ref{Composition}.

\smallskip

\item

We also characterize the smallest algebra of convolution operators which arise when one composes \CZ operators of different homogeneities. This material appears in Section \ref{CZKernels}.

\smallskip

\item

Finally we describe a generalization of the class $\PP(\EEE)$ to allow variable coefficients, and we investigate the commutation properties of these operators. In particular, we study the role of the classical pseudo-differential operators. This material is considered in Sections \ref{6:kernels} and \ref{7:pseudo}.
\end{enumerate}
In the following subsections we provide a more detailed summary of the results in this paper.

\subsubsection{Section \ref{Kernels}: The class of kernels}\quad
\medskip

In Section \ref{Kernels} we provide a precise definition of a class of distributions and their associated multipliers. We start with a fixed decomposition $\R^{N}=\R^{C_{1}}\oplus\cdots \oplus\R^{C_{n}}$. Write $\x\in\R^{N}$ as $(\x_{1}, \ldots, \x_{n})$ with  $\x_{j}=\big(x_{k}:k\in C_{j}\big)\in \R^{C_{j}}$. (Throughout this paper we use boldface, such as $\x_{j}$, to indicate a tuple of coordinates, and standard type, such as $x_{k}$, to indicate a single coordinate.) Each component $\R^{C_{j}}$ is equipped with a family of dilations, denoted by $\lambda\cdot \x_{j}$ for $\lambda >0$.   For simplicity of exposition in this Introduction, we assume that these are the standard isotropic dilations $\lambda\cdot \x_{j}= \big(\lambda x_{k}:k\in C_{j}\big)$ although we will allow more general non-isotropic dilations in Section \ref{Kernels} below. If $Q_{j}$ is the homogeneous dimension of $\R^{C_{j}}$, then for isotropic dilations, $Q_{j}$ is the dimension of $\R^{C_{j}}$.\footnote{If $\lambda\cdot \x_{j}=\exp[A_{j}\log\lambda](\x_{j})$ the homogeneous dimension of $\R^{C_{j}}$ is the trace of $A_{j}$.} Let $n_{j}$ be a smooth homogeneous norm on $\R^{C_{j}}$ so that $n_{j}(\lambda\cdot\x_{j}) = \lambda\,n_{j}(\x_{j})$. Thus for isotropic dilations, we can take $n_{j}$ to be the standard Euclidean norm on $\R^{C_{j}}$.

The homogeneities on each component $\R^{C_{j}}$ can be weighted in various ways to give a global family of dilations on $\R^{N}$. Let $\EEE=\{e(j,k)\}$ be an $n\times n$ matrix with strictly positive entries, and for $1 \leq j \leq n$, define a family of dilations on $\R^{N}$ by setting
\be\label{1.6aa}
\delta_{j}(\lambda)[\x]= \big(\lambda^{1/e(j,1)}\cdot\x_{1}, \ldots ,\lambda^{1/e(j,j)}\cdot \x_{j}, \ldots, \lambda^{1/e(j,n)}\cdot\x_{n}\big).
\ee
Put
\bea\label{1.7aa}
N_{j}(\x) = n_{1}(\x_{1})^{e(j,1)}+ \cdots + n_{j}(\x_{j})^{e(j,j)}+ \cdots n_{n}(\x_{n})^{e(j,n)}
\eea
so that $N_{j}\big(\delta_{j}(\lambda)[\x]\big) = \lambda N_{j}(\x)$. Thus $N_{1}, \ldots, N_{n}$ are homogeneous norms\footnote{This is standard terminology, although we only have $N_{j}(\x+\y)\leq A_{j}[N_{j}(\x)+N_{j}(\y)]$ for some constant $A_{j}$ instead of the usual triangle inequality.} on $\R^{N}$  for the different dilation structures. We require that the components of the matrix $\EEE$ satisfy
\bea\label{1.8aa}
e(j,j) &=1 & 1&\leq j \leq n,\\
e(j,k) & \leq e(j,l)e(l,k) & 1&\leq j,k,l\leq n.
\eea
We study a family $\PP=\PP(\EEE)$ of distributions $\KK$ on $\R^{N}$ which are given away from the origin $0\in \R^{N}$ by integration against a smooth function $K$ satisfying the following differential inequalities\footnote{We use standard multi-index notation; see Section \ref{Kernels} for more details.}:
\be\label{1.9aa}
\left|\partial^{\alphab_{1}}_{\x_{1}}\cdots \partial^{\alphab_{n}}_{\x_{n}}K(\x)\right|\lesssim \prod_{j=1}^{n}N_{j}(\x)^{-Q_{j}-|\alpha_{j}|}.
\ee
(Note that derivatives of $K$ with respect to any of the variables in $\x_{j}$ are controlled by the family of dilations $\delta_{j}(\lambda)$ and the homogeneous norm $N_{j}$.) In addition, we impose certain cancellation conditions on the distributions $\KK$. These are analogous to cancellation conditions for product kernels or flag kernels. Roughly speaking, they require that if $\x=(\x',\x'')$ is a decomposition of the coordinates  $\x_{1}, \ldots, \x_{n}$ into two subsets, then $\KK(\x')=\int \KK(\x',\x'')\psi(\x'')\,d\x''$ is a distribution in the $\x''$ variables of the same type. The precise conditions are specified in part (\ref{Def2.2B}) of Definition \ref{Def2.2} below.

\medskip
Note that in the example of the Heisenberg group given above, we have the natural decomposition $\H^{n}= \C^{n}\oplus \R\cong \R^{2n}\oplus \R$. The isotropic dilations on each component are given by $\lambda\cdot (z_{1}, \ldots, z_{n}) = (\lambda z_{1}, \ldots, \lambda z_{n})$ and $\lambda\cdot t= \lambda t$, so that the homogeneous dimension of $\C^{n}$ is $2n$ and the homogeneous dimension of $\R$ is $1$. Set $n_{1}(\z) = |\z|=\big[\sum_{j=1}^{n}|z_{j}|^{2}\big]^{\frac{1}{2}}$ and $n_{2}(t) = |t|$, so that $n_{1}(\lambda\cdot \z)= \lambda n_{1}(\z)$ and $n_{2}(\lambda\cdot t) = \lambda n_{2}(t)$. If we take $\EEE = \left[\begin{matrix}1&1\\2&1\end{matrix}\right]$ then
\bea\label{1.10ty}
N_{1}(\z,t) &= n_{1}(\z) +n_{2}(t)\,\,\,\approx \,\,\,|\z| +|t|,\\
N_{2}(\z,t) &= n_{1}(\z)^{2} +n_{2}(t)\,\,\,\approx |\z|^{2}+|t|,
\eea
and the corresponding dilations are given by
\beas
\delta_{1}(\lambda)(\z,t) &= (\lambda \z, \lambda t),\\
\delta_{2}(\lambda)(\z,t)&= (\lambda^{\frac{1}{2}} \z, \lambda t).
\eeas
The differential inequalities required by (\ref{1.9aa}) are then
\beas
\big\vert\partial^{\alpha}_{z,\bar z}\partial^{\beta}_{t}K(\z,t)\big\vert \lesssim N_{1}(\z,t)^{-2n-|\alpha|}N_{2}(\z,t)^{-1-\beta}\approx (|\z| +|t|)^{-2n-|\alpha|}(|\z|^{2}+|t|)^{-1-\beta}
\eeas
which is in agreement with (\ref{1.3aa}).

\subsubsection{Section \ref{Partitions}: Marked partitions}\quad

\medskip

Our analysis of the distributions $\KK\in \PP_{0}(\EEE)$ is based on the decomposition of the unit ball into the regions where one summand in each norm $N_{j}(\x) = n_{1}(\x_{1})^{e(j,1)}+ \cdots + n_{n}(\x_{n})^{e(j,n)}$ is strictly larger than all the others. (There is an analogous decomposition of the Fourier transform $\xib$-space for $\xib$ large, which is needed to understand the differential inequalities of the multiplier $m=\widehat {\KK}$.) Matters are simplest in the case when $n=2$ exemplified by the Heisenberg group, where the two norms are given in equation (\ref{1.10ty}). In this case there are three regions near the origin.
\begin{enumerate}[1.]
\item The set $|\z|^{2}\gtrsim |t|\gtrsim |t|^{2}$, where the isotropic dilations are controlling. Here $N_{1}(\z,t)\approx |\z|\approx |\z|+|t|$ and $N_{2}(z,t)\approx |\z|^{2}\approx (|\z|+|t|)^{2}$. 
\item The set $|\z|\lesssim |t|\lesssim |t|^{\frac{1}{2}}$, where the automorphic dilations are controlling. Here $N_{1}(\z,t)\approx |t|\approx |\z|^{2}+|t|$ and $N_{2}(\z,t)\approx |t|\approx |\z|^{2}+|t|$. 
\item The intermediate region where $|t|\lesssim |\z|\lesssim |t|^{\frac{1}{2}}$ where kernels of the type (\ref{1.3aa}) behave like ``product kernels''. Here $ N_{1}(\z,t)\approx |\z|$ and $N_{2}(\z,t)\approx |t|$. 
\end{enumerate}

The case $n\geq 3$ is more intricate. The required systematic decomposition of the $\x$-space depends on the notion of a `marked partition' of the set $\{1, \ldots, n\}$, which is a collection of disjoint non-empty subsets $I_{1}, \ldots, I_{s}$ of $\{1, \ldots, n\}$ with $\bigcup_{r=1}^{s}I_{r}=\{1, \ldots, n\}$, together with a `marked' element $k_{r}$ in each subset $I_{r}$. This concept arises as follows. 

Suppose the matrix $\EEE$ satisfies the conditions given in equation (\ref{1.8aa}), and let $\x$ be a point in the unit ball such that no two terms in any norm $N_{j}(\x)$ are equal. We will see in Proposition \ref{Prop5.3} below that if $n_{k}(\x_{k})$ is the dominant term in $N_{j}(\x)$ for some $j\neq k$ then $n_{k}(\x)$ is also the dominant term in $N_{k}(\x)$. Thus given such a point $\x$, there is a well-defined set $\{k_{1}, \ldots, k_{s}\}$ of indices such that $n_{k_{r}}(\x_{k_{r}})$ is the dominant term in $N_{k_{r}}(\x)$. For $1 \leq r\leq s$, we let $I_{r}$ be the set of indices $j$ such that $n_{k_{r}}(\x_{k_{r}})$ is dominant in $N_{j}(\x)$. Then $k_{r}\in I_{r}$ and this produces a marked partition. The details are given below in Section
\ref{Partitions}.

One may note that the decomposition given via marked partitions is somewhat akin to what happens in the theory of ``resolutions of singularities'', here applied to (say) a smooth version of a power of the function $F(\x) = \prod_{j=1}^{n}N_{j}(\x)$. Using that theory (see e.g. \cite{MR3117305}) one would be led to decompose a neighborhood of the origin into a number of parts, and in each part make an appropriate change of variables so that $F$ becomes essentially a monomial in the new variables. However, in our situation the decomposition by marked partitions is much more explicit and tractable, and no auxiliary change of variables is required.

\subsubsection{Section \ref{Duality}: Characterization by Fourier transforms} \quad

\medskip

For $\xib\in \R^{N}=\R^{C_{1}}\oplus\cdots \oplus\R^{C_{n}}$ write $\xib=(\xib_{1}, \ldots,\xib_{n})$ with $\xib_{k}\in\R^{C_{k}}$ and set
\bea\label{1.12hj}
\widehat N_{j}(\xib) =|\xib_{1}|^{1/e(1,j)}+\cdots +|\xib_{j}|+\cdots + |\xib_{n}|^{1/e(j,n)}.\eea
This is the `dual norm' to $N_{j}(\x)$. Denote by $\MM_{\infty}({\EEE})$ the class of multipliers $m\in \CC^{\infty}(\R^{N})$ satisfying
\bea\label{1.11hj}
\left|\partial^{\alphab_{1}}_{\xib_{1}}\cdots \partial^{\alphab_{n}}_{\xib_{n}}m(\xib)\right|\lesssim \prod_{j=1}^{n}\left[1+\widehat N_{j}(\xib)\right]^{-|\alphab_{j}|}.
\eea
On page \pageref{proper} we indtroduced the subclass $\PP_{0}(\E)$ of proper distributions. In Section \ref{Duality} we show that $\KK\in \PP_{0}(\EEE)$ if and only if its Fourier transform $m=\widehat\KK\in \MM_{\infty}({\EEE})$.

\subsubsection{Sections \ref{Schwartz sums} and \ref{Decompositions}: Characterization by dyadic decompositions} \quad

\medskip

In Section \ref{Schwartz sums} we show that kernels $\KK\in \PP_{0}(\EEE)$ can be characterized by their dyadic decomposition into dilates of \emph{bump functions}. Let $\{\varphi^{I}\}$ be a uniformly bounded family of $\CC^{\infty}$-functions supported in the unit ball, where $I$ runs over the set of all $n$-tuples $I=(i_{1}, \ldots, i_{n})\in\Z^{n}$. Let 
\be\label{1.15iou}
[\varphi^{I}]_{I}(\x) = 2^{-\sum_{j=1}^{n}Q_{j}i_{j}}\varphi(2^{-i_{1}}\x_{1}, \ldots, 2^{-i_{n}}\x_{n})
\ee
be the $L^{1}$-invariant dilation of $\varphi^{I}$, and let 
\bea\label{1.16iou}
\Gamma_{\Z}(\EEE)&=\Big\{I=(i_{1}, \ldots, i_{n})\in \Z^{n}: e(j,k)i_{k}\leq i_{j}< 0\Big\}.
\eea
If
\bea \label{1.17iou}
\int_{\R^{C_{j}}}\varphi^{I}(\x_{1}, \ldots, \x_{j}, \ldots, \x_{n})\,d\x_{j}=0
\eea
for each $I\in \Gamma_{\Z}(\EEE)$ and for all $1 \leq j \leq n$, then 
the infinite series $\sum_{I\in \Gamma_{\Z}(\EEE)}[\varphi^{I}]_{I}$ converges in the sense of distributions to a distribution $\KK\in \PP_{0}(\EEE)$. More generally, we show that the `strong' cancellation condition given in (\ref{1.17iou}) can be replaced by a variant of the \emph{weak} cancellation condition that was introduced in \cite{MR2949616}. Roughly speaking\footnote{The precise statement is given in Definition \ref{Def4.4}.}, we require that if $\varphi^{I}$ does not have cancellation in a variable $\x_{j}$, then there is a compensating gain in the size of $\varphi^{I}$. 

This more general result is then used in our study of convolutions of distributions in Section \ref{Composition}. In this study we also need a converse statement:  if $\KK\in\PP_{0}(\EEE)$, it can be written, modulo distributions corresponding to coarser decompositions of $\R^{N}$, as a sum $\sum_{I\in \Gamma_{\Z}(\EEE)}[\varphi^{I}]_{I}$. This is the subject of Section \ref{Decompositions}.

\subsubsection{Section \ref{Integrability}: Significance of the rank of $\EEE$}\quad

\medskip

In Section \ref{Integrability} we show that the class of classical \CZ kernels corresponds exactly to the case in which the rank of the matrix $\EEE$ is equal to $1$. If the rank is greater than $1$, distributions $\KK\in \PP(\EEE)$ differ from \CZ kernels in two ways:

\begin{enumerate}[1)]

\smallskip

\item The distribution $\KK$ is integrable at infinity, and so we can write $\KK=\KK_{0}+\KK_{\infty}$ with $\KK_{0}, \KK_{\infty}\in \PP(\EEE)$, where $\KK_{0}$ has support in the unit ball, and where $\KK_{\infty}$ is given by integration against a function $K_{\infty}\in \CC^{\infty}(\R^{N})\cap L^{1}(\R^{N})$. Thus for rank$(\EEE)\geq 2$, we focus on the local behavior of distributions $\KK\in \PP(\EEE)$. We generally restrict our attention to the class of proper distributions $\PP_{0}(\EEE)\subset \PP(\EEE)$ such that all derivatives are rapidly decreasing at infinity.

\smallskip

\item If $\KK$ is a \CZ kernel, then $\big\vert\big\{\x\in \R^{N}:|K(\x)|>\lambda\big\}\big\vert\lesssim \lambda^{-1}$. If $\KK\in \PP(\EEE)$ and the rank of $\EEE$ is greater than $1$, such weak type estimates may no longer be true. We show, for example, that if rank$(\EEE)=n$ there exist distributions $\KK\in \PP(\EEE)$ such that $\big\vert\big\{\x\in \R^{N}:|K(\x)|>\lambda\big\}\big\vert\approx \lambda^{-1}\big[\log \lambda\big]^{n-1}$.

\smallskip

\item In general, there is a notion of `reduced rank' which determines optimal bounds for  estimates of the sort considered in 2).  This is discussed in Section \ref{Weak type}.

\end{enumerate}

\subsubsection{Sections \ref{Groups} and \ref{Composition}: Convolution operators on a Lie group}\quad

\medskip

In Sections \ref{Groups} and \ref{Composition} we turn to the second part of our analysis dealing with convolution operators $T_{\KK}f=f*\KK$ on a homogeneous nilpotent Lie group $G\cong \R^{N}$. Our two main results are the following:
\begin{enumerate}[(A)]
\smallskip

\item If $\KK\in \PP_{0}(\EEE)$ then  each operator $T_{\KK}$ extends to a bounded operator on $L^{p}(G)$ for $1<p<\infty$.

\smallskip

\item If $\KK, \LL\in \PP_{0}(\EEE)$ then there exists $\MM\in \PP_{0}(\EEE)$ such that $T_{\LL}\circ T_{\KK}=T_{\MM}$. Formally $\MM= \KK*\LL$, although the convolution of tempered distributions is not always defined. Thus the space $\PP_{0}(\EEE)$ is an algebra under composition (or convolution).
\end{enumerate}

\smallskip

The first result follows rather easily from the observation that every distribution $\KK\in\PP_{0}(\EEE)$ is a flag kernel, since it was established in \cite{MR2949616} that convolution with a flag kernel extends to a bounded operator on $L^{p}$. The second result is analogous to the result in the same paper that the space of kernels is closed under convolution. However this case is more difficult and requires additional ideas. 

We remark that both results are quite simple for Euclidean convolution, since $\widehat{T_{\KK}f}(\xib) = m(\xib)\widehat f(\xib)$ where $m$ is the Fourier transform of $\KK$ and hence $m\in \MM_{\infty}(\EEE)$. It is not hard to check that  a multiplier $m\in \MM_{\infty}(\EEE)$ satisfies the conditions of the Marcinkiewicz multiplier theorem, and the $L^{p}$-boundedness of the operator $T_{\KK}$ follows immediately. Since the product of two such multipliers is again a multiplier of the same class, the result on composition of operators also follows easily in this case.

To prove the results on a more general group, we rely very heavily on the decomposition of distributions $\KK\in \PP(\EEE)$ as dyadic sums of dilates of bump functions. The heart of the argument is then a careful analysis of sums of the form $\sum_{I,J}[\varphi^{I}]_{I}*[\psi^{J}]_{J}$.

\subsubsection{Section \ref{CZKernels}: Convolution of \CZ kernels}\quad

\medskip

In Section \ref{CZKernels} we  study the convolution of  \CZ kernels $\KK_{1}, \ldots, \KK_{m}$ with compact support  having different homogeneities. If the homogeneities are suitably adapted to the decomposition on $\R^{N}$ we  show that there is an $s\times s$ matrix $\EEE$ so that the algebra $\PP_{0}(\EEE)$ contains each $\KK_{j}$. We also establish a kind of converse result: If $\EEE$ is an $n\times n$ matrix satisfying the basic hypothesis given in (\ref{1.8aa}), then there are finitely many different homogeneities on $\R^{N}$ so that $\PP_{0}(\EEE)$ is the minimal algebra of the above type containing compactly supported \CZ kernels of each homogeneity.

\subsubsection{Section \ref{Multi-Flag}: Regularity of two-flag kernels}\label{2-Flag}\quad

\medskip

In Section \ref{Multi-Flag} we return to the study of kernels belonging to two opposite flags, and show that these belong to $\PP_{0}(\EEE)$ for an appropriate matrix $\EEE$. In the example studied in section \ref{SME}, where the kernel satisfied the two estimates
\beas
\big\vert\partial^{a}_{x}\partial^{b}_{y}\partial^{c}_{z}K(x,y,z)\big\vert\leq C_{a,b,c}
\begin{cases}
|x|^{-1-a}(|x|^{2}+|y|)^{-1-b}(|x|^{3}+|y|^{3/2}+|z|)^{-1-c}\\\\
(|x|+|y|^2+|z|^3)^{-1-a}(|y|+|z|^\frac32)^{-1-b}|z|^{-1-c}
\end{cases},
\eeas
it turns out that $\KK\in \PP_{0}(\EEE)$ where 
\beas
\EEE&=\left[\begin{matrix}
1&1&1\\2&1&1\\3&\frac{3}{2}&1
\end{matrix}\right]&&\text{and} &&\begin{cases}
N_{1}(x,y,z) &= |x|+|y|+|z|,\\
N_{2}(x,y,z) &= |x|^{2}+|y|+|z|,\\
N_{3}(x,y,z) &= |x|^{3}+|y|^{3/2}+|z|
\end{cases}.
\eeas

\subsubsection{Sections \ref{6:kernels} and \ref{7:pseudo}: Operators with variable coefficients}\quad

\medskip

In Sections \ref{6:kernels} and \ref{7:pseudo} we describe a wider class of operators with ``variable coefficients''. An operator $T$ belongs to this extended class if it is of the form
\be
T[f](\x) = \int_{G}K(\x,\z)f(\x\cdot\z^{-1})\,d\z
\ee
where for each $\x$, the distribution $K(\x,\cdot)$ is a kernel of the type we have been considering that depends in an appropriately smooth way on $\x$. One can then assert:
\begin{enumerate}[a)]

\smallskip

\item Operators of that type form an algebra, and each is bounded on $L^{p}(G)$ for $1 < p < \infty$.

\smallskip

\item The standard pseudo-differential operators of order $0$ belong to this algebra, and the sub-algebra of these operators is ``central'' in the sense that an operator of the extended algebra commutes with a standard pseudo-differential operator of order $0$, up to an error which is an appropriate ``smoothing operator." 
\end{enumerate}

\section{The Classes $\PP(\EEE)$ and $\MM({\EEE})$}\label{Kernels}

In this section we define a class of distributions $\PP(\EEE)$ and the corresponding class of Fourier multipliers $\MM(\EEE)$ on the space $\R^{N}$ and its dual. We begin by introducing notation that will be used throughout the paper.

\subsection{Notation}\label{Notation}\quad

\subsubsection{Decompositions of $\R^{N}$} We begin with an initial decomposition of $\R^{N}$ into  direct sums of subspaces  where only some of the coordinates $x_{1}, \ldots, x_{N}$ are non-zero. Thus if $C\subset \{1, \ldots, N\}$ is non-empty, set
\bea\label{Eqn2.1bb}
\R^{C}=\Big\{\x=(x_{1}, \ldots, x_{N})\in \R^{N}:\text{$x_{j}=0$ for all $j\notin C$}\Big\}\subset\R^{N}.
\eea
We will abuse of this notation by allowing $\R^C$ to also indicate the set of $c$-tuples ($c$ being the cardinality of $C$) with entries indexed by  elements of $C$. We assume that an initial decomposition of $\R^N$ is given, \index{R1Cj@$\R^{C_j}$}
\bea\label{Eqn2.2bb}
\R^{N}=\R^{C_{1}}\oplus\cdots\oplus \R^{C_{n}},
\eea
where $C_{1}, \ldots ,C_{n}\subset \{1, \ldots, N\}$ are disjoint non-empty subsets with $\bigcup_{j=1}^{n}C_{j}=\{1, \ldots, N\}$. Given the decomposition (\ref{Eqn2.2bb}), if $\x=(x_{1}, \ldots, x_{N})\in \R^{N}$ we write $\x=(\x_{1}, \ldots, \x_{n})$ where $\x_{j}=\{x_{l}:l\in C_{j}\}$. \index{x1j@$\x_{C_j}$}

Unfortunately there are further notational complications. We shall sometimes need to consider the direct sum of only a subset of the subspaces $\R^{C_{1}}, \ldots, \R^{C_{n}}$, and we will also need to partition the set $\{C_{1}, \ldots, C_{n}\}$ into a union of disjoint subsets and consider the corresponding \emph{coarser} decomposition of $\R^{N}$. We shall then use the following notation.
\begin{enumerate}[(a)]
\smallskip

\item If $L=\{l_{1}, \ldots, l_{r}\}\subset\{1, \ldots, n\}$, write\label{exponent}\index{R2L@$\R^{L}$}
\bea\label{Eqn2.3bb}
\R^{L}=\R^{C_{l_{1}}}\oplus \cdots \oplus\R^{C_{l_{r}}}=\Big\{\x\in\R^{N}:\text{$x_{j}=0$ for all $j\notin \bigcup_{k=1}^{r}C_{l_{k}}$}\Big\}.
\eea
An element of $\R^{L}$ can then be written $\x_{L}=\{\x_{k}:k\in L\}$.\index{x2L@$\x_{L}$}

\smallskip

\item Let $I_{1}, \ldots, I_{s}\subset \{1, \ldots, n\}$ be disjoint, non-empty subsets with $\bigcup_{r=1}^{s}I_{r}=\{1, \ldots, n\}$. Write
\bea\label{Eqn2.4bb}
\R^{I_{r}}=\bigoplus_{k\in I_{r}}\R^{C_{k}}=\Big\{\x\in\R^{N}:\text{$x_{j}=0$ for all $j \notin \bigcup_{k\in I_{r}}C_{k}$}\}.
\eea
We then have the coarser decomposition $\R^{N}=\R^{I_{1}}\oplus\cdots \oplus \R^{I_{s}}$. If $\x\in \R^{N}$ we write $\x=(\x_{I_{1}}, \ldots, \x_{I_{s}})$ where $\x_{I_{r}}\in \R^{I_{r}}$.
\end{enumerate}
Note that we have slightly abused notation since the symbols $\R^{L}$ and $\R^{I_{r}}$ in equations (\ref{Eqn2.3bb}) and (\ref{Eqn2.4bb}) have a different meaning than the symbol $\R^{C}$ in eqwuation (\ref{Eqn2.1bb}). However $C$ is a subset of $\{1, \ldots, N\}$ while $L$ and $I_{r}$ are subsets of $\{1, \ldots, n\}$.

\bigskip

\subsubsection{Dilations on $\R^{N}$} The underlying space $\R^{N}$  is equipped with a family of dilations given by \index{l1ambdax@$\lambda\cdot\x$}
\bea\label{Eqn2.1aa}
\lambda\cdot\x=\lambda\cdot(x_{1}, \ldots, x_{N}) = \big(\lambda^{d_{1}}x_{1}, \ldots, \lambda^{d_{N}}x_{N}\big)
\eea
where each $d_{j}>0$.  We can restrict these dilations in equation (\ref{Eqn2.1aa}) to each subspace $\R^{C_{j}}$ and write $\lambda\cdot\x_{j}=\big\{\lambda^{d_{l}}x_{l}:l\in C_{j}\big\}$. The homogeneous dimension of the subspace $\R^{C_{j}}$ is then \index{Q1j@$Q_j$} $
Q_{j}= \sum_{l\in C_{j}}d_{l}$. Let $n_{j}:\R^{C_{j}}\to [0,\infty)$ be a smooth homogeneous norm for this family of dilations so that \index{n1j@$n_{j}$}
\be\label{2.2}
n_{j}(\x_{j}) \approx \sum_{l\in C_{j}}|x_{l}|^{1/d_{l}}.
\ee
Thus $n_{j}(\lambda\cdot\x_{j}) = \lambda n_{j}(\x_{j})$ for $\lambda>0$. Put \index{B1rho@$\B(\rho)$}
\bes
\B(\rho) = \Big\{\x\in \R^{N}:n_{j}(\x_{j})< \rho,\,1\leq j \leq n\Big\}.
\ees

We use standard multi-index notation for derivatives. If $\gammab =(\gamma_{1}, \ldots, \gamma_{N})\in \N^{N}$ then $\partial^{\gammab}_{\x}=\prod_{l=1}^{N}\partial^{\gamma_{l}}_{x_{l}}$ and $|\gammab|=\sum_{l=1}^{N}\gamma_{l}$. Using the decomposition \eqref{Eqn2.2bb}, we can also write $\gammab=(\gammab_{1}, \ldots, \gammab_{n})\in \N^{C_{1}}\times \cdots \times \N^{C_{n}}$. When estimating derivatives $\partial_{\x}^{\gammab}f(\x)$ in terms of homogeneous norms on each subspace $\R^{C_{j}}$ we need to use an appropriately weighted length. Thus put \index{g1amma@$\[\gammab\]$} \label{weighted length} 
\bea\label{2.3iou}
\[\gammab_{j}\]&= \sum_{l\in C_{j}}\gamma_{l}d_{l}&&&&\text{and}&&&\[\gammab\]&=\sum_{j=1}^{n}\[\gammab_{j}\]=\sum_{l=1}^{N}\gamma_{l}d_{l} .
\eea

The space of infinitely differentiable real-valued functions on $\R^{N}$ with compact support is denoted by $\CC^{\infty}_{0}(\R^{N})$ and the space of Schwartz functions is denoted by $\SS(\R^{N})$. The basic semi-norms on these spaces are given by  
\index{n2orms@$\|\varphi\|_{(m)},\|\psi\|_{M}$}
\beas
||\varphi||_{(m)} &= \sup_{\x\in \R^{N}}\Big\{\big\vert\partial^{\gammab}_{\x}\varphi(\x)\big\vert: |\gamma|\leq m\Big\} &&\text{for $\varphi\in \CC^{\infty}_{0}(\R^{N})$},\\
||\psi||_{M}\,\, &= \sup_{\x\in \R^{N}}\big\{\big\vert(1+|\x|)^{\alphab}\partial^{\betab}_{\x}\psi(\x)\big\vert:|\alphab|+|\betab|\leq M\big\}&&\text{for $\psi\in \SS(\R^{N})$}.
\eeas

Let $\AA$ be an index set. We use the following terminology from Section 4 in \cite{MR2949616}.
\begin{enumerate}[1)]
\smallskip

\item If  $\big\{\varphi_{\alpha}:\alpha\in \AA\big\}\subset \CC^{\infty}_{0}(\R^{N})$, then the family $\{\varphi_{\alpha}\}$ is \textit{normalized} in terms of a function $\Phi\in \CC^{\infty}_{0}(\R^{N})$ if there are constants $C, C_{m}>0$ and integers $p_{m}$ with the following properties.
\begin{enumerate}[a)]

\smallskip

\item If the support of $\Phi$ is contained in the ball $\B(\rho)$ then the support of each $\varphi_{\alpha}$ is contained in the ball $\B(C\rho)$.

\smallskip

\item For every $m\in \N$ and every $\alpha\in \AA$, $||\varphi_{\alpha}||_{(m)}\leq C_{m}||\Phi||_{(m+p_{m})}$.
\end{enumerate}
\label{uniform bounded}
\smallskip

\item A family $\{\varphi_{\alpha}:\alpha\in \AA\}\subset \CC^{\infty}_{0}(\R^{N})$ is \textit{uniformly bounded} if all the members are supported in a fixed ball $\B(\rho)$ and if there are constants $C_{m}>0$ so that $||\varphi_{\alpha}||_{(m)}\leq C_{m}$ for all $\alpha\in \AA$.

\smallskip

\item If $\{\psi_{\alpha}:\alpha\in \AA\}\subset \SS(\R^{N})$, then the family $\{\psi_{\alpha}\}$ is \textit{normalized} in terms of a function $\Psi\in \SS(\R^{N})$ if there are constants $C_{M}>0$ and integers $p_{M}\in \N$ so that $||\psi_{\alpha}||_{[M]}\leq C_{M}||\Psi||_{[M+p_{M}]}$ for every $M\in \N$.
\smallskip

\item A family $\{\psi_{\alpha}:\alpha\in \AA\}\subset \SS(\R^{N})$ is \textit{uniformly bounded} if there are constants $C_{M}>0$ so that $||\psi_{\alpha}||_{M}\leq C_{M}$ for all $\alpha\in \AA$.
\end{enumerate}

Given the decomposition $\R^{N}=\R^{C_{1}}\oplus\cdots\oplus\R^{C_{n}}$, define an $n$-parameter family of dilations by setting $(\lambda_{1}, \ldots, \lambda_{n})\cdot \x= (\lambda_{1}\cdot\x_{1}, \ldots, \lambda_{n}\cdot \x_{n})$. If $(\lambda_{1}, \ldots, \lambda_{n})=(2^{-t_{1}}, \ldots, 2^{-t_{n}})$ with $\t=(t_{1}, \ldots, t_{n})\in \R^{n}$ then for $f\in L^{1}(\R^{N})$, we define \index{f1t@$(f)_{\t},[f]_{\t}$}
\bea\label{2.3}
(f)_{\t}(\x) &=f(2^{-t_{1}}\cdot\x_{1}, \ldots, 2^{-t_{n}}\cdot \x_{n}),\\
[f]_{\t}(\x)&= 2^{-\sum_{j=1}^{n}t_{j}Q_{j}}f(2^{-t_{1}}\cdot\x_{1}, \ldots, 2^{-t_{n}}\cdot \x_{n}).
\eea
Note that $||(f)_{\t}||_{L^{\infty}(\R^{N})}= ||f||_{L^{\infty}(\R^{N})}$ while $||[f]_{\t}||_{L^{1}(\R^{N})}= ||f||_{L^{1}(\R^{N})}$.

\subsection{Global norms}\label{Global}\quad

\medskip

The classes $\PP(\EEE)$ and $\MM({\EEE})$ of distributions and multipliers are defined using families of norms $\{N_{1},\ldots, N_{n}\}$ and dual norms $\{\widehat N_{1}, \ldots, \widehat N_{n}\}$. These in turn are defined in terms of an $n\times n$ matrix $\EEE=\{e(j,k)\}$, where each entry $e(j,k)\in (0,\infty)$. For $1\leq j \leq n$ set 

\bea\label{2.4}\index{N2j@$N_{j},\widehat N_{j}$}
N_{j}(\t_{1}, \ldots, \t_{n}) &= 
n_{1}(\t_{1})^{e(j,1)}+\cdots +n_{n}(\t_{n})^{e(j,n)}
\approx 
\sum_{k=1}^{n}\sum_{l\in C_{k}}|t_{l}|^{e(j,k)/d_{l}},\\
\widehat N_{j}(\t_{1}, \ldots, \t_{n})&= 
n_{1}(\t_{1})^{\frac{1}{e(1,j)}}+\cdots +n_{n}(\t_{n})^{\frac{1}{e(n,j)}}
\approx 
\sum_{k=1}^{n}\sum_{l\in C_{k}}|t_{l}|^{1/e(k,j)d_{l}}.
\eea
The entries of $\EEE$ are usually subject to certain natural constraints.\footnote{The motivation for these constraints is discussed in Section \ref{Cones} below.}

\begin{definition}\label{Def2.2Z}
The matrix $\EEE$ satisfies the {\rm basic hypothesis} if :
\bea\label{2.5}
e(j,j)&=1,&&\text{for $1\leq j \leq n$},\\
e(j,l) &\leq e(j,k)e(k,l)&&\text{for $1\leq j,k,l\leq n$.}
\eea
In particular, $1\leq e(j,k)e(k,j)$ for $1 \leq j,k\leq n$. 
\end{definition}

We will need to restrict the norms $\{N_{j}\}$ to certain subspaces of $\R^{N}$. Thus if $L=\{l_{1},\ldots,l_{s}\}\subset \{1, \ldots, n\}$, we use the notation introduced on page \pageref{exponent} and set
 $\R^{L}=\bigoplus_{l\in L}\R^{C_{l}}$. 
For $\t_{L} = (\t_{l_{1}},\ldots,\t_{l_{s}})\in \R^{L}$, and for each $l\in L$, put\label{x_C} \bes
N_{l}(\t_{L})= \sum_{m\in L}n_{m}(\t_{m})^{e(l,m)}.
\ees
If $R=\{R_{1}, \ldots, R_{s}\}$ are positive real numbers and if $\psi\in \mathcal C^{\infty}_{0}(\R^{L})$, define $\psi_{R}\in \CC^{\infty}_{0}(\R^{L})$ by setting 
\bes
\psi_{R}(\t_{L}) = \psi(R_{1}\cdot\t_{l_{1}}, \ldots, R_{s}\cdot \t_{l_{s}}).
\ees

\subsection{Classes of distributions and multipliers}\label{classes}\quad

\medskip

Let $\R^{N}= \R^{C_{1}}\oplus \cdots \oplus\R^{C_{n}}$, let $\EEE = \{e(j,k)\}$ be an $n\times n$ matrix satisfying (\ref{2.5}), and let $\{N_{1}, \ldots, N_{n}\}$ and $\{\widehat N_{1}, \ldots, \widehat N_{n}\}$ be the norms and dual defined in (\ref{2.4}). 

\begin{definition}\label{Def2.2}
A distribution $\KK$ on $\R^{N}$ belongs to the class $\PP(\EEE)$ if it satisfies the following differential inequalities and cancellation conditions. \index{P1@$\PP(\EEE)$}

\begin{enumerate}[{\rm(a)}] 

\medskip

\item \label{Def2.2A} {\rm[Differential Inequalities]} 
Away from the origin the distribution $\mathcal K$ is given by integration against a $\mathcal C^{\infty}$-function $K$, and for every $\gammab=(\gammab_{1},\ldots,\gammab_{n})\in\mathbb N^{C_{1}}\times\cdots\times \mathbb N^{C_{n}}$, there is a constant $C_{\gammab}$ so that
\bes
|\partial^{\gammab}K(\x)|\leq C_{\gammab}\prod_{j=1}^{n}N_{j}(\x)^{-(Q_{j}+\[\gammab_{j}\])}.
\ees

\item \label{Def2.2B} {\rm[Cancellation Conditions]} Let $L=\{l_{1},\ldots,l_{r}\}$ and $M=\{m_{1},\ldots, m_{s}\}$ be any pair of complementary subsets of $\{1,\ldots, n\}$, let $R=\{R_{1}, \ldots, R_{s}\}$ be any positive real numbers, and let $\psi\in \mathcal C^{\infty}_{0}(\R^{M})$ have support in the unit ball. Define a distribution $\mathcal K_{\psi,R}$ on $\R^{L}$ by setting 
\bes
\big\langle \mathcal K_{\psi,R},\varphi\big\rangle = \big\langle \mathcal K, \varphi\otimes\psi_{R}\big\rangle
\ees
for every $\varphi\in \mathcal C^{\infty}_{0}(\R^{L})$. Then $\mathcal K_{\psi,R}$ uniformly satisfies the analogue of the estimates in {\rm(\ref{Def2.2A})} on the space $\R^{L}$: precisely, this means that away from the origin of $\R^{L}$ the distribution $\KK_{\psi,R}$ is given by integration against a smooth function $K_{\psi,R}$ and for every $\gammab=(\gammab_{l_{1}}, \ldots, \gammab_{l_{r}}) \in \mathbb N^{C_{l_{1}}}\times\cdots\times\mathbb N^{C_{l_{r}}}$ there is a constant $C_{\gammab}^{\prime}$ depending only on the constants $\{C_{\gammab}\}$ in {\rm(\ref{Def2.2A})} and the norms $\{||\psi||_{(m)}\}$ (but independent of $R$) so that 
\beas
\big|\partial^{\gammab}K_{\psi,R}(\x)\big|\leq C_{\gammab}^{\prime}\prod_{t=1}^{r}N_{l_{t}}(\x_{L})^{-(Q_{l_{t}}+\[\gammab_{l_{t}}\])}.
\eeas
In particular, if $L=\emptyset$ then $|\big\langle\KK,\psi_{R}\big\rangle|$ is bounded independently of $R$.
\end{enumerate}
\end{definition}

\begin{remark}\label{Rem2.2} 
Since $N_{j}(\x)^{-\[\gammab_{j}\]}\leq |\x_{j}|^{-\[\gammab_{j}\]}$, it follows that a  distribution $\KK\in \PP(\EEE)$ is a product kernel on $\R^{N}=\bigoplus_{j=1}^{n}\R^{C_{j}}$ in the sense of \cite{MR1818111}. (The cancellation conditions follow from the Fourier transform characterization. See \cite{MR1818111}, Remark 2.16.) However, the distributions in $\PP(\EEE)$ are in general more regular; they are singular only at the origin.
\end{remark}

\begin{definition}\label{Def2.4} \index{M1@$\MM(\EEE)$}
If $m\in \CC^{\infty}(\R^{N}\setminus\{0\})$ then $m$ belongs to the class of multipliers $\MM(\EEE)$ if for every $\gammab=(\gammab_{1}, \ldots, \gammab_{n}) \in \N^{C_{1}}\times \cdots \times \N^{C_{n}}$ there is a constant $C_{\gammab}$ so that
\bes
\left|\partial^{\gammab}m(\xib)\right| \leq C_{\gammab}\prod_{j=1}^{n}\widehat N_{j}(\xib)^{-\[\gammab_{j}\]}.
\ees
\end{definition}

We shall see in Lemma \ref{Lem4.2} that unless $e(j,k)=e(j,l)e(l,k)$ for all $j,k,l\in \{1, \ldots, n\}$, the distribution $\KK\in \PP(\EEE)$ is integrable at infinity. We are primarily interested in the local behavior of these kernels and the following restricted classes of distributions and multipliers. 

\begin{definition}\label{Def2.5} Let $|\x|$ denote the usual Euclidean norm of a vector $\x\in\R^{N}$. 
\begin{enumerate}[{\rm(a)}] 

\medskip

\item \label{Def2.5a} \index{P2zero@$\PP_{0}(\EEE)$}
A distribution $\KK$ on $\R^{N}$ belongs to the class $\PP_{0}(\EEE)$ if it belongs to the class $\PP(\EEE)$ and away from the origin it is given by a smooth function $K$ satisfying the differential inequalities 
\be\label{2.6iou}
\big\vert\partial^{\gammab}_{\x}K(\x)\big\vert \leq C_{M,\gammab}\prod_{j=1}^{n}N_{j}(\x)^{(Q_{j}+\[\gammab_{j}\])}\,\big(1+|\x|\big)^{-M}
\ee
for every  multi-index $\gammab=(\gammab_{1}, \ldots, \gammab_{n}) \in \N^{C_{1}}\times \cdots \times \N^{C_{n}}$ and every $M\geq 0$ . 
Moreover  the quotient distributions $\KK_{\psi,R}$ defined in part {\rm(\ref{Def2.2B})} of Definition \ref{Def2.2} satisfy 
\be\label{2.7iou}
\big|\partial^{\gammab}K_{\psi,R}(\x)\big|\leq D_{M,\gammab}\big(1+|\x|\big)^{-M}\prod_{t=1}^{r}N_{l_{t}}^{L}(\x_{L})^{-(Q_{l_{t}}+\[\gammab_{l_{t}}\])}.
\ee

\item \label{Def2.5b} \index{M2infty@$\MM_\infty(\EEE)$}
A function $m\in \CC^{\infty}(\R^{N})$ belongs to $\MM_{\infty}(\EEE)$ if there is a constant $C_{\gammab}$ so that
\bes
\left|\partial^{\gammab}_{\xib}m(\xib)\right| \leq C_{\gammab}\prod_{j=1}^{n}\big(1+\widehat N_{j}(\xib)\big)^{-\[\gammab_{j}\]}
\ees
for every multi-index $\gammab=(\gammab_{1}, \ldots, \gammab_{n}) \in \N^{C_{1}}\times \cdots \times \N^{C_{n}}$.
\end{enumerate}
\end{definition}

\noindent We will see in Section \ref{Duality} below that the multipliers $m\in \MM_{\infty}(\EEE)$ are precisely the Fourier transforms of the distributions $\KK\in \PP_{0}(\EEE)$, and conversely, that distributions $\KK\in \PP_{0}(\EEE)$ are the inverse Fourier transforms of multipliers $m\in \MM_{\infty}(\EEE)$. The family of norms
\be\label{2.8iou}
|\KK|_{M}= \sum_{|\gammab|,|\gammab'|\leq M}\inf\Big\{C_{M,\gammab}+D_{M,\gammab'}:\text{ (\ref{2.6iou}) and (\ref{2.7iou}) hold}\Big\}
\ee
with $M\in \N$ induces a Fr\'echet space topology on the space $\PP_{0}(\EEE)$.

\section{Marked partitions and decompositions of $\R^{N}$}\label{Partitions}

\subsection{Dominant terms in $N_{j}(\t)$ and $\widehat N_{j}(\t)$}\quad

\medskip

To study the kernels and multipliers introduced in Section \ref{Kernels}, we need to partition the space $\R^{N}$ into regions where the norms $N_{j}(\t)=\sum_{k=1}^{n}n_{k}(\t_{k})^{e(j,k)}$ or $\widehat N_{j}(\t)=\sum_{k=1}^{n}n_{k}(\t_{k})^{1/e(k,j)}$ are comparable to a single term $n_{k}(\t_{k})^{e(j,k)}$ or $n_{k}(\t_{k})^{1/e(k,j)}$. We begin by introducing the notion of strict dominance and $A$-dominance. 

\begin{definition}\label{Def5.1}
Let $A\geq 1$.
\begin{enumerate}[{\rm(a)}]

\smallskip

\item Let $\t\in \B(1)$. The term $\t_{k}$ is {\rm $A$-dominant in $N_{j}(\t)$} if $\displaystyle n_{l}(\t_{l})^{e(j,l)}<\big[A\,n_{k}(\t_{k})\big]^{e(j,k)}$ for every $l\neq k$. The term $\t_{k}$ is {\rm strictly dominant in $N_{j}(\t)$} if we can take $A=1$.

\smallskip

\item Let $\t\in \B(1)^{c}$. The term $\t_{k}$ is {\rm $A$-dominant} in $\widehat N_{j}(\t)$ if $\displaystyle n_{l}(\t_{l})^{1/e(l,j)}<\big[A\,n_{k}(\t_{k})\big]^{1/e(k,j)}$ for every $l\neq k$. The term $\t_{k}$ is {\rm strictly dominant in $\widehat N_{j}(\t)$} if we can take $A=1$.
\end{enumerate}
\end{definition}

\goodbreak

\begin{remarks} \label{Rem5.2}\quad
\begin{enumerate}[{\rm 1)}]
\item If $\t\in \B(1)$ and if $\t_{k}$ is $A$-dominant in $N_{j}(\t)$ then $N_{j}(\t)\approx n_{k}(\t_{k})^{e(j,k)}$ where the implied constant depends only on $A$, $\EEE$, and $n$. 

\item If $\t\in \B(1)^{c}$ and if $\t_{k}$ is $A$-dominant in $\widehat N_{j}(\t)$ then $\widehat N_{j}(\t)\approx n_{k}(\t)^{1/e(k,j)}$ where the implied constant depends only on $A$, $\EEE$, and $n$. 

\item If $\t\in \B(1)^{c}$ then each $\widehat N_{j}(\t)\geq 1$.

\item If $\t\in \B(1)^{c}$ and if $\t_{k}$ is $A$-dominant in any $\widehat N_{j}(\t)$ then $An_{k}(\t_{k})>1$.
\end{enumerate}
\end{remarks} 

The following is a simple but key fact about dominance.

\goodbreak

\begin{proposition}\label{Prop5.3}
Suppose the matrix $\EEE$ satisfies the basic hypothesis (\ref{2.5}). 
\begin{enumerate}[{\rm(a)}]

\smallskip

\item \label{Prop5.3a}
Let $\t\in \B(1)$. If $\t_{k}$ is $A$-dominant in $N_{j}(\t)$ then $\t_{k}$ is also $A$-dominant in $N_{k}(\t)$. In particular, if $\t_{k}$ is strictly dominant in $N_{j}(\t)$, then $\t_{k}$ is strictly dominant in $N_{k}(\t)$.

\smallskip

\item \label{Prop5.3b} 
Let $\t\in \B(1)^{c}$. If $\t_{k}$ is $A$-dominant in $\widehat N_{j}(\t)$ then $\t_{k}$ is also $A$-dominant in $\widehat N_{k}(\t)$. In particular if $\t_{k}$ is strictly dominant in $\widehat N_{j}(\t)$  then $\t_{k}$ is also strictly dominant in $\widehat N_{k}(\t)$.
\end{enumerate}
\end{proposition}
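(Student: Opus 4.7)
The plan is to prove both parts directly from the basic hypothesis $e(j,l) \leq e(j,k)\,e(k,l)$. Since strict dominance is just the case $A=1$, it suffices to handle the general $A$-dominance statement. The two parts have parallel structure, but they exploit opposite side-of-$1$ information: in (a) the base $n_l(\t_l)$ of a power is $<1$ (since $\t\in\B(1)$), while in (b) the base $A\,n_k(\t_k)$ is $>1$ (by Remark~\ref{Rem5.2}(4)). In both cases the resulting monotonicity combines with the basic hypothesis in exactly the right way.

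For part (a), fix $l\neq k$ and rewrite the hypothesis $n_l(\t_l)^{e(j,l)} < [A\,n_k(\t_k)]^{e(j,k)}$ as
\[
n_l(\t_l)^{e(j,l)/e(j,k)} < A\,n_k(\t_k).
\]
Because $\t\in\B(1)$ forces $n_l(\t_l)<1$, the function $p\mapsto n_l(\t_l)^{p}$ is decreasing in $p$, and the basic hypothesis rearranged as $e(k,l)\geq e(j,l)/e(j,k)$ gives
\[
n_l(\t_l)^{e(k,l)} \leq n_l(\t_l)^{e(j,l)/e(j,k)} < A\,n_k(\t_k) = [A\,n_k(\t_k)]^{e(k,k)},
\]
which is exactly $A$-dominance of $\t_k$ in $N_k(\t)$.

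For part (b), start from $n_l(\t_l)^{1/e(l,j)} < [A\,n_k(\t_k)]^{1/e(k,j)}$ and raise both sides to the $e(l,j)$-th power, obtaining
\[
n_l(\t_l) < [A\,n_k(\t_k)]^{e(l,j)/e(k,j)}.
\]
Now the base $A\,n_k(\t_k)>1$ by Remark~\ref{Rem5.2}(4), so $q\mapsto [A\,n_k(\t_k)]^{q}$ is increasing, and the basic hypothesis rearranged as $e(l,k)\geq e(l,j)/e(k,j)$ yields
\[
n_l(\t_l) < [A\,n_k(\t_k)]^{e(l,j)/e(k,j)} \leq [A\,n_k(\t_k)]^{e(l,k)}.
\]
Taking $e(l,k)$-th roots returns the inequality $n_l(\t_l)^{1/e(l,k)} < A\,n_k(\t_k)$, which is $A$-dominance of $\t_k$ in $\widehat N_k(\t)$.

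The only real ``obstacle'' is the sign-bookkeeping: the basic hypothesis $e(j,l)\leq e(j,k)\,e(k,l)$ is the \emph{same} inequality in both arguments, but the monotonicity of $x\mapsto x^{p}$ flips direction across $x=1$, so one must verify that the relevant base lies on the correct side. This is precisely where Remark~\ref{Rem5.2}(4) does the essential work in part (b), while the very definition of $\B(1)$ suffices in part (a). Once one identifies these two ``side-of-$1$'' conditions, each half of the proposition reduces to a single one-line exponent manipulation.
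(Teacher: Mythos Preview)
Your proof is correct and follows essentially the same approach as the paper's: both parts use the basic hypothesis in the rearranged form $e(k,l)\geq e(j,l)/e(j,k)$ (respectively $e(l,k)\geq e(l,j)/e(k,j)$), combined with the monotonicity of $x\mapsto x^{p}$ on the appropriate side of $1$. The only difference is cosmetic ordering of the exponent manipulations in part (b); the paper writes $n_l(\t_l)^{1/e(l,k)}=[n_l(\t_l)^{1/e(l,j)}]^{e(l,j)/e(l,k)}$ and bounds directly, whereas you first clear the $1/e(l,j)$ and then take the $e(l,k)$-th root at the end, but the content is identical.
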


\begin{proof}
Let $\t\in \B(1)$ and $A\geq 1$. By the basic hypothesis, $e(k,l)\geq e(j,l)/e(j,k)$, and since $n_{l}(\t_{l})\leq 1$ it follows that $n_{l}(\t_{l})^{e(k,l)}\leq n_{l}(\t_{l})^{e(j,l)/e(j,k)}$. If $\t_{k}$ is $A$-dominant in $N_{j}(\t)$ and if $l\neq k$, then $n_{l}(\t_{l})^{e(j,l)}<\big[A\,n_{k}(\t_{k})\big]^{e(j,k)}$, and so 
$$
n_{l}(\t_{l})^{e(k,l)}\leq n_{l}(\t_{l})^{e(j,l)/e(j,k)}< A\,n_{k}(\t_{k}).
$$ 
Thus $\t_{k}$ is $A$-dominant in $N_{k}(\t)$. On the other hand, if $\t\in \B(1)^{c}$ and $\t_{k}$ is $A$-dominant in $\widehat N_{j}(\t)$, then every $l \neq k$ we have  $n_{l}(\t_{l})^{1/e(l,j)}<[A n_{k}(\t_{k})]^{1/e(k,j)}$. Since $e(l,j)\leq e(l,k)e(k,j)$ and $An_{k}(\t_{k})>1$ we have 
$$
n_{l}(\t_{l})^{1/e(l,k)} = \Big[n_{l}(\t_{l})^{1/e(l,j)}\Big]^{e(l,j)/e(l,k)} < \Big[A n_{k}(t_{k})\Big]^{e(l,j)/e(l,k)e(k,j)}\leq A\,n_{k}(\t_{k}).
$$ 
Thus  $\t_{k}$ is $A$-dominant in $\widehat N_{k}(\t)$.
\end{proof}

\subsection{Marked partitions and the sets $E_{S}$ and $\widehat E_{S}$}\quad

\medskip

The notion of strict dominance allows us to decompose (up to a closed set of measure zero) the ball $\B(1)=\big\{\t\in \R^{N}:n_{j}(\t_{j})\leq 1\big\}$ into sets where one term in each $N_{j}$ is strictly dominant. The same is true for the complement $\B(1)^{c}$ and the norms $\widehat N_{j}$. The decomposition depends on the notion of a \textit{marked partition}.

\begin{definition}\label{Def3.4iou} \quad
\begin{enumerate}[{\rm(a)}]

\item
A \textbf{marked partition} $S=\big\{(I_{1},k_{1});\ldots; (I_{s};k_{s})\big\}$ of the set $\{1, \ldots, n\}$ is a collection $\{I_{1}, \ldots, I_{s}\}$ of disjoint non-empty subsets of $\{1,\ldots,n\}$ such that $\bigcup _{r=1}^{s}I_{r}=\{1,\ldots,n\}$, together with a choice of a particular element $k_{r}\in I_{r}$ in each subset.   

\smallskip

\item
The partition $S_{0}\!=\!\big\{(\{1\},1);\ldots;(\{n\},n)\big\}$  is called the \textbf{principal partition} of $\{1, \ldots, n\}$. 

\smallskip

\item
Any marked partition $S=\big\{(I_{1},k_{1});\ldots; (I_{s};k_{s})\big\}$ with $s<n$ is called a \textbf{non-principal partition}. 

\smallskip

\item
The set of all marked partitions of $\{1,\ldots,n\}$ is denoted by $\SS(n)$. \index{S1n@$\SS(n)$}
\end{enumerate}
\end{definition}

We want to associate a unique marked partition to every point $\t\in \R^{N}$, but to do this we need to exclude the closed set of measure zero where two different terms in $N_{j}(\t)$ or $\widehat N_{j}(\t)$ can be equal. Let
\beas
\Xi = \Big\{\t\in \R^{N}: \text{$\big(\exists j\big)\big(\exists l\neq m\big)$ $n_{l}(\t_{l})^{e(j,l)}= n_{m}(\t_{m})^{e(j,m)}$ or $n_{l}(\t_{l})^{1/e(l,j)}= n_{m}(\t_{m})^{1/e(m,j)}$}\Big\}.
\eeas
The set $\Xi$ is closed and has measure zero, and $\R^{N}\setminus \Xi$ is dense in $\R^{N}$. Note that if $t\notin \Xi$, all the summands in $N_{j}(\t)$ and $\widehat N_{j}(\t)$ are distinct. 

We associate to every point $\t\in\R^{N}\setminus\Xi$ a marked partition $S(\t)\in \SS(n)$. There are two cases, depending on whether the point $\t$ belongs to $\B(1)$ or to $\B(1)^{c}$.

\smallskip

\noindent\textbf{Case 1:} If $\t\in \B(1)\setminus \Xi$ then for every $j$ the terms $n_{1}(\t_{1})^{e(j,1)}, \ldots, n_{n}(\t_{n})^{e(j,n)}$ in $N_{j}(\t)$ are distinct. Thus for each $j\in \{1, \ldots, n\}$ there exists a unique integer $k(j)\in \{1, \ldots, n\}$ such that $\t_{k(j)}$ is strictly dominant in $N_{j}(\t)$. Let $\{k_{1}, \ldots, k_{s}\}$ be the set of distinct integers $k(j)$ which arise in this way. For $1 \leq r \leq s$ set 
\bes
I_{r}=\Big\{j\in \{1, \ldots, n\}: \text{$\t_{k_{r}}$ is strictly dominant in $N_{j}(\t)$}\Big\}.
\ees 
Every $j$ belongs to a unique $I_{r}$ since there is only one term in $N_{j}(\t)$ which is dominant, and according to Proposition \ref{Prop5.3}, we have $k_{r}\in I_{r}$. Thus $S(\t) = \big\{(I_{1},k_{1});\ldots;(I_{s},k_{s})\big\}\in \SS(n)$. 

\smallskip

\noindent\textbf{Case 2:} If $\t\in \B(1)^{c}\setminus \Xi$, the terms $n_{1}(\t_{1})^{1/(1,j)}, \ldots, n_{n}(\t_{n})^{1/e(n,j)}$ in $\widehat N_{j}(\t)$ are distinct, and again for each $j$ there exists a unique integer $k(j)\in \{1,\ldots,n\}$ such that $\t_{k(j)}$ is strictly dominant in $\widehat N_{j}(\t)$. Let $\{k_{1}, \ldots, k_{s}\}$ be the set of distinct integers that arise, and again set 
\bes
I_{r}=\Big\{j\in \{1,\ldots,n\}: \text{$\t_{k_{r}}$ is strictly dominant in $\widehat N_{j}(\t)$}\Big\}.
\ees 
We have $k_{r}\in I_{r}$ by Proposition \ref{Prop5.3}, and thus $S=S(\t) = \{(I_{1},k_{1});\ldots;(I_{s},k_{s})\}\in\SS(n)$.

\begin{definition}\label{Def5.4}
Let $S=\big\{(I_{1},k_{1});\ldots;(I_{s},k_{s})\big\}\in\SS(n)$, and let $A\geq 1$. Then \index{E1AS@$E_{S},\widehat E_{S},E^{A}_{S},\widehat E^{A}_{S}$}
\beas
E^{A}_{S}&= \Big\{\t=(\t_{1}, \ldots, \t_{n})\in \B(1): 
\text{$\forall r\in \{1, \ldots, s\},\,\forall j\in I_{r},\quad\t_{k_{r}}$ is $A$-dominant in $N_{j}(\t)\big)$}\Big\}\\
&=
\Big\{\t\in\B(1):\text{$\forall r\in \{1, \ldots, s\},\,\forall j\in I_{r},\,\forall l\neq k_{r}$,\,$n_{l}(\t_{l})^{e(j,l)}<\big[A\,n_{k_{r}}(\t_{k_{r}})\big]^{e(j,k_{r})}$  }\Big\}.
\\\\
\widehat E^{A}_{S}&= \Big\{\t=(\t_{1}, \ldots, \t_{n})\in \B(1)^{c}: 
\text{$\forall r\in \{1, \ldots, s\},\,\forall j\in I_{r},\,\t_{k_{r}}$ is $A$-dominant in $\widehat N_{j}(\t)\big)$}\Big\}\\
&=
\Big\{\t\in\B(1)^{c}:\text{$\forall r\in \{1, \ldots, s\},\,\forall j\in I_{r},\,\forall l\neq k_{r}$,\,$n_{l}(\t_{l})^{1/e(l,j)}<\big[A\,n_{k_{r}}(\t_{k_{r}})\big]^{1/e(k_{r},j)}$}\Big\}.
\eeas

\noindent When $A=1$ we write $E_{S}^{1}=E_{S}$ and $\widehat E_{S}^{1}=\widehat E_{S}$.
\end{definition}

\begin{proposition}\label{Prop5.5}
The set $E_{S}^{A}$ and $\widehat E_{S}^{A}$ are open. Moreover,
\beas
\B(1)\setminus \Xi &= \bigcup_{S\in \SS(n)}E_{S},&&&&&&& \B(1)^{c}\setminus \Xi &= \bigcup_{S\in \SS(n)}\widehat E_{S}.
\eeas
If $A>1$ then
\beas
\B(1) &\subset \bigcup_{S\in \SS(n)}E^{A}_{S},&&&&&&&\B(1)^{c} \subset \bigcup_{S\in \SS(n)}\widehat E^{A}_{S}.
\eeas
\end{proposition}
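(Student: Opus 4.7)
The plan is to prove the three claims in order—openness, the exact $A=1$ decomposition, then the $A>1$ covering—with Proposition \ref{Prop5.3} as the key structural input. Openness of $E_S^A$ and $\widehat E_S^A$ is immediate: each is the intersection of an open ball ($\B(1)$ or $\B(1)^{c}$) with finitely many sets cut out by strict inequalities $n_l(\t_l)^{e(j,l)} < [A\,n_{k_r}(\t_{k_r})]^{e(j,k_r)}$ (or their $\widehat N$ analogue) between continuous functions of $\t$.

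For the equalities $\B(1)\setminus\Xi = \bigcup_{S} E_S$ and $\B(1)^{c}\setminus\Xi = \bigcup_{S}\widehat E_S$, I would formalize the construction already sketched in Cases 1 and 2 preceding Definition \ref{Def5.4}. Take $\t\in\B(1)\setminus\Xi$: by definition of $\Xi$ every summand of every $N_j(\t)$ is distinct, so each $N_j$ has a unique strictly dominant term $\t_{k(j)}$. Proposition \ref{Prop5.3}(a) applied with $k=k(j)$ then forces $\t_{k(j)}$ to be strictly dominant in $N_{k(j)}$ as well, so $k\circ k = k$. Hence the image $\{k_1,\ldots,k_s\}$ of the map $k$ consists of its fixed points; setting $I_r = k^{-1}(k_r)$ produces a marked partition $S=\{(I_1,k_1);\ldots;(I_s,k_s)\}$ with $k_r\in I_r$, and $\t\in E_S$ by construction. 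The reverse inclusion is trivial since $E_S\subset\B(1)$. The $\widehat E_S$ case is identical, invoking Proposition \ref{Prop5.3}(b).

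For $A>1$, fix $\t\in\B(1)\setminus\{\0\}$ and approximate $\t$ by a sequence $\t^{(m)}\to\t$ with $\t^{(m)}\in\B(1)\setminus\Xi$; this is possible because $\Xi$ is closed of measure zero. By the previous step $\t^{(m)}\in E_{S_m}$, and the finiteness of $\SS(n)$ lets me pass to a subsequence with $S_m\equiv S$. The strict inequalities defining $E_S$ pass to the limit as $n_l(\t_l)^{e(j,l)} \leq n_{k_r}(\t_{k_r})^{e(j,k_r)}$ for all admissible $(r,j,l)$. Each $n_{k_r}(\t_{k_r})$ is positive: otherwise, taking $j=k_r\in I_r$ in the limit inequality would give $n_l(\t_l)=0$ for every $l\neq k_r$, forcing $\t=\0$, contrary to assumption. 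With $n_{k_r}(\t_{k_r})>0$, the factor $A>1$ upgrades $\leq$ to the strict inequality $n_l(\t_l)^{e(j,l)} < [A\,n_{k_r}(\t_{k_r})]^{e(j,k_r)}$, placing $\t\in E_S^A$. For $\widehat E_S^A$ the same argument applies and the zero vector is automatically absent from $\B(1)^{c}$.

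The main obstacle—really the crucial structural input—is Proposition \ref{Prop5.3}: it guarantees $k\circ k = k$, so that the indices $\{k_r\}$ singled out by dominance genuinely serve as markers for the blocks $\{I_r\}$ of a marked partition in the sense of Definition \ref{Def3.4iou}. Without this self-consistency the chosen marker $k_r$ might fail to lie in its own block, and the construction would collapse. Everything else (openness, density of $\R^N\setminus\Xi$, finiteness of $\SS(n)$, and the harmless measure-zero exclusion of the origin) is routine.
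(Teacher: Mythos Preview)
Your proof is correct and follows essentially the same route as the paper's: the equalities come from the Case~1/Case~2 construction (with Proposition~\ref{Prop5.3} supplying the crucial fact $k\circ k=k$), and the $A>1$ covering comes from density of $\R^{N}\setminus\Xi$ together with the observation that $E_S^A$ contains the closure of $E_S$ (away from the origin). You have simply unpacked in sequence language what the paper states in one topological sentence, and your explicit verification that $n_{k_r}(\t_{k_r})>0$ for $\t\neq\0$---and the accompanying remark that the origin is a harmless measure-zero exception---is a legitimate fine point that the paper glosses over.
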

\begin{proof}
The first line follows from the construction of $E_{S}$ and $\widehat E_{S}$. The second line follows since the closure of $\R^{N}\setminus \Xi$ is all of $\R^{N}$, and if $A>1$ then $E_{S}^{A}$ is an open neighborhood of the closure of $E_{S}$, and $\widehat E_{S}^{A}$ is an open neighborhood of the closure of $\widehat E_{S}$.
\end{proof}

\subsection{Characterizing the sets $E_{S}^{A}$ and $\widehat E_{S}^{A}$}\quad

\medskip

Let $S=\big\{(I_{1},k_{1});\ldots;(I_{s},k_{s})\big\}\in \SS(n)$. The definitions of the sets $E_{S}^{A}$ and $\widehat E_{S}^{A}$ involve inequalities between the norms $n_{j}(\t_{j})$ and $n_{k_{r}}(\t_{k_{r}})$ for all $r\in \{1, \ldots, s\}$ and \textit{all} $j\neq k_{r}$. In this section we show that the sets are characterized by a smaller number of inequalities relating, for given $r$, $n_{k_{r}}(\t_{k_{r}})$ to $n_{j}(\t_{j})$   \textit{only} for $j\in I_{r} \cup\{k_{1}, \ldots, k_{s}\}$. 

\begin{definition}\label{Def3.7iou} 
Let $\big\{(I_{1},k_{1});\ldots;(I_{s},k_{s})\big\} \in \SS(n)$. For $1 \leq r\leq s$ and $1 \leq l\leq n$ set \index{s2igmaS@$\sigma_{S}(k_{r},l)$} \index{t1auS@$\tau_{S}(l,k_{r})$}
\beas
\sigma_{S}(k_{r},l) &= \min\limits_{\substack{j\in I_{r}}}\frac{e(j,l)}{e(j,k_{r})}, &&&\tau_{S}(l,k_{r}) &= \min_{j\in I_{r}}\frac{e(l,j)}{e(k_{r},j)}.
\eeas
\end{definition}

\begin{proposition}\label{Prop5.8} Suppose that the matrix $\EEE=\{e(j,k)\}$ satisfies the basic hypothesis (\ref{2.5}) and that $S=\big((I_{1},k_{1});\ldots;(I_{s},k_{s})\big)\in \SS(n)$. 
\begin{enumerate}[{\rm(a)}]

\smallskip

\item\label{Prop5.8a}
If $1 \leq r \leq s$ and if $l\in I_{r}$ then 
\beas
\sigma_{S}(k_{r},l) &= e(l,k_{r})^{-1},&&&\tau_{S}(l,k_{r})&=e(k_{r},l)^{-1}.
\eeas

\smallskip

\item\label{Prop5.8b} 
Let $1 \leq r, p \leq s$. If $m\in I_{p}$ then 
\beas
\sigma_{S}(k_{r},m)&\geq \sigma_{S}(k_{r},k_{p})\sigma_{S}(k_{p},m),&&&\tau_{S}(m,k_{r})&\geq \tau_{S}(m,k_{p})\tau_{S}(k_{p},k_{r}).
\eeas
\end{enumerate}
\end{proposition}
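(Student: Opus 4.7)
The plan is to handle part (a) by showing each minimum is attained at the index $j = l$, and then to derive part (b) by applying the basic hypothesis termwise inside the minimum defining $\sigma_S$ (resp.\ $\tau_S$).

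\textbf{Part (a).} Fix $r$ and $l \in I_r$. For $\sigma_S(k_r,l)$, I would first substitute $j = l \in I_r$ into the defining minimum, using $e(l,l) = 1$, to get
\[
\sigma_S(k_r,l) \leq \frac{e(l,l)}{e(l,k_r)} = e(l,k_r)^{-1}.
\]
For the reverse inequality, the basic hypothesis gives $e(j,k_r) \leq e(j,l)\,e(l,k_r)$ for every $j$, so $\frac{e(j,l)}{e(j,k_r)} \geq e(l,k_r)^{-1}$, and taking the minimum over $j \in I_r$ yields $\sigma_S(k_r,l) \geq e(l,k_r)^{-1}$. The argument for $\tau_S(l,k_r) = e(k_r,l)^{-1}$ is symmetric: substitute $j = l$ to get the upper bound $e(k_r,l)^{-1}$, and use $e(k_r,j) \leq e(k_r,l)\,e(l,j)$ to get the lower bound.

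\textbf{Part (b).} Fix $r,p$ and $m \in I_p$. By part (a) applied to $p$ and $m$, we have $\sigma_S(k_p,m) = e(m,k_p)^{-1}$, so the desired inequality becomes
\[
\sigma_S(k_r,m)\,e(m,k_p) \geq \sigma_S(k_r,k_p).
\]
To prove this, I would apply the basic hypothesis $e(j,k_p) \leq e(j,m)\,e(m,k_p)$ to each $j \in I_r$, which gives
\[
\frac{e(j,m)}{e(j,k_r)}\,e(m,k_p) \geq \frac{e(j,k_p)}{e(j,k_r)}.
\]
Taking the minimum over $j \in I_r$ on both sides then yields the claim, since the minimum on the left is $\sigma_S(k_r,m)\,e(m,k_p)$ and the minimum on the right is $\sigma_S(k_r,k_p)$. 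The argument for the $\tau_S$ inequality is analogous: use $\tau_S(m,k_p) = e(k_p,m)^{-1}$ from part (a), reduce the claim to $\tau_S(m,k_r)\,e(k_p,m) \geq \tau_S(k_p,k_r)$, and apply $e(k_p,j) \leq e(k_p,m)\,e(m,j)$ termwise.

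There is no serious obstacle here; the whole argument is a mechanical application of the basic hypothesis (\ref{2.5}) together with the observation that $l \in I_r$ is an admissible index in the minimum defining $\sigma_S(k_r,l)$ and $\tau_S(l,k_r)$. The one subtlety worth emphasising is that part (a) is genuinely used in the proof of part (b): the rewriting of $\sigma_S(k_p,m)$ and $\tau_S(m,k_p)$ as $e(m,k_p)^{-1}$ and $e(k_p,m)^{-1}$ respectively is what makes the termwise application of the basic hypothesis collapse correctly into the desired submultiplicativity.
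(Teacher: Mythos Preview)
Your proof is correct and essentially identical to the paper's. Both part~(a) and part~(b) are handled the same way: for (a), substitute $j=l$ for the upper bound and use $e(j,k_r)\leq e(j,l)e(l,k_r)$ for the lower bound; for (b), rewrite $\sigma_S(k_p,m)=e(m,k_p)^{-1}$ via part~(a) and then apply $e(j,k_p)\leq e(j,m)e(m,k_p)$ termwise before taking the minimum over $j\in I_r$.
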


\begin{proof}
It suffices to prove the statements about $\sigma_{S}$; the statements about $\tau_{S}$ follow in the same way. If $l\in I_{r}$ we can take $j=l$ in the definition of $\sigma_{S}(k_{r},l)$ to see that $\sigma_{S}(k_{r},l)\leq e(l,k_{r})^{-1}$. On the other hand it follows from the basic assumption (\ref{2.5}) that $e(j,l)/e(j,k_{r}) \geq e(l,k_{r})^{-1}$ for any $l$, and this gives the opposite inequality and establishes the first equality in (\ref{Prop5.8a}). 

Next let $m\in I_{p}$. Then using (\ref{Prop5.8a}) and the basic hypothesis, if $j_{0}\in I_{r}$
\beas
\sigma_{S}(k_{p},m)\sigma_{S}(k_{r},k_{p}) 
&= 
e(m,k_{p})^{-1} \sigma_{S}(k_{r},k_{p}) 
=
e(m,k_{p})^{-1}\min\limits_{\substack{j\in I_{r}}}\frac{e(j,k_{p})}{e(j,k_{r})}\,\\
&\leq 
e(m,k_{p})^{-1}\frac{e(j_{0},k_{p})}{e(j_{0},k_{r})}
\leq
\frac{e(j_{0},m)}{e(j_{0},k_{r})}.
\eeas
Taking the minimum over $j_{0}\in I_{r}$ it follows that $\sigma_{S}(k_{r},m)\geq \sigma_{S}(k_{r},k_{p})\sigma_{S}(k_{p},m)$, which establishes the first inequality in (\ref{Prop5.8b}) and completes the proof. 
\end{proof}

\goodbreak

\begin{lemma}\label{Thm5.6}
Let $S=\big((I_{1},k_{1});\ldots;(I_{s},k_{s})\big)\in \SS(n)$. 
\begin{enumerate}[{\rm (a)}] 

\smallskip

\item \label{Thm5.6A}
Let $\t\in \B(1)$. If $\t\in E_{S}^{A}$ then for every $r\in \{1, \ldots, s\}$
\bea\label{5.2aa}
n_{k_{r}}(\t_{k_{r}})&>A^{-1}n_{j}(\t_{j})^{1/e(j,k_{r})}=A^{-1}n_{j}(\t_{j})^{\sigma_{S}(k_{r},j)}& &\forall j\in I_{r}, j\neq k_{r},\quad\text{and}\\
n_{k_{r}}(\t_{k_{r}})&>A^{-1}n_{k_{p}}(\t_{k_{p}})^{\sigma_{S}(k_{r},k_{p})} & &\forall p\in \{1, \ldots, s\},\,\, p\neq r.
\eea
Conversely, if $\t\in \B(1)$ satisfies (\ref{5.2aa}) then $\t\in E_{S}^{A^{\eta}}$ where $\eta$ is a constant that depends only on the matrix $\EEE$. 

\smallskip

\item \label{Thm5.6B}
Let $\t\in \B(1)^{c}$. If $\t\in \widehat E_{S}^{A}$ then for every $r\in \{1, \ldots, s\}$
\bea\label{5.3aa}
n_{k_{r}}(\t_{k_{r}})&>A^{-1}n_{j}(\t_{j})^{e(k_{r},j)}=A^{-1}n_{j}(\t_{j})^{1/\tau_{S}(j,k_{r})}& &\forall j\in I_{r}, j\neq k_{r},\quad\text{and}\\
n_{k_{r}}(\t_{k_{r}})&>A^{-1}n_{k_{p}}(\t_{k_{p}})^{1/\tau_{S}(k_{p},k_{r})} & &\forall p\in \{1, \ldots, s\}, \,\,p\neq r.
\eea
Conversely, if $\t\in \B(1)^{c}$ satisfies (\ref{5.3aa}) then $\t\in E_{S}^{A^{\eta}}$ where $\eta$ is a constant that depends only on the matrix $\EEE$. 
\end{enumerate}
\end{lemma}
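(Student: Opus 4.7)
The plan is to prove (a) in both directions; part (b) will follow by the symmetric argument. The forward direction of (a) is a direct specialization of Definition \ref{Def5.4}. For the first line of (\ref{5.2aa}), taking $l=j$ gives $n_j(\t_j)<[An_{k_r}(\t_{k_r})]^{e(j,k_r)}$, which is the claim once Proposition \ref{Prop5.8}(\ref{Prop5.8a}) identifies $\sigma_S(k_r,j)=1/e(j,k_r)$ for $j\in I_r$. For the second line, taking $l=k_p$ in the definition for each $j\in I_r$ yields the family $n_{k_p}(\t_{k_p})<[An_{k_r}(\t_{k_r})]^{e(j,k_r)/e(j,k_p)}$; choosing the $j\in I_r$ that makes the right side smallest (distinguishing the cases $An_{k_r}(\t_{k_r})\lessgtr 1$ so that the extremal $j$ is identifiable) produces the exponent $1/\sigma_S(k_r,k_p)$.

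For the converse of (a), I would fix $r$, $j\in I_r$, and $l\neq k_r$ and aim for $n_l(\t_l)^{e(j,l)}<[A^{\eta}n_{k_r}(\t_{k_r})]^{e(j,k_r)}$ by a case split on the location of $l$. If $l\in I_r$ with $l\neq k_r$, I would apply the first line of (\ref{5.2aa}) with $j$ replaced by $l$, raise to the power $e(j,l)$, and use $e(j,l)e(l,k_r)\geq e(j,k_r)$ from (\ref{2.5}) together with $n_{k_r}(\t_{k_r})<1$ to replace the exponent on $n_{k_r}(\t_{k_r})$ by $e(j,k_r)$ at the cost of a bounded increase in the $A$-power. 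If $l=k_p$ with $p\neq r$, I would use the second line of (\ref{5.2aa}) and raise to $e(j,k_p)$; the bound $\sigma_S(k_r,k_p)\leq e(j,k_p)/e(j,k_r)$ built into the definition of $\sigma_S$ gives $e(j,k_p)/\sigma_S(k_r,k_p)\geq e(j,k_r)$, and the same replacement closes the case. If $l\in I_p\setminus\{k_p\}$ with $p\neq r$, I would first apply the first line of (\ref{5.2aa}) inside the block $I_p$ to bound $n_l(\t_l)$ by a power of $n_{k_p}(\t_{k_p})$, then substitute the previous case's bound and raise to $e(j,l)$; Proposition \ref{Prop5.8}(\ref{Prop5.8b}) packages the double application of the basic hypothesis needed to show that the final exponent on $n_{k_r}(\t_{k_r})$ still dominates $e(j,k_r)$. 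In each case the $A$-exponent is a ratio of products of entries of $\EEE$ and reciprocal $\sigma_S$ values, uniformly bounded across the finitely many marked partitions, so a single $\eta=\eta(\EEE)$ works.

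Part (b) will follow from the symmetric argument on $\B(1)^c$, with $\widehat N_j$ in place of $N_j$, $\tau_S$ in place of $\sigma_S$, and the identities $\tau_S(l,k_r)=1/e(k_r,l)$ for $l\in I_r$ and $\tau_S(m,k_r)\geq \tau_S(m,k_p)\tau_S(k_p,k_r)$ from Proposition \ref{Prop5.8} playing the role their $\sigma_S$ counterparts played in (a). The monotonicity step is reversed: for $A$-dominant coordinates in $\B(1)^c$ one has $An_{k_r}(\t_{k_r})>1$ by Remarks \ref{Rem5.2}, so the relaxation exploits ``larger exponent, larger value'' rather than the reverse.

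I expect the main obstacle to be Case 3 of the converse, where chaining an in-block inequality with a between-block inequality forces two applications of the basic hypothesis together with the minimality encoded in $\sigma_S(k_r,k_p)$. Proposition \ref{Prop5.8}(\ref{Prop5.8b}) is precisely the clean packaging of this transitivity, and the real content of the argument is recognizing that one invocation of it collapses all the exponent bookkeeping into a uniform bound on $\eta$.
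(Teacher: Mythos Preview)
Your proposal is correct and follows essentially the same route as the paper. The paper streamlines in two places: it avoids your case distinction on $An_{k_r}(\t_{k_r})\lessgtr 1$ in the forward direction (since $n_{k_p}(\t_{k_p})\leq 1$ on $\B(1)$, rewriting the defining inequality as $n_{k_p}(\t_{k_p})^{e(j,k_p)/e(j,k_r)}<An_{k_r}(\t_{k_r})$ and taking the $j\in I_r$ that minimizes the exponent gives $\sigma_S(k_r,k_p)$ directly, with no dependence on the size of $An_{k_r}$), and it collapses your three-case converse into a single argument by, for arbitrary $l\neq k_r$, chaining through the block $I_p\ni l$ via Proposition~\ref{Prop5.8}(\ref{Prop5.8b}) once---your Cases~1 and~2 are simply the degenerate instances $p=r$ and $l=k_p$ of this chain.
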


\noindent Letting $A=A^{\eta}=1$, we obtain the following characterization of the sets $E_{S}$ and $\widehat E_{S}$.

\begin{corollary}\label{Cor5.7}
Let $S=\big((I_{1},k_{1});\ldots;(I_{s},k_{s})\big)\in \SS(n)$. 
\begin{enumerate}[{\rm (a)}] 
\item \label{Cor5.7A} If $\t\in \B(1)$ then $\t\in E_{S}$ if and only if 
\beas
n_{k_{r}}(\t_{k_{r}})&>n_{j}(\t_{j})^{1/e(j,k_{r})}=n_{j}(\t_{j})^{\sigma_{S}(k_{r},j)}& &\text{for every $j\in I_{r}$ with $j\neq k_{r}$, and}\\
n_{k_{r}}(\t_{k_{r}})&>n_{k_{p}}(\t_{k_{p}})^{\sigma_{S}(k_{r},k_{p})} & &\text{for every $p\in \{1, \ldots, s\}$ with $p\neq r$.}
\eeas
\item \label{Cor5.7B} If $\t \in \B(1)^{c}$ then $\t\in \widehat E_{S}$ if and only if
\beas
n_{k_{r}}(\t_{k_{r}})&>n_{j}(\t_{j})^{e(k_{r},j)}=n_{j}(t_{j})^{1/\tau_{S}(j,k_{r})}& &\text{for every $j\in I_{r}$ with $j\neq k_{r}$, and}\\
n_{k_{r}}(\t_{k_{r}})&>n_{k_{p}}(\t_{k_{p}})^{1/\tau_{S}(k_{p},k_{r})} & &\text{for every $p\in \{1, \ldots, s\}$ with $p\neq r$.}
\eeas
\end{enumerate}
\end{corollary}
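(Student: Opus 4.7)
The plan is to invoke Lemma~\ref{Thm5.6} with $A=1$. Since the exponent $\eta$ in that lemma depends only on $\EEE$ and not on $A$, setting $A=1$ forces $A^{\eta}=1$ as well. The two implications packaged in part (\ref{Thm5.6A}) of the lemma, namely $\t\in E_{S}^{A}\Rightarrow(\ref{5.2aa})$ and $(\ref{5.2aa})\Rightarrow\t\in E_{S}^{A^{\eta}}$, therefore both refer to the single set $E_{S}^{1}=E_{S}$ and combine to give the iff in (\ref{Cor5.7A}). The iff in (\ref{Cor5.7B}) follows identically from part (\ref{Thm5.6B}).

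To confirm that the inequalities produced by this specialization agree word-for-word with those stated in the Corollary, one reads off the definitions of $\sigma_{S}$ and $\tau_{S}$ in Definition~\ref{Def3.7iou} together with the equalities $\sigma_{S}(k_{r},j)=1/e(j,k_{r})$ and $\tau_{S}(j,k_{r})=1/e(k_{r},j)$ valid for $j\in I_{r}$ (Proposition~\ref{Prop5.8}(\ref{Prop5.8a})). The first-bullet inequalities in (\ref{Cor5.7A}) and (\ref{Cor5.7B}) are then the $l=j$ instances of (\ref{5.2aa})/(\ref{5.3aa}) specialized to $A=1$, while the second-bullet inequalities are the $l=k_{p}$, $p\neq r$ instances. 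No further work is required once Lemma~\ref{Thm5.6} is in hand.

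Should a self-contained proof be preferred over routing through Lemma~\ref{Thm5.6}, I would argue directly against the definition of $E_{S}$ (resp.\ $\widehat E_{S}$). The forward direction of (\ref{Cor5.7A}) is immediate: in the defining inequality $n_{l}(\t_{l})^{e(j,l)}<n_{k_{r}}(\t_{k_{r}})^{e(j,k_{r})}$, taking $l=j$ and using $e(j,j)=1$ yields the first-bullet Corollary inequality, and taking $l=k_{p}$ and intersecting over $j\in I_{r}$ yields the second, with $\sigma_{S}(k_{r},k_{p})$ arising as the tightest exponent because $n_{k_{p}}(\t_{k_{p}})<1$. The converse proceeds by cases on the block of $S$ containing $l$: (i) $l\in I_{r}\setminus\{k_{r}\}$, handled by the first Corollary inequality together with the basic hypothesis $e(j,l)e(l,k_{r})\geq e(j,k_{r})$; (ii) $l=k_{p}$ with $p\neq r$, handled by the second Corollary inequality; (iii) $l\in I_{p}\setminus\{k_{p}\}$ with $p\neq r$, which I expect to be the main obstacle and would be dispatched by first applying case (i) inside $I_{p}$ to control $n_{l}(\t_{l})^{e(j,l)}$ by a power of $n_{k_{p}}(\t_{k_{p}})$, and then chaining with case (ii).

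Part (\ref{Cor5.7B}) proceeds by the same trichotomy, with $\tau_{S}$ replacing $\sigma_{S}$ via Proposition~\ref{Prop5.8} and with the dual form of the basic hypothesis. The subtlety worth flagging is that in $\widehat E_{S}$ one has $n_{k_{r}}(\t_{k_{r}})>1$ for every $r$ by item~(4) of Remark~\ref{Rem5.2} (applied with $A=1$), so the direction of exponent monotonicity on $n_{k_{p}}(\t_{k_{p}})$ is opposite to the $E_{S}$ situation; the definition of $\tau_{S}$ as a minimum is tailored precisely so that $1/\tau_{S}(k_{p},k_{r})$ is the tightest binding exponent in that regime, which makes the three-case verification go through unchanged.
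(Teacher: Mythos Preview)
Your proposal is correct and matches the paper's approach exactly: the corollary is stated immediately after Lemma~\ref{Thm5.6} with the one-line justification ``Letting $A=A^{\eta}=1$,'' which is precisely your argument. Your additional self-contained verification is essentially a recapitulation of the proof of Lemma~\ref{Thm5.6} itself, so it adds nothing new but is also not wrong.
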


\begin{proof}[Proof of Lemma \ref{Thm5.6}] \quad

\smallskip

If $\t\in \B(1)$ then $\t\in E_{S}^{A}$ if and only if $\displaystyle n_{k_{r}}(\t_{k_{r}})>A^{-1}n_{l}(\t_{l})^{e(j,l)/e(j,k_{r})}$ for $1\leq r \leq s$, for $l\neq k_{r}$, and for all $j\in I_{r}$. But since $n_{l}(\t_{l})\leq 1$ this is equivalent to saying that 
\beas
n_{k_{r}}(\t_{k_{r}})&> 
A^{-1}\max_{j\in I_{r}}n_{l}(\t_{l})^{e(j,l)/e(j,k_{r})}= 
A^{-1}n_{l}(\t_{l})^{\min_{j\in I_{r}}e(j,l)/e(j,k_{r})}
= 
A^{-1}n_{l}(\t_{l})^{\sigma_{S}(k_{r},l)},
\eeas
and this establishes both inequalities in (\ref{5.2aa}).  To prove the converse, let $l\in \{1, \ldots, n\}$ with $l\neq k_{r}$. Then there exists an index $p\in \{1, \ldots, s\}$ so that $l\in I_{p}$, and by the first inequality in 
(\ref{5.2aa}) it follows that $n_{l}(\t_{l})^{\sigma_{S}(k_{p},l)}<A\,n_{k_{p}}(\t_{k_{p}})$. On the other hand, the second inequality in (\ref{5.2aa}) says that $n_{k_{p}}(\t_{k_{p}})^{\sigma_{S}(k_{r},k_{p})}\leq A\,n_{k_{r}}(\t_{k_{r}})$. Also, Proposition \ref{Prop5.8}, part (\ref{Prop5.8b}) says that $\sigma_{S}(k_{r},l)\geq \sigma_{S}(k_{r},k_{p})\sigma_{S}(k_{p},l)$. Since $n_{l}(\t_{l})< 1$, it follows that 
\beas
n_{l}(\t_{l})^{\sigma_{S}(k_{r},l)}&\leq n_{l}(\t_{l})^{\sigma_{S}(k_{p},l)\sigma_{S}(k_{r},k_{p})}<\big[A\,n_{k_{p}}(\t_{k_{t}})\big]^{\sigma_{S}(k_{r},k_{p})} <
A^{1+\sigma_{S}(k_{r},k_{p})}n_{k_{r}}(\t_{k_{r}}).
\eeas
But then if $j\in I_{r}$ and $l\neq k_{r}$,
\beas
n_{l}(\t_{l})^{e(j,l)}
&=
\Big[n_{l}(\t_{l})^{e(j,l)/e(j,k_{r})}\Big]^{e(j,k_{r})}
\leq 
n_{l}(\t_{l})^{\sigma_{S}(k_{r},l)e(j,k_{r})}
<
\Big[A^{(1+\sigma_{S}(k_{r},k_{p}))}n_{k_{r}}(\t_{k_{r}})\Big]^{e(j,k_{r})}\\
&\leq 
\Big[A^{\eta}n_{k_{r}}(\t_{k_{r}})\Big]^{e(j,k_{r})}
\eeas
where we take $\eta = 1+\sup_{r,p}\sigma_{S}(k_{r},k_{p})+\sup_{r,p}\tau_{S}(k_{p},k_{r})^{-1}$. This shows that $\t_{k_{r}}$ is $A^{\eta}$-dominant in $N_{j}(\t)$, and completes the proof of (\ref{Thm5.6A}).

The proof of (\ref{Thm5.6B}) is very similar. Suppose that $\t\in \B(1)^{c}$. Then $\t\in \widehat E_{S}^{A}$ if and only if $n_{k_{r}}(\t_{k_{r}})> A^{-1}\,n_{l}(\t_{l})^{e(k_{r},j)/e(l,j)}$ for $1\leq r\leq s$, for $l\neq k_{r}$, and for all $j\in I_{r}$. Suppose first that $n_{l}(\t_{l})\geq 1$. In this case it follows that
\beas
n_{k_{r}}(\t_{k_{r}})&> A^{-1}\,\max_{j\in I_{r}}\,n_{l}(\t_{l})^{e(k_{r},j)/e(l,j)}\\&=A^{-1}n_{l}(\t_{l})^{\max_{j\in I_{r}}\,e(k_{r},j)/e(l,j)}=A^{-1}n_{l}(\t_{l})^{1/\tau_{S}(l,k_{r})}.
\eeas 
On the other hand, if $n_{l}(\t_{l})< 1$ then we still have $n_{k_{r}}(\t_{k_{r}})>A^{-1}n_{l}(\t_{l})^{1/\tau_{S}(l,k_{r})}$ since $n_{k_{r}}(\t_{k_{r}})>A^{-1}$ by Remarks \ref{Rem5.2}. In either case, this establishes both inequalities of (\ref{5.3aa}). To prove the converse, let $l\in \{1, \ldots, n\}$ with $l\neq k_{r}$. If $n_{l}(\t_{l})<1$, then since $n_{k_{r}}(\t_{k_{r}})>A^{-1}$ it follows that $n_{l}(\t_{l})^{1/e(l,j)}<1<\big[A\,n_{k_{r}}(\t_{k_{r}})\big]^{1/e(k_{r},j)}$. If $n_{l}(\t_{l})\geq 1$. There exists $p\in \{1, \ldots, s\}$ so that $l\in I_{p}$, and by the first inequality in 
(\ref{5.3aa}) it follows that $n_{l}(\t_{l})^{1/\tau_{S}(l,k_{p})}<A\,n_{k_{p}}(\t_{k_{p}})$. On the other hand, the second inequality in (\ref{5.3aa}) says that $n_{k_{p}}(\t_{k_{p}})\leq A^{\tau_{S}(k_{p},k_{r})}\,n_{k_{r}}(\t_{k_{r}})$. Also, Proposition \ref{Prop5.8}, part (\ref{Prop5.8b}) says that $\tau_{S}(l,k_{r})\geq \tau_{S}(l,k_{p})\tau_{S}(k_{p},k_{r})$. It follows that 
\beas
n_{l}(\t_{l})^{1/\tau_{S}(l,k_{r})}&\leq 
n_{l}(\t_{l})^{1/\tau_{S}(l,k_{p})\tau_{S}(k_{p},k_{r})}<\big[A\,n_{k_{p}}(\t_{k_{p}})\big]^{1/\tau_{S}(k_{p},k_{r})} 
<
A^{1+1/\tau_{S}(k_{p},k_{r})}n_{k_{r}}(\t_{k_{r}}).
\eeas
If $j\in I_{r}$ and $l\neq k_{r}$,
\beas
n_{l}(\t_{l})^{1/e(l,j)}\leq \big[n_{l}(\t_{l})^{1/\tau_{S}(l,k_{r})}\big]^{1/e(k_{r},j)} < \big[A^{1+1/\tau_{S}(k_{p},k_{r})}n_{k_{r}}(\t_{k_{r}})\big]^{1/e(k_{r},j)}.
\eeas 
Thus in all cases we have $n_{l}(\t_{l})^{1/e(k_{r},l)}<\big[A^{\eta}\,n_{k_{r}}(\t_{k_{r}})\big]^{1/e(k_{r},j)}$, and this says that $\t_{k_{r}}$ is $A^{\eta}$-dominant in $\widehat N_{j}(\t)$, which completes the proof of (\ref{Thm5.6B}) and of Lemma \ref{Thm5.6}.
\end{proof}

\subsection{Estimates of kernels and multipliers on $E_{S}$ and $\widehat E_{S}$}\label{Sec3.4iou}\quad

\medskip

The differential inequalities for kernels and multipliers take simpler forms on the sets $E_{S}$ and $\widehat E_{S}$. First consider the principal marked partition $S_{0}=\big\{(\{1\},1);\ldots;(\{n\},n)\big\}$. If $\x\in \B(1)\cap E_{S_{0}}$ then $N_{k}(\x)\approx n_{k}(\x_{k})$, and if $\xib\in \widehat E_{S_{0}}$ then $\widehat N_{k}(\xib)\approx n_{k}(\xib_{k})$.  Thus the differential inequalities for $\KK\in \PP(\EEE)$ and $m\in \MM(\EEE)$ take the following form on the sets $E_{S_{0}}$ and $\widehat E_{S_{0}}$:
\beas
\x\in E_{S_{0}}&&&\Longrightarrow&\big|\partial^{\gammab}K(\x)\big|&\leq C_{\gammab}\prod_{k=1}^{n}n_{k}(\x_{k})^{-(Q_{k}+\[\gammab_{k}\])},\\
\xib\in\widehat E_{S_{0}}&&&\Longrightarrow& \big\vert\partial^{\gammab}m(\xib)\big\vert&\leq C_{\gammab}\prod_{k=1}^{n}n_{k}(\xib_{k})^{-\[\gammab_{k}\]}.
\eeas
Note that these are precisely the differential inequalities satisfied by product kernels or product multipliers  on $\R^{N}= \bigoplus_{j=1}^{n}\R^{C_{j}}$. (See \cite{MR1818111}, page 34 and page 37.) Thus although a distribution $\KK\in \PP(\EEE)$ is pseudo-local, if the  set $E_{S_{0}}\neq \emptyset$ the function $K$ satisfies nothing better than the estimates for a product kernel on this set.  

We have the following extension for non-empty sets $E_{S}$ and $\widehat E_{S}$.

\begin{proposition}\label{Lem5.15} Let $S=\big\{(I_{1},k_{1});\ldots; (I_{s};k_{s})\big\}\in \SS(n)$. 
\begin{enumerate}[{\rm(a)}]

\item\label{Lem5.15a}
If $E_{S}\cap \B(1) \neq \emptyset$, if $\KK\in \PP(\EEE)$, 
and if $K\in \CC^{\infty}(\R^{N}\setminus\{0\})$ is the associated smooth function, then for every $\gammab=(\gammab_{1},\ldots,\gammab_{n})$, there is a constant $C_{\gammab}$ so that for $\x\in E_{S}\cap \B(1)$ the inequality in Definition \ref{Def2.2}, part (\ref{Def2.2A}) is equivalent to 
\bes
|\partial^{\gammab}K(\x)|\leq C_{\gammab}\prod_{r=1}^{s}n_{k_{r}}(\x_{k_{r}})^{-\sum_{j\in I_{r}}e(j,k_{r})(Q_{j}+\[\gammab_{j}\])}.
\ees

\item \label{Lem5.15b}
If $\widehat E_{S}\cap \B(1)^{c}\neq \emptyset$ and if $m\in \MM(\EEE)$ then for every $\gammab=(\gammab_{1}, \ldots, \gammab_{n})$ there is a constant $C_{\gammab}$ so that for $\xib\in \widehat E_{S}$ the inequality in  Definition \ref{Def2.4} is equivalent to
\bes
\left|\partial^{\gammab}m(\xib)\right| \leq C_{\gammab}\prod_{r=1}^{s} n_{k_{r}}(\xib_{k_{r}})^{-\sum_{j\in I_{r}}e(j,k_{r})\[\gammab_{j}\]}.
\ees
\end{enumerate}
\end{proposition}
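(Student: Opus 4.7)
The plan is to use the dominance characterization of $E_S$ and $\widehat E_S$ from Remarks \ref{Rem5.2} to reduce each factor $N_j$ (respectively $\widehat N_j$) appearing in the original estimate to a single monomial in one of the coordinates $\x_{k_r}$ (respectively $\xib_{k_r}$), and then to regroup the resulting product according to the partition $\{I_1,\ldots,I_s\}$.

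First I would record the pointwise comparability on $E_S$: for every $r\in\{1,\ldots,s\}$ and every $j\in I_r$, the coordinate $\x_{k_r}$ is by construction of $E_S$ strictly dominant in $N_j(\x)$, so Remarks \ref{Rem5.2}(1) give
\bes
N_j(\x)\approx n_{k_r}(\x_{k_r})^{e(j,k_r)}\qquad(\x\in E_S\cap\B(1)),
\ees
with implied constants depending only on $\EEE$ and $n$. The analogous statement holds on $\widehat E_S\cap\B(1)^c$ for $\widehat N_j$, again by Remarks \ref{Rem5.2}.

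I would then substitute this comparability into the inequality of Definition \ref{Def2.2}(\ref{Def2.2A}). Since $\{I_r\}_{r=1}^s$ partitions $\{1,\ldots,n\}$, the product over $j$ factors through the partition and collapses to
\bes
\prod_{j=1}^n N_j(\x)^{-(Q_j+\[\gammab_j\])}=\prod_{r=1}^s\prod_{j\in I_r}N_j(\x)^{-(Q_j+\[\gammab_j\])}\approx\prod_{r=1}^s n_{k_r}(\x_{k_r})^{-\sum_{j\in I_r}e(j,k_r)(Q_j+\[\gammab_j\])},
\ees
which is exactly the bound asserted in (\ref{Lem5.15a}). Because the comparability $N_j(\x)\approx n_{k_r}(\x_{k_r})^{e(j,k_r)}$ is two-sided with constants independent of $\x$, this inequality is in fact \emph{equivalent} to the original one on $E_S$. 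The argument for (\ref{Lem5.15b}) is entirely parallel, using that on $\widehat E_S\cap\B(1)^c$ each $\widehat N_j(\xib)$ with $j\in I_r$ is comparable to a single power of $n_{k_r}(\xib_{k_r})$, and then regrouping the product over $j$ by $r$ in the same fashion.

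There is no serious analytic obstacle here: essentially all of the combinatorial content is already absorbed into the definitions of $E_S$ and $\widehat E_S$ and, crucially, into Proposition \ref{Prop5.3}, which is what ensures that the same marked index $k_r$ is dominant in \emph{every} $N_j$ with $j\in I_r$ so that the simplification is internally consistent across the factors. The only point to monitor carefully is that the comparability constants in Remarks \ref{Rem5.2} depend only on $\EEE$ and $n$, so that the constants $C_\gammab$ in the simplified inequalities can be chosen uniformly over all of $E_S$ (or $\widehat E_S$).
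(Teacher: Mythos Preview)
Your proposal is correct and follows essentially the same argument as the paper's own proof: use the dominance built into the definition of $E_S$ (via Remarks \ref{Rem5.2}) to replace each $N_j(\x)$ for $j\in I_r$ by $n_{k_r}(\x_{k_r})^{e(j,k_r)}$, then regroup the product over $j$ according to the partition $\{I_r\}$. The paper's proof is terser but identical in substance, and it likewise dispatches part (\ref{Lem5.15b}) by saying it is done the same way.
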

\begin{proof}
If $\x\in E_{S}$, then $N_{j}(\x)\approx n_{k_{r}}(\x_{k_{r}})^{e(j,k_{r})}$ for every $j\in I_{r}$. Thus if $\x\in E_{S}$ 
\beas
|\partial^{\gammab}K(\x)|
&\leq 
C_{\gammab}\prod_{j=1}^{n}N_{j}(\x)^{-(Q_{j}+\[\gammab_{j}\])}
\approx
C_{\gammab}\prod_{r=1}^{s}\prod_{j\in I_{r}}n_{k_{r}}(\x_{k_{r}})^{-e(j,k_{r})(Q_{j}+\[\gammab_{j}\])}.
\eeas
This establishes part (\ref{Lem5.15a}), and again part (\ref{Lem5.15b}) is done the same way.
\end{proof} 

\subsection{A coarser decomposition of $\R^{N}$ associated to $S\in \SS(n)$}\label{Sec3.5iou}\quad

\medskip

Proposition \ref{Lem5.15} suggests that, on each non-empty set $E_S$, a kernel $\KK\in \PP_0(\EEE)$ satisfies the same differential inequalities of a product kernel adapted to a coarser decomposition of $\R^N$ depending on $S$ and, in a similar way, that on each non-empty set $\widehat E_S$,  a multiplier $m\in \MM_\infty(\EEE)$ behaves like a product multiplier.

For each $1 \leq r \leq s$ we gather the coordinates $\{\t_{j}:j\in I_{r}\}$ into a single subspace as discussed on page \pageref{exponent}. Thus for $1 \leq r \leq s$ set 
\be\label{5.9aa}
\R^{I_{r}}= \bigoplus_{j\in I_{r}} \R^{C_{j}}
\ee
so that $\R^{N}= \R^{I_{1}}\oplus \cdots \oplus \R^{I_{s}}$. Note that if $s<n$, this decomposition is strictly coarser than the original decomposition $\R^{N}= \R^{C_{1}}\oplus \cdots \oplus \R^{C_{n}}$.
If $\t\in \R^{N}$ write $\t= (\t_{I_{1}}, \ldots, \t_{I_{S}})$ where $\t_{I_{r}}=\big\{\t_{j}:j\in I_{r}\big\}$. 

Next we introduce two new families of dilations $\x_{I_{r}}\to \lambda\cdot_{S}\x_{I_{r}}$ and $\xib_{I_{r}}\to\lambda\,\hat\cdot_{S}\,\xib_{I_{r}}$  on each component $\R^{I_{r}}$. These should have the property that if $\x\in E_{S}$ then the associated norm of $\x_{I_{r}}$ should be comparable to $n_{k_{r}}(\x_{k_{r}})$, and if $\xib\in \widehat E_{S}$, then the associated norm of $\xib_{I_{r}}$ should be comparable to $n_{k_{r}}(\xib_{k_{r}})$. Recall from Corollary \ref{Cor5.7} that
\bea\label{njvsnkr}
\x\in E_{S}&&&\Longrightarrow &&&n_{j}(\x_{j})^{1/e(j,k_{r})}&<n_{k_{r}}(\x_{r})\\
\xib\in \widehat E_{S}&&&\Longrightarrow &&&n_{j}(\xib_{j})^{\,e(k_{r},j)\,}\,\,&<n_{k_{r}}(\xib_{k_{r}}),
\eea
and from Proposition \ref{Prop5.8} that if $j\in I_{r}$ then $e(j,k_{r})=\sigma_{S}(k_{r},j)^{-1}$ and $e(k_{r},j)=\tau(j,k_{r})^{-1}$.
\begin{definition}\label{Def3.12iou}
Let $S=\big\{(I_{1},k_{1});\ldots;(I_{s},k_{s})\big\}\in \SS(n)$. \index{l2ambdaS@$\lambda\cdot_{S}\x_{I_{r}},\lambda \,\hat\cdot_{S}\,\xib_{I_{r}}$}

\begin{enumerate}[{\rm(a)}]
\smallskip

\item If $\x=(\x_{I_{1}}, \ldots, \x_{I_{s}})\in \R^{N}$ with $\x_{I_{r}}=\big\{\x_{j}:j\in I_{r}\}\in \R^{I_{r}}$ set
\bes
\lambda\cdot_{S}\x_{I_{r}}=\Big(\lambda^{e(j,k_{r})}\cdot \x_{j}:j\in I_{r}\Big)=\Big(\lambda^{1/\sigma_{S}(k_{r},j)}\cdot \x_{j}:j\in I_{r}\Big).
\ees
Let $n_{S,r}$ be a smooth homogeneous norm on $\R^{I_{r}}$ for this dilation.

\smallskip

\item If $\xib=(\xib_{I_{1}}, \ldots, \xib_{I_{s}})\in \R^{N}$ with $\xib_{I_{r}}=\big\{\xib_{j}:j\in I_{r}\big\}$ let
\bes
\lambda \,\hat\cdot_{S}\,\xib_{I_{r}}=\Big(\lambda^{1/e(k_{r},j)}\cdot \xib_{j}:j\in I_{r}\Big)=\Big(\lambda^{\tau_{S}(j,k_{r})}\cdot \xib_{j}:j\in I_{r}\Big).
\ees
Let $\widehat n_{S,r}$ be a smooth homogeneous norm on $\R^{I_{r}}$ for this dilation.
\end{enumerate}
\end{definition}

\noindent Observe that \index{n3Sr@$n_{S,r},\widehat n_{S,r}$}
\bea\label{5.11aa}
n_{S,r}(\x_{I_{r}}) &\approx \sum_{j\in I_{r}}n_{j}(\x_{j})^{1/e(j,k_{r})}=\sum_{j\in I_{r}}n_{j}(\x_{j})^{\sigma_{S}(k_{r},j)},\\
\widehat n_{S,r}(\xib_{I_{r}}) & \approx \sum_{j\in I_{r}}n_{j}(\xib_{j})^{e(k_{r},j)}=\sum_{j\in I_{r}}n_{j}(\xib_{j})^{1/\tau_{S}(j,k_{r})},
\eea
and that the homogeneous dimensions of $\R^{I_{r}}$ under the dilations $\lambda\cdot_{S}$ and $\lambda\,\hat\cdot_{S}$ are given by
\bea\label{5.12aa} \index{Q2Sr@$Q_{S,r},\widehat Q_{S,r}$}
Q_{S,r}&= \sum_{j\in I_{r}}Q_{j}e(j,k_{r})= \sum_{j\in I_{r}}Q_{j}\sigma_{S}(k_{r},j)^{-1},\\
\widehat Q_{S,r}&= \sum_{j\in I_{r}}Q_{j}e(k_{r},j)^{-1}= \sum_{j\in I_{r}}Q_{j}\tau_{S}(j,k_{r}).
\eea

Moreover, it follows directly from  and \eqref{2.4} and \eqref{5.11aa} that, for $1\le r\le s$,
\bea\label{new}
n_{S,r}(\t_{I_r})&\lesssim \widehat N_{k_r}(\t),\\
\widehat n_{S,r}(\t_{I_r})&\lesssim  N_{k_r}(\t),
\eea
for all $\t\in\R^N$.

\section{Fourier transform duality of kernels and multipliers}\label{Duality}

In this section we show that the Fourier transforms of kernels $\KK\in \PP_{0}(\E)$ are multipliers $m\in\MM_{\infty}(\EEE)$, and conversely, that the inverse Fourier transform of multipliers $m\in \MM_{\infty}(\EEE)$ are kernels $\KK\in \PP_{0}(\EEE)$. Such results are well known for \CZ kernels and multipliers, and more generally, for product kernels and product multipliers or for flag kernels and flag multipliers. (See \cite{MR1818111} for details.) 

\subsection{Fourier transforms of multipliers} \quad

\medskip

Let $m\in L^{\infty}(\R^{N})$ and let $\KK = \widehat m$ be its Fourier transform in the sense of tempered distributions. Thus if $\varphi\in\SS(\R^{N})$ we have $\big\langle \KK,\varphi\big\rangle = \big\langle m,\widehat\varphi\big\rangle = \int_{\R^{N}}m(\xib)\widehat\varphi(\xib)\,d\xib$. Let $\chi\in \CC^{\infty}_{0}(\R^{N})$ be identically equal to one in a neighborhood of the origin, and let $\chi_{\epsilon}(\xib) = \chi(\epsilon\xib)$. Then 
$$
\big\langle \KK,\varphi\big\rangle
=
\int_{\R^{N}}m(\xib)\widehat\varphi(\xib)\,d\xib
=
\lim_{\epsilon\to 0}\int_{\R^{N}}\chi_{\epsilon}(\xib)m(\xib)\widehat\varphi(\xib)\,d\xib 
= 
\lim_{\epsilon\to0}\big\langle \widehat{\chi_{\epsilon}m},\varphi\big\rangle
$$
so $\KK = \lim_{\epsilon\to0}\widehat{\chi_{\epsilon}m}$ in the sense of distributions. Thus in making estimates of the Fourier transform $\widehat m$, we can assume that $m$ has compact support (and hence can freely integrate by parts) provided that we do not use any information about the size of this support in our estimates. 

\begin{theorem}\label{Thm6.1} 
Let $m\in \MM_{\infty}(\EEE)$. Then the Fourier transform $\widehat m$ is a distribution $\KK$ belonging to the class $ \PP_{0}(\EEE)$.
\end{theorem}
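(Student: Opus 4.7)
The plan is to compute $\KK(\x)=\int m(\xib)e^{2\pi i\x\cdot\xib}\,d\xib$ by decomposing $m$ dyadically using the cover $\{\widehat E_{S}^{A}\}_{S\in\SS(n)}\cup\B(2)$ from Proposition \ref{Prop5.5}. First I would fix a smooth partition of unity $1=\chi_{0}+\sum_{S\in\SS(n)}\chi_{S}$ with $\chi_{0}\in\CC^{\infty}_{0}(\B(2))$ and $\mathrm{supp}\,\chi_{S}\subset\widehat E_{S}^{A}$, and decompose $m=\chi_{0}m+\sum_{S}\chi_{S}m$. Since $m$ is bounded and $\chi_{0}m$ is compactly supported, its Fourier transform is a Schwartz function, which a fortiori lies in $\PP_{0}(\EEE)$. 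The remaining work is to handle each $\chi_{S}m$.

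For fixed $S=\{(I_{1},k_{1});\ldots;(I_{s},k_{s})\}$, Proposition \ref{Lem5.15}(b) tells us that on $\mathrm{supp}\,\chi_{S}$ the multiplier $m$ satisfies the differential inequalities of a product multiplier for the coarser decomposition $\R^{N}=\R^{I_{1}}\oplus\cdots\oplus\R^{I_{s}}$ with the dilations $\,\hat\cdot_{S}\,$ from Definition \ref{Def3.12iou}. I would choose annular bumps $\varphi_{r}$ on $\R^{I_{r}}$ satisfying $\sum_{t\in\Z}\varphi_{r}(2^{-t}\,\hat\cdot_{S}\,\xib_{I_{r}})=1$ away from the origin and write $\chi_{S}m=\sum_{\t\in\Z^{s}}m_{S,\t}$ with
$$m_{S,\t}(\xib)=\chi_{S}(\xib)\,m(\xib)\prod_{r=1}^{s}\varphi_{r}(2^{-t_{r}}\,\hat\cdot_{S}\,\xib_{I_{r}}).$$
After the rescaling $\xib_{I_{r}}\mapsto 2^{t_{r}}\,\hat\cdot_{S}\,\xib_{I_{r}}$, each $m_{S,\t}$ pulls back to a $\CC^{\infty}$ bump $\phi_{S,\t}$ that is uniformly bounded in every $\CC^{\infty}$ seminorm (uniformly in $\t$), supported in a fixed annular region of $\R^{I_{r}}$ in each factor. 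The uniform bounds come from the fact that on $\mathrm{supp}\,m_{S,\t}$ one has $\widehat N_{j}(\xib)\approx 2^{t_{r}/e(k_{r},j)}$ for $j\in I_{r}$, so the derivative bounds $|\partial^{\gammab_{j}}_{\xib_{j}}m|\lesssim\widehat N_{j}(\xib)^{-\[\gammab_{j}\]}$ exactly cancel the dilation factors produced by the rescaling. Fourier inversion together with the duality between $\,\hat\cdot_{S}\,$ and $\cdot_{S}$ then yields
$$\widehat{m_{S,\t}}(\x)=2^{\sum_{r}t_{r}\widehat Q_{S,r}}\,\psi_{S,\t}(2^{t_{1}}\cdot_{S}\x_{I_{1}},\ldots,2^{t_{s}}\cdot_{S}\x_{I_{s}})$$
(up to the standard sign convention), where $\psi_{S,\t}=\widehat{\phi_{S,\t}}$ is Schwartz uniformly in $\t$.

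The main step is to sum $\KK_{S}(\x)=\sum_{\t}\widehat{m_{S,\t}}(\x)$ and derive the required differential inequalities. For fixed $\x\neq 0$ I would split the sum at the critical scales $t_{r}^{*}$ where $2^{t_{r}^{*}}n_{S,r}(\x_{I_{r}})\approx 1$: the rapid Schwartz decay of $\psi_{S,\t}$ controls the tail $t_{r}>t_{r}^{*}$, while the geometric factor $2^{\sum_{r}t_{r}\widehat Q_{S,r}}$ summed over $t_{r}<t_{r}^{*}$ gives the dominant contribution, yielding
$$|\KK_{S}(\x)|\lesssim\prod_{r=1}^{s}n_{S,r}(\x_{I_{r}})^{-\widehat Q_{S,r}}(1+|\x|)^{-M}.$$
Derivatives $\partial^{\gammab}$ are handled by bringing $(2\pi i\xib)^{\gammab}$ inside the Fourier integral, which modifies $\phi_{S,\t}$ in a controlled way and yields the analogous bound with an extra factor $\prod_{r}n_{S,r}(\x_{I_{r}})^{-\sum_{j\in I_{r}}e(j,k_{r})\[\gammab_{j}\]}$. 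Summing over $S\in\SS(n)$ and using the relations \eqref{5.11aa}, \eqref{5.12aa}, \eqref{new} together with Proposition \ref{Lem5.15}(a) to convert between the $n_{S,r}$-product bounds and the $N_{j}$-product bounds on the set $E_{S^{*}}$ where the given $\x$ actually lives, I obtain the estimate \eqref{2.6iou}.

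The cancellation conditions \eqref{2.7iou} follow by applying the same scheme to a restricted multiplier. Given complementary $L,M\subset\{1,\ldots,n\}$, a Schwartz function $\psi$ on $\R^{M}$, and positive parameters $R$, a partial Fourier transform identifies the Fourier transform of $\KK_{\psi,R}$ on $\R^{L}$ with the function $\xib_{L}\mapsto\int m(\xib_{L},\xib_{M})\widehat{\psi_{R}}(\xib_{M})\,d\xib_{M}$; the rapid decay of $\widehat{\psi_{R}}$ combined with the multiplier estimates on $m$ shows that this function belongs to $\MM_{\infty}(\EEE|_{L})$ uniformly in $R$, where $\EEE|_{L}$ is the principal submatrix of $\EEE$ indexed by $L$ (which inherits the basic hypothesis \eqref{2.5}). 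Applying the first part of the argument to this restricted multiplier delivers the uniform bounds needed for \eqref{2.7iou}. The main obstacle is precisely the coordinated bookkeeping in the summation step: one must match the dyadic parameters $\t$ for each marked partition $S$ to the partition type $S^{*}$ of the target point $\x$ in order to convert product-type bounds in the coarser $(I_{1},\ldots,I_{s})$-decomposition into the finer $N_{j}(\x)$-bounds required by $\PP_{0}(\EEE)$, and this is exactly what the marked-partition machinery of Section \ref{Partitions} was designed to enable.
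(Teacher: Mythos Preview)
Your overall strategy---decompose $m$ via a partition of unity subordinate to $\{\widehat E_S^A\}$, dyadically decompose each piece, take inverse Fourier transforms, and sum---is genuinely different from the paper's proof and is in spirit the approach the paper develops \emph{later}, in Section~\ref{Decompositions}. The paper's own proof of Theorem~\ref{Thm6.1} is more direct: for each fixed $\x\in\B(1)\cap E_S$ (so $S$ is determined by $\x$, not by a partition of $\xib$-space) it splits the Fourier integral at the $\x$-dependent scales $n_{S,r}(\xib_{I_r})\lessgtr n_{k_r}(\x_{k_r})^{-1}$ and integrates by parts \emph{only} in the marked variables $\xib_{k_r}$, using $\widehat N_{k_r}(\xib)\gtrsim n_{S,r}(\xib_{I_r})$ from \eqref{new}. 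This sidesteps entirely the ``matching'' problem you flag at the end.

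Your execution, however, has a real gap at exactly that point. First, a slip: the Fourier dual of the dilation $\hat\cdot_S$ is $\hat\cdot_S$ itself, not $\cdot_S$ (the exponents $e(j,k_r)$ and $1/e(k_r,j)$ are \emph{not} reciprocals unless the matrix is degenerate), so your inverse-Fourier-transform formula puts $\psi_{S,\t}$ at scale $2^{t_r}\,\hat\cdot_S$, and the naive summation bound is $\prod_r \widehat n_{S,r}(\x_{I_r})^{-\widehat Q_{S,r}}$. Second, and more seriously, this bound is only a \emph{product-kernel} estimate on $\R^{I_1}\oplus\cdots\oplus\R^{I_s}$ and does \emph{not} imply the $\PP_0(\EEE)$ estimate: already for the principal partition $S_0$ it reads $\prod_j n_j(\x_j)^{-Q_j}$, which blows up on every coordinate subspace, whereas $\prod_j N_j(\x)^{-Q_j}$ does not. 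What rescues the argument is that the support of $\chi_S m$ in $\widehat E_S^A$ forces your dyadic parameters $\t$ into the cone $-\Gamma_{\Z}(\EEE_S)$; summing over that cone (as in Lemma~\ref{Lem4.1} or Proposition~\ref{Prop13.1}) yields a bound in the \emph{global} norms $N_{S,r}(\x)$ of \eqref{3.6iou}, not the local $\widehat n_{S,r}(\x_{I_r})$. One then needs the entrywise matrix inequality $(\EEE_S)^\sharp\le\EEE$ (the content of Proposition~\ref{Lem8.7mm}) to pass from the $N_{S,r}$-bound to the $N_j$-bound. None of this is in your proposal; your summation step and your ``conversion'' paragraph both overlook the cone restriction, and without it the scheme fails. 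With it, your route can be made to work, but it is substantially longer than the paper's direct argument and effectively reproves a chunk of Sections~\ref{Schwartz sums}--\ref{Decompositions} along the way.
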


\begin{proof}
Assume as discussed above that $m$ has compact support. The Fourier transform is given by $K(\x) = \widehat m(\x) = \int_{\R^{N}}e^{2\pi i \langle\x,\xib\rangle}m(\xib)\,d\xib$, and for every $\gammab\in\N^{N}$
\be\label{6.1aa}
\partial^{\gammab}K(\x) = (2\pi i)^{|\gamma|}\int_{\R^{N}}e^{2\pi i \langle\x,\xib\rangle}\,\xib^{\gammab}\,m(\xib)\,d\xib.
\ee
To show that $\KK = \widehat m \in \PP_{0}(\EEE)$, we must verify the differential inequalities of Definition \ref{Def2.2} and Definition \ref{Def2.5} as well as the the cancellation conditions of Definition \ref{Def2.2}. We begin with the differential inequalities.

If $\gammab\in\N^{C_{1}}\times \cdots \times \N^{C_{n}}$ and $M\geq 0$, we must show that there are constants $C_{\gammab},\,C_{\gammab,M}>0$ (independent of the support of $m$) so that
\begin{equation} \label{6.2aa}
\begin{aligned}
\big\vert\partial^{\gammab}K(\x)\big\vert&\leq C_{\gammab}\prod_{j=1}^{n}N_{j}(\x)^{-(Q_{j}+\[\gammab_{j}\])}\quad\text{if $\x\in \B(1)$,}\\
\big\vert\partial^{\gammab}K(\x)\big\vert&\leq C_{\gammab,M}(1+|\x|)^{-M}\quad\text{if $\x\notin \B(1)$,}
\end{aligned}
\end{equation}
where $|\x|$ is the ordinary Euclidean norm of $\x\in \R^{N}$. 

The second inequality in (\ref{6.2aa})  follows from a standard integration by parts. Let $\Delta$ denote the ordinary Laplace operator. Then
\beas
\int_{\R^{N}}e^{2\pi i \langle\x,\xib\rangle}\,\xib^{\gammab}\,m(\xib)\,d\xib
&=
(4\pi^{2}|\x|)^{-2M}\int_{\R^{N}}(-\Delta_{\xib})^{M}\left[e^{2\pi i \langle\x,\xib\rangle}\right]\,\xib^{\gammab}\,m(\xib)\,d\xib\\
&=
(4\pi^{2}|\x|)^{-2M}\int_{\R^{N}}e^{2\pi i \langle\x,\xib\rangle}\,(-\Delta_{\xib})^{M}\left[\xib^{\gammab}\,m(\xib)\right]\,d\xib.
\eeas
If $M$ is large enough, $\big\vert\Delta_{\xib}^{M}\left[\xib^{\gammab}\,m(\xib)\right]\big\vert \leq C_{M} \big(1+|\xib|\big)^{-N-1}$,  so the last integral converges absolutely independently of the support of $m$. 

We turn to the heart of the matter which is the first estimate in (\ref{6.2aa}). Suppose that $\x\in \B(1) \cap E_{S}$ where $S=\big\{(I_{1},k_{1});\ldots;(I_{s},k_{s})\big\}\in \SS(n)$.  By \eqref{njvsnkr}, for $1 \leq r\leq s$, if $j\in I_{r}$ we have
\be\label{NjapproxnS}
N_{j}(\x) \approx n_{k_{r}}(\x_{k_{r}})^{e(j,k_{r})}\approx n_{S,r}(\x_{I_{r}}),
\ee
where $n_{S,r}(\x_{I_{r}}) \approx \sum_{j\in I_{r}}n_{j}(\x_{j})^{1/e(j,k_{r})}$ is the norm defined in \eqref{5.11aa}.
 Hence, for $\x\in \B(1) \cap E_{S}$, the estimate (\ref{6.2aa}) is equivalent to the estimate
\be\label{6.4aa}
\big|\partial^{\gammab}K(\x)\big|\lesssim \prod_{r=1}^{s}n_{k_{r}}(\x_{k_{r}})^{-\sum_{j\in I_{r}}e(j,k_{r})(Q_{j}+\[\gammab_{j}\])},
\ee 
which, by \eqref{NjapproxnS}, may also be interpreted as the differential inequality of a product kernel for the decomposition $\R^N=\R^{I_1}\oplus\cdots\oplus\R^{I_s}$ with the dilations $\cdot_S$ such that
$\lambda\cdot_{S}\x_{I_{r}}= \big\{\lambda^{e(j,k_{r})}\cdot\x_{j}:j\in I_{r}\big\}$.

To obtain \eqref{6.4aa} we adapt the standard method used for proving that product kernels and multipliers are related via the Fourier transform, cf. Theorem 2.1.11 in  \cite{MR1818111}. This consists in splitting the Fourier integral \eqref{6.1aa} into $2^s$ regions depending on whether $n_{S,r}(\xib_{I_{r}})$ is larger or smaller\footnote{It may seem strange that we measure the size of $\xib_{I_{r}}$ with $n_{S,r}(\xib_{I_{r}})$ instead of $\widehat n_{s,r}(\xib_{I_{r}})$. The point is that we have estimates of derivatives of $m$ in terms of $\widehat N_{k_{r}}(\xib)$ and we want a bound from below on this quantity that involves a norm on $\R^{I_{r}}$. By \eqref{new}, we need to use $n_{S,r}(\xib_{I_{r}})$.} than $n_{k_{r}}(\x_{k_{r}})^{-1}$ for $1\leq r \leq s$. For any $r$, we take advantage of the smallness of the domain of integration if we are considering  $n_{S,r}(\xib_{I_{r}})<n_{k_{r}}(\x_{k_{r}})^{-1}$, while we integrate  by parts in $\xib_{k_r}$ in the other case\footnote{  We are not claiming that $m$ is a product multiplier for the coarser decomposition. If it were so, the estimate 
 $$
 \big|\partial^{\gammab}K(\x)\big|\lesssim \prod_{r=1}^{s}n_{S,r}(\x_{I_{r}})^{-\sum_{j\in I_{r}}e(j,k_{r})(Q_{j}+\[\gammab_{j}\])},
$$ would hold for every $\x\in\R^N$, which is not true. Also notice that formula \eqref{3.2.7a} below, which gives a differential inequality typical of a product multiplier, only holds for derivatives in the $\xib_{k_r}$.}.

 Let $\varphi_{0}$ and $\varphi_{1}$ be smooth functions on the positive half-line with $\varphi_{0}(t)+\varphi_{1}(t) \equiv 1$, $\varphi_{0}(t) \equiv 1$ for $t\leq \frac{1}{2}$ and $\varphi_{0}(t) \equiv 0$ for $t\geq 2$. For $\epsilon =(\epsilon_{1},\ldots, \epsilon_{s})\in \{0,1\}^{s}$ set
\beas
m_{\epsilon}(\xib) &= m(\xib)\,\prod_{r=1}^{s}\varphi_{\epsilon_{r}}\big( n_{S,r}(\xib_{I_{r}})\,n_{k_{r}}(\x_{k_{r}})\big),\\
K_{\epsilon}(\x) &= \int_{\R^{N}}e^{2\pi i \langle \x,\xib\rangle}m_{\epsilon}(\xib)\,d\xib.
\eeas
Note that $m(\xib) = \sum_{\epsilon\in \{0,1\}^{s}}m_{\epsilonb}(\xib)$ and $K(\x) = \sum_{\epsilon\in\{0,1\}^{s}}K_{\epsilon}(\x)$, so it will suffice to show that
\be\label{6.5aa}
\big|\partial^{\gammab}K_{\epsilon}(\x)\big|\lesssim \prod_{r=1}^{s}n_{k_{r}}(\x_{k_{r}})^{-\sum_{j\in I_{r}}e(j,k_{r})(Q_{j}+\[\gammab_{j}\])}.
\ee

Since $m\in \MM(\EEE)$, and since $\widehat N_{k_{r}}(\xib)\gtrsim n_{S,r}(\xib_{I_{r}})$ by \eqref{new},
we have 
\bea\label{3.2.7a}
\big\vert\partial^{\deltab_{k_{1}}}_{\xib_{k_{1}}}\cdots \partial^{\deltab_{k_{s}}}_{\xib_{k_{s}}}m(\xib)\big\vert
&\lesssim
\prod_{r=1}^{s}\widehat N_{k_{r}}(\xib)^{-\[\deltab_{k_{r}}\]}
\lesssim
\prod_{r=1}^{s} n_{S,r}(\xib_{I_{r}})^{-\[\deltab_{k_{r}}\]}.
\eea
Since $n_{S,r}(\xib_{I_{r}})$ is homogeneous of degree one under the dilations $\lambda\,\cdot_{S}\,\xib_{I_{r}}$, and since $e(k_{r},k_{r}) = 1$, it follows from Proposition \ref{Prop12.1} in Appendix II that  $\big\vert\partial^{\gamma}_{\xib_{k_{r}}}n_{S,r}(\xib_{I_{r}})\big\vert \leq C_{\gamma}n_{S,r}(\xib_{I_{r}})^{1-\[\gamma\]}$ for all multi-indices $\gamma$,. Using the chain rule, we see that $\partial^{\deltab_{k_{r}}}_{\xib_{k_{r}}}\Big[\varphi_{\epsilon_{r}}\big( n_{S,r}(\xib_{I_{r}})\,n_{k_r}(\x_{k_{r}})\big)\Big]$ is a sum of terms of the form
\beas
\varphi_{\epsilon_{r}}^{(m)}\big(n_{S,r}(\xib_{I_{r}})n_{k_r}(\x_{k_{r}})\big)\,n_{k_r}(\x_{k_{r}})^{m}\,\partial^{\alphab_{1}}_{\xib_{k_{r}}}n_{S,r}(\xib_{I_{r}})\cdots \partial^{\alphab_{m}}_{\xib_{k_{r}}}n_{S,r}(\xib_{I_{r}})
\eeas
where $1\leq m \leq |\deltab_{k_{r}}|$ and $\alphab_{1}+ \cdots +\alphab_{m}= \delta_{k_{r}}$. Since $n_{k_r}(\x_{k_{r}})\approx n_{S,r}(\xib_{I_{r}})^{-1}$ on the support of $\varphi^{(m)}_{\epsilon_{r}}$, $m\geq 1$, it follows that
\beas
\Big\vert
\varphi_{\epsilon_{r}}^{(m)}\big(n_{S,r}(\xib_{I_{r}})n_{k_r}(\x_{k_{r}})\big)\,n_{k_r}(\x_{k_{r}})^{m}\,\partial^{\alphab_{1}}_{\xib_{k_{r}}}n_{S,r}(\xib_{I_{r}})\cdots \partial^{\alphab_{m}}_{\xib_{k_{r}}}n_{S,r}(\xib_{I_{r}})\Big\vert
&\lesssim
n_{S,r}(\xib_{I_{r}})^{-\[\deltab_{k_{r}}\]}.
\eeas
Combining these estimates gives $\big|\partial^{\deltab_{k_{1}}}_{\xib_{k_{1}}}\cdots \partial^{\deltab_{k_{s}}}_{\xib_{k_{s}}}m_{\epsilon}(\xib)\big|\leq C_{\deltab}\,\prod_{r=1}^{s} n_{S,r}(\xib_{I_{r}})^{-\[\deltab_{k_{r}}\]}$.

Now for each $\epsilon=(\epsilon_{1}, \ldots, \epsilon_{s})\in \{0,1\}^{s}$ let $A(\epsilon)=\{r\in \{1,\ldots, s\}:\epsilon_{r}=1\}$. Note that $m_{\epsilon}$ is supported where
\beas
n_{S,r}(\xib_{I_{r}})&\leq 2n_{k_r}(\x_{k_{r}})^{-1} &&&\text{if $r\notin A(\epsilon)$},\\
n_{S,r}(\xib_{I_{r}})& \geq\frac{1}{2}n_{k_r}(\x_{k_{r}})^{-1} &&&\text{if $r\in A(\epsilon)$}.
\eeas
Choose $\deltab=(\deltab_{1}, \ldots, \deltab_{n})\in \N^{C_{1}}\times \cdots \times\N^{C_{n}}$ with $\deltab_{j}= 0$ unless $j=k_{r}$ for some $r\in A(\epsilon)$. If $r\in A(\epsilon)$, we will choose the entries of each $\deltab_{k_{r}}=(\delta_{k_{r},1}, \ldots, \delta_{k_{r},c_{k_{r}}})$ to be much larger than the corresponding entries of $\gammab_{k_{r}}=(\gamma_{k_{r},1}, \ldots, \gamma_{k_{r},c_{k_{r}}})$. Using (\ref{6.1aa}) we integrate by parts to obtain
\bea\label{3.2.8a}
\big(\prod_{r\in A(\epsilonb)}\x_{k_{r}}^{\deltab_{k_{r}}}\big)\partial^{\gammab}_{\x}K_{\epsilonb}(\x) 
&=
(2\pi i)^{|\gammab|}
\int_{\R^{N}}\big(\prod_{r\in A(\epsilonb)}\x_{k_{r}}^{\deltab_{k_{r}}}\big)e^{2\pi i \langle \x,\xib\rangle}\xib^{\gammab}m_{\epsilon}(\xib)\,d\xib\\
&=
(2\pi i)^{|\gammab|-|\deltab|}
\int_{\R^{N}}\big[\prod_{r\in A(\epsilonb)}\partial^{\deltab_{k_{r}}}_{\xib_{k_{r}}}\big]\big(e^{2\pi i \langle \x,\xib\rangle}\big)\,\xib^{\gammab}m_{\epsilon}(\xib)\,d\xib\\
&=
(-1)^{|\deltab|}(2\pi i)^{|\gammab|-|\deltab|}
\int_{\R^{N}}e^{2\pi i \langle \x,\xib\rangle}\,\big[\prod_{r\in A(\epsilonb)}\partial^{\deltab_{k_{r}}}_{\xib_{k_{r}}}\big]\big(\xib^{\gammab}m_{\epsilon}\big)(\xib)\,d\xib.
\eea
The product rule shows that $\big[\prod_{r\in A(\epsilonb)}\partial^{\deltab_{k_{r}}}_{\xib_{k_{r}}}\big]\big(\xib^{\gammab}m_{\epsilonb}\big)(\xib)$ can be written as a finite sum of terms of the form
\bea\label{6.8aa}
\Big(\prod_{r\in A(\epsilonb)}\partial^{\mub_{k_{r}}}_{\xib_{k_{r}}}\Big)\big[\xib^{\gammab}\big]\,\,\Big(\prod_{r\in A(\epsilonb)}\partial^{\deltab_{k_{r}}-\mub_{k_{r}}}_{\xib_{k_{r}}}\Big)\big[m_{\epsilon}\big](\xib)
\eea
where $\mub = (\mub_{1}, \ldots, \mub_{n})\leq (\deltab_{1},\ldots, \deltab_{n})$.
The first factor is zero if the entries of $\mub_{k_{r}}$ are larger than the corresponding terms $\gammab_{k_{r}}$. Thus our choice of $\deltab_{k_{r}}$ guarantees that the entries of $\deltab_{k_{r}}-\mub_{k_{r}}$ are large enough to make the integrals below converge independently of the compact support of $m$. Note that $\mub_{j}=\0$ unless $j=k_{r}$ for some $r\in A(\epsilon)$. Then
\beas
\Big(\prod_{r\in A(\epsilonb)}\partial^{\mub_{k_{r}}}_{\xib_{k_{r}}}\Big)\big[\xib^{\gammab}\big]
&= 
C_{\mub,\gammab}
\prod_{\substack {r\in A(\epsilon)}}\Big(\prod_{\substack{j\in I_{r}}}\xib_{j}^{\gammab_{j}-\mub_{j}}\Big)\,
\prod_{r\notin A(\epsilon)}\big(\prod_{\substack{j\in I_{r}}}\xib_{j}^{\gammab_{j}}\big)
\eeas
According to (\ref{3.2.7a}) we also have
\beas
\Big|\Big(\prod_{r\in A(\epsilonb)}\partial^{\deltab_{k_{r}}-\mub_{k_{r}}}_{\xib_{k_{r}}}\Big)\big[m_{\epsilon}\big](\xib)\Big|
\leq 
C_{\deltab,\mub}\prod_{r\in A(\epsilon)} n_{S,r}(\xib_{I_{r}})^{-\[\deltab_{k_{r}}\]+\[\mub_{k_{r}}\]}
\eeas
Thus
\bea\label{6.9aa}
\Big\vert \Big(&\prod_{r\in A(\epsilonb)}\partial^{\mub_{k_{r}}}_{\xib_{k_{r}}}\Big)\big[\xib^{\gammab}\big]\,\,\Big(\prod_{r\in A(\epsilonb)}\partial^{\deltab_{k_{r}}-\mub_{k_{r}}}_{\xib_{k_{r}}}\Big)\big[m_{\epsilon}\big](\xib)\Big\vert\\
&\leq 
C_{\deltab,\gammab}\,
\prod_{r\in A(\epsilon)} n_{S,r}(\xib_{I_{r})^{-\[\deltab_{k_{r}}\]+\[\mub_{k_{r}}\]} }
\prod_{\substack {r\in A(\epsilon)}}\Big(\prod_{\substack{j\in I_{r}}}|\xib_{j}^{\gammab_{j}-\mub_{j}}|\Big)\,
\prod_{r\notin A(\epsilon)}\big(\prod_{\substack{j\in I_{r}}}|\xib_{j}^{\gammab_{j}}|\big).
\eea
Now using (\ref{3.2.8a}), (\ref{6.8aa}), and (\ref{6.9aa}), we can estimate $\displaystyle \big|\big(\prod_{r\in A(\epsilonb)}\x_{k_{r}}^{\deltab_{k_{r}}}\big)\partial^{\gammab}_{\x}K_{\epsilonb}(\x)\big|$ by a finite sum of terms of the form
\beas
&\prod_{r\in A(\epsilon)}
\Bigg[\,\,
\int\limits_{n_{S,r}(\xib_{I_{r}})\geq n_{k_r}(\x_{k_{r}})^{-1}}
\Big(\prod_{\substack{j\in I_{r}}}|\xib_{j}^{\gammab_{j}-\mub_{j}}|\Big)\,  n_{S,r}(\xib_{I_{r}})^{-\[\deltab_{k_r}\]+\[\mub_{k_r}\]} \,d\xib_{I_{r}}\Bigg]\\
&\qquad\qquad\qquad\qquad\qquad\qquad\qquad\qquad
\times\prod_{r\notin A(\epsilon)}
\Bigg[\,\,
\int\limits_{n_{S,r}(\xib_{I_{r}})<n_{k_r}(\x_{k_{r}})^{-1}}\Big(\prod_{j\in I_{r}}|\xib_{j}^{\gammab_{j}}|\Big)\,d\xib_{I_{r}}\Bigg].
\eeas 
According to Proposition \ref{Prop12.3} in the appendix, this last expression is bounded by
\beas
\prod_{r\notin A(\epsilon)}&n_{k_r}(\x_{k_{r}})^{-\sum_{j\in I_{r}}e(k_{r},j)(Q_{j}+\[\gammab_{j}\])}\,\,
\prod_{r\in A(\epsilon)}n_{k_r}(\x_{k_{r}})^{-\sum_{j\in I_{r}}e(k_{r},j)(Q_{j}+\[\gammab_{j}\]-\[\deltab_{r}\])}\\
&=
\prod_{r\in A(\epsilon)}n_{k_r}(\x_{k_{r}})^{\[\deltab_{r}\]}\prod_{r=1}^{s}n_{k_r}(\x_{k_{r}})^{-\sum_{j\in I_{r}}e(k_{r},j)(Q_{j}+\[\gammab_{j}\])}.
\eeas
Comparing with (\ref{3.2.8a}), this shows that $|\partial^{\gammab}_{\x}K(\x)|$ satisfies (\ref{6.4aa}), so the kernel $K$ satisfies the correct differential inequalities.

\bigskip

We also need to verify the cancellation conditions given in Definition \ref{Def2.2}, part (\ref{Def2.2B}). Let $\x=(\x',\x'')\in \R^{N_{1}}\times\R^{N_{2}}$ be a decomposition of the variables into two subsets, with $\x'=(\x_{p_{1}}, \ldots, \x_{p_{r}})$ and $\x''=(\x_{q_{1}}, \ldots, \x_{q_{t}})$. Let $\psi$ be a normalized bump function in the $\x''$ variables. Then
\beas
K_{\psi,R}(\x') &=
\int_{\R^{N_{2}}}K(\x',\x'')\psi(R\cdot \x'')\,d\x''\\
&=
\int_{\R^{N}}\int_{\R^{N_{2}}}e^{2\pi i <\x,\xib>}m(\xib)\psi(R\cdot \x'')\,d\x''\,d\xib\\
&=
\int_{\R^{N_{1}}}
\Bigg[\int_{\R^{N_{2}}}
\left[\int_{\R^{N_{2}}}e^{2\pi i <\x'',\xib''>}\psi(R\cdot\x'')\,d\x''\right]\,m(\xib',\xib'')\,d\xib''\Bigg]\,e^{2\pi i <\x',\xib'>}
\,d\xib'\\
&=
\int_{\R^{N_{1}}}
\left[\int_{\R^{N_{2}}}
\left[R^{-Q''}\widehat \psi(R^{-1}\cdot\xib'')\right]m(\xib',\xib'')\,d\xib''\right]
\,e^{2\pi i <\x',\xib'>}\,d\xib'\\
&=
\int_{\R^{N_{1}}}e^{2\pi i<\x',\xib'>}m_{\psi,R}(\xib')\,d\xib'
\eeas
where 
\bes
m_{\psi,R}(\xib') = \int_{\R^{N_{2}}}\left[R^{-Q''}\widehat \psi(R^{-1}\cdot\xib'')\right]m(\xib',\xib'')\,d\xib''=\int_{\R^{N_{2}}}\widehat \psi_{R}(\xib'')m(\xib',\xib'')\,d\xib''.
\ees 
Since $||\widehat\psi_{R}||_{L^{1}}$ is finite and independent of $R$, it is easy to see that $m_{\psi,R}$ belongs to the class $\MM_{\infty}(\widehat N_{p_{1}}, \ldots, \widehat N_{p_{r}})$ on the space $\R^{N_{1}}$. Thus the estimates for $K_{\psi,R}$ follow from the same arguments. This completes the proof of Theorem \ref{Thm6.1}.
\end{proof}

\subsection{Fourier transforms of kernels}\quad

\smallskip

\begin{theorem}\label{Thm6.2} 
Let $\KK\in \PP_{0}(\EEE)$. Then the Fourier transform of $\KK$ is a smooth function belonging to the class $\MM_{\infty}(\EEE)$.
\end{theorem}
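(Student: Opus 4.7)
The strategy parallels the proof of Theorem \ref{Thm6.1} with the roles of kernel and multiplier exchanged. First, by approximating $\KK$ with Schwartz functions lying in $\PP_0(\EEE)$ with uniformly bounded semi-norms, I may assume that $m = \widehat\KK$ is a priori smooth and only track bounds that depend on $|\KK|_M$. Writing $\partial^\gammab m(\xib) = \int (-2\pi i\x)^\gammab K(\x)\, e^{-2\pi i\langle \x,\xib\rangle}\,d\x$, the case of bounded $\xib$ is handled by splitting off a bump $\chi$ supported near the origin: on $\B(2)^c$ the integral converges absolutely by the rapid decay of $K$ from Definition \ref{Def2.5}, while on $\B(2)$ the case $L = \emptyset$ of the cancellation condition in Definition \ref{Def2.2}(b) bounds $\langle \KK, (-2\pi i\x)^\gammab \chi(\x) e^{-2\pi i\langle \x,\xib\rangle}\rangle$ uniformly in $|\xib| \le 1$.

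The main task is to estimate $\partial^\gammab m(\xib)$ on each $\widehat E_S \cap \B(1)^c$ with $S = \{(I_1,k_1);\ldots;(I_s,k_s)\}$, where by Proposition \ref{Lem5.15}(b) the target is
\begin{equation*}
|\partial^\gammab m(\xib)| \lesssim \prod_{r=1}^s n_{k_r}(\xib_{k_r})^{-\sum_{j \in I_r}\[\gammab_j\]/e(k_r,j)}.
\end{equation*}
Mirroring the dyadic splitting of $m$ in the proof of Theorem \ref{Thm6.1}, I would insert a partition of unity $\phi_0\big(\widehat n_{S,r}(\x_{I_r})\, n_{k_r}(\xib_{k_r})\big) + \phi_1\big(\widehat n_{S,r}(\x_{I_r})\, n_{k_r}(\xib_{k_r})\big) \equiv 1$ for each $r$, with $\widehat n_{S,r}$ as in Definition \ref{Def3.12iou} (this is the natural scale on the $\x$-side dual to $\xib$, in view of $\widehat n_{S,r}(\x_{I_r}) \lesssim N_{k_r}(\x)$ from \eqref{new}). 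This produces $2^s$ pieces $m_\epsilon$ indexed by $\epsilon \in \{0,1\}^s$.

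On a piece with $\epsilon_r = 0$ the variable $\x_{I_r}$ is confined to $\widehat n_{S,r}(\x_{I_r}) \lesssim n_{k_r}(\xib_{k_r})^{-1}$, and one applies the cancellation condition of Definition \ref{Def2.2}(b) in the coordinates indexed by $M = \{r : \epsilon_r = 0\}$, interpreting the cutoff times exponential as $\psi_R$ for a uniformly normalized $\psi$ and scaling $R$ determined by the $n_{k_r}(\xib_{k_r})$; the resulting quotient $\KK_{\psi,R}$ on the remaining subspace satisfies the analogue of Definition \ref{Def2.2}(a). On a piece with $\epsilon_r = 1$ I would integrate by parts $\beta_{k_r}$ times in $\x_{k_r}$, extracting $(2\pi i \xib_{k_r})^{-\beta_{k_r}} \approx n_{k_r}(\xib_{k_r})^{-\[\beta_{k_r}\]}$ and producing $\partial^{\beta_{k_r}}_{\x_{k_r}}K$; the kernel estimate from Definition \ref{Def2.2}(a), combined with $\widehat n_{S,r}(\x_{I_r}) \lesssim N_{k_r}(\x)$ and an analogue of the appendix Proposition \ref{Prop12.3}, bounds the resulting $\x_{I_r}$-integral by $n_{k_r}(\xib_{k_r})^{-\sum_{j\in I_r}\[\gammab_j\]/e(k_r,j) + \[\beta_{k_r}\]}$, so the gain from integration by parts cancels the excess exactly. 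Choosing each $\beta_{k_r}$ sufficiently large secures absolute convergence, and summing over $\epsilon$ and over $S$ (using Proposition \ref{Prop5.5} to cover $\B(1)^c$) yields the estimate in Definition \ref{Def2.5}(b).

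The principal obstacle is the bookkeeping required by the two distinct dilation structures $\cdot_S$ (natural on the $\xib$-side) and $\hat\cdot_S$ (natural on the $\x$-side) on each $\R^{I_r}$; they differ precisely because $e(j,k_r)\, e(k_r,j) \ge 1$ is in general a strict inequality. One must verify that the cutoff $\phi_{\epsilon_r}(\widehat n_{S,r}(\x_{I_r})\, n_{k_r}(\xib_{k_r}))$ together with $(-2\pi i\x)^\gammab$ and $e^{-2\pi i\langle \x,\xib\rangle}$ genuinely fits the template $\psi_R$ of Definition \ref{Def2.2}(b) after an extraction of scales, with $\psi$ bump-normalized uniformly across $\xib$ and $R$ depending only on the $n_{k_r}(\xib_{k_r})$; this reduces to compatibility between $\widehat n_{S,r}$-units and $n_j$-units on $\R^{I_r}$, which follows from Definition \ref{Def3.12iou} and the basic hypothesis \eqref{2.5}.
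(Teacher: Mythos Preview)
Your proposal is correct and follows essentially the same route as the paper: the same dyadic splitting via $\phi_{\epsilon_r}\big(\widehat n_{S,r}(\x_{I_r})\,n_{k_r}(\xib_{k_r})\big)$, integration by parts in $\x_{k_r}$ on the pieces with $\epsilon_r=1$, invocation of the cancellation condition (Definition~\ref{Def2.2}(b)) on the pieces with $\epsilon_r=0$, and estimation of the resulting integrals via Proposition~\ref{Prop12.3}. One small point: the paper's initial reduction is to write $\KK=\KK_0+\psi$ with $\KK_0$ compactly supported and $\psi\in\SS(\R^N)$ (immediate from Definition~\ref{Def2.5}), after which $m(\xib)=\langle\KK_0,e^{2\pi i\langle\cdot,\xib\rangle}\rangle$ is automatically smooth; this is cleaner than your Schwartz-approximation step, which is not obviously available without circularity before the Fourier characterization is established.
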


\begin{proof}
Suppose that $\KK\in\PP_{0}(\EEE)$. According to Definition \ref{Def2.5} we can write $\KK=\KK_{0}+\psi$ where $\KK_{0}\in \PP_{0}(\EEE)$ has compact support in $\B(1)$ and $\psi\in \SS(\R^{N})$. Since $\widehat \psi\in \MM_{\infty}(\EEE)$, it suffices to show that $\widehat {\KK_{0}}\in \MM_{\infty}(\EEE)$. Thus without loss of generality we can assume that $\KK$ has compact support in the ball $\B(1)$. The distribution $\KK$ then acts on $\CC^{\infty}(\R^{N})$, and in particular, we set $m(\xib)=\big\langle\KK,\exp[2\pi i<\,\cdot\,,\xib>]\big\rangle$. From general principles, this is a smooth function and
\bea\label{6.10aa}
\partial^{\gammab}_{\xib}m(\xib)&=(2\pi i)^{|\gammab|}\big\langle\KK,\x^{\gammab}\exp[2\pi i<\,\cdot\,,\xib>]\big\rangle= (2\pi i)^{|\gammab|}\big\langle \x^{\gammab}\KK,\exp[2\pi i<\,\cdot\,,\xib>]\big\rangle\\
&=(2\pi i)^{|\gammab|}\int_{\R^{N}}e^{2\pi i <\x,\xib>}\x^{\gammab}K(\x)\,d\x.
\eea
Note that if $|\gamma|\geq 1$, the distribution $\x^{\gammab}\KK$ coincides with integration against the locally integrable function $\x^{\gammab}K(\x)$, (see Remark 2.1.7 on page 36 of \cite{MR1818111}), and this justifies the last equality.

We want to show
\be\label{6.11aa}
\big|\partial^{\gammab}_{\xib}m(\xib)\big|\leq C_{\gammab}\prod_{j=1}^{n} \big(1+\widehat N_{j}(\xib)\big)^{-\[\gammab_{j}\]}.
\ee
Since $\KK$ has compact support, this inequality follows if $\xib\in \B(1)$. Thus we only need to consider $\xib\in \B(1)^{c}$. Suppose $\xib$ belongs to $\widehat E_{S}$ where $S=\big\{(I_{1},k_{1});\ldots;(I_{s},k_{s})\big\}\in\SS(n)$. Then for $j\in I_{r}$, $1+\widehat N_{j}(\xib) \approx n_{k_{r}}(\xib_{k_{r}})^{1/e(k_{r},j)}$ and we want to prove
\bea\label{6.12aa}
\big|\partial^{\gammab}_{\xib}m(\xib)\big|
&\leq 
C_{\gammab}\prod_{r=1}^{s}\prod_{j\in I_{r}} \big(1+\widehat N_{j}(\xib)\big)^{-\[\gammab_{j}\]}
\approx
C_{\gammab}\prod_{r=1}^{s}n_{k_{r}}(\xib_{k_{r}})^{-\sum_{j\in I_{r}}\[\gammab_{j}\]/e(k_{r},j)}.
\eea

The proof now follows the same pattern as the proof of Theorem \ref{Thm6.1}.
To establish (\ref{6.12aa}) we split the integral in (\ref{6.10aa}) into $2^{s}$ regions depending on whether $\widehat n_{S,r}(\x_{I_{r}})$ is larger or smaller than $n_{k_{r}}(\xib_{k_{r}})^{-1}$. 

Let $\varphi_{0}$ and $\varphi_{1}$ be smooth functions on the positive half-line with $\varphi_{0}(t)+\varphi_{1}(t) \equiv 1$, $\varphi_{0}(t) \equiv 1$ for $t$ near zero, and $\varphi_{0}(t) \equiv 0$ for $t$ large. For $\epsilon =(\epsilon_{1},\ldots, \epsilon_{s})\in \{0,1\}^{s}$ set
\bea
K_{\epsilon}(\x) &= 
K(\x)\prod_{r=1}^{s}\varphi_{\epsilon_{r}}\big(n_{k_{r}}(\xib_{k_{r}})\widehat n_{S,r}( \x_{I_{r}})\big),\\
m_{\epsilon}(\xi) &= \int_{\R^{N}}e^{2\pi i \langle \x,\xib\rangle}K_{\epsilon}(\x)\,d\x.
\eea
Note that $m(\xib) = \sum_{\epsilon\in \{0,1\}^{s}}m_{\epsilonb}(\xib)$ and $K(\x) = \sum_{\epsilon\in\{0,1\}^{s}}K_{\epsilon}(\x)$. Recall that $\widehat n_{S,r}(\xib_{r})=\sum_{j\in I_{r}}n_{j}(\xib_{j})^{e(k_{r},j)}$ so $\widehat n_{S,r}$ is homogeneous relative to the family of dilations $\{\xi_{j};j\in I_{r}\}\to \{\lambda^{1/e(k_{r},j)}\xi_{j}:j\in I_{r}\}$. It follows from Proposition \ref{Prop12.1} that for $\deltab_{k_{r}}\in \N^{C_{k_{r}}}$ and $\epsilonb\in\{0,1\}^{s}$ we have
\be\label{4.3.6d}
\big|\partial^{\deltab_{k_{1}}}_{\x_{k_{1}}}\cdots \partial^{\deltab_{k_{s}}}_{\x_{k_{s}}}K_{\epsilon}(\x)\big|
\leq 
C_{\deltab}\,\prod_{j=1}^{n}N_{j}(\x)^{-Q_{j}}\prod_{r=1}^{s}\widehat n_{S,r}(\x_{k_{r}})^{-\[\deltab_{k_{r}}\]}
\ee
since $e(k_{r},k_{r})=1$.
Now fix $\epsilon\in \{0,1\}^{s}$ and let $A(\epsilon)=\{r\in \{1,\ldots, s\}:\epsilon_{r}=1\}$. Let $\deltab_{k_{r}}\in \N^{c_{k_{r}}}$.
We will choose the entries of each $\deltab_{k_{r}}$ to be sufficiently large integers. Then
\bea\label{4.3.7d}
\big(\prod_{r\in A(\epsilonb)}\xib_{k_{r}}^{\deltab_{r}}\big)\partial^{\gammab}_{\xib}m_{\epsilonb}(\xib) &=
(-2\pi i)^{|\gammab|}
\int_{\R^{N}}\big(\prod_{r\in A(\epsilonb)}\xib_{k_{r}}^{\deltab_{r}}\big)e^{-2\pi i \langle \x,\xib\rangle}\x^{\gammab}K_{\epsilon}(\x)\,d\x\\
&=
(-2\pi i)^{|\gammab|-|\deltab|}
\int_{\R^{N}}\big[\prod_{r\in A(\epsilonb)}\partial^{\deltab_{r}}_{\x_{k_{r}}}\big]\big(e^{-2\pi i \langle \x,\xib\rangle}\big)\,\x^{\gammab}K_{\epsilon}(\x)\,d\x\\
&=
(-1)^{|\deltab|}(-2\pi i)^{|\gammab|-|\deltab|}
\int_{\R^{N}}e^{-2\pi i \langle \x,\xib\rangle}\,\big[\prod_{r\in A(\epsilonb)}\partial^{\deltab_{r}}_{\x_{k_{r}}}\big]\big(\x^{\gammab}K_{\epsilon}\big)(\x)\,d\x.
\eea
The product rule shows that the derivative $\big[\prod_{r\in A(\epsilonb)}\partial^{\deltab_{r}}_{\x_{k_{r}}}\big]\big(\x^{\gammab}K_{\epsilonb}\big)(\x)$ can be written as a sum of terms of the form $\big(\prod_{r\in A(\epsilonb)}\partial^{\mub_{r}}_{\x_{k_{r}}}\big)\big(\x^{\gammab}\big)\,\,\big(\prod_{r\in A(\epsilonb)}\partial^{\deltab_{r}-\mub_{r}}_{\x_{k_{r}}}\big(K_{\epsilon}(\x)\big)\big)$. In the first factor, the entries of $\mub_{r}$ can only be as large as the corresponding terms $\gammab_{r}$. Thus we will choose $\deltab_{r}$ to have entries much larger than those of $\gammab_{r}$ so that the entries of $\deltab_{r}-\mub_{r}$ are large enough to make the integrals below converge independently of the compact support of $m$. Then
\beas
\Big(\prod_{r\in A(\epsilonb)}\partial^{\mub_{r}}_{\x_{k_{r}}}\Big)\big(\x^{\gammab}\big)
&=
C_{\mub,\gammab}
\prod_{\substack {r\in A(\epsilon)}}\big(\prod_{\substack{j\in I_{r}\\j\neq k_{r}}}\x_{j}^{\gammab_{j}}\big)\x_{k_{r}}^{\gammab_{k_{r}}-\mub_{k_{r}}}\,
\prod_{r\notin A(\epsilon)}\big(\prod_{\substack{j\in I_{r}}}\x_{j}^{\gammab_{j}}\big)
\eeas
Note that $e(j,l)\leq e(j,k_{r})e(k_{r},l)$. Thus
\bea\label{4.3.9d}
N_{j}(\x)&=\sum_{l=1}^{n}|\x_{l}|^{e(j,l)}
\geq
\sum_{l\in I_{r}}|\x_{l}|^{e(j,l)}
\geq
\sum_{l\in I_{r}}|\x_{l}|^{e(k_{r},l)e(J,k_{r})}\\
&\gtrsim
\Big[\sum_{L\in I_{r}}|\x_{l}|^{e(k_{r},l)}\Big]^{e(j,k_{r})}
=
\widehat n_{S,r}(\x_{I_{r}})^{e(j,k_{r})}.
\eea
According to (\ref{4.3.6d})
\beas
\Big|\Big(\prod_{r\in A(\epsilonb)}&\partial^{\deltab_{r}-\mub_{r}}_{\x_{k_{r}}}\big(K_{\epsilon}\big)(\x)\Big)\Big|
\leq 
C_{\deltab,\mub}\prod_{j=1}^{n}N_{j}(\x)^{-Q_{j}}\prod_{r\in A(\epsilon)}\widehat n_{S,r}(\x_{I_{r}})^{-\[\deltab_{r}\]+\[\mub_{r}\]}\\
&\leq
C_{\deltab,\mub}\prod_{r=s}\widehat n_{S,r}(\x_{I_{r}})^{-\sum_{j\in I_{r}}Q_{j}/e(k_{r},j)}\prod_{r\in A(\epsilon)}\widehat n_{S,r}(\x_{I_{r}})^{-\[\deltab_{r}\]+\[\mub_{r}\]}.
\eeas
Thus according to (\ref{3.2.8a}) and (\ref{4.3.9d}), we can estimate $\big\vert\big(\prod_{r\in A(\epsilonb)}\x_{k_{r}}^{\deltab_{r}}\big)\partial^{\gammab}_{\x}K_{\epsilonb}(\x)\big\vert$ by a sum of terms of the form
\beas
&\Big[\prod_{r\notin A(\epsilon)}\!\!\!\!\int\limits_{\substack{\R^{I_{r}}\\\widehat n_{S,r}(\x_{I_{r}})<n_{k_{r}}(\xib_{k_{r}})^{-1}}}\Big|\prod_{j\in I_{r}}\x_{j}^{\gammab_{j}}\Big|\,\widehat n_{S,r}(\x_{I_{r}})^{-\sum_{j\in I_{r}}Q_{j}e(j,k_{r})}\,d\x_{I_{r}}\Big]\times\\
&
\Big[\prod_{r\in A(\epsilon)}\!\!\!\!\int\limits_{\substack{\R^{I_{r}}\\\widehat n_{S,r}(\x_{I_{r}})\geq n_{k_{r}}(\xib_{k_{r}})^{-1}}}\Big|\big(\prod_{\substack{j\in I_{r}\\j\neq k_{r}}}\x_{j}^{\gammab_{j}}\big)\x_{k_{r}}^{\gammab_{k_{r}}-\mub_{k_{r}}}
\Big|\,\widehat n_{S,r}(\x_{I_{r}})^{-\sum_{j\in I_{r}}Q_{j}e(j,k_{r})-\[\deltab_{r}\]+\[\mub_{r}\]}\,d\x_{I_{r}}\Big]\\
&\approx
\Big[\prod_{r\notin A(\epsilon)}\!\!\!\!\int\limits_{\substack{\R^{I_{r}}\\\widehat n_{S,r}(\x_{I_{r}})<n_{k_{r}}(\xib_{k_{r}})^{-1}}}\Big|\prod_{j\in I_{r}}\x_{j}^{\gammab_{j}}\Big|\,\widehat n_{S,r}(\x_{I_{r}})^{-\sum_{j\in I_{r}}Q_{j}e(j,k_{r})}\,d\x_{I_{r}}\Big]\times\\
&
\Big[\prod_{r\in A(\epsilon)}\!\!\!\!\int\limits_{\substack{\R^{I_{r}}\\\widehat n_{S,r}(\x_{I_{r}})\geq n_{k_{r}}(\xib_{k_{r}})^{-1}}}\Big|\big(\prod_{\substack{j\in I_{r}}}\x_{j}^{\gammab_{j}}\big)\Big|
\widehat n_{S,r}(\x_{I_{r}})^{-\sum_{j\in I_{r}}Q_{j}e(j,k_{r})-\[\deltab_{r}\]}\,d\x_{I_{r}}\Big].
\eeas
It may happen that the integrals over the regions where $\widehat n_{S,r}(\x_{I_{r}})<n_{k_{r}}(\xib_{k_{r}})^{-1}$ diverge if all the appropriate $\gammab_{j}$ are equal to zero. However, for these terms, we use the cancellation properties of the kernels since $\varphi_{\epsilon_{r}}\big(n_{k_{r}}(\xib_{k_{r}})\widehat n_{S,r}(\x_{I_{r}})\big)$ are dilates of normalized bump functions. Thus according to Proposition \ref{Prop12.3}, this last expression is bounded by
\beas
\prod_{r\notin A(\epsilon)}&n_{k_{r}}(\xib_{k_{r}})^{-\sum_{j\in I_{r}}\[\gammab_{j}\]e(j,k_{r})}\,\,
\prod_{r\in A(\epsilon)}n_{k_{r}}(\xib_{k_{r}})^{\sum_{j\in I_{r}}(-(\[\gammab_{j}\]-\[\deltab_{r}\])e(j,k_{r}))}\\
&=
\prod_{r\in A(\epsilon)}n_{k_{r}}(\xib_{k_{r}})^{\[\deltab_{r}\]e(j,k_{r})}\prod_{r=1}^{s}n_{k_{r}}(\xib_{k_{r}})^{-\sum_{j\in I_{r}}\[\gammab_{j}\]e(j,k_{r})}.
\eeas
Comparing with (\ref{3.2.8a}), this shows that $|\partial^{\gammab}_{\xib}m(\xib)|$ satisfies (\ref{6.12aa}), so the multiplier $m$ satisfies the correct differential inequalities, and this completes the proof of Theorem \ref{Thm6.2}.\end{proof}

\medskip

\section{Dyadic sums of Schwartz functions}\label{Schwartz sums}

In Section \ref{Duality} we showed that distributions $\KK\in \PP_{0}(\EEE)$ can be characterized in terms of their Fourier transforms. We now begin the study of a different kind of characterization in terms of dyadic sums of normalized bump functions. In this section we show that appropriate sums of dilates do converge to distributions $\KK\in \PP_{0}(\EEE)$. In Theorem \ref{Thm8.5} below, we will see that, modulo `coarser' kernels, every distribution $\KK\in \PP_{0}(\EEE)$ can be decomposed in this way.

If $\varphi\in\SS(\R^{N})$, the dilates $[\varphi]_{\t}$ and $(\varphi)_{\t}$ were defined in equation (\ref{2.3}). In this section we show that appropriate dyadic sums of dilates $[\varphi]_{\t}$ of Schwartz functions having suitable cancellation properties converge to distributions $\KK\in \PP_{0}(\EEE)$. Similarly we show that dyadic sums of dilates $(\varphi)_{\t}$ of Schwartz functions with appropriate vanishing conditions converge to multipliers $\MM_{\infty}(\EEE)$. The main results appear in subsections \ref{SubSec5.4}-\ref{SubSec5.6}. However, before turning to the main results of this section, we begin with some further remarks about cones associated to matrices, and then discuss inclusions among classes of kernels and multipliers defined by different matrices.

\goodbreak

\subsection{Cones associated to a matrix $\EEE$}\label{CONES}\quad

\medskip

Let $\EEE=\{e(j,k)\}$ be an $n\times n$ matrix satisfying the basic hypotheses in (\ref{2.5}) and let $\mu \geq 0$. We introduce the following notation. \index{G2ammaE@$\Gamma(\EEE),\Gamma_{\mu}(\EEE),\Gamma^o(\EEE),\Gamma_{\Z,\mu}(\EEE),\Gamma_{\Z}(\EEE)$} 
\bea\label{5.1qwe}
\Gamma(\EEE)&=\Big\{\t=(t_{1}, \ldots, t_{n})\in \R^{n}:\forall j\neq k,\,e(j,k)t_{k}\leq t_{j}<0\Big\},\\
\Gamma_{\mu}(\EEE)&=\Big\{\t=(t_{1}, \ldots, t_{n})\in \R^{n}:\forall j\neq k,\,e(j,k)t_{k}-\mu\leq t_{j}<0\Big\},\\
\Gamma^{o}(\EEE)&= {\rm int\,}\big(\Gamma(\EEE)\big)=\Big\{\t=(t_{1}, \ldots, t_{n})\in \R^{n}:\forall j\neq k,\,e(j,k)t_{k}< t_{j}<0\Big\},\\
\Gamma_{\Z,\mu}(\EEE)&=\big\{I=(i_{1}, \ldots, i_{n})\in \Z^{n}:\forall j\neq k,\, e(j,k)i_{k}-\mu\leq i_{j}<0\Big\},\\
\Gamma_{\Z}(\EEE)&=\Gamma_{\Z,0}(\EEE).
\eea

 We  will refer to properties of the cone $\Gamma(\EEE)$ which are proved in Appendix I (Section \ref{Cones}). In particular, Lemma \ref{Lem5.1ghj} summarizes the content of Lemma \ref{Lem3.5} and Lemma \ref{Lem3.7}.

Since $\EEE$ satisfies the basic hypothesis, $\Gamma(\EEE)$ is not empty.\footnote{See Proposition \ref{Prop3.3} in Appendix I.} However its interior, which is an open convex cone, can be empty. For example, if $\EEE=\left[\begin{matrix}1&2\\\frac{1}{2}&1\end{matrix}\right]$, $\Gamma(\EEE)$ is the one-dimensional ray given by $x_{1}=2x_{2}$, $x_{2}< 0$, which has no interior. Note that in this case $1=e(1,2)e(2,1)$. The following Lemma shows that, more generally,  any such equality is the obstruction to the open cone being non-empty.

\begin{lemma}\label{Lem5.1ghj}
Let $\EEE$ be a matrix satisfying the basic hypotheses \eqref{2.5}.
\begin{enumerate}
\item[\rm(a)] The interior of $\Gamma(\EEE)$ is non-empty if and only if $\EEE$ is reduced, i.e., $1<e(j,k)e(k,j)$ for every pair $1\leq j\neq k \leq n$.
\smallskip
\item[\rm(b)] $\Gamma(\EEE)$ is contained in the intersection of the hyperplanes $\big\{\t\in\R^n:e(j,k)t_k=t_j\big\}$ for all pairs $(j,k)$ such that $e(j,k)e(k,j)=1$ and has non-empty interior in this subspace of $\R^n$.
\end{enumerate}  
\end{lemma}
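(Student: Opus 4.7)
The plan is to treat parts (a) and (b) separately, with the real content concentrated in the reverse direction of (a).

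For the forward direction of (a), I would argue directly: if $t \in \Gamma^{o}(\EEE)$, pick any $j\neq k$ and combine the two strict inequalities $e(j,k)t_{k}<t_{j}<0$ and $e(k,j)t_{j}<t_{k}<0$. Dividing the first by the negative number $t_{j}$ (which flips the sign) gives $e(j,k)(t_{k}/t_{j})>1$, and similarly $e(k,j)(t_{j}/t_{k})>1$; multiplying the two yields $e(j,k)e(k,j)>1$, the reduced condition.

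For the reverse direction of (a), I would take logarithms. Writing $\beta_{j}=\log(-t_{j})$ and $D_{jk}=\log e(j,k)$, the existence of $t\in\Gamma^{o}(\EEE)$ becomes the strict feasibility of $\beta_{j}-\beta_{k}<D_{jk}$ for all $j\neq k$ on the weighted digraph with edge weights $D_{jk}$. The basic hypothesis translates to $D_{jj}=0$ and the triangle inequality $D_{jk}+D_{kl}\geq D_{jl}$, and standard Bellman--Ford theory says strict feasibility is equivalent to every directed cycle having strictly positive total $D$-weight. I would verify this last condition by induction on cycle length: a simple cycle $j_{1}\to\cdots\to j_{m}\to j_{1}$ of length $m\geq 3$ can be shortened to a cycle of length $m-1$ by merging two consecutive edges via the triangle inequality (which only decreases the total weight), and non-simple cycles decompose into simple ones. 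The base case is a $2$-cycle, whose weight $D_{jk}+D_{kj}$ is exactly the reduced hypothesis. Alternatively, one can first invoke non-strict feasibility (no non-negative cycle) to obtain a potential $\beta^{0}$ and then perturb by $O(\varepsilon)$, using finiteness of the simple cycle set and a uniform positive gap.

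For part (b), the inclusion $\Gamma(\EEE)\subset V:=\bigcap H_{jk}$ is direct: whenever $e(j,k)e(k,j)=1$, the inequalities $e(j,k)t_{k}\leq t_{j}$ and $e(k,j)t_{j}\leq t_{k}$ rewrite (using $e(k,j)=e(j,k)^{-1}$) as $e(j,k)t_{k}\leq t_{j}\leq e(j,k)t_{k}$, forcing $t_{j}=e(j,k)t_{k}$. For the non-empty interior in $V$, I would introduce the equivalence relation on $\{1,\dots,n\}$ generated by pairs with $e(j,k)e(k,j)=1$. A short chain computation with the basic hypothesis shows that along any such chain $j_{0},\dots,j_{m}$ one gets $e(j_{0},j_{m})e(j_{m},j_{0})=1$ as well, so the equivalence classes $A_{1},\dots,A_{n'}$ are characterized by $e(j,k)e(k,j)=1$ for every pair within a class and $e(j,k)e(k,j)>1$ across classes. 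Picking one representative $j_{a}\in A_{a}$, the submatrix $\EEE'=(e(j_{a},j_{b}))$ inherits the basic hypothesis and is now reduced, so part (a) furnishes $s\in\Gamma^{o}(\EEE')\subset \R^{n'}$. Setting $t_{j}=e(j,j_{a})s_{a}$ when $j\in A_{a}$ produces the desired point of $\Gamma(\EEE)$: the within-class constraints become equalities (placing $t\in V$), and the across-class constraints become strict (placing $t$ in the relative interior of $V\cap\Gamma(\EEE)$), verified via the basic hypothesis $e(j,j_{a})e(j_{a},j_{b})\leq e(j,k)e(k,j_{b})$.

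The main obstacle is the reverse direction of (a), specifically the passage from pairwise $D_{jk}+D_{kj}>0$ to strict positivity of every directed cycle; the cycle-reduction argument must handle non-simple cycles carefully, and the perturbation alternative requires a finiteness argument to justify a uniform $\varepsilon$. Everything in (b) then follows cleanly from (a) together with bookkeeping on the equivalence-class decomposition.
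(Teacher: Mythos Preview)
Your argument is correct. The forward direction of (a) and all of (b) match the paper's treatment (the paper calls the equivalence-class construction the ``reduced matrix'' $\EEE^{\flat}$ and proves exactly the transitivity, containment in $V$, and bijective lift that you describe).

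The reverse direction of (a) is where you diverge. The paper argues by induction on $n$: assuming the $(n-1)\times(n-1)$ principal submatrix $\EEE'$ has $\Gamma(\EEE')$ with non-empty interior, one picks any $\t'$ in that interior and checks, directly from the basic hypothesis, that the set of admissible $t_{n}$ is the non-degenerate open interval $\big(\max_{j} e(n,j)t_{j},\ \min_{k} e(k,n)^{-1}t_{k}\big)$; this yields an explicit open subset of $\Gamma(\EEE)$. Your route---the logarithmic change of variables $\beta_{j}=\log(-t_{j})$, $D_{jk}=\log e(j,k)$ followed by the potential/Bellman--Ford criterion---is a genuinely different and valid alternative. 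It has the virtue of isolating the combinatorial content: strict feasibility of the difference constraints is equivalent to strict positivity of every directed cycle, and the latter is exactly the statement that $\prod_{r} e(k_{r},k_{r+1})>1$ for every closed chain with not-all-equal indices. The paper actually proves this cycle inequality separately (as a standalone proposition used later for convergence of certain dyadic sums), but does not invoke it in the proof of the present lemma. So your approach unifies two pieces the paper keeps separate, at the cost of importing the feasibility theory; the paper's induction is more self-contained and slightly more constructive. One small correction: in your perturbation alternative you wrote ``no non-negative cycle'' where you meant ``no negative cycle''.
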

\smallskip

\subsection{Inclusions among classes of kernels associated to different matrices}\label{subs.inclusions}\quad
\medskip

The cone $\Gamma(\EEE)$ contains the relevant information on the structure of kernels and multipliers associated with the matrix $\EEE$.

\begin{proposition}\label{inclusion}
Let $\EEE,\EEE'$ be two $n\times n$ matrices,  both satisfying the basic hypotheses. The following are equivalent:
\begin{enumerate}
\item[\rm(i)] $\cP_0( \EEE)\subseteq \cP_0(\EEE')$;
\item[\rm(ii)] $\cM_\infty(\EEE)\subseteq \cM_\infty(\EEE')$;
\item[\rm(iii)] for every $j=1,\dots,n$ and $\xib\in B^c$, $\widehat N'_j(\xib)\le \widehat N_j(\xib)$;
\item[\rm(iv)] for every $j,k=1,\dots,n$, $e(j,k)\le e'(j,k)$;
\item[\rm(v)] $\Gamma(\EEE)\subseteq\Gamma(\EEE')$.
\end{enumerate}
\end{proposition}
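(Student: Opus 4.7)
I will establish the five conditions are equivalent via the cycle (v)$\Leftrightarrow$(iv)$\Leftrightarrow$(iii), then (iii)$\Rightarrow$(ii)$\Leftrightarrow$(i), closed by (ii)$\Rightarrow$(iv).

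For (iv)$\Leftrightarrow$(v), the definition $\Gamma(\EEE)=\{\t\in\R^n: t_k<0,\ e(j,k)t_k\le t_j\ \forall j\ne k\}$ makes the forward direction immediate: since $t_k<0$, the inequality $e(j,k)\le e'(j,k)$ gives $e'(j,k)t_k\le e(j,k)t_k\le t_j$. For the converse, fix $j_0\ne k_0$ and consider the point $\t\in\R^n$ with $t_{k_0}=-1$ and $t_l=-e(l,k_0)$ for $l\ne k_0$; the basic hypothesis (\ref{2.5}) verifies $\t\in\Gamma(\EEE)$ (the required inequality reduces to $e(j,k)e(k,k_0)\ge e(j,k_0)$), and then $\t\in\Gamma(\EEE')$ forces $e(j_0,k_0)\le e'(j_0,k_0)$. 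For (iii)$\Leftrightarrow$(iv), use the formula $\widehat N_j(\xib)=\sum_k n_k(\xib_k)^{1/e(k,j)}$ from (\ref{2.4}). Testing (iii) at $\xib\in\B^c$ with only one nonzero component $\xib_{k_0}$ of norm $\lambda\ge 1$ reduces the claim to $\lambda^{1/e'(k_0,j)}\le\lambda^{1/e(k_0,j)}$ for all large $\lambda$, which forces (iv). Conversely, on $\B^c$ some $n_l(\xib_l)\ge 1$ gives $\widehat N_j(\xib)\ge 1$; splitting the sum defining $\widehat N'_j(\xib)$ by whether $n_k(\xib_k)\ge 1$, large-component terms are dominated by the corresponding $\widehat N_j$-terms (since $1/e'(k,j)\le 1/e(k,j)$ lowers a $\ge 1$ quantity), while small-component terms are each $\le 1\le\widehat N_j$, yielding $\widehat N'_j\lesssim\widehat N_j$.

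The implication (iii)$\Rightarrow$(ii) is immediate from Definition \ref{Def2.5}: $\widehat N'_j\lesssim\widehat N_j$ gives $(1+\widehat N_j)^{-\[\gammab_j\]}\lesssim (1+\widehat N'_j)^{-\[\gammab_j\]}$, so any $m\in\MM_\infty(\EEE)$ satisfies the $\EEE'$-bounds. The equivalence (ii)$\Leftrightarrow$(i) is the content of Theorems \ref{Thm6.1} and \ref{Thm6.2}, which identify $\PP_0(\EEE)$ with the inverse Fourier transforms of $\MM_\infty(\EEE)$, and similarly for $\EEE'$.

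To close the cycle via (ii)$\Rightarrow$(iv), for each pair $(j_0,k_0)$ I would construct a family $\{m_\lambda\}_{\lambda\ge 1}$ of test multipliers uniformly in $\MM_\infty(\EEE)$ whose $\MM_\infty(\EEE')$-seminorms encode the entry $e(k_0,j_0)$. A natural candidate is $m_\lambda(\xib)=\chi(\lambda^{-1/e(k_0,j_0)}\cdot\xib_{k_0})\,\psi(\xib)$ with $\chi$ a bump supported where $n_{k_0}\approx 1$ and $\psi$ an auxiliary smooth factor on the remaining variables; on the support $n_{k_0}(\xib_{k_0})\approx\lambda^{1/e(k_0,j_0)}$, derivatives in $\xib_{k_0}$ scale as $\lambda^{-\[\gammab_{k_0}\]/e(k_0,j_0)}$, which matches $(1+\widehat N_{k_0}(\xib))^{-\[\gammab_{k_0}\]}$ since $\widehat N_{k_0}(\xib)\approx n_{k_0}(\xib_{k_0})$ there. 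Uniform membership in $\MM_\infty(\EEE')$ then forces the analogous scaling with $e'(k_0,j_0)$, and letting $\lambda\to\infty$ gives $e(k_0,j_0)\le e'(k_0,j_0)$. \textbf{The main obstacle} lies precisely here: verifying uniform $\MM_\infty(\EEE)$-membership of $m_\lambda$ requires controlling all mixed-variable derivative bounds and exploiting the basic hypothesis chain $e(j,l)\le e(j,k)e(k,l)$ so that cross-terms do not spoil uniformity in $\lambda$. A cleaner alternative, invoking the dyadic decompositions developed in Section \ref{Decompositions}, is to exhibit $\KK\in\PP_0(\EEE)$ of the form $\sum_{I\in\Gamma_\Z(\EEE)}[\varphi^I]_I$ concentrated along a ray of $\Gamma(\EEE)$; membership of $\KK$ in $\PP_0(\EEE')$ would then force that ray into $\Gamma(\EEE')$, directly establishing (v).
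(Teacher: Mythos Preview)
Your treatment of the routine implications—(iv)$\Leftrightarrow$(v), (iii)$\Leftrightarrow$(iv), (iii)$\Rightarrow$(ii), and (i)$\Leftrightarrow$(ii) via Theorems~\ref{Thm6.1} and~\ref{Thm6.2}—is correct and in line with the paper, which dismisses (v)$\Rightarrow$(iv)$\Rightarrow$(iii)$\Rightarrow$(ii) as obvious. Your argument for (v)$\Rightarrow$(iv) using the column vector $\t=(-e(l,k_0))_l\in\Gamma(\EEE)$ is exactly Proposition~\ref{Prop3.3}.

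The gap is in (ii)$\Rightarrow$(iv), and it is more than an unchecked technicality: your test family $m_\lambda$ cannot distinguish $\EEE$ from $\EEE'$. You differentiate in $\xib_{k_0}$, which tests the norm $\widehat N_{k_0}$; but on your support $\widehat N_{k_0}(\xib)\approx n_{k_0}(\xib_{k_0})$ via the diagonal term $e(k_0,k_0)=1$, and the \emph{same} holds for $\widehat N'_{k_0}$. So the $\EEE'$-bound on $\partial^{\gammab_{k_0}}m_\lambda$ is automatically satisfied, and no information about $e'(k_0,j_0)$ is extracted. To see the entry $e(k_0,j_0)$ one must take a $\xib_{j_0}$-derivative (since $\widehat N_{j_0}$ contains $n_{k_0}(\xib_{k_0})^{1/e(k_0,j_0)}$) at a point where $\xib_{k_0}$ is large—but your $m_\lambda$ has no nontrivial $\xib_{j_0}$-dependence. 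Your alternative via dyadic sums along a ray is also undeveloped: Theorem~\ref{Thm3.7} lets you build such a $\KK\in\PP_0(\EEE)$, but you give no mechanism to read the ray back out of membership in $\PP_0(\EEE')$.

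The paper proves (ii)$\Rightarrow$(v) by contradiction with a cleaner construction. If $-\alphab\in\Gamma(\EEE)\setminus\Gamma(\EEE')$ with all $\alpha_j>0$, set $N_{\alphab}(\xib)=\sum_j n_j(\xib_j)^{1/\alpha_j}$. The condition $-\alphab\in\Gamma(\EEE)$ is equivalent to $(1+N_{\alphab})^{\alpha_j}\gtrsim 1+\widehat N_j$ for all $j$, so every Mihlin--H\"ormander multiplier for the single dilation $\lambda\cdot_{\alphab}$ lies in $\MM_\infty(\EEE)$. Since $-\alphab\notin\Gamma(\EEE')$, some pair satisfies $\alpha_k/\alpha_j>e'(k,j)$; the explicit multiplier $m(\xib)=\xi_l/\widetilde N_{\alphab}(\xib)^{\alpha_j d_l}$ (with $l\in C_j$ and $\widetilde N_{\alphab}$ smooth and even in $\xi_l$) then has $|\partial_{\xi_l}m(0,\dots,\xib_k,\dots,0)|\gtrsim n_k(\xib_k)^{-\alpha_j d_l/\alpha_k}$, which violates the $\EEE'$-bound $(1+\widehat N'_j)^{-d_l}\approx n_k(\xib_k)^{-d_l/e'(k,j)}$. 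The point is that a multiplier homogeneous for a \emph{single} dilation trivially satisfies all the mixed $\MM_\infty(\EEE)$ bounds, bypassing the uniformity issues that plague your $m_\lambda$.
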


\begin{proof}
The equivalence between (i) and (ii) is given by Theorems~\ref{Thm6.1} and~\ref{Thm6.2}. The implications (v)$\Rightarrow$(iv)$\Rightarrow$(iii)$\Rightarrow$(ii) are obvious. In order to prove that (ii) implies (v), assume by contradiction that $\Gamma(\EEE)$ in not contained in $\Gamma(\EEE')$. Then there exists a half-line $\R^
-\alphab\in \Gamma(\EEE)\setminus \Gamma(\EEE')$, where $\alphab=(\alpha_1,\dots,\alpha_n)$ has strictly positive components.

Let $N_{\alphab}=\sum_{j=1}^nn_j(\xib)^{1/\alpha_j}$ and consider multipliers $m$ which satisfy the inequalities
\bea\label{m=N^i}
\big|\de^{\gammab} m(\xib)\big|\lesssim \big(1+N_{\alphab}(\xib)\big)^{-\sum_j\alpha_j\[\gamma_j\]}
\eea
for all $\gammab\in\N^N$.
We show that every such $m$ is in $\MM_\infty(\EEE)$ but there exist some which are not in $\MM_\infty(\EEE')$.

To prove the first statement it suffices to show that, for every $j$,
$$
\big(1+N_{\alphab}(\xib)\big)^{\alpha_j}\gtrsim 1+\widehat N_j(\xib).
$$
This condition is equivalent to the inequalities $\alpha_k/\alpha_j\le e(k,j)$ for every $j,k$, and in turn these are equivalent to the condition $-\alphab\in\Gamma(\EEE)$.

Similarly, since $-\alphab\not\in\Gamma(\EEE')$, there exist $j,k$ such that $\alpha_k/\alpha_j> e'(k,j)$.
Suppose in \eqref{m=N^i} we take $\gammab=(0,\dots,0,\gamma_j,\dots,0)$ and $\xib=(0,\dots,0,\xib_k,\dots,0)$. The inequality becomes
$$
\big|\de^{\gamma_j} m(0,\dots,\xib_k,\dots,0)\big|\lesssim\big(1+n_k(\xib_k)^{1/\alpha_k}\big)^{-\alpha_j\[\gamma_j\]}
$$
where the right-hand side  is not dominated by
$$
 \big(1+n_k(\xib_k)\big)^{-\frac1{e'(k,j)}\[\gamma_j\]}\approx \big(1+\widehat N'_j(\xib)\big)^{-\[\gamma_j\]}.
$$

It is then sufficient to construct $m$ satisfying \eqref{m=N^i} and such that, for some $l\in C_j$ it satisfies the opposite inequality
$$
\big|\de_{\xi_l} m(0,\dots,\xib_k,\dots,0)\big|\gtrsim\big(1+n_k(\xib_k)^{1/\alpha_k}\big)^{-\alpha_jd_l},
$$
e.g.,
$$
m(\xib)=\frac{\xi_l}{\widetilde N_{\alphab}(\xib)^{\alpha_j d_l}},
$$
where $\widetilde N_{\alphab}$ is a smooth homogeneous norm equivalent to $N_{\alphab}$ and even in $\xi_l$. This $m$ is not in $\MM_\infty(\EEE')$.
\end{proof}

\subsection{Coarser decompositions and lower dimensional matrices}\label{subs.coarser}\quad
\medskip

We will often need to consider classes $\PP_0(\EEE)$ where $\EEE$ is a lower-dimensional (say $s\times s$) matrix and the variables $\x_1,\x_2,\dots,\x_n$ have correspondingly been grouped together into $s$ blocks with given dilation exponents within each block.

Precisely, consider a coarser decompositions of $\R^N$,
$$
\R^N=\R^{A_1}\oplus\R^{A_2}\oplus\cdots\oplus\R^{A_s},
$$
where $\AA=\{A_1,A_2,\dots,A_s\}$ is a partition of $\{1,2,\dots,n\}$ and $\R^{A_r}$ is defined according to \eqref{Eqn2.3bb}. 
We also assign an $n$-tuple $\alphab=(\alpha_1,\alpha_2,\dots,\alpha_n)$ of positive exponents to define dilations on each $\R^{A_r}$, with homogeneous norm
\be\label{n_A}
n_{A_r}(\mbt_{A_r})=\sum_{j\in A_r}n_j(\mbt_j)^{\alpha_j},\qquad r=1,\dots,n.
\ee

Given an $s\times s$ matrix $\EEE=\big(e(t,r)\big)_{1\le t,r\le s}$ satisfying the basic hypotheses, with $s<n$, let $\PP_0(\EEE,\AA,\alphab)$ be the class of associated kernels and  $\MM_\infty(\EEE,\AA,\alphab)$ the class of multipliers. The associated  norms are, for $1\le t\le s$,
\bea\label{globalnorms}
N_t(\x)&=\sum_{r=1}^sn_{A_r}(\x_{A_r})^{e(t,r)}\approx \sum_{r=1}^s\sum_{k\in A_r}n_k(\x_k)^{\alpha_ke(t,r)}\\
\widehat N_t(\xib)&=\sum_{r=1}^sn_{A_r}(\xib_{A_r})^{\frac1{e(r,t)}}\approx \sum_{r=1}^s\sum_{k\in A_r}n_k(\xib_k)^{\frac{\alpha_k}{e(r,t)}}.
\eea

Define the $n\times n$ matrix $\EEE^\sharp=\big(e^\sharp(j,k)\big)$ where, for $j\in A_t$ and $k\in A_r$,
\be\label{esharp}
e^\sharp(j,k)=\frac{\alpha_{k}}{\alpha_j}e\big(t,r).
\ee
\index{E2sharp@$\EEE^\sharp$}

Then $\EEE^\sharp$ satisfies the basic assumptions and the associated dual norms are
\beas
N^\sharp_{j}(\x) &=\sum_{r=1}^s \sum_{k\in A_r}n_k(\x_k)^{\frac{\alpha_k}{\alpha_{j}}e(t,r)}\approx  N_t(\x)^{1/\alpha_{j}}\\
\widehat N^\sharp_{j}(\xib) &=\sum_{r=1}^s \sum_{k\in A_r}n_k(\xib_k)^{\frac{\alpha_k}{\alpha_{j}}\frac1{e(r,t)}}\approx \widehat N_t(\xib)^{1/\alpha_{j}}.
\eeas

\begin{lemma}\label{sharp}
We have the equalities $\MM_\infty(\EEE,\AA,\alphab)=\MM_\infty(\EEE^\sharp)$,  $\PP_0(\EEE,\AA,\alphab)=\PP_0(\EEE^\sharp)$.
\end{lemma}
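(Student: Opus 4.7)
\medskip\noindent\textbf{Plan.} The entire proof rests on the norm equivalences
\bes
N^\sharp_{j}(\x) \approx N_t(\x)^{1/\alpha_{j}}, \qquad \widehat N^\sharp_{j}(\xib) \approx \widehat N_t(\xib)^{1/\alpha_{j}} \qquad (j\in A_t)
\ees
that are already computed in the display just before the statement. First I would verify that $\EEE^\sharp$ satisfies the basic hypothesis \eqref{2.5}: the diagonal condition is automatic, since $e^\sharp(j,j)=\frac{\alpha_{j}}{\alpha_j}e(t,t)=1$ for $j\in A_t$, and the triangle condition $e^\sharp(j,k)\le e^\sharp(j,l)\,e^\sharp(l,k)$ reduces, after cancelling the $\alpha$-factors, to the triangle inequality for $\EEE$ applied to the block indices of $j,k,l$. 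Hence both sides of each claimed equality make sense as defined classes.

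Next I would prove $\MM_\infty(\EEE,\AA,\alphab)=\MM_\infty(\EEE^\sharp)$ by direct comparison of the defining inequalities. The key arithmetic observation is that a multi-index $\gammab_{A_t}\in\N^{A_t}$ has weighted length $\sum_{j\in A_t}\[\gammab_j\]/\alpha_j$ relative to the $A_t$-block dilations (since the dilation exponent of $x_l$ for $l\in C_j\subset A_t$ is $d_l/\alpha_j$). Raising $\widehat N_j^\sharp \approx \widehat N_t^{1/\alpha_j}$ to the power $\[\gammab_j\]$ and multiplying over all $j$ yields
\bes
\prod_{j=1}^{n}\bigl(1+\widehat N^\sharp_j(\xib)\bigr)^{-\[\gammab_j\]}\approx \prod_{t=1}^{s}\bigl(1+\widehat N_t(\xib)\bigr)^{-\sum_{j\in A_t}\[\gammab_j\]/\alpha_j},
\ees
so the differential inequalities defining the two multiplier classes are equivalent term by term. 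A completely parallel computation, using $N^\sharp_j\approx N_t^{1/\alpha_j}$ and the analogous formula for the homogeneous dimensions $Q_{A_t}=\sum_{j\in A_t}Q_j/\alpha_j$, shows the differential inequalities in Definition~\ref{Def2.5}(\ref{Def2.5a}) for $\PP_0(\EEE^\sharp)$ match those for $\PP_0(\EEE,\AA,\alphab)$.

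For the equality $\PP_0(\EEE,\AA,\alphab)=\PP_0(\EEE^\sharp)$ I would then invoke Fourier duality rather than argue about cancellations directly. Theorems \ref{Thm6.1} and \ref{Thm6.2} are established for an arbitrary basic decomposition of $\R^N$ equipped with a matrix satisfying the basic hypothesis, and their proofs do not depend on the specific decomposition: they apply verbatim to the coarser decomposition $\R^N=\R^{A_1}\oplus\cdots\oplus\R^{A_s}$ together with the $s\times s$ matrix $\EEE$, as well as to the finer decomposition with $\EEE^\sharp$. Consequently each of $\PP_0(\EEE,\AA,\alphab)$ and $\PP_0(\EEE^\sharp)$ coincides with the inverse Fourier image of the single multiplier class $\MM_\infty(\EEE,\AA,\alphab)=\MM_\infty(\EEE^\sharp)$, and the lemma follows.

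The point where one must be careful is the cancellation condition in Definition \ref{Def2.2}(\ref{Def2.2B}): in $\PP_0(\EEE^\sharp)$ it is indexed by arbitrary subsets of $\{1,\ldots,n\}$, whereas in $\PP_0(\EEE,\AA,\alphab)$ it is indexed only by subsets of $\{1,\ldots,s\}$, i.e., by partitions respecting $\AA$. The inclusion $\PP_0(\EEE^\sharp)\subseteq \PP_0(\EEE,\AA,\alphab)$ on the cancellation side is immediate (every admissible $\psi_R$ for the coarse setting is a special case of one for the fine setting, with the scaling parameters coupled within each block by $\widetilde R_j=R_t^{1/\alpha_j}$). The opposite inclusion is not transparent from the definitions, and the Fourier detour is what makes the argument go through cleanly, since the cancellation conditions are automatically verified by the integration-by-parts computation in the proof of Theorem~\ref{Thm6.1}.
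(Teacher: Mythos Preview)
Your proposal is correct and follows essentially the same route as the paper: establish $\MM_\infty(\EEE,\AA,\alphab)=\MM_\infty(\EEE^\sharp)$ by direct comparison of the defining differential inequalities via the norm equivalences $\widehat N^\sharp_j\approx \widehat N_t^{1/\alpha_j}$, then deduce the equality of kernel classes from Theorems~\ref{Thm6.1} and~\ref{Thm6.2}. Your additional remarks—verifying the basic hypothesis for $\EEE^\sharp$ and flagging the asymmetry in the cancellation conditions that the Fourier detour circumvents—are correct and in fact anticipate the paper's comment immediately following the lemma that kernels in $\PP_0(\EEE,\AA,\alphab)$ automatically satisfy the stronger cancellations of $\PP_0(\EEE^\sharp)$.
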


\begin{proof}
Let $m\in \MM_\infty(\EEE,\AA,\alphab)$.  Then, for every $\boldsymbol\gamma=(\boldsymbol\gamma_1,\dots,\boldsymbol\gamma_s)\in\bN^{A_1}\times\cdots\times\bN^{A_s}$,
$$
\big|\de^{\boldsymbol\gamma}_{\boldsymbol\xi}m(\boldsymbol\xi)\big|\lesssim\prod_{t=1}^s\big(1+\widehat N_t(\xib)\big)^{-\[\boldsymbol\gamma_t\]},
$$
where
$$
[\![\boldsymbol\gamma_t]\!]=\sum_{j\in A_t}\frac1{\alpha_j}\sum_{l\in C_j}\gamma_ld_l,
$$
by \eqref{2.3iou}, taking into account the dilations on $\R^{A_t}$ which make \eqref{n_A} a homogeneous norm.

If $\gammab_t=(\gammab'_j)_{j\in A_t}$, $\gammab$ can be written as an element of $\bN^{C_1}\times\cdots\times \bN^{C_n}$  as $\gammab=(\gammab'_1,\dots,\gammab'_n)$. Then
$$
\sum_{l\in C_j}\gamma_ld_l=\[\gammab'_j\]
$$
and therefore
$$
\begin{aligned}
\prod_{t=1}^s\big(1+\widehat N_t(\xib)\big)^{-[\![\gammab_t]\!]}&=\prod_{t=1}^s\prod_{j\in A_t}\big(1+\widehat N_t(\xib)\big)^{-\frac1{\alpha_j}[\![\gammab'_j]\!]}\\
&\approx\prod_{t=1}^s\prod_{j\in A_t}\big(1+\widehat N^\sharp_j(\xib)\big)^{-\[\gammab'_j\]}=\prod_{j=1}^n\big(1+\widehat N^\sharp_j(\xib)\big)^{-\[\gammab'_j\]}\ .
\end{aligned}
$$

Then $m\in\cM_\infty(\mathbf E)$. The opposite implication is proved in the same way. The equality $\cP_0(\mathbf E)=\cP_0(\mathbf E^\sharp)$ follows from Theorems \ref{Thm6.1} and \ref{Thm6.2}.
\end{proof}

Notice that Lemma \ref{sharp} implies that kernels belonging to $\PP_{0}(\EEE,\AA,\alphab)$ satisfy stronger cancellation properties than those required by the definition.

Combining together Proposition \ref{inclusion} and Lemma \ref{sharp} we obtain the following statement.
\begin{corollary}\label{sharp-inclusion}
Let $\AA$ and $\alphab$ be as above, and let $\EEE$ an $s\times s$ matrix satisfying the basic hypotheses. $\EEE'$ an $n\times n$ matrix also satisfying the basic hypotheses. We have the inclusion $\PP_0(\EEE,\AA,\alphab)\subseteq \PP_0(\EEE')$ if and only if the matrices $\EEE^\sharp$ and $\EEE'$ satisfy the condition
$$
e^\sharp(j,k)\le e'(j,k),\qquad\forall\,j,k,\ 1\le j,k\le n,
$$
together with the other equivalent conditions in Proposition \ref{inclusion}.
\end{corollary}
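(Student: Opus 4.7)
The proof is essentially a one-line concatenation of the two results just established, so my plan is simply to spell out the substitution cleanly and flag the only minor point that needs checking, namely that the hypotheses of Proposition \ref{inclusion} apply to the pair $(\EEE^\sharp,\EEE')$.

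First I would invoke Lemma \ref{sharp} to rewrite the left-hand class in the principal decomposition: $\PP_0(\EEE,\AA,\alphab)=\PP_0(\EEE^\sharp)$, where $\EEE^\sharp$ is the $n\times n$ matrix defined in \eqref{esharp}. As observed in the statement of Lemma \ref{sharp}, $\EEE^\sharp$ satisfies the basic hypotheses (\ref{2.5}), so both $\EEE^\sharp$ and $\EEE'$ are $n\times n$ matrices meeting the hypotheses of Proposition \ref{inclusion}.

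Second, I would apply Proposition \ref{inclusion} directly to this pair. It yields that
\[
\PP_0(\EEE^\sharp)\subseteq \PP_0(\EEE')
\]
is equivalent to $e^\sharp(j,k)\le e'(j,k)$ for all $1\le j,k\le n$, and also equivalent to each of the other conditions listed in Proposition \ref{inclusion} (inclusion of multiplier classes, pointwise domination of the dual norms on $\B^c$, and inclusion of the associated cones). Combining this equivalence with the equality from Lemma \ref{sharp} gives the statement of the corollary.

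There is no genuine obstacle here, since the two ingredients do all the work. The only thing worth double-checking is that the dilation structure on $\R^N$ used in defining $\PP_0(\EEE^\sharp)$ is the same isotropic-within-each-$\R^{C_j}$ structure used to define $\PP_0(\EEE')$; this is precisely how $\EEE^\sharp$ was constructed in \eqref{esharp}, where the extra factors $\alpha_k/\alpha_j$ absorb the rescalings built into the coarser decomposition $\AA$ and the exponents $\alphab$. Hence the comparison of the two classes reduces, without any further accommodation, to the entrywise comparison of $\EEE^\sharp$ and $\EEE'$ provided by Proposition \ref{inclusion}.
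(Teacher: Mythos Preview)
Your proposal is correct and matches the paper's approach exactly: the corollary is stated immediately after Lemma \ref{sharp} with the remark that it follows by ``combining together Proposition \ref{inclusion} and Lemma \ref{sharp}'', and no further proof is given. Your check that $\EEE^\sharp$ satisfies the basic hypotheses is also justified, as the paper notes this right after \eqref{esharp}.
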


This result can be applied in particular to the case where an $n\times n$ matrix $\EEE$ satisfying the basic hypotheses is not reduced, i.e., there exist $j\ne k$ such that $e(j,k)e(k,j)=1$.  As shown in Appendix I (Section \ref{Reduced}), in this case the $j$-th row of $\EEE$ equals $e(j,k)$ times the $k$-th row, and the same holds for the $k$-th and $j$-th column.

Following the notation of Lemma \ref{Lem3.7}, we denote by $k_1,\dots, k_s$ representatives of the equivalence classes $A_1,\dots,A_s$ under the relation
$$
j\sim k\ \Longleftrightarrow\ e(j,k)e(k,j)=1.
$$

Then, for all $j\in A_t$,
\bea\label{reducednorms}
N_j(\mathbf x)&\approx N_{k_t}(\mathbf x)^{e(j,k_t)} &&\text{and}& \widehat N_j(\xib)&\approx \widehat N_{k_t}(\xib)^{e(j,k_t
)}.
\eea

Moreover, for every $r\in\{1,\dots,s\}$,
$$
N_{k_t}(\x)\approx\sum_{r=1}^s\Big[\sum_{k\in A_r}n_k(\x_k)^{e(k_r,k)}\Big]^{e(k_t,k_r)}\quad \text{and}\quad
  \widehat N_{k_t}(\xib)\approx\sum_{r=1}^s\Big[\sum_{k\in A_r}n_k(\xib_k)^{e(k_r,k)}\Big]^{1/e(k_r,k_t)}.
$$
Therefore we are in the previous situation, with $\alpha_k=e(k_r,k)$ if $k\in A_r$ and with the reduced matrix
$$
\mathbf E^\flat=\big(e(k_t,k_r)\big)_{1\le t,r\le s}
$$ 
defined in  Section \ref{Reduced}. Notice that $(\EEE^\flat)^\sharp=\EEE$.
By Lemma \ref{sharp} we have the following

\begin{proposition}\label{PEflat}
$\PP_0(\EEE)=\PP_0(\EEE^\flat)$ and $\MM_\infty(\EEE)=\MM_\infty(\EEE^\flat)$.
\end{proposition}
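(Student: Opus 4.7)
The plan is to deduce the proposition from Lemma~\ref{sharp} by showing that the given setup fits into the framework of that lemma, with $\EEE$ arising as the sharpening $(\EEE^\flat)^\sharp$ of $\EEE^\flat$ relative to the partition $\AA=\{A_1,\dots,A_s\}$ and the exponents $\alpha_k=e(k_r,k)$ for $k\in A_r$. Once this identification is made, the claim follows formally.

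First I would verify the key algebraic identity $(\EEE^\flat)^\sharp=\EEE$. By definition \eqref{esharp}, for $j\in A_t$ and $k\in A_r$ we have
\[
(\EEE^\flat)^\sharp(j,k)=\frac{\alpha_k}{\alpha_j}\,e(k_t,k_r)=\frac{e(k_r,k)}{e(k_t,j)}\,e(k_t,k_r).
\]
Since $j\sim k_t$ means $e(j,k_t)e(k_t,j)=1$, we can rewrite this as $e(j,k_t)\,e(k_t,k_r)\,e(k_r,k)$. Now I invoke the structural fact recalled just before \eqref{reducednorms}: whenever $j\sim k_t$, the $j$-th row of $\EEE$ equals $e(j,k_t)$ times the $k_t$-th row, and whenever $k\sim k_r$, the $k$-th column equals $e(k_r,k)$ times the $k_r$-th column. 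Applying column proportionality with $l=k_t$ gives $e(k_t,k)=e(k_t,k_r)\,e(k_r,k)$, and then row proportionality gives $e(j,k)=e(j,k_t)\,e(k_t,k)=e(j,k_t)\,e(k_t,k_r)\,e(k_r,k)$. This matches the expression above, so $(\EEE^\flat)^\sharp=\EEE$.

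Next I would unwind what the class $\PP_0(\EEE^\flat)$ means in this context. The matrix $\EEE^\flat$ is $s\times s$, so it is attached to the coarser decomposition $\R^N=\R^{A_1}\oplus\cdots\oplus\R^{A_s}$; on each $\R^{A_r}$ one must put a homogeneous norm, and the natural choice dictated by the reduction (compare with the displays preceding \eqref{reducednorms}) is precisely $n_{A_r}(\x_{A_r})=\sum_{k\in A_r}n_k(\x_k)^{e(k_r,k)}$, i.e.~the norm corresponding to the exponents $\alpha_k=e(k_r,k)$. With this choice, the global norms $N_t(\x)$ attached to $(\EEE^\flat,\AA,\alphab)$ via \eqref{globalnorms} agree up to equivalence with $N_{k_t}(\x)$ from the display preceding Proposition~\ref{PEflat}, and similarly for the dual norms. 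Thus $\PP_0(\EEE^\flat)$ and $\MM_\infty(\EEE^\flat)$ are, by construction, the classes $\PP_0(\EEE^\flat,\AA,\alphab)$ and $\MM_\infty(\EEE^\flat,\AA,\alphab)$.

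Finally, applying Lemma~\ref{sharp} to the base matrix $\EEE^\flat$ with partition $\AA$ and exponents $\alphab$, together with the identity $(\EEE^\flat)^\sharp=\EEE$ proved in the first step, yields
\[
\PP_0(\EEE^\flat)=\PP_0(\EEE^\flat,\AA,\alphab)=\PP_0\bigl((\EEE^\flat)^\sharp\bigr)=\PP_0(\EEE),
\]
and likewise for $\MM_\infty$. The only non-routine step is the entrywise verification $(\EEE^\flat)^\sharp=\EEE$; everything else is bookkeeping, relying on the row/column proportionality in $\EEE$ for equivalent indices established in Appendix~I.
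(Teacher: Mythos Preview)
Your proof is correct and follows exactly the paper's approach: the paper states just before the proposition that $(\EEE^\flat)^\sharp=\EEE$ and then invokes Lemma~\ref{sharp}, which is precisely what you do. You simply spell out in detail the entrywise verification of $(\EEE^\flat)^\sharp=\EEE$ using the row/column proportionality, whereas the paper leaves this as a one-line remark.
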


Proposition \ref{PEflat} makes it possible to replace, whenever it is appropriate, $\EEE$  by $\EEE^\flat$ and reduce matters to the case of a reduced matrix.

\medskip
\subsection{Size estimates}\label{SubSec5.4}\quad

\medskip

We now turn to the construction of the dyadic decompositions of kernels $K$ belonging to classes $\PP_{0}(\EEE)$. In the rest of this section, we shall assume that $\Gamma^{o}(\EEE)$ is non-empty, and consequently that $\Gamma_{\Z}(\EEE)$ is non-empty, where $\Gamma^{o}(\EEE)$ and $\Gamma_{\Z}(\EEE)$ were defined in equation (\ref{5.1qwe}). Let $\big\{\varphi^{I}:I\in\Gamma_{\Z}(\EEE)\big\}$ be a uniformly bounded family in $\CC^{\infty}_{0}(\R^{N})$ or $\SS(\R^{N})$. We will show that if all the functions $\varphi^{I}$ satisfy a suitable cancellation condition, then the sum 
\be\label{4.2}
K=\sum_{I\in \Gamma_{\Z}(\EEE)}[\varphi^{ I}]_{ I}
\ee 
converges in the sense of distributions to an element $\KK\in \PP_{0}(\EEE)$. The precise statement is given in Theorem \ref{Thm3.7} below.

We first show that, even without any cancellation hypotheses on the functions $\varphi^{I}$, the sum in equation (\ref{4.2}) satisfies the differential inequalities of Definition \ref{Def2.2} for the class $\PP_{0}(\EEE)$.

\begin{lemma}\label{Lem4.1}
Suppose that $\Gamma^{o}(\EEE)$ is non-empty, and let $\big\{\varphi^{I}:I\in\Gamma_{\Z}(\EEE)\big\}\subset \SS(\R^{N})$ be a uniformly bounded family of Schwartz functions. Then the series $K(\x)=\sum_{ I\in \Gamma_{\Z}(\EEE)} [\varphi^{ I}]_{ I}(\x)$ converges absolutely for all $\x\neq 0$, $K\in\CC^{\infty}(\R^{N}\setminus\{0\})$, and
\bea\label{4.3}
\partial^{\gammab}K(\x) &= 
\sum_{I\in \Gamma_{\Z}({\EEE})}2^{-\sum_{j=1}^{n}i_{j}(Q_{j}+\[ \gammab_{j}\])}(\partial^{\gamma}\varphi^{I})\big(2^{-i_{1}}\,\cdot\,\x_{1}, \ldots, 2^{-i_{n}}\,\cdot\,\x_{n}\big).
\eea
Moreover, for every $\gammab=(\gammab_{1}, \ldots, \gammab_{n})\in \N^{N}$ and $M\geq 0$ there exists a constant $C_{\gammab,M}>0$ so that
\be\label{4.5wer}
\big\vert\partial^{\gammab}K(\x)\big\vert\leq C_{\gammab,M}\,\prod_{j=1}^{n}N_{j}(\x)^{-(Q_{j}+\[\gammab_{j}\])}(1+|\x|)^{-M}.
\ee
\end{lemma}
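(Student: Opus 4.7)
My plan proceeds in three stages: termwise differentiation, Schwartz decay on individual summands, and summing over the cone $\Gamma_{\Z}(\EEE)$. The first two are routine; the third is the real content.

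First I would verify formula \eqref{4.3} by formal term-by-term differentiation. Because $2^{-i_j}\cdot\x_j = (2^{-i_j d_l}x_l)_{l\in C_j}$, each application of $\partial_{x_l}$ for $l\in C_j$ pulls out $2^{-i_j d_l}$; collecting these across $\gammab = (\gammab_1,\dots,\gammab_n)$ gives the factor $2^{-\sum_j i_j \[\gammab_j\]}$, which combines with the $L^1$-normalization factor $2^{-\sum_j i_j Q_j}$ to produce \eqref{4.3}. The legitimacy of termwise differentiation and of the absolute convergence of the resulting series for $\x\ne 0$ will follow once I have locally uniform control on the summands, which is the point of step two. Using that $\{\varphi^I\}$ is uniformly bounded in $\SS(\R^N)$, for any $n$-tuple of non-negative integers $M_1,\dots,M_n$ I get, uniformly in $I$,
\[
\big|(\partial^{\gammab}\varphi^I)(\y_1,\dots,\y_n)\big| \le C_{\gammab,M_1,\dots,M_n}\prod_{j=1}^{n}\big(1+n_j(\y_j)\big)^{-M_j}.
\]
Substituting $\y_j = 2^{-i_j}\cdot\x_j$ and using $n_j(2^{-i_j}\cdot\x_j)=2^{-i_j}n_j(\x_j)$ yields
\[
\big|\partial^{\gammab}[\varphi^I]_I(\x)\big| \le C\,2^{-\sum_j i_j(Q_j+\[\gammab_j\])}\prod_{j=1}^{n}\big(1+2^{-i_j}n_j(\x_j)\big)^{-M_j}.
\]

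The heart of the proof is then to sum this bound over $I \in \Gamma_{\Z}(\EEE)$ and obtain \eqref{4.5wer}. Setting $t_j = -i_j \ge 1$, $a_j=n_j(\x_j)$ and $b_j = Q_j+\[\gammab_j\]$, the translated cone is
\[
C' = \{T=(t_1,\dots,t_n)\in\Z^n : t_j\ge 1,\ t_j \le e(j,k)t_k\ \forall j\ne k\},
\]
and I must show
\[
\Sigma := \sum_{T\in C'}2^{\sum_j t_j b_j}\prod_{j=1}^{n}\big(1+2^{t_j}a_j\big)^{-M_j}\ \lesssim\ \prod_{j=1}^{n}N_j(\x)^{-b_j}\,(1+|\x|)^{-M}.
\]
The basic one-variable estimate $\sum_{t\ge 1}2^{tb}(1+2^ta)^{-M}\lesssim a^{-b}(1+a)^{-(M-b)}$ (valid for $M>b$, proved by splitting at $t\approx -\log_2 a$) is the elementary input. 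The plan is to decompose $\Sigma$ according to the marked partition $S=S(\x)$ of $\x$ from Section~\ref{Partitions}: if $\x\in E_S^A$ with $S=\{(I_1,k_1);\dots;(I_s,k_s)\}$, Proposition~\ref{Lem5.15} gives $N_j(\x)\approx a_{k_r}^{e(j,k_r)}$ for $j\in I_r$. The cone relations $t_j\le e(j,k_r)t_{k_r}$ for $j\in I_r$ let me first sum the non-key coordinates $t_j$, $j\in I_r\setminus\{k_r\}$, in the truncated ranges dictated by the cone, and then apply the one-variable estimate to each key coordinate $t_{k_r}$. The resulting exponent $\sum_{j\in I_r}b_j e(j,k_r)$ of $a_{k_r}$ matches exactly $\prod_{j\in I_r}N_j(\x)^{-b_j}\approx a_{k_r}^{-\sum_{j\in I_r}b_j e(j,k_r)}$, which assembles to $\prod_j N_j(\x)^{-b_j}$. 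Finally, choosing the $M_j$ sufficiently large so that $\prod_j(1+n_j(\y_j))^{-M_j}$ controls $(1+|\y|)^{-M}$ for the given $M$ yields the rapid decay at infinity.

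The main obstacle will be the third step: the naive bound obtained by ignoring the constraints and treating the sum as a product of one-variable sums produces $\prod_j a_j^{-b_j}$, which is strictly larger than $\prod_j N_j(\x)^{-b_j}$ unless $\x\in E_{S_0}$. Exploiting the cone condition $t_j\le e(j,k)t_k$ to truncate the non-key summations correctly — equivalently, organising the summation by the marked partition governing the geometry of $\x$ — is what makes the bound sharp. Everything else (absolute convergence, smoothness of $K$ on $\R^N\setminus\{0\}$, and legitimacy of \eqref{4.3}) is a consequence of the locally uniform majorant provided by \eqref{4.5wer}.
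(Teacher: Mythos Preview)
Your approach is correct but differs from the paper's. The paper delegates the summation to Proposition~\ref{Prop13.1} in Appendix~III, whose proof does not use marked partitions at all. Instead it bounds $|\partial^{\gammab}\varphi^{I}(\y)|$ by a \emph{single} Schwartz factor $(1+\sum_k n_k(\y_k))^{-M}$ (rather than your product $\prod_j(1+n_j(\y_j))^{-M_j}$) and then uses the cone inequality $-i_j\le -e(j,k)i_k$ in the form
\[
1+2^{-i_j}N_j(\x)\ \le\ 1+\sum_k\big(2^{-i_k}n_k(\x_k)\big)^{e(j,k)}\ \lesssim\ \Big[1+\sum_k 2^{-i_k}n_k(\x_k)\Big]^{\e},\qquad \e=\max_{j,k}e(j,k).
\]
This converts the single factor into the product $\prod_j\big(1+2^{-i_j}N_j(\x)\big)^{-(\alpha_j+1)}$ \emph{before} summing; afterwards the sum over $I$ is dominated by a product of decoupled one-variable sums $\sum_{i_j\le 0}2^{-i_j\alpha_j}\big(1+2^{-i_j}N_j(\x)\big)^{-(\alpha_j+1)}\lesssim N_j(\x)^{-\alpha_j}$, with the $(1+|\x|)^{-M}$ decay coming from leftover powers of the original factor. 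No case splitting in $\x$ is needed.

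Your route---fixing the marked partition $S=S(\x)$, summing the non-key indices using the truncation $t_j\le e(j,k_r)t_{k_r}$ to get $\lesssim 2^{e(j,k_r)t_{k_r}b_j}$, then summing the key indices---also lands on $\prod_r a_{k_r}^{-\sum_{j\in I_r}e(j,k_r)b_j}\approx\prod_j N_j(\x)^{-b_j}$. But it costs a region-by-region analysis, the sets $E_S^A$ are only set up for $\x\in\B(1)$ so the large-$|\x|$ regime needs a separate (easy) argument, and after dropping the $(1+2^{t_j}a_j)^{-M_j}$ factors on the non-key variables the extraction of the $(1+|\x|)^{-M}$ decay is slightly less direct. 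The paper's trick of building $N_j$ into the majorant before summing is the more economical device.
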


\begin{proof}
Since the family $\{\varphi^{I}\}$ is uniformly bounded in $\SS(\R^{n})$, for each $M>0$ there is a constant $C_{\gamma,M}$ so that

\beas
\Big\vert2^{-\sum_{j=1}^{n}i_{j}(Q_{j}+\[\gammab_{j}\])}(\partial^{\gamma}\varphi^{I})\big(2^{-i_{1}}\,&\cdot\,\x_{1}, \ldots, 2^{-i_{n}}\,\cdot\,\x_{n}\big)\Big\vert
\\&\leq 
C_{\gamma,M}\prod_{j=1}^{n}2^{-i_{j}(Q_{j}+\[\gammab_{j}\])}(1+\sum_{j=1}^{n}2^{-i_{j}} n_{j}({\x}_{j}))^{-M}.
\eeas
It follows from Proposition \ref{Prop13.1} in Appendix III that the series on the right hand side of equation (\ref{4.3}) converges absolutely and is bounded by $C\,\prod_{j=1}^{n} N_{j}(\x)^{-(Q_{j}+\[\gammab_{j}\])}\prod_{j=1}^{n}(1+n_{j}(\x))^{-M'}$. This shows that the formal differentiation of equation (\ref{4.2}) yielding (\ref{4.3}) is justified, and also establishes the inequality in (\ref{4.5wer}). 
\end{proof}

As an application of Lemma \ref{Lem4.1} we check that kernels satisfying the differential inequalities in part (\ref{Def2.2A}) of Definition \ref{Def2.2} just fail to be integrable near the origin. See also Lemma \ref{Lem4.4}.

\begin{corollary}\label{Cor4.2}
Let $\EEE$ satisfy (\ref{2.5}) and let $N_{j}(\x)$ be defined in equation (\ref{2.4}). Suppose that $\Gamma^{o}(\EEE)$ is non-empty. Then
\begin{enumerate}[{\rm(a)}]
\item \label{Cor4.2a}$\displaystyle \int_{\B(1)}\prod_{j=1}^{n}N_{j}(\x)^{-Q_{j}}\,d\x =+\infty$; 

\item \label{Cor4.2b}If $\alpha>0$ and $1 \leq k \leq n$ then $\displaystyle \int_{\B(1)}N_{k}(\x)^{\alpha}\prod_{j=1}^{n}N_{j}(\x)^{-Q_{j}}\,d\x <+\infty$ .
\end{enumerate}
\end{corollary}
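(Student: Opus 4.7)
The plan for part (a) is to use Lemma \ref{Lem4.1} to produce a non-negative function that is pointwise dominated by $\prod_{j}N_{j}^{-Q_{j}}$ but whose integral over $\B(1)$ is infinite. I pick $\varphi\in\CC^{\infty}_{0}(\R^{N})$ with $\varphi\geq 0$, $\varphi\not\equiv 0$, and $\operatorname{supp}\varphi\subset\B(1)$, and apply Lemma \ref{Lem4.1} to the constant family $\varphi^{I}\equiv\varphi$. The resulting series $K(\x)=\sum_{I\in\Gamma_{\Z}(\EEE)}[\varphi]_{I}(\x)$ converges absolutely for $\x\neq 0$ to a non-negative function bounded above by $C\prod_{j}N_{j}(\x)^{-Q_{j}}$. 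Since $i_{j}<0$ forces $\operatorname{supp}[\varphi]_{I}\subset\B(1)$ (because $n_{j}(\x_{j})<2^{i_{j}}<1$ on this support), monotone convergence gives $\int_{\B(1)}K\,d\x=\|\varphi\|_{L^{1}}\cdot\#\Gamma_{\Z}(\EEE)$. The non-emptiness of $\Gamma^{o}(\EEE)$ makes $\Gamma_{\Z}(\EEE)$ infinite: for any $\t^{*}\in\Gamma^{o}(\EEE)$, the scale-invariance and openness of $\Gamma^{o}(\EEE)$ ensure $\lfloor M\t^{*}\rfloor\in\Gamma_{\Z}(\EEE)$ for all sufficiently large integers $M$. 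Thus $\int_{\B(1)}K\,d\x=+\infty$, and the pointwise domination yields (a).

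For part (b), the plan is a dyadic decomposition combined with the marked-partition analysis of Section \ref{Partitions}. Up to a null set, $\B(1)=\bigsqcup_{I\in\Z^{n}_{<0}}\Omega_{I}$ with $\Omega_{I}=\{\x:2^{i_{j}}\leq n_{j}(\x_{j})<2^{i_{j}+1},\,\forall j\}$ and $\Z^{n}_{<0}=\{I:i_{j}\leq -1\,\forall j\}$. Then $|\Omega_{I}|\approx\prod_{j}2^{i_{j}Q_{j}}$ and $N_{j}(\x)\approx\sum_{l}2^{e(j,l)i_{l}}\approx 2^{\max_{l}e(j,l)i_{l}}$ on $\Omega_{I}$. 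For each $I$ outside a measure-zero set, by Proposition \ref{Prop5.3} the argmax indices determine a unique marked partition $S=S(I)=\{(I_{r},k_{r})\}_{r=1}^{s}\in\SS(n)$, and on the corresponding region of $I$-space one has $N_{j}(\x)\approx 2^{e(j,k_{r})i_{k_{r}}}$ for $j\in I_{r}$ (consistent with Proposition \ref{Lem5.15}). Writing $k\in I_{r_{0}}$ and $Q_{S,r}=\sum_{j\in I_{r}}e(j,k_{r})Q_{j}$,
\bes
\int_{\Omega_{I}}N_{k}^{\alpha}\prod_{j}N_{j}^{-Q_{j}}\,d\x\lesssim\prod_{j}2^{i_{j}Q_{j}}\cdot 2^{\alpha e(k,k_{r_{0}})i_{k_{r_{0}}}}\cdot\prod_{r}2^{-Q_{S,r}i_{k_{r}}}.
\ees

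I then sum these contributions, grouping by $S$; since $\SS(n)$ is finite it suffices to bound each $S$-contribution separately. Fix $(i_{k_{r}})_{r=1}^{s}$: the remaining indices $i_{j}$ for $j\in I_{r}\setminus\{k_{r}\}$ satisfy $i_{j}\leq e(j,k_{r})i_{k_{r}}$ (from the $l=j$ case of the maximum), and summing $2^{i_{j}Q_{j}}$ geometrically up to this upper bound gives $O(2^{e(j,k_{r})Q_{j}i_{k_{r}}})$. Multiplied over $j\in I_{r}\setminus\{k_{r}\}$ this produces a factor $2^{i_{k_{r}}(Q_{S,r}-Q_{k_{r}})}$, which combined with the volume contribution $\prod_{r}2^{i_{k_{r}}Q_{k_{r}}}$ (from the $j=k_{r}$ terms) and the integrand factor $\prod_{r}2^{-i_{k_{r}}Q_{S,r}}$ yields an \emph{exact cancellation}: every power of $2^{i_{k_{r}}}$ involving $Q_{k_{r}}$ or $Q_{S,r}$ drops out. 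What remains is
\bes
\sum_{(i_{k_{r}})\text{ in the $S$-cone}}2^{\alpha e(k,k_{r_{0}})i_{k_{r_{0}}}},
\ees
where Corollary \ref{Cor5.7} gives the cone condition $|i_{k_{r}}|\leq \sigma_{S}(k_{r},k_{p})|i_{k_{p}}|$ for every $r\neq p$. Setting $u_{r}=-i_{k_{r}}\geq 1$, this forces $u_{r}\leq C_{S}u_{r_{0}}$ for every $r$, so for each fixed $u_{r_{0}}=m$ there are at most $O(m^{s-1})$ admissible configurations of the remaining coordinates. The sum is therefore dominated by $\sum_{m\geq 1}m^{s-1}2^{-\alpha e(k,k_{r_{0}})m}<\infty$, and summing over the finite set $\SS(n)$ completes (b).

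The main obstacle is the precise bookkeeping of the exponents of $2^{i_{k_{r}}}$: the three contributions (volume, integrand, and the geometric sum over non-marked indices) must cancel exactly, since any residual positive power of $2^{|i_{k_{r}}|}$ would destroy summability; one must also use the cone constraint from Corollary \ref{Cor5.7}, rather than free summation, to keep the remaining sum in the $(i_{k_{r}})$ variables under control via the polynomial-in-$m$ counting bound.
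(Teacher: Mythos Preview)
Your proof of part (a) is essentially identical to the paper's: both use Lemma \ref{Lem4.1} with the constant family $\varphi^{I}\equiv\varphi\geq 0$ to produce a non-negative function dominated by $C\prod_{j}N_{j}^{-Q_{j}}$ whose $L^{1}$ norm equals $\|\varphi\|_{L^{1}}\cdot\#\Gamma_{\Z}(\EEE)=+\infty$. Your explicit argument for why $\Gamma_{\Z}(\EEE)$ is infinite is a welcome detail.

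Your proof of part (b) is correct, but the paper's argument is dramatically shorter and avoids the marked-partition machinery entirely. The paper observes that on $\B(1)$ one has $N_{k}(\x)\lesssim N_{l}(\x)^{c_{k,l}}$ for each $l$ (with $c_{k,l}=\min_{m}e(k,m)/e(l,m)>0$), so one may write $N_{k}(\x)^{\alpha}\lesssim\prod_{l=1}^{n}N_{l}(\x)^{\epsilon_{l}}$ with all $\epsilon_{l}>0$. Then the trivial bound $N_{j}(\x)\geq n_{j}(\x_{j})$ gives
\[
\int_{\B(1)}N_{k}^{\alpha}\prod_{j}N_{j}^{-Q_{j}}\,d\x\;\lesssim\;\int_{\B(1)}\prod_{j}n_{j}(\x_{j})^{-Q_{j}+\epsilon_{j}}\,d\x\;=\;\prod_{j}\int_{n_{j}(\x_{j})\leq 1}n_{j}(\x_{j})^{-Q_{j}+\epsilon_{j}}\,d\x_{j}\;<\;\infty.
\]
That is the entire argument. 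Your dyadic/marked-partition approach reproduces this conclusion through the exact cancellation $\prod_{r}2^{Q_{S,r}i_{k_{r}}}\cdot\prod_{r}2^{-Q_{S,r}i_{k_{r}}}=1$, which is genuinely correct and in some sense explains \emph{why} the borderline integral in (a) diverges while any gain $N_{k}^{\alpha}$ rescues it; your method would also yield sharper information such as the blow-up rate as $\alpha\to 0^{+}$. But for the bare statement, the paper's two-line trick of distributing the gain across all norms and then separating variables is the intended shortcut you missed.
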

\begin{proof}
Choose $\varphi\in \CC^{\infty}_{0}(\R^{N})$ supported in $\B(1)$ with $\int_{\R^{N}}\varphi(\x)\,d\x = 1$ and $\varphi(\x)\geq 0$. Then if $\varphi^{I}=\varphi$ for every $I$, the family $\{\varphi^{I}\}$ is normalized in $\SS(\R^{N})$. According to equation (\ref{4.5wer}), if $F\subset \Gamma(\EEE)$ is a finite set with $M$ elements , then $\sum_{I\in F}[\varphi]_{I}(\x)\leq C\prod_{j=1}^{N}N_{j}(\x)^{-Q_{j}}$, and hence
\bes
\int_{\B(1)}\prod_{j=1}^{N}N_{j}(\x)^{-Q_{j}}\,d\x \geq C^{-1}\sum_{I\in F}\int_{\R^{N}}[\varphi]_{I}(\x)\,d\x = C^{-1}M.
\ees
Letting $M\to \infty$ gives statement (\ref{Cor4.2a}). To prove statement (\ref{Cor4.2b}), observe that there is a constant $c_{k,l}>0$ so that if $\x\in \B(1)$, then $N_{k}(\x)\leq N_{l}(\x)^{c_{k,l}}$. Thus given $\alpha>0$ and $1 \leq k \leq n$ there exist positive numbers $\epsilon_{1}, \ldots, \epsilon_{n}$ so that $N_{k}(\x) ^{\alpha}\leq \prod_{l=1}^{n}N_{l}(\x)^{\epsilon_{l}}$. Then
\beas
\int\limits_{\B(1)}\!\!N_{k}(\x)^{\alpha}\prod_{j=1}^{n}N_{j}(\x)^{-Q_{j}}\,d\x&\leq 
\int\limits_{\B(1)}\prod_{j=1}^{n}N_{j}(\x)^{-Q_{j}+\epsilon_{j}}\,d\x\\
&
\leq \prod_{j=1}^{n}\int\limits_{n_{j}(\x_{j})\leq 1}\!\!\!\!n_{j}(\x_{j})^{-Q_{j}+\epsilon_{j}}\,d\x_{j}<+\infty.
\eeas
This completes the proof.
\end{proof}

\subsection{Cancellation properties}\quad

We next turn to a discussion of cancellation properties. Let $\R^{N}=\bigoplus_{j=1}^{n}\R^{C_{j}}$ and suppose $\EEE$ is an $n\times n$ matrix satisfying (\ref{2.5}). 

\begin{definition}\label{Def4.3} Let $\varphi\in\SS(\R^{N})$.
\begin{enumerate}[{\rm(a)}]

\smallskip

\item $\varphi$ has {\rm cancellation in the variable ${\x}_{j}$} if $\displaystyle \int_{\R^{C_{j}}}\varphi({\x}_{1}, \ldots, {\x}_{j}, \ldots, {\x}_{n})\,d{\x}_{j}=0$. 

\smallskip

\item $\varphi$ has {\rm strong cancellations} if it has cancellation in ${\x}_{j}$ for $1 \leq j \leq n$. 
\end{enumerate}
\end{definition}

\noindent This terminology was used in Definition 5.3 of \cite{MR2949616}. We will also need a variant of the notion of \textit{weak-cancellation} that was introduced in Definition 5.5 of \cite{MR2949616}. 

\begin{definition}\label{Def4.4}
A function $\varphi\in \CC^{\infty}_{0}(\R^{N})$ has weak cancellation with parameter $\epsilon>0$ relative to the $n$-tuple $ I=(i_{1}, \ldots, i_{n})\in \Gamma_{\Z}(\EEE)$ provided that one can write
\beas
\varphi= \sum_{A\subset\{1, \ldots, n-1\}}\Big[\prod_{j\in A}2^{-\epsilon\Lambda( I, j)}\Big]\varphi_{A}
\eeas
where the sum is taken over all subsets $A\subset\{1, \ldots, n-1\}$, and for each subset $A$,
\begin{enumerate}[{\rm (1)}]

\smallskip

\item $\varphi_{A}\in\CC^{\infty}_{0}(\R^{N})$, and is normalized relative to $\varphi$;

\smallskip

\item $\varphi_{A}$ has cancellation in the variable $\x_{j}$ for each $j \notin A$, and in particular for $j=n$;

\smallskip

\item $\tau_{A}:A\to \{1, \ldots, n\}$ is a mapping with $\tau_{A}(j) \neq j$  for all $j\in A$;

\smallskip

\item $\displaystyle \Lambda(I, j) = i_{j}-e\big(j,\tau_{A}(j)\big)\,i_{\tau_{A}(j)}.$

\end{enumerate}
\end{definition}

\noindent Note that if $ I\in \Gamma_{\Z}(\EEE)$, then $i_{j}-e\big(j,\tau_{A}(j)\big)\,i_{\tau_{A}(j)}\ge0$. Thus if $\varphi$ has weak cancellation, it can be written as a finite sum of functions, each of which has the property that for each index $j$, either there is cancellation in $\x_{j}$ or the lack of cancellation is compensated by a gain $2^{-\epsilon[i_{j}-e(j,\tau_{A}(j))\,i_{\tau_{A}(j)}]}$.
\goodbreak

The following is Lemma 5.1 in \cite{MR2949616}:

\begin{proposition}\label{Prop3.5}
If $\varphi \in\SS(\R^{N})$ and $\{J_{1}, \ldots, J_{r}\}$ are disjoint subsets of $\{1, \ldots, N\}$, then the following are equivalent:
\begin{enumerate}[{\rm(a)}]

\medskip

\item $\int\varphi(\x)\,d\x_{J_{k}}=0$ for $1\leq k \leq r$.

\medskip

\item For every $(j_{1}, \ldots, j_{r})\in J_{1}\times\cdots\times J_{r}$ there is a function $\varphi_{j_{1}, \ldots, j_{r}}\in\SS(\R^{n})$, normalized with respect to $\varphi$, so that $\varphi(\x) = \sum_{j_{1}\in J_{1}}\cdots \sum_{j_{r}\in J_{r}}\partial_{j_{1}}\cdots\partial_{j_{r}}\varphi_{j_{1},\ldots, j_{r}}(\x)$.
\medskip
\end{enumerate}
Moreover, if $\varphi\in \CC^{\infty}_{0}(\R^{N})$, then we can choose the functions $\varphi_{j_{1}, \ldots, j_{r}}\in\CC^{\infty}_{0}(\R^{N})$ normalized relative to $\varphi$.
\end{proposition}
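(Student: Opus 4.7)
The implication (b)$\Rightarrow$(a) is immediate: for any $(j_{1},\dots,j_{r})\in J_{1}\times\cdots\times J_{r}$ and any $k\in\{1,\dots,r\}$, the integrand $\partial_{j_{1}}\cdots\partial_{j_{r}}\varphi_{j_{1},\ldots,j_{r}}$ is a pure $x_{j_{k}}$-derivative of a Schwartz function (since $j_{k}\in J_{k}$), so its integral with respect to $d\x_{J_{k}}$ vanishes.

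For (a)$\Rightarrow$(b), I would argue by induction on $r$, driven by the following sub-lemma: \emph{if $\psi\in\SS(\R^{N})$ (resp.\ $\CC^{\infty}_{0}(\R^{N})$) and $J\subset\{1,\dots,N\}$ is nonempty with $\int\psi\,d\x_{J}=0$, then there exist $\psi_{j}\in\SS(\R^{N})$ (resp.\ $\CC^{\infty}_{0}(\R^{N})$), $j\in J$, normalized relative to $\psi$, such that $\psi=\sum_{j\in J}\partial_{j}\psi_{j}$; moreover, given any $L\subset\{1,\dots,N\}$ disjoint from $J$ with $\int\psi\,d\x_{L}=0$, the $\psi_{j}$ may be chosen so that also $\int\psi_{j}\,d\x_{L}=0$.} Granting this, apply the sub-lemma with $J=J_{r}$, iteratively invoking the ``moreover'' clause with $L=J_{k}$ for $k<r$ to inherit the remaining cancellation conditions, and then appeal to the inductive hypothesis on $r$ for each resulting $\widetilde{\varphi}_{j_{r}}$.

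The sub-lemma itself proceeds by induction on $|J|$. The base case $|J|=1$ is straightforward: if $J=\{j\}$, set
\be
\psi_{j}(\x)=\int_{-\infty}^{x_{j}}\psi(x_{1},\dots,x_{j-1},s,x_{j+1},\dots,x_{N})\,ds.
\ee
Then $\partial_{j}\psi_{j}=\psi$; the vanishing of $\int_{\R}\psi\,dx_{j}$ forces $\psi_{j}$ to also equal $-\int_{x_{j}}^{\infty}\psi\,ds$, from which rapid decay as $x_{j}\to\pm\infty$ is inherited from $\psi$, so $\psi_{j}\in\SS$ (and the support in $x_{j}$ stays bounded in the $\CC^{\infty}_{0}$ case). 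The $L$-cancellation $\int\psi_{j}\,d\x_{L}=0$ follows by Fubini since $L$ is disjoint from $\{j\}$. For the inductive step $|J|\geq 2$, fix $j_{1}\in J$ and $\rho\in\CC^{\infty}_{0}(\R)$ with $\int\rho=1$, and write $\psi=\psi^{(1)}+\psi^{(2)}$ with
\be
\psi^{(2)}(\x)=\rho(x_{j_{1}})\int_{\R}\psi(\x)\,dx_{j_{1}},\qquad\psi^{(1)}=\psi-\psi^{(2)}.
\ee
Then $\int\psi^{(1)}\,dx_{j_{1}}=0$, so the base case produces $\psi^{(1)}=\partial_{j_{1}}\widetilde{\psi}_{1}$ with $\widetilde{\psi}_{1}$ normalized. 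A Fubini computation exploiting $\int\psi\,d\x_{J}=0$ gives $\int\psi^{(2)}\,d\x_{J\setminus\{j_{1}\}}=0$, so the inductive hypothesis on $|J|-1$ (applied in the $(N-1)$-dimensional slice where $x_{j_{1}}$ is frozen) produces $\psi^{(2)}=\sum_{k\geq 2}\partial_{j_{k}}\widetilde{\psi}_{2,k}$.

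The main obstacle to watch is the bookkeeping: one must verify that the inheritance of $L$-cancellation is propagated at every level (checked by Fubini, since both $\psi^{(1)}$ and $\psi^{(2)}$ inherit $\int\cdot\,d\x_{L}=0$ from $\psi$ as soon as $L$ is disjoint from $\{j_{1}\}$ and from $J$), and that the normalization of the resulting functions is controlled uniformly through the nested inductions. The latter reduces to the observation that $\psi\mapsto\rho(x_{j_{1}})\int\psi\,dx_{j_{1}}$ and the antiderivative used in the base case are continuous linear maps on $\SS(\R^{N})$ with explicit seminorm estimates, and they preserve compact supports in the $\CC^{\infty}_{0}$ variant.
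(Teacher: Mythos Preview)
Your argument is correct. Note that the paper itself does not prove this proposition: it simply records it as Lemma~5.1 of \cite{MR2949616}, so there is no in-paper proof to compare against. The construction you outline (antiderivative in a single variable for the base case, together with the $\rho(x_{j_{1}})\int\psi\,dx_{j_{1}}$ splitting for the inductive step) is the standard route to this kind of result.

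Two minor points of presentation. First, the ``moreover'' clause of your sub-lemma should be stated for an arbitrary family of sets $L$ (each disjoint from $J$) simultaneously, not one $L$ at a time; in the outer induction on $r$ you need each $\widetilde\varphi_{j_{r}}$ to inherit cancellation in \emph{all} of $J_{1},\dots,J_{r-1}$ at once. This is free, since your construction of the $\psi_{j}$ is independent of $L$, so any $L$-cancellation with $L\cap J=\emptyset$ is preserved automatically. Second, the parenthetical about ``the $(N-1)$-dimensional slice where $x_{j_{1}}$ is frozen'' is an unnecessary detour: it is cleaner to apply the inductive hypothesis on $|J|$ directly to $\psi^{(2)}$ as a function on all of $\R^{N}$, with index set $J\setminus\{j_{1}\}$.
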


We will also need the following result.

\begin{proposition}\label{Prop3.6}
Let $L=\{l_{1},\ldots,l_{a}\}$ and $M=\{m_{1},\ldots, m_{b}\}$ be complementary subsets of $\{1, \ldots,n\}$, let $A$ be a subset of $M$, and let $R=\{R_{m_{1}}, \ldots, R_{m_{b}}\}$ be positive real numbers. Write any $\x\in \R^{N}$ as $\x=(\x',\x'')$ where $\x'=(\x_{l_{1}}, \ldots, \x_{l_{a}})\in \R^{L}$ and $\x'' =(\x_{m_{1}}, \ldots, \x_{m_{b}})\in \R^{M}$. Let $\varphi\in\SS(\R^{N})$ and let $\psi\in \CC^{\infty}_{0}(\R^{M})$ have support in $\B(1)$. Put
\bes
\Phi(\x') =\int_{\R^{M}}\varphi(\x',\x'')\psi(R_{m_{1}}\,\cdot \,\x_{m_{1}}, \ldots, R_{m_{b}}\,\cdot\,\x_{m_{b}})\,d\x_{m_{1}}\cdots d\x_{m_{b}}.
\ees
\begin{enumerate}[{\rm(a)}]

\item \label{Prop3.6a} $\Phi\in\SS\left(\R^{L}\right)$ and is normalized relative to $\varphi$ and $\psi$ with constants independent of $R$. If $\varphi\in \CC^{\infty}_{0}(\R^{N})$, then $\Phi \in \CC^{\infty}_{0}\left(\R^{L}\right)$ and is normalized relative to $\varphi$ and $\psi$ with constants independent of $R$.
\medskip

\item \label{Prop3.6b} If some of the parameters $\{R_{m_{j}}\}$ are large, there is an improvement over statement {\rm (\ref{Prop3.6a}):} there exists $\epsilon >0$ independent of $\varphi$, $\psi$ and $R$ so that 
\bes
\Phi(\x') = \Big[\prod_{m_{j}\in M}\min\big\{1,\,R_{m_{j}}^{-\epsilon}\big\}\Big]\,\widetilde \Phi(\x')
\ees
where $\widetilde \Phi\in \SS(\R^{N})$ is normalized relative to $\varphi$ and $\psi$, with constants independent of $R$. If $\varphi\in \CC^{\infty}_{0}(\R^{N})$ is supported in the unit ball, then $\widetilde \Phi \in \CC^{\infty}_{0}\left(\R^{L}\right)$ and is normalized relative to $\varphi$ and $\psi$ with constants independent of $R$.

\medskip

\item \label{Prop3.6c} If $\varphi$ has cancellation in some of the variables in $M$ there is an improvement over statement {\rm (\ref{Prop3.6b})} even if $R_{m_{j}}$ is small. There exists $\epsilon >0$ independent of $\varphi$, $\psi$ and $R$ so that if $\varphi$ has cancellation in $\x_{j}$ for each $j\in A$ then
\bes
\Phi(\x') = \Big[\prod_{m_{j}\in M\setminus A}\min\big\{1,\,R_{m_{j}}^{-\epsilon}\big\}\Big]\,\Big[\prod_{m_{j}\in A}\min\big\{R_{m_{j}}^{\epsilon}, R_{m_{j}}^{-\epsilon}\big\}\Big]\,\widetilde \Phi(\x'),
\ees
where $\widetilde \Phi$ is normalized relative to $\varphi$ and $\psi$, with constants independent of of $R$. 
\end{enumerate}
\end{proposition}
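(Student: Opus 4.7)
The plan is to handle the three parts by a uniform strategy: estimate $\Phi(\x')$ (and its $\x'$-derivatives) directly, splitting the $\x''$-integration according to whether each $R_{m_j}$ is large or small, then exploit cancellation of $\varphi$ in the directions indexed by $A$ via a Taylor expansion of $\psi(R\cdot\x'')$.

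For part (\ref{Prop3.6a}), I would work directly from the defining integral. The factor $\psi(R\cdot\x'')$ is bounded by $\|\psi\|_\infty$ and supported in $\{n_{m_j}(\x_{m_j})<R_{m_j}^{-1}\}$. Bounding $|\varphi(\x',\x'')|\le C_M(1+|\x'|+|\x''|)^{-M}$ (Schwartz decay) or using the compact support of $\varphi$ and integrating in $\x''$ gives $(1+|\x'|)^M|\Phi(\x')|\lesssim 1$ uniformly in $R$. Differentiation commutes with the integral since $\psi(R\cdot\x'')$ does not involve $\x'$, so the same estimate applies to $\partial_{\x'}^{\gammab}\Phi$. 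In the compactly supported case, the $\x''$-integration projects the support of $\varphi$ onto $\R^L$, so $\Phi$ inherits a fixed compact support. Together these give the normalized estimates.

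For part (\ref{Prop3.6b}), the gain comes from the small measure of the integration domain. Split $M=M_{\mathrm{big}}\sqcup M_{\mathrm{small}}$ with $M_{\mathrm{big}}=\{m_j:R_{m_j}\ge 1\}$. For $m_j\in M_{\mathrm{big}}$, the $\x_{m_j}$-domain $\{n_{m_j}(\x_{m_j})<R_{m_j}^{-1}\}$ has Lebesgue measure $\lesssim R_{m_j}^{-Q_{m_j}}$. Bounding $|\varphi(\x',\x'')|$ by its Schwartz norm on those variables while keeping the Schwartz decay in the remaining variables (those in $M_{\mathrm{small}}$ together with the $\x_{m_j}\in M_{\mathrm{big}}$ taken at their supremum), then integrating, yields
\[
|\partial_{\x'}^{\gammab}\Phi(\x')|\;\lesssim\;(1+|\x'|)^{-M}\prod_{m_j\in M_{\mathrm{big}}}R_{m_j}^{-Q_{m_j}}.
\]
Choosing $\epsilon < \min_j Q_{m_j}$ and using $R_{m_j}^{-Q_{m_j}}\le R_{m_j}^{-\epsilon}$ for $R_{m_j}\ge 1$ (and $\min\{1,R_{m_j}^{-\epsilon}\}=1$ for $R_{m_j}<1$) produces the claimed factor; dividing by it leaves a $\widetilde\Phi$ with seminorms uniformly bounded in $R$.

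For part (\ref{Prop3.6c}), the new ingredient is to supply an extra factor $R_{m_j}^{\epsilon}$ for each $m_j\in A$ with $R_{m_j}\le 1$. For such an index, I would Taylor expand $\psi(R\cdot\x'')$ in the $\x_{m_j}$-variable around $\x_{m_j}=0$:
\[
\psi(R\cdot\x'')\;=\;\psi(R\cdot\x'')\big|_{\x_{m_j}=0}\;+\;\sum_{l\in C_{m_j}}R_{m_j}^{d_l}\,x_l\int_0^1(\partial_l\psi)\big(R\cdot\x''_{s,l}\big)\,ds,
\]
where $\x''_{s,l}$ coincides with $\x''$ except that $\x_{m_j}$ is scaled by $s\in[0,1]$. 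The zeroth-order term is constant in $\x_{m_j}$, so integrating it against $\varphi$ in $\x_{m_j}$ vanishes by the assumed cancellation of $\varphi$ in $\x_{m_j}$. Each remainder term produces a factor $R_{m_j}^{d_l}$ multiplied by an integral of the same shape as $\Phi$ but with $\varphi$ replaced by $x_l\varphi$ (Schwartz, normalized relative to $\varphi$; still compactly supported if $\varphi$ is) and $\psi$ by a function which, for $R_{m_j}\le 1$ and $s\in[0,1]$, has its scaled argument in a controlled region and thus has uniform seminorms. Taking $\epsilon < \min_l d_l$ yields the gain $R_{m_j}^{\epsilon}$. Iterating this expansion over all $m_j\in A$ with $R_{m_j}\le 1$, and combining with part (\ref{Prop3.6b}) for the large-$R_{m_j}$ indices (where $\min\{R_{m_j}^\epsilon,R_{m_j}^{-\epsilon}\}=R_{m_j}^{-\epsilon}$ is already supplied), completes the claimed factorization.

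The main obstacle is bookkeeping: one must verify that after extracting all the scalar prefactors, the residual function $\widetilde\Phi$ remains normalized relative to $\varphi$ and $\psi$ with constants independent of $R$. This amounts to checking that the Taylor remainders in part (\ref{Prop3.6c}) involve only bump functions with $R$-uniformly bounded seminorms (which holds because the scaling parameter $s\in[0,1]$ is bounded and $R_{m_j}\le 1$ keeps the scaled argument bounded), and that the iteration over indices $m_j\in A$ does not degrade the constants, since each step replaces the pair $(\varphi,\psi)$ by another normalized pair of the same type.
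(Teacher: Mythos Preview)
Your argument is correct. Parts (\ref{Prop3.6a}) and (\ref{Prop3.6b}) match the paper exactly: the paper says (a) ``is clear'' and for (b) invokes the small measure of the region $\{n_{m_j}(\x_{m_j})\le R_{m_j}^{-1}\}$ for large $R_{m_j}$, just as you do.

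For part (\ref{Prop3.6c}) you and the paper take dual routes to the same gain. The paper invokes Proposition~\ref{Prop3.5} to write $\varphi=\sum\partial_{j_1}\cdots\partial_{j_a}\varphi_{j_1,\ldots,j_a}$ as a sum of derivatives in the $A$-variables, then integrates by parts; the derivative lands on $\psi(R\cdot\x'')$ and produces the factor $R_{m_k}^{d_{j_k}}$ directly for each $m_k\in A$ with $R_{m_k}<1$. You instead Taylor-expand $\psi(R\cdot\x'')$ in $\x_{m_j}$ and kill the zeroth-order term with the cancellation of $\varphi$, leaving a remainder that carries the same factor $R_{m_j}^{d_l}$. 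Both arguments are standard and essentially equivalent; the paper's version is a bit cleaner since Proposition~\ref{Prop3.5} handles all indices in $A$ at once and avoids the auxiliary $s$-integral, while your version is more self-contained in that it does not need to quote an external decomposition lemma. Your remark about the bookkeeping is apt: the main thing to check in your iteration is that $x_l\varphi$ remains normalized and that $(\partial_l\psi)(R\cdot\x''_{s,l})$ stays uniformly bounded, both of which hold for the reasons you give.
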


\begin{proof}
Statement (\ref{Prop3.6a}) is clear. Next, since $\psi(\x'')$ is supported where $ n_{m_{j}}(\x_{m_{j}})\leq 1$, we have $ n_{m_{j}}(\x_{m_{j}})\leq R_{m_{j}}^{-1}$ if $\psi(R_{m_{1}}\cdot \x_{m_{1}}, \ldots, R_{m_{b}}\cdot\x_{m_{b}})\neq 0$. The integration in the definition of $\Phi(\x')$ takes place over the set $\big\{(\x_{m_{1}}, \ldots, \x_{m_{b}}): n_{m_{j}}(\x_{m_{j}})\leq R_{m_{j}}^{-1}\big\}$, and we can estimate the integral by the (small) size of the region of integration in the variables $\{\x_{m_{j}}\}$ for which $R_{m_{j}}$ is large. This gives the estimate in part (\ref{Prop3.6b}). To establish (\ref{Prop3.6c}) we use Proposition \ref{Prop3.5}: $\Phi(\x')$ is a sum of terms of the form
\bes
\int_{\R^{M}}\partial_{j_{1}}\cdots \partial_{j_{a}}\varphi_{j_{1},\ldots, j_{a}}(\x',\x'')\psi(R_{m_{1}}\cdot \x_{m_{1}}, \ldots, R_{m_{b}}\cdot\x_{m_{b}})\,d\x_{m_{1}}\cdots d\x_{m_{b}}
\ees
where $\partial_{j_{k}}$ is a derivative with respect to a variable in $\x_{m_{k}}$ for each $m_{k}\in A$. Integrating by parts in the variables in $A$ for which $R_{m_{k}}<1$ then gives the estimate in part (\ref{Prop3.6c}), and this completes the proof.
\end{proof}

We will want to show that sums of dilates of normalized bump functions with weak cancellation satisfy the cancellation properties of the distributions $\KK\in \PP_{0}(\EEE)$, and we will use the following.

\begin{lemma}\label{Lem3.7jj}
Suppose that $\varphi^{I}$ has weak cancellation with parameter $\epsilon >0$. Let $L=\{l_{1},\ldots,l_{a}\}$ and $M=\{m_{1},\ldots, m_{b}\}$ be complementary subsets of $\{k_{1},\ldots, k_{s}\}$, and let $R=\{R_{m_{1}}, \ldots, R_{m_{b}}\}$ be positive real numbers. Write $\x\in \R^{N}$ as $\x=(\x',\x'')$ where $\x'=(\x_{l_{1}}, \ldots, \x_{l_{a}})$ and $\x'' =(\x_{m_{1}}, \ldots, \x_{m_{b}})$, and similarly write $ I=( I', I'')$. Let $\psi\in \CC^{\infty}_{0}(\R^{M})$ be a normalized bump function. Then the function 
\beas
\Phi^{I}_{\psi,R}(\x')=\int_{\R^{M}}\varphi^{I}(\x',\x'')\psi\big((2^{i_{m_{1}}}R_{m_{1}})\cdot \x_{m_{1}}, \ldots, (2^{i_{m_{b}}}R_{m_{b}})\cdot\x_{m_{b}}\big)\,d\x_{m_{1}}\cdots d\x_{m_{b}}.
\eeas
can be written as a finite sum of terms, indexed by subsets $A\subset \{k_{1}, \ldots, k_{s}\}$, of the form
\beas
\Big[\prod_{j\in A\cap M}2^{-\epsilon[i_{j}-e(j,\tau(j))i_{\tau(j)}]}\Big]
\Big[\prod_{j\in M\setminus A}\min\Big\{(2^{j}R_{j})^{+\epsilon}, (2^{j}R_{j})^{-\epsilon}\Big\}\Big]\,\widetilde \varphi^{I}_{A}(\x').
\eeas
The family $\{\widetilde\varphi^{I}_{A}\}\subset \CC^{\infty}_{0}(\R^{L})$ is normalized relative to $\varphi^{I}$.
\end{lemma}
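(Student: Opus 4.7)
The plan is to combine the weak-cancellation decomposition of $\varphi^I$ supplied by Definition \ref{Def4.4} with the quantitative cancellation gains from Proposition \ref{Prop3.6}, part (\ref{Prop3.6c}), applied to the integration in the variables indexed by $M$.

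First I would expand $\varphi^I$ via weak cancellation:
\begin{equation*}
\varphi^I \;=\; \sum_{B \subset \{1,\ldots,n-1\}} \Big[\prod_{j \in B} 2^{-\epsilon\Lambda(I,j)}\Big]\,\varphi^I_B,
\end{equation*}
where each $\varphi^I_B \in \CC^\infty_0(\R^N)$ is normalized relative to $\varphi^I$, has cancellation in $\x_j$ for every $j \notin B$, and is equipped with a map $\tau_B : B \to \{1,\ldots,n\}$ with $\tau_B(j) \neq j$, so that $\Lambda(I,j) = i_j - e(j,\tau_B(j))\,i_{\tau_B(j)} \ge 0$. Substituting into the defining integral for $\Phi^I_{\psi,R}$ and pulling the finite sum outside expresses $\Phi^I_{\psi,R}$ as a finite linear combination of integrals
\begin{equation*}
\Big[\prod_{j \in B} 2^{-\epsilon\Lambda(I,j)}\Big] \int_{\R^M}\varphi^I_B(\x',\x'')\,\psi\bigl((2^{i_{m_1}}R_{m_1})\cdot\x_{m_1},\ldots,(2^{i_{m_b}}R_{m_b})\cdot\x_{m_b}\bigr)\,d\x''.
\end{equation*}

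Next I would apply Proposition \ref{Prop3.6}, part (\ref{Prop3.6c}), to each such integral, with dilation parameters $2^{i_{m_j}}R_{m_j}$ in place of the $R_{m_j}$ of that proposition, noting that $\varphi^I_B$ has cancellation in exactly those coordinates $\x_{m_j}$ with $m_j \in M \setminus B$. This yields a function $\widetilde\varphi^I_B \in \CC^\infty_0(\R^L)$ normalized relative to $\varphi^I$ and $\psi$ (uniformly in $R$), multiplied by
\begin{equation*}
\prod_{j \in M\setminus B}\min\bigl\{(2^{i_j}R_j)^{\epsilon},(2^{i_j}R_j)^{-\epsilon}\bigr\}\; \cdot\; \prod_{j \in M\cap B}\min\bigl\{1,(2^{i_j}R_j)^{-\epsilon}\bigr\}.
\end{equation*}

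To match the target form, I would then absorb the bounded ``spare'' factors into the normalized function: for each $j \in M \cap B$ the factor $\min\{1,(2^{i_j}R_j)^{-\epsilon}\} \le 1$ is absorbed into $\widetilde\varphi^I_B$, leaving intact the prefactor $2^{-\epsilon\Lambda(I,j)}$; similarly, for each $j \in L \cap B$ the prefactor $2^{-\epsilon\Lambda(I,j)} \le 1$ is harmless and may be absorbed into $\widetilde\varphi^I_B$. After this absorption, re-indexing the sum by $A = B \cap M \subset \{k_1,\ldots,k_s\}$ (combining the finitely many $B$'s giving the same $A$ into a single normalized $\widetilde\varphi^I_A$) and taking $\tau=\tau_B|_A$ produces exactly the decomposition asserted in the lemma. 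The main obstacle I anticipate is not analytic but rather bookkeeping: one must correctly align the cancellation-in-$\x_j$ status of each $\varphi^I_B$ with the two regimes of Proposition \ref{Prop3.6}(c), and then verify that the $L$-components of the indexing set can genuinely be absorbed into $\widetilde\varphi^I_A$ without spoiling the uniformity of the normalization in $R$; once this is done correctly, the proof is immediate.
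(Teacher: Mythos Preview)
Your proposal is correct and follows essentially the same approach as the paper's proof: expand $\varphi^I$ via the weak-cancellation decomposition of Definition~\ref{Def4.4}, apply Proposition~\ref{Prop3.6}(\ref{Prop3.6c}) to the $\x''$-integral of each summand, and then absorb the bounded factors $\min\{1,(2^{i_j}R_j)^{-\epsilon}\}$ for $j\in M\cap B$ and $2^{-\epsilon\Lambda(I,j)}$ for $j\in L\cap B$ into the normalized bump function. One small caveat: when you ``combine the finitely many $B$'s giving the same $A$ into a single $\widetilde\varphi^I_A$'', note that different $B$'s may carry different maps $\tau_B|_A$ and hence different prefactors, so you should simply leave them as separate terms in the finite sum (which the lemma's phrasing allows) rather than merging them.
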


\begin{proof}
To make the exposition and the notation simpler, assume without loss of generality that $\x'=(\x_{k_{1}}, \ldots, \x_{k_{a}})$ and $\x''=(\x_{k_{a+1}}, \ldots, \x_{k_{s}})$ and write $ I=( I', I'')$ with $ I'=(i_{1}, \ldots, i_{k_{a}})$ and $ I''=(i_{k_{a+1}}, \ldots, i_{k_{s}})$. According to Definition \ref{Def4.4}, since $\varphi^{I}$ has weak cancellation,
\beas
\varphi^{I}(\x',\x'')= \sum_{A\subset\{1, \ldots, n\}}\Big[\prod_{j\in A}2^{-\epsilon[i_{j}-e(j,\tau(j))i_{\tau(j)}]}\Big]\varphi^{I}_{A}(\x',\x'').
\eeas
Then $\Phi^{I}_{R,\psi}$ can be written as a finite sum of terms, indexed by $A\subset\{1, \ldots,n\}$, of the form
\beas
\!\Big[\prod_{j\in A}2^{-\epsilon[i_{j}-e(j,\tau(j))i_{\tau(j)}]}\Big]\!
\int
\varphi^{I}_{A}(\x',\x'')\psi\big((2^{i_{m_{1}}}R_{m_{1}})\cdot \x_{m_{1}}, \ldots, (2^{i_{m_{b}}}R_{m_{b}})\cdot\x_{m_{s}}\big)\,d\x''.
\eeas
Using Proposition \ref{Prop3.6}, and incorporating $\prod_{j\in M\setminus A}2^{-\epsilon[i_{j}-e(j,\tau(j))i_{\tau(j)}]}$ into the bump function, it follows this function can be written
\beas
\Big[\prod_{j\in A\cap M}2^{-\epsilon[i_{j}-e(j,\tau(j))i_{\tau(j)}]}\Big]
\Big[\prod_{j\in M\setminus A}\min\Big\{(2^{i_{m_{j}}}R_{m_{j}})^{+\epsilon}, (2^{i_{m_{j}}}R_{m_{j}})^{-\epsilon}\Big\}\Big]\widetilde\varphi^{I}_{A}(\x'),
\eeas
which completes the proof.
\end{proof}

\subsection{Dyadic sums with weak cancellation}\label{SubSec5.6}\quad

Fix the decomposition $\R^{N}= \R^{C_{1}}\oplus \cdots \oplus \R^{C_{n}}$ and let $\EEE$ be an $n\times n$ matrix satisfying the basic hypothesis given in equation (\ref{2.5}). Suppose that $\Gamma^{o}(\EEE) \neq \emptyset$. Let  $\Gamma_{\Z}(\EEE)$ be as in equation (\ref{5.1qwe}).

\begin{theorem}\label{Thm3.7} 
Let $\left\{\varphi^{ I}: I\in \Gamma_{\Z}(\EEE)\right\}\subset \SS(\R^{N})$ be a uniformly bounded family. If each function $\varphi^{ I}$ has weak cancellation with parameter $\epsilon>0$ relative to $I$, then $ K=\sum_{ I\in\Gamma_{\Z}(\EEE)}[\varphi^{ I}]_{I}$ converges in the sense of distributions to a kernel $\KK\in \PP_{0}(\EEE)$.
\end{theorem}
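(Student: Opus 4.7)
The plan is to verify the three ingredients required for $\KK\in\PP_{0}(\EEE)$: (i) convergence of the partial sums in $\SS'(\R^{N})$, (ii) the differential inequalities of Definition \ref{Def2.5}(a) away from $0$ together with rapid decay at infinity, and (iii) the cancellation conditions of Definition \ref{Def2.2}(b), again with rapid decay at infinity, for every pair of complementary subsets $L,M\subset\{1,\ldots,n\}$.

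Step (ii) is essentially immediate: Lemma \ref{Lem4.1} applied to the family $\{\varphi^{I}\}$, which does not use any cancellation, already gives pointwise absolute convergence of $\sum_{I}[\varphi^{I}]_{I}$ on $\R^{N}\setminus\{0\}$ to a smooth function $K$ satisfying the desired differential inequalities with rapid decay at infinity. For (i), I would test against $\eta\in\SS(\R^{N})$ and use the weak cancellation decomposition $\varphi^{I}=\sum_{A}\prod_{j\in A}2^{-\epsilon\Lambda(I,j)}\varphi^{I}_{A}$. Each $\varphi^{I}_{A}$ has cancellation in $\x_{j}$ for $j\notin A$; Taylor-expanding $\eta$ in $\x_{j}$ after dilation by $2^{-i_{j}}$ and invoking this cancellation produces a positive power of $2^{i_{j}}$, hence geometric smallness as $i_{j}\to-\infty$. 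Combined with the weak-cancellation gains $2^{-\epsilon\Lambda(I,j)}$ for $j\in A$ and the cone inequalities $e(j,k)i_{k}\le i_{j}<0$, this makes $\sum_{I}\langle[\varphi^{I}]_{I},\eta\rangle$ absolutely convergent, and the limiting distribution agrees with integration against $K$ away from the origin.

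The main step is (iii). Fix $L,M,R,\psi$ as in Definition \ref{Def2.2}(b). After the change of variable $\y_{m}=2^{-i_{m}}\cdot\x_{m}$ for $m\in M$,
\beas
\int[\varphi^{I}]_{I}(\x_{L},\x_{M})\,\psi_{R}(\x_{M})\,d\x_{M}=2^{-\sum_{l\in L}i_{l}Q_{l}}\,\Phi^{I}_{\psi,R}\bigl((2^{-i_{l}}\cdot\x_{l})_{l\in L}\bigr),
\eeas
where $\Phi^{I}_{\psi,R}$ is the function appearing in Lemma \ref{Lem3.7jj}. That lemma expresses $\Phi^{I}_{\psi,R}$ as a finite sum over subsets $A$ of terms carrying the gain $\prod_{j\in A\cap M}2^{-\epsilon[i_{j}-e(j,\tau(j))i_{\tau(j)}]}\prod_{j\in M\setminus A}\min\{(2^{i_{j}}R_{j})^{\epsilon},(2^{i_{j}}R_{j})^{-\epsilon}\}$ times a normalized bump $\widetilde\varphi^{I}_{A}$ on $\R^{L}$. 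For each $I_{L}$ in the projection of $\Gamma_{\Z}(\EEE)$ onto $\Z^{L}$ (which is contained in $\Gamma_{\Z}(\EEE|_{L})$ for the principal submatrix $\EEE|_{L}$, itself satisfying the basic hypothesis), the inner sum over those $I_{M}$ with $(I_{L},I_{M})\in\Gamma_{\Z}(\EEE)$ converges geometrically and produces a function $\Psi^{I_{L}}$ such that $\{\Psi^{I_{L}}\}$ is uniformly bounded (compactly supported when each $\varphi^{I}$ is) in $\SS(\R^{L})$. Since $K_{\psi,R}=\sum_{I_{L}}[\Psi^{I_{L}}]_{I_{L}}$, a second application of Lemma \ref{Lem4.1} on $\R^{L}$, now with the restricted norms $N_{l}^{L}$, yields the required estimates for $K_{\psi,R}$ with rapid decay at infinity.

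The main obstacle is the bookkeeping in step (iii): one must verify that the two types of gain factors from Lemma \ref{Lem3.7jj} (from weak cancellation, involving the auxiliary map $\tau_{A}$, and from the smallness of $2^{i_{m}}R_{m}$) really do combine to give geometric convergence of the inner sum in $I_{M}$, uniformly in $I_{L}\in\Gamma_{\Z}(\EEE|_{L})$ and uniformly in $R$. The absence of a uniform lower bound on the $R_{m}$ is what forces the use of both the $+\epsilon$ and $-\epsilon$ alternatives in Lemma \ref{Lem3.7jj}, and it is precisely the basic-hypothesis subadditivity $e(j,l)\le e(j,k)e(k,l)$ that ensures the resulting geometric series in $I_{M}$ is dominated by a product of one-variable series with summable rates.
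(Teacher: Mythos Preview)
Your proposal is essentially correct and follows the paper's approach. The paper also splits the proof into uniform estimates for finite partial sums (differential inequalities via Lemma \ref{Lem4.1}, cancellation conditions via Lemma \ref{Lem3.7jj} followed by Lemma \ref{Lem4.1} on $\R^{L}$) and then convergence in $\SS'$, and the ``bookkeeping'' you flag as the main obstacle is exactly the content of Proposition \ref{Prop16.1wer} in Appendix III, which shows that the two types of gain factors sum geometrically over the inner indices, uniformly in the outer indices and in $R$.

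One small difference worth noting: for step (i), the paper does \emph{not} use the full weak-cancellation decomposition and Taylor expansion of the test function. Instead it exploits only the guaranteed cancellation in $\x_{n}$ (recall $A\subset\{1,\ldots,n-1\}$, so every $\varphi^{I}_{A}$ has cancellation in $\x_{n}$), writes $\varphi^{I}=\sum_{k\in C_{n}}\partial_{x_{k}}\varphi^{I}_{k}$, integrates by parts once, and applies Proposition \ref{Prop13.1} to dominate the sum pointwise by $N_{n}(\x)^{\d}\prod_{j}N_{j}(\x)^{-Q_{j}}$, which is locally integrable by Corollary \ref{Cor4.2}(b); dominated convergence then gives the distributional limit. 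This is shorter than your route, since extracting gains from cancellation in \emph{all} $\x_{j}$ with $j\notin A$ and then summing the combined gains over $\Gamma_{\Z}(\EEE)$ would require a separate combinatorial lemma of Proposition \ref{Prop16.1wer} type.
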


The arguments are similar to those in the proofs of Theorem 6.8 and Propositions 6.9, 6.10, and 6.11 of \cite{MR2949616}. The proof consists of two steps:
\begin{enumerate}[(1)]

\smallskip

\item If $F\subset \Gamma_{\Z}(\EEE)$ is a finite set, the distribution $\KK_{F}=\sum_{ I\in F}[\varphi^{ I}]_{ I}$ belongs to the class $\PP_{0}(\EEE)$ with constants independent of the set $F$. (Lemma \ref{Lem4.1} shows that $\KK_{F}$ satisfies the appropriate differential inequalities, so we  only need to verify that $\KK_{F}$ satisfies the cancellation conditions.) 

\smallskip

\item If $\{F_{m}\}$ is any increasing sequence of finite subsets of $\Gamma_{\Z}(\EEE)$ with $\bigcup F_{m}= \Gamma_{\Z}(\EEE)$, then $\lim_{m\to\infty}\KK_{F_{m}}$ exists in the sense of distributions.
\end{enumerate}

\begin{proof}[Proof of Step 1]\quad

\medskip

Let $F\subset\Gamma_{\Z}(\EEE)$ be a finite set and let $\KK_{F}= \sum_{ I\in F}[\varphi^{ I}]_{ I}$. To verify the cancellation conditions, let $L=\{l_{1},\ldots,l_{a}\}$ and $M=\{m_{1},\ldots, m_{b}\}$ be complementary subsets of $\{1, \ldots, n\}$. If $\x=(\x_{1}, \ldots, \x_{n})\in \R^{N}$ write $\x=(\x',\x'')$ where $\x'=(\x_{l_{1}}, \ldots, \x_{l_{a}})\in \R^{L}$ and $\x''=(\x_{m_{1}}, \ldots, \x_{m_{b}})\in \R^{M}$. Let $\psi\in \mathcal C^{\infty}_{0}(\R^{L})$ be a normalized bump function with support in the unit ball, and let $R=\{R_{l_{1}}, \ldots, R_{l_{a}}\}$ be positive real numbers. We must show that the function
\beas
K_{F,\psi,R}(\x'')
&=
\sum_{ I\in F}\int_{\R^{M}}[\varphi^{ I}]_{ I}(\x_{1}, \ldots, \x_{n})\psi(R_{l_{1}}\cdot \x_{l_{1}}, \ldots, R_{l_{a}}\cdot\x_{l_{a}})\,d\x_{l_{1}}\cdots d\x_{l_{a}}
\eeas
satisfies the correct differential inequalities in the $\x''$ variables.

To simplify the notation, and without any real loss of generality, we will only consider the case when $L=\{1, \ldots, a\}$ and $M=\{a+1, \ldots,n\}$, so that $\x'=(\x_{1}, \ldots, \x_{a})$ and $\x''=(\x_{a+1}, \ldots, \x_{n})$. Let $\EEE''$ be the sub-matrix of $\EEE$ consisting of those entries $e(j,k)$ for which $a+1\leq j,k \leq n$, and let
\bes
\Gamma_{\Z}(\EEE'')=\Big\{(i_{a+1}, \ldots, i_{n})\in \Z^{n-a}:e(j,k)i_{k} \leq i_{j}<0, \, a+1\leq j,k \leq n\Big\}.
\ees
For each $ I''=(i_{a+1}, \ldots, i_{n})\in \Gamma_{\Z}(\EEE'')$ there are only finitely many elements $ I\in \Gamma_{\Z}(\EEE)$ which have the same last $n-a$ entries. Let $F''\subset \Gamma_{\Z}(\EEE'')$ be a finite set and suppose (without loss of generality) that 
\bes
F=\Big\{ I=(i_{1}, \ldots, i_{n})\in \Gamma_{\Z}(\EEE): I''=(i_{a+1}, \ldots, i_{n})\in F''\Big\}.
\ees
For each $ I''\in \Gamma_{\Z}(\EEE'')$ let 
\bes
\Lambda(I'')= \Big\{ I'=(i_{1}, \ldots, i_{a})\in \Z^{a}: ( I', I'')\in \Gamma_{\Z}(\EEE)\Big\}.
\ees
Then for each $ I\in F$ we can write $ I = ( I',  I'')$ with $ I''\in F''$ and $ I'\in \Lambda( I'')$.

Let $\psi\in \mathcal C^{\infty}_{0}(\R^{M})$ be a normalized bump function with support in the unit ball, and let $R=\{R_{1}, \ldots, R_{a}\}$ be positive real numbers. To complete step 1 we need to estimate
\beas
K_{F,\psi,R}&(\x'')
=
\sum_{ I''\in F''}\sum_{I'\in\Lambda( I'')}\int_{\R^{M}}[\varphi^{ I}]_{ I}(\x_{1}, \ldots, \x_{n})\psi(R_{1}\cdot \x_{1}, \ldots, R_{a}\cdot\x_{a})\,d\x_{1}\cdots d\x_{a}
\eeas
and its derivatives. Let
\beas
\Phi^{I}_{\psi,R}(\x'')
&= \int_{\R^{M}}\varphi^{ I}(\x_{1}, \ldots, \x_{n})\psi\big((2^{i_{1}}R_{1})\cdot \x_{1}, \ldots, (2^{i_{a}}R_{a})\cdot\x_{a})\,d\x_{1}\cdots d\x_{a}.
\eeas
Then 
\beas
\int_{\R^{M}}[\varphi^{ I}]_{ I}(\x_{1}, \ldots, \x_{n})\psi(R_{1}\cdot \x_{1}, \ldots, R_{a}\cdot\x_{a})\,d\x_{1}\cdots d\x_{a}
&=
\big[\Phi^{(I',I'')}_{\psi,R}\big]_{ I''}(\x'')
\eeas
and so
\beas
K_{F,\psi,R}(\xt'') 
&=
\sum_{I''\in F'}\Big[\sum_{ I'\in \Lambda(I'')}\Phi^{(I',I'')}_{\psi,R}\Big]_{ I''}(\x'').
\eeas
If we can show that for each $I''\in F''$ the inner sum $\sum_{ I'\in \Lambda(I'')}\Phi^{(I',I'')}_{\psi,R}$ converges to a function $\widetilde \Phi^{ I''}_{\psi,R}\in\CC^{\infty}_{0}(\R^{L})$, and this family is uniformly bounded independent of the choice of $F$, then the required differential inequality estimates for $K_{F,\psi,R}$ will follow from Lemma \ref{Lem4.1}. But according to Lemma \ref{Lem3.7jj}, $\Phi^{(I',I'')}_{\psi,R}$ is a sum of normalized bump functions multiplied by exponential gains. Thus the proof of step 1 will be complete if we can show that for each fixed $ I''\in F''\subset\Gamma_{\Z}(\EEE'')$ and each subset $A\subset \{1, \ldots, n\}$,
\beas
\sum_{(i_{1}, \ldots, i_{a})\in \Lambda( I'')}\Bigg[\prod_{\substack{j=1\\j\in A}}^{a}2^{-\epsilon\left[i_{j}-e(j,\tau_{A}(j))i_{\tau_{A}(j)}\right]}\Bigg]
\Bigg[\prod_{\substack{j=1\\j\notin A}}^{a}\min\Big\{(2^{i_{j}}R_{j})^{+\epsilon}, (2^{i_{j}}R_{j})^{-\epsilon}\Big\}\Bigg]
\eeas
converges independently of $ I''$ and the finite set $F''$. Without loss of generality, we can assume that $\{1, \ldots,a\}\cap A=\{1,\ldots, b\}$ and $\{1, \ldots,a\}\setminus A=\{b+1,\ldots, a\}$. Thus we need to estimate
\beas
\sum_{(i_{1}, \ldots, i_{a})\in \Lambda( I'')}\Bigg[\prod_{\substack{j=1}}^{b}2^{-\epsilon\left[i_{j}-e(j,\tau_{A}(j))i_{\tau_{A}(j)}\right]}\Bigg]
\Bigg[\prod_{\substack{j=b+1}}^{a}\min\Big\{(2^{i_{j}}R_{j})^{+\epsilon}, (2^{i_{j}}R_{j})^{-\epsilon}\Big\}\Bigg].
\eeas
The required estimate follows from Proposition \ref{Prop16.1wer} in Appendix III.
\end{proof}

\begin{proof}[Proof of Step 2]\quad

\medskip 

We need to check that if $\{F_{m}\subset\Gamma_{\Z}(\EEE)\}$ is a sequence of finite sets with $\bigcup F_{m}=\Gamma_{\Z}(\EEE)$, then the sequence $\big\{\KK_{F_{m}}=\sum_{ I\in F_{m}}[\varphi^{ I}]_{ I}\big\}$ converges in the sense of distributions to an element of $\PP_{0}(\EEE)$. The argument is similar to that on pages 667-668 of \cite{MR2949616}. Since each $\varphi^{ I}$ has cancellation in $\x_{n}$, it follows from Proposition \ref{Prop3.6} that we can write $\varphi^{ I}=\sum_{k\in C_{n}}\partial_{x_{k}}\varphi^{ I}_{k}$ where each $\varphi^{ I}_{k}\in \CC^{\infty}_{0}(\R^{N})$ is normalized relative to $\varphi^{ I}$.  It follows that
\bes
[\varphi^{ I}]_{ I}
=
\sum_{k\in C_{n}}[\partial_{x_{k}}\varphi^{ I}_{k}]_{ I}
=
\sum_{k\in C_{n}}2^{i_{k_{s}}d_{k}}\partial_{x_{k}}[\varphi^{ I}_{k}]_{ I}.
\ees
Thus if $\psi\in\SS^{\infty}_{0}(\R^{N})$ is a test function, we can integrate by parts to get 
\beas
\big\langle \KK_{F_{m}},\psi\big\rangle
&=
\sum_{k\in C_{n}}\big\langle \sum_{ I\in F_{m}}2^{i_{k}d_{k}}\partial_{x_{k}}[\varphi^{ I}_{k}]_{ I},\psi\big\rangle
=
-\sum_{k\in C_{n}}\big\langle \sum_{ I\in F_{m}}2^{i_{k}d_{k}}[\varphi^{ I}_{k}]_{ I},\partial_{x_{k}}\psi\big\rangle\\
&=
-\sum_{k\in C_{n}}\int_{\R^{N}}\Big[\sum_{ I\in F_{m}}2^{i_{k}d_{k}}2^{-\sum_{k=1}^{n}i_{k}Q_{k}}
 \varphi^{ I}_{k}(2^{-i_{1}}\cdot\x_{1}, \ldots, 2^{-i_{n}}\cdot\x_{n})\Big]\partial_{x_{k}}\psi(\x)\,d\x.
\eeas
Let $\d = \min\big\{d_{k}:1\leq k \leq N\big\}>0$. Since $\{\varphi^{ I}_{k}\}$ is a normalized family in $\SS(\R^{N})$, it follows from Proposition \ref{Prop13.1} that
\beas
\Big|\sum_{ I\in F_{m}}&2^{i_{k}d_{k}}2^{-\sum_{k=1}^{n}i_{k} Q_{k}}
 \varphi^{ I}_{k}(2^{-i_{1}}\cdot\x_{1}, \ldots, 2^{-i_{n}}\cdot\x_{n})\Big|\\
&\leq
C_{M}\sum_{ I\in \Gamma(\EEE)}\Big[\prod_{k=1}^{n}2^{-i_{k} Q_{k}}\Big]2^{i_{k}\d}\Big(1+\sum_{j=1}^{n}2^{-i_{k}} n_{k}(\x_{k})\Big)^{-M}\\
&\leq
C_{M}N_{n}(\x)^{\d}\,\prod_{k=1}^{n}N_{k}(\x)^{- Q_{k}} \leq C_{M}N_{n}(\x)^{\d}\prod_{j=1}^{n}N_{j}(\x)^{-Q_{j}}.
\eeas
It follows from Corollary \ref{Cor4.2} that $N_{n}(\x)^{\d}\,\prod_{j=1}^{n}N_{j}(\x)^{-Q_{j}}$ is locally integrable. Thus by the Lebesgue dominated convergence theorem it follows that as $m$ tends to infinity, $\langle \KK_{F_{m}},\psi\rangle$ tends to
\beas
-\sum_{k\in C_{n}}\int\limits_{\R^{N}}\Big[\sum_{ I\in F_{m}}2^{i_{k}d_{k}}2^{-\sum_{k=1}^{n}i_{k} Q_{k}}
 \varphi^{ I}_{k}(2^{-i_{1}}\cdot\x_{1}, \ldots, 2^{-i_{n}}\cdot\x_{n})\Big]\partial_{x_{k}}\psi(\x)d\x.
\eeas
This completes the proof.
\end{proof}

\section{Decomposition of multipliers and kernels}\label{Decompositions}

Let $\KK\in \PP_{0}(\EEE)$. The object of this section is to decompose $\KK$ into a finite sum of distributions, each of which can be written as a dyadic sum of dilates of normalized Schwartz functions (or normalized bump functions) with appropriate cancellation. It is difficult to directly decompose $\KK$ and maintain the cancellation, so we work on the Fourier transform side.  The effect of this procedure is that the following steps will be governed by the decomposition of $\B(1)^{c}$ in the $\xib$-space according to the sets $\widehat E_S$, while the decomposition of $\B(1)$ in the $\x$-space will no longer play any r\^ole.

\subsection{New matrices $\EEE_{S}$}\label{NewMatrices}\quad

\medskip

Let $\SS'(\EEE)$ \index{S3'n@$\SS'(\EEE)$} denote the set of marked partitions $S$ such that $\widehat E_{S}\cap \B(1)^{c}$ is non-empty. From Corollary \ref{Cor5.7} we obtain the following equivalent condition.

\begin{corollary}\label{Remark3.10}
Let $S=\big((I_{1},k_{1});\ldots;(I_{s},k_{s})\big)\in \SS(n)$.  Then $\widehat E_{S}\cap \B(1)^{c}\neq \emptyset$ if and only if the cone
\bea
\widehat \Gamma_{S}=\Big\{\a=(a_{1}, \ldots, a_{s})\in \R^{s}:\text{$\tau_{S}(k_{r},k_{p})a_{p}\leq a_{r}<0$ for all $1\leq p,r\leq s$}\Big\},
\eea
has non-empty interior.
\end{corollary}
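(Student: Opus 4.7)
The plan is to translate membership in $\widehat E_S$ into linear inequalities by taking logarithms, and then to recognize the resulting system, after a sign change, as exactly the strict form of the inequalities defining $\widehat\Gamma_S$. The bridge will be the substitution $a_r=-\log_2 n_{k_r}(\t_{k_r})$ for the marked indices, together with the observation (from Corollary \ref{Cor5.7}(b)) that the inequalities involving the non-marked components $n_j(\t_j)$ for $j\in I_r\setminus\{k_r\}$ decouple from the inequalities among the $n_{k_r}(\t_{k_r})$ and can be satisfied trivially by taking $\t_j=0$ for every non-marked index. The interior of $\widehat\Gamma_S$ is obtained by strengthening the defining inequalities to strict ones, since the case $r=p$ reduces (via $\tau_S(k_r,k_r)=1$) to the trivial relation $a_r\leq a_r$.

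For the forward implication, I would take $\t\in\widehat E_S\cap\B(1)^c$ and first note that, because $\t_{k_r}$ is in particular $1$-dominant in $\widehat N_{k_r}(\t)$, Remark \ref{Rem5.2}(4) with $A=1$ forces $n_{k_r}(\t_{k_r})>1$, hence $a_r<0$ for every $r$. Next, taking $\log_2$ of the strict inequality $n_{k_r}(\t_{k_r})>n_{k_p}(\t_{k_p})^{1/\tau_S(k_p,k_r)}$ furnished by Corollary \ref{Cor5.7}(b) and applying the substitution yields $a_p>\tau_S(k_p,k_r)a_r$; swapping the roles of $r$ and $p$ this becomes $\tau_S(k_r,k_p)a_p<a_r$, so $(a_1,\ldots,a_s)$ is an interior point of $\widehat\Gamma_S$. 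For the converse, I would pick $(a_1,\ldots,a_s)$ satisfying $\tau_S(k_r,k_p)a_p<a_r<0$ for all $r\neq p$, set $b_r=-a_r>0$, and explicitly construct $\t$ by choosing any $\t_{k_r}\in\R^{C_{k_r}}$ with $n_{k_r}(\t_{k_r})=2^{b_r}$ and setting $\t_j=0$ for each non-marked index $j\notin\{k_1,\ldots,k_s\}$. The non-marked inequalities in Corollary \ref{Cor5.7}(b) then reduce to $n_{k_r}(\t_{k_r})>0$ and are automatic, while the marked inequalities translate back (after the index swap) into the strict inequalities on the $(a_r)$. This produces a concrete $\t\in\widehat E_S$, and since $n_{k_r}(\t_{k_r})>1$ the point $\t$ automatically lies in $\B(1)^c$.

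There is no serious obstacle here; the argument is essentially logarithmic bookkeeping together with one explicit construction. The only point requiring attention is the index convention: $\widehat\Gamma_S$ is defined with $\tau_S(k_r,k_p)a_p$ on the left while Corollary \ref{Cor5.7}(b) bounds $n_{k_r}(\t_{k_r})$ from below by a power of $n_{k_p}(\t_{k_p})$ using $1/\tau_S(k_p,k_r)$, so one must be careful to perform the swap $r\leftrightarrow p$ when matching the two systems. Once this bookkeeping is done the equivalence is immediate.
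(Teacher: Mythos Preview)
Your proposal is correct and matches the paper's approach exactly: the paper states this corollary as an immediate consequence of Corollary~\ref{Cor5.7} without giving further details, and you have supplied precisely the logarithmic change of variables and index bookkeeping that makes that deduction explicit. One small remark: rather than relying on Remark~\ref{Rem5.2}(4) to obtain $n_{k_r}(\t_{k_r})>1$, you can read this off directly from the strict inequalities in Corollary~\ref{Cor5.7}(b) themselves when $s\geq 2$ (combining $n_{k_r}(\t_{k_r})>n_{k_p}(\t_{k_p})^{1/\tau_S(k_p,k_r)}$ with its counterpart for $p$ and $r$ swapped forces both quantities to exceed $1$), and the case $s=1$ is trivial since then $\widehat\Gamma_S=(-\infty,0)$.
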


With the decomposition introduced in Section \ref{Sec3.5iou} and in the notation of Section \ref{subs.coarser}, for each $S\in\SS'(\EEE)$ we want to  define new norms on $\R^{N}$ which describe the behaviour of multipliers on the set $\widehat E_{S}\cap \B(1)^{c}$ in the same way that the original norms $\widehat N_{1}(\t), \ldots, \widehat N_{n}(\t)$ describe it on the principal region $\widehat E_{S_0}\cap \B(1)^{c}$. 
\smallskip

Let $S=\big\{(I_1,k_1);\dots;(I_s,k_s)\big\}\in\SS'(\EEE)$. On each $\R^{I_r}$ we put the norm
$$
\widehat n_{S,r}(\xib_{I_{r}}) = \sum_{j\in I_{r}}n_{j}(\xib_{j})^{e(k_{r},j)}=\sum_{j\in I_{r}}n_{j}(\xib_{j})^{1/\tau_{S}(j,k_{r})},
$$
defined in \eqref{5.11aa} and homogeneous with respect to the dilations~$\hat\cdot_S$. 

We now want  an $s\times s$ matrix $\EEE_S$ satisfying the basic hypotheses and such that $\Gamma(\EEE_S)=\widehat\Gamma_S$. We cannot simply use the coefficients $\tau_S(k_r,k_p)$ because in general they do not satisfy the inequalities $\tau_{S}(k_{r},k_{p})\leq \tau_{S}(k_{r},k_{u})\,\tau_{S}(k_{u},k_{p})$. 

Luckily we can replace the coefficients $\tau_{S}(k_{p},k_{r})$ by new ones which do satisfy the basic hypothesis. According to Lemma \ref{Lem3.2} in Appendix I, if the cone is non-empty, the coefficients in the inequalities defining the cone can be replaced by new coefficients which satisfy the basic hypothesis. Thus we have the following result.

\begin{lemma}\label{Lem5.10}

Let $S=\big\{(I_{1},k_{1});\ldots, (I_{s},k_{s})\big\}\in \SS'(\EEE)$. \index{E3ES@$\EEE_{S}$}
There is a unique $s\times s$ matrix $\EEE_{S}=\{e_{S}(r,p)\}$ such that $e_{S}(r,p)\leq \tau_{S}(k_{r},k_{p})$, and
\bes
\Gamma(\EEE_{S})=\Big\{\a\in\R^{s}:e_{S}(r,p)a_{p}\leq a_{r}<0\Big\} = \widehat\Gamma_{S}.
\ees
\end{lemma}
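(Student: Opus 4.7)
The plan is to obtain $\EEE_S$ as a direct application of Lemma~\ref{Lem3.2} of Appendix~I (the result mentioned in the paragraph immediately preceding the lemma) to the cone $\widehat\Gamma_S$. My task therefore reduces to checking that the hypotheses of that lemma hold and then reading off the conclusion.

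First I would verify the hypotheses. Since $S\in\SS'(\EEE)$, Corollary~\ref{Remark3.10} guarantees that $\widehat\Gamma_S$ has non-empty interior. By definition the cone is cut out by the inequalities $\tau_S(k_r,k_p)\,a_p\le a_r<0$ with strictly positive coefficients $\tau_S(k_r,k_p)$, and Definition~\ref{Def3.7iou} applied with $l=k_r$ yields $\tau_S(k_r,k_r)=\min_{j\in I_r}e(k_r,j)/e(k_r,j)=1$, so the diagonal entries are already normalized. The only hypothesis of Lemma~\ref{Lem3.2} that can fail for the raw matrix $\{\tau_S(k_r,k_p)\}$ is the multiplicative basic inequality $\tau_S(k_r,k_p)\le\tau_S(k_r,k_u)\tau_S(k_u,k_p)$, which is exactly what Lemma~\ref{Lem3.2} is designed to repair.

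For the shape of the construction inside Lemma~\ref{Lem3.2}: the natural candidate is
$$e_S(r,p)=\inf\Big\{\prod_{i=0}^{m-1}\tau_S(k_{r_i},k_{r_{i+1}}):m\ge 1,\ r_0=r,\ r_m=p\Big\},$$
where the infimum runs over all finite chains in $\{1,\dots,s\}$ joining $r$ to $p$. Taking the one-step chain gives $e_S(r,p)\le\tau_S(k_r,k_p)$; the trivial chain gives $e_S(r,r)=1$; and concatenating an $r\to u$ chain with a $u\to p$ chain yields the basic hypothesis $e_S(r,p)\le e_S(r,u)\,e_S(u,p)$. To see $\Gamma(\EEE_S)=\widehat\Gamma_S$: since $a_p<0$ and $e_S(r,p)\le\tau_S(k_r,k_p)$, the inequality $e_S(r,p)a_p\le a_r$ is stricter than $\tau_S(k_r,k_p)a_p\le a_r$, giving $\Gamma(\EEE_S)\subseteq\widehat\Gamma_S$; conversely, on $\widehat\Gamma_S$ any chain telescopes to $a_r\ge\prod_i\tau_S(k_{r_i},k_{r_{i+1}})\,a_p$, and the infimum over chains gives $a_r\ge e_S(r,p)a_p$, so $\widehat\Gamma_S\subseteq\Gamma(\EEE_S)$.

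The step I expect to require the most care is uniqueness, though it too is handled by Lemma~\ref{Lem3.2}. My plan is: any other matrix $\EEE'_S$ satisfying the basic hypothesis and the bound $e'_S(r,p)\le\tau_S(k_r,k_p)$ must satisfy $e'_S(r,p)\le\prod_i e'_S(r_i,r_{i+1})\le\prod_i\tau_S(k_{r_i},k_{r_{i+1}})$ for every chain, and taking the infimum yields $e'_S(r,p)\le e_S(r,p)$. For the reverse inequality one rewrites $e_S(r,p)=\sup\{a_r/a_p:\a\in\widehat\Gamma_S,\ a_p<0\}$ and uses $\Gamma(\EEE'_S)=\widehat\Gamma_S$ to force $e'_S(r,p)\ge a_r/a_p$ pointwise on the cone. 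Since the genuinely nontrivial content—that a cone with non-empty interior admits a canonical transitive set of defining coefficients—is packaged in Lemma~\ref{Lem3.2}, no essential new difficulty arises here beyond the bookkeeping just described.
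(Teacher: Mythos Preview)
Your proposal is correct and follows exactly the paper's approach: the lemma is obtained as a direct application of Lemma~\ref{Lem3.2} to the matrix $\{\tau_S(k_r,k_p)\}$, with non-emptiness of $\widehat\Gamma_S$ supplied by Corollary~\ref{Remark3.10}. Your explicit chain-infimum formula for $e_S(r,p)$ matches the alternative description the paper records immediately after the proof of Lemma~\ref{Lem3.2}, so even that elaboration is in line with the paper's treatment.
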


According to \eqref{globalnorms}, we introduce global norms $N_{S,r}$ and $\widehat N_{S,r}$.

\begin{definition}
Let $S=\big\{(I_{1},k_{1});\ldots, (I_{s},k_{s})\big\}\in \SS'(\EEE)$.  Set \index{N4Sr@$N_{S,r}\widehat N_{S,r}$}
\bea\label{3.6iou}
N_{S,r}(\x) &= \sum_{p=1}^{s}\widehat n_{S,p}(\x_{I_{p}})^{e_{S}(r,p)}\,\,\approx \,\,\sum_{p=1}^{s}\sum_{k\in I_{p}}n_{k}(\x_{k})^{e(k_{p},k)e_{S}(r,p)},\\
\widehat N_{S,r}(\xib) &= \sum_{p=1}^{s}\widehat n_{S,p}(\xib_{I_{p}})^{1/e_{S}(p,r)}\approx\sum_{p=1}^{s}\sum_{k\in I_{p}}n_{k}(\xib_{k})^{e(k_{p},k)/e_{S}(p,r)}.
\eea
\end{definition}

These norms coincide with the norms defined in \eqref{globalnorms}, relative to the partition $\II_S=\{I_1,\dots,I_s\}$ of $\{1,\dots,n\}$, to the matrix $\EEE_S$ and to the vector of exponents $\alphab_S=(\alpha_1,\dots,\alpha_n)$ with $\alpha_j=e(k_r,j)$ if $j\in I_r$.

We will use the shortened notation $\MM_\infty(\EEE_S)$ instead of $\MM_\infty(\EEE_S,\II_S,\alphab_\S)$.

\begin{proposition}\label{Lem8.7mm}
We have the inclusion $\MM_\infty(\EEE_S)\subset \MM_\infty(\EEE)$.
\end{proposition}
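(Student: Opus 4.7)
The plan is to reduce the statement to the entrywise comparison criterion of Proposition~\ref{inclusion} and then verify that comparison using the basic hypothesis for $\EEE$ together with the defining inequality $e_S(r,p)\leq\tau_S(k_r,k_p)$ for $\EEE_S$.

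First I would apply Lemma~\ref{sharp} with the partition $\II_S=\{I_1,\ldots,I_s\}$ and the vector of exponents $\alphab_S$ given by $\alpha_j=e(k_r,j)$ for $j\in I_r$, to identify $\MM_\infty(\EEE_S)=\MM_\infty(\EEE_S,\II_S,\alphab_S)$ with the class $\MM_\infty(\EEE_S^\sharp)$ associated to an $n\times n$ matrix $\EEE_S^\sharp$. By the definition \eqref{esharp}, for $j\in I_t$ and $k\in I_r$ one has
\[
e_S^\sharp(j,k)=\frac{\alpha_k}{\alpha_j}\,e_S(t,r)=\frac{e(k_r,k)}{e(k_t,j)}\,e_S(t,r).
\]
By Proposition~\ref{inclusion} (the equivalence of (ii) and (iv)), the desired inclusion $\MM_\infty(\EEE_S^\sharp)\subseteq\MM_\infty(\EEE)$ is equivalent to the pointwise bound $e_S^\sharp(j,k)\leq e(j,k)$ for all $j,k\in\{1,\ldots,n\}$.

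Unpacking this inequality, I need to show
\[
e(k_r,k)\,e_S(t,r)\ \leq\ e(k_t,j)\,e(j,k)\qquad\text{for all }j\in I_t,\ k\in I_r.
\]
For the right-hand side, the basic hypothesis \eqref{2.5} for $\EEE$ gives $e(k_t,k)\leq e(k_t,j)\,e(j,k)$. For the left-hand side, Lemma~\ref{Lem5.10} guarantees $e_S(t,r)\leq\tau_S(k_t,k_r)$, and by the very definition of $\tau_S$ (Definition~\ref{Def3.7iou}), specializing the minimum to the index $k\in I_r$,
\[
\tau_S(k_t,k_r)\ =\ \min_{j'\in I_r}\frac{e(k_t,j')}{e(k_r,j')}\ \leq\ \frac{e(k_t,k)}{e(k_r,k)},
\]
so that $e(k_r,k)\,e_S(t,r)\leq e(k_t,k)$. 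Chaining these two estimates yields the required inequality.

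The verification is thus a direct composition of two one-line inequalities, and no real analytic difficulty arises; the whole content of the proposition lies in having set up $\EEE_S$ so that $e_S(t,r)\leq\tau_S(k_t,k_r)$. The only point that needs a little care is keeping the roles of the two indices in $\tau_S$ and the two subspaces $I_t,I_r$ straight when writing out the sharp matrix $\EEE_S^\sharp$; this is where I expect the only risk of a slip, so I would double-check the case $t=r$ (where $e_S^\sharp(j,k)=e(k_r,k)/e(k_r,j)\leq e(j,k)$ reduces again to the basic hypothesis) as a sanity check before writing out the general argument above.
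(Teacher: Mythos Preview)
Your proposal is correct and follows essentially the same route as the paper's proof: reduce via Corollary~\ref{sharp-inclusion} (which you invoke as Lemma~\ref{sharp} plus Proposition~\ref{inclusion}) to the entrywise inequality $e_S^\sharp(j,k)\le e(j,k)$, then verify it by combining $e_S(t,r)\le\tau_S(k_t,k_r)\le e(k_t,k)/e(k_r,k)$ with the basic hypothesis $e(k_t,k)\le e(k_t,j)e(j,k)$. The only difference is notational (your indices $t,r$ are the paper's $r,p$).
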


\begin{proof}
By Corollary \ref{sharp-inclusion}, it is sufficient to check that the entries of the matrix $\EEE_S^\sharp$ are not greater than the corresponding entries of the matrix $\EEE$. By \eqref{esharp}, if $j\in I_r$ and $k\in I_p$,
$$
e_S^\sharp(j,k)=\frac{e(k_p,k)}{e(k_r,j)}e_S(r,p)\le \frac{e(k_p,k)}{e(k_r,j)}\tau_S(k_r,k_p)\le \frac{e(k_p,k)}{e(k_r,j)}\frac{e(k_r,k)}{e(k_p,k)}\le e(j,k).
$$
\end{proof}

\subsection{Road map for the dyadic decomposition}\quad

Let $m=\widehat{\KK}$ be the Fourier transform so that $m\in \MM_{\infty}(\EEE)$. 
We proceed as follows. 
\begin{enumerate}[(1)]
\smallskip

\item  In Section \ref{SSPartitions} we will construct a partition of unity on $\R^{N}$ consisting of functions $\Psi_{0}$ and $\{\Psi_S\}_{S\in \SS'(\EEE)}$, where $\Psi_0\in \CC^{\infty}_{0}(\R^N)$ has support in $\B(2)$, and where each $\Psi_S \in \MM_{\infty}(\EEE)$ and has support in $\widehat E_{S}^{A}\cap \B(1)^{c}$. Here $A>1$ depends only on the matrix $\EEE$.

We will write 
\beas
m=\Psi_{0}m+\sum_{S\in \SS'(\EEE)}\Psi_{S}m= m_{0}+\sum_{S\in \SS'(\EEE)}m_{S}.
\eeas 
Then $m_{0}\in\CC^{\infty}_{0}(\B(2))$ and each $m_{S}\in \MM_{\infty}(\EEE)$ 
 is supported in $\widehat E_{S}^{A}\cap\B(1)^{c}$.  

\smallskip

\item For each $S=\big\{(I_{1},k_{1});\ldots;(I_{s},k_{s})\big\}\in \SS'(\EEE)$, we show that 
\beas
m_{S}(\xib_{I_{1}}, \ldots, \xib_{I_{s}})= \sum_{J\in \Gamma(\EEE_S)}m_{S}^{J}(2^{-j_{1}}\,\hat\cdot_{S}\xib_{I_{1}}, \ldots, 2^{-j_{s}}\,\hat\cdot_{S}\xib_{I_{s}})
\eeas
where
 $\big\{m_{S}^{J}:J\in \Gamma(\EEE_S)\big\}\subset\CC^{\infty}_{0}(\R^{N})$ is a uniformly bounded family with supports in the set where $4^{-1}<\widehat n_{S,r}(\xib_{I_{r}})<4$ for $1 \leq r \leq s$. 
\smallskip

\item Writing $[m_{S}^{J}]^{J}(\xib_{I_{1}}, \ldots, \xib_{I_{s}}) = m_{S}^{J}(2^{-j_{1}}\,\hat\cdot_{S}\xib_{I_{1}}, \ldots, 2^{-j_{s}}\,\hat\cdot_{S}\xib_{I_{s}})$, it then follows that 
\beas
m(\xib)=m_{0}(\xib)+\sum_{S\in \SS'([n])}\sum_{J\in\Gamma(\EEE_S)}[m_{S}^{J}]^{J}(\xib).
\eeas
This is done in Section \ref{DyadicDecomp}.

\smallskip

\item
Taking inverse Fourier transform, denoted by $^{\vee}$, we obtain 
\beas
\KK = m_{0}^{\vee}+\sum_{S\in \SS'([n])}\sum_{J\in\Gamma(\EEE_S)}\big([m_{S}^{J}]^{J}\big)^{\vee}.
\eeas 
This gives a decomposition of $\KK$ into sums of dilates of Schwartz functions, which in turn can be written as sums of dilates of normalized bump functions. (See Lemma \ref{Lem8.7qq} below.) 
\end{enumerate}

\subsection{Partitions of unity}\label{SSPartitions}\quad

\medskip

In this section, we construct a partition of unity on the `Fourier transform side'. We begin by slightly modifying our notation. As usual, $\R^{N}=\bigoplus_{j=1}^{n}\R^{C_{j}}$ and $\EEE=\{e(j,k)\}$ is an $n\times n$ matrix satisfying (\ref{2.5}). Let $S=\big\{(I_{1},k_{1});\ldots;(I_{s},k_{s})\big\}\in   \SS'(\EEE)$  and  $\R^{I_{r}}=\bigoplus_{j\in I_{r}}\R^{C_{j}}$. Instead of taking $\widehat N_{j}(\xib)=\sum_{l=1}^{n}n_{l}(\xib_{l})^{1/e(l,j)}$ which might not be differentiable, we choose a smooth version of this norm. Then
\beas
\widehat N_{j}(\xib) &\approx \sum_{l=1}^{n}n_{l}(\xib_{l})^{1/e(l,j)},&&&\widehat n_{S,r}(\xib_{I_{r}}) &\approx \sum_{j\in I_{r}}n_{j}(\xib_{j})^{e(k_{r},j)},\\
\lambda\,\hat\cdot_{S}\,\xib_{I_{r}}&= \big\{\lambda^{1/e(k_{r},j)}\cdot\xib_{j}:j\in I_{r}\big\},&&&
\widehat Q_{S,r}&=\sum_{j\in I_{r}}Q_{j}e(k_{r},j)^{-1},\\
\tau_{S}(k_{p},k_{r})&=\min_{j\in I_{r}}\big\{e(k_{p},j)/e(k_{r},j)\big\}.
\eeas
 Let $\widehat E_{S}^{A}$ be the set introduced in Definition \ref{Def5.4} for $A\ge1$. Recall from Lemma \ref{Thm5.6} that there is a constant $\eta>1$ so that
\beas
\xib\in \widehat E_{S}^{A}\cap \B(1)^{c}
&\Longrightarrow
\begin{cases}
\text{$n_{j}(\xib_{j})^{e(k_{r},j)}<A n_{k_{r}}(\xib_{k_{r}})$ \quad\quad $\forall j\neq k_{p}$ in $I_{r}$}\\
\text{$n_{k_{p}}(\xib_{k_{p}})<An_{k_{r}}(\xib_{k_{r}})^{\tau_{S}(k_{p},k_{r})}\,\forall 1\leq p\neq r\leq s$}
\end{cases}\\
&
\Longrightarrow
\xib\in \widehat E_{S}^{A^{\eta}}\cap \B(1)^{c}.
\eeas

\medskip

Now pick $\psi\in\CC^{\infty}(\R)$ and $\chi\in \CC^{\infty}(\R^{N})$ such that
\beas
\psi(t) &=
\begin{cases}
1 &\text{if $t\leq 2n$}\\
0 &\text{if $t\geq 4n$}
\end{cases},
&&\text{and}&
\chi(\xib) &=
\begin{cases}
0 &\text{if $|\xib|\leq 1$}\\
1 &\text{if $|\xib|\geq \frac{3}{2}$}
\end{cases}.
\eeas
For $S=\big\{(I_{1},k_{1});\ldots;(I_{s},k_{s})\big\}\in \SS'(\EEE)$, set
\be\label{8.1bb}
\Psi_{S}^{\#}(\xib) = \chi(\xib)\,\prod_{r=1}^{s} \prod_{j\in I_{r}}\psi\big(\widehat N_{j}(\xib)n_{k_{r}}(\xib_{k_{r}})^{-1/e(k_{r},j)}\big).
\ee

\begin{proposition}\quad
\begin{enumerate}[{\rm(a)}]

\item
Let $\overline{\widehat E_{S}}$ be the closure of $\widehat E_{S}$. Then $\Psi_{S}^{\sharp}(\xib)\equiv 1$ in an open neighborhood of the set  $\overline{\widehat E_{S}}\setminus \B(2)$. 

\smallskip

\item There is a constant $A>1$ so that $\Psi_{S}^{\sharp}$ is supported in $\widehat E_{S}^{A}\setminus \overline{\B(1)}$ .

\smallskip

\item $\Psi_{S}^{\sharp}\in \CC^{\infty}(\R^{N})$.

\end{enumerate}
\end{proposition}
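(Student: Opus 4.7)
My plan is to verify (b), (c), and (a) in this order, each following fairly directly from the definitions once the right observation is made.

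For (b), on the support of $\Psi_S^{\sharp}$ every $\psi$-factor is nonzero, hence $\widehat N_j(\xib) \le 4n\,n_{k_r}(\xib_{k_r})^{1/e(k_r,j)}$ for every $r$ and every $j \in I_r$. Since $n_l(\xib_l)^{1/e(l,j)} \le \widehat N_j(\xib)$ for every $l$, this rearranges to the $A$-dominance inequality defining $\widehat E_S^A$ with $A = (4n)^{\max_{r,\,j\in I_r} e(k_r,j)}$. Combined with $\chi \ne 0$ forcing $\xib \notin \overline{\B(1)}$ (after matching the Euclidean threshold in $\chi$ to the bounded ball $\B(1)$), this gives the desired inclusion.

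For (c), smoothness is clear on the open set where every $n_{k_r}(\xib_{k_r})$ is positive. The only potential singular locus is $\bigcup_r \{n_{k_r}(\xib_{k_r}) = 0\}$, where $n_{k_r}(\xib_{k_r})^{-1/e(k_r,j)}$ blows up. However, on the support of $\chi$ the quantity $\widehat N_j(\xib)$ is bounded below, so as $n_{k_r}(\xib_{k_r}) \to 0$ the argument of the corresponding $\psi$-factor tends to $+\infty$; since $\psi$ is supported in $(-\infty, 4n]$, this factor vanishes identically on an open neighborhood of the singular locus, making $\Psi_S^{\sharp}$ smoothly zero there.

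Finally, for (a), if $\xib \in \overline{\widehat E_S}$ and $j \in I_r$ then the defining inequalities of $\widehat E_S$ give $n_l(\xib_l)^{1/e(l,j)} \le n_{k_r}(\xib_{k_r})^{1/e(k_r,j)}$ for every $l$, so summing over $l$ yields $\widehat N_j(\xib) \le n\,n_{k_r}(\xib_{k_r})^{1/e(k_r,j)}$. The argument of each $\psi$-factor is then at most $n < 2n$, so every factor equals $1$, and the strict inequality extends this to an open neighborhood by continuity; meanwhile $\chi = 1$ on this neighborhood once its Euclidean threshold is chosen compatibly with the bounded ball $\B(2)$. The main (mild) obstacle throughout is the bookkeeping to reconcile the Euclidean cutoff in $\chi$ with the homogeneous-norm balls $\B(1)$ and $\B(2)$; this is not a deep point because both balls are bounded, so the thresholds $1$ and $3/2$ in the definition of $\chi$ can be chosen in the appropriate regime.
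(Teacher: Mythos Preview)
Your proof is correct and follows essentially the same approach as the paper: for (b) you extract the bound $\widehat N_j(\xib)\le 4n\,n_{k_r}(\xib_{k_r})^{1/e(k_r,j)}$ from the nonvanishing of each $\psi$-factor and convert it to $A$-dominance, for (c) you observe that the $\psi$-factor vanishes near the singular set $\{n_{k_r}(\xib_{k_r})=0\}$ because its argument blows up there, and for (a) you bound the argument of each $\psi$-factor by (approximately) $n<2n$ using the closure inequalities. Your explicit flag about reconciling the Euclidean cutoff in $\chi$ with the homogeneous-norm balls $\B(1),\B(2)$ is a fair remark---the paper glosses over this point---but as you note it is only a matter of adjusting thresholds.
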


\begin{proof}
Suppose that $\xib\in \overline{\widehat E_{S}}\setminus \B(2)$. Then $\chi(\etab) = 1$ for $\etab$ in a neighborhood of $\xib$. Moreover,  $n_{l}(\xib_{l})^{1/e(l,j)}\leq n_{k_{r}}(\xib_{k_{r}})^{1/e(k_{r},j)}$ for every $j\in I_{r}$, and so 
\beas
\widehat N_{j}(\xib)n_{k_{r}}(\xib_{k_{r}})^{-1/e(k_{r},j)} \approx n_{k_{r}}(\xib_{k_{r}})^{-1/e(k_{r},j)}\sum_{l=1}^{n}n_{l}(\xib_{l})^{1/e(l,j)}.
\eeas 
It follows that $\psi\big(\widehat N_{j}(\etab)n_{k_{r}}(\etab_{k_{r}})^{-1/e(k_{r},j)}\big)=1$ for $\etab$ in an open neighborhood of $\xib$, and so the same is true for $\Psi_{S}^{\sharp}(\etab)$.

Next suppose $\Psi_{S}^{\#}(\xib)\neq 0$.  Then  $\chi(\xib)\neq 0$, and $\psi\big(\widehat N_{j}(\xib)n_{k_{r}}(\xib_{k_{r}})^{-1/e(k_{r},j)}\big)\neq 0$ for $1\leq r \leq s$ and every $j\in I_{r}$. This shows that $\xib\in \overline{B(1)}^{c}$, and since $n_{j}(\xib)\lesssim \widehat N_{j}(\xib_{j})$ and $n_{k_{p}}(\xib_{k_{p}})\lesssim \widehat N_{j}(\xib)^{e(k_{p},j)}$, it follows from the assumption on the support of $\psi$ that
\beas
n_{j}(\xib_{j})&\lesssim n_{k_{r}}(\xib_{k_{r}})^{1/e(k_{r},j)} && \text{for $j\in I_{r}$,}\\
n_{k_{p}}(\xib_{k_{p}})&\lesssim n_{k_{r}}(\xib_{k_{r}})^{e(k_{p},j)/e(k_{r},j)}&&\text{for $1\leq p,r\leq s$}.
\eeas
It follows that there exists a constant $A>0$ (depending on $n$ and the matrix $\EEE$) so that $n_{j}(\xib_{j})\leq A\,n_{k_{r}}(\xib_{k_{r}})^{1/e(k_{r},j)}$ and $n_{k_{p}}(\xib_{k_{p}})\leq A\,n_{k_{r}}(\xib_{k_{r}})^{\tau_{S}(k_{p},k_{r})}$. But this means that the support of $\Psi_{S}^{\sharp}$ is contained in the set $\widehat E_{S}^{A^{\eta}}$ where $\eta$ is the constant from Lemma \ref{Thm5.6}.

Finally, if $\xib\notin \B(1)$ and if $\Psi_{S}^{\sharp}(\xib)\neq 0$, then $\widehat N_{j}(\xib)\geq 1$ for $1\leq j \leq n$ and it follows that $n_{k_{r}}(\xib_{k_{r}})$ is bounded away from $0$. Since the function $\widehat N_{j}(\xib)n_{k_{r}}(\xib_{k_{r}})^{-1/e(k_{r},j)}$ is smooth on the set where $\xib_{k_{r}}\neq \0$, and it follows that $\Psi_{S}^{\#}$ is smooth. 
\end{proof}

\begin{lemma}\label{Lem8.1} 
$\Psi_{S}^{\#}\in \MM_{\infty}(\widehat E)$; \textit{i.e.} for every $\gammab\in \N^{C_{1}}\times\cdots\times \N^{C_{n}}$ there is a constant $C_{\gammab}>0$ so that
\bes
\big|\partial^{\gammab}_{\xib}\Psi_{S}^{\#}(\xib)\big|\leq C_{\gammab}\prod_{j=1}^{n}\Big[1+\widehat N_{j}(\xib)\Big]^{-\[\gammab_{j}\]}.
\ees
\end{lemma}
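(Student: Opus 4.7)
The plan is to estimate $\partial^{\gammab}\Psi_S^\#$ by a direct application of the Leibniz and Fa\`a di Bruno formulas, exploiting the homogeneity properties of each factor and the fact that on the support of $\Psi_S^\#$ the variables $\xib_{k_r}$ are bounded below and the norms $\widehat N_j$ are controlled by a single term.

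\smallskip

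\noindent\emph{Step 1: Reduction to the support region.} We have shown that $\supp\Psi_S^\#\subset \widehat E_S^{A^\eta}\setminus\overline{\B(1)}$. On this set Lemma \ref{Thm5.6} and Remarks \ref{Rem5.2} give $n_{k_r}(\xib_{k_r})\ge c>0$ for every $r$, and for $j\in I_r$ the equivalence $\widehat N_j(\xib)\approx n_{k_r}(\xib_{k_r})^{1/e(k_r,j)}$. Consequently the target estimate can be rewritten as
\[
\prod_{j=1}^n\bigl(1+\widehat N_j(\xib)\bigr)^{-\[\gammab_j\]}\approx \prod_{r=1}^s n_{k_r}(\xib_{k_r})^{-\sum_{j\in I_r}\[\gammab_j\]/e(k_r,j)}.
\]
Applying Leibniz to $\Psi_S^\#=\chi\cdot\prod_{r,j\in I_r}\psi(F_{r,j})$ with $F_{r,j}(\xib)=\widehat N_j(\xib)\,n_{k_r}(\xib_{k_r})^{-1/e(k_r,j)}$ reduces the problem to bounding derivatives of the two kinds of factors.

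\smallskip

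\noindent\emph{Step 2: Derivatives landing on $\chi$.} Any derivative of $\chi$ is supported in the compact annulus $1\le|\xib|\le 3/2$, where every $\widehat N_j(\xib)\lesssim 1$. On this region the right-hand side of the target estimate is bounded below by a positive constant, and the derivatives of $\chi$ are uniformly bounded, so such terms contribute trivially.

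\smallskip

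\noindent\emph{Step 3: Derivatives of the $\psi$ factors.} The central point is that $F_{r,j}$ is homogeneous of degree $0$ under the one-parameter dilation $\delta_j(\lambda)$ (which scales $\xib_l$ by $\lambda^{1/e(l,j)}$), since both $\widehat N_j$ and $n_{k_r}(\xib_{k_r})^{1/e(k_r,j)}$ are homogeneous of degree $1$. Since $\widehat N_j$ is a smooth homogeneous norm and $n_{k_r}(\xib_{k_r})\ge c>0$ on the support, $F_{r,j}$ is smooth there; Proposition \ref{Prop12.1} in Appendix II then yields
\[
\bigl|\partial^{\boldsymbol\alpha} F_{r,j}(\xib)\bigr|\lesssim \widehat N_j(\xib)^{-\sum_l[\boldsymbol\alpha_l]/e(l,j)}.
\]
Combining this with Fa\`a di Bruno and the boundedness of $\psi^{(k)}$ gives the same bound for $\partial^{\boldsymbol\alpha}\psi(F_{r,j})$ on the set where $\psi^{(k)}(F_{r,j})\ne 0$ (where $F_{r,j}\approx 1$).

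\smallskip

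\noindent\emph{Step 4: Combining the bounds and the main obstacle.} The cleanest route is to observe that the target bound is exactly the condition $\Psi_S^\#\in\MM_\infty(\EEE_S)$ rewritten via the equivalences $\widehat N_{S,r}(\xib)\approx n_{k_r}(\xib_{k_r})$ on the support, after which Proposition \ref{Lem8.7mm} upgrades the conclusion to $\MM_\infty(\EEE)$. The remaining bookkeeping -- distributing $\gammab$ over the pairs $(r,j)$ via Leibniz and verifying that
\[
\prod_{r,j\in I_r}\widehat N_j(\xib)^{-\sum_l[\boldsymbol\alpha^{(r,j)}_l]/e(l,j)}\lesssim \prod_{r=1}^s n_{k_r}(\xib_{k_r})^{-\sum_{j\in I_r}[\gammab_j]/e(k_r,j)}
\]
whenever $\sum_{r,j}\boldsymbol\alpha^{(r,j)}=\gammab$ -- is the main obstacle. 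It is handled by substituting $\widehat N_j\approx n_{k_r}^{1/e(k_r,j)}$ for $j\in I_r$ and using the basic hypothesis $e(l,j)\le e(l,k_p)e(k_p,j)$ together with the dominance inequalities of Corollary \ref{Cor5.7} on $\widehat E_S^A$ to trade each factor involving $n_l$ (for $l\in I_p$) for the appropriate power of $n_{k_p}(\xib_{k_p})$.
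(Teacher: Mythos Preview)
Your overall strategy is the same as the paper's: both exploit that $F_{r,j}=\widehat N_j\,n_{k_r}^{-1/e(k_r,j)}$ is homogeneous of degree zero, apply Proposition~\ref{Prop12.1}, and then convert the resulting bound on $\widehat E_S^A$ via the basic hypothesis. However, there is a concrete error in Step~3. The dilation under which $\widehat N_j(\xib)=\sum_l n_l(\xib_l)^{1/e(l,j)}$ is homogeneous of degree one sends $\xib_l\mapsto\lambda^{e(l,j)}\cdot\xib_l$, \emph{not} $\lambda^{1/e(l,j)}\cdot\xib_l$ (the family $\delta_j$ you cite is designed for $N_j$ on the kernel side, and even there the exponent is $1/e(j,l)$). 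Consequently the adapted length of $\alphab$ is $\sum_l e(l,j)\[\alphab_l\]$, and Proposition~\ref{Prop12.1} actually gives
\[
\bigl|\partial^{\alphab}F_{r,j}(\xib)\bigr|\lesssim\widehat N_j(\xib)^{-\sum_l e(l,j)\[\alphab_l\]}.
\]

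This is precisely the exponent that makes Step~4 succeed. For each $l\in I_p$ one has $\widehat N_j(\xib)\ge n_{k_p}(\xib_{k_p})^{1/e(k_p,j)}$ always, and $e(k_p,j)\le e(k_p,l)e(l,j)$ by the basic hypothesis; since $n_{k_p}(\xib_{k_p})>1$ on the support, this yields
\[
\widehat N_j(\xib)^{-e(l,j)\[\alphab_l\]}\le n_{k_p}(\xib_{k_p})^{-e(l,j)\[\alphab_l\]/e(k_p,j)}\le n_{k_p}(\xib_{k_p})^{-\[\alphab_l\]/e(k_p,l)}\approx\widehat N_l(\xib)^{-\[\alphab_l\]},
\]
and multiplying over $l$ gives the target bound. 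With your inverted exponent $1/e(l,j)$ the middle step would require $e(k_p,j)e(l,j)\le e(k_p,l)$, which is the \emph{reverse} of the basic hypothesis and fails in general. The detour through $\MM_\infty(\EEE_S)$ and Proposition~\ref{Lem8.7mm} that you mention is legitimate but does not shortcut anything, since establishing the $\MM_\infty(\EEE_S)$ bound on the support is the same computation.
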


\begin{proof}
The function $\widehat N_j(\xib)$ is homogeneous of degree one relative to the dilations
\beas
(\xib_{1},\ldots,\xib_{n}) \to (\lambda^{e(1,j)}\cdot\xib_{1},\ldots,\lambda^{e(n,j)}\cdot \xib_{n}).
\eeas
and the function $\rho_{j,k_{r}}(\xib)=\widehat N_{j}(\xib)n_{k_{r}}(\xib_{k_{r}})^{-1/e(k_{r},j)}$ is homogeneous of degree zero. The homogeneous dimension of $\R^{C_{l}}$ is $e(l,j)Q_{l}$, and if $\gammab=(\gammab_{1}, \ldots, \gammab_{n})\in \N^{C_{1}}\times\cdots\times \N^{C_{n}}$, the adapted length of $\gammab$ is $\sum_{l=1}^{n}e(l,j)\[\gammab_{j}\]$. It follows from Proposition \ref{Prop12.1} in Appendix II that for $\xib\in \B(1)^{c}$
\beas
|\partial^{\gammab}\rho_{j,k_{r}}(\xib)|
\lesssim 
\Big[1+\widehat N_{j}(\xib)\Big]^{-\sum_{l=1}^{n}e(l,j)\[\gammab_{l}\]}.
\eeas
But on the support of $\Psi_{S}^{\#}$ which is contained in $\widehat E_{S}^{A}$, 
\beas
\Big[1+&\widehat N_{j}(\xib)\Big]^{-\sum_{l=1}^{n}e(l,j)\[\gammab_{l}\]}\\
&=
\prod_{p=1}^{s}\prod_{l\in I_{p}}\Big[1+\widehat N_{j}(\xib)\Big]^{-\[\gammab_{l}\]e(l,j)}
\leq
\prod_{p=1}^{s}\prod_{l\in I_{p}}\Big[1+n_{k_{p}}(\xib_{k_{p}})\Big]^{-\[\gammab_{l}\]e(l,j)/e(k_{p},j)}\\
&\leq 
\prod_{p=1}^{s}\prod_{l\in I_{p}}\Big[1+n_{k_{p}}(\xib_{k_{p}})\Big]^{-\[\gammab_{l}\]/e(k_{p},l)}
\lesssim
\prod_{p=1}^{s}\prod_{l\in I_{p}}\Big[1+N_{l}(\xib)\Big]^{-\[\gammab_{l}\]}
\eeas
since $e(k_{p},j)\leq e(k_{p},l)e(l,j)$, $n_{k_{p}}(\xib_{k_{p}})>1$, and $N_{l}(\xib) \approx n_{k_{p}}(\xib_{k_{p}})^{1/e(k_{p},l)}$ on the support of $\Psi_{S}^{\#}$ if $l\in I_{p}$. The estimates of the Lemma now follow from the chain rule and product rule.
\end{proof}

Let $\Psi_{0}^{\#}$ be a compactly supported function which is identically equal to $1$ on $\B(2)$. We set \index{P3si0S@$\Psi_0,\Psi_S$}
\bea\label{8.3bb}
\Psi_{0}&= \Psi_{0}^{\#}\Big[\Psi_{0}^{\#} + \sum_{S\in \SS'(\EEE)}\Psi_{S}^{\#}\Big]^{-1},&&&&
\Psi_{S}&= \Psi_{S}^{\#}\Big[\Psi_{0}^{\#} + \sum_{S\in \SS'(\EEE)}\Psi_{S}^{\#}\Big]^{-1}.
\eea

Using the fact that, if $m\in\MM_\infty(\EEE)$ is bounded away from zero, then also $1/m\in\MM_\infty(\EEE)$, we then have

\begin{corollary} \label{Cor8.2}\quad 
\begin{enumerate}[{\rm(a)}]
\smallskip

\item $\Psi_{0}(\xib)+\sum_{S\in \SS'(\EEE)}\Psi_{S}(\xib) \equiv 1$ for every $\xib\in \R^{N}$.

\smallskip

\item $\Psi_{S}\in \MM_{\infty}(\EEE)$ for $S=0$ and for every $S\in \SS'(\EEE)$.

\smallskip

\item For $S\in \SS'(\EEE)$ the function $\Psi_{S}$ is supported in $\widehat E_{S}^{A}$.

\smallskip
\end{enumerate}
\end{corollary}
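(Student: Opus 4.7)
The plan is to prove (a) and (c) directly from the definitions of $\Psi_0^\sharp$ and $\Psi_S^\sharp$, and then deduce (b) by combining the preceding Lemma~\ref{Lem8.1} with the two algebraic facts that $\MM_\infty(\EEE)$ is closed under finite products and that if $m\in \MM_\infty(\EEE)$ is bounded below by a positive constant then $1/m\in \MM_\infty(\EEE)$.

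The first step is to verify that the denominator
\[
D(\xib)=\Psi_0^\sharp(\xib)+\sum_{S\in \SS'(\EEE)}\Psi_S^\sharp(\xib)
\]
satisfies $D\geq 1$ pointwise on $\R^N$. Each summand is nonnegative (as products of nonnegative cut-offs $\psi$ and $\chi$), so it suffices to exhibit, at every point $\xib\in \R^N$, one summand equal to $1$. If $\xib\in \B(2)$, then $\Psi_0^\sharp(\xib)=1$. Otherwise $\xib\in \B(1)^c$, and by Proposition~\ref{Prop5.5} applied to the norms $\widehat N_j$, the closures of the sets $\widehat E_S$, $S\in \SS(n)$, cover $\B(1)^c$; since $\SS(n)\setminus \SS'(\EEE)$ consists of partitions with $\widehat E_S\cap \B(1)^c=\emptyset$, the union of the closures for $S\in \SS'(\EEE)$ already covers $\B(1)^c$. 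Choose such an $S$ with $\xib\in \overline{\widehat E_S}\setminus \B(2)$; by the proposition that follows equation (\ref{8.1bb}), $\Psi_S^\sharp\equiv 1$ in an open neighborhood of this set, so $\Psi_S^\sharp(\xib)=1$. In either case $D(\xib)\geq 1$, and then (a) is immediate from the definitions in (\ref{8.3bb}): $\Psi_0+\sum_S\Psi_S=D/D\equiv 1$. Statement (c) is also immediate, since $\Psi_S=\Psi_S^\sharp/D$ vanishes wherever $\Psi_S^\sharp$ vanishes, and we have already shown that $\supp \Psi_S^\sharp\subset \widehat E_S^A\setminus\overline{\B(1)}$.

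For (b) I claim that $\MM_\infty(\EEE)$ is closed under finite products. This follows directly from Leibniz's rule and the defining inequality in Definition~\ref{Def2.5}(b): for every $\gammab$, $\partial^\gammab(mm')$ is a finite sum of terms $\partial^{\alphab}m\cdot \partial^{\betab}m'$ with $\alphab+\betab=\gammab$, each of which is bounded by
$$C_{\alphab,\betab}\prod_{j=1}^n(1+\widehat N_j(\xib))^{-\[\alphab_j\]-\[\betab_j\]}=C_{\alphab,\betab}\prod_{j=1}^n(1+\widehat N_j(\xib))^{-\[\gammab_j\]}.$$
The class is also clearly closed under finite sums, so the denominator $D$ belongs to $\MM_\infty(\EEE)$: the summands $\Psi_S^\sharp$ for $S\in \SS'(\EEE)$ are in $\MM_\infty(\EEE)$ by Lemma~\ref{Lem8.1}, and $\Psi_0^\sharp\in \CC^\infty_0(\R^N)$ lies trivially in $\MM_\infty(\EEE)$ because all its derivatives are bounded and compactly supported.

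It remains to invoke the fact quoted before the statement: since $D\in \MM_\infty(\EEE)$ with $D\geq 1$, we have $1/D\in \MM_\infty(\EEE)$. (This is a standard Fa\`a di Bruno computation: each derivative of $1/D$ is a finite linear combination of terms of the form $D^{-k-1}\prod_{i=1}^k\partial^{\gammab^{(i)}}D$ with $\sum_i\gammab^{(i)}=\gammab$, and the lower bound on $D$ together with the bounds on the factors yields exactly the required estimate.) Multiplying $1/D$ by $\Psi_0^\sharp$ or by any $\Psi_S^\sharp$ (both of which lie in $\MM_\infty(\EEE)$) and using closure under products gives $\Psi_0,\Psi_S\in \MM_\infty(\EEE)$, proving (b). The only potential subtlety in the argument is the covering step in the proof of (a); once the geometry of the sets $\widehat E_S^A$ from Proposition~\ref{Prop5.5} is used, the remaining verifications are purely formal consequences of the algebraic structure of $\MM_\infty(\EEE)$.
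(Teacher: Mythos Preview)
Your proof is correct and follows exactly the approach the paper intends: the paper itself states the corollary with only the one-line hint ``using the fact that, if $m\in\MM_\infty(\EEE)$ is bounded away from zero, then also $1/m\in\MM_\infty(\EEE)$,'' and you have correctly expanded this into the covering argument for $D\geq 1$ (via Proposition~\ref{Prop5.5} and the proposition after (\ref{8.1bb})), the closure of $\MM_\infty(\EEE)$ under sums and products, and the Fa\`a di Bruno estimate for $1/D$. One tiny stylistic point: in the dichotomy for (a), the ``otherwise'' case is $\xib\notin\B(2)$, hence $\xib\in\B(2)^c\subset\B(1)^c$, which you use implicitly; making this explicit would be cleaner.
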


We now use Corollary \ref{Cor8.2} to obtain a decomposition of a multiplier $m\in \MM_{\infty}(\EEE)$.

\index{m30S@$m_0,m_S$}
\begin{corollary}\label{Cor8.3}
Let $m\in \MM_\infty(\EEE)$. Put $m_{0}= \Psi_{0}m$, and $m_{S} = \Psi_{S}m$ for each $S\in \SS'(\EEE)$. Then 
\begin{enumerate}[{\rm(a)}]

\smallskip

\item $m(\xib) = m_{0}(\xib) + \sum_{S\in \SS'(\EEE)}m_{S}(\xib)$ for every $\xib \in \R^{N}$;

\smallskip

\item $m_{S}\in \MM_{\infty}(\widehat E)$ for every $S\in \SS'(\EEE)$;

\smallskip

\item $m_{S}$ is supported in $\widehat E_{S}^{A}$ where $A$ depends only on the matrix $\EEE$;

\smallskip

\item $m_{0}\in \CC^{\infty}_{0}(\R^{N})$ has support in $\B(2)$.
\end{enumerate}
\end{corollary}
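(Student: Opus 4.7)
The plan is to verify items (a)--(d) directly, leveraging the properties of the partition of unity $\{\Psi_0,\Psi_S\}$ established in Corollary~\ref{Cor8.2}. Each assertion is largely a consequence of a corresponding property of the cutoff functions, once we know that $\MM_\infty(\EEE)$ is a pointwise algebra.

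Item (a) is immediate: multiply the partition-of-unity identity in Corollary~\ref{Cor8.2}(a) by $m(\xib)$. Item (c) is also immediate: since $m_S=\Psi_S m$ is supported in $\supp(\Psi_S)$, Corollary~\ref{Cor8.2}(c) gives the inclusion in $\widehat E_S^A$. For item (d), since the denominator $\Psi_0^\#+\sum_{S}\Psi_S^\#$ is strictly positive and smooth, and since the $\Psi_S^\#$ are supported away from $\B(1)$ (by the factor $\chi$), we can arrange $\Psi_0^\#$ (and hence $\Psi_0$) to be smooth with compact support contained in $\B(2)$; multiplication by $m\in\CC^\infty$ then gives $m_0\in\CC_0^\infty(\R^N)$ with support in $\B(2)$.

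The only substantive step is (b). I would establish, as a small lemma, that $\MM_\infty(\EEE)$ is closed under pointwise multiplication. This is a direct Leibniz computation: for $\gammab=(\gammab_1,\dots,\gammab_n)\in\N^{C_1}\times\cdots\times\N^{C_n}$,
\begin{equation*}
\partial^{\gammab}(\Psi_S m)(\xib)=\sum_{\alphab\leq\gammab}\binom{\gammab}{\alphab}\,\partial^{\alphab}\Psi_S(\xib)\,\partial^{\gammab-\alphab}m(\xib),
\end{equation*}
and by the defining bounds of $\MM_\infty(\EEE)$ applied to each factor, each summand is majorized by
\begin{equation*}
C_{\alphab,\gammab}\prod_{j=1}^{n}\bigl(1+\widehat N_j(\xib)\bigr)^{-\[\alphab_j\]}\bigl(1+\widehat N_j(\xib)\bigr)^{-\[\gammab_j-\alphab_j\]}=C_{\alphab,\gammab}\prod_{j=1}^{n}\bigl(1+\widehat N_j(\xib)\bigr)^{-\[\gammab_j\]},
\end{equation*}
since the weighted length $\[\,\cdot\,\]$ is additive. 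Summing over $\alphab\leq\gammab$ yields $m_S\in\MM_\infty(\EEE)$. Combined with Corollary~\ref{Cor8.2}(b), which guarantees $\Psi_S\in\MM_\infty(\EEE)$, and the hypothesis $m\in\MM_\infty(\EEE)$, this closes item (b).

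I do not anticipate any genuine obstacle: the construction of the $\Psi_S$ in Section~\ref{SSPartitions} was carried out precisely so that their product with an arbitrary $m\in\MM_\infty(\EEE)$ localizes $m$ to $\widehat E_S^A$ while staying inside the multiplier class. The only point requiring minor care is ensuring that $\Psi_0^\#$ is chosen supported in $\B(2)$ while still being identically $1$ on a neighborhood of the set $\{|\xib|\leq 3/2\}$ where all the $\Psi_S^\#$ vanish (via the $\chi$-factor), so that the denominator in~\eqref{8.3bb} is bounded away from zero and $\Psi_0$ has the claimed compact support.
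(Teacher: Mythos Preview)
Your proposal is correct and is exactly the argument the paper has in mind: the paper gives no explicit proof of Corollary~\ref{Cor8.3}, treating it as an immediate consequence of Corollary~\ref{Cor8.2} together with the (implicit) fact that $\MM_\infty(\EEE)$ is closed under pointwise multiplication. Your Leibniz computation for (b) makes that closure explicit, and your remarks on (a), (c), (d) match the paper's reasoning; the only quibble is that the paper actually takes $\Psi_0^\#\equiv 1$ on $\B(2)$ (so its support is slightly larger), making the ``support in $\B(2)$'' in (d) a harmless imprecision in the constant rather than something to engineer.
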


\begin{remark}\label{Rem8.5ww}
Since $m_{S}$ is supported on $\widehat E_{S}^{A}$, we have
\beas
\big|\partial^{\gammab}_{\xib}( m_{S})(\xib)\big|
&\lesssim
\prod_{r=1}^{s}\prod_{j\in I_{r}}\widehat N_{j}(\xib)^{-\[\gammab_{j}\]}
\approx
\prod_{r=1}^{s}\prod_{j\in I_{r}}n_{k_{r}}(\xib_{k_{r}})^{-\[\gammab_{j}\]e(k_{r},j)}\\
&\approx
\prod_{r=1}^{s}\widehat n_{S,r}(\xib_{I_{r}})^{-\sum\limits_{j\in I_{r}}\[\gammab_{j}\]/e(k_{r},j)}.
\eeas
This says that the differential estimates for $ m_{S}$ are equivalent to product multiplier estimates relative to the norms $\{\widehat n_{S,r}\}$ on the decomposition $\R^{N}= \R^{I_{1}}\oplus\cdots\oplus\R^{I_{s}}$. 
\end{remark}

All the functions $m_{S}$ belongs to the class $\MM_{\infty}(\EEE)$. If $S=\big\{(I_{1},k_{1});\ldots;(I_{s},k_{s})\big\}\in \SS'(\EEE)$ is not the principal marked partition, one can say more:  the function $m_{S}$ in fact belongs to the class of multipliers $\MM_\infty(\EEE_{S})$ introduced in Section \ref{NewMatrices}.

\begin{proposition}\label{Prop8.4qq}
For $S\in \SS'(\EEE)$, the function $m_S$ belongs to the class $\MM_{\infty}(\EEE_{S})$.
\end{proposition}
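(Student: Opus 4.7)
The plan is to upgrade Remark~\ref{Rem8.5ww}---which already bounds $|\partial^{\gammab}_{\xib}m_S(\xib)|$ by $\prod_r\widehat n_{S,r}(\xib_{I_r})^{-a_r}$ on $\supp(\Psi_S)$, where $a_r:=\sum_{j\in I_r}\[\gammab_j\]/e(k_r,j)$---to the estimate required by $\MM_\infty(\EEE_S)$. Unpacking the shorthand $\MM_\infty(\EEE_S)=\MM_\infty(\EEE_S,\II_S,\alphab_S)$ via Section~\ref{subs.coarser}, and using that in the coarser $(\R^{I_r},\hat\cdot_S)$-structure the coordinate $\xib_j$ ($j\in I_r$) carries the exponent $\alpha_j=e(k_r,j)$, one checks that $a_r$ is exactly the weighted length of $\gammab_{I_r}$ in that structure. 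Thus the Proposition reduces to the pointwise geometric inequality
\be\label{plan-key}
\widehat N_{S,r}(\xib)\lesssim \widehat n_{S,r}(\xib_{I_r}),\qquad \xib\in\widehat E_S^A,\ r=1,\dots,s,
\ee
since on $\widehat E_S^A\cap \B(1)^c$ we have $\widehat N_{S,r}(\xib)\geq \widehat n_{S,r}(\xib_{I_r})\gtrsim 1$ (the lower bound coming from Remark~\ref{Rem5.2} and the fact that some $\widehat N_j(\xib)\geq 1$ forces $n_{k_r}(\xib_{k_r})$ to be bounded away from zero), which absorbs the $1+$ on the right-hand side of the target estimate.

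To prove (\ref{plan-key}) I would expand $\widehat N_{S,r}(\xib)=\sum_{p=1}^s \widehat n_{S,p}(\xib_{I_p})^{1/e_S(p,r)}$. The term $p=r$ equals $\widehat n_{S,r}(\xib_{I_r})$ since $e_S(r,r)=1$. For $p\neq r$, Remark~\ref{Rem5.2} gives $\widehat n_{S,p}(\xib_{I_p})\approx n_{k_p}(\xib_{k_p})$ and $\widehat n_{S,r}(\xib_{I_r})\approx n_{k_r}(\xib_{k_r})$ on $\widehat E_S^A$, so the task becomes showing
\bes
n_{k_p}(\xib_{k_p})\lesssim n_{k_r}(\xib_{k_r})^{e_S(p,r)},\qquad p\neq r,
\ees
uniformly on $\widehat E_S^A$. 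Setting $a_q=-\log n_{k_q}(\xib_{k_q})$ and invoking Corollary~\ref{Cor5.7}(b) together with the $A$-tolerance from Lemma~\ref{Thm5.6}(b), the vector $\a=(a_1,\dots,a_s)$ satisfies the cone-defining inequalities $\tau_S(k_r,k_p)a_p\leq a_r$ of $\widehat\Gamma_S$ up to an additive shift bounded in terms of $A$ and $\EEE$.

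The heart of the argument is now Lemma~\ref{Lem5.10}: the cone $\widehat\Gamma_S$ originally cut out by the non-submultiplicative coefficients $\tau_S$ coincides with $\Gamma(\EEE_S)$, the cone cut out by the submultiplicative coefficients $e_S$. Applying this identification, the bounded-shift position of $\a$ yields $e_S(p,r)a_r\leq a_p+O(1)$ for all $p,r$, which exponentiates to $n_{k_p}(\xib_{k_p})\lesssim n_{k_r}(\xib_{k_r})^{e_S(p,r)}$ and proves (\ref{plan-key}). The main obstacle is precisely this passage from $\tau_S$-inequalities to $e_S$-inequalities, since only the latter control $\widehat N_{S,r}$; once Lemma~\ref{Lem5.10} is invoked, what remains is a translation between the multiplicative and logarithmic forms of the same chain of inequalities.
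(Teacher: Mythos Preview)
Your proof is correct and follows exactly the paper's route: both reduce via Remark~\ref{Rem8.5ww} to the pointwise fact that $1+\widehat N_{S,r}(\xib)\approx \widehat n_{S,r}(\xib_{I_r})$ on $\widehat E_S^A$, which the paper simply asserts in one line while you supply an outline.

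One step deserves a word of care. Lemma~\ref{Lem5.10} gives equality of the \emph{cones} $\widehat\Gamma_S=\Gamma(\EEE_S)$, but equality of cones does not automatically give equality of their additive $O(1)$-enlargements, so your passage ``the bounded-shift position of $\a$ yields $e_S(p,r)a_r\leq a_p+O(1)$'' is not quite immediate from the lemma as stated. The cleanest way to close this is to bypass the logarithmic coordinates and use the chain description of $e_S$ implicit in Lemma~\ref{Lem3.2} (see the remark following it): for each pair $p,r$ there is a chain $p=i_0,i_1,\dots,i_m=r$ with $e_S(p,r)=\prod_{l=0}^{m-1}\tau_S(k_{i_l},k_{i_{l+1}})$. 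Iterating the bounds $n_{k_{i_l}}<\big(A\,n_{k_{i_{l+1}}}\big)^{\tau_S(k_{i_l},k_{i_{l+1}})}$ from Lemma~\ref{Thm5.6}(\ref{Thm5.6B}) along this chain gives $n_{k_p}\lesssim n_{k_r}^{e_S(p,r)}$ directly on $\widehat E_S^A$, with an implied constant depending only on $A$, $s$, and $\EEE$. This is exactly the multiplicative form of your target inequality and finishes (\ref{plan-key}).
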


\begin{proof}
We must show that for any $\gammab=(\gammab_{I_{1}}, \ldots, \gammab_{I_{s}}) \in \N^{I_{1}}\times \cdots \times \N^{I_{S}}$ there is a constant $C_{\gammab}$ so that
\beas
\big\vert\partial^{\gammab}m_{S}(\xib)\big\vert\leq C_{\gammab}\,\prod_{r=1}^{s}\big(1+\widehat N_{S,r}(\xib)\big)^{- \[\gammab_{I_{r}}\]_{S}},
\eeas
where $\[\gammab_{I_{r}}\]_{S}=\sum\limits_{j\in I_{r}}\[\gammab_{j}\]/e(k_{r},j)$ is the length of the multi-index $\gammab_{I_r}$ adapted to the dilations~$\hat\cdot_S$.
But this is a simple consequence of the estimate in Remark \ref{Rem8.5ww} and the fact that on the set $\widehat E_{S}^{A}$ we have $\widehat n_{S,r}(\xib_{I_r})\approx 1+\widehat N_{S,r}(\xib)$.
\end{proof}

\subsection{Dyadic decomposition of a multiplier}\label{DyadicDecomp}\quad

We now turn to the task of providing a dyadic decomposition for each multiplier $m_{S}$.  Fix $S=\big\{(I_{1},k_{1});\ldots;(I_{s},k_{s})\big\}\in\SS'(\EEE)$, and write $\R^{N}=\R^{I_{1}}\oplus\cdots\oplus\R^{I_{s}}$. On each factor $\R^{I_{r}}$ choose a function $\theta\in\CC^{\infty}_{0}(\R^{I_{r}})$ satisfying\footnote{To keep notation simpler, we do not indicate the dependence of $\theta$ on the index $r$.}
\beas
\theta(\xib_{I_{r}}) =
\begin{cases}
1&\text{if \,\,$\widehat n_{S,r}(\xib_{I_{r}})\leq 1$,}\\
0 & \text{if \,\,$\widehat n_{S,r}(\xib_{I_{r}})\geq 2$.}
\end{cases}
\eeas
Recall that $2^{-j}\,\hat\cdot_{S}\,\xib_{I_{r}}=\big\{2^{-j/e(k_{r},j)}\cdot\xib_{j}\big\}_{j\in I_{r}}$. Put $\theta_{j}(\xib_{I_{r}})= \theta(2^{-j}\,\hat\cdot_{S}\,\xib_{I_{r}})$, and
\beas
\Theta_j(\xib_{I_{r}})&=\theta_j(\xib_{I_{r}})-\theta_{j-1}(\xib_{I_{r}})=\theta(2^{-j}\,\hat\cdot_{S}\,\xib_{I_{r}})-\theta(2^{j-1}\,\hat\cdot_{S}\,\xib_{I_{r}})=\Theta_{0}(2^{-j}\,\hat\cdot_{S}\,\xib_{I_{r}}).
\eeas
Note that $\Theta_{j}$ is supported where $\widehat n_{S,r}(\xib_{I_{r}})\approx 2^{j}$. Precisely:
\bes
\Theta_{j}(\xib_{I_{r}}) \neq 0\quad\Longrightarrow\quad2^{j-1}\leq \widehat n_{S,r}(\xib_{I_{r}}) \leq 2^{j+2}.
\ees 
For $0\neq\xib_{I_{r}}\in \R^{I_{r}}$ we have 
\bes
\sum_{j=-\infty}^{\infty}\Theta_{j}(\xib_{I_{r}})=\theta_{0}(\xib_{I_{r}})+\sum_{j=1}^{\infty}\Theta_{j}(\xib_{I_{r}}) = 1,
\ees
and the second equality holds for $\xib_{I_{r}}=0$ as well. For $J=(j_{1}, \ldots, j_{s})\in \Z^{s}$ define
\bes
\Theta_{J}(\xib)=\Theta_{J}(\xib_{I_{1}}, \ldots, \xib_{I_{s}}) = \Theta_{j_{1}}(\xib_{I_{1}}) \cdots \Theta_{j_{s}}(\xib_{I_{s}}).
\ees
Then $\Theta_{J}$ is supported where each $\widehat n_{S,r}(\xib_{I_{r}})\approx 2^{j_{r}}$,
and $\sum_{J\in \Z^{s}}\Theta_{J}(\xib_{I_{1}}, \ldots,\xib_{I_{s}}) \equiv 1$ provided each $\xib_{I_{r}}\neq 0$. Recall that $m_{S}$ is supported on $\widehat E_{S}^{A}\cap \B(1)^{c}$, and $\xib_{I_{r}}\neq 0$ on this support. Thus we can write 
\beas
m_{S}(\xib) = \sum_{J\in \Z^{s}}\Theta_{J}(\xib)m_{S}(\xib)=\sum_{J\in \Z^{s}}\Theta_{J}(\xib)\Psi_{S}(\xib)m(\xib).
\eeas 

\begin{proposition}
There is a constant $\mu>0$ depending only on $\EEE$ so that if $J\in \Z^{s}$ and $\Theta_{J}(\xib)\Psi_{S}(\xib)m(\xib)\neq 0$, then $J$ belongs to 
$$
 -\Gamma_{\Z,\mu}(\EEE_S)=\Big\{J=(j_{1}, \ldots, j_{s})\in \Z^{s}: \text{$0< j_{p}\leq \tau_{S}(k_{p},k_{r})j_{r}+\mu$}\Big\}.
 $$
\end{proposition}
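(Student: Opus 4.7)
The plan is to read off the location of $\xib$ from the two conditions $\Theta_J(\xib)\neq 0$ and $\xib\in\supp\Psi_S\subset \widehat E_S^A\cap\B(1)^c$, and convert these into inequalities on the components of $J$. The first step is the scale identification $\widehat n_{S,r}(\xib_{I_r})\approx n_{k_r}(\xib_{k_r})$ on $\widehat E_S^A$: the lower bound $\widehat n_{S,r}(\xib_{I_r})\ge n_{k_r}(\xib_{k_r})^{e(k_r,k_r)}=n_{k_r}(\xib_{k_r})$ is automatic, while specializing the $A$-dominance condition in Definition \ref{Def5.4} to $j=l$ for $l\in I_r\setminus\{k_r\}$ gives $n_l(\xib_l)^{e(k_r,l)}<An_{k_r}(\xib_{k_r})$, yielding the matching upper bound. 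Since $\widehat n_{S,r}(\xib_{I_r})\in[2^{j_r-1},2^{j_r+2}]$ on $\supp\Theta_{j_r}$, I conclude $n_{k_r}(\xib_{k_r})\approx 2^{j_r}$ on the support, for every $r$.

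From here, the lower bound on $j_r$ follows from the $\B(1)^c$ condition: on $\widehat E_S^A$, Proposition \ref{Lem5.15} and Remarks \ref{Rem5.2} give $\widehat N_j(\xib)\approx n_{k_r}(\xib_{k_r})^{1/e(k_r,j)}$ for any $j\in I_r$, and $\widehat N_j(\xib)\ge 1$ on $\B(1)^c$. Hence $n_{k_r}(\xib_{k_r})\gtrsim 1$, so $j_r\ge -c_1$ for a constant $c_1=c_1(\EEE,A)$. For the upper bound, I invoke Lemma \ref{Thm5.6}(b): on $\widehat E_S^A$, for every pair $p\ne r$ one has $n_{k_p}(\xib_{k_p})<(An_{k_r}(\xib_{k_r}))^{\tau_S(k_p,k_r)}$. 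Substituting $n_{k_p}(\xib_{k_p})\approx 2^{j_p}$, $n_{k_r}(\xib_{k_r})\approx 2^{j_r}$ and taking $\log_2$ yields $j_p\le \tau_S(k_p,k_r)j_r+c_2$ for some $c_2=c_2(\EEE,A)$.

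Combining the two bounds with $\mu$ taken sufficiently large to absorb both $c_1$ and $c_2$, as well as the translation needed to pass from $j_p\ge -c_1$ to $0<j_p$ (the finitely many borderline indices $j_p\in\{-c_1,\dots,0\}$ being handled by enlarging $\mu$, since for those values the diagonal inequality $j_p\le \tau_S(k_p,k_p)j_p+\mu=j_p+\mu$ is trivial and the off-diagonal bound remains valid after a uniform shift), I obtain $J\in -\Gamma_{\Z,\mu}(\EEE_S)$.

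The main technical point, and the one I expect to require the most care, is the scale identification $\widehat n_{S,r}(\xib_{I_r})\approx n_{k_r}(\xib_{k_r})$ on $\widehat E_S^A$, because the norm $\widehat n_{S,r}$ is built with exponents $e(k_r,\cdot)$ while the dominance defining $\widehat E_S^A$ is phrased in terms of exponents $1/e(\cdot,j)$ and holds uniformly over $j\in I_r$. One must specialize $j=l$ (rather than e.g.\ $j=k_r$) in the dominance condition in order to match the exponents without losing control of the multiplicative constants; once this pivot is in place, the remaining estimates reduce to taking logarithms in the inequalities from Lemma \ref{Thm5.6}.
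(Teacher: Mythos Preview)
Your approach is essentially the paper's. The paper skips the detour through $n_{k_r}(\xib_{k_r})$ and works directly with the norms $\widehat n_{S,r}$: from $\Theta_J(\xib)\neq 0$ it reads off $2^{j_r-1}\le \widehat n_{S,r}(\xib_{I_r})\le 2^{j_r+2}$, and from $\xib\in\widehat E_S^A$ it uses
\[
\widehat n_{S,p}(\xib_{I_p})^{1/\tau_S(k_p,k_r)} < A\,\widehat n_{S,r}(\xib_{I_r}),
\]
then takes logarithms. Your explicit scale identification $\widehat n_{S,r}(\xib_{I_r})\approx n_{k_r}(\xib_{k_r})$ via the specialization $j=l$ in the dominance condition is exactly what justifies the paper's displayed inequality, so the two arguments are equivalent.

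One correction: your closing parenthetical about absorbing the borderline indices $j_p\in\{-c_1,\dots,0\}$ by ``enlarging $\mu$'' is not right. Enlarging $\mu$ relaxes only the upper bound $j_p\le \tau_S(k_p,k_r)j_r+\mu$; it cannot manufacture the strict lower bound $0<j_p$. In fact neither your argument nor the paper's literally yields $j_p\ge 1$: from $An_{k_p}(\xib_{k_p})>1$ on $\widehat E_S^A\cap\B(1)^c$ one only gets $\widehat n_{S,p}(\xib_{I_p})\gtrsim A^{-1}$, hence $j_p\ge -C(\EEE)$. (The paper's ``$1\le \widehat n_{S,p}$'' is itself only true up to a constant.) This is an imprecision in the \emph{statement} rather than a gap in either proof; the only use of the proposition is the subsequent reindexing step, which maps each $J$ to a nearest point of $-\Gamma_{\Z}(\EEE_S)$ and absorbs any bounded shift in the lower bound.
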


\begin{proof}
Suppose $\Theta_{J}(\xib)\Psi_{S}(\xib)m(\xib)\neq 0$. Then $\Theta_{J}(\xib)\neq 0$, and it follows that $2^{j_{r}-1}\leq \widehat n_{S,r}(\xib_{I_{r}})\leq 2^{j_{r}+2}$. Also $\Psi_{S}(\xib)\neq 0$, and it follows that $\xib\in \widehat E_{S}^{A}$ and so 
\beas
1\leq\widehat n_{S,p}(\xib_{I_{p}})^{1/\tau_{S}(k_{p},k_{r})}<A\,\widehat n_{S,r}(\xib_{I_{r}})
\eeas 
for all $r,p\in \{1, \ldots, s\}$. It follows that
\beas
\big(2^{j_{p}-1}\big)^{1/\tau_{S}(k_{p},k_{r})}&\leq \widehat n_{S,p}(\xib_{I_{p}})^{1/\tau_{S}(k_{p},k_{r})}<A\,\widehat n_{S,r}(\xib_{I_{r}})\leq A\,2^{j_{r}+1},
\eeas
and so 
\beas
j_{p}\leq \tau_{S}(k_{p},k_{r})j_{r}+\big[\tau_{S}(k_{p},k_{r})(1+\log_{2}(A))+1\big]\leq\tau_{S}(k_{p},k_{r})j_{r}+\mu
\eeas
for an appropriate constant $\mu$.
\end{proof}

It now follows that 
\beas
m_{S}(\xib) &= \sum_{J\in -\Gamma_{\Z,\mu}(\EEE_S)}\Theta_{J}(\xib)m_{S}(\xib)\\
&=
\sum_{J\in -\Gamma_{\Z,\mu}(\EEE_S)}\Theta_{\0}(2^{-j_{1}}\,\hat\cdot_{S}\,\xib_{I_{1}}, \ldots, 2^{-j_{s}}\,\hat\cdot_{S}\,\xib_{I_{s}}) \,m_{S}(\xib_{I_{1}}, \ldots, \xib_{I_{s}}).
\eeas
Put
\beas
m_{S}^{J}(\xib)
= \Theta_{0}(\xib)m_{S}(2^{j_{1}}\,\hat\cdot_{S}\,\xib_{I_{1}}, \ldots, 2^{j_{s}}\,\hat\cdot_{S}\,\xib_{I_{s}})
\eeas
so that we can write
\beas
m_{S}(\xib_{I_{1}}, \ldots, \xib_{I_{s}}) = \sum_{J\in -\Gamma_{\Z,\mu}(\EEE_S)}m_{S}^{J}(2^{-j_{1}}\hat\cdot_{S}\xib_{I_{1}}, \ldots, 2^{-j_{s}}\hat\cdot_{S}\xib_{I_{S}}).
\eeas
\begin{proposition}
The set of functions $\Big\{m_{S}^{J}:J\in -\Gamma_{\Z,\mu}(\EEE_S)\Big\}$ is a uniformly bounded family in $\CC^{\infty}_{0}(\R^{N})$, each supported where $\frac{1}{4}<\widehat n_{S,r}(\xib_{I_{r}})<4$ for $1 \leq r \leq s$. 
\end{proposition}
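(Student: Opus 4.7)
The plan is to verify the support statement directly from the construction and then to deduce the uniform boundedness of all derivatives by exploiting the scale-invariance built into the estimates for $m_S$ on its support.

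\medskip

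\noindent\textbf{Support.} By definition $m_{S}^{J}(\xib) = \Theta_{0}(\xib)\,m_{S}(2^{j_{1}}\,\hat\cdot_{S}\,\xib_{I_{1}}, \ldots, 2^{j_{s}}\,\hat\cdot_{S}\,\xib_{I_{s}})$, and the cutoff factor $\Theta_{0}(\xib) = \prod_{r=1}^{s}\Theta_{0}(\xib_{I_{r}})$ vanishes identically on the complement of $\{\xib : \tfrac{1}{4}<\widehat n_{S,r}(\xib_{I_{r}})<4 \text{ for all } 1\le r\le s\}$. This is the required support condition, and it is independent of $J$.

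\medskip

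\noindent\textbf{Uniform boundedness.} By Leibniz it suffices to bound the derivatives of the second factor, i.e., of $\etab\mapsto m_{S}(\etab)$ evaluated at $\etab_{I_{r}}=2^{j_{r}}\,\hat\cdot_{S}\,\xib_{I_{r}}$, uniformly in $J\in -\Gamma_{\Z,\mu}(\EEE_{S})$. By the chain rule, each derivative $\partial_{\xi_{l}}$ (with $l\in C_{j}$, $j\in I_{r}$) produces a factor $2^{j_{r}d_{l}/e(k_{r},j)}$, so the full scaling factor from $\partial^{\gammab}$ is
$$
\prod_{r=1}^{s}2^{j_{r}\sum_{j\in I_{r}}\[\gammab_{j}\]/e(k_{r},j)}.
$$
On the other hand, since $m_{S}$ is supported in $\widehat E_{S}^{A}$, Remark \ref{Rem8.5ww} gives the product-type estimate
$$
\big|\partial^{\gammab}m_{S}(\etab)\big|\lesssim \prod_{r=1}^{s}\widehat n_{S,r}(\etab_{I_{r}})^{-\sum_{j\in I_{r}}\[\gammab_{j}\]/e(k_{r},j)}.
$$
Substituting $\etab_{I_{r}}=2^{j_{r}}\,\hat\cdot_{S}\,\xib_{I_{r}}$ and using the homogeneity $\widehat n_{S,r}(2^{j_{r}}\,\hat\cdot_{S}\,\xib_{I_{r}})=2^{j_{r}}\widehat n_{S,r}(\xib_{I_{r}})$ contributes the reciprocal factor $\prod_{r}2^{-j_{r}\sum_{j\in I_{r}}\[\gammab_{j}\]/e(k_{r},j)}$, which cancels exactly with the chain-rule factor. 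We are left with
$$
\big|\partial^{\gammab}\big[m_{S}(2^{j_{1}}\,\hat\cdot_{S}\,\xib_{I_{1}},\ldots,2^{j_{s}}\,\hat\cdot_{S}\,\xib_{I_{s}})\big]\big|\lesssim \prod_{r=1}^{s}\widehat n_{S,r}(\xib_{I_{r}})^{-\sum_{j\in I_{r}}\[\gammab_{j}\]/e(k_{r},j)},
$$
and on the support of $\Theta_{0}$ each factor $\widehat n_{S,r}(\xib_{I_{r}})$ is comparable to $1$, so the right-hand side is $O(1)$ with a constant independent of $J$. Combining this with Leibniz applied to the smooth compactly supported factor $\Theta_{0}(\xib)$ (whose derivatives are themselves $J$-independent) yields a bound $\|\partial^{\gammab}m_{S}^{J}\|_{\infty}\le C_{\gammab}$ independent of $J$.

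\medskip

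There is no real obstacle; the one point to be careful about is that the cancellation of scales only works because one uses the sharpened, product-multiplier style estimate of Remark \ref{Rem8.5ww} rather than the nominal $\MM_{\infty}(\EEE)$ bound on $m_{S}$. Once this is invoked, both statements of the proposition follow mechanically from the chain rule and the homogeneity of $\widehat n_{S,r}$.
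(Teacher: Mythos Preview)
Your proof is correct and follows essentially the same approach as the paper's own argument: both use the chain rule to extract the scaling factor $\prod_{r}2^{j_{r}\sum_{j\in I_{r}}\[\gammab_{j}\]/e(k_{r},j)}$, invoke the product-multiplier estimate for $m_{S}$ on $\widehat E_{S}^{A}$ (the content of Remark~\ref{Rem8.5ww}), and cancel via the homogeneity $\widehat n_{S,r}(2^{j_{r}}\,\hat\cdot_{S}\,\xib_{I_{r}})=2^{j_{r}}\widehat n_{S,r}(\xib_{I_{r}})$, leaving a quantity bounded on the support of $\Theta_{0}$. Your version is in fact slightly more explicit in handling the $\Theta_{0}$ factor via Leibniz.
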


\begin{proof}
Since $m_{S}$ is supported on $\widehat E_{S}^{A}$,
\beas
|\partial^{\gammab}\big(m_{S}(2^{j_{1}}\,\hat\cdot_{S}\,\xib_{I_{1}}, \ldots, 2^{j_{s}}\,\hat\cdot_{S}\,\xib_{I_{s}})\big)(\xib)| 
&= 
2^{\sum_{r=1}^{s}\[\gammab_{r}\]/e(k_{r},j)}\,\left|\partial^{\gammab}_{\xib}( m_{S})(2^{j_{1}}\,\hat\cdot_{S}\, \xib_{I_{1}}, \ldots, 2^{j_{s}}\,\hat\cdot_{S}\, \xib_{I_{s}})\right|\\
&\lesssim
2^{\sum_{r=1}^{s}\[\gammab_{r}\]/e(k_{r},j)}\,\prod_{r=1}^{s}\widehat n_{S,r}(2^{j_{r}}\,\hat\cdot_{S}\,\xib_{I_{r}})^{-\[\gammab_{l}\]/e(k_{r},j)}\\
&=
C_{\gammab}\,\prod_{r=1}^{s}\widehat n_{S,r}(\xib_{I_{r}})^{-\[\gammab_{l}\]/e(k_{r},j)}
\eeas
But on the support of $\theta_{\0}$ the product $\prod_{r=1}^{s}\widehat n_{S,r}(\xib_{I_{r}})^{-\[\gammab_{l}\]/e(k_{r},j)}$ is uniformly bounded and bounded away from zero (depending on $\gammab$). 
\end{proof}

We have  written $m_{S}(\xib) = \sum_{J\in -\Gamma_{\Z,\mu}(\EEE_S)}m_{S}^{J}(2^{-J}\,\hat\cdot_{S}\,\xib)$. 
We want to replace the sum over $-\Gamma_{\Z,\mu}(\EEE_S)$ by a sum over $-\Gamma_{\Z}(\EEE_S)$.

For each $J\in -\Gamma_{\Z,\mu}(\EEE_S)$, choose $J'=p(J)\in -\Gamma_{\Z}(\EEE_S)$ at minimal distance from $J$. It is quite clear that there is a finite upper bound $M$ for the number of $J$ having the same $p(J)$ and that, if $p(J)=J'$, then $|j_r-j'_r|\le c(\mu)$ for each  $r=1,\dots,s$. It follows that we can rewrite
\bea
m_{S}(\xib) &= \sum_{J\in -\Gamma_{\Z,\mu}(\EEE_S)}m_{S}^{J}(2^{-J}\,\hat\cdot_{S}\,\xib) \\
&= \sum_{J'\in -\Gamma_{\Z}(\EEE_S)}\sum_{J:p(J)=J'} m_{S}^{J}(2^{-J}\,\hat\cdot_{S}\,\xib)\\
&=\sum_{J'\in -\Gamma_{\Z}(\EEE_S)}\widetilde m_{S}^{J'}(2^{-J'}\,\hat\cdot_{S}\,\xib)\ ,
\eea
where the functions
$$
\widetilde m_{S}^{J'}(\xib)=\sum_{J:p(J)=J'} m_{S}^{J}(2^{J'-J}\,\hat\cdot_{S}\,\xib)
 $$
form a a uniformly bounded family in $\CC^{\infty}_{0}(\R^{N})$.

Applying this decomposition for each marked partition $S\in \SS'(\EEE)$ we have the following decomposition for multipliers.

\begin{theorem}\label{Thm8.4}
Let $m\in \MM_{\infty}(\EEE)$. There is a uniformly bounded collection of functions $\{m_{S}^{J}\}\subset \CC^{\infty}_{0}(\R^{N})$ indexed by  
\beas
S\in \SS'(\EEE)=\big\{T\in \SS(n):\widehat E_{T}\cap \B(1)^{c}\neq\emptyset\big\}
\eeas 
and
$J\in \Gamma_{\Z}(\EEE_S)$ 
with the following properties.
\begin{enumerate}[{\rm(a)}]

\smallskip

\item If $S=\big\{(I_{1},k_{1});\ldots;(I_{s},k_{s})\big\}\in \SS'(\EEE)$ and $J\in \Gamma_{\Z}(\EEE_S)$, the function $m_{S}^{J}$ is supported where $c^{-1}<\widehat n_{S,r}(\xib_{I_{r}})<c$ for $1\leq r\leq s$ and an appropriate constant $c$. 

\smallskip

\item The multiplier $m$ can be written
\bea\label{8.8tt}
m(\xib) =m_{0}(\xib)+ \sum_{S\in \SS'(\EEE)}\sum_{J\in -\Gamma_{\Z}(\EEE_S)}m_{S}^{J}(2^{-j_{1}}\,\hat\cdot_{S}\,\xib_{i_{1}}, \ldots, 2^{-j_{s}}\,\hat\cdot_{S}\,\xib_{I_{s}})
\eea
where $m_{0}\in \CC^{\infty}_{0}(\R^N)$ is supported in $\B(2)$.
\end{enumerate}
\end{theorem}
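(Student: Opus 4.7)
The plan is to assemble this theorem from the pieces already constructed in the previous two subsections. First, I would invoke the partition of unity from Corollary \ref{Cor8.2} and Corollary \ref{Cor8.3} to split
\[
m(\xib) = m_0(\xib) + \sum_{S\in \SS'(\EEE)} m_S(\xib), \qquad m_S = \Psi_S m,
\]
where $m_0 \in \CC^\infty_0(\B(2))$ and each $m_S$ is supported in $\widehat E_S^A \cap \B(1)^c$. By Proposition \ref{Prop8.4qq}, $m_S \in \MM_\infty(\EEE_S)$. This already reduces the problem to decomposing a single $m_S$ into dyadic pieces adapted to the coarser decomposition $\R^N = \R^{I_1}\oplus\cdots\oplus\R^{I_s}$ with the dilations $\hat\cdot_S$.

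Next, for each fixed $S = \{(I_1,k_1);\ldots;(I_s,k_s)\}$, I would construct the standard Littlewood-Paley type cutoffs $\Theta_j$ on each factor $\R^{I_r}$, homogeneous with respect to $\hat\cdot_S$, so that $\Theta_j(\xib_{I_r})$ is supported in $\{2^{j-1}\le \widehat n_{S,r}(\xib_{I_r})\le 2^{j+2}\}$ and $\sum_{j\in\Z}\Theta_j \equiv 1$ off the origin of $\R^{I_r}$. Setting $\Theta_J = \prod_r \Theta_{j_r}$ and noting that $\xib_{I_r}\ne 0$ on the support of $m_S$, I would write $m_S(\xib) = \sum_{J\in\Z^s} \Theta_J(\xib) m_S(\xib)$ and define the rescaled pieces $m_S^J(\xib) = \Theta_{\mathbf 0}(\xib)\,m_S(2^{j_1}\hat\cdot_S \xib_{I_1},\ldots,2^{j_s}\hat\cdot_S \xib_{I_s})$, so that each summand is a $\hat\cdot_S$-dilate of $m_S^J$.

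The crucial step is showing that only indices $J$ belonging (after sign change) to a translate $\Gamma_{\Z,\mu}(\EEE_S)$ of the cone actually contribute: since $m_S$ vanishes outside $\widehat E_S^A$, the condition $\Theta_J(\xib)\Psi_S(\xib)\ne 0$ combined with the dominance inequalities $\widehat n_{S,p}(\xib_{I_p})^{1/\tau_S(k_p,k_r)} < A\,\widehat n_{S,r}(\xib_{I_r})$ forces $j_p \le \tau_S(k_p,k_r) j_r + \mu$ for a constant $\mu$ depending only on $A$ and $\EEE$. Because $\EEE_S$ was chosen in Lemma \ref{Lem5.10} precisely so that $\Gamma(\EEE_S) = \widehat\Gamma_S$, this inequality is exactly the one defining $-\Gamma_{\Z,\mu}(\EEE_S)$. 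Uniform boundedness of the family $\{m_S^J\}$ in $\CC_0^\infty$ then follows from the differential inequalities of Proposition \ref{Prop8.4qq}, after rescaling by $2^J\hat\cdot_S$: the dilation weights $2^{\sum_r \[\gammab_r\]/e(k_r,j)}$ cancel against the homogeneous factors $\widehat n_{S,r}(2^{j_r}\hat\cdot_S \xib_{I_r})^{-\[\gammab_r\]/e(k_r,j)}$, leaving bounds in terms of $\widehat n_{S,r}(\xib_{I_r})$ alone, which are harmless on the support of $\Theta_{\mathbf 0}$.

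Finally, to pass from $-\Gamma_{\Z,\mu}(\EEE_S)$ to $-\Gamma_\Z(\EEE_S)$, I would define a projection $p:-\Gamma_{\Z,\mu}(\EEE_S)\to -\Gamma_\Z(\EEE_S)$ by taking the closest lattice point and group together those $J$ with a common image $J'$; since $\mu$ is fixed, this grouping has bounded multiplicity and bounded shift, so setting $\widetilde m_S^{J'}(\xib) = \sum_{p(J)=J'} m_S^J(2^{J'-J}\hat\cdot_S\xib)$ yields a new uniformly bounded family supported where each $\widehat n_{S,r}(\xib_{I_r})$ lies in a fixed compact subset of $(0,\infty)$, giving the required expansion \eqref{8.8tt}. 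The main obstacle in this program is the careful bookkeeping that yields the cone containment for $J$ (step three), which is where the choice of $\EEE_S$ from Lemma \ref{Lem5.10} and the basic hypothesis on $\EEE$ interact nontrivially through the inequalities of Lemma \ref{Thm5.6}.
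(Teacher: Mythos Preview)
Your proposal is correct and follows essentially the same route as the paper: the paper's proof of Theorem~\ref{Thm8.4} is precisely the construction carried out in Sections~\ref{SSPartitions} and~\ref{DyadicDecomp}, and each of your four steps (the partition of unity via Corollaries~\ref{Cor8.2}--\ref{Cor8.3}, the Littlewood--Paley cutoffs $\Theta_J$, the cone containment $J\in -\Gamma_{\Z,\mu}(\EEE_S)$ from the support condition on $\widehat E_S^A$, and the projection $p$ to $-\Gamma_\Z(\EEE_S)$ with bounded fibers) matches the paper's argument exactly. The only cosmetic difference is that for uniform boundedness the paper invokes the product-type estimate of Remark~\ref{Rem8.5ww} directly rather than Proposition~\ref{Prop8.4qq}, but on the support of $\Theta_{\mathbf 0}$ these are equivalent.
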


\subsection{Dyadic decomposition of a kernel}\label{SSKernelDecomp}\quad

\medskip

We can now decompose a distribution $\KK\in \PP_{0}(\EEE)$. We know from Theorem \ref{Thm6.2} that the Fourier transform $m=\widehat {\KK} \in \MM_{\infty}(\EEE)$, and by Theorem \ref{Thm8.4} we can write $m$ as in equation (\ref{8.8tt}). Write
\beas[]
[\psi]^{J}(\xib_{I_{1}}, \ldots, \xib_{I_{s}})= \psi\big(2^{-j_{1}}\,\hat\cdot_{S}\,\xib_{I_{1}}, \ldots, 2^{-j_{s}}\,\hat\cdot_{S}\,\xib_{I_{s}}\big)
\eeas
so that 
\beas
\widehat \KK = m_{0}+\sum_{S\in \SS'(\EEE)}\sum_{J\in -\Gamma_{\Z}(\EEE_S)}[m_{S}^{J}]^{J}.
\eeas
Then if $\spcheck$ denotes the inverse Fourier transform, we have 

\beas
\KK = (m_{0})^{\spcheck}+\sum_{S\in \SS'(\EEE)}\sum_{J\in -\Gamma_{\Z}(\EEE_S)}([m_{S}^{J}]^{J})\spcheck.
\eeas 
Computing the inverse Fourier transform of $[m_{S}^{J}]^{J}$ we have 
\beas
([m_{S}^{J}]^{J})\spcheck(\x_{I_{1}}, \ldots, \x_{I_{S}})
&=
2^{\sum_{r=1}^{s}j_{r}\widehat Q_{I_{r}}}\,[m_{S}^{J}]\spcheck(2^{j_{1}}\,\hat\cdot_{S}\,\x_{I_{1}}, \ldots, 2^{j_{s}}\,\hat\cdot_{S}\,\x_{I_{s}}).
\eeas
Since the family $\{m_{S}^{J}\}$ is uniformly bounded in $\CC^{\infty}_{0}(\R^{N}) \subset\SS(\R^{N})$, it follows that the family $\{([m_{S}^{J}]^{J})\spcheck\}$ is also a uniformly bounded family in $\SS(\R^{N})$. Since each $m_{S}^{J}$ is supported where $c^{-1}<\widehat n_{S,r}(\xib_{I_{r}})<c$ for all $r$, and since $\widehat n_{S,r}(\xib_{I_{r}})\approx n_{k_{r}}(\xib_{k_{r}})$, it follows that 
\bea\label{8.10ff}
\int_{\R^{C_{k_{r}}}}(m_{S}^{J})\spcheck(\x_{1}, \ldots, \x_{k_{r}}, \ldots, \x_{n})\,d\x_{k_{r}}=0
\eea
for $1 \leq r \leq s$; \textit{i.e.} the function $(m_{S}^{J})\spcheck$ has cancellation in each of the variables $\{\x_{k_{1}}, \ldots, \x_{k_{s}}\}$.
Let us write
\bea
\psi_{0}&=(m_{0})^{\spcheck},&&&\psi_{S}^{J}&= ([m_{S}^{J}]^{J})\spcheck= [(m_{S}^{-J})^{\vee}]_{-J}.
\eea

\begin{theorem}\label{Thm8.5} 
Let $\KK\in \PP_{0}(\EEE)$. There is a function $\psi_{0}\in \SS(\R^{N})$ and for each $S\in \SS'(\EEE)$ there is a uniformly bounded collection of functions $\big\{\psi_{S}^{J}\big\}_{J\in \Gamma_{\Z}(\widehat\EEE_{S})}\subset\SS(\R^{N})$ so that:
\begin{enumerate}[{\rm(1)}]

\smallskip

\item 
Let $S=\big\{(I_{1},k_{1});\ldots;(I_{s},k_{s})\big\}\in \SS'(\EEE)$.

\begin{enumerate}[{\rm(a)}]

\smallskip

\item \label{Thm8.5a} Each function $\psi^{J}_{S}$ has cancellation in each of the variables $\{\x_{k_{1}}, \ldots, \x_{k_{s}}\}$, and so has strong cancellation relative to the decomposition $\R^{N}=\R^{I_{1}}\oplus\cdots \oplus\R^{I_{s}}$. 

\smallskip
\item \label{Thm8.5b} 
The series 
\bes
\KK_S(\x)=\sum_{J\in \Gamma_{\Z}(\EE_{S})}\big[\psi_{S}^{J}\big]_{J}(\x) =\sum_{J\in \Gamma_{\Z}(\EE_{S})}2^{-\sum_{r=1}^{s}i_{r}\widehat Q_{I_{r}}}\psi_{S}^{J}(2^{-J}\hat\cdot_{S}\x)
\ees
converges to an element of $\PP_{0}({\EEE}_{S})\subset \PP_{0}(\EEE)$ in the sense of distributions .
\end{enumerate}
\smallskip

\item\label{Thm8.5c} The distribution $\KK$ decomposes as $\displaystyle \KK(\x) = \psi_{0}(\x)+\sum_{S\in \SS'(\EEE)}\KK_{S}(\x)$.
\end{enumerate}
\end{theorem}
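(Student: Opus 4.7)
The plan is to assemble pieces that are already in place: Theorem~\ref{Thm8.4} provides the multiplier-side dyadic decomposition, and I take inverse Fourier transforms termwise. Starting from $\KK\in\PP_0(\EEE)$, Theorem~\ref{Thm6.2} gives $m:=\widehat{\KK}\in\MM_\infty(\EEE)$, so Theorem~\ref{Thm8.4} yields
$$m=m_0+\sum_{S\in\SS'(\EEE)}\sum_{J\in-\Gamma_\Z(\EEE_S)}[m_S^J]^J$$
with $m_0\in\CC^\infty_0(\B(2))$ and $\{m_S^J\}$ uniformly bounded in $\CC^\infty_0(\R^N)$, each supported where $c^{-1}<\widehat n_{S,r}(\xib_{I_r})<c$. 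I would set $\psi_0=(m_0)^\vee$ and, for each $J\in\Gamma_\Z(\EEE_S)$, set $\psi_S^J=(m_S^{-J})^\vee$, so that $\bigl([m_S^{-J}]^{-J}\bigr)^\vee=[\psi_S^J]_J$ (the dilations $\hat\cdot_S$ on $\xib$ and the $L^1$-normalized dilations on $\x$ are dual under Fourier transform, up to absolute constants). Uniform boundedness of $\{\psi_S^J\}$ in $\SS(\R^N)$ is immediate from that of $\{m_S^J\}$ in $\CC^\infty_0(\R^N)$.

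For the cancellation claim (1)(a), I use the support condition built into Theorem~\ref{Thm8.4}. Since each $m_S^J$ is supported where $\widehat n_{S,r}(\xib_{I_r})\approx 1$ for every $r$, and the relation $\widehat n_{S,r}(\xib_{I_r})\approx n_{k_r}(\xib_{k_r})$ on $\widehat E_S^A$ from Corollary~\ref{Cor5.7} shows that $m_S^J$ vanishes on a neighborhood of the hyperplane $\xib_{k_r}=0$. Hence $\psi_S^J$ integrates to zero in the $\x_{k_r}$ variable, as already recorded in~(\ref{8.10ff}). Integrating further over the remaining coordinates in $I_r\setminus\{k_r\}$ yields cancellation in the full $\x_{I_r}$, which is precisely strong cancellation relative to the coarser decomposition $\R^N=\R^{I_1}\oplus\cdots\oplus\R^{I_s}$.

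For (1)(b), I would invoke Theorem~\ref{Thm3.7}, applied now with the matrix $\EEE_S$ on the coarser decomposition $\R^N=\R^{I_1}\oplus\cdots\oplus\R^{I_s}$ with dilations $\hat\cdot_S$. Because $S\in\SS'(\EEE)$, Corollary~\ref{Remark3.10} together with the construction of $\EEE_S$ in Lemma~\ref{Lem5.10} guarantees $\Gamma(\EEE_S)=\widehat\Gamma_S$ has non-empty interior, so the hypothesis of Theorem~\ref{Thm3.7} is met. Strong cancellation is a fortiori weak cancellation with any parameter $\epsilon>0$, so Theorem~\ref{Thm3.7} gives $\KK_S=\sum_{J\in\Gamma_\Z(\EEE_S)}[\psi_S^J]_J\in\PP_0(\EEE_S)$ as a convergent sum of distributions. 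The inclusion $\PP_0(\EEE_S)\subset\PP_0(\EEE)$ is the kernel-side counterpart of Proposition~\ref{Lem8.7mm}, transported through Fourier duality (Theorems~\ref{Thm6.1}--\ref{Thm6.2}).

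Finally, the decomposition~(2) is obtained by taking inverse Fourier transform of the identity in Theorem~\ref{Thm8.4}: only finitely many $S$ contribute, and each inner sum converges to $(m_S)^\vee=\KK_S$ in $\SS'(\R^N)$ by the previous paragraph, so summing over $S$ gives $\KK=\psi_0+\sum_{S\in\SS'(\EEE)}\KK_S$. The main obstacle is purely bookkeeping, keeping track of the pairing between $\hat\cdot_S$ on the Fourier side and its dual $\cdot_S$ on the physical side together with the $[\,\cdot\,]^J$ versus $[\,\cdot\,]_J$ normalizations; once these conventions are matched, no substantive new analytic input beyond Theorems~\ref{Thm8.4} and~\ref{Thm3.7} is required.
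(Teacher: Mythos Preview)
Your proof is correct and follows essentially the same route as the paper: start from the multiplier decomposition of Theorem~\ref{Thm8.4}, set $\psi_S^J=(m_S^{-J})^\vee$, and read off the cancellation from the support of $m_S^J$ away from $\xib_{k_r}=0$. The one genuine difference is in justifying~(1)(b). You invoke Theorem~\ref{Thm3.7} on the coarser decomposition with matrix $\EEE_S$, using strong cancellation of the $\psi_S^J$. The paper instead recognizes directly that the series telescopes back to $(m_S)^\vee$, then appeals to Proposition~\ref{Prop8.4qq} (which gives $m_S\in\MM_\infty(\EEE_S)$) together with Theorem~\ref{Thm6.1} to conclude $\KK_S\in\PP_0(\EEE_S)$. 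The paper's route is slightly cleaner because it avoids re-running the dyadic-sum machinery and uses only the Fourier characterization, but your appeal to Theorem~\ref{Thm3.7} is equally valid and perhaps more in the spirit of the dyadic-decomposition philosophy. A minor citation quibble: the equivalence $\widehat n_{S,r}(\xib_{I_r})\approx n_{k_r}(\xib_{k_r})$ on the support of $m_S^J$ comes from the fact that $m_S$ is supported in $\widehat E_S^A$ (Corollary~\ref{Cor8.3}(c)) rather than from Corollary~\ref{Cor5.7} directly.
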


\begin{proof}
Let $m=\widehat\KK\in \MM_{\infty}(\EEE)$. Using Theorem \ref{Thm8.4}, write 
$$
m=m_{0}+\sum_{S\in \SS'(\EEE)}\sum_{J\in -\Gamma_{\Z}(\EE_{S})}[m_{S}^{J}]^{J}.
$$
  Then for each $S\in \SS'(\EEE)$ and each {$J\in \Gamma_{\Z}(\EE_{S})$}, let $\psi_{S}^{J}(\x)=(m_{S}^{-J})\spcheck(\x)$. We have already observed in equation (\ref{8.10ff}) that $\psi_{S}^{J}$ has cancellation in each variable $\x_{k_{r}}$, $1\leq r \leq s$, which gives assertion (\ref{Thm8.5a}).

 Next 
\beas
\KK_{S}&= \sum_{J\in \Gamma_{\Z}(\EE_{S})}[\psi_{S}^{J}]_{J}=\sum_{J\in -\Gamma_{\Z}(\EE_{S})}[\psi_{S}^{-J}]_{-J}=\sum_{J\in -\Gamma_{\Z}(\EE_{S})}([m_{S}^{J}]^{J})^{\vee}=m_{S}^{\vee}.
\eeas
According to Proposition \ref{Prop8.4qq}, $m_{S}\in \MM_{\infty}(\EEE_{S})$, and using Theorem \ref{Thm6.1} it follows that $\KK_{S}\in \PP_{0}(\EEE_{S})$. But according to Lemma \ref{Lem8.7mm}, $\MM_{\infty}(\EEE_{S})\subset \MM_{\infty}(\EEE)$, and every distribution in $\PP_{0}(\EEE_{S})$ belongs to $\PP_{0}(\EEE)$. This establishes assertion (\ref{Thm8.5b}).

Finally assertion (\ref{Thm8.5c}) is then a consequence of the identity $m=m_0+\sum_{S\in \SS'(\EEE)}m_{S}$, and this completes the proof.
\end{proof}

Theorem \ref{Thm8.5} shows that each kernel $\KK$ can be written as sums of dilates of uniformly bounded families of Schwartz functions. As in \cite{MR2949616}, we can improve this result and show that we can replace Schwartz functions by uniformly bounded families of functions compactly supported in the unit ball. We use the following result, which is Lemma 6.5 in \cite{MR2949616}.

\begin{lemma}\label{Lem8.7qq}
Let $\psi\in \SS(\R^N)$. Then there are functions $\{\varphi^{k}\}\subset\CC^\infty_{0}(\R^N)$, $k\in \N$ such that
\bes
\psi(\x)= \sum_{k=0}^{\infty} 2^{-kQ}\varphi^{k}(2^{-k}\cdot\x)
\ees
where $Q$ is the homogeneous dimension of $\R^N$ relative to the given dilations. The functions $\{\varphi^{k}\}$ have the following properties.
\begin{enumerate}[{\rm(a)}]
\smallskip

\item Each $\varphi^{k}$ is supported in $\B(1)$.

\smallskip

\item For any $\delta>0$ and any $\gammab\in \N^N$ there exists a positive integer $M$ so that 
\bes
\sup_{\x\in\R^N} |\partial^{\gammab}\varphi^{k}(\x)|\leq 2^{-k\delta}\sup_{\x\in\R^N}\sum_{|\alphab|\leq M}|\partial^{\alphab}\psi(\x)|.
\ees

\item If $\psi$ has strong cancellation, then each $\varphi^{k}$ also has strong cancellation.
\end{enumerate}
\end{lemma}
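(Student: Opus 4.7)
The plan is to construct $\{\varphi^k\}$ via a smooth dyadic partition of unity, rescale each piece into $\B(1)$, and then upgrade the construction to respect strong cancellation by running it inside a derivative representation of $\psi$.

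First I would fix a cutoff $\eta\in\CC^\infty_0(\R^N)$ with $\eta\equiv 1$ on $\B(1/2)$ and $\supp\eta\subset\B(1)$, set $\eta_0=\eta$ and, for $k\ge 1$, $\eta_k(\x)=\eta(2^{-k}\cdot\x)-\eta(2^{-(k-1)}\cdot\x)$, so that $\supp\eta_k$ lies in the annulus $\B(2^k)\setminus\B(2^{k-2})$ and $\sum_{k=0}^K\eta_k(\x)=\eta(2^{-K}\cdot\x)\to 1$ pointwise. Then $\chi^k:=\eta_k\psi$ is supported in $\B(2^k)$, and defining $\varphi^k(\y):=2^{kQ}\chi^k(2^k\cdot\y)$ yields $\psi(\x)=\sum_k 2^{-kQ}\varphi^k(2^{-k}\cdot\x)$ with each $\varphi^k$ supported in $\B(1)$, which gives (a).

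For (b), one uses the Schwartz decay $|\partial^\beta\psi(\x)|\lesssim(1+|\x|)^{-M}\|\psi\|_{(M+|\beta|)}$ for any $M$, together with $|\partial^\gammab\eta_k(\x)|\lesssim 2^{-k\[\gammab\]}$ (from the chain rule) and the fact that $\supp\eta_k\subset\{|\x|\gtrsim 2^k\}$ for $k\ge 1$. These combine by the product and chain rules to give $|\partial^\gammab\chi^k(\x)|\lesssim 2^{-kM'}\|\psi\|_{(M'')}$ on $\supp\eta_k$ for any $M'$, and after the dilation $\chi^k\mapsto\varphi^k$, which costs a factor $2^{k(Q+\[\gammab\])}$, one obtains the required $2^{-k\delta}$ bound by choosing $M'$ large enough. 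This step is routine.

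The main obstacle is (c): the product $\eta_k\psi$ does not preserve cancellation, since multiplying by a non-constant function destroys $\int\psi\,d\x_j=0$. To bypass this, I would first apply Proposition~\ref{Prop3.5} with $J_r=C_r$ to write $\psi=\sum_{(j_1,\ldots,j_n)\in C_1\times\cdots\times C_n}\partial_{j_1}\cdots\partial_{j_n}\tilde\psi_{j_1,\ldots,j_n}$ with each $\tilde\psi_{j_1,\ldots,j_n}\in\SS(\R^N)$ normalized relative to $\psi$, apply the construction of the first two paragraphs to each $\tilde\psi_{j_1,\ldots,j_n}$ to produce $\tilde\varphi^k_{j_1,\ldots,j_n}$, and finally define
\[
\varphi^k(\y):=\sum_{(j_1,\ldots,j_n)}2^{-k(d_{j_1}+\cdots+d_{j_n})}\partial_{j_1}\cdots\partial_{j_n}\tilde\varphi^k_{j_1,\ldots,j_n}(\y).
\]
The identity $\partial_{j_r}[f(2^{-k}\cdot\x)]=2^{-kd_{j_r}}(\partial_{j_r}f)(2^{-k}\cdot\x)$ ensures that $\sum_k 2^{-kQ}\varphi^k(2^{-k}\cdot\x)$ still equals $\psi(\x)$; each $\varphi^k$ remains supported in $\B(1)$; each summand carries a derivative in every block $\R^{C_r}$, so $\varphi^k$ has strong cancellation by the other direction of Proposition~\ref{Prop3.5}; and the derivative loss in the bounds of (b) is absorbed by the prefactor $2^{-k(d_{j_1}+\cdots+d_{j_n})}$, provided the $\delta$ used for each $\tilde\psi_{j_1,\ldots,j_n}$ is chosen sufficiently large at the outset.
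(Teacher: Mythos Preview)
The paper does not prove this lemma itself; it simply cites it as Lemma~6.5 in \cite{MR2949616}. Your argument---a dyadic partition of unity for (a) and (b), combined with the derivative representation from Proposition~\ref{Prop3.5} to preserve strong cancellation in (c)---is correct and is the standard route, essentially matching the argument in the cited reference.
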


Now let $\KK\in \PP_{0}(\EEE)$  and let $\KK=\psi_{0}+\sum_{S\in \SS'(\EEE)}\KK_S$ be the decomposition given in Theorem~\ref{Thm8.5} where $\KK_S(\x)=\sum_{J\in \Gamma_{\Z}(\EE_{S})}[\psi_{S}^{J}]_{J}(\x)$, and each $\psi_{S}^{J}$ has cancellation in the variables $\{\x_{k_{1}}, \ldots, \x_{k_{s}}\}$. We apply Lemma \ref{Lem8.7qq} to each function $\psi_{S}^{J}$:
\bes
\psi_{S}^{J}(\x)=\sum_{k=0}^{\infty} 2^{-kQ_S}\varphi_{S}^{J,k}(2^{-k}\cdot\x)
\ees
where each $\{\varphi_{S}^{J,k}\}\subset \CC^{\infty}_{0}(\R^N)$ is a uniformly bounded family of functions supported in $\B(1)$, each having cancellation in the variables $\{\x_{k_{1}}, \ldots, \x_{k_{s}}\}$. Then 
\beas
\KK_S(\x) &=
\sum_{J\in \Gamma_{\Z}(\EE_{S})}\left[\sum_{k=0}^{\infty} [\varphi_{S}^{J,k}]_k\right]_{J}(\x).
\eeas
The argument on pages 661-663 of \cite{MR2949616} shows that this last sum can be rewritten to yield 
\beas
\KK_S(\x)=\sum_{J\in \Gamma_{\Z}(\EE_{S})} [\widetilde\varphi_{S}^{J}]_{J}(\x)
\eeas
where $\{\widetilde\varphi_{S}^{J}\}$ is a uniformly bounded family in $\CC^{\infty}_{0}(\R^{N})$, all supported in $\B(1)$, and all having cancellation in the variables $\{\x_{k_{1}}, \ldots, \x_{k_{s}}\}$. Thus we have

\goodbreak

\begin{corollary}\label{Cor8.8}
Let $\KK\in \PP_{0}(\EEE)$. For each $S\in \SS'(\EEE)$ there is a uniformly bounded collection of functions $\big\{\varphi_{S}^{J}\big\}_{J\in \Gamma_{\Z}(\EE_{S})}\subset\CC^{\infty}_{0}(\R^{N})$ and a function $\psi_{0}\in \SS(\R^N)$ so that  
\begin{enumerate}[{\rm(a)}]

\smallskip

\item \label{Cor8.8a} 
Each function $\varphi^{J}_{S}$ has cancellation in the variables $\x_{k_{1}}, \ldots, \x_{k_{s}}$.
 
\smallskip

\item \label{Cor8.8b} 
Let
\beas[]
[\varphi_{S}^{J}]^{J}(\x) = 2^{-}\varphi_{S}^{J}(2^{-j_{1}}\,\hat\cdot_{S}\,\x_{I_{1}}, \ldots, 2^{-j_{s}}\,\hat\cdot_{S}\,\x_{I_{s}}),
\eeas
where 
\beas
2^{-j_{r}}\,\hat\cdot_{S}\,\x_{I_{r}}= \Big\{2^{-j_{r}/e(k_{r},j)}\cdot\x_{j}\Big\}_{j\in I_{r}}.
\eeas
 Then the series $\KK_S=\sum_{J\in \Gamma_{\Z}( \EEE_{S})}\big[\varphi_{S}^{J}\big]_{J}$ converges in the sense of distributions to an element $\KK_{S} \in \PP_{0}({\EEE}_{S})\subset \PP_{0}(\EEE)$.

\smallskip

\item\label{Cor8.8c} 
$
\KK(\x) = \psi_{0}(\x)+\sum_{S\in \SS'(\EEE)}\KK_{S}(\x).
$
\end{enumerate}
\end{corollary}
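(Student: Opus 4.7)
The plan is to start from the decomposition furnished by Theorem \ref{Thm8.5}, which already gives $\KK = \psi_0 + \sum_{S \in \SS'(\EEE)} \KK_S$ with $\KK_S = \sum_{J \in \Gamma_{\Z}(\EEE_S)} [\psi_S^J]_J$, where each $\psi_S^J \in \SS(\R^N)$ has cancellation in the marked variables $\x_{k_1},\ldots,\x_{k_s}$. The only remaining work is to upgrade each Schwartz factor $\psi_S^J$ to a function $\varphi_S^J \in \CC^\infty_0(\R^N)$ supported in $\B(1)$, while preserving both the marked-variable cancellation and the indexing set $\Gamma_{\Z}(\EEE_S)$.

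First I would apply Lemma \ref{Lem8.7qq} to each $\psi_S^J$ to write
\[
\psi_S^J(\x) = \sum_{k=0}^{\infty} 2^{-kQ}\,\varphi_S^{J,k}(2^{-k}\cdot\x),
\]
where each $\varphi_S^{J,k}\in \CC^\infty_0(\R^N)$ is supported in $\B(1)$, retains the cancellation in the variables $\x_{k_1},\ldots,\x_{k_s}$ (since isotropic dilation commutes with integration in any coordinate direction), and satisfies a decay bound $\|\varphi_S^{J,k}\|_{(m)}\leq C_{m,\delta}\,2^{-k\delta}$ for any prescribed $\delta>0$. Substituting into $\KK_S$ yields the double sum
\[
\KK_S = \sum_{J\in\Gamma_{\Z}(\EEE_S)}\sum_{k=0}^{\infty}\bigl[[\varphi_S^{J,k}]_k\bigr]_J,
\]
where the inner dilation is the standard isotropic one and the outer dilation is $\hat\cdot_S$. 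The goal is to collapse the double sum into a single sum over $\Gamma_{\Z}(\EEE_S)$.

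The key step is the regrouping argument from pages 661--663 of \cite{MR2949616}: a composite dilation of the form $[[\varphi]_k]_J$ is, up to a bounded rescaling of its bump, the $[\cdot]_{J'}$-dilation of a function of the same type, where $J'$ is obtained from $J$ by adding $k$ times a fixed vector $\mathbf v$ lying in the interior of $\Gamma_{\Z}(\EEE_S)$ (so that $J+k\mathbf v$ remains in $\Gamma_{\Z}(\EEE_S)$ for every $k\geq 0$). For each target index $J'\in\Gamma_{\Z}(\EEE_S)$ one collects all pairs $(J,k)$ whose shifted image lands within bounded lattice distance of $J'$. The $2^{-k\delta}$ gain provided by Lemma \ref{Lem8.7qq} absorbs the finite amount of rescaling needed to bring the supports back inside $\B(1)$, while uniform boundedness of the resulting family $\widetilde\varphi_S^{J'}$ in $\CC^\infty_0(\R^N)$ follows from the geometric summability of the collected pieces. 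Since the cancellation in $\x_{k_1},\ldots,\x_{k_s}$ is preserved at every stage, the resulting $\widetilde\varphi_S^{J'}$ inherits it.

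The main obstacle is the bookkeeping in this reindexing: one must verify that $\mathbf v$ can be chosen so that the map $(J,k)\mapsto J+k\mathbf v$ sends $\Gamma_{\Z}(\EEE_S)\times\N$ into $\Gamma_{\Z}(\EEE_S)$ with finite fibres, and that the strict inequalities defining the cone interact correctly with the $2^{-k\delta}$ decay to ensure that only finitely many $(J,k)$ contribute significantly to each $J'$. Such a $\mathbf v$ exists because $\Gamma^o(\EEE_S)$ is non-empty whenever $S\in\SS'(\EEE)$, by Corollary \ref{Remark3.10}. Once the reindexed series $\KK_S=\sum_{J'\in\Gamma_{\Z}(\EEE_S)}[\widetilde\varphi_S^{J'}]_{J'}$ is in hand, assertion (a) is immediate from the preservation of cancellation, assertion (b) follows from Theorem \ref{Thm3.7} applied to the matrix $\EEE_S$ (noting that strong cancellation in the marked variables is a special case of weak cancellation) together with the inclusion $\PP_0(\EEE_S)\subset\PP_0(\EEE)$ of Proposition \ref{Lem8.7mm}, and assertion (c) is just the identity $\KK=\psi_0+\sum_S\KK_S$ from Theorem \ref{Thm8.5}(\ref{Thm8.5c}).
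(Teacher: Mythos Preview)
Your proposal is correct and follows essentially the same route as the paper: start from Theorem~\ref{Thm8.5}, apply Lemma~\ref{Lem8.7qq} to each Schwartz piece $\psi_S^J$, then invoke the reindexing argument on pages 661--663 of \cite{MR2949616} to collapse the double sum back to a single sum over $\Gamma_{\Z}(\EEE_S)$. You actually spell out more of the mechanics of that regrouping (the shift by a lattice vector $\mathbf v$ in the interior of the cone, the geometric decay absorbing the rescaling) than the paper does, which simply cites the reference; your justification via Corollary~\ref{Remark3.10} that $\Gamma^o(\EEE_S)\neq\emptyset$ is a helpful addition.
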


\section{The rank of $\EEE$ and integrability at infinity}\label{Integrability}

 So far we have analyzed kernel in the restricted class $\PP_0(\EEE)$, which are the sum of a kernel in $\PP(\EEE)$ with compact support and  a Schwartz function.

In this Section we  discuss
the nature of a distribution $\KK\in \PP(\EEE)$ and show that it depends in the first place on the rank of the matrix $\EEE=\{e(j,k)\}$. 

We will see that if the rank equals $1$ then the distribution $\KK$ is a standard (non-isotropic) \CZ kernel. However, if the rank is greater than $1$, we will see that the kernel $K$ is integrable at infinity, but has worse behavior near the origin than a \CZ kernel.

We will also show that we have integrability at infinity in a more general situation. Suppose that the Fourier transform $m$ of a distribution $\KK$ satisfies the differential inequalities in Definition \ref{Def2.4} for pure derivatives; \textit{i.e.} for every $1 \leq j \leq n$ and for every $\gammab_{j}\in \N^{C_{j}}$ there is a constant $C_{\gammab_{j}}>0$ so that $|\partial^{\gammab_{j}}_{\xib_{j}}m(\xib)|\leq C_{\gammab_{j}}N_{j}(\xib)^{-\[\gammab_{j}\]}$. Then if the rank of $\EEE$ is greater than $1$, the function $K$ is integrable at infinity. This result will be used in Section \ref{Multi-Flag} below when we consider distributions which are simultaneously flag kernels for two different flags.

\subsection{The rank $1$ case: \CZ kernels}\label{Rank1}\quad

We begin by recalling the definition of a \CZ kernel\footnote{For reasons of compatibility with the framework of this paper, we restrict ourselves to kernels and multipliers which are $C^\infty$ away from the origin and to dilations which are compatible with the decomposition 
\ref{RNagain} and with the dilations in \eqref{Eqn2.1aa}.}. Start with the usual decomposition 
\be\label{RNagain}
\R^{N}= \R^{C_{1}}\oplus \cdots \oplus \R^{C_{n}},
\ee
 and use the standard notation of Section \ref{Kernels}. For $\a=(a_{1}, \ldots, a_{n})$ an $n$-tuple of positive numbers, define a one-parameter family of dilations by setting 
 \be\label{lambdaa}
 \lambda\cdot_{\a}\x=(\lambda^{a_{1}}\cdot \x_{1}, \ldots, \lambda^{a_{n}}\cdot \x_{n}).
\ee
 \index{l3ambdadota@$\lambda\cdot_{\a}\x$}
  With this homogeneity, $\R^{N}$ has homogeneous dimension $Q_\a= \sum_{j=1}^{n}a_{j}Q_{j}$, and 
  $$
  N_\a(\x) = n_{1}(\x_{1})^{1/a_{1}}+ \cdots + n_{n}(\x_{n})^{1/a_{n}}
  $$
  \index{N5a@$N_\a$} is a homogeneous norm. We can then define the class of \CZ kernels with this homogeneity.

\begin{definition}\label{CZa}
$\CC\ZZ_{\a}$\index{C1Z@$\CC\ZZ_\a$} is the space of tempered distributions $\KK$ on $\R^{N}$ such that
\begin{enumerate}[{\rm(a)}]
\smallskip

\item away from the origin, $\KK$ is given by integration against a smooth function $K$ satisfying the differential inequalities
\beas
\big\vert\partial^{\gammab}K(\x)\big\vert \leq C_{\gammab}N_\a(\x)^{-Q_\a-\[\gammab\]_\a},
\eeas
for every multi-index $\gammab$, with $\[\gammab\]_\a=\sum_{j=1}^na_j\[\gammab_j\]$;

\item there is a constant $C>0$ so that for any normalized bump function $\psi\in \CC^{\infty}_{0}(\R^{N})$ with support in the unit ball and for any $R>0$, $\big\vert\big\langle\KK,\psi_{R}\big\rangle\big\vert\leq C$, where $\psi_{R}(\x) = \psi(R\cdot_{\a}\x)$.
\end{enumerate}
\end{definition}

It is well known, cf. \cite{MR1818111}, that, for distributions $\KK\in \CC\ZZ_{\a}$, the corresponding multipliers  $m=\widehat \KK$ are characterized by the differential inequalities 
$$
\big\vert\partial^{\gammab}m(\xib)\big\vert\leq C_{\gammab}N_\a(\xib)^{-\[\gammab\]_{\a}}.
$$ 
Put 
$$
N_{j}(\xib)= n_{1}(\xib)^{\frac{a_{j}}{a_{1}}}+ \cdots + n_{n}(\xib)^{\frac{a_{j}}{a_{n}}}\approx N_\a(\xib)^{a_j}.
$$ 
Then these differential inequalities can be written as $\big\vert\partial^{\gammab}m(\xib)\big\vert\leq C_{\gammab}\prod_{j=1}^{n}N_{j}(\xib)^{-\[\gammab_j\]}$,
and so the multipliers are the elements of  the class $\MM(\EEE_{\a})$ where
\bea\label{Ea}
\EEE_{\a}= \left[\begin{matrix}
1&\frac{a_{1}}{a_{2}}&\frac{a_{1}}{a_{3}}&\cdots&\frac{a_{1}}{a_{n}}\\
\frac{a_{2}}{a_{1}}&1&\frac{a_{2}}{a_{3}}&\cdots&\frac{a_{2}}{a_{n}}\\
\vdots&\vdots&\vdots&\ddots&\vdots\\
\frac{a_{n}}{a_{1}}&\frac{a_{n}}{a_{2}}&\frac{a_{n}}{a_{3}}&\cdots&1
\end{matrix}\right].
\eea

\begin{proposition}\label{Prop4.1aa}
Let $\EEE=\{e(j,k)\}$ satisfy the basic hypotheses \eqref{2.5}. The following are equivalent:
\begin{enumerate}[{\rm(1)}]

\smallskip

\item $\text{rank}(\EEE) =1$;

\medskip

\item $e(j,k)=e(j,l)e(l,k)$ for all $1\leq j,k,l \leq n$;

\medskip

\item there is a dilation structure on $\R^{N}$, compatible with the decomposition \eqref{RNagain}, so that if $m\in \MM(\EEE)$, then $m$ is a Mihlin-H\"ormander multiplier relative to that structure.

\medskip

\item there is a dilation structure on $\R^{N}$, compatible with the decomposition \eqref{RNagain}, so that if $\KK\in \PP(\EEE)$, then $\KK$ is a \CZ kernel relative to that structure;

\end{enumerate}
\end{proposition}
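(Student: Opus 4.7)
My plan is to establish the circle $(1)\Leftrightarrow(2)\Rightarrow(4)\Rightarrow(3)\Rightarrow(2)$, relying on Proposition \ref{inclusion} to translate statements about kernels/multipliers into statements about entries of the defining matrices.

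For $(1)\Leftrightarrow(2)$, I would argue by direct matrix computation. If $\rank(\EEE)=1$, every row of $\EEE$ is a scalar multiple of row $1$; using $e(j,j)=1$ forces that scalar to be $1/e(1,j)$, so $e(j,k)=e(1,k)/e(1,j)$, and then $e(j,l)e(l,k)=[e(1,l)/e(1,j)][e(1,k)/e(1,l)]=e(j,k)$. Conversely, fixing $l=1$ in (2) gives $e(j,k)=e(j,1)e(1,k)$, exhibiting row $j$ as $e(j,1)$ times row $1$.

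For $(2)\Rightarrow(4)$, set $a_j=1/e(1,j)$ so that $e(j,k)=a_j/a_k$, and consider the one-parameter dilations $\lambda\cdot_\a$ of \eqref{lambdaa}. Since $N_j(\x)=\sum_k n_k(\x_k)^{a_j/a_k}\approx N_\a(\x)^{a_j}$ (the elementary equivalence $(\sum f_k)^{a_j}\approx\sum f_k^{a_j}$ for positive quantities), the differential inequalities in Definition \ref{Def2.2}(a) collapse to
\[
|\partial^{\gammab}K(\x)|\lesssim \prod_{j=1}^n N_\a(\x)^{-a_j(Q_j+\[\gammab_j\])}=N_\a(\x)^{-Q_\a-\[\gammab\]_\a},
\]
which is the CZ size estimate of Definition \ref{CZa}. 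The CZ cancellation requirement is precisely the case $L=\emptyset$ of the cancellation conditions in Definition \ref{Def2.2}(b), specialized to $R_j=R^{a_j}$, so $\KK\in\CC\ZZ_\a$. The implication $(4)\Rightarrow(3)$ is then standard: Fourier transforms of $\CC\ZZ_\a$ kernels are Mihlin--H\"ormander multipliers for the same dilation structure.

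The key step is $(3)\Rightarrow(2)$, and I would handle it using Proposition \ref{inclusion}. The Mihlin--H\"ormander class attached to $\cdot_\a$ is exactly $\MM(\EEE_\a)$, where $\EEE_\a$ has entries $a_j/a_k$; statement (3) is therefore the inclusion $\MM(\EEE)\subseteq\MM(\EEE_\a)$, which by Proposition \ref{inclusion}(iv) means $e(j,k)\le a_j/a_k$ for all $j,k$. Swapping the roles of $j$ and $k$ and multiplying gives $e(j,k)e(k,j)\le 1$, while the basic hypothesis gives the reverse inequality $1=e(j,j)\le e(j,k)e(k,j)$. Hence $e(j,k)e(k,j)=1$ for every pair, and feeding this back into the basic hypothesis chain $e(j,l)\le e(j,k)e(k,l)\le e(j,k)e(k,j)e(j,l)=e(j,l)$ forces equality throughout, yielding (2). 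The main obstacle I expect is the careful bookkeeping in $(2)\Rightarrow(4)$, namely matching the somewhat elaborate cancellation hypothesis of Definition \ref{Def2.2}(b) with the simpler CZ cancellation; once one recognizes that the CZ version is the specialization $R_j=R^{a_j}$ of the multi-parameter condition, nothing deeper is needed.
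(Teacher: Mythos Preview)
Your argument is correct, and the logical chain $(1)\Leftrightarrow(2)$, $(2)\Rightarrow(4)\Rightarrow(3)$, $(3)\Rightarrow(2)$ closes without gaps. One small point worth noting: Proposition \ref{inclusion} is stated for the classes $\MM_\infty$, not $\MM$; you should remark that the inclusion $\MM(\EEE)\subseteq\MM(\EEE_\a)$ from (3) restricts to $\MM_\infty(\EEE)\subseteq\MM_\infty(\EEE_\a)$ (any $m\in\MM_\infty(\EEE)$ is smooth at the origin with bounded derivatives, and lies in $\MM(\EEE_\a)$ by hypothesis, hence in $\MM_\infty(\EEE_\a)$). With this in place, your appeal to Proposition \ref{inclusion}(iv) is legitimate.

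Your route differs from the paper's in two places. For the forward direction the paper proves $(2)\Rightarrow(3)$ directly on the multiplier side, observing that under (2) one has $\widehat N_j=N_j\approx N_1^{e(j,1)}$ so the $\MM(\EEE)$ inequalities collapse to Mihlin--H\"ormander; it then invokes \cite{MR1818111} for the equivalence $(3)\Leftrightarrow(4)$. You instead work on the kernel side for $(2)\Rightarrow(4)$ and note that $(4)\Rightarrow(3)$ is standard---this is equivalent but requires slightly more care with the cancellation conditions (which you handle correctly by specializing $R_j=R^{a_j}$). The more interesting difference is in the reverse implication: the paper proves $(3)\Rightarrow(1)$ by passing through the cone $\Gamma(\EEE)$, using Proposition \ref{inclusion}(v) and then Lemma \ref{Lem3.7} from Appendix I to conclude that $\Gamma(\EEE)$ is one-dimensional. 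Your argument for $(3)\Rightarrow(2)$ is more direct and avoids the cone machinery entirely: from $e(j,k)\le a_j/a_k$ you get $e(j,k)e(k,j)\le 1$, hence equality by the basic hypothesis, and then the chain $e(j,l)\le e(j,k)e(k,l)\le e(j,k)e(k,j)e(j,l)=e(j,l)$ forces (2). This is a genuinely simpler alternative to the paper's route.
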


\begin{proof}
The rank of $\EEE$ equals 1 if and only if all the rows are proportional. Due to the 1's along the diagonal this means that, for every $j\ne k$ the $j$-th row equals the $k$-th row multiplied by $e(j,k)$. Thus (1) and (2) are equivalent. 

If (2) holds, we have in particular that $e(j,k)e(k,j)=1$ for all $j,k$. Hence $\widehat N_j=N_j$ for all~$j$. Moreover, in analogy with \eqref{reducednorms},
$$
N_j(\x)\approx N_1(\x)^{e(j,1)}.
$$
Thus the differential inequalities in Definition \ref{Def2.4} can be written
\beas
\big|\partial^{\gammab}m(\xib)\big|&\leq C_{\gammab}\prod_{j=1}^{n}\widehat N_{j}(\xib)^{-\[\gammab_{j}\]}\\ &\approx \prod_{j=1}^{n}N_1(\xib)^{-e(j,1)\[\gammab_{j}\]}= N_1(\xib)^{-\sum_{j=1}^{n}e(j,1)\[\gammab_{j}\]}\\
&=N_1(\xib)^{-\[\gammab\]},
\eeas
where $\[\gammab\]=\sum_{j=1}^{n}e(j,1)\[\gammab_{j}\]=\sum_{j=1}^{n}e(1,j)^{-1}\[\gammab_{j}\]$ is the length of $\gamma$ in the dilation structure that makes $N_1$ a homogeneous norm.
Thus the multiplier $m(\xib)$ satisfies the Mihlin-H\"ormander differential inequalities relative to the family of dilations, and so (3) holds.

Conversely, assume that (3) holds. Then there are exponents $\beta_1=1,\beta_2,\dots,\beta_n$ such that, setting
$$
N(\xib)=\sum_{j=1}^n n_j(\xib)^{\beta_j},
$$
 every $m\in\MM(\EEE)$ satisfies the inequalities
$$
\big|\partial^{\gammab}m(\xib)\big|\lesssim N(\xib)^{-\sum_{j=1}^{n}\beta_j^{-1}\[\gammab_{j}\]}.
$$
This is equivalent to saying that $m\in\MM(\EEE')$ with
$$
e'(j,k)=\frac{\beta_k}{\beta_j}.
$$
Hence in our hypothesis we have the inclusion $\MM(\EEE)\subseteq\MM(\EEE')$, which implies that $\MM_\infty(\EEE)\subseteq\MM_\infty(\EEE')$. By Proposition \ref{inclusion}, this implies that $\Gamma(\EEE)\subseteq \Gamma(\EEE')$. By Lemma \ref{Lem3.7}, the dimension of $\Gamma(\EEE')$ equals the reduced rank of $\EEE'$, which is 1. Hence also $\Gamma(\EEE)$ has dimension 1, so it must coincide with $\EEE'$. This proves that (3) implies (1).

Finally, the equivalence of (3) and (4) is proved in \cite{MR1818111}.
\end{proof}
\medskip

\subsection{Higher rank and integrability at infinity}\label{HigerRank}\quad

In this section let $\EEE=\{e(j,k)\}$ be an $n\times n$ matrix satisfying the basic hypothesis (\ref{2.5}), and as usual let $N_{j}(\x) = \sum_{k=1}^{n}n_{k}(\x_{k})^{e(j,k)}$ and $\widehat N_{j}(\xib) = \sum_{k=1}^{n}n_{k}(\xib_{k})^{1/e(k,j)}$. Let $Q_{j}$ be the homogeneous dimension of $\R^{C_{j}}$. We begin by studying the integrability at infinity of a  measurable function $K$ on $\R^{N}$ which satisfies the inequality 
\bea\label{Eqn7.1wer}
|K(\x)|\leq C\,\prod_{j=1}^{n}N_{j}(\x)^{-Q_{j}}.
\eea
Note that nothing is said about the size of the derivatives of $K$.

\begin{lemma}\label{Lem4.2}
Suppose that the rank of $\EEE$ is greater than or equal to $2$. If $K$ satisfies the inequality in equation (\ref{Eqn7.1wer}), then $K$ is integrable over the complement of the unit ball $\B(1)$.
\end{lemma}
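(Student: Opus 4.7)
The plan is to dyadically decompose $\B(1)^c$ and reduce integrability to a lattice-point sum that decays geometrically once $\operatorname{rank}(\EEE)\ge 2$. For each $I=(i_1,\dots,i_n)\in\Z^n$ set $A_{I}=\{\x:2^{i_k-1}<n_k(\x_k)\le 2^{i_k},\ 1\le k\le n\}$; the shells with $\max_k i_k\ge 0$ cover $\B(1)^c$ up to a set of measure zero. Since $|A_{I}|\approx\prod_k 2^{i_k Q_k}$ and $N_j(\x)\approx \max_k 2^{i_k e(j,k)}$ on $A_{I}$, the pointwise hypothesis (\ref{Eqn7.1wer}) yields
$$\int_{\B(1)^c}|K(\x)|\,d\x\;\lesssim\;\sum_{I\,:\,\max_k i_k\ge 0}2^{G(I)},\qquad G(I):=\sum_k Q_k i_k-\sum_j Q_j\max_k i_k\,e(j,k),$$
so it suffices to establish the convergence of this series.

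I would then rewrite $G(I)=\sum_j Q_j\bigl(i_j-\max_k i_k e(j,k)\bigr)$; taking $k=j$ in each maximum gives $G\le 0$, with equality exactly when $I$ lies in the closed cone $C:=\{I\in\R^n:e(j,k)i_k\le i_j\text{ for all }j,k\}$. The crux is to show that $C\cap\{I:\max_k i_k\ge 0\}=\{0\}$ under the rank hypothesis. Indeed, if some $i_{k_0}>0$, then $i_j\ge e(j,k_0)i_{k_0}>0$ for every $j$, and the inequalities $e(j,k)\le i_j/i_k$ and $e(k,j)\le i_k/i_j$ combined with the basic hypothesis $e(j,k)e(k,j)\ge 1$ force $e(j,k)e(k,j)=1$ for all $j\ne k$; the equivalence (1)$\Leftrightarrow$(2) of Proposition~\ref{Prop4.1aa} then makes $\EEE$ rank one, contradicting the assumption. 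If instead $\max_k i_k=0$, picking $k_1$ with $i_{k_1}=0$ and substituting into $e(j,k_1)i_{k_1}\le i_j$ gives $i_j\ge 0$, which combined with $i_j\le 0$ forces $I=0$.

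Finally, to pass from pointwise strict negativity of $G$ to quantitative geometric decay, partition $\R^n$ into finitely many rational polyhedral cones $\Omega_\sigma$ indexed by maps $\sigma:\{1,\dots,n\}\to\{1,\dots,n\}$, where $\sigma(j)$ is an argmax of $k\mapsto i_k e(j,k)$. On each $\Omega_\sigma$ the functional $G$ becomes linear and its zero set is contained in $\Omega_\sigma\cap C$; by the previous step this intersection meets $\{\max_k i_k\ge 0\}$ only at the origin. Compactness on the unit sphere then produces $c_\sigma>0$ with $G(I)\le -c_\sigma|I|$ throughout $\Omega_\sigma\cap\{\max_k i_k\ge 0\}$, giving a convergent geometric-type sum; adding the finitely many contributions completes the argument. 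The main obstacle I anticipate is the rank-based cone argument in the middle paragraph linking $C$ to Proposition~\ref{Prop4.1aa}; the dyadic setup and the cone/lattice summation are routine.
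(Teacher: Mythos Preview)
Your proof is correct, but it takes a genuinely different route from the paper's.

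The paper's argument is more elementary: it observes that for each fixed $k$ one has $|K(\x)|\le C|\x_k|^{-\widetilde Q_k}$ with $\widetilde Q_k=\sum_j e(j,k)Q_j$, decomposes $\B(1)^c$ into the $2^n-1$ sets $E_\omega=\{\x:|\x_l|\ge 1\text{ iff }l\in\omega\}$, and on each $E_\omega$ takes a convex combination $|K|\lesssim\prod_{l\in\omega}|\x_l|^{-\theta_l\widetilde Q_l}$ with $\sum\theta_l=1$. Integrability then reduces to the numerical inequality $\sum_{l=1}^n Q_l/\widetilde Q_l<1$, which is checked directly from the basic hypothesis with strict inequality exactly when some $e(j,k)<e(j,l)e(l,k)$, i.e.\ when $\operatorname{rank}(\EEE)\ge 2$.

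Your dyadic approach is longer but conceptually closer to the cone machinery used elsewhere in the paper: the functional $G$ is piecewise linear with zero set the closed cone $C$, and the rank hypothesis forces $C\cap\{\max_k i_k\ge 0\}=\{0\}$, whence compactness gives uniform decay. One small point to make explicit in your middle paragraph: from $e(j,k)e(k,j)=1$ for all $j\ne k$ you invoke Proposition~\ref{Prop4.1aa}, but that proposition's condition~(2) is $e(j,l)=e(j,k)e(k,l)$ for all triples. The missing step is immediate from the basic hypothesis: $e(j,l)\le e(j,k)e(k,l)$ and $e(j,k)\le e(j,l)e(l,k)=e(j,l)/e(k,l)$ together give equality. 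With that filled in, your argument is complete. The paper's approach buys brevity and a clean closed-form criterion; yours buys a more uniform framework that mirrors the lattice sums appearing in Sections~\ref{Schwartz sums} and~\ref{Decompositions}.
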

\begin{proof}
Since the rank of $\EEE$ is strictly larger than $1$, it follows from Proposition \ref{Prop4.1aa} that there are integers $j,k,l\in \{1, \ldots, n\}$ so that $e(j,k)<e(j,l)e(l,k)$. Moreover, for every $k\in \{1, \ldots, n\}$, we have
\begin{equation}\label{4.2aa}
|K(\x)|\leq C\prod_{j=1}^{n}N_{j}(\x)^{-Q_{j}}\leq C\prod_{j=1}^{n}|\x_{k}|^{-e(j,k)Q_{j}}=C|\x_{k}|^{-\sum_{j=1}^{n}e(j,k)Q_{j}}=C|\x_{k}|^{-\widetilde Q_{k}}
\end{equation}
where $\widetilde Q_{k}= \sum_{\substack{j=1}}^{n}e(j,k)Q_{j}$. Decompose the complement of $\B(1)$ as the union of sets $E_{\omega}$ where $\omega$ ranges over the collection of non-empty subsets of $\{1, \ldots, n\}$, and
\bes
E_{\omega}=\{\x\in\R^{N}:\text{$|\x_{k}|\geq 1$ for $k\in \omega$, $|\x_{k}|<1$ for $k\notin\omega$}\}.
\ees
Fix $\omega$, and let $\{\theta_{l}\}_{l\in\omega}$ be strictly positive numbers such that $\sum_{l\in\omega}\theta_{l}=1$. Then it follows from (\ref{4.2aa}) that $|K(\x)| \lesssim \prod_{l\in\omega}|\x_{l}|_{l}^{-\theta_{l}\widetilde Q_{l}}$, and if we can choose the constants $\{\theta_{l}\}$ such that $\theta_{l}\widetilde Q_{l}>Q_{l}$ for every $l$, then 
\bes
\int_{E_{\omega}}|K(\x)|\,d\x \lesssim \prod_{l\in\omega}\,\int\limits_{\substack{\x_{l}\in \R^{C_{l}}\\|\x_{l}|\geq 1}}|\x_{l}|^{-\theta_{l}\widetilde Q_{l}}\,d\x_{l}<+\infty.
\ees
We can choose $\{\theta_{l}\}$ with the required property if \, $\sum_{l\in \omega}Q_{l}\widetilde Q_{l}^{-1}<1$, and we can do this for every subset $\omega$ if \,$\sum_{l=1}^{n}Q_{l}\widetilde Q_{l}^{-1}<1$. But for any fixed $k\in \{1, \ldots, n\}$, we have $e(j,l)\geq e(j,k)/e(l,k)$ and so
\be\label{4.3aa}
\sum_{l=1}^{n}Q_{l}\widetilde Q_{l}^{-1}
= 
\sum_{l=1}^{n}Q_{l}\Big[\sum_{\substack{j=1}}^{n}e(j,l)Q_{j}\Big]^{-1}
\leq
\sum_{l=1}^{n}e(l,k)\,Q_{l}\Big[\sum_{j=1}^{n}e(j,k)Q_{j}\Big]^{-1}=1.
\ee
Moreover, we can make the inequality in (\ref{4.3aa}) strict if there exists a single triple $j,k,l\in\{1,\ldots,n\}$ such that $e(j,k)<e(j,l)e(l,k)$. This completes the proof.\end{proof}

\begin{corollary}\label{Cor5.3ww}
Suppose that $\text{rank}\,(\EEE)\geq 2$. If $\KK\in \PP(\EEE)$ we can write $\KK=\KK_{0}+\KK_{\infty}$ where $\KK_{0}, \KK_{\infty}\in \PP(\EEE)$, $\KK_{0}$ is a distribution with compact support in the unit ball, and $\KK_{\infty}$ is a distribution given by integration against a function $K_{\infty}\in \CC^{\infty}(\R^{N})\cap L^{1}(\R^{N})$ which satisfies the differential inequalities of Definition \ref{Def2.2}.
\end{corollary}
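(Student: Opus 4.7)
The strategy is to split $\KK$ by a product-type cutoff. For each $j\in\{1,\dots,n\}$ fix $\chi_j\in\CC^\infty_0(\R^{C_j})$ equal to $1$ on $\{n_j(\x_j)\le\half\}$ and supported in $\{n_j(\x_j)\le 1\}$, and set $\chi(\x)=\prod_j\chi_j(\x_j)$. Define $K_\infty(\x)=(1-\chi(\x))K(\x)$; since $1-\chi$ vanishes in a neighbourhood of the origin, this extends to a function in $\CC^\infty(\R^N)$. Let $\KK_\infty$ denote integration against $K_\infty$ and set $\KK_0=\KK-\KK_\infty$, which on $\R^N\setminus\{0\}$ is given by $\chi K$ and is therefore smooth with compact support.

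Two routine checks come first. To see that $K_\infty\in L^1(\R^N)$, note that $K_\infty$ vanishes on the neighbourhood $\{n_j(\x_j)<\half\text{ for all }j\}$ of the origin; on the bounded transition region $\B(1)$ minus that neighbourhood, $K_\infty$ is smooth and bounded, while on $\B(1)^c$ we have $|K_\infty|\le|K|$, which Lemma \ref{Lem4.2} guarantees is integrable. The differential inequalities of Definition \ref{Def2.2}(a) for $\chi K$ and $(1-\chi)K$ are obtained from the Leibniz rule: the term with all derivatives on $K$ inherits the bound from $\KK$, while any term carrying at least one derivative of $\chi$ is supported where $n_j(\x_j)\approx 1$ for some $j$, on which region the right-hand side $\prod_j N_j(\x)^{-(Q_j+\[\gammab_j\])}$ is bounded below and the inequality is trivial.

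The main step is the cancellation condition of Definition \ref{Def2.2}(b), which suffices to verify for $\KK_0$; the condition for $\KK_\infty=\KK-\KK_0$ then follows by subtraction from the hypothesis on $\KK$. Given a partition $\{1,\dots,n\}=L\sqcup M$, the product structure of $\chi$ factors as $\chi=\chi_L(\x_L)\chi_M(\x_M)$ with $\chi_L=\prod_{j\in L}\chi_j$ and $\chi_M=\prod_{j\in M}\chi_j$, so that for every $\psi\in\CC^\infty_0(\R^M)$ supported in the unit ball and every scaling $R$,
\begin{equation*}
\KK_{0,\psi,R}(\x_L)\;=\;\chi_L(\x_L)\int K(\x_L,\x_M)\,\chi_M(\x_M)\psi(R\cdot\x_M)\,d\x_M.
\end{equation*}
The plan is to read the inner integral as a slice of $\KK$ against a bump function built from $\chi_M\psi_R$ and invoke the cancellation hypothesis on $\KK$; the smooth, compactly supported factor $\chi_L$ out front does not affect the decay in $\x_L$.

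The main obstacle is that the family $\{\chi_M\psi_R\}_{R>0}$ is not uniformly bounded in $\CC^\infty_0(\R^M)$, because differentiating $\psi(R\cdot)$ in the coordinate $x_k$ introduces a factor of $R_k$. The remedy is a coordinate-wise case split in $M$: on indices $k$ with $R_k\ge 2$, the support of $\psi(R\cdot\x_M)$ in $x_k$ lies in $\{n_k(x_k)\le\half\}$ where $\chi_k\equiv 1$, so no truncation is needed and one reads that coordinate at scale $R_k$; on indices $k$ with $R_k<2$, the factor $\chi_k$ is needed for truncation but derivatives in $x_k$ pick up only bounded powers of $R_k$, so $\chi_k\psi(R_k\cdot)$ is a uniformly normalised bump function in $x_k$ and one reads that coordinate at scale $\mathbf 1$. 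Applying the cancellation condition of $\KK$ with the resulting mixed-scale bump function then yields the required estimate uniformly in $R$, using no input beyond the cancellation hypothesis for $\KK$.
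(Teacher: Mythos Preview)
Your argument is correct. The paper states this corollary immediately after Lemma \ref{Lem4.2} without proof, so you have supplied the details that the paper leaves implicit. The choice of a \emph{product} cutoff $\chi=\prod_j\chi_j$ is the right one, precisely because it factors as $\chi_L\cdot\chi_M$ and hence interacts cleanly with the cancellation conditions of Definition \ref{Def2.2}(\ref{Def2.2B}).

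Your handling of the cancellation test is the substantive part and is sound. One small remark on exposition: when you write ``$\chi_k\psi(R_k\cdot)$ is a uniformly normalised bump function in $x_k$'', bear in mind that $\psi$ itself need not be a tensor product, so the new bump function $\phi$ must be defined globally on $\R^M$ via $\phi(\y_M)=\chi_M((R')^{-1}\cdot\y_M)\,\psi(R(R')^{-1}\cdot\y_M)$, with $R'_k=R_k$ when $R_k\ge 2$ and $R'_k=1$ otherwise. On the support of the $\psi$-factor, the term $\chi_k(R_k^{-1}\cdot y_k)$ for $R_k\ge 2$ is identically $1$ (so its derivatives vanish there), and for $R_k<2$ the chain-rule factors from $\psi$ are bounded. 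This makes it explicit that $\phi$ is supported in the unit ball with $\CC^\infty$ norms bounded independently of $R$, which is exactly what the cancellation hypothesis on $\KK$ requires; your text already says this, but spelling out the global definition of $\phi$ removes any ambiguity about what ``reading a coordinate at scale $R_k$ or $1$'' means when $\psi$ is not a product.
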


We also will need the following variant of Lemma \ref{Lem4.2}.

\begin{lemma}\label{Lem4.6}
Suppose that $\KK$ is a tempered distribution on $\R^{n}$ and suppose that its Fourier transform $m=\widehat\KK$ is a smooth function away from zero which satisfies the following differential inequalities for pure derivatives: for $1 \leq j \leq n$ and for every $\gammab_{j}\in \N^{C_{j}}$ there exists a constant $C_{\gammab_{j}}>0$ so that
\beas
\big\vert\partial^{\gammab_{j}}_{\xib_{j}}m(\xib)\big\vert \leq C_{\gammab_{j}}\widehat N_{j}(\xib)^{-\[\gammab_{j}\]}.
\eeas
Then $\KK$ is given by integration against a smooth function $K$ away from the origin, and if the rank of the matrix $\EEE$ is greater than $1$, then $K$ is integrable at infinity.\footnote{Note that we only make an hypothesis about ``pure'' derivatives of $m$. We are not assuming that $m \in \MM(\EEE)$.}
\end{lemma}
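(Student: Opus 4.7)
The plan is to reduce to Lemma \ref{Lem4.2} by proving, for each $k\in\{1,\dots,n\}$, the pointwise bound $|K(\x)|\lesssim n_k(\x_k)^{-\widetilde Q_k}$ whenever $n_k(\x_k)\geq 1$, where $\widetilde Q_k=\sum_{j=1}^n e(j,k)Q_j$ is the Jacobian exponent of the one-parameter dilation $\delta_k(\lambda)\xib=(\lambda^{e(1,k)}\cdot\xib_1,\dots,\lambda^{e(n,k)}\cdot\xib_n)$ that makes $\widehat N_k$ one-homogeneous. Once that bound is in hand, the argument in the proof of Lemma \ref{Lem4.2} transfers verbatim: I would partition $\B(1)^c$ into the sets $E_\omega$, interpolate these per-coordinate estimates on each $E_\omega$ with weights $\theta_k>0$ summing to $1$ and satisfying $\theta_k\widetilde Q_k>Q_k$, and invoke the inequality $\sum_l Q_l\widetilde Q_l^{-1}<1$ from \eqref{4.3aa} (valid precisely when $\mathrm{rank}(\EEE)\geq 2$) to see that such weights exist and that each $\int_{E_\omega}|K|\,d\x$ is finite.

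To prove the pointwise bound, I would fix $k$ and $\x$ with $n_k(\x_k)\geq 1$, set $R=n_k(\x_k)^{-1}\leq 1$, and define $\phi_R(\xib)=\phi(\widehat N_k(\xib)/R)$ for a bump $\phi\in\CC^\infty_0(\R)$ equal to $1$ on $[-1,1]$ and supported in $[-2,2]$, after replacing $\widehat N_k$ by a smooth $\delta_k$-homogeneous equivalent norm if needed. Splitting $m=\phi_R m+(1-\phi_R)m=m_R+\tilde m_R$, the hypothesis with $\gammab=0$ gives $|m|\lesssim 1$; since $\mathrm{supp}\,m_R$ has Lebesgue measure $\approx R^{\widetilde Q_k}$, a trivial $L^1$ estimate yields $|\widehat{m_R}(\x)|\lesssim R^{\widetilde Q_k}=n_k(\x_k)^{-\widetilde Q_k}$. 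For $\tilde m_R$, I would select a coordinate $\ell^*\in C_k$ with $|x_{\ell^*}|^{1/d_{\ell^*}}\approx n_k(\x_k)$, an integer $M>\widetilde Q_k$ divisible by $d_{\ell^*}$, and the multi-index $\alphab_k$ supported only at $\ell^*$ with $\alpha_{\ell^*}=M/d_{\ell^*}$, so that $\[\alphab_k\]=M$ and $|\x_k^{\alphab_k}|\approx n_k(\x_k)^M$. Integration by parts $\alpha_{\ell^*}$ times in $\xi_{\ell^*}$, the Leibniz rule, the pure-derivative hypothesis $|\partial^{\alphab_k}_{\xib_k}m|\lesssim\widehat N_k(\xib)^{-M}$, and the cutoff estimate $|\partial^{\alphab_k}_{\xib_k}\phi_R|\lesssim R^{-M}\lesssim\widehat N_k(\xib)^{-M}$ on the annulus $\widehat N_k(\xib)\approx R$ together bound the integrand by $\widehat N_k(\xib)^{-M}\mathbf 1_{\widehat N_k(\xib)\gtrsim R}$. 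In $\delta_k$-polar coordinates the resulting integral is $\approx R^{\widetilde Q_k-M}$ since $M>\widetilde Q_k$, whence $|\widehat{\tilde m_R}(\x)|\lesssim n_k(\x_k)^{-M}\cdot R^{\widetilde Q_k-M}=n_k(\x_k)^{-\widetilde Q_k}$, which combined with the estimate for $\widehat{m_R}$ completes the claim.

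The smoothness of $K$ on $\R^N\setminus\{0\}$ will follow by applying the same cutoff-plus-IBP scheme to $\partial^\gammab_\x K(\x)=(2\pi i)^{|\gammab|}\int e^{2\pi i\langle\x,\xib\rangle}\xib^\gammab m(\xib)\,d\xib$; the polynomial weight $\xib^\gammab$ worsens the pure-derivative bound only by fixed factors that are absorbed by a sufficiently large choice of $M$. To justify the integrations by parts when $m\notin L^1$, I would use the standard device already employed at the start of the proof of Theorem \ref{Thm6.1}: replace $m$ by $m\cdot\chi(\epsilon\xib)$ for a Schwartz cutoff $\chi$, carry out the computation with the resulting compactly supported multiplier, and pass to the limit $\epsilon\to 0$ via dominated convergence; since the estimates above are uniform in the support of $m$, this produces the same bounds for $K$. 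The main obstacle will be matching, at the IBP step, the single-variable derivative direction available under the pure-derivative hypothesis with an algebraic factor that recovers $n_k(\x_k)^M$ on the left-hand side; the choice of coordinate $\ell^*$ above is the device that handles this, and it is the reason pure derivatives in each $\xib_k$ suffice (whereas Theorem \ref{Thm6.1} required genuinely mixed derivatives to control all directions at once).
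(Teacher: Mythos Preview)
Your proposal is correct and follows the same approach as the paper: reduce to Lemma~\ref{Lem4.2} by establishing, for each fixed $k$, the bound $|K(\x)|\lesssim n_k(\x_k)^{-\widetilde Q_k}$ via a scale-adapted cutoff in $\widehat N_k$ followed by integration by parts in a single coordinate of the block $\xib_k$. The only organizational difference is that the paper packages the cutoff-plus-IBP step as Proposition~\ref{Prop12.4} in Appendix~II and invokes it as a black box (after first truncating $m$ to compact support), whereas you write the argument out inline; the content is the same. One cosmetic point: the phrase ``an integer $M>\widetilde Q_k$ divisible by $d_{\ell^*}$'' is imprecise when $d_{\ell^*}$ is irrational---simply choose an integer $\alpha_{\ell^*}>\widetilde Q_k/d_{\ell^*}$ and set $M=\alpha_{\ell^*}d_{\ell^*}$.
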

\begin{proof}
If we can show that $\KK$ is given by integration agains a function $K$ with $|K(\x)|\lesssim n_{k}(\x_{k})^{-\sum_{j=1}^{n}e(j,k)Q_{j}} $ for $1\leq k \leq n$, then the argument proceeds as in the proof of Lemma \ref{Lem4.2}. Thus fix $k$, and define $\lambda \cdot_{k}\x= (\lambda^{e(1,k)}\cdot\x_{1}, \ldots, \lambda^{e(n,k)}\cdot\x_{n})$.  This is a family of dilations on all of $\R^{N}$ and $\widehat N_{j}(\t) = \sum_{k=1}^{n}n_{k}(\t_{k})^{1/e(k,j)}$ is a homogeneous norm. The homogeneous dimension is $\sum_{j=1}^{n}e(j,k)Q_{j}$. Choose $\chi\in \CC^{\infty}_{0}(\R^{N})$ with $\chi(\xib) \equiv 1$ if $\widehat N_{j}(\xib)\leq 1$. Let $m_{R}(\xib) = \chi(R\cdot_{j}\xib)m(\xib)$. Then for all $\gammab_{j}\in\N^{C_{j}}$, the product and chain rules show that we have
\beas
\big\vert\partial^{\gammab_{j}}_{\xib_{j}}m_{R}(\xib)\big\vert \leq C_{\gammab_{j}}\widehat N_{j}(\xib)^{-\[\gammab_{j}\]}.
\eeas
Now $m_{R}\to m$ in the sense of distributions as $R\to 0$, and hence $\KK$ is the limit, in the sense of distributions, of the inverse Fourier transform of $m_{R}$. But it now follows from Proposition \ref{Prop12.4} in Appendix II that $|K(\x)|\lesssim n_{k}(\x_{k})^{-\sum_{j=1}^{n}e(j,k)Q_{j}}$, and this completes the proof.
\end{proof}

\subsection{Higher rank and weak-type estimates near zero}\label{Weak type}\quad

\medskip

If $K$ is the smooth function on $\R^{N}\setminus\{0\}$ corresponding to a Calder\'on-Zygmund kernel $\KK$, then $K$ satisfies the  estimate $\left|\left\{\x\in\R^{N}:|K(x)|>\lambda\right\}\right| \lesssim \lambda^{-1}$. Moreover, there are Calder\'on-Zygmund kernels $\KK$ such that $\left|\left\{\x\in\R^{N}:|K(x)|>\lambda\right\}\right| \gtrsim \lambda^{-1}$. In this section we show that the corresponding estimates for kernels $\KK\in \PP(\EEE)$ associated with the decomposition $\R^{N}=\R^{C_{1}}\oplus\cdots\oplus\R^{C_{n}}$ depend on the reduced rank of $\EEE$. Recall from the discussion in Section \ref{subs.coarser} and Proposition \ref{PEflat}
 that if the reduced rank of $\EEE$ is $m<n$ then there is an $m\times m$ matrix $\EEE^{\flat}$ so that $\PP_{0}(\EEE)$ coincides with the space of distributions $\PP_{0}(\EEE^{\flat})$ associated with a coarser decomposition decomposition $\R^{N}=\R^{A_{1}}\oplus\cdots\oplus \R^{A_{m}}$

\begin{lemma}\label{Lem4.4}
Suppose that the reduced rank of $\EEE$ is $m$.
\begin{enumerate}[{\rm(a)}]

\item \label{Lem4.4a}
If $\KK\in \PP(\EEE)$, then for $\lambda \geq 1$, $\displaystyle \Big|\Big\{\x\in\R^{N}:K(\x)>\lambda\Big\}\Big|\lesssim\lambda^{-1}\log(\lambda)^{m-1}$.

\smallskip

\item \label{Lem4.4b}
There exists $\KK\in \PP(\EEE)$ so that $\displaystyle \Big|\Big\{\x\in\R^{N}:K(\x)>\lambda\Big\}\Big|
\gtrsim
\lambda^{-1}\log(\lambda)^{m-1}$ for $\lambda \geq 1$.
\end{enumerate}
\end{lemma}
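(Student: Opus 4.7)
The plan is to first reduce to the reduced matrix case via Proposition \ref{PEflat} and the discussion of Section \ref{subs.coarser}: replacing $\EEE$ by $\EEE^\flat$ and grouping the variables into the coarser blocks $\R^{A_1},\dots,\R^{A_m}$, we may assume $\EEE$ is reduced, so that $m=n$ and the principal region $E_{S_0}$ has non-empty interior (Lemma \ref{Lem5.1ghj}). The case $m=1$ is the classical \CZ case (Proposition \ref{Prop4.1aa}) where $\log(\lambda)^{m-1}=1$, so assume $m\ge 2$. Corollary \ref{Cor5.3ww} then lets us write $\KK=\KK_0+\KK_\infty$ with $\KK_0\in\PP_0(\EEE)$ compactly supported in $\B(1)$ and $K_\infty\in L^1$; the tail gives only an $O(\lambda^{-1})$ contribution to the distribution function, so we may replace $\KK$ by $\KK_0$.

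For part (a), decompose $\B(1)\setminus\Xi = \bigsqcup_{S\in\SS(n)}E_S$ (Proposition \ref{Prop5.5}) and, for each $S=\{(I_1,k_1);\dots;(I_s,k_s)\}$, apply Proposition \ref{Lem5.15} to get
\begin{equation*}
|K(\x)|\lesssim \prod_{r=1}^{s} n_{k_r}(\x_{k_r})^{-Q_{S,r}},\qquad \x\in E_S,
\end{equation*}
with $Q_{S,r}=\sum_{j\in I_r}e(j,k_r)Q_j$. The $A$-dominance condition of Definition \ref{Def5.4} confines, for fixed $\rho_r = n_{k_r}(\x_{k_r})$, the transverse variables $\{\x_j : j\in I_r\setminus\{k_r\}\}$ to a box of total Lebesgue measure $\approx \rho_r^{Q_{S,r}-Q_{k_r}}$. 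Polar decomposition in $\x_{k_r}$ then yields $d\x\approx\prod_r \rho_r^{Q_{S,r}-1}\,d\rho_r\,d\sigma_r$, and substituting $u_r=\rho_r^{Q_{S,r}}$ reduces the bound to
\begin{equation*}
\big|\{|K|>\lambda\}\cap E_S\big| \lesssim \big|\{u\in\Omega_S\subset[0,1]^s : u_1\cdots u_s < C\lambda^{-1}\}\big|,
\end{equation*}
with $\Omega_S$ defined by the cone-type constraints of Corollary \ref{Cor5.7}. Going to log coordinates $v_r=-\log u_r$ and slicing by $v_1+\cdots+v_s=t$, this measure is comparable to $\int_L^{\infty} e^{-t}\,t^{s-1}\,dt \approx \lambda^{-1}(\log\lambda)^{s-1}$, with $L=\log(\lambda/C)$. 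Summing over $S\in\SS(n)$, the principal partition ($s=n=m$) is dominant, proving (a).

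For part (b), take $\varphi(\x)=\varphi_1(\x_1)\cdots\varphi_n(\x_n)$ with each $\varphi_j\in\CC^{\infty}_{0}(\R^{C_j})$ supported in a thin annulus $\{1-\delta\le n_j(\x_j)\le 1+\delta\}$, $\int\varphi_j\,d\x_j=0$, and $\varphi_j(\x_{0,j})>0$ at a chosen point $\x_{0,j}$. Then $\varphi$ has strong cancellation and Theorem \ref{Thm3.7} gives
\begin{equation*}
K=\sum_{I\in\Gamma_{\Z}(\EEE)}[\varphi]_I\in\PP_0(\EEE)\subset\PP(\EEE).
\end{equation*}
The thin-annulus structure implies that the supports of distinct dilates $[\varphi]_I$ are pairwise disjoint, since $[\varphi]_I$ is concentrated where $n_j(\x_j)\in[2^{i_j}(1-\delta),2^{i_j}(1+\delta)]$ for every $j$. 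Consequently, at $\y_I=(2^{i_1}\cdot\x_{0,1},\dots,2^{i_n}\cdot\x_{0,n})$ only the term indexed by $I$ contributes, giving $K(\y_I)=2^{-\langle Q,I\rangle}\varphi(\x_0)>0$, and $K>c\,2^{-\langle Q,I\rangle}$ on a neighborhood $V_I$ of $\y_I$ of measure $\approx 2^{\langle Q,I\rangle}$ with the $V_I$ mutually disjoint. Hence
\begin{equation*}
\big|\{K>\lambda\}\big| \gtrsim \sum_{\substack{I\in\Gamma_{\Z}(\EEE)\\ \langle Q,I\rangle<-\log_2(c\lambda)}} 2^{\langle Q,I\rangle},
\end{equation*}
and since $\Gamma(\EEE)$ is an $n$-dimensional cone on which $\langle Q,\cdot\rangle$ is strictly negative, the same slicing argument as in (a) shows that this lattice sum is comparable to $\int_L^\infty 2^{-t}\,t^{n-1}\,dt\approx \lambda^{-1}(\log\lambda)^{n-1}$.

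The main obstacle is that the strong cancellation required by Theorem \ref{Thm3.7} could in principle spoil the lower bound on $K(\y_I)$; the thin-annulus product construction avoids this because cancellation occurs between positive and negative lobes \emph{within} each factor $\varphi_j$, while distinct $I\in\Gamma_{\Z}(\EEE)$ correspond to annuli at completely separated dyadic scales, so the dilates $[\varphi]_I$ never overlap.
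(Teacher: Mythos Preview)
Your proposal is correct in both parts, and part (b) is essentially the paper's construction: a product of mean-zero bumps supported in thin annuli so that distinct dilates $[\varphi]_I$ have pairwise disjoint supports, followed by counting lattice points in the open $n$-dimensional cone $\Gamma(\EEE)$. The paper takes $\Theta=\prod_{k=1}^N\theta_k$ with odd $\theta_k$ supported in $(1,2^{d_k})$ (coordinate-wise on $\R^N$), while you take $\varphi=\prod_{j=1}^n\varphi_j$ with $\varphi_j$ supported in a thin $n_j$-annulus (block-wise on $\R^{C_j}$); the disjointness mechanism and the lattice-point count are identical. Your lower bound sums $2^{\langle Q,I\rangle}$ over the half-cone $\{\langle Q,I\rangle<-L\}$ rather than the single slab the paper uses, but the geometric series collapses to the same slab contribution $\lambda^{-1}(\log\lambda)^{n-1}$.

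For part (a) you take a genuinely different route. The paper does not invoke the marked-partition decomposition at all; it uses only the two elementary pointwise inequalities
\[
\prod_{j=1}^n N_j(\x)^{Q_j}\ \ge\ \prod_{l=1}^n n_l(\x_l)^{Q_l}
\qquad\text{and}\qquad
\prod_{j=1}^n N_j(\x)^{Q_j}\ \ge\ n_k(\x_k)^{\sum_l e(l,k)Q_l}\quad(1\le k\le n),
\]
which place $\{|K|>\lambda\}$ inside $\{\prod_l n_l(\x_l)^{Q_l}<\lambda^{-1}\}\cap\bigcap_l\{n_l(\x_l)^{Q_l}<\lambda^{-\epsilon_l}\}$ with $\sum_l\epsilon_l<1$ (the strict inequality coming from rank $>1$, exactly as in Lemma \ref{Lem4.2}), and then applies the explicit volume bound of Proposition \ref{Prop4.5}. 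Your argument via $E_S$, the change of variables $u_r=\rho_r^{Q_{S,r}}$, and the log-coordinate slicing is longer but perfectly valid; what it buys in addition is the refined statement that each region $E_S$ with $s<n$ contributes only $O(\lambda^{-1}(\log\lambda)^{s-1})$, so the principal region $E_{S_0}$ is where the worst behavior occurs.
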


\noindent If $m=1$, we are dealing with a Calder\'on-Zygmund kernel and these facts are well-known, so we may assume $m>1$.

\begin{proof}[Proof of Lemma \ref{Lem4.4}, part {\rm(\ref{Lem4.4a})}]  As indicated above,  we can assume that $m=n$ by replacing $\EEE$ by its reduced matrix and applying Proposition \ref{PEflat}.
Suppose that $\lambda \geq 1$. For $1 \leq k \leq n$ we have $\prod_{j=1}^{n}N_{j}(\x)^{Q_{j}}\geq n_{k}(\x_{k})^{\sum_{l=1}^{n}e(l,k)Q_{l}}$, and also $\prod_{j=1}^{n}N_{j}(\x)^{Q_{j}}\geq \prod_{l=1}^{n}n_{l}(\x_{l})^{Q_{l}}$. Since $|K(\x)|\leq \prod_{j=1}^{n}N_{j}(\x)^{-Q_{j}}$, it follows that
\beas
\Big\{\x\in\R^{N}:|K(\x)|>\lambda\Big\}\subset \Bigg\{\x\in \R^{N}:
\begin{cases}
\prod_{l=1}^{n}n_{l}(\x_{l})^{Q_{l}} <\lambda^{-1}\,\,\text{and}\\
n_{l}(\x_{l})^{Q_{l}}<\lambda^{-Q_{l}/\sum_{k=1}^{n}e(k,l)Q_{k}}&1\leq l \leq n
\end{cases}
\Bigg\}.
\eeas
Note that, as in equation (\ref{4.3aa}), $\sum_{l=1}^{n}Q_{l}\,\Big[\sum_{k=1}^{n}e(k,l)Q_{k}\Big]^{-1}<1$ since the rank of $\EEE$ is strictly greater than $1$. Part (\ref{Lem4.4a}) of the Lemma then follows from the following calculation.
\end{proof}

\begin{proposition}\label{Prop4.5}
Let $A_{1}, \ldots, A_{n}, \delta \in (0,\infty)$ and put $A= \prod_{j=1}^{n}A_{j}$. Put
\beas
E(A_{1}, \ldots, A_{n},\delta)=\Big\{\x\in\R^{N}:\text{$\prod_{j=1}^{n}n_{j}(\x_{j})^{Q_{j}}<\delta$ and $n_{j}(\x_{j})^{Q_{j}}<A_{j}$ for $1 \leq j \leq n$}\Big\}.
\eeas
There is a constant $c_{n}$ depending only on $n$ so that 
\beas
\big\vert E(A_{1}, \ldots, A_{n},\delta)\big\vert \gtrsim c_{n}\,
\begin{cases}
A&\text{if $A<\delta$},\\
\delta\left[1+ \left(\log\left(\frac{A}{\delta}\right)\right)^{n-1}\right] &\text{if $A\geq\delta$}
\end{cases}
\eeas
\end{proposition}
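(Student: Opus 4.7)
The plan is to reduce the integral over $\R^N$ defining $|E(A_1,\dots,A_n,\delta)|$ to an explicit integral over $(0,\infty)^n$, and then evaluate that integral by a logarithmic change of variables.

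First, since $n_j$ is a smooth homogeneous norm on $\R^{C_j}$ of homogeneous dimension $Q_j$, integrating in polar coordinates gives $|\{\x_j\in\R^{C_j}:n_j(\x_j)^{Q_j}<t\}|=c_j t$ for $t>0$. Using this on each factor, the substitution $t_j=n_j(\x_j)^{Q_j}$ yields
$$
\big|E(A_1,\dots,A_n,\delta)\big|= \Big(\prod_{j=1}^n c_j\Big)\int_{V} dt_1\cdots dt_n,
$$
where $V=\{(t_1,\dots,t_n)\in(0,\infty)^n:t_j<A_j,\,\prod_j t_j<\delta\}$. If $A=\prod_j A_j<\delta$, the second constraint is vacuous and $V$ has volume $A$, which handles the first case of the proposition.

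For the case $A\ge\delta$, I would rescale by $s_j=t_j/A_j$, so that $V$ transforms (up to the factor $A$) into $W=\{s\in(0,1)^n:\prod_j s_j<\eta\}$ where $\eta=\delta/A\in(0,1]$. Then substitute $u_j=-\log s_j\in(0,\infty)$, so that $\prod s_j<\eta$ becomes $\sum_j u_j>\beta$ where $\beta=\log(A/\delta)\ge 0$, and $ds_j=-e^{-u_j}du_j$. This converts the volume computation to
$$
\int_V dt = A\int_{u_j>0,\,\sum_j u_j>\beta} e^{-(u_1+\cdots+u_n)}\,du_1\cdots du_n = A\int_\beta^\infty \frac{u^{n-1}e^{-u}}{(n-1)!}\,du,
$$
the last equality following from the fact that the sum of $n$ independent $\mathrm{Exp}(1)$ variables has $\mathrm{Gamma}(n,1)$ distribution (or directly by iterated integration).

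The main (and essentially only) step is then to bound this Gamma tail from below by $c_n(1+\beta^{n-1})e^{-\beta}$ for $\beta\ge 0$: for $\beta\le 1$ one has $\int_\beta^\infty \frac{u^{n-1}e^{-u}}{(n-1)!}\,du\ge \int_1^\infty \frac{u^{n-1}e^{-u}}{(n-1)!}\,du=:c_n'$, while for $\beta\ge 1$ the bound $\int_\beta^{\beta+1}\frac{u^{n-1}e^{-u}}{(n-1)!}\,du\ge \frac{\beta^{n-1}e^{-(\beta+1)}}{(n-1)!}$ suffices. Multiplying through by $A$ and recalling that $Ae^{-\beta}=\delta$, these two estimates combine to give
$$
\int_V dt\,\gtrsim\, \delta\bigl[1+\bigl(\log(A/\delta)\bigr)^{n-1}\bigr],
$$
which is the desired inequality. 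I do not anticipate a serious obstacle: the essential point is that the constraints are multiplicative in the $n_j(\x_j)^{Q_j}$, which the logarithmic substitution turns into a simple convolution/Gamma tail problem.
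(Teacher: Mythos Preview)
Your proof is correct. It takes a genuinely different route from the paper: you reduce to an explicit Euclidean integral by pushing forward Lebesgue measure on each factor via $t_j=n_j(\x_j)^{Q_j}$ (using Proposition~\ref{Prop12.2}), then make a logarithmic substitution to identify the result as the tail $\int_\beta^\infty u^{n-1}e^{-u}\,du/(n-1)!$ of a $\mathrm{Gamma}(n,1)$ distribution, from which the estimate follows by an elementary bound. The paper instead argues by induction on $n$: it splits $E$ into the region where $n_n(\x_n)^{Q_n}<\delta/(A_1\cdots A_{n-1})$, which is a product of balls of total volume comparable to $\delta$, and the complementary region, where for each fixed $\x_n$ the induction hypothesis applies to the first $n-1$ variables with $\delta$ replaced by $\delta/n_n(\x_n)^{Q_n}$. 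Your approach is more explicit and in fact yields a closed form, hence both the upper and lower bounds simultaneously with sharp dependence on $A/\delta$; the paper's inductive argument is more elementary and self-contained but does not produce an exact formula.
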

\begin{proof}
We argue by induction on $n$, with the case $n=1$ following from Proposition \ref{Prop12.2} in Appendix II. Thus suppose the result is true for $n-1$. We divide the region $E(A_{1}, \ldots, A_{n},\delta)$ into two parts; the part where $n_{n}(\x_{n})^{Q_{n}}<\delta[A_{1}\cdots A_{n-1}]^{-1}$ is a product of $n$ balls, and its volume is $\delta$. For the region where $n_{n}(\x_{n})^{Q_{n}}\geq\delta[A_{1}\cdots A_{n-1}]^{-1}$ we can use the induction hypothesis in the variables $\{\x_{1}, \ldots, \x_{n-1}\}$. 
\end{proof}

\begin{proof}[Proof of Lemma \ref{Lem4.4}, part {\rm(\ref{Lem4.4b})}]
As before we assume that $\EEE$ is reduced, i.e.,   $m=n$. Recall that $\lambda\cdot\x=(\lambda^{d_{1}}x_{1}, \ldots, \lambda^{d_{N}}x_{N})$. Choose a closed interval $J_{k}\subset (1,2^{d_{k}})\subset \R$, and let $\theta_{k}\in \CC^{\infty}_{0}(\R)$ be an odd function such that $\theta_{k}(t) \geq 0$ if $t\geq 0$, $\theta_{k}(t) \equiv 0$ if $t\notin (1, 2^{d_{k}})$, and $\theta_{k}(t) \equiv 1$ if $t\in J_{k}$. Then $\int_{\R}\theta_{k}(t) \,dt =0$ and if we put $\theta_{k,m}(t)=\theta_{k}(2^{md_{k}}t)$, the functions $\theta_{k,m_{1}}$ and $\theta_{k,m_{2}}$ have disjoint supports if $m_{1}\neq m_{2}$. Put $\Theta(\t)=\prod_{k=1}^{N}\theta_{k}(t_{k})$.

The function $\Theta$ has strong cancellation in the sense of Definition \ref{Def4.3}. 
It follows from Theorem \ref{Thm3.7} that $\KK=\sum_{I\in \Gamma_{\Z}(\EEE)}[\Theta]_{I}$ converges in the sense of distributions to an element $\KK\in \PP_0(\EEE)$. Note that the supports of the functions $[\Theta]_{I}$ are disjoint. Let $\Sigma$ denote the set where $\Theta = 1$, so that if $2^{-I}\cdot \x\in \Sigma$ and $2^{-\sum_{j=1}^{n}i_{j}Q_{j}}>\lambda$ we have $K(\x)>\lambda$. Let $\Sigma_{I}=\{\x\in\R^{N}:2^{-I}\cdot \x\in \Sigma\}$. Then $\big\vert\Sigma_{I}\big\vert = 2^{+\sum_{j=1}^{n}i_{j}Q_{j}}\big\vert\Sigma\big\vert$. We have
\beas
\Big\{\x\in\R^{N}:\Big(\exists I\in  \Gamma_{\Z}(\EEE)\Big)\Big(2^{-I\cdot\x}\in \Sigma \,\,\text{and}\,\,2^{-\sum_{j=1}^{n}i_{j}Q_{j}}>\lambda\Big)\Big\}\subset \{\x\in\R^{N}:K(\x)>\lambda\},
\eeas
so 
\beas
\Big|\Big\{\x\in\R^{N}:K(\x)>\lambda\Big\}\Big|
&\geq
\Big|\Big\{\x\in\R^{N}:\Big(\exists I\in \Gamma_{\Z}(\EEE)\Big)\Big(2^{-I\cdot\x}\in \Sigma \,\,\text{and}\,\,2^{-\sum_{j=1}^{n}i_{j}Q_{j}}>\lambda\Big)\Big\}\Big|\\
&\gtrsim
\sum_{I\in \Gamma_{\Z}(\EEE,\lambda)}|\Sigma_{I}|
\gtrsim
\lambda^{-1}\#\Big(\Gamma_{\Z}(\EEE,\lambda)\Big)
\eeas
where  $\Gamma_{\Z}(\EEE,\lambda)$ is the set of $n$-tuples $I=(i_{1}, \ldots, i_{n}) \in  \Gamma_{\Z}(\EEE)$ such that $\lambda < 2^{-\sum_{j=1}^{n}i_{j}Q_{j}} \leq C\lambda$ with $C$ a large constant.  The cone $\Gamma(\EEE) = \Big\{t=(t_{1}, \ldots, t_{n}): e(j,k)t_{k}< t_{j}< 0\Big\}$ is open since $\EEE$ has rank $n$, and the number of lattice points $(i_{1}, \ldots, i_{n})$ in this cone satisfying
\beas
\log(\lambda) <\sum_{j=1}^{n}Q_{j}i_{j}<\log(\lambda)+1
\eeas
is of the order of $\big(\log(\lambda)\big)^{n-1}$. This shows that $\#\Big(\Gamma_{\Z}(\EEE,\lambda)\Big)\gtrsim \big(\log(\lambda)\big)^{n-1}$ which completes the proof of part {\rm(\ref{Lem4.4b})}.
\end{proof}
\medskip

\section{Convolution operators on homogeneous nilpotent Lie groups}\label{Groups}

We now turn to the study of convolution operators $f\to f*\KK$ with $\KK\in \PP_0(\EEE)$ and the convolution on a homogeneous nilpotent Lie group $G$. The underlying manifold of $G$ is $\R^{N}$ for some integer $N\geq 1$, and  there is a one-parameter group of  {\it automorphic dilations}, i.e., of automorphisms $\delta_{r}:G\to G$ which, in appropriate coordinates on $G$, take the form 
\be\label{automorphic}
\delta_{r}(x_{1}, \ldots, x_{N})=(r^{d_{1}}x_{1},\ldots,r^{d_{N}}x_{N}),
\ee 
with $d_i>0$ for every $i$.
The ordering of the coordinates and the parametrization by $r$ can be chosen so that 
\be\label{d1<d2}
1\leq d_{1}\leq d_{2}\leq\cdots\leq d_{N}.
\ee 
In these coordinates, the product on $G$ has the following form. For $\x=(x_{1}, \ldots, x_{N})$, $\y=(y_{1}, \ldots, y_{N})$,
\be\label{productN}
\x\y=\big(x_1+y_1,x_2+y_2+M_2(\x,\y),\dots, x_N+y_N+M_N(\x,\y)\big),
\ee
where $M_2,\dots,M_N$ are polynomials which vanish for $\x$ or $\y$ equal to 0 and are such that
\be\label{Ml}
M_l(\delta_r\x,\delta_r\y)=\delta_rM_l(\x,\y).
\ee
In particular, each $M_l$ only depends on the variables $x_m,y_m$ for which $d_m<d_l$.
Moreover, the Haar measure for $G$ is Lebesgue measure on $\R^{N}$.

 If $f,g\in L^{1}(G)$, the convolution $f*g$ is defined by
\bes
f*g(\x)=\int_{G\cong\
R^{N}}f(\x\y^{-1})g(\y)\,d\y=\int_{G\cong\R^{N}}f(\y)g(\y^{-1}\x)\,d\y.
\ees

\medskip

\subsection{ Convolution of scaled bump functions: compatibility of dilations and convolution}\label{Compatibility}\quad

\medskip

If $f\in L^{1}(G)$ and $\lambda=(\lambda_{1}, \ldots, \lambda_{N})$ with each $\lambda_{j}>0$, consider the $N$-parameter family of dilations of $f$ by $\lambda$ given by
\bea\label{9.1aaa}
f_{\lambda}(x_{1}, \ldots, x_{N}) = \Big[\prod_{j=1}^{N}\lambda_{j}^{-d_{j}}\Big]\,f(\lambda_{1}^{-d_{1}}x_{1}, \ldots, \lambda_{N}^{-d_{N}}x_{N}),
\eea
If $\varphi, \psi$ are normalized bump functions supported in the unit ball and if $ \lambda =(\lambda_{1}, \ldots,\lambda_{N})$ and $\mu=(\mu_{1}, \ldots,\mu_{N})$, it is easy to check that for Euclidean convolution $\varphi_{\lambda}*\psi_{\mu}=\theta_{\nu}$ where $\theta\in \CC^{\infty}_{0}(\R^{N})$ is normalized relative to $\varphi$ and $\psi$, and $\nu_{j}= \max\{\lambda_{j},\mu_{j}\}=\lambda_{j}\vee\mu_{j}$. This fact is very useful in studying the convolution of two sums of dilates of bump functions. However, on a more general nilpotent Lie group, this need not be true unless we suitably restrict the $N$-tuples $\lambda=(\lambda_{1},\ldots, \lambda_{N})$ and $\mu=(\mu_{1}, \ldots, \mu_{N})$. Let
\bea\label{9.2aaa} \index{E4N@$E_{N}$}
E_{N}=\Big\{\lambda=(\lambda_{1}, \ldots, \lambda_{N})\in (0,\infty)^{N}:\lambda_{1}\leq \lambda_{2}\leq \cdots\leq \lambda_{N}\Big\}.
\eea
We say that a mapping $(x_{1}, \ldots, x_{N})\to (\lambda_{1}^{-d_{1}}x_{1}, \ldots, \lambda_{N}^{-d_{N}}x_{N})$ is \emph{compatible with the group structure of $G$} if $\lambda\in E_{N}$. The significance of this notion comes from the following result, which is Lemma 6.17 in \cite{MR2949616}.

\begin{lemma}\label{Lem9.2}
Let $\varphi, \psi\in\CC^{\infty}_{0}(G)$ be supported in $\B(1)$. If $\lambda, \mu\in E_{N}$ there exists $\theta\in \CC^{\infty}_{0}(G)$ so that $\varphi_{\lambda}*\psi_{\mu}=\theta_{\nu}$ where $\nu_{j}=\lambda_{j}\vee\mu_{j}=\max\{\lambda_{j},\mu_{j}\}$. Moreover, $\theta$ is normalized relative to $\varphi$ and $\psi$: there are constants $\rho_{0}\geq 1$ and $C_{m}>0$ independent of $\varphi$ and $\psi$ so that $\theta$ is supported in $\B(\rho_{0})$ and $\displaystyle \sup_{|\gammab|\leq m}||\partial^{\gammab}\theta||_{\infty}\leq C_{m}\,\sup_{|\gammab|\leq m}||\partial^{\gammab}\varphi||_{\infty}\,\sup_{|\gammab|\leq m}||\partial^{\gammab}\psi||_{\infty}$.
\end{lemma}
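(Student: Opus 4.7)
The key structural facts are: (a) by \eqref{productN}--\eqref{Ml}, the polynomial $M_l$ and the $l$-th coordinate of inversion $\y\mapsto\y^{-1}$ depend only on coordinates with $d_m<d_l$, hence by \eqref{d1<d2} only on coordinates with $m<l$; (b) the hypothesis $\lambda,\mu\in E_N$ forces $\nu=\lambda\vee\mu\in E_N$, so that $\lambda_m\le\lambda_l$, $\mu_m\le\mu_l$, and $\nu_m\le\nu_l$ whenever $m<l$.

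Set $T_\kappa=\mathrm{diag}(\kappa_1^{d_1},\dots,\kappa_N^{d_N})$, so that $f_\kappa(\x)=|\det T_\kappa|^{-1}f(T_\kappa^{-1}\x)$. Writing $\x=T_\nu\w$ and substituting $\y=T_\lambda\u$ in $\varphi_\lambda*\psi_\mu(\x)=\int\varphi_\lambda(\y)\psi_\mu(\y^{-1}\x)\,d\y$ gives
$$
(\varphi_\lambda*\psi_\mu)(T_\nu\w)=|\det T_\mu|^{-1}\int_G\varphi(\u)\,\psi\bigl(V(\u,\w)\bigr)\,d\u,
$$
with $V(\u,\w):=T_\mu^{-1}\bigl((T_\lambda\u)^{-1}T_\nu\w\bigr)$. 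Expanding $V$ via the group law and the homogeneity \eqref{Ml} yields
$$
V_l(\u,\w)=-\bigl(\tfrac{\lambda_l}{\mu_l}\bigr)^{d_l}u_l+\bigl(\tfrac{\nu_l}{\mu_l}\bigr)^{d_l}w_l+R_l(\u,\w),
$$
where $R_l$ is a polynomial in the variables $\{u_m,w_m:m<l\}$ whose every coefficient is a product of ratios of the form $(\lambda_m/\mu_l)^{a d_m}$ or $(\nu_m/\mu_l)^{a d_m}$ constrained by $\sum a\,d_m=d_l$. By (b), every such ratio lies in $(0,1]$, so the coefficients of $R_l$ are bounded by constants depending only on $G$.

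Defining $\theta(\w):=|\det T_\nu|\,(\varphi_\lambda*\psi_\mu)(T_\nu\w)$ recasts the identity $\varphi_\lambda*\psi_\mu=\theta_\nu$ as
$$
\theta(\w)=\prod_l\bigl(\tfrac{\nu_l}{\mu_l}\bigr)^{d_l}\int_G\varphi(\u)\,\psi\bigl(V(\u,\w)\bigr)\,d\u.
$$
To kill the (possibly large) prefactor, split $L=\{l:\lambda_l\ge\mu_l\}$ (so $\nu_l=\lambda_l$) and $M=\{l:\lambda_l<\mu_l\}$ (so $\nu_l=\mu_l$) and make the diagonal substitution $u_l=w_l+(\mu_l/\lambda_l)^{d_l}t_l$ for $l\in L$, $u_l=t_l$ for $l\in M$. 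The Jacobian $\prod_{l\in L}(\mu_l/\lambda_l)^{d_l}$ cancels the prefactor $\prod_{l\in L}(\lambda_l/\mu_l)^{d_l}$ exactly, and $V_l$ reduces to $-t_l+R_l$ on $L$ and to $-(\lambda_l/\mu_l)^{d_l}t_l+w_l+R_l$ on $M$; in either case $V_l$ is a polynomial in $(\t,\w)$ with bounded coefficients.

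Compact support of $\theta$ in a ball $\B(\rho_0)$ depending only on $G$ then follows by inspecting the constraints $\u\in\B(1)$ and $V(\u,\w)\in\B(1)$ coordinate by coordinate, which force $|w_l|\lesssim 1$ for every $l$. Smoothness together with the derivative bounds making $\theta$ normalized relative to $\varphi,\psi$ follows by differentiating under the integral sign and applying the chain rule, since $\partial u_l/\partial w_k\in\{0,\delta_{lk}\}$ and $\partial V_l/\partial w_k$ is a polynomial in $(\t,\w)$ with coefficients depending only on $G$. The main obstacle is the bookkeeping in the expansion of $V_l$, namely verifying that every ratio of $\lambda$'s, $\mu$'s, and $\nu$'s appearing in $R_l$ is at most one; this is precisely the point where the hypothesis $\lambda,\mu\in E_N$ enters essentially, through the chain ``$m$ appears in $M_l$ $\Rightarrow$ $d_m<d_l$ $\Rightarrow$ $m<l$ $\Rightarrow$ $\lambda_m\le\lambda_l$''.
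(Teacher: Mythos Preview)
The paper does not prove this lemma; it simply quotes it as Lemma~6.17 of \cite{MR2949616}. Your setup is correct, but the key boundedness claim is false. You assert that every ratio $(\lambda_m/\mu_l)$ or $(\nu_m/\mu_l)$ appearing in $R_l$ lies in $(0,1]$, invoking $\lambda,\mu\in E_N$. But $E_N$ only gives $\lambda_m\le\lambda_l$ and $\mu_m\le\mu_l$ for $m<l$; it gives no comparison between $\lambda_m$ and $\mu_l$. On the three--dimensional Heisenberg group ($d_1=d_2=1$, $d_3=2$, $M_3(\x,\y)=x_1y_2-x_2y_1$) with $\lambda=(A,A,A)$ and $\mu=(1,1,1)$ one computes $R_3=A^{2}(u_2w_1-u_1w_2)$, and after your diagonal substitution $\tilde V_3=-t_3+A(t_2w_1-t_1w_2)$. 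Hence $\partial_{w_1}\tilde V_3=At_2$, and the term $\int\varphi\,\partial_3\psi(\tilde V)\cdot At_2\,d\t$ in $\partial_{w_1}\theta$ is, at face value, of order $A$. Thus ``differentiate under the integral sign and apply the chain rule, since $\partial V_l/\partial w_k$ has coefficients depending only on $G$'' is not correct as stated.

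The derivative bounds do of course hold, but extracting them requires one more mechanism. One option: for each $l\in L$ integrate by parts in $t_l$, using that $\partial_{t_l}\tilde V_l=-1$ while $\tilde V_m$ for $m<l$ is independent of $t_l$; this trades $\partial_l\psi(\tilde V)$ for $-\partial_{t_l}$ (modulo lower--order corrections), and $\partial_{t_l}$ acting on $\varphi(\u)$ then produces the compensating small factor $(\mu_l/\lambda_l)^{d_l}$. Equivalently, replace your diagonal substitution by a \emph{non-diagonal}, group-adapted change of variables (e.g.\ take the $\psi$-argument itself as the new integration variable on the $L$-block), after which the $\w$-derivatives of the integrand become manifestly bounded. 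Either route involves an inductive bookkeeping in $l$ that your sketch does not supply.
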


\medskip

It will be useful for our purposes to have a formulation of Lemma \ref{Lem9.2} for dilations with fewer parameters and with the $N$ variables split into $n$ blocks according to a decomposition $G\cong \R^{N}= \R^{C_{1}}\oplus\cdots\oplus\R^{C_{n}}$. For reasons of compatibility with the group structure, we now require that $C_{1}, \ldots, C_{n}\subset\{1, \ldots, N\}$ be sets of consecutive integers, indexed according to the natural ordering:
\be\label{orderedC}
C_1=\{1,\dots,l_1\},\quad C_2=\{l_1+1,\dots,l_2\},\quad\dots\quad C_n=\{l_{n-1}+1,\dots,N\}.
\ee
This implies that each $G_j\cong \R^{C_{j}}\oplus\cdots\oplus\R^{C_{n}}$ is a subgroup of $G$. Formula \eqref{productN} 
remains true if the individual variables $x_l,y_l\in\R$ are replaced by the blocks of variables $\x_j,\y_j\in\R^{C_j}$ and the scalar-valued functions $M_l$ by the $\R^{C_j}$-valued functions $\mathbf M_j=(M_{l_{j-1}+1},\dots,M_{l_j})$. Notice that the functions $\mathbf M_j$ still satisfy \eqref{Ml}.\footnote{{In what follows below, after choosing marked partitions, we may have situations where we no longer have sequences of consecutive integers. However this is all right if we can refine to a situation of consecutive integers.}}

\smallskip

Let $\EEE=\{e(j,k)\}$ be an $n\times n$ matrix satisfying (\ref{2.5}). Associated to $\EEE$ is the cone of lattice points
\bes
\Gamma_{\Z}(\EEE)  = 
\Big\{I=(i_{1}, \ldots, i_{n})\in\mathbb Z^{n}: e(j,k)i_{k}\leq i_{j}< 0,\,\,1\leq j,k\leq n\Big\}
\ees
and we consider dilations 
\beas[]
[f]_{I}(\x)=2^{-\sum_{j=1}^{n}i_{j}Q_{j}}f(2^{-i_{1}}\cdot\x_{1}, \ldots, 2^{-i_{n}}\cdot \x_{n}).
\eeas 
Note that $(2^{i_{1}}, \ldots, 2^{i_{n}})$ is an $n$-tuple while elements of $E_{N}$ defined in equation (\ref{9.2aaa}) are $N$-tuples. However $[f]_{I}=f_{\lambda(I)}$ where $\lambda(I)$ is obtained from $I$ by setting $\lambda_{l}=2^{i_{k}}$ if $l\in C_{k}$; that is
\bea\label{9.6bb}
\lambda(I) = \big(\overbrace{2^{i_{1}}, \ldots, 2^{i_{1}}}^{C_{1}},\,\,\overbrace{2^{i_{2}}, \ldots, 2^{i_{2}}}^{C_{2}},\,\,\ldots, \,\overbrace{2^{i_{n}}, \ldots, 2^{i_{n}}}^{C_{n}}\,\big).
\eea
We want to impose conditions on the matrix $\EEE$, in addition to the basic hypothesis (\ref{2.5}), which guarantees that if $I\in \Gamma_{\Z}(\EEE)$, then $\lambda(I)\in E_{N}$. 

\begin{definition}\label{Def9.3}
An $n\times n$ matrix $\EEE=\{e(j,k)\}$ is {\rm doubly monotone} if each row is weakly increasing from left to right, and each column is weakly decreasing from top to bottom; \textit{i.e.}
\beas
e(j,k)&\leq e(j,k+1)&&&&\text{for $1\leq j \leq n$ and $1 \leq k <n$},\\
e(j,k)&\geq e(j+1,k)&&&&\text{for $1\leq j < n$ and $1 \leq k \leq n$}.
\eeas
\end{definition}

\begin{proposition}\label{Prop9.4}
Let $\EEE=\{e(j,k)\}$ be an $n\times n$ matrix satisfying the basic hypothesis (\ref{2.5}). 
\begin{enumerate}[{\rm(a)}]
\smallskip
\item \label{Prop9.4a}The matrix $\EEE$ is doubly monotone if and only if $e(j+1,j)\leq 1$ for $1 \leq j \leq n-1$.
\smallskip
\item \label{Prop9.4b}If the matrix $\EEE$ is doubly monotone and if $I\in \Gamma(\EEE)$ then $\lambda(I)\in E_{N}$. 
\end{enumerate}
\end{proposition}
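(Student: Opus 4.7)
The plan is to dispatch part (a) by two applications of the submultiplicativity bound $e(j,l)\le e(j,k)e(k,l)$ from the basic hypothesis, and then reduce part (b) to a one-line calculation using the sign of $i_j$.

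For (a), the ``only if'' direction is immediate: if $\EEE$ is doubly monotone, reading down column $j$ gives $e(j+1,j)\le e(j,j)=1$. For the converse, I would assume $e(j+1,j)\le1$ for all $j$ and argue both monotonicities separately. Column monotonicity: applying the basic hypothesis with the middle index equal to $j$, one gets $e(j+1,k)\le e(j+1,j)\,e(j,k)\le e(j,k)$. Row monotonicity: applying the basic hypothesis with the middle index equal to $k+1$, one gets $e(j,k)\le e(j,k+1)\,e(k+1,k)\le e(j,k+1)$. Both inequalities use only $e(j+1,j)\le1$ (resp.\ $e(k+1,k)\le1$), so this closes part (a).

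For (b), the key observation is that, because each $C_k$ consists of consecutive integers and the assignment \eqref{9.6bb} places the constant value $2^{i_k}$ on the block $C_k$, the tuple $\lambda(I)$ lies in $E_N$ if and only if $i_1\le i_2\le\cdots\le i_n$. To establish this, I would fix $j$ and invoke the defining inequality of $\Gamma(\EEE)$ with the pair $(j+1,j)$ to obtain $e(j+1,j)\,i_j\le i_{j+1}$. Since $i_j<0$ and $0<e(j+1,j)\le1$ by part (a), multiplying a negative number by a factor in $(0,1]$ leaves it no smaller, so $i_j\le e(j+1,j)\,i_j\le i_{j+1}$. Iterating in $j$ gives the required monotonicity of $(i_1,\dots,i_n)$ and hence $\lambda(I)\in E_N$.

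There is no real obstacle here; both parts are short syllogisms from the basic hypothesis and the sign condition $i_k<0$ built into the definition of $\Gamma(\EEE)$. The only point requiring a moment's care is the sign flip in part (b), where the inequality $e(j+1,j)\le1$ translates into $e(j+1,j)\,i_j\ge i_j$ precisely because $i_j$ is negative; this is what converts the cone condition into the desired ordering of exponents.
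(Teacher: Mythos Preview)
Your proof is correct and matches the paper's argument essentially line for line: the same two applications of the basic hypothesis for part (a), and the same sign-flip observation $i_j\le e(j+1,j)i_j\le i_{j+1}$ for part (b). The only cosmetic difference is that in the ``only if'' direction of (a) you read down a column to get $e(j+1,j)\le e(j,j)=1$, whereas the paper reads across a row to get $e(j+1,j)\le e(j+1,j+1)=1$; either is immediate from the definition.
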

\begin{proof}
If $\EEE$ is doubly monotone and each row is weakly increasing, then since $e(j+1,j+1)=1$ we certainly have $e(j+1,j)\leq 1$. Conversely, suppose that $e(k+1,k)\leq 1$ for $1\leq k \leq n-1$. Then by (\ref{2.5}) we have 
\beas
e(j,k)\leq e(j,k+1)e(k+1,k)&\leq e(j,k+1),\\
e(j+1,k)\leq e(j+1,j)e(j,k)&\leq e(j,k),
\eeas 
so $\EEE$ is doubly monotone. 

Next if $\EEE$ is doubly monotone and $I=(i_{1}, \ldots, i_{n})\in \Gamma(\EEE)$, it follows from the definition of $\lambda(I)$ in equation (\ref{9.6bb}) that we only need to show that $i_{j}\leq i_{j+1}$ for $1 \leq j \leq n-1$. However, since $e(j+1,j)\leq 1$ and $i_{j}<0$, we have $i_{j}\leq e(j+1,j)i_{j}\leq i_{j+1}$ since  $I\in \Gamma_{\Z}(\EEE)$. 
\end{proof}

\begin{corollary}
Suppose  $\EEE$ is doubly monotone, and let $I=(i_{1}, \ldots, i_{n}),\,J=(j_{1}, \ldots, j_{n})\in \Gamma_{\Z}(\EEE)$. If $\varphi, \psi\in \CC^{\infty}_{0}(\R^{n})$ are supported in the unit ball, then $[\varphi]_{I}*[\psi]_{J}=[\theta]_{K}$ where $\theta\in \CC^{\infty}_{0}(\R^{N})$ is normalized relative to $\varphi$ and $\psi$, and $K=(k_{1}, \ldots, k_{n})\in \Gamma_{\Z}(\EEE)$ with $k_{m}=\max\{i_{m},j_{m}\}$ for $1\leq m \leq n$.
\end{corollary}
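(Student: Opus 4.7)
The plan is to combine Proposition \ref{Prop9.4}(\ref{Prop9.4b}) with Lemma \ref{Lem9.2}, the main point being that double monotonicity of $\EEE$ is precisely what converts the cone condition $I,J\in\Gamma_{\Z}(\EEE)$ into the $E_{N}$-compatibility condition required to convolve scaled bump functions on the nilpotent group $G$. First I would observe that since $\EEE$ is doubly monotone and $I,J\in\Gamma_{\Z}(\EEE)$, Proposition \ref{Prop9.4}(\ref{Prop9.4b}) gives $\lambda(I),\lambda(J)\in E_{N}$, with $\lambda(I),\lambda(J)$ as defined in \eqref{9.6bb}. By the definitions in \eqref{2.3} and \eqref{9.1aaa} one has $[\varphi]_{I}=\varphi_{\lambda(I)}$ and $[\psi]_{J}=\psi_{\lambda(J)}$, so Lemma \ref{Lem9.2} applies and produces a function $\theta\in\CC^{\infty}_{0}(G)$, normalized relative to $\varphi$ and $\psi$, together with an $N$-tuple $\nu\in(0,\infty)^{N}$ given componentwise by $\nu_{l}=\lambda(I)_{l}\vee\lambda(J)_{l}$, such that $[\varphi]_{I}*[\psi]_{J}=\theta_{\nu}$.

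Next, I would identify $\nu$ with $\lambda(K)$ for $K=(k_{1},\ldots,k_{n})$ defined by $k_{m}=\max\{i_{m},j_{m}\}$. This is immediate from \eqref{9.6bb}: if $l\in C_{p}$ then $\lambda(I)_{l}=2^{i_{p}}$ and $\lambda(J)_{l}=2^{j_{p}}$, so
\[
\nu_{l}=\max\{2^{i_{p}},2^{j_{p}}\}=2^{\max\{i_{p},j_{p}\}}=2^{k_{p}}=\lambda(K)_{l}.
\]
Hence $\theta_{\nu}=[\theta]_{K}$, which is the desired form of the convolution.

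The only remaining point is to verify that $K\in\Gamma_{\Z}(\EEE)$. Since $i_{m},j_{m}<0$ we have $k_{m}<0$ for every $m$, so it suffices to check that $e(j,k)k_{k}\leq k_{j}$ for all $j\neq k$. I would split into two cases according to whether $k_{k}=i_{k}$ or $k_{k}=j_{k}$: in the first case $e(j,k)k_{k}=e(j,k)i_{k}\leq i_{j}\leq k_{j}$ by $I\in\Gamma_{\Z}(\EEE)$, and in the second case $e(j,k)k_{k}=e(j,k)j_{k}\leq j_{j}\leq k_{j}$ by $J\in\Gamma_{\Z}(\EEE)$; in both cases one uses the positivity of $e(j,k)$ together with the trivial bounds $i_{j},j_{j}\leq k_{j}$.

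There is no serious obstacle in this argument — it is essentially a bookkeeping exercise that packages Lemma \ref{Lem9.2} together with the cone inequalities defining $\Gamma_{\Z}(\EEE)$. The only conceptual content is the translation between the $n$-tuple dilation index $I$ and the $N$-tuple $\lambda(I)$, which is built into Proposition \ref{Prop9.4}.
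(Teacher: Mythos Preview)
Your proof is correct and follows exactly the approach the paper intends: the corollary is stated without proof precisely because it is immediate from Proposition~\ref{Prop9.4}(\ref{Prop9.4b}) and Lemma~\ref{Lem9.2}, together with the straightforward check that $K\in\Gamma_{\Z}(\EEE)$. The only cosmetic point is that in your verification of $e(j,k)k_{k}\le k_{j}$ the index letters $j,k$ clash with the components $j_{j}$ of $J$; choosing different letters (say $e(a,b)k_{b}\le k_{a}$) would avoid the notational collision, but the argument itself is fine.
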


Recall that $S\in \SS'(\EEE)$ means that $\widehat E_{S}\cap \B(1)^{c}\neq \emptyset$. If $S=\big((I_{1},k_{1});\ldots;(I_{s},k_{s})\big)\in \SS'(\EEE)$, the $s\times s$ matrix $\EEE_{S}=\{e_{S}(r,p)\}$ was defined in Lemma \ref{Lem5.10}. We observe that if we order the sets $\{I_{1}, \ldots, I_{s}\}$ so that $k_{1}\leq k_{2}\leq \cdots \leq k_{s}$ and if $\EEE$ is doubly monotone, then the same is true of the matrix $\EEE_{S}$. 

\begin{lemma}\label{Lem9.3pp}
Let $S=\big((I_{1},k_{1});\ldots;(I_{s},k_{s})\big)\in \SS'(\EEE)$ with $k_{1}<k_{2}<\cdots<k_{s}$, and let $\EEE_{S}=\{e_{S}(r,p)\}$ be the $s\times s$ matrix whose existence is established in Lemma \ref{Lem5.10}. If the matrix $\EEE$ is doubly monotone, then the matrix $ \EEE_{S}$ is doubly monotone.
\end{lemma}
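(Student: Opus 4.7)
My plan is to reduce the double-monotonicity of $\EEE_S$ to the simple criterion of Proposition \ref{Prop9.4}(\ref{Prop9.4a}), namely that it suffices to check $e_S(r+1,r)\le 1$ for $1\le r\le s-1$. This works because Proposition \ref{Prop9.4}(\ref{Prop9.4a}) applies to any matrix satisfying the basic hypothesis, and $\EEE_S$ does by Lemma \ref{Lem5.10}.

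To control $e_S(r+1,r)$, I would invoke the inequality $e_S(r,p)\le \tau_S(k_r,k_p)$ from Lemma \ref{Lem5.10}, which gives
\[
e_S(r+1,r)\le \tau_S(k_{r+1},k_r)=\min_{j\in I_r}\frac{e(k_{r+1},j)}{e(k_r,j)}.
\]
Specializing the minimum to the distinguished index $j=k_r\in I_r$ and using $e(k_r,k_r)=1$, I get $e_S(r+1,r)\le e(k_{r+1},k_r)$.

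Now I would use the hypothesis that $\EEE$ is doubly monotone together with the ordering $k_1<k_2<\cdots<k_s$. Since columns of $\EEE$ are weakly decreasing from top to bottom and $k_{r+1}>k_r$, iterating $e(j+1,k)\le e(j,k)$ yields $e(k_{r+1},k_r)\le e(k_r,k_r)=1$. Combining with the previous step gives $e_S(r+1,r)\le 1$, which by Proposition \ref{Prop9.4}(\ref{Prop9.4a}) is equivalent to $\EEE_S$ being doubly monotone.

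I do not anticipate any obstacle: the argument is a two-line chain once one recognizes that only the neighbor inequalities $e_S(r+1,r)\le 1$ need to be checked, and that the bound $e_S(r,p)\le\tau_S(k_r,k_p)$ collapses to an entry of $\EEE$ itself when one picks $j=k_r$.
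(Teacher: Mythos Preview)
Your argument is correct and follows essentially the same route as the paper: reduce via Proposition~\ref{Prop9.4}(\ref{Prop9.4a}) to checking $e_S(r+1,r)\le 1$, then bound $e_S(r+1,r)\le\tau_S(k_{r+1},k_r)\le e(k_{r+1},k_r)$ by choosing $j=k_r$ in the minimum, and finally use the double monotonicity of $\EEE$ together with $k_{r+1}>k_r$ to get $e(k_{r+1},k_r)\le 1$.
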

\begin{proof}
From the construction of $ \EEE_{S}$ we have
\beas
 e_{S}(r+1,r) \leq \tau_{S}(r+1,r)=\min_{j\in I_{r}}\frac{e(k_{r+1},j)}{e(k_{r},j)}\leq e(k_{r+1},k_{r}).
\eeas
If $\{e(j,k)\}$ is doubly monotone, $e(k_{r+1},k_{r})\leq 1$ because of our ordering of the sets $\{I_{1}, \ldots, I_{s}\}$, and it follows that $e_{S}(r+1 ,r)\leq 1$. Since $\EEE_{S}$ satisfies the basic hypothesis (\ref{2.5}), it then follows from part (\ref{Prop9.4a}) of Proposition \ref{Prop9.4} that it is also doubly monotone.
\end{proof}

Continue to fix $S=\big((I_{1},k_{1});\ldots;(I_{s},k_{s})\big)\in \SS'(\EEE)$ with $k_{1}<k_{2}<\cdots<k_{s}$. In Definition \ref{Def3.12iou} we introduced the family of dilations
\beas
2^{-J}\hat\cdot_{S}\,\t &= \big(2^{-j_{1}}\hat\cdot_{S}\,\t_{I_{1}}, \ldots,2^{-j_{r}}\hat\cdot_{S}\,\t_{I_{r}},\ldots, 2^{-j_{s}}\hat\cdot_{S}\,\t_{I_{s}}\big)
\eeas
where
\beas
2^{-j_{r}}\,\hat\cdot_{S}\,\t_{I_{r}}=\Big\{2^{-j_{r}/e(k_{r},j)}\cdot\t_{j}:j\in I_{r}\Big\} = \Big\{2^{-j_{r}\tau_{S}(j,k_{r})}\cdot\t_{j}:j\in I_{r}\Big\}.
\eeas
Recall that $\Gamma_{\Z}({\mathbf{ E}}_{S}) =\big\{(j_{1}, \ldots, j_{s})\in \Z^{s}: e_{S}(p,r)j_{r}\leq j_{p} < 0\big\}$. 

\begin{lemma}\label{Lem9.6}
Suppose that $\EEE$ is doubly monotone. If $J=(j_{1}, \ldots, j_{s})\in \Gamma_{\Z}(\EEE_{S})$ then the mapping $\t\to 2^{-J}\,\hat\cdot_{S}\,\t$ is compatible with the group structure.  
\end{lemma}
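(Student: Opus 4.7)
The plan is to unpack what ``compatible with the group structure'' means in terms of an $N$-tuple $\lambda(J,S)\in(0,\infty)^N$, and then verify the monotonicity of this tuple coordinate-by-coordinate using the hypothesis $J\in\Gamma_\Z(\EEE_S)$, the estimate $e_S(p,r)\le\tau_S(k_p,k_r)$ from Lemma~\ref{Lem5.10}, and the double monotonicity of $\EEE$. Following the convention in \eqref{9.6bb}, the dilation $\t\mapsto 2^{-J}\,\hat\cdot_S\,\t$ scales the coordinate $t_l$ for $l\in C_j$ with $j\in I_r$ by the factor $2^{-d_lj_r/e(k_r,j)}$, so the associated $N$-tuple $\lambda=\lambda(J,S)$ has $\log_2\lambda_l=j_r/e(k_r,j)$ whenever $l\in C_j$ and $j\in I_r$. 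Thus $\lambda\in E_N$ if and only if the map $l\mapsto j_r/e(k_r,j)$ is weakly increasing along $l=1,\dots,N$. Since the blocks $C_1,\dots,C_n$ are ordered intervals of consecutive integers and the exponent is constant on each $C_j$, it suffices to verify that, whenever $j\in I_r$ and $j+1\in I_p$, one has
\be\label{monplan}
\frac{j_r}{e(k_r,j)}\;\le\;\frac{j_p}{e(k_p,j+1)}.
\ee

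First I would treat the case $r=p$, where $j$ and $j+1$ lie in the same block of $S$. Then $j_r=j_p<0$ and \eqref{monplan} reduces to $e(k_r,j)\le e(k_r,j+1)$, which is precisely the ``rows weakly increasing from left to right'' part of the double monotonicity hypothesis on $\EEE$ (Definition~\ref{Def9.3}).

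The main case is $r\ne p$. Since $j_r,j_p<0$, inequality \eqref{monplan} is equivalent to $|j_p|/e(k_p,j+1)\le |j_r|/e(k_r,j)$. From $J\in\Gamma_\Z(\EEE_S)$ we have $e_S(p,r)\,j_r\le j_p<0$, hence $|j_p|\le e_S(p,r)\,|j_r|$. Consequently it is enough to show
\bes
e_S(p,r)\;\le\;\frac{e(k_p,j+1)}{e(k_r,j)}.
\ees
By Lemma~\ref{Lem5.10}, $e_S(p,r)\le \tau_S(k_p,k_r)$, and by Definition~\ref{Def3.7iou}, since $j\in I_r$,
\bes
\tau_S(k_p,k_r)=\min_{l\in I_r}\frac{e(k_p,l)}{e(k_r,l)}\;\le\;\frac{e(k_p,j)}{e(k_r,j)}.
\ees
Finally, applying double monotonicity to the $k_p$-th row of $\EEE$ gives $e(k_p,j)\le e(k_p,j+1)$, so
\bes
e_S(p,r)\;\le\;\frac{e(k_p,j)}{e(k_r,j)}\;\le\;\frac{e(k_p,j+1)}{e(k_r,j)},
\eeas
which closes the estimate.

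The only delicate point is the bookkeeping in the mixed-block case: one has to combine the three ingredients (the cone condition defining $\Gamma_\Z(\EEE_S)$, the construction of $\EEE_S$ via $\tau_S$, and the column/row monotonicity of $\EEE$) while being careful about the signs of $j_r,j_p$. Once these pieces are lined up as above, the conclusion follows without further computation.
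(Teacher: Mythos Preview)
Your proof is correct and follows essentially the same approach as the paper. Both arguments bound $e_S(p,r)$ by $\tau_S(k_p,k_r)$ via Lemma~\ref{Lem5.10}, evaluate the minimum defining $\tau_S$ at an index in $I_r$, and finish with the double monotonicity of $\EEE$; your reduction to adjacent indices $j,j+1$ and explicit separation of the $r=p$ case are minor organizational choices, while the paper treats general $l\le m$ and reaches the same endpoint using $e(k_p,l)\le e(k_p,m)e(m,l)$ together with $e(m,l)\le1$.
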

\begin{proof}
Let $l,m\in \{1, \ldots, n\}$ with $l\leq m$ and $l\in I_{r}$, $m\in I_{p}$. According to Proposition \ref{Prop9.4}, it suffices to show that $j_{r}e(k_{r},l)^{-1}\leq j_{p}e(k_{p},m)^{-1}$.  If $J=(j_{1}, \ldots, j_{s})\in \Gamma_{\Z}(\EEE_{S})$ then $e_{S}(p,r)j_{r}\leq j_{p}$. But then since $j_{r}<0$,
\beas
\frac{j_{p}}{j_{r}}&\leq e_{S}(p,r) \leq \tau_{S}(k_{p},k_{r})=\min_{j\in I_{r}}\frac{e(k_{p},j)}{e(k_{r},j)}\leq \frac{e(k_{p},l)}{e(k_{r},l)}\leq \frac{e(k_{p},m)e(m,l)}{e(k_{r},l)}
\leq \frac{e(k_{p},m)}{e(k_{r},l)}
\eeas
where the last inequality follows since $l\leq m$ and the matrix $\EEE$ is (weakly) increasing across each row. This means that  $j_{r}e(k_{r},l)^{-1}\leq j_{p}e(k_{p},m)^{-1}$, completing the proof.
\end{proof}

\medskip

\subsection{ Automorphic flag kernels and $L^p$-boundedness of convolution operators}

 In this section we discuss  $L^p$-boundedness of  operators
$$
T_\KK f=f*\KK,
$$
given by convolution (relative to some homogeneous group structure on $\R^N$)  with a kernel $\KK\in\PP(\EEE)$. The matrix $\EEE$  is assumed to satisfy the basic hypotheses \eqref{2.5}.

The reader should be aware that there are two possible interpretations of the symbol $f*\KK$, depending on whether the underlying group is the standard abelian $\R^{n}$ or some non-abelian group $G$. Whenever clarification is required, we will write $T^{G}_{\KK}$ to specify the group. For  ordinary convolution on $\R^N$, it can be easily verified that a kernel in $\PP(\EEE)$ is a {\it product kernel}, as defined in Section 2 of   \cite{MR1818111}. In fact, the differential inequalities of a product kernel follow from the inequalities $N_j(\x)\ge n_j(\x_j)$, $j=1,\dots,n$, and the cancellation conditions are the same as in  Definition \ref{Def2.2} (\ref{Def2.2B}). By  \cite{MR1818111}, we have the following theorem.

\begin{theorem}\label{L^p-abelian}
Let $\delta_\lambda$ be any family of dilations on $\R^N$ and  $\R^N=\R^{C_{1}}\oplus\cdots\oplus \R^{C_{n}}$ be any decomposition of $\R^N$ into homogeneous subspace. Suppose that the matrix $\EEE$ satisfies the basic hypotheses. Then, for every $\KK\in\PP(\EEE)$, the operator $T_\KK$ is bounded on $L^p$ for $1<p<\infty$.
\end{theorem}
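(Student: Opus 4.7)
The plan is to deduce the theorem directly from the corresponding $L^p$-boundedness result for product kernels on $\R^N$ established in \cite{MR1818111}, by showing that every $\KK\in\PP(\EEE)$ is in fact a product kernel relative to the given decomposition $\R^N=\R^{C_1}\oplus\cdots\oplus\R^{C_n}$ and the one-parameter dilations on each factor $\R^{C_j}$. As Remark \ref{Rem2.2} already pointed out, this inclusion holds; I would simply spell out the verification as the body of the proof.

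First I would verify the differential inequalities. Recall that a product kernel on $\R^N=\R^{C_1}\oplus\cdots\oplus\R^{C_n}$ requires, away from the coordinate subspaces, the estimates
\[
\bigl|\partial^{\gammab}K(\x)\bigr|\lesssim \prod_{j=1}^n n_j(\x_j)^{-(Q_j+\[\gammab_j\])}
\]
for every multi-index $\gammab=(\gammab_1,\ldots,\gammab_n)$. Since $\EEE$ satisfies the basic hypothesis, $e(j,j)=1$ and hence
\[
N_j(\x)\;=\;\sum_{k=1}^n n_k(\x_k)^{e(j,k)}\;\geq\;n_j(\x_j),\qquad 1\leq j\leq n.
\]
Therefore $N_j(\x)^{-(Q_j+\[\gammab_j\])}\leq n_j(\x_j)^{-(Q_j+\[\gammab_j\])}$ for each $j$, and the differential inequalities of Definition \ref{Def2.2}(\ref{Def2.2A}) imply the product-kernel ones. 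In particular $K$ is smooth on the complement of $\bigcup_{j=1}^n\{\x_j=0\}$, which is the smoothness region required for product kernels.

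Next I would verify the cancellation conditions. The cancellation conditions required for product kernels (see \cite{MR1818111}, Definition 2.1.3) are formulated exactly as in Definition \ref{Def2.2}(\ref{Def2.2B}): for any partition $\{1,\ldots,n\}=L\sqcup M$, any normalized bump $\psi\in\CC^\infty_0(\R^M)$ and any positive scale parameters $R$, the partial evaluation $\KK_{\psi,R}$ is required to satisfy the analogous differential inequalities on $\R^L$, with constants independent of $R$. This is literally the hypothesis imposed on $\KK\in\PP(\EEE)$. Applying again the pointwise bound $N_{l_t}(\x_L)\geq n_{l_t}(\x_{l_t})$ on $\R^L$ (which uses the basic hypothesis restricted to the sub-matrix indexed by $L$) shows that $\KK_{\psi,R}$ satisfies the product-kernel differential inequalities on $\R^L$ uniformly in $R$. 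Hence the cancellation conditions for product kernels hold.

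Having exhibited $\KK$ as a product kernel on $\R^N=\R^{C_1}\oplus\cdots\oplus\R^{C_n}$ (with the given isotropic or non-isotropic dilations on each factor), the $L^p$-boundedness for $1<p<\infty$ of the Euclidean convolution operator $T_\KK f=f*\KK$ is exactly the content of Theorem 2.2.1 of \cite{MR1818111}. Since each component dilation extends to a family of dilations on $\R^N$ commuting with Euclidean translations, no further compatibility hypothesis is required in the abelian case, and the theorem follows. There is no genuine obstacle here; the point of the statement is precisely to record that, in the Euclidean setting, the richer multi-parameter structure encoded in $\EEE$ is not needed for $L^p$-boundedness and one may fall back on the product-kernel theory. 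The non-trivial work will appear later when the group $G$ is nonabelian, at which point the weaker flag-kernel structure has to replace the product-kernel observation above.
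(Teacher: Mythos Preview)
Your proposal is correct and follows exactly the same approach as the paper: the paper observes (in the paragraph immediately preceding the theorem) that the differential inequalities of a product kernel follow from $N_j(\x)\geq n_j(\x_j)$, that the cancellation conditions in Definition~\ref{Def2.2}(\ref{Def2.2B}) are identical to those for product kernels, and then invokes \cite{MR1818111}. Your write-up simply spells out these two observations in slightly more detail.
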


Considering now a general homogeneous group law as in \eqref{productN}, we must be more specific on all the ingredients needed to define a class $\PP(\EEE)$.

We fix a decomposition $\R^N=\R^{C_{1}}\oplus\cdots\oplus \R^{C_{n}}$ where the $C_j$ are as in \eqref{orderedC}.  
  In the notation of \cite{MR1818111}, the subgroups $G_j=\R^{C_{j}}\oplus\cdots\oplus \R^{C_{n}}$
form the {\it standard flag}
\beas
\FF:\qquad (0)\ \subset\ G_n\ \subset\ G_{n-1}\ \subset\ \cdots \subset\ G_2\ \subset\ G_1=G.
\eeas

On each $\R^{C_j}$ we also fix a homogeneous norm  $n_j$ for the automorphic dilations $\delta_\lambda$ in \eqref{automorphic}.
A~distribution $\KK$ is an {\it automorphic flag kernel} on the standard flag if it satisfies the differential inequalities
\beas
\big\vert\partial^{\gammab}K(\x)\big\vert \leq C_{\gammab}\prod_{j=1}^{n}\Big[n_{1}(\x_{1})+\cdots +n_{j}(\x_{j})\Big]^{-Q_{j}-\[\gammab_{j}\]}
\eeas
and the  cancellation conditions in Definition \ref{Def2.2} (\ref{Def2.2B}). 

The differential inequalities for the {\it automorphic flag multiplier} $m=\widehat\KK$ are
\beas
\big\vert\partial^{\gammab}m(\xib)\big\vert &\le
C_{\gammab}\,\prod_{j=1}^{n}\big[n_{j}(\xib_{j})+\cdots + n_{n}(\xib_{n})\big]^{-\[\gammab_{j}\]}.
\eeas
(See Definition 2.3.2 in \cite{MR1818111} for details.)
\medskip

 We show that, if $\EEE$ is doubly monotone, a distribution $\KK\in \PP_0(\EEE)$ is always an automorphic  flag kernel for a standard flag.

\begin{proposition}\label{Prop9.7}
If $\KK\in \PP_{0}(\EEE)$ where the class of distributions is defined by a doubly monotone matrix, then $\KK$ is an automorphic flag kernel on the standard flag.
\end{proposition}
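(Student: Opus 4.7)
The plan is to verify the two defining conditions of an automorphic flag kernel on the standard flag. Since the paper defines the flag-kernel cancellation to be \emph{exactly} the cancellation condition of Definition~\ref{Def2.2}(\ref{Def2.2B}), and $\KK\in\PP_0(\EEE)\subset\PP(\EEE)$ satisfies this by hypothesis, nothing need be done for the cancellation. The substance of the proposition therefore reduces to the pointwise differential inequality
$$
\big|\partial^\gammab K(\x)\big|\ \le\ C_\gammab\prod_{j=1}^n\Big[n_1(\x_1)+\cdots+n_j(\x_j)\Big]^{-Q_j-\[\gammab_j\]}.
$$
Writing $P_j(\x)=n_1(\x_1)+\cdots+n_j(\x_j)$ and $N_j(\x)=\sum_{k=1}^n n_k(\x_k)^{e(j,k)}$, the strategy is to show $N_j(\x)\gtrsim P_j(\x)$ on the unit ball via double monotonicity, and then handle $\B(1)^c$ using the rapid decay built into $\PP_0(\EEE)$.

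First, on $\B(1)$ I would use double monotonicity directly: each row of $\EEE$ is weakly increasing and $e(j,j)=1$, so $e(j,k)\le 1$ whenever $k\le j$. Since $n_k(\x_k)\le 1$ on $\B(1)$, the elementary inequality $t^{e(j,k)}\ge t$ (valid for $0\le t\le 1$) gives
$$
N_j(\x)\ \ge\ \sum_{k=1}^j n_k(\x_k)^{e(j,k)}\ \ge\ \sum_{k=1}^j n_k(\x_k)\ =\ P_j(\x).
$$
Substituting into the $\PP(\EEE)$ differential inequality of Definition~\ref{Def2.2}(\ref{Def2.2A}) immediately yields the flag-kernel bound on $\B(1)$.

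Outside the unit ball the pointwise comparison $N_j\ge P_j$ breaks down, since components with $n_k(\x_k)>1$ shrink under the exponent $e(j,k)\le 1$. Here I would invoke the rapid decay factor of $\PP_0(\EEE)$: by Definition~\ref{Def2.5}(\ref{Def2.5a}), $|\partial^\gammab K(\x)|\le C_{M,\gammab}(1+|\x|)^{-M}$ for every $M\ge 0$. On the other hand each $n_l(\x_l)\approx\sum_{k\in C_l}|x_k|^{1/d_k}$, and since $d_k\ge 1$ we have $n_l(\x_l)\lesssim 1+|\x|$; hence $P_j(\x)\lesssim 1+|\x|$, so
$$
\prod_{j=1}^n P_j(\x)^{Q_j+\[\gammab_j\]}\ \lesssim\ (1+|\x|)^{\sum_j(Q_j+\[\gammab_j\])}.
$$
Choosing $M$ at least this total exponent forces $(1+|\x|)^{-M}$ to dominate the reciprocal flag-kernel factor, which closes the estimate on $\B(1)^c$.

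The only delicate point is the last step: the bare $\PP(\EEE)$ differential inequality is insufficient outside $\B(1)$, and one genuinely needs the additional Schwartz-tail estimate built into the \emph{proper} class $\PP_0(\EEE)$. With both regions handled, the flag-kernel differential inequalities hold globally (away from the flag singularities), and together with the tautological cancellation conditions this completes the verification that $\KK$ is an automorphic flag kernel on the standard flag.
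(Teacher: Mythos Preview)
Your argument is correct but takes a different route from the paper. The paper works entirely on the Fourier transform side: it shows that the multiplier $m=\widehat\KK\in\MM_\infty(\EEE)$ satisfies the flag-multiplier inequalities by using the \emph{column} monotonicity of $\EEE$ (namely $e(k,j)\le 1$ for $k\ge j$) to obtain $1+\widehat N_j(\xib)\gtrsim n_j(\xib_j)+\cdots+n_n(\xib_n)$; the ``$1+$'' built into $\MM_\infty$ absorbs the small-$\xib$ regime, so no case analysis is needed. You instead work directly on the kernel, using the dual \emph{row} monotonicity ($e(j,k)\le 1$ for $k\le j$) to get $N_j(\x)\ge P_j(\x)$ on $\B(1)$, and then the rapid-decay factor of $\PP_0(\EEE)$ together with $n_l(\x_l)\lesssim 1+|\x|$ (valid since $d_k\ge 1$ in this stratified setting) to handle $\B(1)^c$. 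Both proofs are short; the multiplier route is marginally slicker (one inequality, no region split) but relies on the Fourier duality of Section~\ref{Duality} and the flag kernel/multiplier equivalence from \cite{MR1818111}, whereas yours is more elementary and stays entirely on the kernel side. Your reading of the cancellation hypothesis is also correct: in this passage the paper \emph{defines} the automorphic-flag cancellation to be literally that of Definition~\ref{Def2.2}(\ref{Def2.2B}), so that part is indeed tautological.
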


\begin{proof}
Let $\KK\in \PP_{0}(\EEE)$ where the class of distributions is defined by a doubly monotone matrix $\EEE=\{e(j,k)\}$ and let $m=\widehat \KK$. Note that for $\xib\in \B(1)^c$ we have
\beas
\widehat N_{j}(\xib) &= \sum_{k=1}^{n}n_{k}(\xib_{k})^{1/e(k,j)}
\geq \sum_{k=j}^{n}n_{k}(\xib_{k})^{1/e(k,j)}
\geq \sum_{k=j}^{n}n_{k}(\xib_{k})
\eeas
since for $k\geq j$ we have $e(k,j) \leq 1$. Thus if $\gammab\in \N^{C_{1}}\times\cdots\times\N^{C_{n}}$ we have
\beas
\big\vert\partial^{\gammab}m(\xib)\big\vert &\leq C_{\gammab}\prod_{j=1}^{n}\big(1+\widehat N_{j}(\xib)\big)^{-\[\gammab_{j}\]}
\leq
C_{\gammab}\,\prod_{j=1}^{n}\big[n_{j}(\xib_{j})+\cdots + n_{n}(\xib_{n})\big]^{-\[\gammab_{j}\]}.
\eeas
\end{proof}

\begin{theorem}\label{Thm9.8}
 Let $G$ be a homogeneous nilpotent Lie group, endowed with automorphic dilations $\delta_\lambda$, $G\cong \R^N=\R^{C_{1}}\oplus\cdots\oplus \R^{C_{n}}$, the $C_j$ being as in \eqref{orderedC}. If $\EEE$ is a doubly monotone matrix satisfying the basic hypothesis, and if $\KK\in \PP_0(\EEE)$, then the operator $T_{\KK}[\varphi] = \varphi*\KK$, defined initially for $\varphi\in \SS(\R^{N})$, extends uniquely to a bounded operator on $L^{p}(G)$ for $1 < p < \infty$.  If the rank of $\EEE$ is greater than one, the same holds for $\KK\in \PP(\EEE)$.
\end{theorem}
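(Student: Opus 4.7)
The plan is to reduce the statement to the $L^{p}$-boundedness theorem for convolutions with automorphic flag kernels on the standard flag, which was already established in \cite{MR2949616}. For a kernel $\KK\in \PP_{0}(\EEE)$ this reduction is immediate from Proposition \ref{Prop9.7}: since $\EEE$ is doubly monotone, $\KK$ is an automorphic flag kernel on the standard flag $G_{n}\subset G_{n-1}\subset\cdots\subset G_{1}=G$, and so the operator $T_{\KK}^{G}$ extends to a bounded operator on $L^{p}(G)$ for $1<p<\infty$ by the main $L^{p}$ theorem for flag kernels in \cite{MR2949616}. This is where the doubly monotone hypothesis is used in an essential way: without it, the ordering $e(k,j)\le 1$ for $k\geq j$, on which the comparison $\widehat N_{j}(\xib)\gtrsim n_{j}(\xib_{j})+\cdots+n_{n}(\xib_{n})$ rests, could fail, and $\KK$ would not fit the flag framework with respect to the standard flag adapted to the group structure.

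For the second half of the theorem, assume $\mathrm{rank}(\EEE)\geq 2$ and $\KK\in \PP(\EEE)$. I would invoke Corollary \ref{Cor5.3ww} to decompose
\[
\KK=\KK_{0}+\KK_{\infty},
\]
where $\KK_{0}\in \PP(\EEE)$ has compact support in the unit ball and $\KK_{\infty}\in\PP(\EEE)$ is given by integration against a function $K_{\infty}\in \CC^{\infty}(\R^{N})\cap L^{1}(\R^{N})$. Since $\KK_{0}$ is compactly supported and its associated smooth function vanishes off the unit ball, both of the decay conditions \eqref{2.6iou}, \eqref{2.7iou} of Definition \ref{Def2.5} are trivially satisfied, so $\KK_{0}\in \PP_{0}(\EEE)$. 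Hence $T_{\KK_{0}}^{G}$ is bounded on every $L^{p}(G)$ by the first part of the proof. On the other hand, $T_{\KK_{\infty}}^{G}f=f*K_{\infty}$ is bounded on every $L^{p}(G)$ by Young's convolution inequality, since $K_{\infty}\in L^{1}(G)$ with Haar measure equal to Lebesgue measure. Adding the two estimates yields the boundedness of $T_{\KK}^{G}$.

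The extension is unique because $\SS(\R^{N})$ is dense in $L^{p}(G)$ for $1\leq p<\infty$ and each bounded operator is determined by its values on a dense subspace; for $p=\infty$, uniqueness follows by duality from the $p=1$ case applied to $\KK^{\vee}$ (which lies in the same class).

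The only real content is Proposition \ref{Prop9.7} together with the flag kernel theorem of \cite{MR2949616}; the main conceptual point---and the only step that would present an obstacle were the hypotheses weaker---is the verification that double monotonicity is precisely what is needed to align the $n$ norms $\widehat N_{1},\ldots,\widehat N_{n}$ coming from $\EEE$ with the nested norms of the standard flag adapted to the group-theoretic ordering \eqref{orderedC} of the coordinates. Once this alignment is in place, the reduction is essentially formal.
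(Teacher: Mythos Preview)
Your proposal is correct and follows essentially the same route as the paper: Proposition \ref{Prop9.7} reduces the $\PP_{0}(\EEE)$ case to the flag-kernel $L^{p}$ theorem of \cite{MR2949616}, and for rank $\geq 2$ one splits $\KK=\KK_{0}+\KK_{\infty}$ via Corollary \ref{Cor5.3ww} and handles the $L^{1}$ tail by Young's inequality. Your aside about uniqueness at $p=\infty$ via duality is unnecessary (the theorem only asserts $1<p<\infty$), but it does no harm.
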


\begin{proof}
 If $\KK\in \PP_0(\EEE)$, it follows from Proposition \ref{Prop9.7} that $\KK$ is an automorphic flag kernel on the standard flag. The result now follows from Theorem 8.14 in \cite{MR2949616}.
 Assume now that the rank of $\EEE$ is greater than 1 and $\KK\in\PP(\EEE)$. By Corollary \ref{Cor5.3ww},
we can write $\KK=\KK_{0}+K_{\infty}$ where $\KK_{0}\in \PP_{0}(\EEE)$ and $K_{\infty}\in L^{1}(\R^{N})$. The operator $f\to f*K_{\infty}$ is bounded on $L^{p}(G)$ for $1 \leq p \leq \infty$. 
\end{proof}

\section{Composition of operators}\label{Composition}

If $\KK\in \PP_{0}(\EEE)$ let $T_{\KK}[f]=f*\KK$ \index{T2K@$T_{\KK}$} be the corresponding left-invariant convolution operator on the homogeneous nilpotent Lie group $G$. $T_{\KK}$ is initially defined on the Schwartz space $\SS(\R^{N})$. We are interested in the composition of two operators $T_{\KK},T_{\LL}$ with $\KK,\LL\in \PP_{0}(\EEE)$.
Formally $T_{\LL}\circ T_{\KK}[f] = T_{\KK}[f]*\LL=(f*\KK)*\LL= f*(\KK*\LL) = T_{\KK*\LL}$. 

To give a concrete meaning to these equalities is  easy  if convolution is relative to the (abelian) additive structure of $\R^N$, because we can refer to the Fourier multipliers $m=\widehat\KK$, $m'=\widehat\LL$.  For $f\in\SS(\R^N)$, $T_{\LL}\circ T_{\KK}[f] =\FF^{-1}(mm'\widehat f)$, where $\FF$ denotes the Fourier transform, and the convolution kernel of $T_{\LL}\circ T_{\KK}$ is $\FF^{-1}(mm')=\KK*\LL$. In fact, since $\MM_\infty(\EEE)$ is closed under pointwise product, we can conclude as follows.

\begin{theorem}\label{Thm10.1abelian}
Let $\EEE$ be a matrix satisfying the basic hypotheses and let $\KK,\LL\in\PP_0(\EEE)$. With respect to ordinary convolution on $\R^N$, then
  $\KK*\LL\in\PP_0(\EEE)$.
  \end{theorem}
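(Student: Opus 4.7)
The plan is to reduce this to a statement about multipliers via the Fourier transform characterization established in Theorems~\ref{Thm6.1} and~\ref{Thm6.2}. Since $\KK,\LL\in\PP_0(\EEE)$, by Theorem~\ref{Thm6.2} their Fourier transforms $m=\widehat\KK$ and $m'=\widehat\LL$ lie in $\MM_\infty(\EEE)$. For ordinary (abelian) convolution on $\R^N$, the Fourier transform converts convolution to pointwise multiplication, so $\widehat{\KK*\LL}=m\,m'$. Hence by Theorem~\ref{Thm6.1} it suffices to show that the product $m\,m'$ belongs to $\MM_\infty(\EEE)$.

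The closure of $\MM_\infty(\EEE)$ under pointwise products follows immediately from the Leibniz rule. For any multi-index $\gammab=(\gammab_1,\dots,\gammab_n)\in\N^{C_1}\times\cdots\times\N^{C_n}$, one has
$$
\partial^{\gammab}_{\xib}\bigl(m\,m'\bigr)(\xib)=\sum_{\alphab+\betab=\gammab}\binom{\gammab}{\alphab}\,\partial^{\alphab}_{\xib}m(\xib)\,\partial^{\betab}_{\xib}m'(\xib),
$$
and by Definition~\ref{Def2.5}(\ref{Def2.5b}) each summand is controlled by
$$
C_{\alphab}C_{\betab}\prod_{j=1}^{n}\bigl(1+\widehat N_j(\xib)\bigr)^{-\[\alphab_j\]}\prod_{j=1}^{n}\bigl(1+\widehat N_j(\xib)\bigr)^{-\[\betab_j\]}=C_{\alphab,\betab}\prod_{j=1}^{n}\bigl(1+\widehat N_j(\xib)\bigr)^{-\[\gammab_j\]},
$$
using the additivity $\[\alphab_j\]+\[\betab_j\]=\[\gammab_j\]$ of the weighted length. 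Summing over the finitely many $(\alphab,\betab)$ with $\alphab+\betab=\gammab$ yields the required estimate for $m\,m'$.

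Finally, to interpret $\KK*\LL$ as a genuine tempered distribution and identify it with $(mm')^{\vee}$, I would note that since $m,m'$ are smooth functions with all derivatives slowly growing, both $mm'$ and its inverse Fourier transform are well defined as tempered distributions. This inverse Fourier transform coincides with the classical convolution $\KK*\LL$: by Definition~\ref{Def2.5}(\ref{Def2.5a}) one can write $\KK=\KK_{c}+\psi$, $\LL=\LL_{c}+\psi'$ with $\KK_c,\LL_c$ compactly supported distributions and $\psi,\psi'\in\SS(\R^N)$, so that each of the four convolutions $\KK_c*\LL_c$, $\KK_c*\psi'$, $\psi*\LL_c$, $\psi*\psi'$ is well defined as a tempered distribution and their sum has Fourier transform $mm'$.

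There is essentially no obstacle here; the argument is a direct transcription through the Plancherel correspondence. The only point requiring minor care is the well-definedness of $\KK*\LL$ as a tempered distribution (as tempered distributions are not in general convolvable), which is handled by the decomposition $\KK=\KK_c+\psi$ furnished by the definition of $\PP_0(\EEE)$.
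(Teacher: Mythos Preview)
Your proof is correct and follows exactly the approach the paper takes: the paper remarks just before the theorem that in the abelian case one passes to multipliers via Theorems~\ref{Thm6.1} and~\ref{Thm6.2} and uses that $\MM_\infty(\EEE)$ is closed under pointwise product, and it handles the well-definedness of $\KK*\LL$ by the same compactly-supported-plus-Schwartz decomposition you give. You have simply written out the Leibniz-rule verification that the paper leaves implicit.
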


Suppose now that $\R^N\cong G$ is a noncommutative homogeneous group. It is still easy to verify that  $\KK*\LL$ is a well-defined distribution. In fact,
$$
\KK=\KK_0+\varphi_0,\qquad \LL=\LL_0+\psi_0,
$$
where $\KK_0,\LL_0$ have compact support and $\varphi_0,\psi_0$ are Schwartz functions. Hence
$$
\KK*\LL=\KK_0*\LL_0+\KK_0*\psi_0+\varphi_0*\LL_0+\varphi_0*\psi_0,
$$
where each summand is well defined.
However, it is not guaranteed that the convolution $\KK*\LL$ is in the same class.

The main result of this section is that, under  the assumptions introduced in Section~\ref{Groups}, $\KK*\LL\in \PP_{0}(\EEE)$, and so $\PP_{0}(\EEE)$ is an algebra under convolution on the group $G$. We recall the assumptions:
\begin{enumerate}
\item[(i)] the underlying dilations on $\R^N$ are automorphisms of $G$;
\item[(ii)] the subspaces $\R^{C_j}$ are  as in \eqref{orderedC};
\item[(iii)] the matrix $\EEE$ is doubly monotone.
\end{enumerate}

\begin{theorem}\label{Thm10.1}
Suppose that conditions {\rm (i)-(iii)} above are satisfied.
If $\KK, \LL \in \PP_{0}(\EEE)$ then $\KK*\LL \in \PP_{0}(\EEE)$.
\end{theorem}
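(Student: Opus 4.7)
The strategy is to reduce the problem to a bilinear analysis of dyadic decompositions via the results of Sections~\ref{Schwartz sums} and \ref{Decompositions}. By Corollary \ref{Cor8.8}, write
\beas
\KK &= \psi_{0} + \sum_{S\in\SS'(\EEE)} \sum_{J\in \Gamma_{\Z}(\EEE_{S})}[\varphi_{S}^{J}]_{J},\\
\LL &= \psi_{0}' + \sum_{S'\in\SS'(\EEE)} \sum_{J'\in \Gamma_{\Z}(\EEE_{S'})}[\varphi_{S'}^{\prime J'}]_{J'},
\eeas
with $\psi_{0},\psi_{0}'\in\SS(\R^{N})$, and $\{\varphi_{S}^{J}\},\{\varphi_{S'}^{\prime J'}\}$ uniformly bounded families in $\CC^{\infty}_{0}(\R^{N})$ with support in the unit ball, each $\varphi_{S}^{J}$ having strong cancellation in the marked variables $\x_{k_{1}},\dots,\x_{k_{s}}$ of $S$ (and similarly for $S'$). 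Convolution is bilinear, so $\KK*\LL$ splits into four pieces. Three of these involve at least one Schwartz factor, and a direct estimate using the growth controls on $\KK,\LL$ shows the result is Schwartz. The heart of the matter is the core double sum $\sum_{S,S'}\KK_{S}*\LL'_{S'}$.

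Fix $S$ and $S'$. By Lemma \ref{Lem9.6} and the double monotonicity of $\EEE$, the dilations arising from $J\in\Gamma_{\Z}(\EEE_{S})$ and $J'\in\Gamma_{\Z}(\EEE_{S'})$ are compatible with the group law. Hence Lemma \ref{Lem9.2} applies, giving
\bes
[\varphi_{S}^{J}]_{J} * [\varphi_{S'}^{\prime J'}]_{J'} = [\theta^{J,J'}]_{K(J,J')}
\ees
where $K(J,J')\in\Z^{n}$ has entries $k_{l}=\max(\lambda_{l}(J),\lambda_{l}(J'))$ (in the notation of \eqref{9.6bb}), and $\theta^{J,J'}\in\CC^{\infty}_{0}(\R^{N})$ is normalized relative to $\varphi_{S}^{J}$ and $\varphi_{S'}^{\prime J'}$. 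Reindexing by $K\in\Z^{n}$, one obtains formally
\bes
\KK_{S}*\LL'_{S'} = \sum_{K}[\Phi_{S,S'}^{K}]_{K},\qquad \Phi_{S,S'}^{K}=\sum_{\substack{J,J'\\ K(J,J')=K}}\theta^{J,J'}.
\ees
The task is then to show that the multiplicities are controlled, that $K$ always lies in $\Gamma_{\Z}(\EEE_{T})$ for some marked partition $T\in\SS'(\EEE)$ determined combinatorially from $S,S',J,J'$, and that each $\Phi_{S,S'}^{K}$ is a bounded bump function on $\R^{N}$ with weak cancellation with parameter $\epsilon>0$ relative to $K$ (in the sense of Definition \ref{Def4.4}). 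Once this is established, Theorem \ref{Thm3.7} applied inside each class $\PP_{0}(\EEE_{T})\subset\PP_{0}(\EEE)$ yields convergence of each $\KK_{S}*\LL'_{S'}$ to an element of $\PP_{0}(\EEE)$, and summing finitely many such pieces completes the proof.

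The main obstacle is the cancellation/gain bookkeeping. For a given coordinate index $l$: if $\lambda_{l}(K)=\lambda_{l}(J)$ with $l$ a marked index of $S$, then $\varphi_{S}^{J}$ provides strong cancellation in $\x_{l}$, which survives convolution with $\varphi_{S'}^{\prime J'}$ (after using Proposition \ref{Prop3.6} and the fact that the integration in $\x_{l}$ takes place on a region of controlled size). If $\lambda_{l}(K)$ strictly dominates both scales, the gap yields an exponential gain of the type $2^{-\epsilon[k_{l}-e(l,\tau(l))\,k_{\tau(l)}]}$ needed in Definition \ref{Def4.4}; the mapping $\tau$ is chosen to point to whichever of the two parent scales is dominant. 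The delicate point is verifying that for every $K$ produced by some $(J,J')$, the constraints imposed by $\Gamma_{\Z}(\EEE_{S})$, $\Gamma_{\Z}(\EEE_{S'})$, and the coordinate-wise maximum always yield a consistent choice of marked partition $T$ and $\tau$ such that $K\in\Gamma_{\Z}(\EEE_{T})$ with weak-cancellation gains adding up to a summable geometric series after applying Proposition \ref{Prop16.1wer}. The doubly monotone hypothesis and Lemma \ref{Lem9.3pp} (doubly monotone structure is inherited by $\EEE_{S}$) are essential here, as they guarantee both that convolution scales correctly and that the new marked partition $T$ produced by the coordinate-wise maximum is itself admissible.
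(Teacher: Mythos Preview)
Your overall strategy matches the paper's: decompose via Corollary~\ref{Cor8.8}, reduce to $\KK_S * \LL_{S'}$, convolve dyadic bumps using Lemma~\ref{Lem9.2}, and reassemble via Theorem~\ref{Thm3.7}. However, the central assertion---that ``$K$ always lies in $\Gamma_{\Z}(\EEE_T)$ for some marked partition $T\in\SS'(\EEE)$''---is not correct as stated, and this is precisely where the real work lies.

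The coordinate-wise maximum $K(J,J')$ does not in general land in any $\Gamma_{\Z}(\EEE_T)$ for a marked partition of the original matrix. The paper instead partitions $\Gamma_{\Z}(\EEE_S)\times\Gamma_{\Z}(\EEE_{S'})$ into finitely many regions $\Gamma_W(S,S')$ according to which factor's scale dominates at each coordinate in $I_r\cap J_a$ (Section~\ref{Decomp1}). Within a fixed $W$ it distinguishes \emph{fixed} indices (those $r$ or $a$ whose scale is dominant in at least one coordinate) from \emph{free} indices (always dominated). The free indices are summed out first, using the exponential gains of Lemma~\ref{Lem10.13bb}; only the fixed indices survive as an $(\alpha+\beta)$-tuple $M$, not an $n$-tuple. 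The target matrix $\EEE_{S,S',W}$ is then built from a \emph{partial} matrix via Lemma~\ref{Lem3.2zxc}, and one must prove separately (Lemma~\ref{Lem10.10gg}, via Corollary~\ref{sharp-inclusion}) that $\PP_0(\EEE_{S,S',W})\subset\PP_0(\EEE)$. None of this is a marked-partition construction.

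Your sketch also understates the cancellation bookkeeping. Without the fixed/free distinction your reindexed sum $\sum_K[\Phi^K]_K$ has infinite multiplicity at each $K$, and there is no mechanism to show $\Phi^K$ is even bounded. The paper's Lemma~\ref{Lem10.13bb} is what supplies the gains needed to sum over free indices, and it splits into several cases (the sets $C_1',C_1'',C_2',C_2''$ and their $D$-analogues) depending on whether the marked index $k_r$ itself lies in $I_r^*$ and whether cancellation persists through convolution or is traded for a gain via the correction terms coming from the non-abelian group law.
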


The following outline provides an informal \emph{road map} for the long proof which begins in Section \ref{PrelimDecomp}. Let $\KK, \LL \in \PP_{0}(\EEE)$.

\begin{enumerate}[(A)]

\smallskip

\item According to Theorem \ref{Thm8.5}, 
\beas
\KK&=\varphi_{0}+\sum_{S\in \SS'(\EEE)}\KK_{S}&&\text{and}& \LL&=\psi_{0}+\sum_{T\in \SS'(\EEE)}\LL_{T}
\eeas
where $\varphi_{0}, \psi_{0}\in \SS(\R^{N})$ and 
\beas
\KK_{S}&=\sum_{I\in \Gamma_{\Z}(\EEE_{S})}[\varphi^{I}_{S}]_{I}&&\text{and}& 
\LL_{T}&=\sum_{J\in \Gamma_{\Z}(\EEE_{T})}[\psi^{J}_{T}]_{J}.
\eeas
Here $\big\{\varphi^{I}_{S}\big\}$ and $\big\{\psi^{J}_{T}\big\}$ are uniformly bounded families of functions in $\CC^{\infty}_{0}(\R^{N})$, and
\beas
\Gamma_{\Z}(\EEE_{S})&=\Big\{I=(i_{1}, \ldots, i_{s})\in \Z^{s}:\tau_{S}(k_{p},k_{r})i_{r}\leq i_{p}<0,\,\,1\leq r,p\leq s\Big\},\\
\Gamma_{\Z}(\EEE_{T}) &=\Big\{J=(j_{1}, \ldots, j_{q})\in\Z^{q}:\tau_{T}(\ell_{b},\ell_{a})j_{a}\leq j_{b}<0,\,\,1\leq a,b\leq q\Big\}.
\eeas
Then the problem of studying the composition $T_{\LL}\circ T_{\KK}$ reduces essentially to the study of the finite number of compositions $\big\{T_{\LL_{T}}\circ T_{\KK_{S}}\big\}$ for $S,T\in \SS'(\EEE)$. This reduction is discussed in Section \ref{PrelimDecomp}.
\smallskip

\item We fix marked partitions $S= \big((I_{1},k_{1});\ldots;(I_{s},k_{s})\big)$ and $T=\big((J_{1},\ell_{1});\ldots;(J_{q},\ell_{q})\big)$ in $\SS'(\EEE)$, and we simplify notation by writing $\varphi^{I}_{S}=\varphi^{I}$ and $\psi^{J}_{T}=\psi^{J}$. 
 Thus we need to study $T_{\LL_{T}}\circ T_{\KK_{S}}$ where 
\beas
\KK_{S}&=\sum_{I\in \Gamma_{\Z}(\EEE_{S})}[\varphi^{I}]_{I}=\lim_{F_{1}\nearrow \Gamma_{\Z}(\EEE_{S})}\sum_{I\in F_{1}}[\varphi^{I}]_{I}=\lim_{F\nearrow \Gamma_{\Z}(\EEE_{S})}\KK_{F_{1},S},\\
\LL_{T}&=\sum_{J\in \Gamma_{\Z}(\EEE_{T})}[\psi^{J}]_{J}=\lim_{F_{2}\nearrow \Gamma_{\Z}(\EEE_{T})}\sum_{J\in F_{2}}[\psi^{J}]_{J}=\lim_{F_{2}\nearrow \Gamma_{\Z}(\EEE_{S})}\LL_{F_{2},T}.
\eeas
Here $F_{1}, F_{2}$ are finite sets of indices, and the limits exist in the sense of distributions. 

\smallskip

\item Recall from  Corollary \ref{Remark3.10} that, since $S,T\in\SS'(\EEE)$, $\Gamma_{\Z}(\EEE_{S})$ and $\Gamma_{\Z}(\EEE_{T})$ have nonempty interiors.
By Lemma \ref{Lem5.10} in Section \ref{NewMatrices}, there are new matrices $\EEE_{S}=\{e_{S}(p,r)\}$ and $\EEE_{T}=\{e_{T}(b,a)\}$ which satisfy the basic hypothesis such that $e_{S}(p,r)\leq \tau_{S}(k_{p},k_{r})$, $e_{T}(b,a)\leq \tau_{T}(\ell_{b},\ell_{a})$.

If $F_{1}\subset  \Gamma_{\Z}(\EEE_{S})$ and $F_{2}\subset\Gamma_{\Z}(\EEE_{T})$ are any two finite sets of indices then $T_{\LL_{F_{2},T}}\circ T_{\KK_{F_{1},S}}=T_{\HH_{F_{1},F_{2},S,T}}$ where
\beas
\HH_{F_{1},F_{2},S,T} = \KK_{F_{1},S}*\LL_{F_{2},T}= \sum_{I\in F_{1}}\sum_{J\in F_{2}}[\varphi^{I}]_{I}*[\psi^{J}]_{J}.
\eeas
In  Section \ref{FiniteSets} we see that it suffices to show that $\lim_{F_{1}\nearrow\Gamma_{\Z}(\EEE_{S})}\lim_{F_{2}\nearrow\Gamma_{\Z}(\EEE_{T})}\HH_{F_{1},F_{2},S,T}$ exists in the sense of distributions and belongs to $\PP_{0}(\EEE)$. More generally, for arbitrary finite subsets $F\subset \Gamma_{\Z}(\EEE_{S})\times \Gamma_{\Z}(\EEE_{T})$, we show that
\begin{enumerate}[(a)]

\smallskip

\item $\displaystyle \HH_{F,S,T}=\sum_{(I,J)\in F} [\varphi^{I}]_{I}*[\psi^{J}]_{J}\in \PP_{0}(\EEE)$ with estimates independent of the set $F$;

\smallskip

\item In the sense of distributions, $\displaystyle \lim_{F\nearrow \Gamma_{\Z}(\EEE_{S})\times \Gamma_{\Z}(\EEE_{T})}\HH_{F,S,T}$ exists and belongs to $\PP_{0}(\EEE)$.
\end{enumerate}

\smallskip

\item In Section \ref{Properties} we summarize the support, decay, and cancellation properties of the convolutions $[\varphi^{I}]_{I}*[\psi^{J}]_{J}$ that are needed in our analysis.

\smallskip

\item In order to study $\HH_{F,S,T}$ we write
\bes
\Gamma_{\Z}(\EEE_{S})\times \Gamma_{\Z}(\EEE_{T})= \bigcup_{W\in\WW(S,T)}\Gamma_{W}(S,T)
\ees
where the index set $\WW(S,T)$ is finite, and the sets $\big\{\Gamma_{W}(S,T)\subset  \Gamma_{\Z}(\EEE_{S})\times \Gamma_{\Z}(\EEE_{T})\big\}$ are disjoint. This partition of $\Gamma_{\Z}(\EEE_{S})\times \Gamma_{\Z}(\EEE_{T})$ is somewhat complicated, but it has the property that if $(I,J)=\big((i_{1}, \ldots, i_{s}), (j_{1}, \ldots, j_{q})\big)\in \Gamma_{W}(S,T)$, then for $1\leq r\leq s$ and $1\leq a \leq q$, if $I_{r}\cap J_{a}\neq \emptyset$ the ratio $i_{r}/j_{a}$ is suitably restricted. Having made the partition, it then suffices to show that for every $W\in \WW(S,T)$, $\lim_{F\nearrow \Gamma_{W(S,T)}}\HH_{F,S,T}$ converges in the sense of distributions and belongs to $\PP_{0}(\EEE)$. This partition is described in Section \ref{Decomp1}.
 
\smallskip

\item 
Now fix $W\in \WW(S,T)$, and let $F\subset \Gamma_{W}(S,T)$ be a finite set. We study of $\HH_{F,S,T}= \sum_{(I,J)\in F}[\varphi^{I}]_{I}*[\psi^{J}]_{J}$. 
\begin{enumerate}[(a)]

\smallskip

\item For each convolution $[\varphi^{I}]_{I}*[\psi^{J}]_{J}$ there is a normalized bump function $\theta^{I,J}$ and a set of dilations $M(I,J)$ so that  $[\varphi^{I}]_{I}*[\psi^{J}]_{J}=[\theta^{I,J}]_{M(I,J)}$. 

\smallskip

\item There are $s$ parameters $(i_{1}, \ldots, i_{s})$ in $I$ and $q$ parameters $(j_{1}, \ldots, j_{q})$ in $J$, and the function $[\theta^{I,J}]_{M(I,J)}$ is dilated by some collection of these dilations.  However not all of these need appear in $M(I,J)$. We will say that a parameter $i_{r}$ or $j_{a}$ from $I\cup J$ is `fixed' if it appears among the parameters in $M(I,J)$, and `free' if it does not appear. These notions are defined precisely in Section \ref{FixedFree}.

\smallskip

\item Let $I_{r}^{*}$ be the indices in $I_{r}$ which are fixed, and let $J_{a}^{*}$ be the indices in $J_{a}$ which are fixed. Some of these sets may be empty (if $I_{r}$ or $J_{a}$ consists entirely of free indices), and we suppose there are $\alpha\leq s$ non-empty sets $I_{r_{1}}^{*}, \ldots, I_{r_{\alpha}}^{*}$ and $\beta\leq q$ non-empty set $J_{a_{1}}^{*}, \ldots J_{a_{\beta}}^{*}$. 

\smallskip

\item The sets $I_{r_{1}}^{*}, \ldots, I_{r_{\alpha}}^{*}, J_{a_{1}}^{*}, \ldots J_{a_{\beta}}^{*}$ are disjoint and their union is $\{1, \ldots, n\}$. This leads to a new decomposition 
\beas
\R^{N}=\bigoplus_{j=1}^{\alpha}\R^{I_{r_{j}}^{*}}\oplus\bigoplus_{k=1}^{\beta}\R^{J_{a_{k}}^{*}}.
\eeas
This decomposition is discussed in Section \ref{Finer}.
\end{enumerate}
\smallskip

\item\label{part(e)}
Associated to this last decomposition, we show in Section \ref{TheMatrix} that there is an $(\alpha+\beta)\times(\alpha+\beta)$ matrix $\EEE(S,T,W)$ with associated family of norms $N_{1}^{\#}, \ldots, N_{\alpha+\beta}^{\#}$ on $\R^{N}$. In Section \ref{TheClass} we show that the corresponding family of distributions $\PP_{0}(\EEE(S,T,W))$ is contained in the original family $\PP(\EEE)$. Theorem \ref{Thm10.1} then follows if we can show that 
\bes
\sum_{(I,J)\in \Gamma_{W}(S,T)}[\varphi^{I}]_{I}*[\psi^{J}]_{J}\in\PP_{0}(\EEE(S,T,W)).
\ees 
\textit{This is the heart of the matter,} and we argue as follows.

\begin{enumerate}[a)]

\medskip

\item Each convolution $[\varphi^{I}]_{I}*[\psi^{J}]_{J}=[\theta^{I,J}]_{M(I,J)}$ where $\theta^{I,J}\in \CC^{\infty}_{0}(\R^{N})$ is normalized relative to $\varphi^{I}$ and $\psi^{J}$, and where $M(I,J)$ is an $\alpha+\beta$-tuple of integers coming from $\alpha+\beta$ of the integers in $I=(i_{1}, \ldots, i_{s})$ and $J=(j_{1}, \ldots, j_{q})$. Since each function $\varphi^{I}$ and $\psi^{J}$ has strong cancellation, the function $\theta^{I,J}$ will have appropriate decay in the free variables and weak cancellation in the fixed variables. (See Lemma \ref{Lem10.13bb} below.) 
\medskip

\item
We let $\mathfrak M(S,T,W)$ \index{L4ambda@$\Lambda(S,T,W)$} denote the (infinite) set of $\alpha+\beta$-tuples $M=M(I,J)$ that arise in this way as the dilation for some convolution $[\varphi^{I}]_{I}*[\psi^{J}]_{J}$. For each $M\in \mathfrak M(S,T,W)$, we let $F(M)$ be the set of pairs $(I,J)\in \Gamma_{W}(S,T)$ such that $M(I,J)=M$. It follows that
\beas
\sum_{(I,J)\in \Gamma_{W}(S,T)}[\varphi^{I}]_{I}*[\psi^{J}]_{J}= \sum_{M\in \mathfrak M(S,T,W)}\Big[\sum_{(I,J)\in F(M)}\theta^{I,J}\Big]_{M}.
\eeas

\medskip

\item Because of the decay properties of $\theta^{I,J}$, for each $M\in \mathfrak M(S,T,W)$,  the inner infinite sum $\sum_{(I,J)\in F(M)}\theta^{I,J}$ converges to a function $\Theta^{M}\in\CC^{\infty}_{0}$, and the family of functions $\{\Theta^{M}\}$ is uniformly bounded and has weak cancellation relative to $M$. Thus \label{double sum}
\beas
\sum_{(I,J)\in \Gamma_{W}(S,T)}[\varphi^{I}]_{I}*[\psi^{J}]_{J}= \sum_{M\in \mathfrak M(S,T,W)}[\Theta^{M}]_{M}.
\eeas
Theorem \ref{Thm3.7} then shows that $\sum_{(I,J)\in \Gamma_{W}(S,T)}[\varphi^{I}]_{I}*[\psi^{J}]_{J}\in\PP_{0}(\EEE(S,T,W))$. The details are provided in Sections \ref{Estimates} and \ref{FixedandFree}.

\end{enumerate}
\end{enumerate}

\subsection{A preliminary decomposition}\label{PrelimDecomp}

Let $\KK, \LL \in \PP_{0}(\EEE)$. According to Corollary \ref{Cor8.8} we can write 
\beas
\KK &= \varphi_{0}+\sum_{S\in \SS'(\EEE)}\KK_{S}=\varphi_{0}+\sum_{S\in \SS'(\EEE)}\sum_{I\in  \Gamma_{\Z}(\EEE_{S})}[\varphi^{I}_{S}]_{I},\\
\LL &= \psi_{0}+\sum_{T\in \SS'(\EEE)}\LL_{T}=\psi_{0}+\sum_{T\in \SS'(\EEE)}\sum_{J\in\Gamma_{\Z}(\EEE_{T})}[\psi^{J}_{T}]_{J},
\eeas
where $\varphi_{0},\psi_{0}\in \SS(\R^{N})$ and $\{\varphi^{I}_{S}\}$ and $\{\psi^{J}_{T}\}$ are uniformly bounded families of functions in $\CC^{\infty}_{0}(\R^{N})$ supported in $\B(1)$. Each $\varphi^{I}_{S}$ has cancellation in the variables $\{\x_{k_{1}}, \ldots, \x_{k_{s}}\}$ and each $\psi^{J}_{T}$ has cancellation in the variables $\{\x_{l_{1}}, \ldots, \x_{l_{q}}\}$. We have
\beas
T_{\KK}\circ T_{\LL}= \varphi_{0}*\LL +\KK*\psi_{0} +\sum_{S,T\in\SS'(\EEE)}T_{\KK_{S}}\circ T_{\LL_{T}}.
\eeas
Since the first two terms on the right-hand side are Schwartz functions,  to prove Theorem \ref{Thm10.1} it suffices to show that each composition $T_{\KK_{S}}\circ T_{\LL_{T}}$ is a convolution operator $T_{\HH_{S,T}}$ where $\HH_{S,T}\in \PP_{0}(\EEE)$.

\subsection{Reduction to the case of finite sets}\label{FiniteSets}

It follows from Corollary \ref{Cor8.8} that
\beas
\KK_{S}(\x) &= \lim_{F\nearrow \Gamma_{\Z}(\EEE_{S})}\sum_{I\in F}[\varphi^{I}_{S}]_{I}(\x),
&&&
\LL_{T}(\x) &= \lim_{G\nearrow \Gamma_{\Z}(\EEE_{T})}\sum_{J\in G}[\psi^{J}_{T}]_{J}(\x)
\eeas
where the limits are taken in the sense of distributions, $F$ ranges over finite subsets of $\Gamma_{\Z}(\EEE_{S})$, and $G$ ranges over finite subsets of $\Gamma_{\Z}(\EEE_{T})$. It follows that
\beas
T_{\KK_{S}}\circ T_{\LL_{T}}[f] &= \lim_{F\nearrow \Gamma_{\Z}(\EEE_{S})}\lim_{G\nearrow \Gamma_{\Z}(\EEE_{T})}\Big[f*\sum_{I\in F}\sum_{J\in G}[\varphi^{I}_{S}]_{I}*[\psi^{J}_{T}]_{J}\Big].
\eeas
Thus it will suffice to show that $\lim_{F\nearrow\Gamma_{\Z}(\EEE_{S})}\lim_{G\nearrow \Gamma_{\Z}(\EEE_{T})}\sum_{I\in F}\sum_{J\in G}[\varphi^{I}_{S}]_{I}*[\psi^{J}_{T}]_{J} \in \PP_{0}(\EEE)$ where the limits are take in the sense of distributions. To do this, we will show that for any finite sets $F_{1}\subset \Gamma_{\Z}(\EEE_{S})$ and $F_{2}\subset\Gamma_{\Z}(\EEE_{T})$, 
\bes
\HH_{F_{1},F_{2}}=\sum_{I\in F_{1}}\sum_{J\in F_{2}}[\varphi^{I}_{S}]_{I}*[\psi^{J}_{T}]_{J}\in \PP_{0}(\EEE)
\ees
with estimates that are independent of the choice of the finite subsets $F$ and $G$, and
\bes
\HH_{S,T}= \lim_{F_{1}\nearrow \Gamma_{\Z}(\EEE_{S})}\lim_{F_{2}\nearrow \Gamma_{\Z}(\EEE_{T})}\HH_{F,G}\in \PP_{0}(\EEE)
\ees
\label{second lemma}
Thus to prove Theroem \ref{Thm10.1}, it will suffice establish the following result.

\medskip

\begin{lemma}\label{Lem10.2}
For each $S,T \in \SS'(\EEE)$ there exists $\HH_{S,T}\in \PP_{0}(\EEE)$ so that
\be\label{10.12ss}
\KK_{S}*\LL_{T}=\sum_{(I,J)\in \Gamma_{\Z}(\EEE_{S})\times\Gamma_{\Z}(\EEE_{T})}\,[\varphi^{I}_{S}]_{I}*[\psi^{J}_{T}]_{J}=\HH_{S,T}
\ee
where the series converges in the sense of distributions as the limit of sums over finite subsets of $\Gamma_{\Z}(\EEE_{S})\times\Gamma_{\Z}(\EEE_{T})$.
\end{lemma}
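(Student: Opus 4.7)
The plan is to follow the roadmap sketched in the introduction to Section~\ref{Composition}. First I would fix $S=\big((I_1,k_1);\ldots;(I_s,k_s)\big)$ and $T=\big((J_1,\ell_1);\ldots;(J_q,\ell_q)\big)$ in $\SS'(\EEE)$, abbreviate $\varphi^I=\varphi^I_S$ and $\psi^J=\psi^J_T$, and decompose the index set as
\[
\Gamma_{\Z}(\EEE_S)\times \Gamma_{\Z}(\EEE_T)=\bigcup_{W\in\WW(S,T)}\Gamma_W(S,T)
\]
into finitely many disjoint pieces, according to which of the ratios $i_r/j_a$ (for indices $r,a$ whose blocks $I_r\cap J_a$ are non-empty) lies on each side of a suitable comparison determined by the exponents in $\EEE$. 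Because this decomposition is finite, it suffices to prove, for each fixed $W$, that $\sum_{(I,J)\in\Gamma_W(S,T)}[\varphi^I]_I*[\psi^J]_J$ converges distributionally to an element of $\PP_0(\EEE)$, with estimates independent of the truncation.

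Next, for each pair $(I,J)$, the double-monotonicity of $\EEE$ together with Proposition~\ref{Prop9.4} and Lemma~\ref{Lem9.6} guarantee that the $\lambda$-tuples attached to $[\varphi^I]_I$ and $[\psi^J]_J$ lie in $E_N$. Thus Lemma~\ref{Lem9.2} applies, giving $[\varphi^I]_I*[\psi^J]_J=[\theta^{I,J}]_{M(I,J)}$ for a normalized bump function $\theta^{I,J}\in \CC^\infty_0(\R^N)$, where $M(I,J)$ encodes the componentwise maximum of the two dilations. I would then classify each index $i_r$ or $j_a$ as \emph{fixed} when the corresponding exponent actually appears in $M(I,J)$, and \emph{free} otherwise; the key observation, which is forced by the definition of $\Gamma_W(S,T)$, is that on $\Gamma_W(S,T)$ this classification depends only on $W$ (not on the specific $(I,J)$). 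The non-empty fixed blocks $I^*_{r_1},\ldots,I^*_{r_\alpha}, J^*_{a_1},\ldots,J^*_{a_\beta}$ then give a new decomposition $\R^N=\bigoplus\R^{I^*_{r_j}}\oplus\bigoplus\R^{J^*_{a_k}}$ and an $(\alpha+\beta)\times(\alpha+\beta)$ matrix $\EEE(S,T,W)$ whose class $\PP_0(\EEE(S,T,W))$ embeds in $\PP_0(\EEE)$ via Corollary~\ref{sharp-inclusion}.

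With these preparations in place, I would regroup the sum over $\Gamma_W(S,T)$ by the value of the dilation $M$. Writing $\mathfrak M(S,T,W)$ for the set of $M$ that arise and $F(M)$ for the pairs $(I,J)\in\Gamma_W(S,T)$ with $M(I,J)=M$, this yields
\[
\sum_{(I,J)\in\Gamma_W(S,T)}[\varphi^I]_I*[\psi^J]_J=\sum_{M\in\mathfrak M(S,T,W)}\bigl[\Theta^M\bigr]_M,\qquad \Theta^M=\sum_{(I,J)\in F(M)}\theta^{I,J}.
\]
The strong cancellation of $\varphi^I$ in the variables $\x_{k_r}$ and of $\psi^J$ in the variables $\x_{\ell_a}$ translates, through integration by parts in Lemma~\ref{Lem9.2}, into exponential decay of $\theta^{I,J}$ in each free parameter and into weak cancellation of $\theta^{I,J}$ (in the sense of Definition~\ref{Def4.4}) in the fixed parameters. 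The exponential decay in free directions makes each inner sum $\Theta^M$ absolutely convergent in $\CC^\infty_0$ to a function in a uniformly bounded family, while the weak cancellation of the individual summands passes to the limit, giving $\Theta^M$ with weak cancellation relative to $M$ and the matrix $\EEE(S,T,W)$. Theorem~\ref{Thm3.7} then applies to $\sum_M[\Theta^M]_M$ and identifies the limit as a distribution in $\PP_0(\EEE(S,T,W))\subset\PP_0(\EEE)$. Summing over the finitely many $W\in\WW(S,T)$ defines $\HH_{S,T}$ and proves \eqref{10.12ss}; the distributional convergence along arbitrary finite subsets follows by the same absolute-convergence bounds.

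The hard part will be the bookkeeping of step~\ref{part(e)}: producing the partition $\WW(S,T)$ so that the fixed/free classification is constant on each $\Gamma_W(S,T)$, identifying the correct coarser matrix $\EEE(S,T,W)$ satisfying the basic hypothesis, and, most delicately, extracting from the strong cancellation of $\varphi^I,\psi^J$ the precise exponential gain in the free indices together with the weak cancellation in the fixed indices needed to match Definition~\ref{Def4.4}. Once this is in place, the rest of the argument is an application of the machinery already developed in Sections~\ref{Schwartz sums}--\ref{Decompositions}.
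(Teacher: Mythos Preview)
Your proposal is correct and follows essentially the same approach as the paper: the finite partition of $\Gamma_{\Z}(\EEE_S)\times\Gamma_{\Z}(\EEE_T)$ into the sets $\Gamma_W(S,T)$, the fixed/free classification of indices (constant on each piece), the coarser decomposition of $\R^N$ with its matrix $\EEE(S,T,W)$, the regrouping $\sum_M[\Theta^M]_M$, and the appeal to Theorem~\ref{Thm3.7} after establishing weak cancellation in the fixed indices and exponential decay in the free ones are exactly the steps the paper carries out in Sections~\ref{Decomp1}--\ref{FixedandFree}, culminating in Lemma~\ref{Lem10.5}. One small correction of attribution: the decay and cancellation properties of $\theta^{I,J}$ do not come from Lemma~\ref{Lem9.2} (which only gives support and normalization) but from the more refined Lemma~\ref{Lem10.13bb}, which tracks how the strong cancellations of $\varphi^I$ and $\psi^J$ propagate through the group convolution; this is precisely the ``delicate'' step you flag at the end.
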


\subsection{Properties of the convolution $[\varphi^{I}_{S}]_{I}*[\psi^{J}_{T}]_{J}$}\label{Properties}

In order to proceed further, we will need information about the support, decay, and cancellation properties of each of the convolutions $[\varphi^{I}_{S}]_{I}*[\psi^{J}_{T}]_{J}$ that appear in equation (\ref{10.12ss}). The required estimates are given in Lemma \ref{Lem10.13bb} below. The statement essentially incorporates the results of Lemmas 6.17 and 6.18 and Corollary 6.19 in \cite{MR2949616}. Since the setting now is somewhat more complicated, we include a sketch of the proof.

Suppose that $G$ is a homogeneous nilpotent Lie group with underlying manifold $\R^{N}$, and that the one-parameter family of automorphic dilations on $G$ is given by \beas
\delta_{r}(\x)=(r^{d_{1}}x_{1}, \ldots, r^{d_{N}}x_{N}).
\eeas 
Let $*$ denote the convolution on $G$.

\begin{lemma}\label{Lem10.13bb}
Let $\varphi, \psi\in \CC^{\infty}_{0}(\R^{N})$ have support in the unit ball and put 
\beas[]
[\varphi]_{\lambda}(\x)&= \Big[\prod_{m=1}^{N}\lambda_{m}^{-d_{m}}\Big]\varphi(\lambda_{1}^{-d_{1}}x_{1}, \ldots, \lambda_{N}^{-d_{N}}x_{N}),\\
[\psi]_{\mu}(\x) &= \Big[\prod_{m=1}^{N}\mu_{m}^{-d_{m}}\Big]\psi(\mu_{1}^{-d_{1}}x_{1}, \ldots, \mu_{N}^{-d_{N}}x_{N}).
\eeas
Assume that  $0<\lambda_{1}\leq \lambda_{2}\leq \cdots \leq \lambda_{N}$ and $0<\mu_{1}\leq \mu_{2}\leq\cdots \leq \mu_{N}$. 
\begin{enumerate}[{\rm (a)}]

\item \label{Lem10.13bb1} \emph{[Support properties]} 
There exists $\theta\in \CC^{\infty}_{0}(\R^{N})$, normalized with respect to $\varphi$ and $\psi$, so that, 
\beas[]
[\varphi]_{\lambda}*[\psi]_{\mu}(\x) &= \Big[\prod_{m=1}^{n}\nu_{m}^{-d_{m}}\Big]\theta(\nu_{1}^{-d_{1}}\x_{1}, \ldots, \nu_{N}^{-d_{N}}x_{N})=[\theta]_{\nu}(\x)
\eeas
where $\nu_{j}=\lambda_{j}\vee\mu_{j}=\max\{\lambda_{j},\mu_{j}\}$.

\item  \label{Lem10.13bb2}\emph{[Cancellation and decay properties]}  The estimates come in three parts, depending on whether there is cancellation in any of the first $N-1$ variables $\{x_{1}, \ldots, x_{N-1}\}$, or in the last variable $x_{N}$.

\begin{enumerate}[{\rm(i)}]

\medskip

\item Let $C,D\subset \{1, \ldots, N-1\}$, and suppose that $\varphi$ has cancellation in the variables $x_{j}$ for $j\in C$, and that $\psi$ has cancellation in the variables $x_{k}$ for $k\in D$. Put
\beas
C_{1}& =\Big\{j\in \{1, \ldots, N-1\}:\text{$\varphi$ has cancellation in $x_{j}$ and $\lambda_{j}< \mu_{j}$}\big\},\\
C_{2}& =\Big\{j\in \{1, \ldots, N-1\}:\text{$\varphi$ has cancellation in $x_{j}$ and $\lambda_{j}\geq \mu_{j}$}\big\},\\
D_{1}&=\Big\{j\in \{1, \ldots, N-1\}:\text{$\psi$ has cancellation in $x_{j}$ and $\mu_{j}\leq\lambda_{j}$}\big\},\\
D_{2}&= \Big\{j\in \{1, \ldots, N-1\}:\text{$\psi$ has cancellation in $x_{j}$ and $\mu_{j}> \lambda_{j}$}\big\},
\eeas
so that
\beas
C_{1}\cup C_{2}&= C,&&&C_{1}\cap C_{2}&= \emptyset,\\
D_{1}\cup D_{2} &= D,&&&D_{1}\cap D_{2}&=\emptyset,\\
C_{1}\cap D_{1}&=\emptyset, &&& C_{2}\cap D_{2}&=\emptyset.
\eeas
There exists $\epsilon>0$ depending only on the group structure so that $[\varphi]_{\lambda}*[\psi]_{\mu}$  can be written as a finite sum of terms of the form

\bea\label{NewEquation}
\prod_{j\in C_{1}}\left[\left(\frac{\lambda_{j}}{\mu_{j}}\right)^{\epsilon}+\left(\frac{\lambda_{j}}{\lambda_{j+1}}\right)^{\epsilon}\right]& \times 
\prod_{k\in D_{1}}\left[\left(\frac{\mu_{k}}{\lambda_{k}}\right)^{\epsilon}+\left(\frac{\mu_{k}}{\mu_{k+1}}\right)^{\epsilon}\right]\times\\
&\times \prod_{j\in C_{2}'}\left(\frac{\lambda_{j}}{\lambda_{j+1}}\right)^{\epsilon}\times
\prod_{k\in D_{2}'}\left(\frac{\mu_{k}}{\mu_{k+1}}\right)^{\epsilon}
\big[\Theta_{C_{2}'',D_{2}''}\big]_{\nu}
\eea
\noindent where
\beas
C_{2}&= C_{2}'\cup C_{2}'',&&&&C_{2}'\cap C_{2}''&=\emptyset,\\
D_{2}&= D_{2}'\cup D_{2}'',&&&&D_{2}'\cap D_{2}''&=\emptyset.
\eeas
The functions $\big\{\Theta_{C_{2}'',D_{2}''}\big\}\subset \CC^{\infty}_{0}(\R^{N})$ are normalized relative to $\varphi$ and $\psi$ and have cancellation in the variables $x_{j}$ for $j\in C_{2}''\cup D_{2}''$.

\medskip

\item Suppose that $\varphi$ also has cancellation in the variable $x_{N}$. If $\lambda_{N}<\mu_{N}$, there is an additional factor $\Big(\frac{\lambda_{N}}{\mu_{N}}\Big)^{\epsilon}$ in equation (\ref{NewEquation}). If $\lambda_{N}\geq \mu_{N}$ then the function $\Theta_{C_{2}'',D_{2}''}$ in equation (\ref{NewEquation}) also has cancellation in the variable $x_{N}$.

\medskip

\item Suppose that $\psi$ also has cancellation in the variable $x_{N}$. If $\lambda_{N}<\mu_{N}$, then the function $\Theta_{C_{2}'',D_{2}''}$ in equation (\ref{NewEquation}) also has cancellation in the variable $x_{N}$.  If $\lambda_{N}\geq \mu_{N}$ there is an additional factor $\Big(\frac{\lambda_{N}}{\mu_{N}}\Big)^{\epsilon}$ in equation (\ref{NewEquation}).
\end{enumerate}
\end{enumerate}
\end{lemma}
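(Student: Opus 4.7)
The plan follows the strategy of Lemmas~6.17--6.18 and Corollary~6.19 in \cite{MR2949616}, adapted to the present decomposition $\R^N=\R^{C_1}\oplus\cdots\oplus\R^{C_n}$.

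For part~(\ref{Lem10.13bb1}), the orderings $\lambda_1\le\cdots\le\lambda_N$ and $\mu_1\le\cdots\le\mu_N$ place $\lambda,\mu\in E_N$, so the dilations by $\lambda$ and $\mu$ are compatible with the group structure, and Lemma~\ref{Lem9.2} applies directly to produce $\theta\in\CC^\infty_0(\R^N)$, normalized relative to $\varphi$ and $\psi$, supported in a fixed ball $\B(\rho_0)$, and such that $[\varphi]_\lambda*[\psi]_\mu=[\theta]_\nu$ with $\nu_j=\lambda_j\vee\mu_j$.

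For part~(\ref{Lem10.13bb2}), after rescaling by $\nu$ it suffices to analyze $\theta$ as a single fixed convolution $\tilde\varphi*\tilde\psi$ where $\tilde\varphi$ and $\tilde\psi$ are the pullbacks of $[\varphi]_\lambda$ and $[\psi]_\mu$ under the $\nu$-dilation; every component of the resulting scaling is at most $1$, and for each $j$ exactly one of the two equals $1$. The cancellation bookkeeping is then done variable by variable. For $j\in C_1$ (so $\varphi$ has cancellation in $x_j$ and $\lambda_j<\mu_j=\nu_j$), Proposition~\ref{Prop3.5} lets me write $\varphi=\sum_k\partial_{x_k}\varphi_k$ with $k$ in the single-coordinate block containing $x_j$ and $\varphi_k$ normalized relative to $\varphi$. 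Transferring each derivative to $\tilde\psi$ by integration by parts on the group and using the definition of the $\lambda$-dilation produces the dimensional factor $(\lambda_j/\mu_j)^{d_k}\ge(\lambda_j/\mu_j)^\epsilon$ for some $\epsilon>0$ depending only on the dilations. The alternative gain $(\lambda_j/\lambda_{j+1})^\epsilon$ arises because integration by parts in $x_j$ does not commute with the non-abelian group law: by \eqref{productN} and \eqref{Ml} the chain rule produces extra derivatives in the higher coordinates $x_{j+1},\ldots,x_N$ weighted by polynomials in the lower coordinates, and those derivatives are controlled by the dilation ratio $\lambda_j/\lambda_{j+1}\le1$. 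The case $j\in D_1$ is symmetric.

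When $j\in C_2$ (so $\varphi$ has cancellation in $x_j$ but $\nu_j=\lambda_j\ge\mu_j$), the derivative produced by Proposition~\ref{Prop3.5} is at the same scale as the natural scale of $\theta$, so it is either absorbed as a cancellation of the fully-assembled $\Theta_{C_2'',D_2''}$ in $x_j$ (no gain) or traded against the non-abelian correction term for a factor $(\lambda_j/\lambda_{j+1})^\epsilon$, the choice being recorded by the splitting $C_2=C_2'\sqcup C_2''$. Similarly for $D_2$. Cases~(ii) and~(iii) are simpler because $x_N$ is the top variable: there is no $\lambda_{N+1}$, so cancellation of the ``smaller'' function in $x_N$ yields only the direct gain $(\lambda_N/\mu_N)^\epsilon$ (respectively $(\mu_N/\lambda_N)^\epsilon$), while cancellation of the ``larger'' function in $x_N$ is preserved in $\Theta_{C_2'',D_2''}$ with no primed/double-primed splitting needed.

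\textbf{Main obstacle.} The delicate point is tracking which cancellations survive and which are spent, given that non-abelian integration by parts creates cascading derivatives in all higher variables via the polynomials $M_l$ of \eqref{productN}. Each application of Proposition~\ref{Prop3.5} in a variable $x_j$ may force the next step to handle new derivatives in $x_{j+1},\ldots,x_N$, and the coherence of the choice $C_2=C_2'\sqcup C_2''$ (and $D_2=D_2'\sqcup D_2''$) across all variables must be maintained simultaneously. This combinatorial bookkeeping is precisely what is carried out in \cite{MR2949616}, and the argument transfers to our setting essentially unchanged once one works with the coordinate blocks $C_1,\dots,C_n$ in place of individual coordinates.
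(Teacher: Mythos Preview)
Your proposal is correct and follows the same strategy as the paper's sketch, which likewise invokes Lemma~\ref{Lem9.2} for part~(\ref{Lem10.13bb1}) and defers the detailed combinatorics to Lemmas~6.17--6.18 and Corollary~6.19 of \cite{MR2949616}. One point worth sharpening: where you speak of ``integration by parts on the group'' and ``chain rule'' corrections, the paper is more explicit that the mechanism is to write $\partial_{x_j}=Z_j+\sum_{k>j}P_{j,k}(\x)\partial_{x_k}$ with $Z_j$ a left- or right-invariant vector field, so that the $Z_j$ piece moves across the convolution cleanly (yielding the $(\lambda_j/\mu_j)^{\epsilon}$ gain) while the polynomial correction terms produce the $(\lambda_j/\lambda_{j+1})^{\epsilon}$ gain; this is the precise reason the two alternatives in your $C_1$ analysis arise, and it makes the distinction between $j<N$ and $j=N$ in parts~(ii)--(iii) transparent since $\partial_{x_N}$ is already invariant.
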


\begin{proof}[Sketch of the proof]\quad

\medskip

Because of the monotonicity of the $\lambda$'s and the $\mu$'s, assertion (\ref{Lem10.13bb1}) follows immediately from Lemma \ref{Lem9.2}, so the real content of the lemma is assertion (\ref{Lem10.13bb2}). Let us first consider the case of  Abelian convolution.  Then there is no need to distinguish between the first $N-1$ variable and the last variable. If (say) $\varphi$ has cancellation in the variable $x_{j}$ then we can write $\varphi=\partial_{x_{j}}\phi$ where $\phi$ is normalized in terms of $\varphi$. Suppose that $\lambda_{j}<\mu_{j}$. Then moving the derivative $\partial_{x_{j}}$ across the convolution, we have
\beas[]
[\varphi]_{\lambda}*[\psi]_{\mu}
&=
[\partial_{x_{j}}\phi]_{\lambda}*[\psi]_{\mu}
=
\lambda_{j}^{d_{j}}\partial_{x_{j}}\big([\phi]_{\lambda})*[\psi]_{\mu}\\
&=
\lambda_{j}^{d_{j}}[\phi]_{\lambda}*\partial_{x_{j}}\big([\psi]_{\mu}\big)
=
\Big(\frac{\lambda_{j}}{\mu_{j}}\Big)^{d_{j}} [\phi]_{\lambda}*[\partial_{x_{j}}\psi]_{\mu}
=
\Big(\frac{\lambda_{j}}{\mu_{j}}\Big)^{d_{j}}[\theta]_{\nu}
\eeas
where we have use part (\ref{Lem10.13bb1}) to write $[\phi]_{\lambda}*[\partial_{x_{j}}\psi]_{\mu}= [\theta]_{\nu}$ where $\theta$ is normalized.  On the other hand, if $\lambda_{j}\geq \mu_{j}$, we can put the derivative in front of the whole convolution, and we have
\beas[]
[\varphi]_{\lambda}*[\phi]_{\mu}
&=
[\partial_{x_{j}}\phi]_{\lambda}*[\phi]_{\mu}
=
\lambda_{j}^{d_{j}}\partial_{x_{j}}\big([\phi]_{\lambda})*[\phi]_{\mu}\\
&=
\lambda_{j}^{d_{j}}\partial_{x_{j}}\Big([\phi]_{\lambda}*[\psi]_{\mu}\Big)
=
\lambda_{j}^{d_{j}}\partial_{x_{j}}\big([\theta']_{\nu}\big)
=
[\partial_{x_{j}}\theta']_{\nu}
\eeas
where $[\phi]_{\lambda}*[\psi]_{\mu}=[\theta']_{\nu}$ by part (\ref{Lem10.13bb1}), and now $\partial_{x_{j}}\theta'$ has cancellation in the variable $x_{j}$. Thus the lemma follows in the case of Abelian convolution on $\R^{n}$.

\medskip

For convolution $*$ on a homogeneous nilpotent Lie group, in general it is \emph{not} the case that $\partial_{x_{j}}f*g=f*\partial_{x_{j}}g$ or that $\partial_{x_{j}}f*g=\partial_{x_{j}}[f*g]$.  However, we have the following facts.\footnote{See Proposition 6.16 in \cite{MR2949616} or page 22 in \cite{MR657581}.}
\begin{enumerate}[(a)]

\smallskip

\item If $X$ is a left-invariant vector field and $Y$ is a right-invariant vector field then $X[f*g]=f*X[g]$ and $Y[f*g]= Y[f]*g$;

\smallskip

\item If $X$ is a left-invariant vector field and $Y=\widetilde X$ is the unique right-invariant vector field agreeing with $X$ at the origin, then $X[f]*g=f*Y[g]$;

\smallskip

\item For any $1 \leq j \leq N-1$ we can write
\beas
\frac{\partial f}{\partial x_{j}}(\x) = Z_{j}[f](\x)+\sum_{k=j+1}^{N}P_{j,k}(\x)\frac{\partial f}{\partial {x_{k}}}(\x)= Z_{j}[f](\x)+\sum_{k=j+1}^{N}\frac{\partial}{\partial {x_{k}}}\big[P_{j,k}(\x)f(\x)\big]
\eeas
where $Z_{j}$ is either a left- or a right-invariant vector field, and $P_{j,k}$ is a polynomial of the form
\beas
P_{j,k}(x_{1}, \ldots, x_{k-1}) = \sum_{\alphab\in\mathfrak A(j,k)}c_{j,k}(\alpha)x_{1}^{\alpha_{1}}\cdots x_{k-1}^{\alpha_{k-1}}
\eeas
where $\mathfrak A(j,k)=\big\{(\alpha_{1}, \ldots, \alpha_{k-1})\in \Z^{k-1}:\alpha_{1}d_{1}+\cdots+\alpha_{k-1}d_{k-1}=d_{k}-d_{j}\big\}$ .
\end{enumerate}

\bigskip

\noindent We can establish part (\ref{Lem10.13bb2}) of the lemma by repeating the Abelian argument, but replacing the derivative $\partial_{x_{j}}$ by the appropriate invariant vector field $Z_{j}$. When $j<N$ this introduces an error which is a sum of terms with cancellation in variables $x_{k}$ with $k>j$. Explicitly,
\beas[]
[\partial_{x_{j}}\phi]_{\lambda}&=
\lambda_{j}^{d_{j}}\partial_{x_{j}}\big([\phi]_{\lambda}\big)
=
\lambda_{j}^{d_{j}}Z_{j}\big([\phi]_{\lambda}\big)+\lambda_{j}^{d_{j}}\sum_{k=j+1}^{N}\partial_{x_{k}}\big(P_{j,k}(\x)[\phi]_{\lambda}(\x)\big)\\
\eeas
Now it follows as in Proposition 4.5 of \cite{MR2949616} that
\beas[]
[\partial_{x_{j}}\phi]_{\lambda}&=
\lambda_{j}^{d_{j}}Z_{j}\big([\phi]_{\lambda}\big)+\sum_{k=j+1}^{N}\Big(\frac{\lambda_{j}}{\lambda_{k}}\Big)^{d_{j}}
\lambda_{k}^{d_{k}}\partial_{x_{j}}\big([\Theta_{j,k}]_{\lambda}\big)\\
&=
\lambda_{j}^{d_{j}}Z_{j}\big([\phi]_{\lambda}\big)+\sum_{k=j+1}^{N}\Big(\frac{\lambda_{j}}{\lambda_{k}}\Big)^{d_{j}}
[\partial_{x_{j}}\Theta_{j,k}]_{\lambda}
\eeas
where the functions $\Theta_{j,k}$ are normalized relative to $\phi$. The term $\lambda_{j}^{d_{j}}Z_{j}\big([\phi]_{\lambda}\big)$ is handled as in the Abelian case, since now the vector field $Z_{j}$ can either be moved across a convolution or put in front of the convolution. This gives a gain of $\Big(\frac{\lambda_{j}}{\mu_{j}}\Big)^{d_{j}}$. The remaining terms now have cancellation in variables $x_{k}$ with $k>1$, and there is a gain  $\Big(\frac{\lambda_{j}}{\lambda_{k}}\Big)^{d_{j}}\leq\Big(\frac{\lambda_{j}}{\lambda_{j+1}}\Big)^{d_{j}} $. Of course when $j=N$, there is no correction term. This completes the sketch. Complete details can be found in Section 6 of  \cite{MR2949616}.
\end{proof}

\subsection{A further decomposition of $ \Gamma_{\Z}(\EEE_{S})\times \Gamma_{\Z}(\EEE_{T})$}\label{Decomp1}

Fix two marked partitions $S=\big\{(I_{1},k_{1});\ldots;(I_{s},k_{s})\big\},\,T=\{(J_{1},\ell_{1});\ldots;(J_{q},\ell_{q})\}\in \SS'(\EEE)$. We return to the study of the infinite sum $\sum_{(I,J)\in \Gamma_{\Z}(\EEE_{S})\times \Gamma_{\Z}(\EEE_{T})}[\varphi^{I}]_{I}*[\psi^{J}]_{J}$.\footnote{Since $S$ and $T$ are fixed, we now write $\varphi^{I}$ and $\psi^{J}$ instead of $\varphi^{I}_{S}$ and $\psi^{J}_{T}$.} To analyze this, we need a further decomposition of the index set $ \Gamma_{\Z}(\EEE_{S})\times \Gamma_{\Z}(\EEE_{T})$.  Recall from equation (\ref{1.15iou}) that
\bea\label{10.3aaa}
[\varphi^{I}]_{I}(\x) &= 2^{-\sum_{r=1}^{s}i_{r}\widehat Q_{S,r}}\,\varphi^{I}(2^{-i_{1}}\,\hat\cdot_{S}\,\x_{I_{1}}, \ldots, 2^{-i_{s}}\,\hat\cdot_{S}\,\x_{I_{s}}),\\
[\psi^{J}]_{J}(\x) &= 2^{-\sum_{a=1}^{q}j_{a}\widehat Q_{T,a}}\psi^{J}(2^{-j_{1}}\,\hat\cdot_{T}\,\x_{J_{1}},\ldots, 2^{-j_{q}}\hat \cdot_{T}\,\x_{J_{q}}),
\eea
where, using the identities $e(k_{r},j)=\tau_{S}(j,k_{r})^{-1}$ and $e(\ell_{a},l)=\tau_{T}(l,\ell_{a})^{-1}$ from Proposition \ref{Prop5.8}, together with \eqref{5.12aa} and Definition \ref{Def3.12iou} (b), we have
\bea\label{10.4aaa}
\widehat Q_{S,r}&=\sum_{j\in I_{r}}Q_{j}\,e(k_{r},j)^{-1}&&=\sum_{j\in I_{r}}Q_{j}\tau_{S}(j,k_{r}),\\
2^{-i_{r}}\,\hat\cdot_{S}\,\x_{I_{r}}&= \Big\{2^{-i_{r}/e(k_{r},j)}\cdot \x_{v}:j\in I_{r}\Big\}&&= \Big\{2^{-i_{r}\tau_{S}(j,k_{r})}\cdot \x_{v}:j\in I_{r}\Big\},\\
\widehat Q_{T,a}&= \sum_{l\in J_{a}}Q_{a}e(\ell_{a},l)^{-1}
&&=\sum_{l\in J_{a}}Q_{a}\tau_{T}(l,\ell_{a}),
\\2^{-j_{a}}\,\hat\cdot_{T}\,\x_{J_{a}}&= \Big\{2^{-j_{a}/e(\ell_{a},v)}\cdot\x_{v}:v\in J_{a}\Big\}
&&=
\Big\{2^{-j_{a}\tau_{T}(v,\ell_{a})}\cdot\x_{v}:v\in J_{a}\Big\}.
\eea

According to Lemma \ref{Lem10.13bb}, for each index $j$, $1\leq j \leq n$, the scale of the support of the convolution $[\varphi^{I}]_{I}*[\psi^{J}]_{J}$ in the direction of each coordinate $\x_{j}$ is the maximum of the scales of the supports of each factor. We let $1\leq r\leq s$ and $1 \leq a\leq q$ and focus on indices $j\in I_{r}\cap J_{a}$ provided that the intersection is non-empty. The size of the support of $[\varphi^{I}]_{I}$ in the direction of $\x_{j}$ is on the order of $2^{i_{r}\tau_{S}(j,k_{r})}$, while the size of the support of $[\psi^{J}]_{J}$ is on the order of $2^{j\tau_{T}(v_{j},\ell_{a})}$. Since the indices $i_{r}$ and $j_{a}$ are negative, we have
\beas
2^{i_{r}\tau_{S}(j,k_{r})} \geq 2^{j_{a}\tau_{T}(j,\ell_{a})}
&\Longleftrightarrow 
i_{r}\tau_{S}(j,k_{r}) \geq j_{a}\tau_{T}(j,\ell_{a})
\Longleftrightarrow
\frac{i_{r}}{j_{a}}\leq \frac{\tau_{T}(j,\ell_{a})}{\tau_{S}(j,k_{r})}=\frac{e(k_{r},j)}{e(\ell_{a},l)}.
\eeas
Write\footnote{We should write $I_{r}\cap J_{a} =\{v_{1}^{r,a}, \ldots, v_{m(r,a)}^{r,a}\}$, but as long as the context is clear we omit the dependence on $(r,a)$ for simplicity of notation.} $I_{r}\cap J_{a}=\big\{v_{1}, \ldots, v_{m}\big\}$, and order these indices so that
\be\label{10.15tt}
0<\frac{\tau_{T}(v_{1},\ell_{a})}{\tau_{S}(v_{1},k_{r})}
\leq \frac{\tau_{T}(v_{2},\ell_{a})}{\tau_{S}(v_{2},k_{r})}\leq \cdots \leq \frac{\tau_{T}(v_{m-1},\ell_{a})}{\tau_{S}(v_{m-1},k_{r})}\leq \frac{\tau_{T}(v_{m},\ell_{a})}{\tau_{S}(v_{m},k_{r})}<\infty.
\ee
The ratios in equation (\ref{10.15tt}) divide the positive real axis into at most $m+1$ subintervals. For any $I=(i_{1}, \ldots, i_{s})\in  \Gamma_{\Z}(\EEE_{S})$  and $J=(j_{1}, \ldots, j_{q})\in   \Gamma_{\Z}(\EEE_{T})$, the ratio $i_{r}/j_{a}$ must lie in one of these intervals. We partition $ \Gamma_{\Z}(\EEE_{S})\times \Gamma_{\Z}(\EEE_{T})$ into disjoint sets so that for $(I,J)$ in each set, the ratios $i_{r}/j_{a}$ always lie in the same interval. 

To describe and index this decomposition, we proceed as follows. Let 
\beas \index{S4igma@$\Sigma_{S,T}$}
\Sigma_{S,T}=\{(I_{r},J_{a}):1\leq r\leq s,\,\,1\leq a \leq q,\,\,I_{r}\cap J_{a}\neq \emptyset\},
\eeas 
For each 
$(I_r,J_a)\in\Sigma_{S,T}$ consider the set of $m_{r,a}+1$ intervals
\bea\label{UUU}
\Bigg\{\bigg[0,&\frac{\tau_{T}(v_{1},\ell_{a})}{\tau_{S}(v_{1},k_{r})}\bigg)=E^{r,a}_{0},
\left[\frac{\tau_{T}(v_{1},\ell_{a})}{\tau_{S}(v_{1},k_{r})},\frac{\tau_{T}(v_{2},\ell_{a})}{\tau_{S}(v_{2},k_{r})}\right)=E^{r,a}_{1},\ldots\\
&\ldots, \left[\frac{\tau_{T}(v_{j},\ell_{a})}{\tau_{S}(v_{j},k_{r})},\frac{\tau_{T}(v_{j+1},\ell_{a})}{\tau_{S}(v_{j+1},k_{r})}\right)=E^{r,a}_{j},\ldots\\
&
\ldots,
\left[\frac{\tau_{T}(v_{m_{r,a}-1},\ell_{a})}{\tau_{S}(v_{m_{r,a}-1},k_{r})},\frac{\tau_{T}(v_{m_{r,a}},\ell_{a})}{\tau_{S}(v_{m_{r,a}},k_{r})}\right)=E^{r,a}_{m_{r,a}-1}, 
\left[\frac{\tau_{T}(v_{m_{r,a}},\ell_{a})}{\tau_{S}(v_{m_{r,a}},k_{r})},\infty\right)=E^{r,a}_{m_{r,a}}\Bigg\},
\eea

and let \index{W1@$\widetilde {\WW}(S,T),\WW(S,T)$}
\beas
\widetilde {\WW}(S,T)=\prod_{(I_r,J_a)\in\Sigma_{S,T}}\big\{E^{r,a}_{0},E^{r,a}_{1},\dots,E^{r,a}_{m_{r,a}}\big\},
\eeas
i.e., the set of functions which assign to each  $(I_r,J_a)\in \Sigma_{S,T}$ one of the intervals in \eqref{UUU}.
Hence, if $W\in \widetilde \WW(S,T)$
 and $(I_{r},J_{a})\in \Sigma_{S,T}$, $W(I_{r},J_{a})$ is one of these intervals, and we write
\beas
W(I_{r},J_{a})=E^{r,a}_{w(I_{r},J_{a})}
\eeas
so that $w(I_{r},J_{a})\in\big\{0,1,\dots,m_{r,a}\big\}$. (Thus $W(I_{r},J_{a})$ is one of the $m_{r,a}+1$ intervals, while $w(I_{r},J_{a})$ is the number of that interval.)   Setting $j=w(I_{r},J_{a})$, let
\beas
\Lambda_{W}(J_{a},I_{r})&=
\begin{cases}
\infty &\text{if }j=0,\\\\
\frac{\tau_{S}(v_{j},k_{r})}{\tau_{T}(v_{j},\ell_{a})} &\text{if $j>0$,}
\end{cases}&&&
\Lambda_{W}(I_{r},J_{a}) &=
\begin{cases}
\frac{\tau_{T}(v_{j+1},\ell_{a})}{\tau_{S}(v_{j+1},k_{r})}&\text{if $j<m_{r,a}$,}\\\\
\infty &\text{if $j=m_{r,a}$,}
\end{cases}
\eeas
so that $\Lambda_{W}(J_{a},I_{r})^{-1}$ and $\Lambda_{W}(I_{r},J_{a})$ are the left and right endpoints of $E_{w(I_{r},J_{a})}$.

\medskip

We now construct the desired partition. For $W\in \widetilde{\WW}(S,T)$, put 
\bea\label{CCC} \index{G3ammaW@$\Gamma_{W}(S,T)$}
\Gamma_{W}(S,T) = \Big\{\big((i_{1},\ldots,i_{s}),&(j_{1}, \ldots,j_{q})\big)\in \Gamma_{\Z}(\EEE_{S})\times \Gamma_{\Z}(\EEE_{T}): \\
&
(I_{r},J_{a})\in \Sigma_{S,T} \Longrightarrow \frac{1}{\Lambda_{W}(J_{a},I_{r})} \leq \frac{i_{r}}{j_{a}}<\Lambda_{W}(I_{r},J_{a})\Big\}.
\eea
It may happen  that for some $W\in \widetilde \WW(S,T)$ the set $\Gamma_{W}(S,T)$ is empty,\footnote{For instance,  if there are equalities among the weak inequalities in (\ref{10.15tt}), some of the intervals are empty. } and so we put
\beas
\WW(S,T)=\Big\{W\in \widetilde \WW(S,T):\Gamma_{W}(S,T)\neq \emptyset\Big\}.
\eeas
The following Proposition is an immediate consequence of the definition of $\Gamma_{W}(S,T)$ and the ordering in equation (\ref{10.15tt}).

\goodbreak

\begin{proposition}\label{Prop10.4yy}Let $S=\big\{(I_{1},k_{1});\ldots;(I_{s},k_{s})\big\},\,T=\{(J_{1},\ell_{1});\ldots;(J_{q},\ell_{q})\}\in \SS'(\EEE)$.

\begin{enumerate}[{\rm(a)}]

\smallskip

\item The sets $\{\Gamma_{W}(S,T):W\in \WW(S,T)\}$ partition the set $\Gamma_{\Z}(\EEE_{S})\times \Gamma_{\Z}(\EEE_{T})$:
\beas
\Gamma_{\Z}( {\EEE}_{S})\times \Gamma_{\Z}({\EEE}_{T})&= \bigcup_{W\in \WW(S,T)}\Gamma_{W}(S,T),\\
W_{1}, W_{2}\in \WW(S,T&),\quad W_{1}\neq W_{2}\Longrightarrow\Gamma_{W_{1}}(S,T)\cap \Gamma_{W_{2}}(S,T)=\emptyset.
\eeas

\smallskip

\item

Let $W\in \WW(S,T)$ and suppose $I_{r}\cap J_{a} =\{v_{1}, \ldots, v_{m}\}\neq\emptyset$, ordered as in equation (\ref{10.15tt}). Suppose that $W(I_{r},J_{a}) = E_{w(I_{r},J_{a})}$. Let $\big((i_{1},\ldots,i_{s}),(j_{1}, \ldots,j_{q})\big)\in \Gamma_{W}(S,T)$. Then for $1\leq k \leq m$,
\beas
k\leq w(I_{r},J_{a}) &\Longleftrightarrow j_{a}\tau_{T}(v_{k},\ell_{a})\geq i_{r}\tau_{S}(v_{k},k_{r})\\
k>w(I_{r},J_{a}) &\Longleftrightarrow j_{a}\tau_{T}(v_{k},\ell_{a})< i_{r}\tau_{S}(v_{k},k_{r})\\
\eeas
In particular, 
\begin{enumerate}[{\rm(1)}]

\smallskip

\item if $w(I_{r},J_{a})=0$, then $i_{r}\tau_{S}(v_{k},k_{r})>j_{a}\tau_{T}(v_{k},\ell_{a}) $ for all $1 \leq k \leq m$;

\smallskip

\item if $w(I_{r},J_{a})= m$ then $i_{r}\tau_{S}(v_{k},k_{r})\leq j_{a}\tau_{T}(v_{k},\ell_{a}) $ for all $1 \leq k \leq m$;

\smallskip

\item if $k_{r}\in I_{r}\cap J_{a}$ then
\beas
k_{r}\leq w(I_{r},J_{a}) &\Longrightarrow i_{r}\leq \tau_{T}(k_{r},\ell_{a})\,j_{a}<0,\\
k_{r}> w(I_{r},J_{a}) &\Longrightarrow \tau_{T}(k_{r},\ell_{a})\,j_{a}<i_{r}<0;\\
\eeas
\item 
if $\ell_{a}\in I_{r}\cap J_{a}$ then
\beas
\ell_{a}> w(I_{r},J_{a}) &\Longrightarrow j_{a}<\tau_{S}(\ell_{a},k_{r})\, i_{r}<0,\\
\ell_{a}\leq w(I_{r},J_{a}) &\Longrightarrow \tau_{S}(\ell_{a},k_{r})\, i_{r}\leq j_{a}<0.
\eeas
\end{enumerate}
\end{enumerate}
\end{proposition}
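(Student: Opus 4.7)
The statement is essentially a bookkeeping lemma, and my plan is to argue that both parts unpack directly from the definitions.

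For part (a), I would first observe that for any pair $\bigl((i_{1},\ldots,i_{s}),(j_{1},\ldots,j_{q})\bigr)\in \Gamma_{\Z}(\EEE_{S})\times\Gamma_{\Z}(\EEE_{T})$ and any $(I_{r},J_{a})\in\Sigma_{S,T}$, the ratio $i_{r}/j_{a}$ is a well-defined strictly positive real number, since $i_{r},j_{a}<0$ by definition of the cones. By construction, the intervals $E^{r,a}_{0},E^{r,a}_{1},\dots,E^{r,a}_{m_{r,a}}$ form a disjoint cover of $[0,\infty)$, so there is a unique index $w(I_{r},J_{a})\in\{0,1,\dots,m_{r,a}\}$ with $i_{r}/j_{a}\in E^{r,a}_{w(I_{r},J_{a})}$. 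The assignment $(I_{r},J_{a})\mapsto E^{r,a}_{w(I_{r},J_{a})}$ then produces a unique $W\in\widetilde{\WW}(S,T)$ with $\bigl((i_{1},\ldots,i_{s}),(j_{1},\ldots,j_{q})\bigr)\in\Gamma_{W}(S,T)$. Removing those $W$ with $\Gamma_{W}(S,T)=\emptyset$ gives the partition indexed by $\WW(S,T)$.

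For part (b), write $j=w(I_{r},J_{a})$, so the defining condition on $i_{r}/j_{a}$ is
$$
\frac{\tau_{T}(v_{j},\ell_{a})}{\tau_{S}(v_{j},k_{r})}\le \frac{i_{r}}{j_{a}}<\frac{\tau_{T}(v_{j+1},\ell_{a})}{\tau_{S}(v_{j+1},k_{r})},
$$
with the usual conventions when $j=0$ or $j=m_{r,a}$. Using the monotone ordering (\ref{10.15tt}), for $k\le j$ one has $\tau_{T}(v_{k},\ell_{a})/\tau_{S}(v_{k},k_{r})\le \tau_{T}(v_{j},\ell_{a})/\tau_{S}(v_{j},k_{r})\le i_{r}/j_{a}$; multiplying through by $j_{a}\,\tau_{S}(v_{k},k_{r})<0$ reverses the inequality and gives exactly $j_{a}\tau_{T}(v_{k},\ell_{a})\ge i_{r}\tau_{S}(v_{k},k_{r})$. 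The converse direction, and the strict inequality for $k>j$, follow identically. The ``in particular'' cases (1) and (2) are the specializations $j=0$ and $j=m_{r,a}$. For (3) and (4), one uses the observation, immediate from Proposition \ref{Prop5.8}(a), that $\tau_{S}(k_{r},k_{r})=e(k_{r},k_{r})^{-1}=1$ and $\tau_{T}(\ell_{a},\ell_{a})=1$; substituting these into the general equivalence and carrying the sign information $i_{r},j_{a}<0$ through produces the claimed two-sided inequalities.

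Since every step is a direct translation of the definition of $\Gamma_{W}(S,T)$ in (\ref{CCC}) together with the ordering (\ref{10.15tt}), there is no real obstacle; the only thing one has to be attentive to is keeping track of the signs when multiplying an inequality by a negative quantity such as $j_{a}\tau_{S}(v_{k},k_{r})$, so as not to confuse $\le$ with $\ge$ or strict with non-strict inequalities in the endpoints of $E^{r,a}_{w(I_{r},J_{a})}$.
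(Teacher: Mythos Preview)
Your proposal is correct and matches the paper's approach exactly: the paper states that the proposition ``is an immediate consequence of the definition of $\Gamma_{W}(S,T)$ and the ordering in equation (\ref{10.15tt})'' and gives no further argument, while you have simply written out those immediate consequences in detail. Your handling of the sign reversals and of the specializations (3)--(4) via $\tau_{S}(k_{r},k_{r})=\tau_{T}(\ell_{a},\ell_{a})=1$ is accurate.
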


To summarize, \label{summary}we have partitioned $\Gamma_{\Z}( {\EEE}_{S})\times \Gamma_{\Z}({\EEE}_{T})$ into a disjoint union of sets $\Gamma_{W}(S,T)$. Let $I=(i_{1}, \ldots, i_{s})$ and $J=(j_{1}, \ldots, j_{q})$. Then $(I,J)\in \Gamma_{W}(S,T)$ if and only if
\bea\label{10.8zxc}
 e_{S}(p,r) i_{r}&\leq i_{p}<0&&&&\text{for $r,p\in\{1, \ldots, s \}$,}\\
 e_{T}(b,a)j_{a}&\leq j_{j}<0&&&&\text{for $a,b\in \{1, \ldots, q\}$,}\\
\Lambda_{W}(I_{r},J_{a})j_{a}&< i_{r}&&&&\text{if $I_{r}\cap J_{a}\neq\emptyset$,}\\
\Lambda_{W}(J_{a},I_{r})i_{r}&\leq j_{a}&&&&\text{if $I_{r}\cap J_{a}\neq\emptyset$.}
\eea
It can happen that $\Lambda_{W}(I_{r},J_{a})$ or $\Lambda_{W}(J_{a},I_{r})$ is $+\infty$, in which case there is simply no corresponding inequality between $i_{r}$ and $j_{a}$. However, if there is no bound of an index $i_{r}$ by \emph{any} $j_{a}$, then $\Lambda_{W}(I_{r},J_{a})=\infty$ for every $a$ such that $I_{r}\cap J_{a}\neq \emptyset$, and this means that the scale of the dilation by $i_{r}$ is \emph{always} smaller than the scale of the dilation by the corresponding $j_{a}$. Similarly, if there is no bound of $j_{a}$  by \emph{any} $i_{r}$, then the scale of the dilation by $j_{a}$ is \emph{always} less than or equal to the scale of of the dilation by the corresponding $i_{r}$.\footnote{In the notation of the next section, this means that in the first case, $i_{r}$ is a free index, and in the second case $j_{a}$ is a free index. See Definition \ref{Def10.5zxc}.}

\subsection{Fixed and free indices}\label{FixedFree}

Continue to fix $S=\big\{(I_{1},k_{1});\ldots;(I_{s},k_{s})\big\}, \,\,T=\{(J_{1},\ell_{1});\ldots, (J_{q},\ell_{q})\}\in \SS'(\EEE)$. The index set $\WW(S,T)$ is finite, so formally, using the partition from Proposition \ref{Prop10.4yy}, we have
\beas
\KK_{S}*\LL_{T}(\x) &=\sum_{W\in \WW(S,T)}\Bigg[\sum_{(I,J)\in\Gamma_{W}(S,T)}\,[\varphi^{I}]_{I}*[\psi^{J}]_{J}(\x)\Bigg]=\sum_{W\in \WW(S,T)}\RR_{S,T,W}(\x).
\eeas
Thus to prove Lemma \ref{Lem10.2} (and hence prove Theorem \ref{Thm10.1}) it suffices to show that for each $W\in \WW(S,T)$ the inner infinite sum 
\beas
\RR_{S,T,W}=\sum_{(I,J)\in\Gamma_{W}(S,T)}\,[\varphi^{I}]_{I}*[\psi^{J}]_{J}
\eeas
converges in the sense of distributions, and that the sum belongs to the class $\PP_{0}(\EEE)$. We will do this by showing that there is a matrix $\EEE(S,T,W)$ so that $\RR_{S,T,W}\in \PP_{0}(\EEE(S,T,W))\subset\PP_{0}(\EEE)$. 

\begin{definition}\label{Def10.5zxc}
Let $S=\big\{(I_{1},k_{1});\ldots;(I_{s},k_{s})\big\}, T=\{(J_{1},\ell_{1});\ldots, (J_{q},\ell_{q})\}\in \SS'(\EEE)$ and $W\in \WW(S,T)$. Recall that if $(I_{r},J_{a})\in \Sigma_{S,T}$ then $I_{r}\cap J_{a}\neq \emptyset$ and $W(I_{r},J_{a})=E_{w(I_{r},J_{a})}$ where $\{E_{0}, \ldots, E_{m}\}$ are the intervals listed in equation (\ref{UUU}).

\begin{enumerate}[{\rm(a)}]

\item
Set 
\bea\label{IJstar}
I_{r}^{*}
&=
\bigcup_{\substack{a=1\\I_{r}\cap J_{a}\neq \emptyset}}^{q}\Big\{j\in I_{r}\cap J_{a}: i_{r}\,\tau_{S}(j,k_{r})>j_{a}\,\tau_{T}(j,\ell_{a})\
\\
&\qquad\qquad\qquad\qquad\qquad\qquad \forall \big((i_{1},\ldots,i_{s}),(j_{1}, \ldots,j_{q})\big)\in\Gamma_W(S,T)\Big\}\\
&=
\bigcup_{\substack{a=1\\I_{r}\cap J_{a}\neq \emptyset}}^{q}\Big\{v_k\in I_{r}\cap J_{a}: k>w(I_{r},J_{a})\Big\} ;\\
J_{a}^{*}&=
\bigcup_{\substack{r=1\\I_{r}\cap J_{a}\neq \emptyset}}^{w}\Big\{j\in I_{r}\cap J_{a}: i_{r}\,\tau_{S}(j,k_{r})\leq j_{a}\,\tau_{T}(j,\ell_{a})\ 
\\
&\qquad\qquad\qquad\qquad\qquad\qquad\forall \big((i_{1},\ldots,i_{s}),(j_{1}, \ldots,j_{q})\big)\in\Gamma_W(S,T)\Big\}\\
&=
\bigcup_{\substack{a=1\\I_{r}\cap J_{a}\neq \emptyset}}^{q}\Big\{v_k\in I_{r}\cap J_{a}: k\leq w(I_{r},J_{a})\Big\} .\index{I1rJa@$I_{r}^{*},J_{a}^{*}$}
\eea

\item An index $r$, $1\leq r\leq s$ is \textbf{fixed} if $I_{r}^{*}\neq \emptyset$, and is \textbf{free} if $I_{r}^{*}= \emptyset$.

\smallskip

\item An index $a$, $1 \leq a \leq q$ is \textbf{fixed} if $J_{a}^{*}\neq \emptyset$, and is \textbf{free} if $J_{a}^{*}= \emptyset$.

\end{enumerate}
\end{definition}
\begin{remark}
The notions of fixed and free indices and the definitions of the sets $I_{r}^{*}$ and $J_{a}^{*}$ depend on the choice of $S,T\in\SS'(\EEE)$ and on the choice of $W\in \WW(S,T)$. In particular, they are the same for all choices of $(I,J)\in \Gamma_{W}(S,T)$.
\end{remark}
Clearly $I_{r}^{*}\subset I_{r}$, $J_{a}^{*}\subset J_{a}$ and $I_{r}^{*}\cap J_{a}^{*}= \emptyset$. Also, for every $j\in \{1, \ldots, n\}$ there is a unique pair $(I_{r},J_{a})\in \Sigma_{S,T}$ so that $j\in I_{r}\cap J_{a}$, and then either $j\in I_{r}^{*}$ or $j\in J_{a}^{*}$. Thus we have a disjoint decomposition 
\bea\label{VVV}
\{1, \ldots, n\} = \bigcup_{r=1}^{s}I_{r}^{*}\,\cup\,\bigcup_{a=1}^{q}J_{a}^{*}.
\eea 

The following Proposition provides an explanation of the significance of fixed and free indices. The proof just involves unwinding the definitions.

\begin{proposition}\quad
\begin{enumerate}[{\rm(a)}]

\smallskip

\item An index $r\in \{1, \ldots, s\}$ is free if and only if for {every} $j\in I_{r}$ and  every $(I,J)\in \Gamma_{W}(S,T)$,  the scale of the support of $[\varphi^{I}]_{I}$ is smaller in the direction of $\x_{j}$ than the scale of the support of $[\psi^{J}]_{J}$.

\smallskip

\item An index $r\in \{1, \ldots, s\}$ is fixed if and only if there exists {at least one} $j\in I_{r}$ so that for every $(I,J)\in \Gamma_{W}(S,T)$, the scale of the support of $[\varphi^{I}]_{I}$ in the direction of $\x_{j}$ is larger than the scale of the support of $[\psi^{J}]_{J}$.

\smallskip

\item The indices $j\in \{1, \ldots, n\}$ for which the scale of the support of $[\varphi^{I}]_{I}$ is larger than the scale of the support of $[\psi^{J}]_{J}$ in the direction of $\x_{j}$ is precisely $\bigcup_{r=1}^{s}I_{r}^{*}$.

\smallskip

\item An index $a\in \{1, \ldots, q\}$ is free if and only if for {every} $j\in J_{a}$ and for every $(I,J)\in \Gamma_{W}(S,T)$, the scale of the support of $[\psi^{J}]_{J}$ in the direction of $\x_{j}$ is smaller than the scale of the support of $[\varphi^{I}]_{I}$.

\smallskip

\item An index $a\in \{1, \ldots, q\}$ is fixed if and only if there exists {at least one} $j\in J_{a}$ so that for every $(I,J)\in \Gamma_{W}(S,T)$ the scale of the support of $[\psi^{J}]_{J}$ is larger than the scale of the support of $[\varphi^{I}]_{I}$ in the direction of $\x_{j}$.

\smallskip

\item The indices $j\in \{1, \ldots, n\}$ for which the scale of the support of $[\psi^{J}]_{J}$ is larger than the scale of the support of $[\varphi^{I}]_{I}$ in the direction of $\x_{j}$ is precisely $\bigcup_{a=1}^{q}J_{a}^{*}$.
\end{enumerate}
\end{proposition}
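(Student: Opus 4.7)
The plan is to observe that this Proposition is purely a matter of translating Definition \ref{Def10.5zxc} into the geometric language of support scales. So my first step would be to identify exactly what ``scale in the direction of $\x_j$'' means for $[\varphi^{I}]_{I}$ and $[\psi^{J}]_{J}$. From equations (\ref{10.3aaa})--(\ref{10.4aaa}), since each $\varphi^{I}$ and $\psi^{J}$ is supported in $\B(1)$, the support of $[\varphi^{I}]_{I}$ is contained in the region where $n_{j}(\x_{j})\lesssim 2^{i_{r}\tau_{S}(j,k_{r})}$ for $j\in I_{r}$, and similarly the support of $[\psi^{J}]_{J}$ lies where $n_{j}(\x_{j})\lesssim 2^{j_{a}\tau_{T}(j,\ell_{a})}$ for $j\in J_{a}$. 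Hence ``scale of $[\varphi^{I}]_{I}$ is smaller than scale of $[\psi^{J}]_{J}$ in direction $\x_{j}$'' becomes the numerical inequality $i_{r}\tau_{S}(j,k_{r})\leq j_{a}\tau_{T}(j,\ell_{a})$, where $(r,a)$ is the unique pair such that $j\in I_{r}\cap J_{a}$ (which exists and is unique because both $\{I_{r}\}$ and $\{J_{a}\}$ partition $\{1,\ldots,n\}$); the reverse strict inequality is what ``larger'' means.

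Next I would appeal to Proposition \ref{Prop10.4yy}(b) to show that this dichotomy is in fact independent of the choice of $(I,J)\in\Gamma_{W}(S,T)$. Indeed, by construction of $\Gamma_{W}(S,T)$, for a given pair $(I_{r},J_{a})\in\Sigma_{S,T}$ the ratio $i_{r}/j_{a}$ is constrained to lie in the single interval $E^{r,a}_{w(I_{r},J_{a})}$ of (\ref{UUU}). Writing $I_{r}\cap J_{a}=\{v_{1},\ldots,v_{m}\}$ in the order (\ref{10.15tt}), parts (1) and (2) of Proposition \ref{Prop10.4yy}(b) give precisely the equivalence
\[
k>w(I_{r},J_{a})\ \Longleftrightarrow\ i_{r}\tau_{S}(v_{k},k_{r})>j_{a}\tau_{T}(v_{k},\ell_{a})\quad \text{for every }(I,J)\in\Gamma_{W}(S,T).
\]
Consequently the existential and universal quantifiers in Definition \ref{Def10.5zxc} collapse: a point $v_{k}\in I_{r}\cap J_{a}$ belongs to $I_{r}^{*}$ exactly when $k>w(I_{r},J_{a})$, i.e., exactly when the scale of $[\varphi^{I}]_{I}$ strictly exceeds that of $[\psi^{J}]_{J}$ in direction $\x_{v_{k}}$; and it belongs to $J_{a}^{*}$ exactly when $k\leq w(I_{r},J_{a})$.

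With this in hand, the six statements follow directly. For (a), ``$r$ is free'' means $I_{r}^{*}=\emptyset$; by the previous paragraph and the fact that every $j\in I_{r}$ lies in a unique $I_{r}\cap J_{a}$, this happens iff every $j\in I_{r}$ yields $i_{r}\tau_{S}(j,k_{r})\leq j_{a}\tau_{T}(j,\ell_{a})$, i.e., iff the scale of $[\varphi^{I}]_{I}$ in direction $\x_{j}$ is always smaller than that of $[\psi^{J}]_{J}$. Statement (b) is the logical negation of (a) combined with the definition of $I_{r}^{*}$. For (c), the disjoint decomposition (\ref{VVV}) together with the characterization of $I_{r}^{*}$ and $J_{a}^{*}$ in terms of the scale comparison means that the set of $j$ for which $[\varphi^{I}]_{I}$ has the strictly larger scale is exactly $\bigcup_{r=1}^{s}I_{r}^{*}$. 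Finally (d), (e), (f) are obtained by the same argument with the roles of $S$ and $T$ (and of ``$>$'' and ``$\leq$'') interchanged.

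There is no real obstacle: the only non-bookkeeping step is invoking Proposition \ref{Prop10.4yy}(b) to see that the strict/weak inequality $i_{r}\tau_{S}(j,k_{r})\gtrless j_{a}\tau_{T}(j,\ell_{a})$ is uniform on $\Gamma_{W}(S,T)$, which is precisely what the definition of $\Gamma_{W}(S,T)$ in (\ref{CCC}) was arranged to guarantee. Everything else is a direct transcription of Definition \ref{Def10.5zxc} into support-scale language.
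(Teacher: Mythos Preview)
Your proposal is correct and matches the paper's approach exactly: the paper states only that ``the proof just involves unwinding the definitions,'' and your argument is precisely that unwinding, carried out in full detail using Proposition~\ref{Prop10.4yy}(b) and the scale identifications from (\ref{10.3aaa})--(\ref{10.4aaa}).
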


Let $S^{*}=\{r_{1}, \ldots, r_{\alpha}\}$ denote the subset of $\{1, \ldots, s\}$ consisting of fixed indices, and let $T^{*}= \{a_{1}, \ldots, a_{\beta}\}$ denote the subset of $\{1, \ldots, q\}$ consisting of fixed indices. It may happen that $\alpha=0$ or that $\beta=0$, but we always have $\alpha+\beta\geq 1$. Then we can write
\be\label{10.28ss}
\{1, \ldots, n\} =\bigcup_{r\in S^{*}}I_{r}^{*}\cup \bigcup_{a\in T^{*}}J_{a}^{*}= \bigcup_{j=1}^{\alpha}I_{r_{j}}^{*} \cup \bigcup_{k=1}^{\beta}J_{a_{k}}^{*}.
\ee

\subsection{{A finer decomposition of $\R^{N}$}}\label{Finer}\quad

\medskip

Using the decomposition in equation (\ref{10.28ss}) we study the new decomposition of $\R^{N}$ indexed by the fixed indices in $\{1, \ldots, s\}$ and $\{1, \ldots, q\}$: 
\bes
\R^{N}= \bigoplus_{r\in S^{*}}\R^{I_{r}^{*}}\oplus\bigoplus_{a\in T^{*}}\R^{J_{a}^{*}}=\bigoplus_{j=1}^{\alpha}\R^{I_{r_{j}}^{*}}\,\oplus \,\bigoplus_{j=1}^{\beta}\R^{J_{a_{j}}^{*}}
\ees
where
\beas
\R^{I_{r}^{*}}&=\big\{\x=(\x_{1}, \ldots, \x_{n})\in \R^{N}: \x_{j}\neq 0 \Longrightarrow j\in I_{r}^{*}\big\},\\
\R^{J_{a}^{*}}&=\big\{\x=(\x_{1}, \ldots, \x_{n})\in \R^{N}: \x_{j}\neq 0 \Longrightarrow j\in J_{a}^{*}\big\}.
\eeas
Write elements of $\R^{I_{r}^{*}}$ and $\R^{J_{a}^{*}}$ as $\x_{I_{r}^{*}}=\{\x_{j}:j\in I_{r}^{*}\}$ and  $\x_{J_{a}^{*}}=\{\x_{j}:j\in J_{a}^{*}\}$. This gives us a set of coordinates on $\R^{N}$. 

Order these coordinates so that the corresponding set of indices $\{k_{r_{1}},\ldots, k_{r_{\alpha}}, \ell_{a_{1}}, \ldots, \ell_{a_{\beta}}\}$ together are increasing. With this ordering, rename the subspaces  as $\{\R^{V_{1}}, \ldots, \R^{V_{\alpha+\beta}}\}$. Then  every element $\x\in\R^{N}$ is an $(\alpha+\beta)$-tuple $\x=(\overline \x_{1}, \ldots,\overline \x_{m}, \ldots, \overline \x_{\alpha+\beta})$  where $\overline \x_{m}\in \R^{V_{k}}$ and $V_{k}$ is either $I_{r}^{*}$ for some $r\in S^{*}$ or $J_{a}^{*}$ for some $a\in T^{*}$. By the choice of the ordering, if $m_{1}<m_{2}$ then
\beas
V_{m_{1}}&=I_{r}^{*},\,&V_{m_{2}}&=I_{p}^{*}&&\quad \Longrightarrow&\quad k_{r}&<k_{p},\\ V_{m_{1}}&=I_{r}^{*},\,&V_{m_{2}}&=J_{a}^{*}&&\quad \Longrightarrow&\quad k_{r}&<\ell_{a},\\
V_{m_{1}}&=J_{a}^{*},\,&V_{m_{2}}&=I_{r}^{*}&&\quad \Longrightarrow&\quad \ell_{a}&<k_{r},\\
V_{m_{1}}&=J_{a}^{*},\,&V_{m_{2}}&=J_{b}^{*}&&\quad \Longrightarrow&\quad \ell_{a}&<\ell_{b}.
\eeas

On each component $\R^{V_{m}}$ we use the dilation structure inherited from either $I_{r}$ or $J_{a}$. Thus  from (\ref{10.4aaa}) we define a dilation which we write $\,\circ\,$ by setting
\bea\label{10.36ss}
\lambda\circ\overline \x_{m}=
\begin{cases}
\Big\{\lambda^{\tau_{S}(j,k_{r})}\cdot \x_{j}:j\in I_{r}^{*}\Big\}&\text{if $\overline \x_{m}=\x_{I_{r}^{*}}$}\\\\
\Big\{\lambda^{\tau_{T}(j,\ell_{a})}\cdot \x_{j}:j\in J_{a}^{*}\Big\}&\text{if $\overline \x_{m}=\x_{J_{a}^{*}}$} \end{cases}.
\eea
(To keep notation simple we write  $\,\circ\,$ instead of $\,\circ_{S,T,W}\/$.) The corresponding homogeneous norms are given by
\beas
n_{S,r}(\x_{I_{r}^{*}}) &= \sum_{j\in I_{r}^{*}}n_{j}(\x_{j})^{1/\tau_{S}(j,k_{r})},&&&
n_{T,a}(\x_{J_{a}^{*}})&= \sum_{j\in J_{a}^{*}}n_{j}(\x_{j})^{1/\tau_{T}(j,\ell_{a})},
\eeas
and the homogeneous dimension of $\R^{V_{m}}$ is then
\beas
Q_{m}=
\begin{cases}
Q_{I_{r}^{*}}= \sum_{j\in I_{r}^{*}}Q_{j}\,\tau_{S}(j,k_{r}) &\text{if $\overline \x_{m}=\x_{I_{r}^{*}}$};\\\\
Q_{J_{a}^{*}}= \sum_{j\in J_{a}^{*}} Q_{j}\,\tau_{T}(j,\ell_{a})&\text{if $\overline \x_{m}=\x_{J_{a}^{*}}$}.
\end{cases}
\eeas

\subsection{{The matrix $\EEE_{S,T,W}$}}\label{TheMatrix}\quad

\medskip

If $\big((i_{1}, \ldots, i_{s}),(j_{1}, \ldots, j_{q})\big)\in \Gamma_{W}(S,T)\subset \Gamma_{\Z}({\EEE}_{S})\times\Gamma_{\Z}({\EEE}_{T})$,
   we obtain an $(\alpha+\beta)$-tuple $\{i_{r_{1}}, \ldots, i_{r_{\alpha}}, j_{a_{1}}, \ldots, j_{a_{\beta}}\}$ by keeping only the $i_{r}$ and $j_{s}$ corresponding to fixed indices $S^{*}$ and $T^{*}$. Let $\pi:\Z^{s+q}\to \Z^{\alpha+\beta}$ be the mapping which takes $(i_{1}, \ldots, i_{s}), (j_{1}, \ldots, j_{q})\in\Z^{s+q}$ to  $(i_{r_{1}}, \ldots, i_{r_{\alpha}}, j_{a_{1}}, \ldots, j_{a_{\beta}})\in \Z^{\alpha+\beta}$. Our next objective is to describe the inequalities satisfied by the entries of these $(\alpha+\beta)$-tuples.  

It follows from equation (\ref{10.8zxc}) that 
\bea\label{10.12zxc}
e_{S}(r_{j},r_{k}) i_{r_{k}}&\leq i_{r_{j}}<0&&&&\text{for $j,k\in\{1, \ldots, \alpha \}$,}\\
 e_{T}(a_{l},a_{m})j_{a_{m}}&\leq j_{a_{l}}<0&&&&\text{for $l,m\in \{1, \ldots, \beta\}$,}\\
\Lambda_{W}(I_{r_{j}},J_{a_{m}})j_{a_{m}}&< i_{r_{j}}&&&&\text{if $I_{r_{j}}\cap J_{a_{m}}\neq\emptyset$,}\\
\Lambda_{W}(J_{a_{m}},I_{r_{j}})i_{r_{j}}&\leq j_{a_{m}}&&&&\text{if $I_{r_{j}}\cap J_{a_{m}}\neq\emptyset$.}
\eea
Let
\bea \index{G4ammaSTW@$\Gamma(S,T,W)$}
\Gamma(S,T,W) =\Big\{(i_{r_{1}}, \ldots, i_{r_{\alpha}},j_{a_{1}}, \ldots, j_{a_{\beta}})\in\Z^{\alpha+\beta}\,\,\text{which satisfy (\ref{10.12zxc})}\Big\}.
\eea
Thus $\pi:\Gamma_{W}(S,T) \to \Gamma(S,T,W)$. 

\begin{proposition}
The mapping $\pi:\Gamma_{W}(S,T) \to \Gamma(S,T,W)$ is onto.
\end{proposition}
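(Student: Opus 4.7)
The plan is to construct, for any given target $(\hat i_{r_1},\ldots,\hat i_{r_\alpha}, \hat j_{a_1},\ldots,\hat j_{a_\beta})\in\Gamma(S,T,W)$, explicit integer values $i_r$ for each free $r\in\{1,\ldots,s\}\setminus S^*$ and $j_a$ for each free $a\in\{1,\ldots,q\}\setminus T^*$, completing the tuple into an element of $\Gamma_W(S,T)$ satisfying the full system (\ref{10.8zxc}).

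First, I would unpack what ``free'' means. If $r$ is free then $I_r^*=\emptyset$, which means that every $j\in I_r$ lies in some $J_{a(j)}^*$ with $a(j)\in T^*$ a fixed index; by the definition of $J_a^*$, this forces $i_r\tau_S(j,k_r)\le j_{a(j)}\tau_T(j,\ell_{a(j)})$ for every $(I,J)\in\Gamma_W(S,T)$. So the natural attempt is to make this constraint as tight as allowed, setting
$$
i_r \;=\; \Bigl\lfloor\,\min_{a\in T^*,\ j\in I_r\cap J_a^*}\;\tfrac{\tau_T(j,\ell_a)}{\tau_S(j,k_r)}\,\hat j_a\,\Bigr\rfloor\,,
$$
and symmetrically for free $a$. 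By construction all cross-constraints coming from $I_r\cap J_a^*$ with $a$ fixed are automatic, and the defining inequality $i_r<0$ is clear since $\hat j_a<0$.

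The main work will then be to verify the remaining constraints of (\ref{10.8zxc}): the mutual bounds $e_S(p,r)i_r\le i_p$ and $e_S(r,p)i_p\le i_r$ (and analogously for $T$) when at least one of the indices is free, and the cross-bounds $\Lambda_W(I_r,J_a)j_a<i_r$ and $\Lambda_W(J_a,I_r)i_r\le j_a$ when at least one of the two is free. These should follow from a careful combination of: the hypotheses (\ref{10.12zxc}) on the fixed indices; the inequalities $e_S(p,r)\le\tau_S(k_p,k_r)$ and $e_T(b,a)\le\tau_T(\ell_b,\ell_a)$ from Lemma \ref{Lem5.10}; the basic-hypothesis chain inequalities for $\tau_S$ and $\tau_T$ from Proposition \ref{Prop5.8}; and the ordering (\ref{10.15tt}) together with the case analysis of Proposition \ref{Prop10.4yy} that distinguishes $I_r^*$ from $J_a^*$ precisely by whether $k>w(I_r,J_a)$ or $k\le w(I_r,J_a)$. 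Whenever two free indices interact, I would resolve any remaining slack by processing the free indices in a linear order (free $r$'s in increasing $k_r$, then free $a$'s in increasing $\ell_a$) and, at each step, replacing $i_r$ (resp.\ $j_a$) by the smaller of the value above and the bound required by already-assigned free coordinates; the fact that the interval of admissible values is nonempty at each stage is exactly the feasibility content of $W\in\WW(S,T)$, i.e.\ of $\Gamma_W(S,T)\ne\emptyset$.

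The hard step will be the last verification: showing that the explicit $i_r$ defined above automatically respects $e_S(p,r)i_r\le i_p$ and $e_S(r,p)i_p\le i_r$ against every fixed or previously-chosen $i_p$. This is where the constants $e_S(p,r)$, which are $\le \tau_S(k_p,k_r)$ but in general strictly so, have to be handled; the argument will use that, for the $j\in I_r\cap J_{a(j)}^*$ realizing the minimum, one has $\tau_T(j,\ell_{a(j)})/\tau_S(j,k_r)\ge \tau_T(j,\ell_{a(j)})/\tau_S(j,k_p)\cdot(1/\tau_S(k_p,k_r))$ and similar comparisons via the basic hypothesis, so that the required multiplicative inequality descends from the corresponding one for $\hat j_{a(j)}$ against $i_p$ (itself controlled by (\ref{10.12zxc}) if $p$ is fixed, and by the inductive choice if $p$ is free). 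If this direct verification becomes too delicate, the fallback is a Fourier--Motzkin elimination of the free coordinates in (\ref{10.8zxc}), checking that every consequence involving only fixed coordinates is already one of the inequalities (\ref{10.12zxc}) up to the basic hypothesis; feasibility at each elimination step is preserved by the nonemptiness $\Gamma_W(S,T)\ne\emptyset$.
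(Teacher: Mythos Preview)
Your approach is genuinely different from the paper's, and in one respect more laborious.  The paper does not construct the free coordinates by an explicit minimization formula.  Instead it invokes the projection lemma for cones (Lemma~\ref{Lem14.4zxc} in Appendix~I): since $\EEE_S$ and $\EEE_T$ satisfy the basic hypothesis, the projections $\Gamma(\EEE_S)\to\Gamma(\EEE_S')$ and $\Gamma(\EEE_T)\to\Gamma(\EEE_T')$ onto the fixed coordinates are surjective, so one can lift the given $(i_{r_1},\dots,i_{r_\alpha})$ and $(j_{a_1},\dots,j_{a_\beta})$ to full elements $(i_1,\dots,i_s)\in\Gamma(\EEE_S)$ and $(j_1,\dots,j_q)\in\Gamma(\EEE_T)$.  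This makes the first two lines of~(\ref{10.8zxc}) hold by construction, which completely bypasses what you correctly flag as the ``hard step'' of your proposal --- verifying $e_S(p,r)i_r\le i_p$ and $e_S(r,p)i_p\le i_r$ for your explicit $i_r$.

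For the cross-constraints (lines~3 and~4 of~(\ref{10.8zxc})) when one of $r,a$ is free, the paper argues as follows: if $r$ is free then $I_r^*=\emptyset$, forcing $w(I_r,J_a)=m_{r,a}$ for every relevant $a$, so $\Lambda_W(I_r,J_a)=\infty$ and line~3 is vacuous; moreover every such $a$ is fixed.  The paper's treatment of line~4 in this situation is quite terse --- it essentially asserts that the remaining inequality is exactly the one encoded by the interval $E^{r,a}_{m_{r,a}}$.  Your explicit construction (choosing $i_r$ as small as the cross-constraints allow) is one way to make this precise; another is to note that Lemma~\ref{Lem14.4zxc} actually produces an \emph{interval} of admissible $i_r$, and one should pick the lower end.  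Either way, the inequality $\Lambda_W(J_a,I_r)\,e_S(r,p)\ge \Lambda_W(J_a,I_p)$ (or its analogue routed through a fixed $p$) is what must be checked, and it follows from $e_S(r,p)\le\tau_S(k_r,k_p)$, Proposition~\ref{Prop5.8}, and the ordering~(\ref{10.15tt}).  Your Fourier--Motzkin fallback would also work, but the paper's route via Lemma~\ref{Lem14.4zxc} is the intended shortcut.
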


\begin{proof} Thus suppose that  the $(\alpha+\beta)$-tuple $(i_{r_{1}},\ldots, i_{r_{\alpha}},j_{a_{1}}, \ldots, j_{a_{\beta}})$ satisfies the inequalties in equation (\ref{10.12zxc}). First observe that the first two lines of inequalities describe the projections of $\Gamma({\EEE}_{S})$ and $\Gamma({\EEE}_{T})$ onto the smaller sets of coordinates. Since $\EEE_{S}$ and $ \EEE_{T}$ satisfy the basic hypotheses (\ref{2.5}), it follows from Lemma \ref{Lem14.4zxc} that there exists $(i_{1}, \ldots, i_{s})\in \Gamma({\EEE}_{S})$ and $(t_{1}, \ldots, t_{q})\in \Gamma({\EEE}_{T})$ such that 
\beas
\pi(i_{1}, \ldots, i_{s},j_{1}, \ldots, j_{q}) = (i_{r_{1}},\ldots, i_{r_{\alpha}},j_{a_{1}}, \ldots, j_{a_{\beta}}).
\eeas
We claim that $(i_{1}, \ldots, i_{s},j_{1}, \ldots, j_{q})\in \Gamma_{W}(S,T)$, so we need to check all the inequalities in equation (\ref{10.8zxc}). The first two lines are clearly satisfied, since they only depend on the fact that $(i_{1}, \ldots, i_{s})\in \Gamma(\EEE_{S})$ and $(j_{1}, \ldots, j_{q})\in \Gamma( \EEE_{T})$. Also the second two lines are satisfied provided that $r=r_{j}$ and $a=a_{m}$ for some $j\in \{1, \ldots, \alpha\}$ and $m\in \{1, \ldots, \beta\}$. It remains to check what happens if $i_{r}$ is a free index or $j_{a}$ is a free index. But as we observed in the summary on page \pageref{summary}, if $i_{r}$ is a free index and $I_{r}\cap J_{a}\neq \emptyset$ then $\Lambda_{W}(I_{r},J_{a})=\infty$ so there is no third line in the inequalities. Also if $i_{r}$ is a free index, then the scale of the dilation $\tau_{S}(j,k_{r})i_{r}$ must be smaller than the scale of the dilation $\tau_{T}(j,\ell_{a})j_{a}$ for every $j\in I_{r}\cap J_{a}$. This says that the ratio $i_{r}/j_{a}$ must lie in the largest (\textit{i.e.} the unbounded) interval, and this is the statement of the fourth inequality. A similar argument works if $j_{a}$ is a free index.
\end{proof}

The cone $\Gamma(S,T,W)\subset \R^{\alpha+\beta}$ is defined by the inequalities in (\ref{10.12zxc}), which, using the notation in Section \ref{PartialMatrix}, 
is defined by a \emph{partial matrix} $\BBB_W$, with indices  varying in the disjoint union $S^*\sqcup T^*$ and given by
\be\label{bW}
b(\mu,\nu)=\begin{cases} e_S(\mu,\nu)&\text{ if }\mu,\nu\in S^*\\ e_T(\mu,\nu)&\text{ if }\mu,\nu\in T^*\\ \Lambda_W(I_\mu,J_\nu)&\text{ if }\mu\in S^*\,,\,\nu\in T^*\\ 
\Lambda_W(J_\mu,I_\nu)&\text{ if }\mu\in T^*\,,\,\nu\in S^*.
\end{cases}
\ee 

\begin{lemma} The matrix $\BBB_W$ is connected.
\end{lemma}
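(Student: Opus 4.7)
The plan is to verify that the undirected support graph of $\BBB_W$ on the vertex set $S^{*}\sqcup T^{*}$ -- the graph having an edge at each pair of indices whose $b$-value in \eqref{bW} is a specified (finite) entry -- is connected.

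First, I would observe that for any $\mu,\nu\in S^{*}$ one has $b(\mu,\nu)=e_S(\mu,\nu)$, which is always a specified entry of the matrix $\EEE_S$; similarly $b(\mu,\nu)=e_T(\mu,\nu)$ is specified whenever $\mu,\nu\in T^{*}$. Thus $S^{*}$ and $T^{*}$ each induce complete subgraphs of the support graph, and the question of connectivity reduces, when both $S^{*}$ and $T^{*}$ are non-empty, to exhibiting at least one bridge: a pair $(\mu,\nu)\in S^{*}\times T^{*}$ with $I_\mu\cap J_\nu\neq\emptyset$, which by \eqref{bW} makes $b(\mu,\nu)=\Lambda_W(I_\mu,J_\nu)$ a specified entry. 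If one of $S^{*}$ or $T^{*}$ is empty, only one clique remains and there is nothing to prove.

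The essential combinatorial input I would exploit is the disjoint decomposition \eqref{VVV},
\[
\{1,\ldots,n\}\;=\;\bigcup_{r\in S^{*}}I_r^{*}\,\cup\,\bigcup_{a\in T^{*}}J_a^{*},
\]
together with the inclusions $I_r^{*}\subseteq I_r$ and $J_a^{*}\subseteq J_a$. Starting from an arbitrary $\mu_0\in S^{*}$ and some $j_0\in I_{\mu_0}^{*}$, the element $j_0$ lies in a unique $J_{a_0}$, so $I_{\mu_0}\cap J_{a_0}\neq\emptyset$ and by construction $w(I_{\mu_0},J_{a_0})<m_{\mu_0,a_0}$, so $\Lambda_W(I_{\mu_0},J_{a_0})$ is finite. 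If $a_0\in T^{*}$ we have our bridge. Otherwise $J_{a_0}^{*}=\emptyset$, which forces every element of $J_{a_0}$ (in particular the marked $\ell_{a_0}$) to lie in some $I_{\mu_1}^{*}$ with $\mu_1\in S^{*}$; applying the same analysis with $\mu_1,j_1=\ell_{a_0}$ in place of $\mu_0,j_0$ and iterating traces a chain of pairs $(\mu_k,a_k)$. By the finiteness of $S^{*}$ and $T^{*}$ and a book-keeping argument (each step involves a previously unused $I$ or $J$-block), the chain must eventually terminate at some $a_k\in T^{*}$, producing the required cross-edge.

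The main obstacle I anticipate is making the chain argument rigorous and ensuring it terminates inside $T^{*}$, that is, ruling out the pathological possibility that every $J_a$ meeting $\bigcup_{r\in S^{*}}I_r$ has $a\notin T^{*}$. I expect the resolution to come from combining finiteness of the bipartite incidence structure of the partitions $\{I_r\}$ and $\{J_a\}$ with the hypothesis $\Gamma_W(S,T)\neq\emptyset$, which ensures $W$ does not split the index set into independent blocks inconsistent with both $S^{*}$ and $T^{*}$ being non-empty; equivalently, any such splitting would force the cone $\Gamma(S,T,W)$ to factor as a product of lower-dimensional cones in a manner incompatible with the very definitions of the fixed index sets $I_r^{*}$ and $J_a^{*}$.
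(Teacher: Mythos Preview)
Your reduction to finding a cross-edge between the two cliques $S^{*}$ and $T^{*}$ is the same as the paper's, and your observation that $j_0\in I_{\mu_0}^{*}\cap J_{a_0}$ forces $w(I_{\mu_0},J_{a_0})<m_{\mu_0,a_0}$, hence $\Lambda_W(I_{\mu_0},J_{a_0})<\infty$, is exactly the combinatorial link between \eqref{IJstar} and finiteness of the $\Lambda_W$ that the paper exploits. However, two concrete problems remain.

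First, connectivity of a partial matrix in the sense of Section \ref{PartialMatrix} is \emph{directed}: you need a finite chain $b(\mu,\lambda_1)\cdots b(\lambda_a,\nu)$ for every ordered pair $(\mu,\nu)$. Thus when both $S^{*}$ and $T^{*}$ are nonempty you must exhibit \emph{two} cross-edges: some $(r,a)$ with $\Lambda_W(I_r,J_a)<\infty$ and some $(r',a')$ with $\Lambda_W(J_{a'},I_{r'})<\infty$. Your plan produces only the first kind. Finding one pair $(\mu,\nu)\in S^{*}\times T^{*}$ with $I_\mu\cap J_\nu\neq\emptyset$ does not suffice, since for that pair one of the two $\Lambda_W$ values may well be $\infty$.

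Second, your chain argument loops. You set $j_1=\ell_{a_0}$ and then look for the $J$-block containing $j_1$; but $\ell_{a_0}\in J_{a_0}$, so $a_1=a_0$ and you never leave $J_{a_0}$. The claimed book-keeping that ``each step involves a previously unused $I$ or $J$-block'' is therefore not justified, and the chain gives no new information after the first step.

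The paper avoids the chain altogether and argues directly by contradiction: if $\Lambda_W(J_a,I_r)=\infty$ (that is, $w(I_r,J_a)=0$) for every $r\in S^{*}$, $a\in T^{*}$, then by \eqref{IJstar} one has $I_r\cap J_a\subset I_r^{*}$ for all such pairs; combining this with the fact that $J_a^{*}=\emptyset$ whenever $a\notin T^{*}$ forces $I_r=I_r^{*}$ for each $r\in S^{*}$, and from this one deduces $T^{*}=\emptyset$. The symmetric contradiction handles the other direction. To salvage your approach you would have to replace the looping chain by this kind of global counting with the decomposition \eqref{VVV}.
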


\begin{proof}
We must prove that, given $\mu\ne\nu \in S^*\sqcup T^*$, there are $\lambda_1,\dots,\lambda_a\in S^*\sqcup T^*$ such that 
$$
b_W(\mu,\lambda_1)b_W(\lambda_1,\lambda_2)\cdots b_W(\lambda_a,\nu)<\infty.
$$
There is no doubt that this holds if both $\mu$ and $\nu$ belong either to $S^*$ or to $T^*$ because all the $e_S$ and $e_T$ are finite. For the other two cases it is necessary and sufficient to prove that there exist $r\in S^*$, $a\in T^*$ with $\Lambda_W(I_r,J_a)<\infty$ and $r'\in S^*$, $a'\in T^*$  with $\Lambda_W(J_{a'},I_{r'})<\infty$ (equivalently with  $w(I_r,J_a)\ne0$ and $w(I_{r'},J_{a'})\ne m_{r',a'}$).

Assume that one of these conditions is violated, e.g., that $w(I_r,J_a)=0$ for all $r\in S^*, a\in T^*$. By \eqref{IJstar}, this implies that $I_r\cap J_a\subset I_r^*$ for every $r,a$. Hence $T^*=\emptyset$ and the proof is completed.
\end{proof}

By Lemma \ref{Lem3.2zxc},  there is matrix $\EEE_{S,T,W}$ \index{E5ESTW@$\EEE_{S,T,W}$} which satisfies the basic hypotheses (\ref{2.5}) so that we have
$\Gamma(S,T,W) = \Gamma(\EEE_{S,T,W})$. 

\medskip

\begin{proposition}
If $(m_{1}, \ldots, m_{\alpha+\beta})\in \Gamma_{\Z}(\EEE_{S,T,W})$, the corresponding dilation of $\R^{N}$ given by
\beas
(\overline \x_{1}, \ldots, \overline \x_{\alpha+\beta}) \longrightarrow (2^{-m_{1}}\circ\overline \x_{1}, \ldots, 2^{-m_{\alpha+\beta}}\circ\overline \x_{\alpha+\beta})
\eeas
is still compatible (in the sense of Section \ref{Compatibility}) with the nilpotent Lie group structure.
\end{proposition}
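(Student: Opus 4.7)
The plan is to reduce the compatibility of the mixed dilation $\circ$ to the compatibility results already established for the dilations $\hat\cdot_S$ and $\hat\cdot_T$ separately in Lemma~\ref{Lem9.6}. Given $M=(m_1,\ldots,m_{\alpha+\beta})\in\Gamma_\Z(\EEE_{S,T,W})=\Gamma(S,T,W)$, the preceding proposition provides a lift $(I,J)=((i_1,\ldots,i_s),(j_1,\ldots,j_q))\in\Gamma_W(S,T)\subset\Gamma(\EEE_S)\times\Gamma(\EEE_T)$ whose entries at the fixed indices reproduce the given $m_k$'s: namely $i_r=m_k$ whenever $V_k=I_r^*$ (with $r\in S^*$), and $j_a=m_k$ whenever $V_k=J_a^*$ (with $a\in T^*$). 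Since $\EEE$ is doubly monotone, Lemma~\ref{Lem9.3pp} gives that $\EEE_S$ and $\EEE_T$ are doubly monotone, and Lemma~\ref{Lem9.6} applied to $I\in\Gamma_\Z(\EEE_S)$ and to $J\in\Gamma_\Z(\EEE_T)$ shows that both of the dilations $\t\mapsto 2^{-I}\hat\cdot_S\t$ and $\t\mapsto 2^{-J}\hat\cdot_T\t$ are compatible with the nilpotent group structure of $G$.

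Introduce the exponent functions
\bes
\rho_S(j)=\frac{i_r}{e(k_r,j)}\quad(j\in I_r),\qquad \rho_T(j)=\frac{j_a}{e(\ell_a,j)}\quad(j\in J_a),
\ees
so that the $S$- and $T$-dilations scale the block $\x_j$ by $2^{\rho_S(j)}$ and $2^{\rho_T(j)}$ respectively. Since each $C_j$ is a block of consecutive integers in the ordering of $\{1,\ldots,N\}$, the compatibility statements just established translate directly into the assertion that both $\rho_S$ and $\rho_T$ are weakly increasing on $\{1,\ldots,n\}$. On the other hand, from the definition \eqref{10.36ss} of $\circ$ together with the lift identities $m_k=i_r$ or $m_k=j_a$, the mixed dilation scales $\x_j$ by $2^{\rho_S(j)}$ when $j\in I_r^*$ and by $2^{\rho_T(j)}$ when $j\in J_a^*$. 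The characterization in Proposition~\ref{Prop10.4yy}(b) identifies $j\in I_r^*$ with the inequality $i_r\tau_S(j,k_r)>j_a\tau_T(j,\ell_a)$, equivalently $\rho_S(j)>\rho_T(j)$, and dually $j\in J_a^*$ with $\rho_S(j)\leq\rho_T(j)$. Combining these observations shows that, pointwise on $\{1,\ldots,n\}$, the exponent function $\rho_{\mathrm{mix}}$ of the mixed dilation equals $\max(\rho_S,\rho_T)$.

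Since the pointwise maximum of two weakly increasing functions is weakly increasing, $\rho_{\mathrm{mix}}$ is weakly increasing on $\{1,\ldots,n\}$, which is exactly the statement that the scale vector associated to the dilation $(2^{-m_1}\circ\overline{\x}_1,\ldots,2^{-m_{\alpha+\beta}}\circ\overline{\x}_{\alpha+\beta})$ lies in $E_N$, i.e., is compatible with the nilpotent group structure. The key step to watch for is the pointwise identification $\rho_{\mathrm{mix}}=\max(\rho_S,\rho_T)$; this is what makes the lift to $\Gamma(\EEE_S)\times\Gamma(\EEE_T)$ essential, since the $V_k$ blocks are in general not intervals in $\{1,\ldots,n\}$ and no direct doubly-monotone argument on $\EEE_{S,T,W}$ alone would deliver the conclusion.
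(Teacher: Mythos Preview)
Your argument is correct and takes a genuinely different route from the paper's. The paper proceeds by a direct four-case analysis: for $l_1<l_2$ in $\{1,\ldots,n\}$ it checks whether each of $l_1,l_2$ lies in some $I_r^*$ or some $J_a^*$, and in each of the four cases derives the inequality $\rho_{\mathrm{mix}}(l_1)\le\rho_{\mathrm{mix}}(l_2)$ straight from the defining inequalities of $\Gamma(\EEE_{S,T,W})$ together with the basic hypothesis on $\EEE$ and the bounds $e_S(p,r)\le\tau_S(k_p,k_r)$. Your approach instead invokes the surjectivity of $\pi:\Gamma_W(S,T)\to\Gamma(S,T,W)$ proved just before, lifts $M$ to a point $(I,J)$, and reduces the problem to the observation that the mixed exponent is the pointwise maximum of two monotone functions $\rho_S,\rho_T$ whose monotonicity is exactly the content of Lemma~\ref{Lem9.6}. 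This is cleaner and more conceptual; it also explains \emph{why} compatibility holds (the mixed scale is the larger of two compatible scales) rather than verifying it by brute force. Two minor points worth noting: first, the lift $(I,J)$ produced by Lemma~\ref{Lem14.4zxc} need not have integer entries, but the proof of Lemma~\ref{Lem9.6} uses only the cone inequalities and so applies verbatim to real parameters; second, the identification $\rho_{\mathrm{mix}}=\max(\rho_S,\rho_T)$ relies on the fact that the membership $j\in I_r^*$ versus $j\in J_a^*$ is determined by $W$ alone, so the inequality $\rho_S(j)>\rho_T(j)$ (resp.\ $\le$) holds for \emph{every} $(I,J)\in\Gamma_W(S,T)$, in particular for your chosen lift. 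The paper's approach, by contrast, avoids the surjectivity result entirely and works only with the matrix $\EEE_{S,T,W}$.
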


\begin{proof}
Let $1 \leq l_{1}<l_{2}\leq n$. Then $\x_{l_{1}}$ and $\x_{l_{2}}$ are coordinates that belong to one of the sets $\{I_{r_{j}}^{*}, 1\leq j \leq \alpha\}$ or $\{J_{a_{k}}^{*}, 1\leq k \leq \beta\}$. Write the $(\alpha+\beta)$-tuple $(m_{1}, \ldots, m_{\alpha+\beta})=(i_{r_{1}}, \ldots, i_{r_{\alpha}},j_{a_{1}}, \ldots, j_{a_{q}})$. There are thus four cases to consider.

If $\x_{l_{1}}\in I_{r_{1_{1}}}^{*}$ and $\x_{l_{2}}\in I_{r_{l_{2}}}^{*}$, we proceed as in Lemma \ref{Lem9.6}. After taking logarithm to base $2$ the scales of the corresponding dilations are $i_{r_{l_{1}}}\tau_{S}(l_{1},k_{r_{1_{1}}})$ and $i_{r_{l_{2}}}\tau_{S}(l_{2},k_{r_{l_{2}}})$. Now $ e_{S}(r_{l_{2}},r_{l_{1}})i_{r_{l_{1}}}\leq i_{r_{l_{2}}}<0$ so
\beas
\frac{i_{r_{l_{2}}}}{i_{r_{l_{1}}}} 
&\leq 
 e_{S}(r_{l_{2}},r_{l_{1}})
\leq 
\tau_{S}(k_{r_{l_{2}}},k_{r_{l_{1}}})
=
\min_{j\in I_{r_{l_{1}}}}\frac{e(k_{r_{l_{2}}},j)}{e(k_{r_{l_{1}}},j)}\\
&\leq 
\frac{e(k_{r_{l_{2}}},l_{1})}{e(k_{r_{l_{1}}},l_{1})}
\leq 
\frac{e(k_{r_{l_{2}}},l_{2})e(l_{2},l_{1})}{e(k_{r_{l_{1}}},l_{1})}
\leq 
\frac{e(k_{r_{l_{2}}},l_{2})}{e(k_{r_{l_{1}}},l_{1})}
\eeas
since $e(l_{2},l_{1})\leq 1$. Thus
\beas
i_{r_{l_{1}}}\tau_{S}(l_{1},k_{r_{1_{1}}})=\frac{i_{r_{l_{1}}}}{e(k_{r_{l_{1}}},l_{1})}\leq 
\frac{i_{r_{l_{2}}}}{e(k_{r_{l_{2}}},l_{2})}=i_{r_{l_{2}}}\tau_{S}(l_{2},k_{r_{1_{2}}}),
\eeas
which is the desired inequality. A similar argument works if $\x_{l_{1}}\in J_{a_{1_{1}}}^{*}$ and $\x_{l_{2}}\in J_{a_{l_{2}}}^{*}$.

Now suppose $\x_{l_{1}}\in I_{r_{1_{1}}}^{*}\cap J_{a_{l_{1}}}$ and $\x_{l_{2}}\in I_{r_{l_{2}}}\cap J_{a_{l_{2}}}^{*}$. This time, after taking logarithm to base $2$ the scales of the corresponding dilations are $i_{r_{l_{1}}}\tau_{S}(l_{1},k_{r_{1_{1}}})$ and $j_{a_{l_{2}}}\tau_{T}(l_{2},j_{a_{l_{2}}})$. Just as before, it follows that
\beas
i_{r_{l_{1}}}\tau_{S}(l_{1},k_{r_{1_{1}}})&\leq i_{r_{l_{2}}}\tau_{S}(l_{2},k_{r_{1_{2}}}).
\eeas
But now since $l_{2}\in J_{a_{l_{2}}}^{*}$, we know that the $i$-scale at this variable is less than or equal to the $j$-scale at this variable. Thus 
\beas
i_{r_{l_{2}}}\tau_{S}(l_{2},k_{r_{1_{2}}})\leq j_{a_{l_{2}}}\tau_{T}(l_{2},l_{a_{l_{2}}}).
\eeas
Putting the last two inequalities together gives the required estimate. A similar argument works if $\x_{l_{1}}\in I_{r_{1_{1}}}\cap J_{a_{l_{1}}}^{*}$ and $\x_{l_{2}}\in I_{r_{l_{2}}}^{*}\cap J_{a_{l_{2}}}$, and this completes the proof.
\end{proof}

\subsection{{The class $\PP_{0}(\EEE_{S,T,W})$}}\label{TheClass}

Given the decomposition $\R^{N}= \bigoplus_{j=1}^{\alpha}\R^{I_{r_{j}}^{*}}\,\oplus\,\bigoplus_{j=1}^{\beta}\R^{j_{a_{j}}^{*}}$ and the matrix $\EEE_{S,T,W}$, we have the space of distributions $\PP_{0}(\EEE_{S,T,W})$ and the corresponding space of multipliers $\MM_{\infty}(\EEE_{S,T,W})$. For notational convenience we use the symbol $e_W$ to denote the entries of $\EEE_{S,T,W}$. Recall that their indices  vary in the disjoint union $S^*\sqcup T^*$ of the sets $S^*\subset\{1,\dots,s\}$ and $T^*\subset\{1,\dots,q\}$ of fixed indices relative to $W$.

\begin{lemma}\label{Lem10.10gg}
$\MM_{\infty}(\EEE_{S,T,W})\subset\MM_{\infty}(\EEE)$ and consequently $\PP_{0}(\EEE_{S,T,W})\subset\PP_{0}(\EEE)$.
\end{lemma}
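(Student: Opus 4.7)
The plan is to reduce the inclusion of multiplier classes (and hence, by Theorems~\ref{Thm6.1} and~\ref{Thm6.2}, of kernel classes) to a pointwise comparison of matrix entries via Corollary~\ref{sharp-inclusion}. Note that $\MM_\infty(\EEE_{S,T,W})$ is defined with respect to the decomposition
$$\R^N=\bigoplus_{r\in S^*}\R^{I_r^*}\oplus\bigoplus_{a\in T^*}\R^{J_a^*}$$
with weights $\alpha_j=e(k_r,j)$ if $j\in I_r^*$ and $\alpha_j=e(\ell_a,j)$ if $j\in J_a^*$ (coming from the dilations~$\circ$ in \eqref{10.36ss}). By Corollary~\ref{sharp-inclusion} the inclusion $\MM_\infty(\EEE_{S,T,W})\subseteq\MM_\infty(\EEE)$ is equivalent to the $n\times n$ pointwise comparison
$$e_{S,T,W}^{\sharp}(j,k)=\frac{\alpha_k}{\alpha_j}\,e_{S,T,W}(\mu,\nu)\le e(j,k)\qquad\text{for all }j,k\in\{1,\dots,n\},$$
where $\mu,\nu\in S^*\sqcup T^*$ index the blocks containing $j$ and $k$.

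Proceed by case analysis on $(\mu,\nu)$. When $\mu=r,\ \nu=p\in S^*$ (so $j\in I_r^*,\ k\in I_p^*$), the bound $e_{S,T,W}(r,p)\le b_W(r,p)=e_S(r,p)$ coming from $\BBB_W$ together with $e_S(r,p)\le\tau_S(k_r,k_p)\le e(k_r,k)/e(k_p,k)$ (taking $k\in I_p^*\subset I_p$ in the defining minimum) yields
$$e_{S,T,W}^{\sharp}(j,k)\le\frac{e(k_p,k)}{e(k_r,j)}\cdot\frac{e(k_r,k)}{e(k_p,k)}=\frac{e(k_r,k)}{e(k_r,j)}\le e(j,k),$$
by the basic hypothesis; this is exactly the argument of Proposition~\ref{Lem8.7mm}. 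The symmetric case $\mu,\nu\in T^*$ is identical.

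The mixed case $\mu=r\in S^*$, $\nu=a\in T^*$ (i.e.\ $j\in I_r^*$, $k\in J_a^*$) is the core of the argument. When $j\in I_r\cap J_a$, the direct entry $b_W(r,a)=\Lambda_W(I_r,J_a)=e(k_r,v)/e(\ell_a,v)$ is available (where $v=v^{r,a}_{w(I_r,J_a)+1}$); the ordering~\eqref{10.15tt}, combined with $j\in I_r^*$ being at a position strictly larger than $w(I_r,J_a)$, gives $e(k_r,v)/e(\ell_a,v)\le e(k_r,j)/e(\ell_a,j)$. Hence
$$e_{S,T,W}^{\sharp}(j,k)\le\frac{e(\ell_a,k)}{e(k_r,j)}\cdot\frac{e(k_r,j)}{e(\ell_a,j)}=\frac{e(\ell_a,k)}{e(\ell_a,j)}\le e(j,k),$$
again by the basic hypothesis. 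When instead $j\in I_r\cap J_{a'}$ with $a'\ne a$, we exploit the fact that $e_{S,T,W}(r,a)$ is dominated by products along any path in the connected partial matrix $\BBB_W$ (Lemma~\ref{Lem3.2zxc}). Choosing the path $r\to a'\to a$ (so that $e_{S,T,W}(r,a)\le\Lambda_W(I_r,J_{a'})\,e_T(a',a)$), applying the ordering argument just described to the factor $\Lambda_W(I_r,J_{a'})$ and using $e_T(a',a)\le\tau_T(\ell_{a'},\ell_a)\le e(\ell_{a'},k)/e(\ell_a,k)$, one obtains after simplification $e_{S,T,W}^{\sharp}(j,k)\le e(\ell_{a'},k)/e(\ell_{a'},j)\le e(j,k)$.

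The main obstacle is the last situation, where the natural intermediate index $a'$ is \emph{free} for $W$ (i.e.\ $J_{a'}^*=\emptyset$) so the path $r\to a'\to a$ is unavailable in $\BBB_W$; one must then chain together a longer telescoping path in $\BBB_W$, alternating between $S^*$ and $T^*$, whose existence is ensured by the connectedness of $\BBB_W$ proved after~\eqref{bW}. At each link the corresponding $\Lambda_W$ or $e_S$/$e_T$ factor is estimated using the ordering \eqref{10.15tt} at an element of $I_r^*$ lying in the relevant intersection, and the bounds telescope via repeated application of the basic hypothesis to the desired estimate. Once the pointwise inequality $e_{S,T,W}^\sharp(j,k)\le e(j,k)$ is established in all cases, Corollary~\ref{sharp-inclusion} immediately yields both $\MM_\infty(\EEE_{S,T,W})\subseteq\MM_\infty(\EEE)$ and $\PP_0(\EEE_{S,T,W})\subseteq\PP_0(\EEE)$.
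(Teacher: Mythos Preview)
Your strategy matches the paper's: reduce via Corollary~\ref{sharp-inclusion} to the pointwise bound $e_{S,T,W}^\sharp(j,k)\le e(j,k)$ and proceed by cases on $(\mu,\nu)\in S^*\sqcup T^*$. The two diagonal cases and the mixed case with $a'\in T^*$ are handled exactly as in the paper.

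You correctly flag the subtle situation where the index $a'$ with $j\in J_{a'}$ is \emph{free}; indeed the paper's own chain $e_W(\mu,\nu)\le e_W(\mu,\nu')e_W(\nu',\nu)$ tacitly presumes $\nu'\in T^*$. However, your proposed remedy via ``longer telescoping paths in $\BBB_W$'' does not close the gap. Connectedness of $\BBB_W$ guarantees the existence of \emph{some} path from $r$ to $a$, but gives no control over which intersections $I_\cdot\cap J_\cdot$ the successive links traverse; there is no reason an element of $I_r^*$ should lie in the relevant intersection at each step, so the ordering argument from~\eqref{10.15tt} cannot be applied link by link, and the factors need not telescope.

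The clean fix bypasses paths altogether. By the construction in Lemma~\ref{Lem3.2zxc} one has $e_{S,T,W}(r,a)=\sup\,i_r/j_a$ over $\Gamma(S,T,W)=\pi\bigl(\Gamma_W(S,T)\bigr)$, so it suffices to bound $i_r/j_a$ for every $(I,J)\in\Gamma_W(S,T)$. Since $j\in I_r^*\cap J_{a'}$ one has $i_r/j_{a'}<e(k_r,j)/e(\ell_{a'},j)$ by the definition of $I_r^*$; and since $J\in\Gamma_\Z(\EEE_T)$ (this holds regardless of whether $a'$ is fixed or free) one has $j_{a'}/j_a\le e_T(a',a)\le\tau_T(\ell_{a'},\ell_a)\le e(\ell_{a'},k)/e(\ell_a,k)$, using $k\in J_a$. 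Multiplying yields
\[
e_{S,T,W}(r,a)\ \le\ \frac{e(k_r,j)}{e(\ell_{a'},j)}\cdot\frac{e(\ell_{a'},k)}{e(\ell_a,k)},
\]
and hence $e_{S,T,W}^\sharp(j,k)\le e(\ell_{a'},k)/e(\ell_{a'},j)\le e(j,k)$, exactly your computation from the $a'\in T^*$ case. The point is that the possibly free coordinate $j_{a'}$ is still available because one works in the full cone $\Gamma_W(S,T)$ before projecting to the fixed indices.
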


\begin{proof}
The dual norms for the space $\MM_{\infty}(\EEE_{S,T,W})$ are given by
\beas \index{N6sharp@$\widehat N_{r,W}$}
\widehat N_{r,W}(\xib) &= \sum_{p\in S^{*}}\widehat n_{S,p}(\xib_{I_{p}^{*}})^{1/e_{W}(p,r)}
+
\sum_{b\in T^{*}}\widehat n_{T,b}(\xib_{J_{b}^{*}})^{1/e_{W}(b,r)},\qquad r\in S^{*};\\
\widehat N_{a,W}(\xib)&= \sum_{p\in S^{*}}\widehat n_{S,p}(\xib_{I_{p}^{*}})^{1/e_{W}(I_{p},J_{a})}
+
\sum_{b\in T^{*}}\widehat n_{T,b}}(\xib_{J_{b}^{*})^{1/e_{W}(b,a)},\qquad a\in T^{*}.
\eeas
By Corollary \ref{sharp-inclusion}, it suffices to show, for each $(j,k)$, the entries $e^\sharp_W(j,k)$ of the $n\times n$ matrix $\EEE^\sharp_{S,T,W}$ satisfy the inequality
$$
e^\sharp_W(j,k)\le e(j,k).
$$
By \eqref{esharp},  $e^\sharp_W(j,k)=\frac{\alpha_k}{\alpha_j}e_W(\mu,\nu)$, where
\begin{enumerate}
\item[(i)]
$\mu,\nu\in S^*\sqcup T^*$ are such that $j\in I^*_\mu$ (if $\mu\in S^*$) or $j\in J^*_\mu$ (if $\mu\in T^*$) and $k\in I^*_\nu$ or $J^*_\nu$ accordingly;
\item[(ii)] $\alpha_k$ is the exponent of the norm of $\xib_k$ in
$$
\widehat n_{S,p}(\xib_{I_{p}^{*}})=\sum_{k\in I_p^*}n_{k}(\xib_{k})^{e(k_{p},k)}\ (\text{if }k\in I_p^*),\qquad \widehat n_{T,b}(\xib_{J_{b}^{*}})=\sum_{k\in J_b^*}n_{k}(\xib_{k})^{e(\ell_{b},k)}\ (\text{if }k\in J_b^*).
$$
\end{enumerate}
We then have the inequalities
$$
e^\sharp_W(j,k)= \begin{cases} \displaystyle\frac{e(k_{\nu},k)}{e(k_{\mu},j)} e_W(\mu,\nu)&\text{ if }\mu,\nu\in S^*\,,\,j\in I_\mu^*\,,\,k\in I_\nu^*\\ \displaystyle\frac{e(\ell_{\nu},k)}{e(\ell_{\mu},j)}e_W(\mu,\nu)&\text{ if }\mu,\nu\in T^*\,,\,j\in J_\mu^*\,,\,k\in J_\nu^*\\ \displaystyle\frac{e(\ell_{\nu},k)}{e(k_{\mu},j)}e_W(\mu,\nu)&\text{ if }\mu\in S^*\,,\,\nu\in T^*\,,\,j\in I_\mu^*\,,\,k\in J_\nu^*\\ 
\displaystyle\frac{e(k_{\nu},k)}{e(\ell_{\mu},j)}e_W(\mu,\nu)&\text{ if }\mu\in T^*\,,\,\nu\in S^*\,,\,j\in J_\mu^*\,,\,k\in I_\nu^*.
\end{cases}
$$

If $\mu,\nu\in S^*$ and  $j\in I_\mu^*\,,\,k\in I_\nu^*$, using the inequality $e_W(\mu,\nu)\le b_W(\mu,\nu)$, we have
$$
e^\sharp_W(j,k)\le \frac{e(k_{\nu},k)}{e(k_{\mu},j)}b_W(\mu,\nu)= \frac{e(k_{\nu},k)}{e(k_{\mu},j)}\min_{l\in I_\nu}\frac{e(k_{\mu},l)}{e(k_{\nu},l)}\le\frac{e(k_{\mu},k)}{e(k_{\mu},j)} \le e(j,k),
$$
and similarly if $\mu,\nu\in T^*$.

Assume now that $\mu\in S^*$, $\nu\in T^*$, $j\in I_\mu^*\,,\,k\in J_\nu^*$. In this case the inequality $e_W(\mu,\nu)\le b_W(\mu,\nu)$ may not be sufficient (e.g., we may have $b_W(\mu,\nu)=\infty$). However, we can use the basic inequality
$$
e_W(\mu,\nu)\le e_W(\mu,\nu')e_W(\nu',\nu)\le b_W(\mu,\nu')b_W(\nu',\nu)\le \Lambda_W(I_{\mu},J_{\nu'})\tau_T(\ell_\nu,\ell_{\nu'}),
$$
which holds for all $\nu'\in T^*$. Choosing $\nu'$ so that $j\in J_{\nu'}$, we have, with $m=w(\mu,\nu')$,
\bea
e_W^\sharp(j,k)&\le\frac{e(\ell_{\nu},k)}{e(k_{\mu},j)}  \Lambda_W(I_{\mu},J_{\nu'})\tau_T(\ell_\nu,\ell_{\nu'})\\
&= \frac{e(\ell_{\nu},k)}{e(k_{\mu},j)} \frac{e(k_{\mu},v_m)}{e(\ell_{\nu'},v_m)}\min_{i\in J_{\nu}}\frac{e(\ell_{\nu'},i)}{e(\ell_{\nu},i)}\\
&\le \frac{e(\ell_{\nu},k)}{e(k_{\mu},j)} \frac{e(k_{\mu},v_m)}{e(\ell_{\nu'},v_m)}\frac{e(\ell_{\nu'},k)}{e(\ell_{\nu},k)}\\
&= \frac{e(k_{\mu},v_m)}{e(\ell_{\nu'},v_m)}\frac{e(\ell_{\nu'},j)}{e(k_\mu,j)}\frac{e(\ell_{\nu'},k)}{e(\ell_{\nu'},j)}.
\eea
 Since $j\in I_{\mu}^*\cap J_{\nu'}$, it follows from  \eqref{IJstar} that  $k=v_{m'}$ with $m'> m$ in the ordering \eqref{10.15tt} on $ I_{\mu}\cap J^{\nu'}$, i.e.,
 $$
\frac{e(k_{\mu},v_m)}{e(\ell_{\nu.},v_m)}= \frac{\tau_{T}(v_m,\ell_{\nu'})}{\tau_{S}(v_m,k_{\mu})}
\leq \frac{\tau_{T}(j,\ell_{\nu'})}{\tau_{S}(j,k_{\mu})}=\frac{e(k_{\mu},j)}{e(\ell_{\nu'},j)}.
$$
Hence
$$
e_W^\sharp(j,k)\le \frac{e(\ell_{\nu'},k)}{e(\ell_{\nu'},j)}\le e(j,k).
$$
  
The last case, with  $\mu\in T^*$, $\nu\in S^*$, $j\in J_\mu^*\,,\,k\in I_\nu^*$, is treated similarly.
\end{proof}

\subsection{Estimates for $[\varphi]_{I}*[\psi]_{J}$}\label{Estimates} 

Fix marked partitions $S=\big((I_{1},k_{1});\ldots;(I_{s},k_{s})\big)$ and $T=\big((J_{1}, \ell_{1});\ldots;(J_{q}, \ell_{q})\big)$, and let $W\in \WW(S,T)$. Recall that
\beas
I_{r}^{*}&=\bigcup_{\substack{a=1\\I_{r}\cap J_{a}\neq\emptyset}}^{q}\Big\{j\in I_{r}\cap J_{a}:\frac{i_{r}}{e(k_{r},j)}>\frac{j_{a}}{e(\ell_{a},j)}\ \forall \big((i_{1},\ldots,i_{s}),(j_{1}, \ldots,j_{q})\big)\in\Gamma_{\EEE_{S,T,W}}\Big\},\\
J_{a}^{*}&=\bigcup_{\substack{r=1\\I_{r}\cap J_{a}\neq\emptyset}}^{s}\Big\{j\in I_{r}\cap J_{a}:\frac{i_{r}}{e(k_{r},j)}\leq\frac{j_{a}}{e(\ell_{a},j)}\ \forall \big((i_{1},\ldots,i_{s}),(j_{1}, \ldots,j_{q})\big)\in\Gamma_{\EEE_{S,T,W}}\Big\}.
\eeas
Then 
\beas
S^{*}&=\big\{r\in \{1, \ldots, s\}:I_{r}^{*}\neq \emptyset\big\}\,=\,\{r_{1}, \ldots, r_{\alpha}\}\\
T^{*}&=\big\{a\in \{1, \ldots, q\}:J_{a}^{*}\neq \emptyset\big\}=\{a_{1}, \ldots, a_{\beta}\}
\eeas 
are the fixed indices. Write $\R^{N}=\bigoplus_{j=1}^{\alpha+\beta}V_{j}$ as in subsection \ref{Finer}. Thus if $\x\in \R^{N}$, $\x=(\overline \x_{1}, \ldots, \overline \x_{\alpha+\beta})$ where each $\overline \x_{m}$  is either equal to $\x_{I_{r}^{*}}= \big\{\x_{j}:j\in I_{r}^{*}\big\}$ for some $r\in S^{*}$ or is equal to $\x_{J_{a}^{*}}=\big\{\x_{j}:j\in J_{a}^{*}\big\}$ for some $a\in T^{*}$. In equation (\ref{10.36ss}) we introduced a family of dilations on the space $V_{m}$ by setting
\beas
\lambda\circ \overline \x_{m}&=
\begin{cases}
\Big\{\lambda^{1/e(k_{r},j)}\cdot \x_{j}:j\in I_{r}^{*}\Big\}&\text{if $\overline \x_{m}=\x_{I_{r}^{*}}$},\\\\
\Big\{\lambda^{1/e(\ell_{a},j)}\cdot \x_{j}:j\in J_{a}^{*}\Big\}&\text{if $\overline \x_{m}=\x_{J_{a}^{*}}$}.
\end{cases} 
\eeas

Let $
I=(i_{1}, \ldots, i_{s})$ and $J=(j_{1}, \ldots, j_{q})$ with $(I,J)\in \Gamma_{W}(S,T)$. If $\varphi, \psi\in \CC^{\infty}_{0}(\R^{N})$ have support in the unit ball, it follows from from part (\ref{Lem10.13bb1}) of Lemma \ref{Lem10.13bb} that  there is a function $\theta\in \CC^{\infty}_{0}(\R^{N})$, normalized with respect to the functions $\varphi$ and $\psi$, so that 
\bes
[\varphi]_{I}*[\psi]_{J}(\x) = 2^{-\sum_{j=1}^{\alpha+\beta}m_{j}Q_{j}^{*}}\,\,\theta(2^{-m_{1}}\,\hat\cdot\,\overline \x_{1}, \ldots, 2^{-m_{\alpha+\beta}}\,\hat\cdot\,\overline \x_{\alpha+\beta}),
\ees
where $Q_{j}^{*}$ is the homogeneous dimension of $V_{j}$, and 
\beas
m_{j}&= i_{r}&&&&\text{if $\overline \x_{m}=\x_{I_{r}^{*}}$}\\
m_{j}&= j_{a}&&&&\text{if $\overline \x_{m}= \x_{J_{a}^{*}}$}.
\eeas
Thus the indices $\{m_{1}, \ldots, m_{\alpha+\beta}\}$ are some permutation of the indices $\{r_{1}, \ldots,r_{\alpha}, a_{1}, \ldots, a_{\beta}\}$. Note that if  $S^{**}=\{r_{\alpha+1}, \ldots, r_{s}\}\subset\{1, \ldots, s\}$ and $T^{**}=\{a_{\beta+1}, \ldots, a_{q}\}\subset\{1, \ldots,q\}$ are the free indices, they do not appear among the indices $\{m_{1}, \ldots, m_{\alpha+\beta}\}$. 

Now suppose that $\varphi$ has cancellation in the variables $\{\x_{k_{1}}, \ldots, \x_{k_{s}}\}$ and that $\psi$ has cancellation in the variables $\{\x_{\ell_{1}}, \ldots, \x_{\ell_{q}}\}$. In order to obtain additional estimates of the function $\theta$, we will use part (\ref{Lem10.13bb2}) of Lemma \ref{Lem10.13bb}. Using the notation of Lemma \ref{Lem10.13bb}, we have 
\beas
C&=\{k_{1}, \ldots, k_{s}\}\subset\{1, \ldots, n\},\\
D&=\{\ell_{1}, \ldots, \ell_{q}\}\subset\{1, \ldots, n\}.
\eeas 
The dilations are given by
\beas
\lambda_{j}&= 2^{i_{r}/e(k_{r},j)}&&&&\text{if $j\in I_{r}$},\\
\mu_{k}&= 2^{j_{a}/e(\ell_{a},j)}&&&&\text{if $j\in J_{a}$}.
\eeas
The set $C_{1}=\big\{k_{r}:\lambda_{k_{r}}<\mu_{k_{r}}\big\}$ is now the union of two disjoint subsets:
\beas
C_{1}'&=\Big\{k_{r}:I_{r}^{*}=\emptyset\Big\} &&= \Big\{k_{r}:\text{$r$ is free}\Big\},\\
C_{1}''&= \Big\{k_{r}:\text{$I_{r}^{*}\neq\emptyset$ and $k_{r}\notin I_{r}^{*}$}\Big\}&&=\Big\{k_{r}:\text{$r$ is fixed and $k_{r}\notin I_{r}^{*}$}\Big\}.
\eeas
The same is true of the set $D_{1}=\big\{\ell_{a}:\mu_{\ell_{a}}\leq\lambda_{\ell_{a}}\big\}$:
\beas
D_{1}'&=\Big\{\ell_{a}:J_{a}^{*}=\emptyset\Big\} &&= \Big\{\ell_{a}:\text{$a$ is free}\Big\},\\
D_{1}''&= \Big\{\ell_{a}:\text{$J_{a}^{*}\neq\emptyset$ and $\ell_{a}\notin J_{a}^{*}$}\Big\}&&=\Big\{\ell_{a}:\text{$a$ is fixed and $\ell_{a}\notin J_{a}^{*}$}\Big\}.
\eeas
The sets $C_{2}$ and $D_{2}$ are given by
\beas
C_{2}&= \Big\{k_{r}:\text{$r$ is fixed and $k_{r}\in I_{r}^{*}$}\Big\},\\
D_{2}&= \Big\{\ell_{a}:\text{$a$ is fixed and $\ell_{a}\in J_{a}^{*}$}\Big\}.
\eeas
Define maps
\beas
\sigma_{1}&:\big\{1, \ldots, s\big\}\to \big\{1,\ldots,q\big\}\,\, \text{such that $k_{r}\in J_{\sigma_{1}(r)}$},\\
\sigma_{2}&:\big\{1, \ldots, q\big\}\to \big\{1,\ldots,s\big\}\,\, \text{such that $\ell_{a}\in I_{\sigma_{2}(a)}$},
\eeas
and note that
\beas
\text{$1\leq r \leq s$ and $r$ is free} \quad \Longrightarrow \quad \text{$\sigma_{1}(r)$ is fixed},\\
\text{$1\leq a \leq q$ and $a$ is free} \quad \Longrightarrow \quad \text{$\sigma_{2}(a)$ is fixed}.
\eeas
Applying part (\ref{Lem10.13bb2}) of Lemma \ref{Lem10.13bb}  we have the following result.

\begin{lemma}\label{Lem10.12gg}
Let $(I,J)\in \Gamma(\EEE_{S,T,W})$,  let $\varphi, \psi$ be normalized bump functions, and suppose that $\varphi$ has cancellation in the variables $\{\x_{k_{1}}, \ldots, \x_{k_{s}}\}$ and that $\psi$ has cancellation in the variables $\{\x_{\ell_{1}}, \ldots  \x_{\ell_{q}}\}$. Write $[\varphi]_{I}*[\psi]_{J}=[\theta]_{M}$ as in Lemma \ref{Lem10.2}. There exists $\epsilon >0$ depending only on the group structure so that $\theta$ can be written as a finite sum of terms of the form

\beas
\prod_{k_{r}\in C_{1}'}2^{-\epsilon G_{1}(i_{r})}\!\!\!
\prod_{k_{r}\in C_{1}''}2^{-\epsilon G_{1}(i_{r})}\!\!\!
\prod_{\ell_{a}\in D_{1}'}2^{-\epsilon H_{1}(j_{a})} \prod_{\ell_{a}\in D_{1}''}2^{-\epsilon H_{1}(a)}\!\!\!
\prod_{k_{r}\in C_{2}'}2^{-\epsilon G_{2}(r)}\!\!\!
\prod_{\ell_{a}\in D_{2}'}2^{-\epsilon H_{2}(j_{a})}\,\,
\theta_{C_{2}'',D_{2}''}
\eeas
where the factors are defined as follows.
\begin{enumerate}[{\rm a)}]

\item $C_{2}$and $D_{2}$ are each partitioned into two disjoint subsets:
\beas
C_{2}&= C_{2}'\cup C_{2}'',& C_{2}'\cap C_{2}''&=\emptyset,\\
D_{2}&= D_{2}'\cup D_{2}'',& D_{2}'\cap D_{2}''&=\emptyset.
\eeas

\smallskip

\item The functions $\big\{\theta_{C_{2}'',D_{2}''}\big\}\subset \CC^{\infty}_{0}(\R^{n})$ are normalized relative to $\varphi$ and $\psi$.

\smallskip

\item The function $\theta_{C_{2}'',D_{2}''}$ has cancellation in each variable $\overline \x_{m} = \x_{I_{r}^{*}}$ if $r\in C_{2}''$.

\smallskip

\item The function $\theta_{C_{2}'',D_{2}''}$ has cancellation in each variable $\overline \x_{m}= \x_{J_{a}^{*}}$ if $a\in D_{2}''$.

\smallskip

\item The expressions $G_{j}(r)$ and $H_{j}(a)$ are given by: 

\beas
G_{1}(i_{r}) &= \min\Big\{\frac{j_{\sigma_{1}(r)}}{e(\ell_{\sigma_{1}(r)},k_{r})}-i_{r},\,\, i_{r+1}-i_{r}\Big\};\\
H_{1}(j_{a}) &= \min\Big\{\frac{k_{\sigma_{2}(a)}}{e(k_{\sigma_{2}(a)},\ell_{a})}-j_{a}, \,\,j_{a+1}-j_{a}\Big\};\\
G_{2}(i_{r}) &= i_{r+1}-i_{r};\\
H_{2}(j_{a}) &= j_{a+1}-j_{a}.\\
\eeas
\end{enumerate}
\end{lemma}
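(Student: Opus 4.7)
The strategy is to deduce Lemma~\ref{Lem10.12gg} from the coordinate-level estimate Lemma~\ref{Lem10.13bb}, by aggregating its gains from individual coordinates up to the blocks $C_{k_r}$ and $C_{\ell_a}$. Applying Lemma~\ref{Lem10.13bb}(\ref{Lem10.13bb1}) to the $N$-tuples $\lambda=\lambda(I)$ and $\mu=\mu(J)$ from \eqref{9.6bb} already gives $[\varphi]_I*[\psi]_J=[\theta]_M$, with $\theta$ normalized relative to $\varphi,\psi$ and $M_l=\max(\lambda_l,\mu_l)$. The hypothesis $\lambda,\mu\in E_N$ is supplied by Lemma~\ref{Lem9.6}, using that $\EEE$ is doubly monotone together with $I\in\Gamma(\EEE_S)$, $J\in\Gamma(\EEE_T)$. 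Because $\lambda$ and $\mu$ are constant on each original block $\R^{C_j}$, whether $M_l=\lambda_l$ or $M_l=\mu_l$ for $l\in C_j$ depends only on $j$, and by Definition~\ref{Def10.5zxc} this is exactly the statement $j\in I_r^*$ versus $j\in J_a^*$; so $M$ is indexed by the fixed indices as in Section~\ref{Finer}.

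Next I would translate the block-level cancellation hypotheses into coordinate-level ones by Proposition~\ref{Prop3.5}: ``$\varphi$ has cancellation in $\x_{k_r}$'' becomes ``$\varphi$ has cancellation in each $x_l$ with $l\in C_{k_r}$'', and similarly for $\psi$. Then Lemma~\ref{Lem10.13bb}(\ref{Lem10.13bb2}) applies with the coordinate-level cancellation sets $C=\bigcup_r C_{k_r}$ and $D=\bigcup_a C_{\ell_a}$. Since $\lambda_l,\mu_l$ are constant on each $C_j$, the split $C=C_1\sqcup C_2$ produced by Lemma~\ref{Lem10.13bb} is itself constant on blocks: computing $\lambda_{k_r}=2^{i_r}$ and $\mu_{k_r}=2^{j_{\sigma_1(r)}/e(\ell_{\sigma_1(r)},k_r)}$, the inequality $\lambda_{k_r}<\mu_{k_r}$ is equivalent to $k_r\in J_{\sigma_1(r)}^*$, which is precisely the block-level partition $C_1'\sqcup C_1''\sqcup C_2$ in the statement; the partition of $D$ is symmetric.

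It then remains to aggregate the coordinate-level gains $(\lambda_l/\mu_l)^\epsilon+(\lambda_l/\lambda_{l+1})^\epsilon$ (for $l\in C_1$) and $(\lambda_l/\lambda_{l+1})^\epsilon$ (for $l\in C_2'$) into the claimed block-level factors $2^{-\epsilon G_1(i_r)}$ and $2^{-\epsilon G_2(i_r)}$. Since $\lambda_l/\mu_l$ is constant on the block $C_{k_r}$, and $\lambda_l/\lambda_{l+1}=1$ for all $l\in C_{k_r}$ except at the largest index of $C_{k_r}$, the product over $l\in C_{k_r}$ collapses---after absorbing a constant power into $\epsilon$---to $\max\bigl(\lambda_{k_r}/\mu_{k_r},\,\lambda_{k_r}/\lambda_{k_r+1}\bigr)^\epsilon$. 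Taking logarithms identifies the two arguments of the minimum defining $G_1(i_r)$ with these two ratios. The analogous computation gives the factors attached to $D_1$, $C_2'$, $D_2'$; the freedom of the subdivisions $C_2=C_2'\sqcup C_2''$ and $D_2=D_2'\sqcup D_2''$ reflects the dichotomy of Lemma~\ref{Lem10.13bb}(\ref{Lem10.13bb2})(ii)--(iii) between paying a gain and retaining cancellation. The cancellation properties (c)--(d) of $\theta_{C_2'',D_2''}$ then follow from the coordinate-level cancellation produced by Lemma~\ref{Lem10.13bb}, promoted back to block-level cancellation via Proposition~\ref{Prop3.5} applied in reverse.

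The main obstacle is precisely the identification of the scale $\lambda_{k_r+1}$ (the scale of $[\varphi]_I$ at the coordinate immediately following the block $C_{k_r}$) with $2^{i_{r+1}}$: between $C_{k_r}$ and $C_{k_{r+1}}$ there may be intervening blocks $C_j$ whose $j$ is not a special index, and one must appeal to the doubly monotone property of $\EEE_S$ from Lemma~\ref{Lem9.3pp} together with the ordering $k_1<\cdots<k_s$ from Section~\ref{Groups} to control the intermediate scales within the interval $[2^{i_r},2^{i_{r+1}}]$, so that the aggregated gain across the boundary of $C_{k_r}$ is indeed bounded by $2^{-\epsilon(i_{r+1}-i_r)}$ as required.
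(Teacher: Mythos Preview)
Your overall strategy---deduce Lemma~\ref{Lem10.12gg} from Lemma~\ref{Lem10.13bb}---is exactly what the paper does, but there is a real error in how you pass from block cancellation to coordinate cancellation. Proposition~\ref{Prop3.5} does \emph{not} say that cancellation in the block variable $\x_{k_r}$ implies cancellation in each individual coordinate $x_l$, $l\in C_{k_r}$; the latter condition is strictly stronger. What Proposition~\ref{Prop3.5} actually gives is a decomposition $\varphi=\sum_{l\in C_{k_r}}\partial_{x_l}\varphi_l$, so that each \emph{summand} has cancellation in a \emph{single} coordinate $x_l$ of the block. Hence the coordinate-level set $C$ is not $\bigcup_r C_{k_r}$ but, for each summand, just one chosen index $l_r\in C_{k_r}$ per block.

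This undermines your aggregation step. With only one coordinate $l_r\in C_{k_r}$ carrying cancellation there is no ``product over $l\in C_{k_r}$'' to collapse; you get a single factor $(\lambda_{l_r}/\mu_{l_r})^\epsilon+(\lambda_{l_r}/\lambda_{l_r+1})^\epsilon$. When $l_r$ is not the last index of $C_{k_r}$ you have $\lambda_{l_r}=\lambda_{l_r+1}$ (same block), so the second term equals $1$ and this factor gives no gain whatsoever. To recover the claimed $2^{-\epsilon G_1(i_r)}$ you would have to iterate the non-abelian error mechanism in the proof of Lemma~\ref{Lem10.13bb}, pushing the cancellation from $x_{l_r}$ through the remaining coordinates of $C_{k_r}$ until it exits the block; only at that boundary does a nontrivial scale ratio appear.

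The paper sidesteps this entirely: it applies Lemma~\ref{Lem10.13bb} directly at the level of the $n$ block variables $\x_1,\dots,\x_n$, taking $C=\{k_1,\dots,k_s\}\subset\{1,\dots,n\}$ and $\lambda_j=2^{i_r/e(k_r,j)}$ for $j\in I_r$. In that reading ``cancellation in $x_j$'' already means block cancellation $\int_{\R^{C_j}}\varphi\,d\x_j=0$, and ``$\lambda_j/\lambda_{j+1}$'' is the ratio of adjacent \emph{block} scales. The block-level form of Lemma~\ref{Lem10.13bb} (including the handling of Proposition~\ref{Prop3.5} and the propagation of error terms within a block) is what is established in the cited Lemmas~6.17--6.19 of \cite{MR2949616}; the paper is invoking that rather than reproving it coordinate by coordinate. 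Your ``main obstacle'' paragraph about intermediate blocks between $C_{k_r}$ and $C_{k_{r+1}}$ is then the right concern at the block level, and your proposed use of double monotonicity (Lemma~\ref{Lem9.3pp}) is the relevant tool there.
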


The meaning of this result is as follows. The original functions $\varphi^{I}$ or $\psi^{J}$ have cancellation in variables whose indices belong to $C_{0}\cup C_{1}\cup C_{2}$ and to $D_{0}\cup D_{1}\cup D_{2}$. Then with $\varphi^{I}*\psi^{J}=[\theta^{I,J}]_{M}$, the function $\theta^{I,J}$ can be written as a sum of terms in which a normalized bump function $\Theta^{I,J}_{C_{1}',D_{1}/}$ are multiplied by certain small factors. 

\begin{enumerate}[A)]

\item If $r\in \{1, \ldots, s\}$ or $a\in \{1, \ldots, q\}$ is a free variable then $r\in C_{1}'$ or $a\in D_{1}'$. If $r$ is free, the scale of $[\varphi^{I}]_{I}$ in the coordinate $\x_{k_{r}}$ must be smaller than the scale of $[\psi^{J}]_{J}$ in $\x_{k_{r}}$, which is $j_{\sigma_{1}(r)}/e(\ell_{\sigma_{1}(r)},k_{r})$. Thus there is a gain, either of order $-\epsilon[j_{\sigma_{1}(r)}/e(\ell_{\sigma_{1}(r)},k_{r})-i_{r}]$ or of order $-\epsilon[i_{r+1}-i_{r}]$, due either to integration by parts or to replacing the derivative $\partial_{\x_{k_{r}}}$ with the corresponding invariant differential operator. This leads to the term $2^{-\epsilon G_{1}(r)}$, and there is a similar explanation for the term $2^{-\epsilon H_{1}(a)}$. In particular, there is an gain associated with all free indices.

\smallskip

\item If $r\in \{1, \ldots, s\}$ or $a\in \{1, \ldots, q\}$ is a fixed variable, then $r\in C_{1}''\cup C_{2}$ or $a\in D_{1}''\cup D_{2}$. 
\begin{enumerate}[a)]

\smallskip

\item For indices $r\in C_{1}''$, $k_{r}\notin I_{r}^{*}$ and so the scale of $[\varphi^{I}]_{I}$ in the coordinate $\x_{k_{r}}$ must be smaller than the scale of $[\psi^{J}]_{J}$ in $\x_{k_{r}}$, which is $j_{\sigma_{1}(r)}/e(\ell_{\sigma_{1}(r)},k_{r})$. Thus there is a gain, either of order $-\epsilon[j_{\sigma_{1}(r)}/e(\ell_{\sigma_{1}(r)},k_{r})-i_{r}]$ or of order $-\epsilon[i_{r+1}-i_{r}]$, due either to integration by parts or to replacing the derivative $\partial_{\x_{k_{r}}}$ with the corresponding invariant differential operator. This leads to the term $2^{-\epsilon G_{1}(r)}$ and there is a similar explanation for the term $2^{-\epsilon H_{1}(a)}$.

\smallskip

\item For an index $r\in C_{2}$, it may happen that the cancelation of $[\varphi^{I}]_{I}$ in the variable $\x_{k_{r}}$ leads to cancellation in this variable of the convolution $[\varphi^{I}]_{I}*[\psi^{J}]_{J}$. We let $C_{2}''$ be the elements of $C_{2}$ where this happens and $C_{2}'$ be the complementary set. Then $\theta^{I,J}_{C_{2}'',D_{2}''}$ does have cancellation in the variables $\x_{I_{r}^{*}}$ for $r\in C_{2}''$. In the same way,$\theta^{I,J}_{C_{2}'',D_{2}''}$ does have cancellation in the variables $\x_{J_{a}}^{*}$ for $a\in D_{2}''$.

\smallskip

\item For indices $r\in C_{2}'=C_{2}\setminus C_{2}''$ or $a\in D_{2}'=D_{2}\setminus D_{2}''$, since cancellation does not persist in the convolution, there is an exponential gain of order $-\epsilon[i_{r+1}-i_{r}|$ or $-\epsilon[j_{a+1}-j_{a}|$ due replacing an ordinary derivative by a left- or right-invariant operator. This leads to the terms $2^{-\epsilon G_{2}(r)}$ and $2^{-\epsilon H_{2}(a)}$.

\end{enumerate}
\end{enumerate}

\begin{remarks} Let $(I,J)\in \Gamma(\EEE_{S,T,W})$.

\begin{enumerate}[{\rm 1)}]

\item If $k_{r}\in C_{1}'$ then $i_{r}\leq j_{\sigma_{1}(r)}e(\ell_{\sigma_{1}(r)},k_{r})^{-1}$ and $\sigma_{1}(r)$ is a fixed index. Similarly, if $\ell_{a}\in D_{1}'$ then $j_{a}\leq k_{\sigma_{2}(a)}e(k_{\sigma_{2}(a)},\ell_{a})^{-1}$ and $\sigma_{2}(a)$ is a fixed index.

\smallskip

\item If $k_{r}\in C_{1}''$ then $\sigma_{1}(r)$ is also fixed, and 
\beas
\frac{i_{r}}{j_{\sigma_{1}(r)}}&\geq \tau_{W}(J_{\sigma_{1}(r)},I_{r})^{-1}
\geq \Lambda_{W}(J_{\sigma_{1}(r)},I_{r})^{-1}
=
\frac{e(k_{r},v_{m_{r,\sigma_{1}(r)}})}{e(\ell_{\sigma_{1}(r)},v_{m_{r,\sigma_{1}(r)}})}\geq e(\ell_{\sigma_{1}(r)},k_{r})^{-1}
\eeas
where the second inequality follows from the definition of $\tau_{W}$ since $J_{\sigma(r)}^{*}\cap I_{r}\neq \emptyset$, and the last inequality follows from the basic hypothesis. Thus $i_{r}-e(\ell_{\sigma_{1}(r)},k_{r})^{-1}j_{\sigma_{1}(r)}\leq 0$. Similarly, if $j_{a}\in D_{1}''$, it follows that $j_{a}-e(k_{\sigma_{2}(a)},\ell_{a})^{-1}i_{\sigma_{2}(a)}\leq 0$.
\end{enumerate}
\end{remarks}

\subsection{Proof of Theorem \ref{Thm10.1}}\label{FixedandFree}

We have seen on page \pageref{second lemma} that Theorem \ref{Thm10.1} follows from Lemma \ref{Lem10.2} which deals with sums of convolutions $[\varphi^{I}]_{I}*[\psi^{J}]_{J}$ where $(I,J)\in \Gamma_{\Z}(\EEE_S)\times \Gamma_{\Z}(\EEE_T)$. In section \ref{Decomp1} we partitioned $\Gamma_{\Z}(\EEE_S)\times \Gamma_{\Z}(\EEE_T)$ into disjoint sets $\Gamma_{W}(S,T)$ for $W\in \WW(S,T)$. Thus the proof of Lemma \ref{Lem10.2}, and hence Theorem \ref{Thm10.1}, follows from the following result.

\begin{lemma}\label{Lem10.5}
If $F\subset\Gamma_{W}(S,T)$ is any finite set, then 
\bes
\HH_{F}=\sum_{(S,T)\in F}[\varphi^{I}]_{I}*[\psi^{J}]_{J}\in \PP_{0}(\EEE_{S,T,W})
\ees
with constants independent of the finite set $F$. Moreover, $\lim_{F\nearrow\Gamma_{W}(S,T)}\HH_{F}=\HH_{S,T,W}\in \PP_{0}(\EEE_{S,T,W})$ with convergence in the sense of distributions. In particular, it follows from Lemma \ref{Lem10.10gg} that $\HH_{S,T,W}\in \PP_{0}(\EEE)$.
\end{lemma}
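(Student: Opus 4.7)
The plan is to carry out in full detail the road map sketched immediately after the statement of Theorem \ref{Thm10.1} (items (F)(a)--(c) in the outline) and then invoke Theorem \ref{Thm3.7} to conclude membership in $\PP_{0}(\EEE_{S,T,W})$. Fix $S,T\in\SS'(\EEE)$ and $W\in \WW(S,T)$, and let $\alpha,\beta$, the fixed sets $S^{*},T^{*}$, the decomposition $\R^{N}=\bigoplus_{m=1}^{\alpha+\beta}\R^{V_{m}}$, the dilations $\lambda\circ\overline{\x}_{m}$, and the matrix $\EEE_{S,T,W}$ be as in Sections \ref{FixedFree}--\ref{TheMatrix}. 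For each $(I,J)\in \Gamma_{W}(S,T)$, Lemma \ref{Lem10.13bb}(\ref{Lem10.13bb1}) produces a normalized $\theta^{I,J}\in\CC^{\infty}_{0}(\R^{N})$ such that $[\varphi^{I}]_{I}*[\psi^{J}]_{J}=[\theta^{I,J}]_{M(I,J)}$, where $M(I,J)=\pi(I,J)\in \Gamma_{\Z}(\EEE_{S,T,W})$ is the $(\alpha+\beta)$-tuple obtained by keeping only the fixed scales (the free coordinates $i_{r}$, $j_{a}$ do not appear in $M(I,J)$).

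The key step is the reorganisation of the sum by fixed scales. Let $\mathfrak M(S,T,W)=\pi(\Gamma_{W}(S,T))\subseteq \Gamma_{\Z}(\EEE_{S,T,W})$ and, for each $M\in \mathfrak M(S,T,W)$, let $F(M)=\pi^{-1}(M)\cap \Gamma_{W}(S,T)$. Then
\begin{equation*}
\HH_{F}=\sum_{(I,J)\in F}[\varphi^{I}]_{I}*[\psi^{J}]_{J}=\sum_{M\in \pi(F)}\Big[\sum_{(I,J)\in F(M)\cap F}\theta^{I,J}\Big]_{M}.
\end{equation*}
I would apply Lemma \ref{Lem10.12gg} to each $\theta^{I,J}$ to extract the exponential gains in the free indices and the weak cancellation in the fixed indices. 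For indices in the free subsets $C_{1}'\cup D_{1}'$ the gain factors $2^{-\epsilon G_{1}(i_{r})}$ and $2^{-\epsilon H_{1}(j_{a})}$ are exponentials in differences of the free scales from quantities depending only on $M$; summing these geometric series over the free coordinates in $F(M)\cap F$ (using Proposition \ref{Prop16.1wer} of Appendix III as in the proof of Theorem \ref{Thm3.7}) shows that the inner sum converges, as $F\nearrow \Gamma_{W}(S,T)$, to a function $\Theta^{M}\in\CC^{\infty}_{0}(\R^{N})$ supported in a fixed dilate of the unit ball, and that the family $\{\Theta^{M}:M\in\mathfrak M(S,T,W)\}$ is uniformly bounded.

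The heart of the argument is then to verify that each $\Theta^{M}$ has weak cancellation relative to $M$ in the sense of Definition \ref{Def4.4} with respect to the matrix $\EEE_{S,T,W}$. From Lemma \ref{Lem10.12gg}, $\Theta^{M}$ is a finite sum, indexed by partitions $C_{2}=C_{2}'\sqcup C_{2}''$ and $D_{2}=D_{2}'\sqcup D_{2}''$, of normalized bump functions $\Theta_{C_{2}'',D_{2}''}^{M}$ that (i) have genuine cancellation in $\overline{\x}_{m}$ whenever the corresponding fixed index lies in $C_{2}''\cup D_{2}''$, and (ii) are multiplied by factors of the form $2^{-\epsilon G_{1}}$, $2^{-\epsilon H_{1}}$, $2^{-\epsilon G_{2}}$, $2^{-\epsilon H_{2}}$ indexed by $C_{1}''\cup D_{1}''\cup C_{2}'\cup D_{2}'$. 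Each such exponent can be expressed in terms of differences $m_{\mu}-b_{W}(\mu,\nu)m_{\nu}$ of fixed scales via the identifications listed in \eqref{bW}: for instance, for $k_{r}\in C_{1}''$ we have $\sigma_{1}(r)\in T^{*}$ and
\begin{equation*}
G_{1}(i_{r})\ \geq\ \min\{-i_{r}+e(\ell_{\sigma_{1}(r)},k_{r})^{-1}j_{\sigma_{1}(r)},\ i_{r+1}-i_{r}\}\geq c\bigl[m_{\mu}-b_{W}(\mu,\nu)m_{\nu}\bigr]
\end{equation*}
for an appropriate pair $(\mu,\nu)\in (S^{*}\sqcup T^{*})^{2}$, and similarly for the other three types. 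Using $e_{W}(\mu,\nu)\leq b_{W}(\mu,\nu)$ (from the construction of $\EEE_{S,T,W}$ in Section \ref{TheMatrix}) and $m_{\nu}<0$, this yields an inequality $2^{-\epsilon G_{1}(i_{r})}\lesssim 2^{-\epsilon'[m_{\mu}-e_{W}(\mu,\nu)m_{\nu}]}$, which is exactly the form required for weak cancellation.

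With the weak cancellation of $\Theta^{M}$ established, Theorem \ref{Thm3.7} applied to $\EEE_{S,T,W}$ (which satisfies the basic hypothesis by construction) shows that, as $F\nearrow \Gamma_{W}(S,T)$, the sum $\HH_{F}$ converges in the sense of distributions to $\HH_{S,T,W}=\sum_{M\in \mathfrak M(S,T,W)}[\Theta^{M}]_{M}\in \PP_{0}(\EEE_{S,T,W})$, with seminorms controlled uniformly in $F$ by the normalization constants of the families $\{\varphi^{I}\}$ and $\{\psi^{J}\}$. The uniform bound on $\HH_{F}$ in $\PP_{0}(\EEE_{S,T,W})$ is obtained by truncating the argument to the finite set $F$: the differential inequalities and cancellation conditions of $\PP_{0}(\EEE_{S,T,W})$ for a finite partial sum follow from the estimates of Step 1 in the proof of Theorem \ref{Thm3.7}, since these estimates depend only on the uniform boundedness and weak cancellation of the summands, not on their number. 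Finally, Lemma \ref{Lem10.10gg} gives $\PP_{0}(\EEE_{S,T,W})\subseteq \PP_{0}(\EEE)$.

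The main obstacle I anticipate is the bookkeeping in the weak-cancellation verification: one must check that for every subset $A\subseteq\{1,\ldots,\alpha+\beta-1\}$ labelling the non-cancellation indices, a consistent choice of $\tau_{A}:A\to\{1,\ldots,\alpha+\beta\}$ with $\tau_{A}(\mu)\neq \mu$ exists so that each exponent $G_{1},H_{1},G_{2},H_{2}$ dominates $m_{\mu}-e_{W}(\mu,\tau_{A}(\mu))m_{\tau_{A}(\mu)}$. This matching relies crucially on the inequality $e_{W}(\mu,\nu)\leq b_{W}(\mu,\nu)$ and on the defining inequalities \eqref{10.12zxc} of $\Gamma(\EEE_{S,T,W})$, and must be carried out separately for the four types of exponents coming from $C_{1}',C_{1}'',D_{1}',D_{1}'',C_{2}',D_{2}'$.
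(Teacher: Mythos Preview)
Your proposal follows essentially the same route as the paper: reorganise the double sum by the fixed-scale multi-index $M=\pi(I,J)$, sum out the free variables via the exponential gains from Lemma \ref{Lem10.12gg} to obtain a uniformly bounded family $\{\Theta^{M}\}$, verify weak cancellation of $\Theta^{M}$ relative to $M$, and then invoke Theorem \ref{Thm3.7} for the matrix $\EEE_{S,T,W}$. The paper's proof is in fact terser than yours on precisely the point you flag as the main obstacle---it simply asserts that ``each term in the sum is a function with weak cancellation with parameter $\epsilon>0$ relative to the multi-index $M$'' without writing out the matching of $G_{1},H_{1},G_{2},H_{2}$ to the form required by Definition \ref{Def4.4}---so your more explicit discussion is an elaboration rather than a departure.

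Two minor remarks. First, for the inner sum over free variables the paper does not invoke Proposition \ref{Prop16.1wer}; it just bounds by the product of one-variable geometric series $\sum_{i_{r_{v}}\le c}2^{-\epsilon(c-i_{r_{v}})}$, which suffices because each free $i_{r}$ (resp.\ $j_{a}$) is bounded above by a quantity depending only on the fixed scales. Second, in your weak-cancellation bookkeeping be careful that the gains $G_{2}(i_{r})=i_{r+1}-i_{r}$ and $H_{2}(j_{a})=j_{a+1}-j_{a}$ involve the \emph{next} index in the original $S$- or $T$-ordering, which need not itself be fixed; the correct way to absorb these into the form $m_{\mu}-e_{W}(\mu,\tau(\mu))m_{\tau(\mu)}$ is to use the monotonicity $i_{r}\le i_{r+1}\le\cdots$ to pass to the next \emph{fixed} index and then use $e_{W}(\mu,\nu)\le b_{W}(\mu,\nu)\le e_{S}$ (or $e_{T}$) along the appropriate chain.
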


\begin{proof}
Recall from page \pageref{double sum} that
\beas
\sum_{(I,J)\in \Gamma_{W}(S,T)}[\varphi^{I}]_{I}*[\psi^{J}]_{J}= \sum_{M\in \mathfrak M(S,T,W)}\Big[\sum_{(I,J)\in F(M)}\theta^{I,J}\Big]_{M}= \sum_{M\in \mathfrak M(S,T,W)}[\Theta^{M}]_{M}.
\eeas
We first show that for each $M\in \mathfrak M(S,T,W)$, the sum $\sum_{(I,J)\in F(M)}\theta^{I,J}$ converges to a normalized bump function. Given $M=(m_{1}, \ldots, m_{\alpha+\beta})$, this determines the indices $i_{r}$ and $j_{a}$ for which $r$ and $a$ are fixed; \textit{i.e.}  $r\in\{{r_{1}}, \ldots, {r_{\alpha}}\}$ and $a\in \{{a_{1}}, \ldots, {a_{\beta}}\}$. Thus $i_{r}$ and $j_{a}$ belong to the sets $C_{1}'', D_{1}'', C_{2}, D_{2}$. We must sum over the remaining indices corresponding to free variables $r$ and $a$ for which $k_{r}\in C_{1}'$ and $\ell_{a}\in D_{1}'$.  We write the free variables as $\{r_{\alpha+1}, \ldots, r_{s}\}$ and $\{a_{\beta+1}, \ldots, a_{q}\}$. Using Lemma \ref{Lem10.12gg} it follows that $\sum_{(I,J)\in F(M)}\theta^{I,J}$ is a finite sum of terms  (indexed by subsets $C_{2}''\subset C_{2}$ and $D_{2}''\subset D_{2}$) of the form
\beas
\prod_{k_{r}\in C_{1}''}2^{-\epsilon G_{1}(r)}
\prod_{\ell_{a}\in D_{1}''}2^{-\epsilon H_{1}(a)}
&\prod_{k_{r}\in C_{2}'}2^{-\epsilon G_{2}(r)}
\prod_{\ell_{a}\in D_{2}'}2^{-\epsilon H_{2}(a)}\\
&\Big[\sum_{\substack{
(i_{r_{\alpha+1}}, \ldots, i_{r_{s}})\in C_{1}'\\
(j_{a_{\beta+1}}, \ldots, j_{a_{q}})\in D_{1}'}}
\prod_{v=\alpha+1}^{s}2^{-\epsilon G_{1}(r_{v})}\prod_{v=\beta+1}^{q}2^{-\epsilon H_{1}(a_{v})} \,\,\theta_{C_{2}'',D_{2}''}\Big]
\eeas
where each function $\theta_{C_{2}'',D_{2}''}$ has cancellation in the variables $\overline \x_{m}=\x_{I_{r}^{*}}$ if $r\in C_{2}''$ and $\overline \x_{m}=\x_{J_{a}^{*}}$ if $a\in D_{2}''$.

We have
\beas
\sum_{\substack{
(i_{r_{\alpha+1}}, \ldots, i_{r_{s}})\in C_{1}'\\
(j_{a_{\beta+1}}, \ldots, j_{a_{q}})\in D_{1}'}}
\prod_{v=\alpha+1}^{s}&2^{-\epsilon G_{1}(r_{v})}\prod_{v=\beta+1}^{q}2^{-\epsilon H_{1}(a_{v})} \,\,\theta_{C_{2}'',D_{2}''}\\
&\leq
\prod_{v=\alpha+1}^{s}\Big[\sum_{i_{r_{v}}\leq j_{\sigma_{1}(r_{v})}e(\ell_{\sigma_{1}(r_{v})},k_{r_{v}})}2^{-\epsilon[j_{\sigma_{1}(r_{v})}e(\ell_{\sigma_{1}(r_{v})},k_{r_{v}})-i_{r_{v}}]}\Big]\\
&\qquad \quad
\prod_{v=\beta+1}^{q}\Bigg[
\sum_{j_{a_{v}}\leq i_{\sigma_{2}(a_{v})}e(k_{\sigma_{2}(a_{v})},\ell_{a_{v}})}2^{-\epsilon[i_{\sigma_{2}(a_{v})}e(k_{\sigma_{2}(a_{v})},\ell_{a_{v}})-j_{a_{v}}]}\Bigg]<\infty.
\eeas
It follows that $\sum_{(I,J)\in F(M)}\theta^{I,J}$ is a finite sum of terms of the form
\beas
\prod_{k_{r}\in C_{1}''}2^{-\epsilon G_{1}(r)}
\prod_{\ell_{a}\in D_{1}''}2^{-\epsilon H_{1}(a)}
&\prod_{k_{r}\in C_{2}'}2^{-\epsilon G_{2}(r)}
\prod_{\ell_{a}\in D_{2}'}2^{-\epsilon H_{2}(a)}\,\,\theta^{I,J}_{M,C_{2}'',D_{2}''}
\eeas
where $\theta^{I,J}_{M,C_{2}'',D_{2}''}$ is a normalized bump function. It remains to show that 
\beas
\sum_{M\in \mathfrak M(S,T,W) }\prod_{k_{r}\in C_{1}''}2^{-\epsilon G_{1}(r)}
\prod_{\ell_{a}\in D_{1}''}2^{-\epsilon H_{1}(a)}
&\prod_{k_{r}\in C_{2}'}2^{-\epsilon G_{2}(r)}
\prod_{\ell_{a}\in D_{2}'}2^{-\epsilon H_{2}(a)}\,\,\theta^{I,J}_{M,C_{2}'',D_{2}''}
\eeas
converges in the sense of distributions to an element of $\PP_{0}(\EEE_{S,T,W})$. However, this follows from Theorem \ref{Thm3.7} once we observe that each term in the sum is a function with weak cancellation with parameter $\epsilon>0$ relative to the multi-index $M$. This completes the proof. 
\end{proof}

\section{Convolution of  Calder\'on-Zygmund kernels}\label{CZKernels}

In this section we study the convolution of two or more compactly supported \CZ kernels with different homogeneities, and show that the result belongs to an appropriate class $\PP_{0}(\EEE)$. 

\subsection{\CZ kernels}

Recalling the notion of Calder\'on-Zygmund kernel given in Definition \ref{CZa}, we define  the corresponding class of kernels with rapid decay at infinity.

\begin{definition}\label{CZa0}\index{C2Za0@$\CC\ZZ_{\a,0}$}
$\CC\ZZ_{\a,0}$ is the space of tempered distributions $\KK$ on $\R^{N}$ such that
\begin{enumerate}[{\rm(a)}]
\smallskip

\item away from the origin, $\KK$ is given by integration against a smooth function $K$ satisfying the differential inequalities
\beas
\big\vert\partial^{\gammab}K(\x)\big\vert \leq C_{\gammab, M}N_{\a}(\x)^{-Q_{\a}-\[\gammab\]_{\a}}\big(1+N_{\a}(\x)\big)^{-M},
\eeas
for every $\gammab$ and $M$;

\item there is a constant $C>0$ so that for any normalized bump function $\psi\in \CC^{\infty}_{0}(\R^{N})$ with support in the unit ball and for any $R>0$, $\big\vert\big\langle\KK,\psi_{R}\big\rangle\big\vert\leq C$, where $\psi_{R}(\x) = \psi(R\cdot_{\a}\x)$.
\end{enumerate}
\end{definition}

It  follows from the results of Section \ref{Duality} that, for distributions $\KK\in \CC\ZZ_{\a,0}$, the corresponding multipliers  $m=\widehat \KK$ are characterized by the differential inequalities 
$$
\big\vert\partial^{\gammab}m(\xib)\big\vert\leq C_{\gammab}\big[1+N_{\a}(\xib)\big]^{-\[\gammab\]_{\a}}.
$$
 These differential inequalities characterize the multipliers belonging to the class $\MM_{\infty}(\EEE_{\a})$ where $\EEE_{\a}$ is the matrix in \eqref{Ea}. It follows that, if $\KK\in \CC\ZZ_{\a,0}$, then $\KK\in \PP_{0}(\EEE_{\a})$. Moreover, the matrix $\EEE_{\a}$ is doubly monotone if and only if $a_{1}\geq a_{2}\geq \cdots \geq a_{n}$. Hence this is the compatibility condition with an underlying nilpotent Lie group structure in the sense of Section \ref{Compatibility}.

Finally, it follows from our decomposition result that if $\KK\in \PP_{0}(\EEE_{\a})$, there exists $\Phi\in \SS(\R^{N})$ and a uniformly bounded family $\{\varphi^{j}\}\subset\CC^{\infty}_{0}(\R^{N})$ such that $\int_{\R^{N}}\varphi^{j}(\x)\,d\x=0$ for all $j$, and, in the sense of distributions,
\bea\label{11.1iou}
\KK(\x) = \sum_{j=-\infty}^{0}[\varphi^{j}]_{j}(\x) +\Phi(\x) = \sum_{j=-\infty}^{0}2^{-jQ_{a}}\,\varphi^{j}(2^{-ja_{1}}\cdot \x_{1}, \ldots, 2^{-ja_{n}}\cdot \x_{n})+\Phi(\x).
\eea

\subsection{A general convolution theorem}

Let $\KK_{1}, \ldots, \KK_{p}$ be \CZ kernels on a homogeneous nilpotent Lie group $G\cong\R^{N}=\R^{C_{1}}\oplus\cdots \oplus\R^{C_{n}}$. Suppose that the kernel $\KK_{\ell}$ is in the class $\CC\ZZ_{\a_\ell,0}$ associated with the dilations $\lambda\,\cdot_{\a_\ell}\x = \big(\lambda^{a^{\ell}_{1}}\cdot\x_{1}, \ldots, \lambda^{a^{\ell}_{n}}\cdot\x_{n}\big)$. If $G$ is not abelian, we assume that \eqref{orderedC} holds, as well as the compatibility condition $a^{\ell}_{j}\geq a^{\ell}_{j+1}$ for all $j,\ell$.

\begin{theorem}\label{Lem11.5}\quad
\begin{enumerate}[{\rm (a)}]

\item\label{Lem11.5a}
The convolution  $\KK_{1}*\cdots*\KK_{p}$  belongs to the class $\PP_{0}(\EEE)$ where $e(j,k) = \max_{1\leq \ell\leq p}\frac{a^{\ell}_{j}}{a^{\ell}_{k}}$.

\item \label{Lem11.5b}
Conversely, every $\EEE$ arises in this way. Precisely, let $\EEE = \{e(j,k)\}$ be any $n\times n$ matrix satisfying \eqref{2.5}.  There are $n$ sequences $\a_k$
such that $e(j,k) = \max_{1\leq \ell\leq n}\frac{a^{\ell}_{j}}{a^{\ell}_{k}}$. If $\EEE$ is doubly monotone, each $\a_k$ can be taken non-increasing.
\end{enumerate}
\end{theorem}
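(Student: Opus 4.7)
The plan is to deduce both parts from the inclusion criterion for classes $\PP_0(\EEE')\subseteq\PP_0(\EEE)$ given in Proposition \ref{inclusion}, combined with the convolution algebra property of $\PP_0(\EEE)$ established in Theorem \ref{Thm10.1abelian} (abelian case) or Theorem \ref{Thm10.1} (non-abelian case).

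For part (a), I would first recall from the preceding discussion in this section that each \CZ kernel $\KK_\ell\in\CC\ZZ_{\a_\ell,0}$ belongs to $\PP_0(\EEE_{\a_\ell})$, where $\EEE_{\a_\ell}$ is the rank-one matrix \eqref{Ea} with entries $a^\ell_j/a^\ell_k$. Next I would verify that the componentwise maximum
$$
e(j,k)=\max_{1\le\ell\le p}\frac{a^\ell_j}{a^\ell_k}
$$
defines a matrix $\EEE$ satisfying the basic hypothesis \eqref{2.5}: clearly $e(j,j)=1$, and for any triple $j,k,l$, evaluating both factors at a common index $\ell$ gives
$$
e(j,k)\,e(k,l)\ \ge\ \frac{a^\ell_j}{a^\ell_k}\cdot\frac{a^\ell_k}{a^\ell_l}\ =\ \frac{a^\ell_j}{a^\ell_l},
$$
so maximizing the right-hand side over $\ell$ yields $e(j,k)e(k,l)\ge e(j,l)$. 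Since $a^\ell_j/a^\ell_k\le e(j,k)$ for every $\ell$, Proposition \ref{inclusion} gives $\PP_0(\EEE_{\a_\ell})\subseteq\PP_0(\EEE)$, and in particular every $\KK_\ell$ lies in $\PP_0(\EEE)$. In the non-abelian setting the standing compatibility assumption $a^\ell_j\ge a^\ell_{j+1}$ forces $e(j+1,j)=\max_\ell a^\ell_{j+1}/a^\ell_j\le 1$, so by Proposition \ref{Prop9.4}(a) the matrix $\EEE$ is doubly monotone and all the hypotheses of Theorem \ref{Thm10.1} are met. An application of that theorem (or Theorem \ref{Thm10.1abelian} in the abelian case) iterated $p-1$ times will then show that $\KK_1*\cdots*\KK_p\in\PP_0(\EEE)$.

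For part (b), given an $n\times n$ matrix $\EEE=\{e(j,k)\}$ satisfying \eqref{2.5}, I would define for each $\ell\in\{1,\dots,n\}$ the $n$-tuple $\a_\ell=(a^\ell_1,\dots,a^\ell_n)$ by setting $a^\ell_j:=e(j,\ell)$. The basic hypothesis gives $a^\ell_j/a^\ell_k=e(j,\ell)/e(k,\ell)\le e(j,k)$ for every $\ell$, while the choice $\ell=k$ produces equality since $a^k_j/a^k_k=e(j,k)/e(k,k)=e(j,k)$; hence $e(j,k)=\max_{1\le\ell\le n}a^\ell_j/a^\ell_k$. If in addition $\EEE$ is doubly monotone, then each column of $\EEE$ is weakly decreasing from top to bottom, which is exactly the statement that every $\a_\ell=(e(1,\ell),\dots,e(n,\ell))$ is a non-increasing sequence.

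The argument is essentially bookkeeping built on top of two already available ingredients: the identification $\CC\ZZ_{\a,0}\subset\PP_0(\EEE_\a)$ and the convolution closure of $\PP_0(\EEE)$. The only step that requires genuine verification is that the maximum-matrix produced in part (a) falls under the hypotheses of Theorem \ref{Thm10.1}; the potential obstacle there, namely confirming the doubly monotone property under the standing compatibility assumption on the $\a_\ell$, is handled by the reduction to $e(j+1,j)\le 1$ provided by Proposition \ref{Prop9.4}(a).
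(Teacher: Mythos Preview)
Your proposal is correct and follows essentially the same route as the paper: identify $\CC\ZZ_{\a_\ell,0}=\PP_0(\EEE_{\a_\ell})$, use Proposition~\ref{inclusion} to embed each factor in $\PP_0(\EEE)$, and then invoke Theorem~\ref{Thm10.1abelian} or Theorem~\ref{Thm10.1}; for part (b) the paper makes the identical choice $a^\ell_j=e(j,\ell)$ and verifies the maximum in the same two steps. Your write-up is in fact slightly more explicit than the paper's, since you spell out why the max-matrix satisfies the basic hypothesis \eqref{2.5} and justify double monotonicity via Proposition~\ref{Prop9.4}(\ref{Prop9.4a}), points the paper leaves implicit.
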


\begin{proof}
 We have the identity $\CC\ZZ_{\a_\ell,0}=\PP_0(\EEE_\ell)$, where $e_\ell(j,k)=\big(a^{\ell}_{j}\big)\big(a^{\ell}_{k}\big)^{-1}$. By  Proposition~\ref{inclusion}, each $\KK_{\ell}\in \PP_{0}(\EEE)$.  Moreover, if $a^{\ell}_{j}\geq a^{\ell}_{j+1}$ for all $j,\ell$, $\EEE$ is doubly monotone. Then part (a) follows from Theorems \ref{Thm10.1abelian} and  \ref{Thm10.1}. 

To establish (\ref{Lem11.5b}), put $a^{\ell}_{j}= e(j,\ell)$. Since $\EEE$ is doubly monotone, $a^{\ell}_{j}=e(j,\ell)\geq e(j+1,\ell)=a^{\ell}_{j+1}$, so $a^{\ell}_{1}\geq a^{\ell}_{2}\geq \cdots \geq a^{\ell}_{n}$. For any $1 \leq j,\ell\leq n$ we have
\beas
\frac{a^{\ell}_{j}}{a^{\ell}_{k}}=\frac{e(j,\ell)}{e(k,\ell)}\leq \frac{e(j,k)e(k,\ell)}{e(k,\ell)}=e(j,k)
\eeas
so $\max_{1\leq \ell\leq m}\big(a^{\ell}_{j}\big)\big(a^{\ell}_{k}\big)^{-1}\leq e(j,k)$. On the other hand
\bes
e(j,k)=\frac{e(j,k)}{e(k,k)}\leq \max_{1\leq \ell \leq n}\frac{e(j,\ell)}{e(k,\ell)}=\max_{1\leq \ell \leq n}\frac{a^{\ell}_{j}}{a^{\ell}_{k}}.
\ees
This completes the proof.
\end{proof}

\subsection{Convolution of two \CZ kernels}

Part (a) of Theorem \ref{Lem11.5} determines the minimal class $\PP_0(\EEE)$ containing a finite family of \CZ classes $\CC\ZZ_{\a_\ell,0}$ and part (b) shows that every class $\PP_0(\EEE)$ arises in this way. It is possible, however, that a convolution
$$
\KK_1*\KK_2*\cdots*\KK_p,
$$
with $\KK_j\in\CC\ZZ_{\a_j,0}$,
is contained in a proper subalgebra of the class $\PP_0(\EEE)$ which appears in Theorem \ref{Lem11.5} (a). 

We show that such a situation occurs with two \CZ classes having different homogeneities. Let $\a=(a_{1}, \ldots, a_{n})$ and $\b=(b_{1}, \ldots, b_{n})$ with $a_{j}\geq a_{j+1}$ and $b_{j}\geq b_{j+1}$ for all $j$, and set 
\beas
\lambda\,\cdot_{\a}\x &= \left(\lambda^{a_{1}}\x_{1}, \ldots, \lambda^{a_{n}}\x_{n}\right),\\
\lambda\,\cdot_{\b}\x &= \left(\lambda^{b_{1}}\x_{1}, \ldots, \lambda^{b_{n}}\x_{n}\right).
\eeas

Let $\KK\in \PP_{0}(\EEE_{\a})$ and $\LL\in \PP_{0}(\EEE_{\b})$. If we apply part (a) of Theorem \ref{Lem11.5}, we obtain that $\KK*\LL\in \PP_0(\EEE)$, where the entries of $\EEE$ are
\be\label{aveeb}
e(j,k)=\max\Big\{\frac{a_j}{a_k},\frac{b_j}{b_k}\Big\}.
\ee
Notice that
$$
e(j,k)e(k,j)=\max\Big\{\frac{b_j/a_j}{b_k/a_k},\frac{b_k/a_k}{b_j/a_j}\Big\},
$$
so that the reduced rank of $\EEE$ can be as large as $n$. We will show  that $\KK*\LL$ belongs to the sum of $n-1$ classes $\PP_0(\EEE_m)$ where each $\EEE_m$ has reduced rank equal to 2.

As in (\ref{11.1iou}), write
\beas
\KK(\x) &= \sum_{j=-\infty}^{0}[\varphi^{j}]_{j}(\x)+\Phi(\x) =\sum_{j=-\infty}^{0}2^{-jQ_{\a}}\,\varphi^{j}(2^{-ja_{1}}\cdot \x_{1}, \ldots, 2^{-ja_{n}}\cdot \x_{n})+\Phi(\x),\\
\LL(\x) &= \sum_{k=-\infty}^{0}[\psi^{k}]_{k}(\x)+\Psi(\x)=\sum_{k=-\infty}^{0}2^{-kQ_{b}}\,\psi^{k}(2^{-kb_{1}}\cdot \x_{1}, \ldots, 2^{-kb_{n}}\cdot \x_{n})+\Psi(\x).
\eeas
Since $\KK*\Psi, \,\,\Phi*\LL, \,\,\Phi*\Psi\in \SS(\R^{N})$, we only need to analyze the double sum 
\bea\label{11.2iou}
\sum_{j=-\infty}^{0}\sum_{k=-\infty}^{0}[\varphi^{j}]_{j}*[\psi^{k}]_{k}.
\eea
Let $\sigma$ be the permutation of $\{1, \ldots, n\}$ so that $\frac{b_{\sigma(1)}}{a_{\sigma(1)}}\leq \frac{b_{\sigma(2)}}{a_{\sigma(2)}}\leq \cdots \leq \frac{b_{\sigma(n)}}{a_{\sigma(n)}}$. For simplicity, we will suppose that all the inequalities are strict\footnote{ The proof below shows that this restriction does not affect the validity of the final result.}.

  We split the double sum in (\ref{11.2iou}) into  $n+1$ two-parameter sums. Set

\bea\label{MlNm}
\NN_{m}&= \sum_{(j,k)\in \Gamma_{m}}[\varphi^{j}]_{j}*[\psi^{k}]_{k},&0&\leq m \leq n,
\eea
where
\bea\label{Gammam}
\Gamma_{0}&=\Big\{(j,k)\in \Z\times\Z: j<0,\quad k<0, \quad \frac{j}{k} \le \frac{b_{\sigma(1)}}{a_{\sigma(1)}}\Big\},\\
\Gamma_{m}&= \Big\{(j,k)\in \Z\times\Z: j<0,\quad k<0, \quad \frac{b_{\sigma(m)}}{a_{\sigma(m)}} <\frac{j}{k} \le\frac{b_{\sigma(m+1)}}{a_{\sigma(m+1)}}\Big\},\,1\leq m\leq n-1,\\
\Gamma_{n}&= \Big\{(j,k)\in \Z\times\Z: j<0,\quad k<0, \quad \frac{b_{\sigma(n)}}{a_{\sigma(n)}} <\frac{j}{k} \Big\}.
\eea
Then clearly $\sum_{j=-\infty}^{0}\sum_{k=-\infty}^{0}[\varphi^{j}]_{j}*[\psi^{k}]_{k}=
\sum_{m=0}^{n}\NN_{m}$.

\begin{lemma}\label{lemmaNm}
For $0\leq m \leq n$ let $A_{m}=\{\sigma(1), \ldots, \sigma(m)\}$ and $B_{m}=\{\sigma(m+1), \ldots, \sigma(n)\}$. Then the distribution $\NN_{m}\in \PP_{0}(\EEE_{m})$ where $\EEE_{m}=\{e_{m}(j,k)\}$ is the $n\times n$ matrix whose entries are given by
\beas
e_{m}(j,k) &= \frac{b_{j}}{b_{k}} &&\text{if $j\in A_{m}$ and $k\in A_{m}$},\\
e_{m}(j,k) &= \frac{b_{j}a_{\sigma(m+1)}}{a_{k}b_{\sigma(m+1)}} &&\text{if $j\in A_{m}$ and $k\in B_{m}$},\\
e_{m}(j,k) &= \frac{a_{j}b_{\sigma(m)}}{b_{k}a_{\sigma(m)}} &&\text{if $j\in B_{m}$ and $k\in A_{m}$},\\
e_{m}(j,k) &= \frac{a_{j}}{a_{k}} &&\text{if $j\in B_{m}$ and $k\in B_{m}$}.
\eeas
\end{lemma}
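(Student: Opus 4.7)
The plan is to view $\NN_m$ as a two-parameter dyadic sum governed by a coarser block decomposition of $\R^N$, and then apply Theorem~\ref{Thm3.7}. First, I focus on the double sum $\sum_{(j,k)\in\Gamma_m}[\varphi^j]_j*[\psi^k]_k$. The monotonicity $a_1\ge\cdots\ge a_n$, $b_1\ge\cdots\ge b_n$ and the ordering of blocks in \eqref{orderedC} make both families of dilations compatible with the group structure (Section~\ref{Compatibility}), so Lemma~\ref{Lem10.13bb} gives
\bes
[\varphi^j]_j*[\psi^k]_k=[\theta^{j,k}]_{M(j,k)},
\ees
where the coordinate-wise dilation $M(j,k)$ acts on $\x_i$ by $\max\{2^{ja_i},2^{kb_i}\}$. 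The defining inequalities of $\Gamma_m$ guarantee that, for every $(j,k)\in\Gamma_m$, the $\b$-scale $2^{kb_i}$ dominates for $i\in A_m$ and the $\a$-scale $2^{ja_i}$ dominates for $i\in B_m$. Hence $M(j,k)$ reduces to the dilation of the coarser decomposition $\R^N=\R^{A_m}\oplus\R^{B_m}$ with block parameters $(k,j)$ and block exponents $\alpha_i=1/b_i$ on $A_m$, $\alpha_i=1/a_i$ on $B_m$.

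Second, I would identify the cone $\Gamma_m$ (in the parameters $(k,j)\in\Z_-^2$) with $\Gamma(\mathbf F)$ for a $2\times 2$ matrix $\mathbf F$ indexed by $(A_m,B_m)$ satisfying the basic hypothesis \eqref{2.5}, up to a bounded shift depending only on $\sigma$. The off-diagonal entries of $\mathbf F$ are read directly off the endpoints of the interval $(b_{\sigma(m)}/a_{\sigma(m)},b_{\sigma(m+1)}/a_{\sigma(m+1)}]$ that defines $\Gamma_m$, and the strict ordering of these ratios yields $\mathbf F_{12}\mathbf F_{21}>1$. This puts $\NN_m$ in the form of a two-block dyadic sum indexed by $\Gamma_\Z(\mathbf F)$.

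Third, I would verify the cancellation properties of the family $\{\theta^{j,k}\}$. Since $\varphi^j$ and $\psi^k$ arise from the \CZ decomposition \eqref{11.1iou}, both have mean zero in every coordinate. Applying Lemma~\ref{Lem10.13bb} with all variables cancelling, $\theta^{j,k}$ decomposes as a finite sum of terms, each of which either preserves cancellation in the appropriate block variable or carries an exponential gain whose exponent is precisely the gap between the $\a$- and $\b$-scales in a non-extremal coordinate. The constraint $(k,j)\in\Gamma_m$ is exactly what makes these gains match the weak-cancellation exponents of Definition~\ref{Def4.4} for $\mathbf F$. Theorem~\ref{Thm3.7} then gives $\NN_m\in\PP_0(\mathbf F,\{A_m,B_m\},\alphab)$, and Lemma~\ref{sharp} together with the identity \eqref{esharp} identifies this class with $\PP_0(\EEE_m)$.

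The hard part will be the bookkeeping in step three: verifying that every coordinate gain produced by Lemma~\ref{Lem10.13bb} is controlled by, and summable against, the two-block quantities that enter Definition~\ref{Def4.4} for $\mathbf F$. This is the analogue, for the present two-kernel situation, of the fixed-vs-free index analysis carried out in Sections~\ref{FixedFree}--\ref{FixedandFree} in the proof of Theorem~\ref{Thm10.1}.
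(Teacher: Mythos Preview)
Your approach is essentially the paper's: identify the scale of $[\varphi^j]_j*[\psi^k]_k$ via Lemma~\ref{Lem10.13bb}, recognize that on $\Gamma_m$ the resulting multi-index is governed by a two-parameter cone, and invoke Theorem~\ref{Thm3.7}. The paper works directly with the $n\times n$ matrix $\EEE_m$ and verifies $L(j,k)\in\Gamma(\EEE_m)$, while you pass through the $2\times2$ reduced matrix $\mathbf F$ and then use Lemma~\ref{sharp}; the paper's Remark~2 following the lemma records exactly this equivalence, so the two routes coincide.

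Two points to tighten. First, the boundary cases $m=0$ and $m=n$ are degenerate: one of $A_m,B_m$ is empty, $\EEE_m$ has rank one, and your two-block cone $\Gamma(\mathbf F)$ collapses to a ray. Here the map $(j,k)\mapsto L(j,k)$ is not injective (e.g.\ $L(j,k)=j\a$ when $m=0$), so you must sum over the free index first and show that the inner sum converges to a single normalized bump with mean zero; the paper does this explicitly using the exponential gain from Lemma~\ref{Lem10.13bb}. Second, your assertion that $\varphi^j$ and $\psi^k$ ``have mean zero in every coordinate'' is stronger than what \eqref{11.1iou} actually provides, which is only $\int_{\R^N}\varphi^j=0$. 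Lemma~\ref{Lem10.13bb} as stated needs cancellation in specific coordinates, so this step needs care --- either refine the decomposition of the \CZ kernels (Corollary~\ref{Cor8.8} applied to $\PP_0(\EEE_\a)$ gives bumps with cancellation in a designated $\x_k$), or argue directly that full cancellation together with the scale separation on $\Gamma_m$ yields the required two-block weak cancellation. The paper's own proof is equally terse on this point, simply asserting that each term has weak cancellation.
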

\begin{proof}
We use the fact that $[\varphi^{j}]_{j}*[\psi^{k}]_{k}(\x) =2^{-\sum_{p=1}^{n}l_{p}Q_{p}}\theta^{j,k}(2^{-l_{1}}\cdot \x_{1}, \ldots, 2^{-l_{n}}\cdot\x_{n})$
where $l_{p}=ja_{p}\vee k b_{p}$. Now if $(j,k)\in \Gamma_{m}$ then
\bea\label{L(j,k)}
l_{p}=
\begin{cases}
kb_{p} & \text{if $p\in \{\sigma(1), \ldots, \sigma(m)\}=A_{m}$},\\\\
ja_{p} &\text{if $p\in \{\sigma(m+1),\ldots, \sigma(n)\}=B_{m}$.}
\end{cases}
\eea
If $L(j,k)$ is the multiindex with components $l_p$ as above, one  easily verifies that
$$
(j,k)\in \Gamma_m\Longrightarrow L(j,k)\in\Gamma(\EEE_m).
$$
Consider first the case $1\le m\le n-1$, where $A_m$ and $B_m$ are both nonempty. Then the map $(j,k)\in\Gamma_m\longmapsto L(j,k)$ is injective. Since each term $[\varphi^{j}]_{j}*[\psi^{k}]_{k}$ has weak cancellation, it follows\footnote{ The fact that the components $l_p$ are not integers is irrelevant as long as they belong to a fixed lattice.} from Lemma \ref{Thm3.7}  that the series \eqref{MlNm} convergens to a kernel in the class $\PP_{0}(\EEE_{m})$. 

Suppose now that $m=0$ (the case $m=n$ can be treated in the same way). In this case the matrix $\EEE_0$ has rank 1 and the statement is that $\NN_0\in\CC\ZZ_{\a,0}$. 
Notice that
$L(j,k)=j\a$ only depends on $j$. Hence for each $j$ we must verify that 
$$
\sum_{k\le \frac{a_{\sigma(1)}}{b_{\sigma(1)}}j}[\varphi^{j}]_{j}*[\psi^{k}]_{k}=[\widetilde\varphi^j]_j
$$
with  uniformly bounded functions $\widetilde\varphi^j$ with mean value zero (the scaling by $j$ being as in \eqref{11.1iou}). It follows from Lemma \ref{Lem10.13bb} that each term in the sum is a normalized bump function times a factor that decreases exponentially with $k$. The mean value zero property is obvious. 
\end{proof}

\begin{remarks}\quad
\begin{enumerate}[{\rm 1.}]
\item Recall that \,\,\, $\displaystyle \overbrace{\frac{b_{\sigma(1)}}{a_{\sigma(1)}}<\frac{b_{\sigma(2)}}{a_{\sigma(2)}}<\cdots<\frac{b_{\sigma(m)}}{a_{\sigma(m)}}}^{{A_{m}}}
<
\overbrace{\frac{b_{\sigma(m+1)}}{a_{\sigma(m+1)}}<\cdots <\frac{b_{\sigma(n-1)}}{a_{\sigma(n-1)}}<\frac{b_{\sigma(n)}}{a_{\sigma(n)}}}^{{B_{m}}}$,\,\,\, and that\\ $a_{1}\geq a_{2}\geq \cdots\geq a_{n}$, $b_{1}\geq b_{2}\geq \cdots\geq b_{n}$. We claim that $e(j+1,j)\leq 1$ for $1\leq j \leq n-1$.
\begin{itemize}
\item If $j, j+1 \in A_{m}$, then since $b_{j+1}\leq b_{j}$,
\bes
e(j+1,j) =\frac{b_{j+1}}{b_{j}}\leq \frac{b_{j}}{b_{j}} = 1.;
\ees

\item If $j\in A_{m}$ and $j+1\in B_{m}$ then $\frac{a_{j+1}}{b_{j+1}}\leq \frac{a_{\sigma(m+1)}}{b_{\sigma(m+1)}}\leq \frac{a_{\sigma(m)}}{b_{\sigma(m)}}$ and so
\bes
e(j+1,j) = \frac{a_{j+1}b_{\sigma(m)}}{b_{j}a_{\sigma(m)}}\leq \frac{a_{j+1}b_{\sigma(m)}}{b_{j+1}a_{\sigma(m)}} \leq 1.
\ees

\item If $j\in B_{m}$ and $j+1\in A_{m}$ then $\frac{b_{j+1}}{a_{j+1}}\leq \frac{b_{\sigma(m)}}{a_{\sigma(m)}}$ and so
\bes
e(j+1,j) = \frac{b_{j+1}a_{\sigma(m+1)}}{a_{j}b_{\sigma(m+1)}}
\leq
\frac{b_{j+1}a_{\sigma(m+1)}}{a_{j+1}b_{\sigma(m+1)}}
\leq
\frac{b_{\sigma(m)}a_{\sigma(m+1)}}{a_{\sigma(m)}b_{\sigma(m+1)}}\leq 1.
\ees

\item If $j,j+1\in B_{m}$, then since $a_{j+1}\leq a_{j}$,
\bes
e(j+1,j) = \frac{a_{j+1}}{a_{j}} \leq 1.
\ees
\end{itemize}
It now follows from Proposition \ref{Prop9.4} that the matrix $\EEE_{m}$ is doubly monotone.

\smallskip

\item For $m=0,n$, $\EEE_m$ has rank one and $\NN_0\in\CC\ZZ_{\a,0}$, $\NN_n\in\CC\ZZ_{\b,0}$. For $1\le m\le n-1$, we can also think of the distribution $\NN_{m}$ on the coarser decomposition 
$$
\R^{N} = \big(\oplus_{j\in A_{m}}\R^{C_{j}}\big) +\big(\oplus_{j\in B_{m}}\R^{C_{j}}\big).
$$ 
with exponents, in the notation of Section \ref{subs.coarser},  $\alphab_m=(\alpha_{m,1},\dots,\alpha_{m,n})$ given by
$$
\alpha_{m,j}=\begin{cases}\frac1{b_j}&\text{ if }j\in A_m\\ \frac1{a_j}&\text{ if }j\in B_m.\end{cases}
$$
\beas
\EEE_{m}^\flat=
\left[\begin{matrix}
1&\frac{a_{\sigma(m+1)}}{b_{\sigma(m+1)}}\\
\frac{b_{\sigma(m)}}{a_{\sigma(m)}}&1
\end{matrix}\right].
\eeas
(Note that the product of the off-diagonal elements is at least $1$.) 

\smallskip

\item The proof of Lemma \ref{lemmaNm} shows that, if $G$ is the additive group $\R^N$, the inclusion \eqref{CZ*CZ} holds without any monotonicity assumption on the $a_j$ and the $b_j$ and with the same definition of $\sigma(m)$.\footnote{ This can also be proved more directly by decomposing the multiplier $\mu=\widehat\KK\widehat\LL$. Assuming, as we may, that the variables have been ordered so that $\sigma(m)=m$ for every $m$, $\mu$ can be decomposed as a sum $\mu=\sum_{m=1}^{n-1}\mu_m$, where  $\mu_m\in \MM_\infty(\EEE_m)$ and
$\B^c\cap\supp \mu_m$ is contained in the set where
$$
N_\b(\xib_1,\dots,\xib_{m})^{b_{m}}\le A N_\a(\xib_{m},\dots,\xib_n)^{a_{m}}, \qquad N_\a(\xi_{m+1},\dots,\xib_n)^{a_{m+1}}\le A N_\b(\xib_1,\dots,\xib_{m+1})^{b_{m+1}}.
$$}
\end{enumerate}
\end{remarks}

\begin{proposition} We have the inclusion
\be\label{CZ*CZ}
\CC\ZZ_{\a,0}* \CC\ZZ_{\b,0}\subseteq\sum_{m=1}^{n-1} \PP_0(\EEE_m),
\ee
Moreover, the class $\sum_{m=1}^{n-1} \PP_0(\EEE_m)$ is contained in $\PP_0(\EEE)$, where $\EEE$ is the matrix with entries \eqref{aveeb}, and is closed under convolution.
\end{proposition}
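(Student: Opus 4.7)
The plan is to combine the dyadic decomposition machinery of Section~\ref{Schwartz sums} with the partitioning of Lemma~\ref{lemmaNm}, then reduce the remaining containments to entrywise matrix inequalities via Proposition~\ref{inclusion}, and finally iterate the decomposition-and-group technique of Section~\ref{Decomp1} for the convolution closure.

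First, for the inclusion $\CC\ZZ_{\a,0}*\CC\ZZ_{\b,0}\subseteq\sum_{m=1}^{n-1}\PP_0(\EEE_m)$, I would take $\KK\in\CC\ZZ_{\a,0}$ and $\LL\in\CC\ZZ_{\b,0}$ and expand them via \eqref{11.1iou} as $\KK=\Phi+\sum_{j\le 0}[\varphi^j]_j$ and $\LL=\Psi+\sum_{k\le 0}[\psi^k]_k$ with $\Phi,\Psi\in\SS(\R^N)$ and with $\varphi^j,\psi^k$ uniformly bounded bump functions of mean zero. The three mixed terms in the convolution that involve a Schwartz factor are themselves Schwartz and hence lie trivially in every $\PP_0(\EEE_m)$. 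The remaining double sum $\sum_{j,k\le 0}[\varphi^j]_j*[\psi^k]_k$ is partitioned by the index regions $\{\Gamma_m\}_{m=0}^{n}$ from \eqref{Gammam} into $\sum_{m=0}^{n}\NN_m$, and Lemma~\ref{lemmaNm} places each $\NN_m\in\PP_0(\EEE_m)$. The rank-one endpoint pieces $\NN_0\in\CC\ZZ_{\a,0}$ and $\NN_n\in\CC\ZZ_{\b,0}$ must be merged into $\NN_1$ and $\NN_{n-1}$: this is done by enlarging the index cones $\Gamma_{\Z}(\EEE_1)$ and $\Gamma_{\Z}(\EEE_{n-1})$ along their boundary rays $L\in\R^{-}\cdot\a$ and $L\in\R^{-}\cdot\b$, and using that the strong cancellation inherited from the original bump functions lets Theorem~\ref{Thm3.7} apply on the enlarged cones with weak-cancellation gains supplied by Lemma~\ref{Lem10.13bb}.

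Second, to prove $\sum_{m=1}^{n-1}\PP_0(\EEE_m)\subseteq\PP_0(\EEE)$, by Proposition~\ref{inclusion} it suffices to check the entrywise inequality $e_m(j,k)\le e(j,k)=\max\{a_j/a_k,\,b_j/b_k\}$ for every $m$ and every $(j,k)$. When $j,k$ both lie in $A_m$ (resp.\ both in $B_m$), $e_m(j,k)$ equals $b_j/b_k$ (resp.\ $a_j/a_k$) and the bound is immediate. For $j\in A_m,\,k\in B_m$, membership $j\in A_m$ gives $b_j/a_j\le b_{\sigma(m+1)}/a_{\sigma(m+1)}$, whence $b_j a_{\sigma(m+1)}\le a_j b_{\sigma(m+1)}$ and therefore $e_m(j,k)=\frac{b_j a_{\sigma(m+1)}}{a_k b_{\sigma(m+1)}}\le \frac{a_j}{a_k}\le e(j,k)$. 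The symmetric case $j\in B_m,\,k\in A_m$ uses $b_j/a_j\ge b_{\sigma(m)}/a_{\sigma(m)}$ to produce $e_m(j,k)\le b_j/b_k\le e(j,k)$ by the dual manipulation.

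Third, for the convolution closure, given $\MM_i\in\PP_0(\EEE_{m_i})$ for $i=1,2$ with $1\le m_i\le n-1$, I would apply Corollary~\ref{Cor8.8} to represent each $\MM_i$ as a dyadic sum of uniformly bounded bump functions indexed by $\Gamma_{\Z}(\EEE_{m_i})$ with the appropriate strong cancellation. Expanding $\MM_1*\MM_2$ and partitioning the index-product set according to the relative dilation scales (as in Section~\ref{Decomp1}) produces refined bump function sums whose dilations are governed by intermediate matrices; the structural claim to verify is that these intermediate matrices all belong to $\{\EEE_m:1\le m\le n-1\}$, which uses that each $\EEE_{m_i}$ has reduced rank $2$ with its block split determined by the same $\sigma$-ordering of ratios $b_{\sigma(i)}/a_{\sigma(i)}$, so no new partition types appear. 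Theorem~\ref{Thm3.7} then delivers convergence in the appropriate $\PP_0(\EEE_m)$, with any residual boundary-ray contributions absorbed exactly as in the first paragraph.

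The main technical obstacle throughout is the absorption of rank-one contributions into the rank-two middle classes: a direct check shows that neither $\CC\ZZ_{\a,0}$ nor $\CC\ZZ_{\b,0}$ is contained in any single $\PP_0(\EEE_m)$ with $1\le m\le n-1$, so the argument requires tracking the precise cancellation structure of the individual bump functions produced by Lemma~\ref{Lem10.13bb} and exploiting the openness of the enlarged cones on which Theorem~\ref{Thm3.7} can be invoked.
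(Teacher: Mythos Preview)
Your treatment of the containment $\sum_m \PP_0(\EEE_m)\subseteq\PP_0(\EEE)$ via the entrywise check from Proposition~\ref{inclusion} is correct and matches the paper.

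The ``main technical obstacle'' you identify, however, rests on a miscalculation. With the matrices $\EEE_m$ determined so that $\Gamma(\EEE_m)$ is the closed cone generated by $\{L(j,k):(j,k)\in\Gamma_m\}$ (which is what the proof of Lemma~\ref{lemmaNm} actually requires), one has $\CC\ZZ_{\a,0}\subseteq\PP_0(\EEE_1)$ and $\CC\ZZ_{\b,0}\subseteq\PP_0(\EEE_{n-1})$ directly. Indeed, as $(j,k)$ approaches the common boundary of $\Gamma_0$ and $\Gamma_1$, the multi-index $L(j,k)$ (computed with the $m=1$ rule) tends to $j\a$; hence the ray $\{t\a:t<0\}=\Gamma(\EEE_\a)$ lies on the boundary of $\Gamma(\EEE_1)$, and the closed cone contains it. Proposition~\ref{inclusion} then gives $\NN_0\in\CC\ZZ_{\a,0}\subseteq\PP_0(\EEE_1)$, and symmetrically $\NN_n\in\PP_0(\EEE_{n-1})$, with no cone-enlargement or cancellation-tracking needed. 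This is exactly how the paper disposes of the first inclusion in one line. (The two cross-block formulas for $e_m(j,k)$ displayed in Lemma~\ref{lemmaNm} have $\sigma(m)$ and $\sigma(m+1)$ interchanged; if you computed from those stated formulas you would indeed reach your negative conclusion, but the cone itself, and the corrected entries one reads off from $\sup_{\Gamma_m} L_p/L_q$, give the inclusion.)

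For the convolution closure, your plan to invoke the general machinery of Section~\ref{Decomp1} is reasonable in spirit but leaves the crucial structural claim---that the intermediate matrices produced all lie among $\{\EEE_m:1\le m\le n-1\}$---unverified. The paper instead proceeds concretely: it further decomposes each $\KK\in\sum_m\PP_0(\EEE_m)$ as $\sum_{m}\NN_m+\sum_{l}\MM_l$, with each $\MM_l$ a Calder\'on--Zygmund kernel for an explicit intermediate homogeneity $\gammab_l$, and then analyzes the products $\NN_m*\NN'_{m'}$ (and the mixed cases) by hand, partitioning $\Gamma_m\times\Gamma_{m'}$ according to which scale dominates in each coordinate and identifying the result as lying in $\sum_{p=m}^{m'}\PP_0(\EEE_p)$ via the same mechanism as Lemma~\ref{lemmaNm}. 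The rank-two structure (common $\a,\b$, common $\sigma$-ordering) makes this elementary, whereas the abstract $\EEE_{S,T,W}$ route would obscure why no new classes appear.
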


\begin{proof}
The inclusion follows since $\NN_0\in\PP_0(\EEE_1)$, $\NN_n\in \PP_0(\EEE_{n-1})$. By Proposition \ref{inclusion}, the first part of the statement follows from the inequalities 
$$
e_m(j,k)\le \max\Big\{\frac{a_j}{a_k},\frac{b_j}{b_k}\Big\},
$$
which can be easily verified.
To prove the second part of the statement, observe that each $\KK\in \sum_{m=1}^{n-1} \PP_0(\EEE_m)$ decomposes as
$$
\KK=\sum_{m=1}^{n-1}\NN_m+\sum_{l=1}^n \MM_l,
$$
where $\NN_m\in\PP_0(\EEE_m)$ has the form
$$
\NN_m=\sum_{(j,k)\in\Gamma_m}[\ph^{j,k}]_{L(j,k)},
$$
where the $\ph^{j,k}$ are uniformly bounded and have cancellation in the variables $\x_{A(m)}$ and $\x_{B(m)}$,
and $\MM_l\in\CC\ZZ_{\gammab_l,0}$, where $\gammab_{l}=(\gamma^{l}_{1}, \ldots, \gamma^{l}_{n})$ and
\beas
\gamma^{l}_{m}=
\begin{cases}
a_{\sigma(l)}b_{m}&\text{if $\frac{b_{m}}{a_{m}}\leq \frac{b_{\sigma(l)}}{a_{\sigma(l)}}$},\\
b_{\sigma(l)}a_{m}&\text{if $\frac{b_{m}}{a_{m}}\geq \frac{b_{\sigma(l)}}{a_{\sigma(l)}}$}.
\end{cases}
\eeas

Given a second kernel $\KK'=\sum_{m=1}^{n-1}\NN'_m+\sum_{l=1}^n \MM'_l$, we  consider the convolutions
$$
\NN_m*\NN'_{m'},\qquad \NN_m*\MM'_{l'},\qquad \MM_l*\NN'_{m'},\qquad \MM_l*\MM'_{l'}.
$$

We prove that, assuming $m\le m'$, $\NN_m*\NN'_{m'}\in \sum_{p=m}^{m'} \PP_0(\EEE_p)$. The proofs of the following statements are left to the reader:
\beas
\NN_m*\MM'_{l'}&\in\begin{cases} \sum_{p=m}^{l'-1} \PP_0(\EEE_p)&\text{ if }m<l'\\
 \sum_{p=l'}^{m} \PP_0(\EEE_p)&\text{ if }m\ge l'.\end{cases}\qquad (\text{similarly for }\MM_l*\NN'_{m'})\\
 \MM_l*\MM'_{l'}&\in \sum_{p=l}^{l'-1} \PP_0(\EEE_p)\qquad (l<l').
\eeas 

Let
$$
\NN_m=\sum_{(j,k)\in\Gamma_m}[\ph^{j,k}]_{L(j,k)},\qquad \NN'_{m'}=\sum_{(j',k')\in\Gamma_{m'}}[\psi^{j',k'}]_{L'(j',k')},
$$
where the components of $L'(j',k')$ are defined by \eqref{L(j,k)} with $m'$ in place of $m$.
We isolate a single term $[\ph^{j,k}]_{L(j,k)}*[\psi^{j',k'}]_{L'(j',k')}$ in the convolution, which is scaled by the multi-index $L(j,k)\vee L'(j',k')=(l_p)_{1\le p\le n}$, where
$$
L(j,k)_p=\begin{cases} kb_p&\text{ if }p\in A_m\\ja_p&\text{ if }p\in B_m\end{cases}\qquad 
L'(j',k')_p=\begin{cases} k'b_p&\text{ if }p\in A_{m'}\\j'a_p&\text{ if }p\in B_{m'}.\end{cases}
$$

Consider first the case $k\le k'$.

If $p\in A_m\subset A_{m'}$, then $l_p=k'b_p$.

Assume now that $r,s\in\{m+1,\dots,m'\}$, $r<s$. Then $p=\sigma(r),q=\sigma(s)\in B_m\cap A_{m'}$. If $ja_p\ge k'b_p$, then
\be\label{AcapB}
j\ge k'\frac{b_p}{a_p}\ge k'\frac{b_q}{a_q}.
\ee
If 
\be\label{m''}
m''=\max\big\{r:k'b_{\sigma(r)}>ja_{\sigma(r)}\big\},
\ee

$$
l_p=\begin{cases} k'b_p&\text{ if }p=\sigma(r)\text{ with }r\le m''\\ ja_p&\text{ if }p=\sigma(r)\text{ with }m''<r\le m'.\end{cases}
$$
Assume that $R'$ is nonempty, i.e., that, for $q=\sigma(m')$, $ k'b_q\le ja_q$. Since $(j',k')\in\Gamma_{m'}$, we have
$$
j'< k'\frac{b_q}{a_q}\le j.
$$
Therefore $l_p=ja_p$ for all $p\in B_{m'}$. To conclude,
$$
l_p=\begin{cases} k'b_p&\text{ if }p\in A_{m''}\\ ja_p&\text{ if }p\in B_{m''}.\end{cases}
$$

Notice that $(j,k')\in \Gamma_{m''}$ by  \eqref{m''}. We then have
\bea\label{k>k'}
\sum_{\substack{(j,k)\in\Gamma_m \\ (j',k')\in\Gamma_{m'}\\k\le k'}}&[\ph^{j,k}]_{L(j,k)}*[\psi^{j',k'}]_{L'(j',k')}=\\
&\sum_{m''=m}^{m'}\sum_{(j,k')\in\Gamma_{m''}}\sum_{\substack{(j,k)\in\Gamma_m \\ (j',k')\in\Gamma_{m'}\\k\le k'\,,\,j'\le j}}[\ph^{j,k}]_{L(j,k)}*[\psi^{j',k'}]_{L'(j',k')}.
\eea
By Lemma \ref{Lem10.13bb}, the terms in the innermost sum are uniformly bounded up to a factor which decays exponentially in $j'$ and $k$ and have weak cancellation in the variables $\x_{A_{m''}}$, $\x_{B_{m''}}$. So the sum in \eqref{k>k'} belongs to $\sum_{m''=m}^{m'} \PP_0(\EEE_{m''})$.

Consider now $k>k'$. Then $l_p=ka_p$ for $p\in A_m$. Since $(j,k)\in \Gamma_m$, for $p=\sigma(m+1)$ we have
$ja_p\ge kb_p>k'b_p$. Hence $l_p=ja_p$. By \eqref{AcapB}, we obtain that $l_p=ja_p$ for all $p\in  B_m\cap A_{m'}$.

In particular, since $(j',k')\in \Gamma_{m'}$, for $p=\sigma(m')$ we have
$ja_p\ge k'b_p>j'a_p$ so that $j>j'$. Then $l_p=ja_p$ for all $p\in B_{m'}$. To conclude,
$L(j,k)\vee L'(j',k')=L(j,k)\in\Gamma_m$ and 
\bea\label{k>k''}
\sum_{\substack{(j,k)\in\Gamma_m \\ (j',k')\in\Gamma_{m'}\\k> k'}}&[\ph^{j,k}]_{L(j,k)}*[\psi^{j',k'}]_{L'(j',k')}=\\
&\sum_{(j,k)\in\Gamma_{m}}\sum_{\substack{ (j',k')\in\Gamma_{m'}\\k'< k\,,\,j'< j}}[\ph^{j,k}]_{L(j,k)}*[\psi^{j',k'}]_{L'(j',k')}.
\eea
Repeating the same arguments used in the other case, we conclude that the sum in \eqref{k>k''} belongs to $\PP_0(\EEE_m)$.
\end{proof}

\section{Two-flag kernels and multipliers}\label{Multi-Flag}

In this Section we show the surprising fact, already mentioned in Section \ref{SME}, that distributions which are simultaneously flag kernels for two opposite flags are in fact  the  elements of an appropriate class $\PP(\EEE)$. We will recall the definition of flag-kernel in Section \ref{Flags}, and define two-flag kernels in Section \ref{multi}. We consider the special case of step-two flags  (which is considerably simpler than the general case)  in Section \ref{Heisenberg}, and the general situation in Section \ref{2Flag}. The key point is that if $m$ is the Fourier transform of a compactly supported distribution satisfying the differential inequalities of flag kernels, then $m$ actually satisfies improved estimates. This is established in  Section \ref{Improved}.

\subsection{Flag kernels and multipliers}\label{Flags}

We recall the definition of \textit{flag kernel} introduced in \cite{MR1818111}. Let $\R^{N}= \R^{C_{1}}\oplus\cdots\oplus\R^{C_{n}}$, and use the notation of Section \ref{Kernels}, so that $n_{j}$ is a homogeneous norm on $\R^{C_{j}}$, which has homogeneous dimension $Q_{j}$.  An  \textit{$n$-step flag} is a strictly increasing sequence of $n$ subspaces of $\R^{N}$, each of which is a direct sum of the subspaces $\{\R^{C_{1}}, \ldots, \R^{C_{n}}\}$. In particular, denote by $\FF$ \index{F2@$\FF$} the flag
\be\label{8.1aa}
\FF:\quad (0) \subsetneq \R^{C_{n}}\subsetneq \R^{C_{n-1}}\oplus \R^{C_{n}}\subsetneq \cdots \subsetneq \R^{C_{2}}\oplus \cdots \oplus \R^{C_{n}}\subsetneq \R^{N}.
\ee
Given  an $n$-tuple $\a=(a_{1}, \ldots, a_{n})$ of positive real numbers, we consider the dilations 
\be\label{8.2aa}
\lambda\cdot_{\a}\x = (\lambda^{a_{1}}\cdot\x_{1}, \ldots, \lambda^{a_{n}}\cdot\x_{n})
\ee 
defined in \eqref{lambdaa} and introduce on $\R^N$ the \textit{partial norms} \index{N7jj@$N'_{j}$}
\be\label{8.3aa} 
N_{j}'(\x) = n_{1}(\x_{1})^{1/a_{1}}+ \cdots + n_{j}(\x_{j})^{1/a_{j}},
\ee
so that $N_{j}'$ does \emph{not} involve the variables $\x_{j+1}, \ldots, \x_{n}$. Recall that if $L=\{l_{1}, \ldots, l_{r}\}\subset \{1, \ldots, n\}$ with $l_{1}<\cdots < l_{r}$, we denote by  $\R^L$ the space $\R^{C_{l_{1}}}\oplus\cdots\oplus\R^{C_{l_{r}}}$. The ``quotient flag'' of $\FF$ on $\R^L$, denoted by $\FF_{L}$, is then \index{F3L@$\FF_{L}$}
\be\label{8.4aa}
(0)\subsetneq \R^{C_{l_{r}}}\subsetneq\R^{C_{l_{r-1}}}\oplus\R^{C_{l_{r}}}\subsetneq \cdots \subsetneq \R^{C_{l_{2}}}\oplus \cdots \R^{C_{l_{r}}}\subsetneq  \R^L
\ee
and the corresponding partial norms are given for $1 \leq j \leq r$ by
\be\label{8.5aa}\index{N8Lj'@$N'_{L,j}$}
N_{L,j}'(\x_{l_{1}}, \ldots, \x_{l_{j}})= n_{l_{1}}(\x_{l_{1}})^{1/a_{l_{1}}}+ \cdots +n_{l_{j}}(\x_{l_{j}})^{1/a_{l_{j}}}.
\ee

 Notice that intrinsic to the flag structure is an ordering of the subspaces $\R^{C_j}$, which must be taken into account in the labeling of variables, norms etc.

\goodbreak

\begin{definition}\label{defFlagKernel}
A flag kernel associated to the flag $\FF$ and the dilations $\lambda\cdot_{\alphab}\x$ is a distribution $\KK$ on $\R^{N}$ with the following size estimates and cancellation conditions:

\begin{enumerate}[{\rm(A)}]

\item {\rm[Differential Inequalities]} Away from the subspace where $\x_{1}=0$ the distribution $\KK$ is given by integration against a smooth function $K$ which satisfies
\bes
\big\vert\partial^{\gammab_{1}}_{\x_{1}}\cdots \partial^{\gammab_{n}}_{\x_{n}}K(\x_{1}, \ldots, \x_{n}) \big|
\leq
C_{\gammab}\prod_{j=1}^{n}N_{j}'(\x_{1}, \ldots, \x_{j})^{-a_{j}(Q_{j}+\[\gammab_{j}\])}
\ees

\item {\rm [Cancellation Conditions]} Let $L=\{l_{1},\ldots,l_{r}\}$ and $M=\{m_{1},\ldots, m_{s}\}$ be complementary subsets of $\{1,\ldots, n\}$, let $R=\{R_{1}, \ldots, R_{s}\}$ be positive real numbers, and let $\psi\in \mathcal C^{\infty}_{0}(\R^{M})$
 be a normalized bump function with support in the unit ball. Define a distribution $\mathcal K_{\psi,R}$ on $\R^{C_{l_{1}}}\oplus \cdots \oplus\R^{C_{l_{r}}}$ by setting $\big\langle \mathcal K_{\psi,R},\varphi\big\rangle = \big\langle \mathcal K, \varphi\otimes\psi_{R}\big\rangle$ for every $\varphi\in \mathcal C^{\infty}_{0}(\R^{L})$. Then $\mathcal K_{\psi,R}$ uniformly satisfies the analogue of the estimates in {\rm(\ref{Def2.2A})} on the space $\R^{L}$;
  \textit{i.e.} away from the subspacee of $\R^{L}$ where $\x_{c_{r}}=0$, the distribution $\KK_{\psi,R}$ is given by integration against a smooth function $K_{\psi,R}$ and for every $\gammab=(\gammab_{l_{1}}, \ldots, \gammab_{l_{r}}) \in \mathbb N^{C_{l_{1}}}\times\cdots\times\mathbb N^{C_{l_{r}}}$  there is a constant $C_{\gammab}$ depending only on the constants in (\ref{Def2.2A}) and in particular independent of $\psi$ and $R$ so that
\bes
\big\vert\partial^{\gammab_{l_{1}}}_{\x_{l_{1}}}\cdots \partial^{\gammab_{l_{r}}}_{\x_{l_{r}}}K_{\psi,R}(\x_{l_{1}}, \ldots, \x_{l_{r}}) \big|\leq C_{\gammab}\prod_{j=1}^{r}N_{L,j}'(\x_{l_{1}}, \ldots, \x_{l_{j}})^{-a_{l_{j}}(Q_{l_{j}}+\[\gammab_{l_{j}}\])}.
\ees
As usual, if $s=n$, we require that $|\big\langle \KK,\psi_{R}\big\rangle|$ is bounded independently of $R$.
\end{enumerate}
\end{definition}

\noindent The following is Theorem 2.3.9 in \cite{MR1818111}.

\begin{theorem}
If $\KK$ is a flag kernel adapted to the flag {\rm(\ref{8.1aa})} and the family of dilations {\rm(\ref{8.2aa})}, the Fourier transform $m=\widehat{\KK}$ is function smooth away from the subspace where $\xib_{n}= 0$ which satisfies the differential inequalities
\be\label{8.6aa}
\big\vert\partial^{\gammab_{1}}_{\xib_{1}}\cdots \partial^{\gammab_{n}}_{\xib_{m}}m(\xib_{1}, \ldots, \xib_{n})\big\vert\leq
C_{\gammab}\,
\prod_{j=1}^{n}\Big(n_{j}(\xib_{j})^{1/a_{j}}+\cdots+n_{n}(\xib_{n})^{1/a_{n}}\Big)^{-\alpha_{j}\[\gammab_{j}\]}.
\ee
Conversely, any function satisfying {\rm(\ref{8.6aa})} is the Fourier transform of a flag kernel adapted to the flag {\rm(\ref{8.1aa})} and dilations {\rm(\ref{8.2aa})}.
\end{theorem}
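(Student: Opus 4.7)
The approach I would take is to follow the same strategy used in the proofs of Theorems \ref{Thm6.1} and \ref{Thm6.2} above, adapted to the flag setting. The result is in fact a degenerate or ``limiting'' version of the Fourier-transform duality between $\PP_0(\EEE)$ and $\MM_\infty(\EEE)$: the flag structure corresponds morally to sending certain entries $e(j,k)$ with $k>j$ to $+\infty$, so that the dual norm $\widehat N_j$ from \eqref{2.4} collapses to $n_j(\xib_j)^{1/a_j}+\cdots+n_n(\xib_n)^{1/a_n}$, precisely matching the right-hand side of (\ref{8.6aa}). However, rather than attempting to derive the flag statement as a formal consequence of Theorems \ref{Thm6.1}--\ref{Thm6.2}, which would require a careful limiting argument, I would prove it directly along the same lines.

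For the forward direction, assume $\KK$ is a flag kernel. First one observes that, as in the reduction at the start of Theorem \ref{Thm6.1}, it suffices to assume $K$ has compact support in the complement of $\{\x_1=0\}$, with bounds independent of the size of the support. Then the smoothness of $m=\widehat\KK$ away from $\{\xib_n=0\}$ is immediate from integration by parts in $\x_n$, since on that set the power of $N'_n(\x)$ in the differential inequalities is sufficient to make $\x^{\gammab}K(\x)$ integrable. To obtain (\ref{8.6aa}), I would introduce the flag analogue of the marked-partition decomposition of Section \ref{Partitions}: for each $j\in\{1,\dots,n\}$ let $k(j)\geq j$ be the unique index for which $n_{k(j)}(\xib_{k(j)})^{1/a_{k(j)}}$ is strictly dominant among $\{n_l(\xib_l)^{1/a_l}:l\geq j\}$ (outside a closed set of measure zero). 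The resulting function $j\mapsto k(j)$ is non-decreasing, partitions $\{1,\dots,n\}$ into blocks analogous to the sets $I_r$, and on each such block the right-hand side of (\ref{8.6aa}) simplifies to a product of pure powers of the dominant $n_{k(j)}(\xib_{k(j)})$. One then splits the Fourier integral, for each block, into the regions where $n_j(\x_j)$ is large or small compared to $n_{k(j)}(\xib_{k(j)})^{-1}$, exactly as in \eqref{3.2.8a}--(\ref{6.9aa}): integration by parts in the large-$\x_{k(j)}$ region uses the differential inequalities of $K$, while the small region uses the cancellation conditions of Definition \ref{defFlagKernel}(B) applied to the complementary block of variables.

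For the converse direction, I would mirror the proof of Theorem \ref{Thm6.1}. Given $m$ satisfying (\ref{8.6aa}), introduce a dyadic partition of unity in frequency adapted to the flag, splitting each annulus according to which of $n_j(\xib_j)^{1/a_j},\dots,n_n(\xib_n)^{1/a_n}$ is dominant in each partial sum. Writing $m=\sum_J m^J$ with each $m^J$ supported where $n_l(\xib_l)^{1/a_l}\approx 2^{j_l}$ for the indices in an appropriate flag cone, take inverse Fourier transforms. Each piece $(m^J)^{\vee}$ is a dilated normalized Schwartz function with cancellation in the dominant variable of each block, and the sum of its dilates yields a distribution satisfying both the differential inequalities and the cancellation conditions of Definition \ref{defFlagKernel}; the cancellation conditions require an additional check that quotients $\KK_{\psi,R}$ correspond to a flag multiplier on the quotient flag $\FF_L$, which is immediate from the product structure of the bound in (\ref{8.6aa}).

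The main obstacle, as in Theorem \ref{Thm6.1}, is the bookkeeping in the forward direction: the cancellation conditions must be used exactly on the complementary set of indices where integration by parts would otherwise fail, and one must verify that the resulting flag-type block decomposition of $\hat\R^N$ really does give the product of partial-norm factors on the right-hand side of (\ref{8.6aa}), rather than a finer product. Once the right combinatorial structure (the non-decreasing map $j\mapsto k(j)$ and its block decomposition) is identified, the analytic estimates proceed in direct parallel with the appendix estimates (Propositions \ref{Prop12.3} and \ref{Prop16.1wer}) already used in this paper.
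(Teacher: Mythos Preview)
The paper does not prove this statement at all: it is quoted verbatim as Theorem 2.3.9 of \cite{MR1818111} and is used as background for the two-flag discussion in Section \ref{Multi-Flag}. So there is nothing to compare your argument against in this paper.

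That said, your sketch is a reasonable outline of how the result is actually proved in \cite{MR1818111}, and it is exactly right that the argument runs parallel to Theorems \ref{Thm6.1}--\ref{Thm6.2} here (historically the dependence is the reverse: the flag-kernel Fourier characterization came first, and the $\PP_0(\EEE)$ duality in this paper is modeled on it). One small correction: in the forward direction you should not assume $K$ has compact support in the complement of $\{\x_1=0\}$---a flag kernel is singular precisely along that subspace, so cutting it away would discard the interesting part. The compact-support reduction is only in the $\xib$-variable on the multiplier side (as at the start of the proof of Theorem \ref{Thm6.1}), or equivalently one works with finite dyadic sums on the kernel side and passes to the limit.
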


\subsection{Pairs of opposite flags}\label{multi}\quad

\medskip

In Section \ref{Flags}, we chose a particular ordering of the subspaces $\{\R^{C_{1}}, \ldots, \R^{C_{n}}\}$ to define the flag $\FF$ in (\ref{8.1aa}) and a particular family of dilations on $\R^{N}$ in (\ref{8.2aa}). We can consider the \emph{opposite flag} obtained by reversing the indices $\{1, \ldots, n\}$ and choosing a different one-parameter family of dilations. Our objective is then to study \textit{two-flag distributions} which are simultaneously flag kernels for the two different flags and two different homogeneities. Thus set \index{F4perp@$\FF^{\perp}$} 
\bea
\FF:\quad &(0) \subsetneq \R^{C_{n}}\subsetneq \R^{C_{n-1}}\oplus \R^{C_{n}}\subsetneq \cdots \subsetneq \R^{C_{2}}\oplus \cdots \oplus \R^{C_{n}}\subsetneq \R^{N},\\
\FF^{\perp}\quad &(0) \subsetneq \R^{C_{1}}\subsetneq \R^{C_{1}}\oplus \R^{C_{2}}\subsetneq \cdots \subsetneq \R^{C_{1}}\oplus \cdots \oplus \R^{C_{n-1}}\subsetneq \R^{N}.
\eea
Let $\a=(a_{1}, \ldots, a_{n})$ and $\b = (b_{1}, \ldots, b_{n})$ be two $n$-tuples of positive real numbers. Define two one-parameter families of dilations 
\bea
\lambda\cdot_{\a}\x&=(\lambda^{a_{1}}\cdot\x_{1}, \ldots, \lambda^{a_{n}}\cdot\x_{n}),\\
\lambda\cdot_{\b}\x&=(\lambda^{b_{1}}\cdot\x_{1}, \ldots, \lambda^{b_{n}}\cdot\x_{n}),
\eea
and associate the first dilation with the flag $\FF$ and the second with the flag $\FF^{\perp}$. Along with satisfying the appropriate cancellation conditions, a two-flag distribution $\KK$ then satisfies the following differential inequalities. Away from the set where $\x_{1}=0$ or where $\x_{n}=0$, $\KK$ is given by integration against a smooth function $K$ and
\beas
\big\vert\partial^{\gammab}K(\x)\big\vert
&\leq
C_{\gammab}\,
\begin{cases}
\prod_{j=1}^{n}\Big(n_{1}(\x_{1})^{a_{j}/a_{1}}+ \cdots +n_{j-1}(\x_{j-1})^{a_{j}/a_{j-1}}+ n_{j}(\x_{j})\Big)^{-(Q_{j}+\[\gammab_{j}\])}\\\\
\prod_{j=1}^{n}\Big(n_{j}(\x_{j}) + n_{j+1}(\x_{j+1})^{b_{j}/b_{j+1}}+\cdots +n_{n}(\x_{n})^{b_{j}/b_{n}}\Big)^{-(Q_{j}+\[\gammab_{j}\])}
\end{cases}.
\eeas
The corresponding Fourier transform $m=\widehat\KK$ satisfies
\beas
\big\vert\partial^{\gammab}m(\xib)\big\vert
&\leq C_{\gammab}\,
\begin{cases}
\prod_{j=1}^{n}\Big(n_{j}(\xib_{j})+n_{j+1}(\xib_{j+1})^{a_{j}/a_{j+1}}+\cdots +n_{n}(\xib_{n})^{a_{j}/a_{n}}\Big)^{-\[\gammab_{j}\]}\\\\
\prod_{j=1}^{n}\Big(n_{1}(\xib_{1})^{b_{j}/b_{1}}+\cdots +n_{j-1}(\xib_{j-1})^{b_{j}/b_{j-1}}+n_{j}(\xib_{j})\Big)^{-\[\gammab_{j}\]}
\end{cases}.
\eeas

\subsection{Two-step flags}\label{Heisenberg}\quad 
\medskip

We first consider the case of two-step flags, where $\R^{N}$ is written as a direct sum of two subspaces: $\R^{N}=\R^{n}\oplus\R^{m}$. As we will see, this situation is much simpler than the general case considered later in Section \ref{2Flag}.  Write elements of $\R^{N}$ as $(\x,\y)$ with $\x\in \R^{n}$ and $\y\in \R^{m}$.  For notational convenience we assume isotropic dilations on each component. There are two flags $\FF$ and $\FF^{\perp}$ adapted to this decomposition:
\beas
\FF:\quad && (0) &\subsetneq \R^{n} \subsetneq \R^{n}\oplus\R^{m} = \R^{N}&&\text{and}&&&
\FF^{\perp}:\quad && (0) &\,\subsetneq \,\R^{m} \,\,\subsetneq \,\,\R^{n}\oplus\R^{m} = \R^{N}.\\
\eeas
Let $\a=(a_{1},a_{2})$ and $\b=(b_{1},b_{2})$ be two pairs of positive real numbers and put
\bea\label{Eqn8.9asd}
\lambda\cdot_{\a}(\x,\y) &= (\lambda^{a_{1}}\x,\lambda^{a_{2}}\y),\\
\lambda\cdot_{\b}(\x,\y) & = \,(\lambda^{b_{1}}\x,\lambda^{b_{2}}\y).
\eea 
Associate the family of dilations $\lambda\cdot_{a}(\x,\y)$ with the flag $\FF$ and the family $\lambda\cdot_{b}\,(\x,\y)$ with the flag $\FF^{\perp}$. A distribution $\KK$ on $\R^{n}\times\R^{m}$ is then a \textit{two-flag kernel} relative to this data if it satisfies appropriate cancellation conditions and
is given by integration against a smooth function $K$ which satisfies the differential inequalities
\bea\label{flagEstimatesK}
\big\vert\partial^{\alphab}_{\x}\partial^{\betab}_{\y}K(\x,\y)\big\vert
&\leq
C_{\alphab,\betab}\,
\begin{cases}
|\x|^{-n-|\alphab|}\big(|\x|^{a_{2}/a_{1}}+|\y|\big)^{-m-|\betab|}&\text{for $\x\neq 0$  (flag $\FF$)}\\\\
\big(|\x|+|\y|^{b_{1}/b_{2}}\big)^{-n-|\alphab|}\,|\y|^{-m-|\betab|}&\text{for $\y\neq 0$  (flag $\FF^{\perp}$)}
\end{cases}
\eea  
If $m(\xib,\etab)=\widehat \KK(\xib,\etab)$ is the Fourier transform, then 
\bea\label{Eqn8.10asd}
\big\vert\partial^{\alphab}_{\xi}\partial^{\betab}_{\etab}m(\xib,\etab)\big\vert
&\leq
C_{\alphab,\betab}\,
\begin{cases}
\big(|\xib|+|\etab|^{a_{1}/a_{2}}\big)^{-|\alphab|}|\etab|^{-|\betab|}&\text{for $\etab\neq 0$  (flag $\FF$)}\\\\
|\xib|^{-|\alphab|}\big(|\xib|^{b_{2}/b_{1}}+|\etab|\big)^{-|\betab|}&\text{for $\xib\neq 0$ (flag $\FF^{\perp}$)}
\end{cases}.  
\eea  
It follows that both $K$ and $m$ are smooth away from the origin.

\begin{proposition}\label{Prop7.3}
Suppose that $\frac{a_{1}}{b_{1}}< \frac{a_{2}}{b_{2}}$,   and let
\beas
N_{1}(\x,\y) &= |\x|+|\y|^{b_{1}/b_{2}} && &\widehat N_1(\xib,\etab)=|\xib|+|\etab|^{a_1/a_2}\\
N_{2}(\x,\y) &= |\x|^{a_{2}/a_{1}}+|\y|&& &\widehat N_2(\xib,\etab)=|\xib|^{b_2/b_1}+|\etab|
\eeas  
be the norms associated with the matrix  
\be\label{E2x2}
\EEE =\left[\begin{matrix}1&\frac{b_{1}}{b_{2}}\\\frac{a_{2}}{a_{1}}&1\end{matrix}\right]
\ee 
satisfying the basic hypotheses. 

If $\KK$ is a two-flag kernel on $\R^{n}\times\R^{m}$ adapted to the dilations given in (\ref{Eqn8.9asd}), then
\begin{enumerate}[{\rm(a)}]
\item \label{Prop7.3a}the corresponding function $K$ is integrable at infinity;

\smallskip

\item \label{Prop7.3b} $\KK\in \PP(\EEE)$ and  $\widehat \KK=m\in \MM(\EEE)$;

\smallskip

\item \label{Prop7.3c} we can write $\KK=\KK_{0}+K_{\infty}$ where $K_{\infty}\in L^{1}(\R^{N})\cap \CC^{\infty}(\R^{N})$, and $\KK_{0}$ is a two-flag kernel supported in $\B(1)$.
\end{enumerate}

Conversely, if $\KK$ is in $\PP(\EEE)$, then $\KK$ is a two-flag kernel (and $\widehat\KK=m$ a two-flag multiplier)
 on $\R^{n}\times\R^{m}$ adapted to the dilations given in (\ref{Eqn8.9asd}). 
\end{proposition}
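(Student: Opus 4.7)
The converse is straightforward in either formulation. If $\KK\in\PP(\EEE)$, the pointwise inequalities $N_1(\x,\y)\ge|\x|$ and $N_2(\x,\y)\ge|\y|$ applied to Definition~\ref{Def2.2} yield the flag $\FF$ and $\FF^\perp$ differential inequalities respectively, and the flag cancellation conditions are precisely the specializations of the $\PP(\EEE)$ conditions to $L=\{1\}$ or $L=\{2\}$. Analogously, $\widehat N_1(\xib,\etab)\ge|\xib|$ and $\widehat N_2(\xib,\etab)\ge|\etab|$ turn the $\MM(\EEE)$ estimate into the two flag multiplier estimates. The matrix $\EEE$ in \eqref{E2x2} satisfies the basic hypotheses \eqref{2.5}: the diagonal condition is built in, $e(1,2)e(2,1)=a_2b_1/(a_1b_2)>1$ by the assumption $a_1/b_1<a_2/b_2$, and the multiplicative inequality $e(j,k)\le e(j,l)e(l,k)$ is automatic when $n=2$.

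For the forward direction I would pass to the Fourier transform side and establish that $m=\widehat\KK\in\MM(\EEE)$, i.e.\ that $|\partial^{\alphab}_{\xib}\partial^{\betab}_{\etab}m|\lesssim\widehat N_1^{-|\alphab|}\widehat N_2^{-|\betab|}$. The two-flag hypothesis supplies the flag $\FF$ multiplier bound $(|\xib|+|\etab|^{a_1/a_2})^{-|\alphab|}|\etab|^{-|\betab|}$ and the flag $\FF^\perp$ multiplier bound $|\xib|^{-|\alphab|}(|\xib|^{b_2/b_1}+|\etab|)^{-|\betab|}$. Partitioning $(\xib,\etab)$-space by comparing competing terms, the flag $\FF^\perp$ bound already matches the target $\MM(\EEE)$ estimate on $|\etab|\lesssim|\xib|^{a_2/a_1}$, while the flag $\FF$ bound matches it on $|\etab|\gtrsim|\xib|^{b_2/b_1}$. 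This leaves only the ``middle region'' $|\xib|^{a_2/a_1}\lesssim|\etab|\lesssim|\xib|^{b_2/b_1}$; since $a_2/a_1>b_2/b_1$, this region is contained in $\{|\xib|\lesssim 1\}$, i.e.\ near the origin in Fourier space.

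The middle region is the main obstacle. There neither flag estimate is sharp, and one must use the cancellation conditions of the flag kernel $\KK$ in an essential way: pairing cancellation in one block of variables with the opposite flag's pointwise control on the remaining block produces the extra decay of $m$ near the origin that matches the $\MM(\EEE)$ target. The required quantitative improved estimates are precisely the subject of Section~\ref{Improved}, and this is the technical heart of the argument. Once $m\in\MM(\EEE)$ is in hand, the Fourier characterization gives $\KK\in\PP(\EEE)$, yielding (b). Part (a) is then immediate: the pure-derivative flag multiplier bounds read $|\partial^{\alphab}_{\xib}m|\lesssim\widehat N_1^{-|\alphab|}$ and $|\partial^{\betab}_{\etab}m|\lesssim\widehat N_2^{-|\betab|}$, and since $\mathrm{rank}\,\EEE=2>1$, Lemma~\ref{Lem4.6} implies that $K$ is integrable at infinity. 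Finally, (c) follows from Corollary~\ref{Cor5.3ww} applied to $\KK\in\PP(\EEE)$, the compact piece $\KK_0$ being a two-flag kernel by the converse already proved.
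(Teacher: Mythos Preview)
Your argument for the converse and for part~(a) is fine and matches the paper. The gap is in your treatment of~(b).

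You work on the multiplier side and correctly identify that the two flag bounds already give the $\MM(\EEE)$ estimate except on a ``middle region'' $|\xib|^{a_2/a_1}\lesssim|\etab|\lesssim|\xib|^{b_2/b_1}$, which lies inside the unit ball. You then claim that Section~\ref{Improved} supplies the missing estimate there. It does not: Lemma~\ref{Lem9.3} shows that pure-derivative bounds imply mixed-derivative bounds \emph{for $\xib\in\B(1)^c$}, precisely the complement of your middle region. The near-origin behaviour of $m$ is never addressed in that section, and cancellation conditions do not enter Lemma~\ref{Lem9.3} at all. (In the general $n$-step proof of Theorem~\ref{Thm7.7}, the part of $m$ supported near the origin is handled not by proving multiplier estimates but by observing that its inverse Fourier transform is smooth.)

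The paper's argument for Proposition~\ref{Prop7.3} sidesteps this entirely by working on the \emph{kernel} side. One first cuts off to get $\KK_0$ supported in $\B(1)$ (this uses only~(a)), and then carries out the same three-region case analysis for $K_0(\x,\y)$: on $|\x|\ge|\y|^{b_1/b_2}$ the flag-$\FF$ kernel bound already equals the $\PP(\EEE)$ bound; on $|\y|\ge|\x|^{a_2/a_1}$ the flag-$\FF^\perp$ bound does; and the remaining region $|\x|<|\y|^{b_1/b_2}$, $|\y|<|\x|^{a_2/a_1}$ forces $|\x|,|\y|>1$ because $a_2b_1/(a_1b_2)>1$, hence lies \emph{outside} the support of $K_0$. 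So for compactly supported two-flag kernels the improved differential inequalities are immediate; neither cancellation nor Section~\ref{Improved} is needed. The duality between your middle region (inside the Fourier unit ball) and the paper's middle region (outside the spatial unit ball) is exactly what makes the kernel-side argument succeed trivially where the multiplier-side argument would require genuine extra work.
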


\begin{proof}
We have already observed that $m$ is smooth away from the origin, and it follows from equation (\ref{Eqn8.10asd}) that we have the following estimates for pure derivatives of $m$:

\beas
\big\vert\partial^{\alphab}_{\xib}m(\xib,\etab)\big\vert 
&\lesssim
\big(|\xib|+|\etab|^{a_{1}/a_{2}}\big)^{-|\alphab|},\\
\big\vert\partial^{\betab}_{\etab}m(\xib,\etab)\big\vert 
&\lesssim
\big(|\xib|^{b_{2}/b_{1}}+|\etab|\big)^{-|\betab|}
\eeas  
Since the rank of $\EEE$ is greater than $1$, it follows from Lemma \ref{Lem4.6} that $K$ is smooth away from the origin and is integrable at infinity. This establishes (\ref{Prop7.3a}).

\medskip

Next let $\chi\in\CC^{\infty}_{0}(\R^{n}\times\R^{n})$ have support in the unit ball, with $\chi(\x,\y) \equiv 1$ if $|\x|+|\y|\leq \frac{1}{2}$.  Obviously $\KK_{0}=\chi\KK$  satisfies the differential inequalities \eqref{flagEstimatesK} and it easy to verify that it also satisfies the cancellations in Definition \ref{defFlagKernel}. By (\ref{Prop7.3a}), $\KK_{\infty}=(1-\chi)\KK\in L^{1}(\R^{n})\cap \CC^{\infty}(\R^{n})$. This establishes (\ref{Prop7.3c}).

\medskip
Being a two-flag kernel we know  that $K_{0}$  satisfies  \eqref{flagEstimatesK}.  A case by case analysis shows that these estimates can be combined and improved. 
\begin{enumerate}[(1)]

\smallskip

\item Suppose $|\x|\geq |\y|^{b_{1}/b_{2}}$. Then $|\x|^{-1}\lesssim \big(|\x|+|\y|^{b_{1}/b_{2}}\big)^{-1}$ and hence the first inequality shows that
\bea\label{Eqn8.11asd}
|\partial^{\alphab}_{\x}\partial^{\betab}_{\y}K_{0}(\x,\y)&|\lesssim 
\big(|\x|+|\y|^{b_{1}/b_{2}}\big)^{-n-|\alpha|}\big(|\x|^{a_{2}/a_{1}}+|\y|\big)^{-m-|\betab|}.
\eea

\smallskip

\item Suppose  $|\y|\geq |\x|^{a_{2}/a_{1}}$.  Then $|\y|^{-1}\lesssim \big(|\x|^{a_{2}/a_{1}}+|\y|\big)^{-1}$, and the second inequality again gives the estimate in (\ref{Eqn8.11asd}). 

\smallskip

\item Finally suppose  $|\x|< |\y|^{b_{1}/b_{2}}$  and $|\y|< |\x|^{a_{2}/a_{1}}$.  Here we must have
\beas
|\x|&<|\x|^{\frac{a_{2}b_{1}}{a_{1}b_{2}}}&&\text{and}&|\y|<|\y|^{\frac{a_{2}b_{1}}{a_{1}b_{2}}}
\eeas
Since $a_{2}b_{1}>a_{1}b_{2}$, this means that $|\x|>1$ and $|\y|>1$, and this is outside the support of $K_{0}$. It follows that for all $(\x,\y)\in \R^{n}\oplus\R^{m}$  we have the \emph{improved} estimates
\beas
|\partial^{\alphab}_{\x}\partial^{\betab}_{\y}K(\x,\y)|\lesssim \big(|\x|+|\y|^{\frac{b_{1}}{b_{2}}}\big)^{-n-|\alpha|}\big(|\x|^{\frac{a_{2}}{a_{1}}}+|\y|\big)^{-m-|\betab|}.
\eeas
\end{enumerate}
This shows that $\KK_{0}\in \PP_{0}(\EEE)$ since the cancellations conditions for the flag kernels are the same as the cancellation conditions for $\PP_{0}(\EEE)$. This establishes (\ref{Prop7.3b}) and completes the proof.

The last part of the statement is obvious. 
\end{proof}

\begin{remarks}\label{remark-flag2} \quad

\begin{enumerate}
\item[\rm(1)] For a matrix $\EEE$ of the form \eqref{E2x2}, the condition $\frac{a_1}{b_1}\le\frac{a_2}{b_2}$ is equivalent to the basic hypotheses. If the inequality is an equality, $\EEE$ is reducible and the class $\PP(\EEE)$ consists of \CZ kernels.

The arguments used in the proof of Proposition \ref{Prop7.3} can adapted to prove that, if the opposite inequality $\frac{a_1}{b_1}>\frac{a_2}{b_2}$ holds, then a kernel $\KK\in\PP(\EEE)$ (defined by the same differential inequalities and cancellations as in Definition \ref{Def2.2}) is integrable near $0$ instead than at infinity. This suggests that the essentially local nature of the kernels strongly depends on the basic hypotheses.
\smallskip

\item[\rm(2)] One may observe that a $2\times2$ matrix satisfying the basic hypotheses can always be put in the form \eqref{E2x2} for appropriate $a_j,b_j$ with $\frac{a_1}{b_1}\le\frac{a_2}{b_2}$. 
This means that, for every two-fold decomposition $\R^N=\R^{C_1}\oplus\R^{C_2}$ and every $\EEE$ satisfying the basic hypotheses, the class $\PP(\EEE)$ conicides either with a class of two-flag kernels or with a class of \CZ kernels, depending on its rank. 

As we will see next, this is specific of two-fold decomposition. However, it follows from Lemma~\ref{sharp} that the same also holds for general  decompositions, as long as the reduced rank of $\EEE$ is not greater than~2.
\end{enumerate}

\end{remarks}

\subsection{General two-flag kernels}\label{2Flag}

In the case of the two-step flags, the improved estimates for the kernel followed easily from the two-flag estimates simply by considering possible cases. However, if the number of norms is greater than two, this argument no longer works.  Observe that the differential inequalities for the kernel $K$ give \textit{no} information when $\x_{1}=\x_{n}=0$ while other $\x_{j}\neq 0$. In particular, it does not follow from the inequalities alone that the kernel is non-singular away from the origin. Nevertheless, we have the following result.

\begin{theorem}\label{Thm7.7} Suppose that $\frac{a_1}{b_1}\le \frac{a_2}{b_2}\le\cdots\le\frac{a_n}{b_n}$ with at least one strict inequality, and let $\KK$ be a two-flag kernel for the flags $\FF$ and $\FF^{\perp}$ with homogeneities $\a$ and $\b$  respectively. 

\begin{enumerate}[{\rm(a)}]

\item\label{Thm7.7a} The function $K$ is integrable at infinity, and we can write $\KK = \KK_{0}+K_{\infty}$ where $K_{\infty}\in L^{1}(\R^{N})\cap \CC^{\infty}(\R^{N})$, and $\KK_{0}$ is a two-flag kernel supported in $\B(1)$.

\item \label{Thm7.7b}The kernel $\KK_{0}$ belongs to the class $\PP_{0}(\EEE)$ associated to the matrix
\bea\label{Enxn}
\EEE =
\left[\begin{matrix}
1&{b}_{1}/{b}_{2}&{b}_{1}/{b}_{3}&\cdots&{b}_{1}/{b}_{n-1}&{b}_{1}/{b}_{n}\\
a_{2}/a_{1}&1&{b}_{2}/{b}_{3}&\cdots &{b}_{2}/{b}_{n-1}&{b}_{2}/{b}_{n}\\
a_{3}/a_{1}&a_{3}/a_{2}&1&\cdots&{b}_{3}/{b}_{n-1}&{b}_{3}/{b}_{n}\\
\vdots&\vdots&\vdots&\ddots&\vdots&\vdots\\
a_{n-1}/a_{1}&a_{n-1}/a_{2}&a_{n-1}/a_{3}&\cdots&1&{b}_{n-1}/{b}_{n}\\
a_{n}/a_{1}&a_{n}/a_{2}&a_{n}/a_{3}&\cdots&a_{n}/a_{n-1}&1\\
\end{matrix}\right].
\eea

\item \label{Thm7.7c} Conversely, every kernel in $\PP_0(\EEE)$ is a two-flag kernel for the flags $\FF$ and $\FF^{\perp}$ with homogeneities $\a$ and $\b$ respectively.
\end{enumerate}
\end{theorem}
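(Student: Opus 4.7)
The plan is to handle parts (a) and (c) by direct arguments and to concentrate on the main content, part (b).

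For part (a), I would first derive pure-derivative estimates for $m=\widehat\KK$ on the Fourier side. Specializing the flag $\FF$ and flag $\FF^\perp$ multiplier estimates of Subsection \ref{2Flag} to pure $\xib_j$-derivatives yields
\beas
|\partial_{\xib_j}^{\gammab_j} m(\xib)| &\lesssim \bigl(n_j(\xib_j) + n_{j+1}(\xib_{j+1})^{a_j/a_{j+1}} + \cdots + n_n(\xib_n)^{a_j/a_n}\bigr)^{-\[\gammab_j\]}, \\
|\partial_{\xib_j}^{\gammab_j} m(\xib)| &\lesssim \bigl(n_1(\xib_1)^{b_j/b_1} + \cdots + n_{j-1}(\xib_{j-1})^{b_j/b_{j-1}} + n_j(\xib_j)\bigr)^{-\[\gammab_j\]},
\eeas
and since $\widehat N_j(\xib)$ is comparable to the sum of these two right-hand sides, the minimum of the two bounds is $\lesssim \widehat N_j(\xib)^{-\[\gammab_j\]}$. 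The basic hypotheses on the matrix $\EEE$ in \eqref{Enxn} together with at least one strict inequality $a_j/b_j < a_{j+1}/b_{j+1}$ force $e(j+1,j)\,e(j,j+1) > 1$ for some $j$, so $\EEE$ has rank at least $2$; Lemma \ref{Lem4.6} then produces integrability of $K$ at infinity. Writing $\KK = \chi\KK + (1-\chi)\KK =: \KK_0 + K_\infty$ for a smooth cut-off $\chi$ which is identically one near the origin and supported in $\B(1)$ completes the decomposition, with $\KK_0$ inheriting the two-flag differential inequalities and cancellation conditions of $\KK$ by a direct verification along the lines of Proposition \ref{Prop7.3}.

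Part (b) is the heart of the argument. By the Fourier-transform characterization of Section \ref{Duality} (Theorem \ref{Thm6.1}) it suffices to show $m_0 := \widehat{\KK_0} \in \MM_\infty(\EEE)$. The strategy is to exploit the compact support of $\KK_0$ to upgrade each separate flag multiplier estimate to the improved Fourier-side estimates alluded to at the beginning of Section \ref{Multi-Flag}, and then to combine them by means of the marked-partition decomposition of Section \ref{Partitions}. On each set $\widehat E_S^A \cap \B(1)^c$ with $S = ((I_1,k_1);\ldots;(I_s,k_s)) \in \SS'(\EEE)$, one has $\widehat N_j(\xib) \approx n_{k_r}(\xib_{k_r})^{1/e(k_r,j)}$ for $j \in I_r$; depending on whether the marked index $k_r$ is small or large relative to the other indices of $I_r$ in the common ordering of $\{1,\ldots,n\}$, the improved estimate for either flag $\FF$ or flag $\FF^\perp$ supplies the bound
\bes
|\partial_{\xib}^{\gammab} m_0(\xib)| \lesssim \prod_{r=1}^s n_{k_r}(\xib_{k_r})^{-\sum_{j \in I_r} \[\gammab_j\]/e(k_r,j)} \approx \prod_{j=1}^n \widehat N_j(\xib)^{-\[\gammab_j\]}
\ees
on that region. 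Summing over $S \in \SS'(\EEE)$ covers $\B(1)^c$, while compact support handles $\B(1)$. The main obstacle is precisely the step of establishing the improved Fourier-side estimates for compactly supported flag kernels: as emphasized in the introduction, the size estimates of a flag kernel alone do not even rule out singularities of $K$ away from the origin, so the cancellation conditions must be genuinely exploited in order to produce the extra Fourier-side decay. A careful book-keeping across the combinatorics of $\SS'(\EEE)$, together with the dyadic decomposition results of Section \ref{Decompositions}, is then needed to glue the flag-by-flag improvements into a single $\MM_\infty(\EEE)$ estimate.

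For the converse in part (c), observe that in the matrix $\EEE$ of \eqref{Enxn}, for $k \leq j$ one has $e(j,k) = a_j/a_k$ and for $k \geq j$ one has $e(j,k) = b_j/b_k$, so
\bes
N_j(\x) \geq \sum_{k=1}^j n_k(\x_k)^{a_j/a_k} \approx N'_j(\x)^{a_j} \quad\text{and}\quad N_j(\x) \geq \sum_{k=j}^n n_k(\x_k)^{b_j/b_k},
\ees
where $N'_j$ is the flag $\FF$ partial norm of \eqref{8.3aa}. Hence the differential inequalities of $\PP_0(\EEE)$ immediately imply both the flag $\FF$ and the flag $\FF^\perp$ differential inequalities of Definition \ref{defFlagKernel}(A), and the cancellation conditions of Definition \ref{Def2.2}(\ref{Def2.2B}) already cover those of Definition \ref{defFlagKernel}(B) for both flags, so every $\KK \in \PP_0(\EEE)$ is a two-flag kernel for $\FF$ and $\FF^\perp$ with the prescribed homogeneities.
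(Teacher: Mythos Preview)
Your arguments for parts (a) and (c) are fine and essentially match the paper's.  The gap is in part (b).

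Your strategy there is to use, on each region $\widehat E_S^A$, the \emph{mixed}-derivative flag-multiplier estimate for one of the two flags, choosing $\FF$ or $\FF^\perp$ according to the position of the marked index $k_r$ inside $I_r$.  This cannot work as stated: a flag-multiplier bound is a single joint estimate $|\partial^{\gammab} m|\lesssim\prod_j(\text{flag norm at }j)^{-\[\gammab_j\]}$, and the $j$-th flag norm for $\FF$ involves only $\xib_j,\ldots,\xib_n$ while the $j$-th flag norm for $\FF^\perp$ involves only $\xib_1,\ldots,\xib_j$.  On $\widehat E_S^A$ with $j\in I_r$ one has $\widehat N_j(\xib)\approx n_{k_r}(\xib_{k_r})^{1/e(k_r,j)}$, so the flag-$\FF$ factor at $j$ dominates $\widehat N_j(\xib)$ only when $k_r\ge j$, and the flag-$\FF^\perp$ factor only when $k_r\le j$.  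As soon as $I_r$ contains indices on both sides of $k_r$ (e.g.\ $n=3$, $S=\{(\{1,2,3\},2)\}$), neither single flag estimate controls all factors $j\in I_r$, and you cannot mix the two flag bounds factor by factor.  The ``improved Fourier-side estimates for compactly supported flag kernels'' you invoke are left unspecified, and there is no mechanism in your outline for combining information from the two flags at the level of mixed derivatives.

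The paper bypasses this difficulty entirely.  It first combines the two flag bounds for \emph{pure} derivatives only---where one may indeed choose the flag depending on $j$---to obtain $|\partial^{\gammab_j}_{\xib_j}m(\xib)|\lesssim\widehat N_j(\xib)^{-\[\gammab_j\]}$ for each $j$ separately.  The key step is then Lemma~\ref{Lem9.3}: pure-derivative bounds of this form automatically imply the full mixed-derivative bounds $|\partial^{\gammab}m(\xib)|\lesssim\prod_j\widehat N_j(\xib)^{-\[\gammab_j\]}$ on $\B(1)^c$.  The proof of that lemma localizes near $\xib\in\widehat E_S^A$ by a rescaling adapted to $S$ (Proposition~\ref{Prop9.2}), reducing to the elementary fact (Proposition~\ref{Prop9.1}) that a compactly supported smooth function with bounded pure derivatives has bounded mixed derivatives.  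This is where the cancellation is used, but in a way that never requires improving either flag estimate individually.
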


\begin{proof}
If two ratios $a_j/b_j,a_k/b_k$ are equal, the product $e(j,k)e(k,j)$ is 1. By Proposition \ref{PEflat}, the matrix $\EEE$ can be reduced to $\EEE^\flat$  of the same type. We may then  assume that all inequalities are strict. 

The first part of the proof proceeds as  the proof  of Proposition \ref{Prop7.3}. Combining the two kinds of estimates for pure derivatives of the Fourier transform $m$, we see that $m$ satisfies $\big\vert\partial^{\gammab_{j}}_{\xib_{j}}m(\xib)\big\vert \lesssim \widehat N_{j}(\xib)^{-\[\gammab_{j}\]}$ where
\beas
\widehat N_{j}(\xib)=n_{1}(\xib_{1})^{b_{j}/b_{1}}+\cdots + n_{j-1}(\xib_{j-1})^{b_{j}/b_{j-1}}&+n_{j}(\xib_{j})+\\
 &+n_{j+1}(\xib_{j+1})^{a_{j}/a_{j+1}}+ \cdots + n_{n}(\xib_{n})^{a_{j}/a_{n}}.
\eeas
We want to apply Lemma \ref{Lem4.6}, and so we need to check that the matrix $\EEE$ satisfies the basic hypothesis of (\ref{2.5}) and has rank greater than $1$. We proceed to check this.

We want to show that $e(j,k) \leq e(j,l)e(l,k)$ where $e(j,k) =
\begin{cases}
b_{j}/b_{k} &\text{if $j \leq k$}\\
a_{j}/a_{k} &\text{if $j \geq k$}
\end{cases}
$.
Thus we need to show
\beas
e(j,k)&= \frac{b_{j}}{b_{k}}
\leq
\begin{cases} 
\frac{a_{j}}{a_{l}}\frac{b_{l}}{b_{k}}=e(j,l)e(l,k)&\text{if $l\leq j\leq k$,}\\
\frac{b_{j}}{b_{l}}\frac{b_{l}}{b_{k}}=e(j,l)e(l,k)&\text{if $j\leq l \leq k$,}\\
\frac{b_{j}}{b_{l}}\frac{a_{l}}{a_{k}}=e(j,l)e(l,k)&\text{if $j\leq k\leq l$,}
\end{cases}&&&&\text{if $j \leq k$},\\\\
e(j,k)&= \frac{a_{j}}{a_{k}}
\leq
\begin{cases} 
\frac{b_{j}}{b_{l}}\frac{a_{l}}{a_{k}}=e(j,l)e(l,k)&\text{if $l\geq j\geq k$,}\\
\frac{a_{j}}{a_{l}}\frac{a_{l}}{a_{k}}=e(j,l)e(l,k)&\text{if $j\geq l \geq k$,}\\
\frac{a_{j}}{a_{l}}\frac{b_{l}}{b_{k}}=e(j,l)e(l,k)&\text{if $j\geq k\geq l$,}
\end{cases}&&&&\text{if $j \geq k$.}
\eeas
However it is easy to check that all of these inequalities follow from the assumption in the Theorem. Thus $\EEE$ satisfies the basic hypotheses. Also one can check that
\beas
\det(\EEE) = \prod_{j=2}^{n}\left(1-\frac{a_{j}}{b_{j}}\frac{b_{j-1}}{a_{j-1}}\right)\neq 0 
\eeas
so in fact the matrix $\EEE$ has rank $n$. 
Thus it follows from Lemma \ref{Lem4.6} that $K$ is integrable at infinity.

\bigskip

 
The proof of part (\ref{Thm7.7b}) will use the following result, which shows that estimates for pure derivatives leads to estimates for mixed derivatives. In order to keep our focus on the proof of Theorem \ref{Thm7.7}, we defer the proof  to Section~\ref{Improved}.  
 \begin{lemma}\label{Lem9.3}
Let $m\in \CC^{\infty}(\R^{N}\setminus \{0\})$. Suppose that for every $\gammab_{j}\in \N^{C_{j}}$ there is a constant $C_{\gammab_{j}}>0$ so that
\bes
\big|\partial^{\gammab_{j}}_{\xib_{j}}m(\xib)\big|\leq C_{\gammab_{j}}\widehat N_{j}(\xib)^{-\[\gammab_{j}\]}.
\ees
Then for every $\gammab=(\gammab_{1}, \ldots, \gammab_{n})\in \N^{C_{1}}\times \cdots \times \N^{C_{n}}$ there is a constant $C_{\gammab}>0$ so that if $\xib\in \B(1)^{c}$,
\bes
\big|\partial^{\gammab_{1}}_{\xib_{1}}\cdots\partial^{\gammab_{n}}_{\xib_{n}}m(\xib)\big|
\leq 
C_{\gammab}\prod_{j=1}^{n}\widehat N_{j}(\xib)^{-\[\gammab_{j}\]}.
\ees
\end{lemma}

\bigskip

We now turn to the proof of Theorem \ref{Thm7.7} part (\ref{Thm7.7b}), which is more involved than in the situation considered in Section \ref{Heisenberg}.  We start by decomposing the Fourier transform $m_{0}$ of the kernel $\KK_{0}$ by means of a smooth cutoff function supported in $B(2)$ and equal to 1 on $B(1)$. We obtain $m_{0}=m'+m''$, where $m'$ is in $\CC_{0}(\R^N)$, while $m''$ is supported in $B(1)^c$ and satisfies the inequalities \eqref{Eqn8.10asd}. Denoting by $\KK',\KK''$  the inverse Fourier transforms of $m',m''$ respectively, we then have $\KK_{0}=\KK'+\KK''$.

We consider $m''$ first. Since it satisfies the inequalities \eqref{Eqn8.10asd},  the two types of estimates can be combined together to give
 \be\label{pure}
 \big\vert\partial^{\gammab_{j}}_{\xib_{j}}m''(\xib)\big\vert \lesssim \widehat N_{j}(\xib)^{-\[\gammab_{j}\]},
 \ee
 whenever we differentiate only in the $\xib_j$-variables ({\it pure} derivatives). Applying Lemma \ref{Lem9.3} and recalling that $m''$ is supported in $B(1)^c$, we obtain inequalities for all derivatives of $m''$, namely
 \be\label{mixed}
\big|\partial^{\gammab_{1}}_{\xib_{1}}\cdots\partial^{\gammab_{n}}_{\xib_{n}}m''(\xib)\big|
\lesssim\prod_{j=1}^{n}\big(1+\widehat N_{j}(\xib)\big)^{-\[\gammab_{j}\]},
\ee
which tells us that $m''\in\MM_\infty(\EEE)$.

By Theorem \ref{Thm6.1} it  follows that $\KK''\in \PP_{0}(\EEE)$. Using a cutoff function in the $\x$-variables, we could then split $\KK''$ as $\KK_0+\KK'''$, with $\KK_0\in \PP_{0}(\EEE)$ and $\KK'''\in\cS(\R^N)$.

We next observe that $\KK'\in C^\infty(\bR^n)$ because $m'$ has compact support. It follows that $\KK$ coincides with a smooth function away from the origin. We can then apply Theorem~\ref{Lem4.6} and conclude that $\KK$ is integrable at infinity.  

Hence $\KK'\in C^\infty(\R^N)\cap L^1(\R^N)$ and the same holds for $\KK'+\KK'''=\KK_\infty$.

Finally, part (\ref{Thm7.7c}) is an obvious consequence of (\ref{Thm7.7b}).
\end{proof}

\begin{remark} \quad

As mentioned in Remark \ref{remark-flag2}, for a general matrix $\EEE$ with reduced rank larger than 2, it is not true that the class $\PP(\EEE)$ coincides with a class of two-flag kernels. In fact, such a coincidence occurs if and only if the equality $e(j,\ell)=e(j,k)e(k,\ell)$ holds whenever the triple $j,k,\ell$ is ordered ($j<k<l$ or vice-versa).

\end{remark}

The following statement is a direct consequence of Theorem \ref{Thm10.1}.
\begin{corollary}\label{convolution2flag}
Let $\KK,\LL$ be two compactly supported two-flag kernels as in Theorem \ref{Thm7.7}, for the same pair of flags and the same homogeneities. Then the convolution $\KK*\LL$ is also a two-flag kernel of the same type.
\end{corollary}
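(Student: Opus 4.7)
The plan is to argue that the convolution of two two-flag kernels lies in the algebra $\PP_0(\EEE)$ identified in Theorem~\ref{Thm7.7}, and then invoke the converse part of that theorem to recognize the result as a two-flag kernel of the same type. First, I would use Theorem~\ref{Thm7.7}(b) to identify the compactly supported two-flag kernels $\KK$ and $\LL$ as elements of $\PP_0(\EEE)$, with $\EEE$ the $n\times n$ matrix displayed in \eqref{Enxn}. This reduces the statement about two-flag kernels to a statement about the class~$\PP_0(\EEE)$.

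Next, I would verify that the hypotheses of the composition theorem are met. In the abelian case this is immediate from Theorem~\ref{Thm10.1abelian}, so the reasoning is trivial. In the non-abelian case, we need to check the hypotheses of Theorem~\ref{Thm10.1}, in particular that $\EEE$ is doubly monotone and that the block decomposition is compatible with the group structure in the sense of \eqref{orderedC}. The key calculation is that $e(j+1,j)=a_{j+1}/a_j$ for the matrix in \eqref{Enxn}, so double monotonicity follows via Proposition~\ref{Prop9.4}(a) from $a_1\geq a_2\geq\cdots\geq a_n$, which is precisely the compatibility condition required for the flag $\FF$ (with its associated dilation $\lambda\cdot_\a$) to be a flag of subgroups of $G$ under the ordering \eqref{orderedC}. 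With double monotonicity in hand, Theorem~\ref{Thm10.1} gives $\KK\ast\LL\in\PP_0(\EEE)$.

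Finally, I would appeal to Theorem~\ref{Thm7.7}(c) to conclude: every element of $\PP_0(\EEE)$ is automatically a two-flag kernel for $\FF$ and $\FF^\perp$ with homogeneities $\a$ and $\b$. Note that although $\KK\ast\LL$ need not itself be compactly supported (it decomposes as the sum of a compactly supported kernel in $\PP_0(\EEE)$ and a Schwartz tail, by Theorem~\ref{Thm7.7}(a)), the statement of the corollary only requires it to be a two-flag kernel, which is exactly what $\PP_0(\EEE)$ delivers.

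The substance of the argument is essentially administrative: the real work has already been done in proving Theorem~\ref{Thm7.7} (the identification of two-flag kernels with the intrinsic class $\PP_0(\EEE)$) and Theorem~\ref{Thm10.1} (the algebra property of $\PP_0(\EEE)$ under group convolution). The only potentially delicate point is checking that the doubly monotone hypothesis is indeed built into the framework of two-flag kernels on a homogeneous group, but this is a consequence of the compatibility between the flag structure and the automorphic dilations of $G$.
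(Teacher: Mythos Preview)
Your proposal is correct and follows exactly the approach the paper takes: the paper simply states that the corollary is ``a direct consequence of Theorem~\ref{Thm10.1}'', and you have correctly unpacked this by combining Theorem~\ref{Thm7.7}(b) (two-flag kernels lie in $\PP_0(\EEE)$), Theorem~\ref{Thm10.1} or~\ref{Thm10.1abelian} (the algebra property), and Theorem~\ref{Thm7.7}(c) (the converse identification). Your attention to the doubly monotone hypothesis in the non-abelian case is apt and fills in a detail the paper leaves implicit.
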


\subsection{Proof of Lemma \ref{Lem9.3}}\label{Improved} 

In this section we complete the proof of Theorem \ref{Thm7.7} by proving Lemma \ref{Lem9.3}; \emph{i.e.} we show that  if $m\in \CC^{\infty}(\R^{N})$ satisfies the estimates for the class of multipliers $\MM(\EEE)$ for \emph{pure} derivatives, then the estimates for mixed derivatives follow automatically. We will use the following result, which is easily proved by estimating the decay of the Fourier transform of~$f$.

\begin{proposition}\label{Prop9.1}
Let $f\in \CC^{\infty}_{0}(\R^{N})$ be supported in
 the unit ball. Suppose that 
\bes
\big\vert\partial^{\gammab_{j}}_{\etab_{j}}f(\etab)\big\vert \leq C_{\gammab_{j}}
\ees
for each  $j$ and each $\gammab_{j}\in \N^{C_{j}}$. Then for every $\gammab=(\gammab_{1}, \ldots, \gammab_{n}) \in \N^{C_{1}}\times \cdots \times \N^{C_{n}}$ we have
\bes
\big\vert \partial^{\gammab_{1}}_{\etab_{1}}\cdots \partial^{\gammab_{n}}_{\etab_{n}}f(\etab)\big\vert\leq C'_{\gammab}
\ees
where the constants $\{C'_{\gammab}\}$ depend only on the constants $\{C_{\gammab_{j}}\}$.
\end{proposition}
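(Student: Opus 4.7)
The plan is to use the Fourier transform to convert derivative bounds into decay bounds, and vice versa. Since $f$ is compactly supported in the unit ball and smooth, its Fourier transform $\widehat f$ is a smooth function on $\R^N$.

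First I would exploit the pure-derivative hypothesis to show that $\widehat f$ is rapidly decreasing. For each single coordinate $x_l$ with $l\in C_j$, let $\gammab_j$ be the multi-index equal to $k$ in the $l$-th slot and zero elsewhere; then the hypothesis gives $|\partial_{x_l}^k f(\etab)|\le C_{k,l}$, and since $f$ has compact support in the unit ball this implies $\|\partial_{x_l}^k f\|_{L^1}\le C'_{k,l}$. Taking Fourier transforms yields
\[
|(2\pi\xi_l)^k\widehat f(\xib)|=|\widehat{\partial_{x_l}^k f}(\xib)|\le C'_{k,l},
\]
so $|\xi_l|^k|\widehat f(\xib)|$ is bounded for every coordinate $\xi_l$ and every $k\in\N$. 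Since $(1+|\xib|)^M\lesssim \sum_{l=1}^N(1+|\xi_l|^M)$, this gives the rapid-decay estimate
\[
(1+|\xib|)^M|\widehat f(\xib)|\le D_M \qquad \text{for every }M\ge 0,
\]
with $D_M$ depending only on the original constants $\{C_{\gammab_j}\}$.

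Next I would recover the mixed-derivative estimate by Fourier inversion. For any multi-index $\gammab=(\gammab_1,\dots,\gammab_n)\in\N^{C_1}\times\cdots\times\N^{C_n}$, differentiating under the integral sign gives
\[
\partial^{\gammab_1}_{\etab_1}\cdots\partial^{\gammab_n}_{\etab_n}f(\etab)=\int_{\R^N}(2\pi i\xib)^{\gammab}\,\widehat f(\xib)\,e^{2\pi i\langle\etab,\xib\rangle}\,d\xib,
\]
and the modulus is bounded by
\[
\int_{\R^N}(1+|\xib|)^{|\gammab|}|\widehat f(\xib)|\,d\xib\le D_{|\gammab|+N+1}\int_{\R^N}(1+|\xib|)^{-N-1}\,d\xib=:C'_{\gammab},
\]
which depends only on the pure-derivative constants through $D_{|\gammab|+N+1}$. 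This gives the desired uniform bound.

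No serious obstacle is expected here; the only point requiring a touch of care is handling the mixed polynomial $\xib^{\gammab}$ (a monomial in individual coordinates) by the isotropic bound $|\xib|^{|\gammab|}$, which is immediate, and the interchange of the $\max$ and $\sum$ that yields $(1+|\xib|)^M\lesssim \sum_l(1+|\xi_l|^M)$. Since the hypothesis controls arbitrary pure powers in each coordinate, we can take $M$ as large as needed to get integrability, so the proposition follows with explicit control of constants.
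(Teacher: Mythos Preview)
Your proof is correct and follows exactly the approach the paper indicates: the paper does not give a detailed argument but simply states that the result ``is easily proved by estimating the decay of the Fourier transform of~$f$,'' which is precisely what you do. Your write-up fills in the details cleanly.
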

\medskip

Next let $S=\{(I_{1}, k_{1});\ldots; (I_{s}, k_{s})\}\in \SS(n)$. For $\xib\in \B(1)^{c}\cap \widehat E_{S}^{A}$, we give a quantitative estimate of the size  of a ball about $\xib$ that we want contained in some larger region $\widehat E_{S}^{A'}$. Recall from Definition \ref{Def3.12iou} and equation (\ref{5.11aa}) that we defined a dilation structure $\lambda\,\widehat\cdot_{S}\,\xib_{I_{r}}=\big\{\lambda^{1/e(k_{r},j)}\cdot\xib_{j}:j\in I_{r}\big\}$ and a corresponding smooth homogeneous norm $\widehat n_{S,r}(\xib_{I_{r}})\approx \sum_{j\in I_{r}}n_{j}(\xib_{j})^{e(k_{r},j)}$ on $\R^{I_{r}}$. Let $C\geq 1$ be a constant such that, for all $1\leq r \leq s$, we have
\be\label{9.1aa}
\widehat n_{S,r}(\xib_{I_{r}}+\etab_{I_{r}}) \leq C\,\Big[\widehat n_{S,r}(\xib_{I_{r}})+\widehat n_{S,r}(\etab_{I_{r}})\Big].
\ee

\begin{proposition}\label{Prop9.2} 
Let $S=\{(I_{1}, k_{1});\ldots; (I_{s}, k_{s})\}\in \SS(n)$ and let $C$ be the constant from equation (\ref{9.1aa}). Let $\xib=(\xib_{1}, \ldots, \xib_{n})\in \B(1)^{c}\cap \widehat E_{S}^{A}$. If $\epsilon <(2C)^{-1}$, then 
\beas
\Big\{\etab\in\R^{N}:\widehat n_{S,r}(\xib_{I_{r}}-\etab_{I_{r}})=\sum_{j\in I_{r}}n_{j}(\etab_{j}-\xib_{j})^{e(k_{r},j)}<\epsilon\,n_{k_{r}}(\xib_{k_{r}}),\,\,1\leq r\leq s\Big\}\subset \widehat E_{S}^{A'}
\eeas 
where $A'$ is a constant that depends only on $A$ and $C$.
\end{proposition}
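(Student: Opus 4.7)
The goal is to verify that every $\etab$ in the indicated ball lies in some $\widehat E_S^{A'}$, i.e. that for each $r'\in\{1,\ldots,s\}$, each $j\in I_{r'}$, and each $l\neq k_{r'}$ one has
\[
n_l(\etab_l)^{1/e(l,j)} < \big[A'\, n_{k_{r'}}(\etab_{k_{r'}})\big]^{1/e(k_{r'},j)}.
\]
The plan is a quantitative perturbation argument driven by the quasi-triangle inequality \eqref{9.1aa} and the basic hypothesis \eqref{2.5}; throughout set $M_{r}:=n_{k_r}(\xib_{k_r})$ and recall from Remark \ref{Rem5.2}(4) that $M_r>1/A$ for every $r$.

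\textbf{Step 1 (comparability of $n_{k_{r'}}(\etab_{k_{r'}})$ and $M_{r'}$).} Since $k_{r'}\in I_{r'}$ and $e(k_{r'},k_{r'})=1$, the hypothesis on $\etab$ gives $n_{k_{r'}}(\etab_{k_{r'}}-\xib_{k_{r'}})\leq \widehat n_{S,r'}(\xib_{I_{r'}}-\etab_{I_{r'}})<\epsilon M_{r'}$. Applying \eqref{9.1aa} to the one-variable norm $n_{k_{r'}}$ and choosing $\epsilon<(2C)^{-1}$ yields $(2C)^{-1}M_{r'}\le n_{k_{r'}}(\etab_{k_{r'}})\le 2C\,M_{r'}$.

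\textbf{Step 2 (estimate of $n_l(\etab_l)^{1/e(l,j)}$ for $l\in I_{r'}$, $l\neq k_{r'}$).} From $\xib\in\widehat E_S^{A}$ we have $n_l(\xib_l)^{1/e(l,j)}<A^{1/e(k_{r'},j)} M_{r'}^{1/e(k_{r'},j)}$, while the hypothesis on $\etab$ gives $n_l(\etab_l-\xib_l)<(\epsilon M_{r'})^{1/e(k_{r'},l)}$. Raising to the power $1/e(l,j)$ and invoking the basic hypothesis $1/(e(l,j)e(k_{r'},l))\le 1/e(k_{r'},j)$, the perturbation term is bounded by $\epsilon^{1/(e(l,j)e(k_{r'},l))} M_{r'}^{1/(e(l,j)e(k_{r'},l))}$. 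Using $M_{r'}>1/A$ to absorb the (non-positive) excess exponent of $M_{r'}$ into a power of $A$, one obtains $n_l(\etab_l-\xib_l)^{1/e(l,j)}\le C_1(A,C,\epsilon)\,M_{r'}^{1/e(k_{r'},j)}$. A quasi-triangle inequality for $n_l$ followed by a quasi-triangle for the power $1/e(l,j)$ combines the two estimates into $n_l(\etab_l)^{1/e(l,j)}\le C_2(A,C,\epsilon)\,M_{r'}^{1/e(k_{r'},j)}$.

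\textbf{Step 3 (estimate for $l\notin I_{r'}$).} Here $l\in I_p$ for some $p\neq r'$, and the hypothesis on $\etab$ controls $n_l(\etab_l-\xib_l)$ only in terms of $M_p$, not $M_{r'}$. The dominance $\xib_{k_{r'}}\succ\xib_{k_p}$ in $\widehat N_j(\xib)$ yields $M_p<(AM_{r'})^{e(k_p,j)/e(k_{r'},j)}$, hence
\[
n_l(\etab_l-\xib_l)^{1/e(l,j)} < \epsilon^{1/(e(l,j)e(k_p,l))}\, A^{e(k_p,j)/(e(l,j)e(k_{r'},j)e(k_p,l))}\, M_{r'}^{e(k_p,j)/(e(l,j)e(k_{r'},j)e(k_p,l))}.
\]
The basic hypothesis $e(k_p,j)\le e(k_p,l)e(l,j)$ forces the exponent of $M_{r'}$ to be $\le 1/e(k_{r'},j)$; using $M_{r'}>1/A$ once more to absorb the difference yields the same type of estimate $n_l(\etab_l-\xib_l)^{1/e(l,j)}\le C_3(A,C,\epsilon) M_{r'}^{1/e(k_{r'},j)}$, while the dominance for $\xib$ gives the matching bound for $n_l(\xib_l)^{1/e(l,j)}$.

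\textbf{Step 4 (conclusion).} Steps 2 and 3 show $n_l(\etab_l)^{1/e(l,j)}\le C_4(A,C,\epsilon)\,M_{r'}^{1/e(k_{r'},j)}$ for all relevant $r',j,l$; by Step 1 we may replace $M_{r'}$ by $n_{k_{r'}}(\etab_{k_{r'}})$ at the cost of another factor depending only on $C$. Since there are only finitely many indices $(r',j,l)$, taking $A'$ to be the supremum of $C_4(A,C,\epsilon)^{e(k_{r'},j)}$ over this finite set produces a constant $A'$ depending only on $A$ and $C$ for which the required strict inequalities hold, i.e. $\etab\in\widehat E_S^{A'}$.

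The only genuine obstacle is the exponent bookkeeping in Steps 2 and 3: one must consistently use the basic hypothesis $e(j,k)\le e(j,l)e(l,k)$ to show the perturbation scales like $M_{r'}^{1/e(k_{r'},j)}$, and the lower bound $M_{r'}>1/A$ to absorb the negative powers of $M_{r'}$ that arise when the exponent inequality is strict. Once this is in place, the rest is mechanical.
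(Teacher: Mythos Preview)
Your proof is correct and follows the same perturbation idea as the paper: establish comparability $n_{k_r}(\etab_{k_r})\approx n_{k_r}(\xib_{k_r})$ from the quasi-triangle inequality and $\epsilon<(2C)^{-1}$, then transfer the $\widehat E_S^A$ inequalities from $\xib$ to $\etab$.

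The one organizational difference is that you verify the full Definition~\ref{Def5.4} directly, checking $n_l(\etab_l)^{1/e(l,j)}<[A'\,n_{k_{r'}}(\etab_{k_{r'}})]^{1/e(k_{r'},j)}$ for \emph{every} triple $(r',j,l)$; this forces your Step~3 (the case $l\in I_p$, $p\neq r'$) and the accompanying exponent bookkeeping with $e(k_p,j)\leq e(k_p,l)e(l,j)$ and $M_{r'}>1/A$. The paper instead invokes the reduced characterization of $\widehat E_S^{A}$ from Lemma~\ref{Thm5.6}\,(\ref{Thm5.6B}): it suffices to check only the within-block conditions $n_j(\etab_j)^{e(k_r,j)}\lesssim n_{k_r}(\etab_{k_r})$ for $j\in I_r$ and the marked-index conditions $n_{k_t}(\etab_{k_t})^{1/\tau_S(k_t,k_r)}\lesssim n_{k_r}(\etab_{k_r})$. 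The second of these follows almost immediately from the first (applied with $r$ replaced by $t$) together with the comparison of $n_{k_t}(\xib_{k_t})$ and $n_{k_r}(\xib_{k_r})$ coming from $\xib\in\widehat E_S^A$. This eliminates your Step~3 entirely and most of the exponent juggling. Your route is self-contained; the paper's is shorter because the reduction to marked indices was already done in Lemma~\ref{Thm5.6}.
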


\begin{proof}
Let $\xib\in \widehat E_{S}^{A}$. Recall from part (\ref{Thm5.6B}) of Lemma \ref{Thm5.6} that 
\beas
n_{k_{r}}(\xib_{k_{r}})&>A^{-1}n_{j}(\xib_{j})^{e(k_{r},j)}& &\forall j\in I_{r}, j\neq k_{r},\quad\text{and}\\
n_{k_{r}}(\xib_{k_{r}})&>A^{-1}n_{k_{t}}(\xib_{k_{t}})^{1/\tau_{S}(k_{r},k_{t})} & &\forall t\in \{1, \ldots, s\}, \,\,t\neq r.
\eeas
Conversely, if $\etab\in \B(1)^{c}$ satisfies 
\beas
n_{k_{r}}(\etab_{k_{r}})&>A^{-1}n_{j}(\etab_{j})^{e(k_{r},j)}& &\forall j\in I_{r}, j\neq k_{r},\quad\text{and}\\
n_{k_{r}}(\etab_{k_{r}})&>A^{-1}n_{k_{t}}(\etab_{k_{t}})^{1/\tau_{S}(k_{r},k_{t})} & &\forall t\in \{1, \ldots, s\}, \,\,t\neq r.
\eeas
then $\etab\in E_{S}^{A^{\tau}}$ where $\tau$ is a constant that depends only on the coefficients $\{e(j,k)\}$. 

\smallskip

If $n_{j}(\etab_{j}-\xib_{j})^{e(k_{r},j)}<\epsilon n_{k_{r}}(\xib_{k_{r}})$ for $1 \leq r \leq s$ and all $j \in I_{R}$, and if $C\epsilon <\frac{1}{2}$, then $n_{k_{r}}(\xib_{k_{r}})\leq C\big[n_{k_{r}}(\xib_{k_{r}}-\etab_{k_{r}})+n_{k_{r}}(\etab_{k_{r}})\big]\leq C\big[\epsilon n_{k_{r}}(\xib_{k_{r}}) +n_{k_{r}}(\etab_{k_{r}})\big]<\frac{1}{2}\,n_{k_{r}}(\xib_{k_{r}})+Cn_{k_{r}}(\etab_{k_{r}})$ so that for $1\leq r \leq s$,
\be\label{9.2aa}
n_{k_{r}}(\xib_{k_{r}}) <2Cn_{k_{r}}(\etab_{k_{r}}).
\ee
Next, if $j\in I_{r}$, 
\beas
n_{j}(\etab_{j}) \leq C\big[n_{j}(\etab_{j}-\xib_{j})+n_{j}(\xib_{j})\big]< C\big[\big(\epsilon n_{k}(\xib_{k_{r}})\big)^{1/e(k_{r},j)}+\big(An_{k_{r}}(\xib_{k_{r}})\big)^{1/e(k_{r},j)}\big],
\eeas
and it follows from (\ref{9.2aa}) that
\be\label{9.3aa}
n_{j}(\etab_{j})\leq 
C'\,n_{k_{r}}(\etab_{k_{r}})^{1/e(k_{r},j)}
\ee
where $C'$ depends on $C$, $\epsilon$, and $A$. Finally, if $1 \leq t \leq s$ with $t\neq r$, it follows from (\ref{9.3aa}) that $n_{k_{t}}(\etab_{k_{t}}) \leq C' n_{k_{t}}(\xib_{k_{t}})^{1/e(k_{t},k_{t})}$, and so
\bea\label{9.4aa}
n_{k_{t}}(\etab_{k_{t}}) 
\leq C'\big(An_{k_{r}}(\xib_{k_{r}})\big)^{\tau_{S}(k_{r},k_{t})}\leq C''\,n_{k_{r}}(\etab_{k_{r}})^{\tau_{S}(k_{r},k_{t})}
\eea
where $C''$ depends on $C$, $\epsilon$, and $A$. It now follows from the inequalities in (\ref{9.3aa}) and (\ref{9.4aa}) that $\etab\in \widehat E_{S}^{A'}$, and this completes the proof.
\end{proof}

\begin{proof}[We now prove Lemma \ref{Lem9.3}]\quad

\medskip
%
%
Let $\xib\in\B(1)^{c}$. Then if $A>1$ there exists $S=\big((I_{1},k_{1});\ldots,;(I_{S},k_{s})\big)\in \SS(n)$, so that $\xib\in \widehat E_{S}^{A}$. We have $\widehat N_{j}(\xib)\approx n_{k_{r}}(\xib_{k_{r}})^{1/e(k_{r},j)}$ for every $r\in\{1, \ldots, s\}$ and every $j\in I_{r}$. (Here the implied constants depend on $A$.) Thus to establish the Lemma we need to show that for every $\gammab=(\gammab_{1}, \ldots, \gammab_{n})\in \N^{C_{1}}\times \cdots \times \N^{C_{n}}$ there is a constant $C_{\gammab}>0$ so that
\beas
\big|\partial^{\gammab_{1}}_{\xib_{1}}\cdots\partial^{\gammab_{n}}_{\xib_{n}}m(\xib)\big| \leq C_{\gammab}\, \prod_{r=1}^{s} n_{k_{r}}(\xib_{k_{r}})^{-\sum_{j\in I_{r}}\[\gammab_{j}\]/e(k_{r},j)}.
\eeas
According to Proposition \ref{Prop9.2}, there exists $\epsilon>0$ and a constant $A'$ depending only on $A$ so that
\beas
\Big\{\etab\in\R^{N}:\text{$n_{j}(\etab_{j}-\xib_{j})^{e(k_{r},j)}<\epsilon\, n_{k_{r}}(\xib_{k_{r}})$ for $1\leq r \leq s$ and $j \in I_{r}$}\Big\}\subset \widehat E_{S}^{A'}.
\eeas
For $1 \leq r \leq s$ let $\varphi_{r}\in \CC^{\infty}_{0}(\R^{I_{r}})$ be supported where $n_{j}(\etab_{j})\leq 1$ for all $j\in I_{r}$, with $\widehat n_{S,r}(\etab_{I_{r}})\equiv 1$ if $n_{j}(\etab_{j})\leq \frac{1}{2}$ for all $j\in I_{r}$. Put
\beas
f(\etab) 
&= 
\Big(\prod_{r=1}^{s}\varphi_{r}(\etab_{I_{r}})\Big)\,
m\Big(\xib_{I_{1}}+[\epsilon\, n_{k_{1}}(\xib_{k_{1}})]\,\widehat{\cdot}_{S}\,\etab_{I_{1}}, \ldots, 
\xib_{I_{S}}+[\epsilon\, n_{k_{s}}(\xib_{k_{s}})]\,\widehat{\cdot}_{S}\,\etab_{I_{s}}\Big).
\eeas
Here, the function $m$ is evaluated at the point whose $j^{th}$-coordinate is 
\bes
\v_{j}=\xib_{j}+ [\epsilon n_{k_{r}}(\xib_{k_{r}})]^{1/e(k_{r},j)}\cdot\etab_{j}
\ees 
when $j\in I_{r}$. Clearly $f\in\CC^{\infty}(\R^{N})$ and is supported where $n_{j}(\etab_{j})\leq 1$ for all $1 \leq j \leq n$. Moreover
\beas
f(\etab) 
&= 
m\Big(\xib_{I_{1}}+[\epsilon\, n_{k_{1}}(\xib_{k_{1}})]\,\widehat{\cdot}_{S}\,\etab_{I_{1}}, \ldots, 
\xib_{I_{S}}+[\epsilon\, n_{k_{s}}(\xib_{k_{s}})]\,\widehat{\cdot}_{S}\,\etab_{I_{s}}\Big)
\eeas
if $n_{j}(\etab_{j}) \leq \frac{1}{2}$ for all $j$. If $f(\etab)\neq 0$, so that $n_{j}(\etab_{j})\leq 1$, then
\beas
n_{j}(\v_{j}-\xib_{j}) &= n_{j}\Big(n_{k_{r}}([\epsilon\,\xib_{k_{r}})]^{1/e(k_{r},j)}\cdot\etab_{j}\Big)\\
&= 
[\epsilon\,n_{k_{r}}(\xib_{k_{r}})]^{1/e(k_{r},j)}\,n_{j} (\etab_{j})\leq[\epsilon\,n_{k_{r}}(\xib_{k_{r}})]^{1/e(k_{r},j)},
\eeas
and it follows that $m$ is evaluated at a point of $\widehat E_{S}^{A'}$.

Consider `pure' derivatives $\partial^{\gammab_{j}}_{\etab_{j}}f$. Some of the derivatives will fall on $\big(\prod_{r=1}^{s}\varphi_{r}(\etab_{I_{r}})\big)$ and the rest fall on the term involving $m$. Derivatives of $\big(\prod_{r=1}^{s}\varphi_{r}(\etab_{I_{r}})\big)$ are bounded by constants independent of $m$. On the other hand, if $\gammab_{j}\in \N^{(I_{r})}$, using the chain rule, the hypothesis about pure derivatives of $m$, and the fact that we are evaluating $m$ at points in $\widehat E_{S}^{A'}$, we get
\beas
&\partial^{\gammab_{j}}_{\etab_{j}}
\Bigg[
m\Big(\xib_{I_{1}}+[\epsilon\, n_{k_{1}}(\xib_{k_{1}})]\,\widehat{\cdot}_{S}\,\etab_{I_{1}}, \ldots, 
\xib_{I_{S}}+[\epsilon\, n_{k_{s}}(\xib_{k_{s}})]\,\widehat{\cdot}_{S}\,\etab_{I_{s}}\Big)
\Bigg]\\
&=
\Big[\partial^{\gammab_{j}}_{\etab_{j}}m\Big]\Big(\xib_{I_{1}}+[\epsilon\, n_{k_{1}}(\xib_{k_{1}})]\,\widehat{\cdot}_{S}\,\etab_{I_{1}}, \ldots, 
\xib_{I_{S}}+[\epsilon\, n_{k_{s}}(\xib_{k_{s}})]\,\widehat{\cdot}_{S}\,\etab_{I_{s}}\Big) \prod_{j\in I_{r}}\big[\epsilon\,n_{k_{r}}(\xib_{k_{r}})\big]^{\[\gammab_{j}\]/e(k_{r},j)}\\
&\leq
C_{\gammab_{j}}\prod_{j\in I_{r}}n_{k_{r}}(\xib_{k_{r}})^{-\[\gammab_{j}\]/e(k_{r},j)}\,\prod_{j\in I_{r}}\big[\epsilon\,n_{k_{r}}(\xib_{k_{r}})\big]^{\[\gammab_{j}\]/e(k_{r},j)}
\leq C_{\gammab_{j}}.
\eeas
Thus $f\in\C^{\infty}(\R^{N})$ is supported in the unit ball, and pure derivatives are bounded. It follows from Proposition \ref{Prop9.1} that all derivatives are bounded by constants depending only on the $\{C_{\gammab_{j}}\}$. But for $n_{j}(\etab_{j})\leq \frac{1}{2}$, we have
\beas
\partial^{\gammab_{1}}_{\etab_{1}}\cdots \partial^{\gammab_{n}}_{\etab_{n}}f(\etab)
=
[\partial^{\gammab_{1}}_{\etab_{1}}\cdots \partial^{\gammab_{n}}_{\etab_{n}}m](\v_{1}, \ldots, \v_{n})
 \prod_{r=1}^{s} n_{k_{r}}(\xib_{k_{r}})^{\sum_{j\in I_{r}}\[\gammab_{j}\]/e(k_{r},j)}.
\eeas
where $\v_{j}= \xib_{j}+ [\epsilon n_{k_{r}}(\xib_{k_{r}})]^{1/e(k_{r},j)}\cdot\etab_{j}$. When $\etab=0$, the uniform bounds on $f$ give the desired estimates for the derivatives of $m$, and this completes the proof.
\end{proof}


\section{Extended kernels and operators}\label{6:kernels}
\medskip

Operators of the form $T_{\KK}f=f*\KK$ with $\KK\in \PP_0(\EEE)$ are \textquotedblleft constant coefficient\textquotedblright operators. In this section we consider a more general class of operators which have \textquotedblleft variable coefficients\textquotedblright. Identify the space $\bR^N$ with a nilpotent Lie group $G\cong\R^N$ as in Section \ref{Groups}, and consider kernels of the form $K(\x, \z)$, with $(\mbx, \mbz) \in G\times G$, so that for each $\mbx \in G$, $K (\mbx, \cdot )\in \PP(\EEE)$, with smooth dependence on $\mbx$.    More precisely, assume that for each norm $|\,\cdot\,|_{M}$ defined in (\ref{2.8iou}) and each~$\beta$,
\begin{equation}\label{beta-norms}
 \sup_\mbx\limits \|  X^\beta \KK(\mbx, \cdot ) \|_M< \infty\ ,
\end{equation} 
where $X^\beta$ is a monomial in the left-invariant vector fields of $G$ acting on the $\x$-variable. We also assume (unless $G=\R^N$ as an additive group) the hypotheses of Theorems \ref{Thm9.8} and \ref{Thm10.1} to guarantee that proper kernels define bounded convolution on $L^p$, $1<p<\infty$ and form a class closed under convolution.

Kernels of this kind will be said to belong to the ``extended class'', to distinguish them from  the ``proper'' kernels in $\PP_0(\EEE)$  which are independent of $\mbx$. Operators corresponding to these extended kernels are given formally by
\begin{equation}\label{6.2EQ}
   Tf(\mbx) = \int_{G} K(\mbx,\mby^{-1}\mbx) f(\mby)d\mby;
\end{equation}
that is, if $f\in \SS(\R^{N})$, $Tf(\mbx) = \langle K(\mbx, \cdot ), f_\mbx \rangle$, with  $f_\mbx(\mby) = f(\mbx \mby^{-1})$. 

Note that the extended class of operators is left-invariant in the following sense. If $\a\in G$ set $L_{\a}f(\x)= f(\a^{-1}\x)$. Then  $L^{-1}_\a TL_\a$ is an operator again of the form \eqref{6.2EQ}, with $K(\mbx,\mbz)$ replaced by $K(\a \mbx,\mbz)$. Observe that the semi-norms  \eqref{beta-norms} of $K(\mbx,\mbz)$ and $K(\a\mbx, \mbz)$ are the same.
\medskip

Our main results concerning this extended class of kernels and operators are as follows.

\begin{theorem}\label{the:6.1}
   Each operator $T$ given by ~(\ref{6.2EQ}) is bounded on $L^p(\mathbb{R}^N )$, \ for $1 < p < \infty$.
\end{theorem}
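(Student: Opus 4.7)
The plan is to combine the dyadic decomposition of proper kernels from Section \ref{Decompositions} with an almost-orthogonality argument, exploiting the crucial extra hypothesis (\ref{beta-norms}) that every seminorm of $X^\beta\KK(\x,\cdot)$ in $\PP_0(\EEE)$ is uniformly bounded in $\x$ for every monomial $X^\beta$ in the left-invariant vector fields of $G$. First I would apply Corollary \ref{Cor8.8} pointwise in $\x$ to write
$$K(\x, \z) = \psi_0(\x, \z) + \sum_{S \in \SS'(\EEE)} \sum_{J \in \Gamma_\Z(\EEE_S)} [\varphi_S^J(\x, \cdot)]_J(\z),$$
where by (\ref{beta-norms}) the family $\{\varphi_S^J(\x,\cdot)\}_{\x,J}$ is uniformly bounded in $\CC^\infty_0(\B(1))$ with the cancellation in the marked variables $\x_{k_1},\ldots,\x_{k_s}$ of $S$, and moreover $X^\beta \varphi_S^J(\x,\cdot)$ is again uniformly bounded (and has the same cancellations) for every $\beta$. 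This decomposes $T = T_{\psi_0} + \sum_{S}\sum_J T_{S,J}$, with $T_{\psi_0}$ manifestly bounded on every $L^p$.

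Next I would establish $L^2$-boundedness of the sum $\sum_{J\in\Gamma_\Z(\EEE_S)} T_{S,J}$ by a Cotlar--Stein argument: estimate $\|T_{S,J}^*T_{S,J'}\|_{L^2\to L^2}$ and $\|T_{S,J}T_{S,J'}^*\|_{L^2\to L^2}$ and show they decay as $2^{-\epsilon|J-J'|}$ for some $\epsilon>0$. The composition kernel factors, after a change of variables, into an integral of products of the $[\varphi_S^J(\x,\cdot)]_J$'s with $\x$-dependent arguments, and the almost-orthogonality gain is obtained essentially as in Lemma \ref{Lem10.13bb}, by moving left- or right-invariant vector fields across the composition and using the cancellation of the bump of smaller scale to integrate against the derivative of the bump of larger scale. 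The new feature in the variable-coefficient setting is that when a vector field $X$ is shifted, it may land on the $\x$-parameter; (\ref{beta-norms}) then ensures the resulting term is again of the same form with a uniformly bounded factor, so no additional terms are produced.

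To pass from $L^2$ to $L^p$ for $1<p<\infty$, I would argue that the Schwartz kernel $\widetilde K(\x,\y):=K(\x,\y^{-1}\x)$ of $T$ is a variable-coefficient automorphic flag kernel on the standard flag of $G$. Indeed, Proposition \ref{Prop9.7} together with (\ref{beta-norms}) shows that for each fixed $\x$, $\y\mapsto K(\x,\y^{-1}\x)$ satisfies the flag-kernel differential inequalities on the standard flag with constants uniform in $\x$, and the $\x$-smoothness passes through the left-translation $\y\mapsto\y^{-1}\x$ thanks to the uniform bounds on $X^\beta K$. Combined with the $L^2$-bound of the previous step, the variable-coefficient version of the flag-kernel $L^p$-boundedness theorem, Theorem 8.14 of \cite{MR2949616}, yields boundedness on $L^p$ for $1<p<\infty$.

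The main obstacle is the almost-orthogonality step: in the non-abelian setting the compositions $T_{S,J}^*T_{S,J'}$ are not pure group convolutions, and one must track simultaneously the cancellation in the group variable $\z$ (which delivers the standard exponential gain of Lemma \ref{Lem10.13bb}) and the correction terms arising from the $\x$-dependence. The key technical point will be a bookkeeping argument showing that each time a vector field is transferred between factors, either the cancellation of some $\varphi_S^J$ gives the required $2^{-\epsilon|J-J'|}$-gain, or the derivative lands on the parameter $\x$, where the uniform estimate (\ref{beta-norms}) keeps the error under control without destroying the dyadic structure.
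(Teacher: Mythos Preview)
Your approach is quite different from the paper's and contains a genuine gap. The paper does \emph{not} redo any $L^p$ theory in the variable-coefficient setting; instead it reduces directly to the constant-coefficient Theorem~\ref{Thm9.8} by a Fourier-series expansion in the parameter $\x$. After cutting off in $\z$ (the piece supported away from $\z=0$ is an $L^1$ convolution, bounded by Minkowski) and localizing in $\x$ to a small ball $B_\lambda$ via a partition of unity, one periodizes and expands
\[
K(\x,\z)=\sum_n a_n(\x)\,\KK_n(\z),
\]
where the $a_n\in C^\infty_0$ are rapidly decreasing in $n$ (this is where hypothesis (\ref{beta-norms}) enters, through integration by parts in the Fourier coefficients) and the $\KK_n\in\PP_0(\EEE)$ have uniformly bounded seminorms. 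Each summand gives $f\mapsto a_n(\x)\,(f*\KK_n)(\x)$, bounded on $L^p$ by the constant-coefficient result, and the rapid decay sums the series. An almost-disjoint covering argument (Lemma~\ref{LEM:6.3}) together with left-invariance removes the $\x$-support restriction. No Cotlar--Stein estimate and no variable-coefficient flag-kernel theory is invoked.

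The gap in your proposal is the passage from $L^2$ to $L^p$: Theorem~8.14 of \cite{MR2949616} is stated and proved only for \emph{convolution} operators with flag kernels, and there is no ``variable-coefficient version'' in that paper to cite. Proving such a version would itself require either the Fourier-series reduction above or a separate multi-parameter $T(1)$/Calder\'on--Zygmund theory that is at least as hard as what you are trying to establish. A secondary issue is your first step: applying Corollary~\ref{Cor8.8} ``pointwise in $\x$'' and asserting that the resulting bumps $\varphi_S^J(\x,\cdot)$ are smooth in $\x$ with uniform $\CC^\infty_0$ bounds requires tracking parameter-dependence through the entire Fourier-side construction of Sections~\ref{Duality}--\ref{Decompositions}; this is plausible but not automatic, and is completely bypassed by the paper's Fourier-series trick.
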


\begin{theorem}\label{the:6.2}
   The class of these operators form an algebra under addition and composition of operators.
\end{theorem}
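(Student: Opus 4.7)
Closure under addition is immediate: the defining differential inequalities for each $\KK(\x,\cdot)\in\PP_0(\EEE)$ and the semi-norms \eqref{beta-norms} are linear. The substance of the theorem is closure under composition.

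Given $T_1,T_2$ of the form \eqref{6.2EQ} with extended kernels $K_1,K_2$, a direct change of variables $\u=\z^{-1}\x$ (together with the unimodularity of $G$) identifies
\begin{equation*}
T_2T_1f(\x)=\int_G K(\x,\y^{-1}\x)f(\y)\,d\y,\qquad K(\x,\w)=\int_G K_2(\x,\u)\,K_1(\x\u^{-1},\w\u^{-1})\,d\u.
\end{equation*}
The plan is to split this composed kernel into a principal term in which the first slot of $K_1$ is frozen at $\x$, plus a correction coming from the smoothness of $K_1$ in its first variable. The further substitution $\u\mapsto \y^{-1}\w$ converts the frozen contribution to $(K_1(\x,\cdot)*K_2(\x,\cdot))(\w)$, where $*$ is convolution on $G$. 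For every fixed $\x$ this convolution lies in $\PP_0(\EEE)$ by Theorem \ref{Thm10.1}, with semi-norms controlled linearly by those of $K_1(\x,\cdot)$ and $K_2(\x,\cdot)$; the smooth dependence on $\x$ is inherited term-by-term from the hypothesis \eqref{beta-norms}.

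To treat the correction, I would Taylor expand along the first variable using the group-adapted identity
\begin{equation*}
K_1(\x\u^{-1},\w\u^{-1})=\sum_{[\![\alphab]\!]<M}\frac{1}{\alphab!}\,p_{\alphab}(\u)\,[X^{\alphab}K_1](\x,\w\u^{-1})+R_M(\x,\u,\w),
\end{equation*}
where $X^{\alphab}$ are left-invariant monomials acting on the first variable and the $p_{\alphab}$ are polynomials in $\u$ that are homogeneous of degree $[\![\alphab]\!]$ under the automorphic dilations (arising from the Baker--Campbell--Hausdorff formula). Each intermediate term, after the same substitution as before, becomes a convolution in $\w$ of the weighted kernel $p_{\alphab}(\u)K_2(\x,\u)$ with the proper kernel $X^{\alphab}K_1(\x,\cdot)\in\PP_0(\EEE)$. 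On the Fourier side, multiplication by $p_{\alphab}$ corresponds to applying the constant-coefficient differential operator $P_{\alphab}(\D_{\xib})$ to $\widehat{K_2}(\x,\xib)\in\MM_\infty(\EEE)$, and the differential inequalities in Definition \ref{Def2.5}, together with Theorem \ref{Thm6.2}, confirm that $p_{\alphab}(\u)K_2(\x,\u)$ again belongs to $\PP_0(\EEE)$ (indeed to a finer subclass because homogeneity $[\![\alphab]\!]$ has been absorbed). Applying Theorem \ref{Thm10.1} once more, every Taylor term contributes to $K(\x,\cdot)$ an element of $\PP_0(\EEE)$ with semi-norm bounds uniform in $\x$ and smooth dependence on $\x$.

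The main obstacle is the Taylor remainder $R_M$, which is not a convolution of elements of $\PP_0(\EEE)$ and must be treated separately. The standard device is to choose $M$ so large that the combined decay $|p_M(\u)|\,|K_2(\x,\u)|$ together with the uniform smoothness of $K_1$ in its first argument makes the residual operator have a kernel with arbitrarily many extra derivatives and arbitrarily rapid decay; such residual operators belong trivially to the extended class (in fact, their kernels can be shown to lie in $\SS(\R^N)$ in the second variable, with smooth dependence on the first). Finally, the $\x$-smoothness estimates \eqref{beta-norms} for $K(\x,\cdot)$ follow by differentiating the explicit formula above and applying the Leibniz rule, using the left-invariance noted after \eqref{6.2EQ} (so that $X^{\betab}_{\x}$ acting on the composed kernel is controlled by $X^{\betab}_{\x}$ derivatives of $K_1$ and $K_2$). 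Combining the principal term, the finitely many Taylor corrections, and the remainder proves that $K$ is an extended kernel and hence $T_2T_1$ belongs to the class.
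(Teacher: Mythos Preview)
Your freeze-and-correct strategy is sound up to the Taylor remainder, but the treatment of $R_M$ is a genuine gap. The remainder still carries $[X^{\alphab}K_1]$ evaluated at an intermediate point $\gamma_t(\x,\u)$ in the first slot and at $\w\u^{-1}$ in the second. The polynomial gain $p_{\alphab}(\u)$ with $[\![\alphab]\!]=M$ helps integrability of $K_2(\x,\u)$ near $\u=0$, but it does nothing to remove the second-variable singularity of $K_1$ at $\w\u^{-1}=0$; so the assertion that the residual kernel lies in $\SS(\R^N)$ in $\w$ is false. Moreover, because the evaluation point $\gamma_t(\x,\u)$ depends on $\u$, the remainder integral is \emph{not} a convolution of two elements of $\PP_0(\EEE)$, and Theorem~\ref{Thm10.1} cannot be invoked for it. Iterating the expansion does not help for the same reason.

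The paper sidesteps any remainder by first reducing to kernels in product form. After localizing in $\x$ by a partition of unity into balls $B_\lambda(\x_m)$ (so only boundedly many cross-terms survive), it expands $\KK(\x,\z)$ as a Fourier series in $\x$, obtaining an \emph{exact} convergent decomposition $\KK(\x,\z)=\sum_{n,m} a_{n,m}(\x)\KK_{n,m}(\z)$ with rapidly decreasing coefficients and uniformly bounded proper kernels. This reduces the composition to products $T=a\circ U$, $S=b\circ V$ with $U(f)=f*\KK$, $V(f)=f*\LL$, and the identity $aUbV = ab\,UV + a[b,U]V$ is exact. The term $ab\,UV$ is handled by Theorem~\ref{Thm10.1}; for the commutator, the kernel of $[b,U]$ is $c(\x,\z)\KK(\z)$ with $c(\x,\z)=b(\x)-b(\x\z^{-1})$ smooth and compactly supported in $\z$, and Lemma~\ref{LEM:6.4} shows this is again in $\PP_0(\EEE)$. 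Your Taylor terms for $|\alphab|\ge1$ are essentially the graded pieces of this $c(\x,\z)\KK(\z)$ in the product-form case; the Fourier-series step is precisely what makes the product form available and eliminates the remainder.
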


\subsection{The $L^p$ boundedness of the operators}\label{sec:6.1}

Recall that if $\delta_{r}:G\to G$ is the one-parameter group of automorphic dilations of the group $G$, the coordinates on $\R^{N}$ are chosen so that $\delta_{r}(x_{1}, \ldots, x_{N})=(r^{d_{1}}x_{1}, \ldots, r^{d_{N}}x_{N})$ with $d_{1}\leq d_{2}\leq \cdots\leq d_{N}$. Let $|\,\cdot\,|_{G}$ be a smooth homogeneous norm,  so that  $|\delta_{r}\x|_{G}= r\,|\x|_{G}$. The homogeneous dimension of $G$ is $Q=\sum_{j=1}^{N}d_{j}$, and $d(\x,\y)=|\y^{-1}\x|_{G}$ is a left-invariant quasi-distance. Let $B_{r}(\x) =\big\{\y\in G:d(\x,\y)<r\big\}$. Then $\big|B_{r}(\x)\big|=\big|B_{r}(0)\big| =r^{Q}\big|B_{1}(0)\big|$. We will write $B_{r}(0)=B_{r}$.

In proving Theorem \ref{the:6.1}, we make a preliminary reduction to kernels $\KK(\mbx, \mbz)$ that are supported when $\mbz \in B_1$. In fact let $\eta$ be a $C^{\infty}$ function supported in $B_1$ which equals 1 when $\mbz \in B_{1/2}$. Then writing $\KK(\mbx, \mbz) = \KK_0 (\mbx, \mbz) + \KK_\infty (\mbx, \mbz) = \KK(\mbx,  \mbz)\eta (\mbz) + \KK(\mbx, \mbz)\big(1- \eta (\mbz)\big)$, we see that $K_\infty (\mbx, \mbz)$ is bounded and rapidly decreasing as $| \mbz | \to \infty$. Now the operator $T_\infty$, that is \eqref{6.2EQ} with $\KK_\infty$ in place at $\KK$, can be written as $T_\infty (f)(\mbx) = \int K_\infty (\mbx,\mby)f(\mbx \mby^{-1})d\mby$, \ and hence, by Minkowski's inequality, is bounded on $L^p$, for all $1 \leq p \leq \infty$.

Thus suppose  $\KK(\mbx, \mbz)$ is supported when $\mbz \in B_1$. We  make a further assumption that $\KK(\mbx, \mbz)$ is supported when $\mbx \in B_\lambda$, where $\lambda$ is so small so that $B_{2 \lambda}$ is strictly contained in the torus $\mathbb{T} = \left\{  \mbx : |x_j | < \frac{1}{2} , \ \ j=1 ,\dotsc, N    \right\}$. (Later we will lift this hypothesis.) In this case we can extend $\KK$ outside of $\T$ by periodicity, and expand $\KK(\x,\z)$ as Fourier series in the $\mbx$-variable, 
\begin{equation*}
   \KK(\mbx, \mbz) = \sum_{\mbk \in \mathbb{Z}^N}\limits e^{2 \pi i\mbk\cdot\mbx}  \HH_\mbk (\mbz)\ ,
\end{equation*}
   with
\begin{equation}\label{fourier}
   \HH_\mbk (\mbz) = \frac{1}{(2 \pi )^N} \int_{\mathbb{T}}\limits e^{-2 \pi i\mbk\cdot\mbx} K(\mbx, \mbz)d\mbx\ .
\end{equation}

Each kernel $\HH_\mbk(\mbz)$ can be written as $\al_\mbk \KK_\mbk(\mbz)$ in such a way that the $\al_\mbk$ are rapidly decreasing coefficients and the $\KK_\mbk$ are  proper kernels, supported for $\mbz \in B_1$, whose semi-norms $\| \KK_\mbk \|_M$ are uniformly bounded in $\mbk$ for every $M$.
This can be done via the following diagonal argument. Integrating by parts in \eqref{fourier}, we obtain that, for every integer $q$,
$$
\|\HH_\mbk\|_M\le \frac{C_q}{|\mbk|^q}\|\KK\|_{q,M}\ ,
$$
where $\|\KK\|_{q,M}$ denotes the supremum of the semi-norms implied by \eqref{beta-norms} over $|\beta|\le q$. Then, given $q$, we inductively choose increasing integers $m_q$ such that
$$
\frac{C_q}{m_q^q}\|\KK\|_{q,q}<\frac1{m_q^{q/2}}\ .
$$
For $m_q\le |\mbk|<m_{q+1}$, we set 
$$
\al_\mbk=\frac{C_q}{|\mbk|^q}\|\KK\|_{q,q}\ .
$$
Then $\al_\mbk<|\mbk|^{-q/2}$, showing that the $\al_\mbk$ are rapidly decreasing, and the kernels $\KK_\mbk=\al_\mbk\inv \HH_\mbk$ are uniformly bounded in every $M$-norm (we remark that the $M$-norms are increasing in $M$). Upon relabeling the series and with a slight abuse of notation, this gives
\begin{equation}\label{EQ:6.3}
   \KK(\mbx, \mbz) = \sum^\infty_{n=1}\limits a_n (\mbx) \KK_n (\mbz)\ .
\end{equation}
Here $a_n(\mbx) = \eta (\mbx)\al_{\mbk}e^{2 \pi i\mbk\cdot\mbx}$, $\KK_n(\mbz) = \KK_\mbk(\mbz)$ (for a $\mbk$ that depends on $n$); also $\eta$ is a suitable $C^\infty$ cut-off function supported in $\mbx \in B_\lambda$.  With this choice,  the $a_n (\mbx)$ are a rapidly decreasing sequence of $C^\infty$ functions, each supported in $B_\lambda$.
Clearly now the operator $T$ whose kernel is given by (\ref{EQ:6.3}) is bounded on $L^p$, in view of Theorem \ref{Thm9.8}.

Now, with the above $\lambda$ fixed, we will drop the restriction that $\KK(\mbx, \mbz)$ is supported for $\mbx \in B_\lambda$, by a further decomposition which writes $\mathbb{R}^N$ as a union of balls $B_\lambda (\mbx_m ) = \mbx_m \cdot B_\lambda$, with these balls chosen so they are \textquotedblleft almost\textquotedblright  disjoint. For this we need the following lemma.
\begin{lemma}\label{LEM:6.3}
 Given $\lambda > 0$, there is a collection $\left\{ B_\lambda (\mbx_m )  \right\}$
so that  
\begin{enumerate}[{\rm(1)}]
\item $\left\{ B_{\lambda/2 }(\mbx_m )  \right\}$  covers $\mathbb{R}^N $;
\item for every $\mu > 0$, the collection $\left\{ B_\mu (\mbx_m )  \right\}$ has the bounded intersection property: there exists an integer $M = M_\mu$ so that no point $\mbx \in \mathbb{R}^M$ is contained in more than $M$ of the $\left\{ B_\mu (\mbx_m )  \right\}$.
\end{enumerate}
 \end{lemma}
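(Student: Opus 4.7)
The plan is to build the collection $\{\mbx_m\}$ by a maximal-separation argument and then derive both properties from a volume-comparison inequality, using the fact that the homogeneous norm $|\,\cdot\,|_G$ satisfies a quasi-triangle inequality $|\mbx\mby|_G\le A(|\mbx|_G+|\mby|_G)$ for some constant $A\ge 1$, and that the left-invariant quasi-distance $d(\mbx,\mby)=|\mby^{-1}\mbx|_G$ makes every ball $B_r(\mbx)$ have Haar measure exactly $c_0 r^Q$ with $Q=\sum_{j=1}^N d_j$ the homogeneous dimension.

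First I would select a maximal family $\{\mbx_m\}\subset \R^N$ with the property that $d(\mbx_i,\mbx_j)\ge \lambda/2$ whenever $i\ne j$. Such a maximal family exists by a standard Zorn / transfinite-induction argument (or, more concretely, by an exhaustion using the separability of $G$ and the fact that closed balls are compact). Property (1) is then an immediate consequence of maximality: if some $\mby\in\R^N$ were not in any $B_{\lambda/2}(\mbx_m)$, then $d(\mby,\mbx_m)\ge\lambda/2$ for every $m$, and $\{\mbx_m\}\cup\{\mby\}$ would be a strictly larger $\lambda/2$-separated family, contradicting maximality.

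For property (2), fix $\mu>0$ and suppose $\mby\in\bigcap_{m\in S}B_\mu(\mbx_m)$ for some index set $S$. The quasi-triangle inequality gives $d(\mbx_i,\mbx_j)\le A(d(\mbx_i,\mby)+d(\mby,\mbx_j))<2A\mu$ for all $i,j\in S$, so the centers $\{\mbx_m:m\in S\}$ all lie in the ball $B_{2A\mu}(\mbx_{m_0})$ for any fixed $m_0\in S$. On the other hand, since the centers are $\lambda/2$-separated, the balls $B_{\lambda/(4A)}(\mbx_m)$, $m\in S$, are pairwise disjoint: if $\mbz$ lay in two of them, the quasi-triangle inequality would force $d(\mbx_i,\mbx_j)<\lambda/2$. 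Each of these disjoint balls is contained in $B_{A(2A\mu+\lambda/(4A))}(\mbx_{m_0})\subset B_{2A^2\mu+\lambda/4}(\mbx_{m_0})$, so comparing Haar measures yields
\begin{equation*}
|S|\cdot c_0(\lambda/(4A))^{Q}\le c_0(2A^2\mu+\lambda/4)^{Q},
\end{equation*}
which gives an explicit bound $|S|\le M_\mu$ depending only on $\mu$, $\lambda$, $A$ and $Q$.

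No step here is genuinely hard; the only item requiring any care is the existence of a maximal $\lambda/2$-separated set, and even that is routine since $G$ is second countable so the maximal family can be produced directly by an inductive greedy construction (listing a dense countable subset and adding points one at a time whenever they are $\lambda/2$-far from all previously chosen centers, then verifying this yields a maximal separated family). The core of the argument is the volume-doubling/disjointness trick in the last paragraph, which is the standard mechanism behind Vitali-type covering lemmas on spaces of homogeneous type and adapts verbatim to the present quasi-metric setting.
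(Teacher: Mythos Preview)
Your proof is correct and follows essentially the same approach as the paper: a maximal-separation argument for the centers followed by a volume comparison to bound the overlap. The only cosmetic difference is that the paper phrases the construction as a maximal family of \emph{disjoint balls} of radius $c_1=\lambda/(4c)$ rather than a maximal $\lambda/2$-separated set of points, but these are equivalent and the volume-counting argument for part~(2) is the same.
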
 
 
 \begin{proof}  
Fix a constant $c_1$ (which is small with $\lambda$, and which will be chosen momentarily) and choose the centers $\mbx_m$ so that the balls  $\left\{ B_{c_1} (\mbx_m )  \right\}$ are disjoint, and so that this is a maximal family with respect to that property. Now if $\operatorname*{\cup}_m\limits B_{\lambda/2} (\mbx_m )$ does not cover $\mathbb{R}^N$, there exists an $\mbx_0$, so that $d(\mbx_0 , \mbx_m ) \ge \lambda/2$ \ for all $m$. We then claim that $B_{c_1} (\mbx_0 )$ must be disjoint from all the $B_{c_1} (\mbx_m )$, violating the maximality. In fact if this disjointness fails, then there is an $\mbx^\prime \in B_{c_1} (\mbx_0 ) \cap B_{c_1} (\mbx_m )$ for some $m$. This means $d(\mbx^\prime , \mbx_0 ) < c_1$ and $d(\mbx^\prime , \mbx_m ) < c_1$, which by the quasi-triangle inequality $d(\mbx_0 , \mbx_m ) \leq c(d(\mbx^\prime , \mbx_0 ) + d(\mbx^\prime , \mbx_m ))$ gives a contradiction with \ $d(\mbx_0 , \mbx_m ) \geq \lambda/2$, when $\lambda/2 \geq 2 c c_1$. Thus if we choose $c_1 = \lambda/4c$ we get conclusion (1).

To prove the second conclusion we may choose $\mu \geq c_1$. Suppose $\mbx^\prime$ \ and \ $\mbx^{\prime\prime}$ belong to $B_\mu (\mbx_m )$. Then $d(\mbx^\prime , \mbx^{\prime\prime}) \leq 2 c\mu$, so $\mbx^{\prime\prime} \in B_{2 c \mu} (\mbx^\prime )$, and hence $B_\mu (\mbx_m ) \subset B_{2c\mu} (\mbx^\prime )$. So if $\mbx^\prime$ belongs to $M$ of the $B_\mu (\mbx_m )$, then $B_\mu (\mbx_m ) \subset B_{2c\mu} (\mbx^\prime )$ for each of these $m$. Since then $B_{c_1}(\mbx_m ) \subset B_{2c \mu}(\mbx^\prime )$ and the $B_{c_1}(\mbx_m )$ are disjoint, we have
\begin{equation*}
   \sum m \left( B_{c_1} (\mbx_m )  \right) \leq m \left(  B_{2c\mu}(\mbx^\prime )   \right) ,
\end{equation*}
 where the sum is taken over those $m$. By homogeneity we get $Mc_1^Q \leq (2c\mu )^Q$, and conclusion (2) is established with $M = (2c/c_1)^Q$.
 \end{proof}
 
We now return to the proof of Theorem \ref{the:6.1}. Let us fix a positive $C^\infty$ function $\varphi$ that equals $1 \text{ in } B_{\lambda/2}$ and vanishes outside $B_\lambda$. Let $\varphi_m (\mbx) = \varphi \left(  \mbx^{-1}_m \cdot \mbx  \right)$, with the $\mbx_m$ as given in Lemma ~(\ref{LEM:6.3}). Then because of conclusions (1) and (2) of the lemma, \ $1 \leq \sum \varphi_m (\mbx) \leq M$. Set $\psi_m (\mbx) =\varphi_m (\mbx)/\sum \varphi_m (\mbx)$. Since $\left\{  X^\alpha \varphi_m  \right\}$ are bounded uniformly in $m$, the same is true of $\{\psi_m \}$. Notice that the $\{ \psi_m \}$ give a partition of unity, with $\psi_m$ supported in $B_\lambda (\mbx_m )$.

We let $\KK_m (\mbx, \mbz) = \psi_m (\mbx) \KK(\mbx, \mbz)$ and $T_m (f)(\mbx) = \int \KK_m (\mbx,\mby^{-1}\mbx )f(\mby )d \mby$. Then $T = \sum T_m$. Observe that $T_m$ is of the form $L_{\mbx_m} T^\prime L^{-1}_{\mbx_m}$, where  $L_{\x}$ is left-translation by $\x$ and $T^\prime$ is an operator for which we already know its $L^p$ boundedness by Theorem \ref{Thm9.8}. Hence
\begin{equation}\label{EQ:6.4}
|| T_m (f) ||_{L^p} \leq A_p || f ||_{L^p},
\end{equation}
uniformly in $m$, \ \ for \ \ $1 < p < \infty$. Now because $T = \sum T_m$, the lemma (with $\mu = \lambda )$ gives us
\begin{equation*}
   \int | T |^p d\mbx \leq M_\lambda \sum_m\limits \int | T_m (f) |^p d\mbx
\end{equation*}
 However by ~(\ref{EQ:6.4}), 
\begin{equation*}
\int \left| T_m (f) \right|^p d\mbx \leq A^p_p \int_{B_\mu (\mbx_m )}\limits | f |^p d\mbx.
\end{equation*}
This is because $\mbx \in B_\lambda (\mbx_m )$ \ and \ $\mby^{-1} \mbx \in B_1 \ \text{ imply } \mby \in B_\mu (\mbx_m )$ here with \ $\mu = c(\lambda +1)$, in view of the fact that $d(\mby,\mbx_m ) \leq c(d(\mbx,\mbx_m ) + d(\mby,\mbx))$, and the support properties of $K_m (\mbx,\mbz)$. Again by conclusion (2) of Lemma ~\ref{LEM:6.3}, 
\begin{equation*}
   \sum_m\limits \int_{B_\mu (\mbx_m )}\limits | f |^p d\mbx \leq M_\mu \int | f |^p d\mbx,
\end{equation*}
giving the conclusion $|| T(f) ||_{L^p} \leq A_p^\prime || f ||_{L^p}$ \ and so proving Theorem \ref{the:6.1}.

\subsection{The algebra of operators}\label{sec:6.2}

Proceeding to the proof of Theorem ~\ref{the:6.2}, we first summarize and slightly rephrase the essential idea of the previous subsection. Whenever $\KK(\mbx, \mbz)$ is a kernel of the extended class that is supported for $| \mbz | \leq 1$, then we can write it as 
\begin{equation}\label{6.5EQ}
   \KK(\mbx, \mbz) = \sum_{n,m}\limits a_{n,m}(\mbx) \KK_{n,m} (\mbz)\ , 
\end{equation}
where
\begin{enumerate}[(a)]
   \item the $\KK_{n,m}$ are proper kernels, each supported for $\mbz$ in the unit ball, with  $M$-norms that are uniform in $n$ and $m$;
\item the $a_{n,m}(\mbx)$ are $C^{\infty}$ functions, each supported in $\mbx \in B_\lambda (\mbx_m )$;
\item  for every $\al$, $\sup_\mbx\limits \left|     X^\alpha a_{n,m}(\mbx)  \right|$ \ is rapidly decreasing as $n \to \infty$, uniformly in $m$.
\end{enumerate} 
\medskip

\indent To prove Theorem ~\ref{the:6.2} we study the composition of two operators $T \text{ and }S$ given by
\begin{equation*}
   T(f)(\mbx) = \int \KK(\mbx,\mby^{-1} \mbx)f(\mby)d\mby , \quad S(f)(\mbx) = \int \LL(\mbx,\mby^{-1}\mbx)f(\mby) d\mby
\end{equation*}
where $\KK$ and $\LL$ are both kernels of the extended class. We consider first their essential parts, that is, we restrict ourselves to $K(\mbx, \mbz)$ and $L(\mbx, \mbz)$ that are supported when $\mbz$ is in the unit ball $B_1$. Now by \eqref{6.5EQ}, $T = \sum_{n,m} T_{n,m}$ and $S = \sum_{n', m'} S_{n' , m'}$ where 
\beas
T_{n,m}(f)(\mbx) &= a_{n,m}(\mbx) (f *\KK_{n,m})(\mbx),&&&S_{n' , m'}(f)(\mbx) &= b_{n', m'}(\mbx)( f * \LL_{n' , m'})(\mbx).
\eeas
and where $\{ \KK_{n,m}\}$ and $\{ \LL_{n' ,m'} \}$ are uniform families of proper kernels, each supported in $| \mbz | \leq 1$.

We next observe that, for each $m$, \ $T_{n,m}\circ S_{n',m'} = 0$ unless $m'$ belongs to a subset of boundedly many $m'$ (depending on $m$). In fact, by the support properties of $a_{n,m}, \ b_{n^\prime , m^\prime}$, and the kernels $\KK_{n,m}$, the product $T_{n,m} \cdot S_{n^\prime , m^\prime}$ is non-vanishing only if the quasi-distance between $B_\lambda (\mbx_m )$ and $B_\lambda (\mbx_{m^\prime} )$ does not exceed 1. Thus by conclusion (2) of Lemma ~\ref{LEM:6.3}, this only happens for at most $M_\mu$ of the $m^\prime$, where $\mu = c^2 (2 \lambda + 1)$.

From this, and the rapid decrease of $a_{n,m} $ and $b_{n^\prime , m^\prime}$ as $n \to \infty$ and $n^\prime \to \infty$, we see that the analysis of the composition can be reduced to the case when both $T$ and $S$ are each one summand. So we write $T = a\circ U, \ S = b \circ V$, where  $U(f) = f* \KK$ and $V(f) = f* \LL$, with both $\KK$ and $\LL$ proper kernels supported for $| \mbz | \leq 1$. Now 
\bes
T\circ S = a \circ U\circ  b \circ V = a \circ b \circ U \circ V + a\circ[b, U] \circ V.
\ees 
The term $a \circ b \circ U \circ V$ is of the right form because it corresponds to the kernel $a(\mbx)b(\mbx) (\LL\ast \KK)(\mbz)$, and $\LL \ast \KK$ is a proper kernel by Theorem \ref{Thm10.1}. On the other hand,
\begin{equation}\label{[b,U]}
[b,U] (f)(\mbx) = \int \big( b(\mbx) - b(\mby)\big)  \KK(\mby^{-1}\mbx ) f(\mby)\,d\mby\ .
\end{equation}
Write $b(\mbx) - b(\mby) = c(\mbx, \mbz)$, \ with $\mbz=\mby^{-1}\mbx$; thus $c(\mbx, \mbz) = b(\mbx) - b(\mbx \mbz^{-1})$. Recall that $c(\mbx, \mbz)$ can be restricted to $\mbz \in B_1$, (the support of $K$), and so can be taken to have compact $\mbz$\textendash support. Also, $c(\mbx, \mbz)$ is jointly $C^\infty$, and $\sup_\mbx \left|  X^\alpha_\mbx \partial^\beta_\mbz c(\mbx, \mbz)   \right| < \infty$ , uniformly in $\mbz$.

At this stage we invoke the following lemma.

\begin{lemma}\label{LEM:6.4}
 Let $\KK\in \PP_{0}(\EEE)$  and $c\in \CC^{\infty}_{0}(\R^{N})$. Then $c \,\KK\in \PP_{0}(\EEE)$.
 \end{lemma}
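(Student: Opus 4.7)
My plan is to use the dyadic decomposition of Corollary \ref{Cor8.8} to reduce the problem to verifying that multiplication by $c$ preserves the structural properties of the building blocks, then reassemble using Theorem \ref{Thm3.7}. First I would write $\KK = \psi_{0} + \sum_{S\in\SS'(\EEE)}\KK_{S}$, where $\psi_{0}\in\SS(\R^{N})$ and $\KK_{S}=\sum_{J\in\Gamma_{\Z}(\EEE_{S})}[\varphi_{S}^{J}]_{J}$ with $\{\varphi_{S}^{J}\}\subset\CC^{\infty}_{0}(\B(1))$ uniformly bounded and each $\varphi_{S}^{J}$ having cancellation in the marked variables $\x_{k_{1}},\ldots,\x_{k_{s}}$. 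The term $c\psi_{0}$ belongs to $\SS(\R^{N})$, so it suffices to prove that $c\KK_{S}\in\PP_{0}(\EEE_{S})$ for each $S\in\SS'(\EEE)$, since by Lemma \ref{Lem8.7mm} this gives $c\KK_{S}\in\PP_{0}(\EEE)$.

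The key computation is that multiplication by $c$ commutes with the dilation in a natural way: setting $\y=2^{-J}\,\hat\cdot_{S}\,\x$ we get
\[
c(\x)\,[\varphi_{S}^{J}]_{J}(\x)=[\tilde\varphi_{S}^{J}]_{J}(\x),\qquad \tilde\varphi_{S}^{J}(\y)=c\bigl(2^{J}\,\hat\cdot_{S}\,\y\bigr)\,\varphi_{S}^{J}(\y).
\]
Because $c$ is bounded with bounded derivatives of all orders, and because for $J\in\Gamma_{\Z}(\EEE_{S})$ all the dilation exponents $2^{j_{r}/e(k_{r},j)}$ are $\le 1$, the chain rule shows that the family $\{\tilde\varphi_{S}^{J}\}$ is uniformly bounded in $\CC^{\infty}_{0}(\B(1))$; in fact derivatives of $c(2^{J}\,\hat\cdot_{S}\,\y)$ only improve as the $j_{r}$ become more negative.

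The main obstacle is verifying that $\{\tilde\varphi_{S}^{J}\}$ has \emph{weak cancellation} (Definition \ref{Def4.4}) relative to the matrix $\EEE_{S}$, since straightforward multiplication by $c$ can destroy the strong cancellation of $\varphi_{S}^{J}$. The idea is to split
\[
c\bigl(2^{J}\,\hat\cdot_{S}\,\y\bigr)=c(0)+\bigl[c\bigl(2^{J}\,\hat\cdot_{S}\,\y\bigr)-c(0)\bigr].
\]
The piece $c(0)\varphi_{S}^{J}$ manifestly retains the strong cancellation of $\varphi_{S}^{J}$ in every marked variable. For the remainder $g_{J}(\y):=c(2^{J}\,\hat\cdot_{S}\,\y)-c(0)$, one applies a telescoping Taylor expansion that successively sets the blocks of variables $\y_{I_{1}},\ldots,\y_{I_{s}}$ to zero; each step gains a factor of the form $2^{\epsilon(j_{r}-e_{S}(r,r')j_{r'})}$ for some $r'\ne r$, arising from the homogeneity of the dilation $\hat\cdot_{S}$ combined with the boundedness of the partial derivatives of $c$ on a fixed compact set. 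The indices $r'$ are chosen to match the definition of $\Lambda(J,r)$ in Definition \ref{Def4.4}, and the resulting decomposition expresses $\tilde\varphi_{S}^{J}$ as the required finite sum $\sum_{A}\bigl[\prod_{r\in A}2^{-\epsilon\Lambda(J,r)}\bigr]\tilde\varphi_{S,A}^{J}$ with each $\tilde\varphi_{S,A}^{J}$ enjoying cancellation in $\y_{k_{r}}$ for $r\notin A$.

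Once weak cancellation is in hand, Theorem \ref{Thm3.7} applies directly to the matrix $\EEE_{S}$ (whose cone $\Gamma^{o}(\EEE_{S})$ is nonempty because $S\in\SS'(\EEE)$, by Corollary \ref{Remark3.10}): the series $\sum_{J\in\Gamma_{\Z}(\EEE_{S})}[\tilde\varphi_{S}^{J}]_{J}$ converges in distribution to an element of $\PP_{0}(\EEE_{S})\subset\PP_{0}(\EEE)$. Summing over the finitely many $S\in\SS'(\EEE)$ and adding back the Schwartz term $c\psi_{0}$ yields $c\KK\in\PP_{0}(\EEE)$, completing the proof.
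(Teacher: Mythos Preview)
Your approach via dyadic decomposition is genuinely different from the paper's, which works entirely on the Fourier transform side: the paper writes $\widehat{c\KK}=m\ast\widehat c$ with $m=\widehat\KK\in\MM_\infty(\EEE)$, splits the convolution integral according to whether $\|\etab\|\le\|\bxi\|^\delta$ or not, and checks the multiplier inequalities directly. That argument is short and avoids any bookkeeping with cancellation.

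Your route is sound in outline but has a gap at the weak-cancellation step. Definition~\ref{Def4.4} requires that the subsets $A$ lie in $\{1,\ldots,s-1\}$, so that \emph{every} piece $\tilde\varphi^J_{S,A}$ retains cancellation in the last marked variable $\x_{k_s}$; this is exactly what Step~2 of the proof of Theorem~\ref{Thm3.7} uses to establish distributional convergence. Your product-of-differences telescope $\prod_r(\Delta_r+E_r)$ naturally produces terms with $s\in A$ (those arising from $\Delta_s$), and such terms have a gain of order $2^{\alpha j_s}$ but \emph{no} cancellation in $\x_{k_s}$, so they do not fit Definition~\ref{Def4.4} and Theorem~\ref{Thm3.7} does not apply to them as stated. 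Also note that the natural gain from Taylor is $2^{\alpha j_r}$ for some $\alpha>0$ depending on the dilation exponents, not immediately $2^{-\epsilon\Lambda(J,r)}$; converting one to the other requires the cone inequality $|j_{r'}|\le e_S(r',r)|j_r|$ and a choice of sufficiently small~$\epsilon$, which you should make explicit.

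The gap is repairable: the pieces with $s\in A$ carry the strong gain $2^{\alpha j_s}$, and since in $\Gamma_\Z(\EEE_S)$ each $|j_r|\le e_S(r,s)|j_s|$, the number of lattice points with a given $j_s$ is $O(|j_s|^{s-1})$, so $\sum_J 2^{\alpha j_s}$ converges absolutely. One can then verify membership in $\PP_0(\EEE_S)$ for that part directly from Lemma~\ref{Lem4.1} and a hands-on check of the cancellation conditions, bypassing Theorem~\ref{Thm3.7}. This works, but it is more labor than you indicated, and the paper's Fourier argument sidesteps the whole issue.
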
 
 
\begin{proof} We use the Fourier transform characterization of Theorems \ref{Thm6.1} and \ref{Thm6.2}: if  $m=\widehat{\KK}$, then $m\in \MM_{\infty}(\EEE)$ and
\bes
\widehat{c\,\KK} (\bxi ) = m \ast \widehat{c} \ (\bxi ) = \int_{\mathbb{R}^N} m(\bxi - \beeta ) \widehat{c} (\beeta )d \beeta.
\ees 
We have $\widehat N_{j}(\xib+\etab)\leq C\big[\widehat N_{j}(\xib)+\widehat N_{j}(\etab)\big]$, and if $|\xib|$ is the Euclidean norm of $\xib$, there are positive real numbers $a$ and $b$ so that $|\xib|^{a}\leq \widehat N_{j}(\xib)\leq |\xib|^{b}$ for $1 \leq j \leq n$ and all $|\xib|\geq 1$. Let $0<\delta<ab^{-1}$ and write
\beas
\partial^{\gammab} (m \ast \ \widehat{c} \ ) &= \int_{\| \beeta \| \leq \| \bxi \|^\delta}\partial^{\gammab}_\bxi m(\bxi - \beeta ) \ \widehat{c} \ ( \beeta ) d \beeta + \int_{\| \beeta \| \geq \| \bxi \|^\delta} \partial^{\gammab}_\bxi m(\bxi - \beeta ) \widehat{c}( \beeta ) d \beeta
=\text{ I + II}.
\eeas
If $|\etab|\leq |\xib|^{\delta}$ then $\widehat N_{j}(\xib-\etab)\approx \widehat N_{j}(\xib)$ for all $1\leq j \leq n$ and all sufficiently large $|\xib|$, and hence
\beas
\big| I\big| \leq \int_{|\etab|<|\xib|^{\delta}}\prod_{j=1}^{n}(1+\widehat N_{j}(\xib-\etab))^{-\[\gammab_{j}\]}\,\big|\widehat c(\etab)\big|\,d\etab\lesssim \prod_{j=1}^{n}\big(1+\widehat N_{j}(\xib)\big)^{-\[\gammab_{j}\]}.
\eeas
On the other hand, because of the rapid decrease of $\widehat{c}$,  
\bes
\big| II\big| \leq c_M | \bxi |^{-M} \leq c_{M} \,\prod^n_{j=1}\big( 1 + \widehat N_{j} (\bxi ) \big)^{-[\![\alpha_j ]\!]}\ 
\ees
if $M \geq \sum^n_{j=1} [\![ \alpha_j ]\!]$. As a result,
$c\,\KK\in \PP_{0}(\EEE)$.
\end{proof}

We now apply Lemma \ref{LEM:6.4} to the kernel $c(\mbx,\mbz)\KK(\mbz)$ for each fixed $\mbx$. This shows that $c(\mbx,\mbz)\KK(\mbz)$ is an extended kernel.
An examination of the argument guarantees that every semi-norm \eqref{beta-norms} of this kernel is controlled by only finitely many of the semi-norms $\sup_{\mbx, \mbz} \left| \de^\alpha_\mbx \partial^\beta_\mbz c(\mbx, \mbz)  \right|$ of $c$ and the $M$-norms of $\KK$.

Coming to the operator $[b,U] \circ V$, we see that 
\bes
([b, U] \circ V) (f)(\mbx)=\int \MM(\mbx,\mby^{-1} \mbx)f(\mby)d\mby,
\ees 
where $\MM(\mbx, \,\cdot \,) = \LL\ast \widetilde{K}(\mbx, \cdot )$, where $ V (f)= f \ast \LL$, and where $\widetilde{K} (\mbx,\mbz) = c(\mbx,\mbz)K(\mbz)$.
 
Then $\MM(\mbx, \mbz)$ is an extended kernel, in view of Theorem \ref{Thm10.1}; moreover any semi-norm \eqref{beta-norms} of $\MM$ is controlled by finitely many semi-norms of $b, \KK$ and $\LL$. Finally, the rapid decrease of the $a_{n,m} $, as $n \to \infty$ (and the corresponding decrease of the $ b_{n^\prime , m^\prime} $ as $n^\prime \to \infty )$ then assures that $T \circ S$ is an operator of the extended class under the assumption to both $\KK(\mbx, \mbz) \text{ and } \LL(\mbx,\mbz)$ are supported when $| \mbz | \leq 1$.

The case when either one or both $\KK(\mbx, \mbz)$ and $\LL(\mbx,\mbz)$ are supported for $\mbz$ away from the origin is a simpler version of the above argument. In effect, when $\KK$ is supported when $| \mbz | \leq 1$,  and $\LL$ is supported when $| \mbz| \geq c > 0$, then we need only use the facts that a convolution of a Schwartz function 
with a distribution of compact support is a Schwartz function, or that the convolution of two Schwartz functions is again a Schwartz function. This allows us to finish the proof of Theorem \ref{the:6.2}.

\section{The role of pseudo-differential operators}\label{7:pseudo}\vspace*{.1in}
In this section we study the interplay of pseudo-differential operators with the operators of the extended class treated in Section \ref{6:kernels}. While more general cases can also be treated, for simplicity we restrict ourselves to the situation where the homogeneous group underlying our Euclidean space $\mathbb{R}^N$ is a stratified group, and where the matrix $\EEE$ arises from the two-flag example in Theorem \ref{Thm7.7} where $a_{1}= \cdots = a_{n}=1$ and $b_j = \frac{1}{j}$. Thus we take 
\bes
\EEE =
\left[\begin{matrix}
1&2&3&\cdots&n-1&n\\
1&1&3/2&\cdots &(n-1)/2&n/2\\
1&1&1&\cdots&(n-1)/3&n/3\\
\vdots&\vdots&\vdots&\ddots&\vdots&\vdots\\
1&1&1&\cdots&1&n/(n-1)\\
1&1&1&\cdots&1&1\\
\end{matrix}\right].
\ees

In this case of a stratified group we have the double identification of $\mathbb{R}^N=\bR^{C_{1}}\oplus\cdots\oplus\bR^{C_{n}}$ with the underlying group and also (via exponential coordinates) with its Lie algebra $\mathfrak{g}$\index{g5frak@$\mathfrak{g}$}.  We also assume that the subspace $\bR^{C_k}=\mathfrak{h}_k$\index{h1frak@$\mathfrak{h}_k$} is the subspace homogeneous of degree $k$ under the automorphic dilations $\delta_{r}$. The fact that $\mathfrak{g}$ is stratified guarantees that $\mathfrak{h}_{k+1} = [\mathfrak{h}_k , \mathfrak{h}_1 ]$. Suppose the cardinality of $C_{k}$ is $c_{k}$. If $\x=(\x_{1}, \ldots, \x_{n})\in \R^{N}$ with $\x_{k}\in \R^{C_{k}}$, write $\x_{k}=(x_{k,1}, \ldots, x_{k,c_{k}})$. 
 The automorphic dilations on $\mathfrak h_{k}$ are given by $\delta_{\lambda}\x_{k}=(\lambda^{k}x_{k,1}, \ldots, \lambda^{k}x_{k,c_{k}})$, the homogeneous dimension of $\mathfrak h_{k}$ is $k\,c_{k}$,  and for $\alphab_{k}\in\bN^{c_k}$, we have $\[\alphab_{k}\]=k\,|\alphab_{k}|$, where $|\alphab_{k}|$ is the standard length of the multi-index.

We take the homogeneous norm on $\mathfrak h_{k}$ to be $n_{k}( \t_{k}) = \| \t_{k} \|^{\frac{1}{k}}=\Big[\sum_{j=1}^{C_{k}}|t_{k,j}|^{2}\Big]^{1/2k}$, where  $|\,\cdot\, |$ denotes the Euclidean norm. Then
\bea\label{13.1iou}
N_{j}(\x) &= |\x_{1}|+|\x_{2}|^{\frac{1}{2}}+ \cdots +|\x_{j-1}|^{\frac{1}{j-1}}+|\x_{j}|^{\frac{1}{j}}+|\x_{j+1}|^{\frac{1}{j}}+ \cdots +|\x_{n}|^{\frac{1}{j}}
\\
\widehat N_{j}(\xib) &= |\xib_{1}|^{\frac{1}{j}}+|\xib_{2}|^{\frac{1}{j}}+ \cdots |\xib_{j-1}|^{\frac{1}{j}}+|\xib_{j}|^{\frac{1}{j}}+|\xib_{j+1}|^{\frac{1}{j+1}}+ \cdots +|\xib_{n}|^{\frac{1}{n}}
\eea
Notice that $N_n=\widehat N_0$ is a homogeneous norm for the automorphic dilations $\delta_\lambda$, and $N_{1}$ is equal to the Euclidean norm.

We will consider standard pseudo-differential operators of order 0. By this we mean operators of the form
\begin{equation}\label{pseudo}
   f \mapsto P(f)(\mbx) = \int_{\mathbb{R}^N}\limits a(\mbx, \bxi ) \ \widehat{f} \ (\bxi ) e^{2 \pi i \mbx \cdot\bxi}d \bxi,
\end{equation}
defined for Schwartz functions $f$, where the symbol $a(\mbx, \bxi ) $ is assumed to be of compact support in $\mbx$ and satisfy the differential inequalities 
\begin{equation}
   \left| \partial^\beta_\mbx \partial^\alpha_\bxi a(\mbx, \bxi ) \right| \leq A_{\alpha , \beta} (1 + \| \bxi \| )^{- | \alpha |}\ .
\end{equation}

In what follows the errors introduced by commutators will be \textit{smoothing operators} in the following sense.  For $1<p<\infty$,  the space $L^p_1$ consists of all $f$ in $L^p$ so that $Xf$ (taken in the weak sense) belongs to $L^p(G)$, for each left-invariant vector field $X$ of the generating sub-space $\mathfrak{h}_1$ of $\mathfrak{g}$. We define a norm on $L^p_1$ by setting 
\begin{equation}
   || f ||_{L_1^p} = || f ||_{L^p} + \sum_{j=1}^{g} || X_j (f) ||_{L^p} ,
\end{equation}
where $\{X_{1}, \ldots, X_{g}\}$ a basis for the subspace $\mathfrak{h}_1$. We shall also see below that an $f \in L^p$ belongs to $L^p_1$ if and only if
\begin{equation}\label{lipschitz}
   || f(\mbx  \h) - f(\mbx) ||_{L^p} = O\big(N_n( \h )\big), \ \text{ as }\  \h  \to 0.
\end{equation}
An operator will be said to be \textit{smoothing} if it maps $L^p$ to $L^p_1$, \ for all $1 < p < \infty$. 

The following theorem will be proved in the next sections.

\begin{theorem}\label{the:7.1}
 Suppose $P$ is a standard pseudo-differential operator of order 0. Then $P$ belongs to the extended class treated in Section \ref{6:kernels}. Moreover if $T$ is any operator of this class, then the commutator $[T, P]$ is a smoothing operator.
\end{theorem}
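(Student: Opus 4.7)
I would write $P$ in the form \eqref{6.2EQ} with kernel
\[
K_G(\x, \z) \;=\; \int a(\x, \xib)\, e^{2\pi i (\x - \x\z^{-1}) \cdot \xib}\, d\xib,
\]
and check that $K_G(\x, \cdot) \in \PP_0(\EEE)$ uniformly in $\x$, with the required smooth $\x$-dependence of the semi-norms \eqref{beta-norms}. For each fixed $\x$ the map $\w \mapsto \int a(\x, \xib)\, e^{2\pi i \w \cdot \xib}\, d\xib$ is a classical isotropic Calder\'on--Zygmund kernel of compact support, hence an element of $\CC\ZZ_{\b,0} = \PP_0(\EEE_{\b})$ with $\b = (1, 1/2, \ldots, 1/n)$, this being the value of $\b$ that encodes Euclidean isotropic dilations on the stratified group. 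A direct comparison gives $e_{\b}(j,k) = k/j \leq e(j,k)$ for every $(j,k)$, so by Proposition~\ref{inclusion}, $\PP_0(\EEE_{\b}) \subset \PP_0(\EEE)$. It remains to observe that the substitution $\w = \x - \x\z^{-1}$ is a polynomial whose coordinate in the $k$-th stratum has the form $z_l$ plus a polynomial in $\x$ and in the strictly lower-stratum components of $\z$, of the correct automorphic homogeneous degree; a chain-rule calculation preserves the differential inequalities for $\PP_0(\EEE)$, and the cancellation conditions pass from $K_E$ to $K_G$ directly. Smoothness and compact $\x$-support of the amplitude provide the uniformity \eqref{beta-norms}.

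\textbf{Part (b).} By part (a) and Theorem~\ref{the:6.2}, $[T, P]$ is already of the extended class, hence bounded on $L^p$. The task is to upgrade this to $X_j \circ [T, P] : L^p \to L^p$ for every $X_j \in \mathfrak{h}_1$. The heuristic is the standard commutator gain in symbol calculus: if $m = \widehat\KK$ is the multiplier of the constant-coefficient part of $T$ and $a$ the amplitude of $P$, then
\[
\sigma([T, P]) \;\sim\; \sum_{|\alpha| \geq 1} \frac{(-i)^{|\alpha|}}{\alpha!} \bigl( \partial_{\xib}^{\alpha} m \cdot \partial_{\x}^{\alpha} a \;-\; \partial_{\xib}^{\alpha} a \cdot \partial_{\x}^{\alpha} m \bigr).
\]
Each term carries at least one $\xib$-derivative of $m$; by Definition~\ref{Def2.5}, $\partial_{\xib_l} m$ with $l \in C_1$ gains a factor of $\widehat N_1(\xib)^{-1}$, and since $|\xib_j| \lesssim \widehat N_1(\xib)$ for $j \in C_1$, this exactly cancels the first-order growth produced by $X_j$. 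Concretely, I would realize this gain at the kernel level via \eqref{[b,U]}: Taylor-expanding $a(\x, \xib) - a(\x\z^{-1}, \xib) = \nabla_\x a \cdot (\x - \x\z^{-1}) + O$, and noting that $(\x - \x\z^{-1})_{C_1} = \z_{C_1}$ on the first stratum, the commutator acquires a factor linear in $\z_{C_1}$ that absorbs the $X_j$ on the left.

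\textbf{Main obstacle.} Making the symbolic heuristic rigorous is delicate because the kernels $\KK \in \PP_0(\EEE)$ do not admit a classical symbol. My plan is to decompose $T$ via Corollary~\ref{Cor8.8} as $\psi_0 + \sum_{S \in \SS'(\EEE)} \sum_{I \in \Gamma_\Z(\EEE_S)} [\varphi_S^I]_I$ and to analyze $[[\varphi_S^I]_I, P]$ piece by piece. For a single bump supported at scales $(2^{i_r})_r$, formula \eqref{[b,U]} combined with a first-stratum Taylor expansion of $a$ at scale $2^{i_{r_0}}$ produces a new normalized bump carrying an additional multiplicative factor of order $2^{i_{r_0}}$, which precisely offsets the loss from applying $X_j$. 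Summing these pieces---with the weak cancellation factors tracked as in the proof of Theorem~\ref{Thm10.1}---and invoking Theorem~\ref{Thm3.7} produces a proper extended-class kernel representation of $X_j \circ [T, P]$, and $L^p$-boundedness then follows from Theorem~\ref{the:6.1}; the $\x$-dependence of both $T$ and $P$ is absorbed into the Fourier-series decomposition \eqref{EQ:6.3} from Section~\ref{sec:6.2}. The main technical hurdle is a uniform matching of the Taylor gain against the $X_j$-loss across all dyadic pieces so that the series converges with summable weak cancellation factors, analogous to but more involved than the composition analysis in Section~\ref{Composition}.
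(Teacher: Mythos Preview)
Your Part~(a) is essentially the paper's argument: the kernel of $P$ in the Euclidean representation is an isotropic Calder\'on--Zygmund kernel, hence lies in $\PP_0(\EEE)$ (the paper checks this directly from \eqref{13.1iou}, you via the inclusion $\PP_0(\EEE_\b)\subset\PP_0(\EEE)$, which is equivalent), and the passage from $\x-\y$ to $\y^{-1}\x$ is a diffeomorphism fixing the origin that preserves the isotropic class. Fine.

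Part~(b) has a genuine gap. After the Fourier-series reduction $T=\sum a_n\,U_n$ and $P=\sum b_{n'}\,V_{n'}$, the commutator $[aU,bV]$ expands as
\[
a\,[U,b]\,V \;+\; ab\,[U,V] \;+\; b\,[a,V]\,U.
\]
Your Taylor-expansion argument via \eqref{[b,U]} correctly handles the first and third terms---commutators of a convolution with a multiplication---and this is where the factor linear in $\z$ appears that absorbs $X_j$. But you have said nothing about the middle term $[U,V]$, the commutator of two \emph{convolution} operators. If both $U$ and $V$ are left convolutions (which is what your Part~(a) representation gives), then on a non-abelian group $[U,V](f)=f*(\LL*\KK-\KK*\LL)\ne 0$, and there is no multiplicative $\z$-factor to exploit; your dyadic argument does not touch this.

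The paper's device is precisely to kill this term: it represents $P$ in the \emph{right}-invariant form of Proposition~\ref{pseudo-alt}\,(\ref{pseudo-altc}), so that $V(f)=\KK*f$ is a right convolution while $U(f)=f*\HH$ is a left convolution, and these commute identically (Lemma~\ref{LEM:7.4}\,(1)). With $[U,V]=0$ the commutator reduces to the two multiplication-commutator terms, and the smoothing then follows from the single clean fact (Lemma~\ref{LEM:7.5}) that $X(z_{k,j}\HH)$ is again a proper kernel whenever $X\in\mathfrak h_1$ and $\HH\in\PP_0(\EEE)$---proved in one line via the multiplier characterization. This replaces your proposed piece-by-piece dyadic summation entirely.
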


\subsection{The isotropic extended kernels}\label{sec:7.1}

For the proof of Theorem \ref{the:7.1} it is necessary to consider a special sub-class $\CC\ZZ_0\subset\PP_{0}(\EEE)$\index{C3Zzero@$\CC\ZZ_0$}, (the isotropic Calder\'{o}n-Zygmund kernels on $\R^N$ which agree with a Schwartz function at infinity) and a corresponding sub-class of the ``extended'' kernels. A distribution $\KK\in\CC\ZZ_0$ is assumed to satisfy the stronger inequalities
\begin{equation}\label{isotropic-ineq}
   \left| \partial^{\gammab} K(\mbz)   \right| \leq A_\alpha \| \mbz \|^{-N - | \gammab |}\big(1+|\z|\big)^{-M}\ ,
\end{equation}
for every $\gammab$ and $M$. Moreover these kernels are required to satisfy only the one type of cancellation property which arises when $n=1$.  It follows from Theorems \ref{Thm6.1} and \ref{Thm6.2} (for the case $n=1$) that this is equivalent to the condition that the Fourier transform $m=\widehat\KK$ satisfies $|\partial^{\gammab}m(\xib)|\leq A_{\alpha}'(1+|\xib|)^{-|\gammab|}$. But then it follows from equation (\ref{13.1iou}) that  $m\in \MM_{\infty}(\EEE)$, and hence $\CC\ZZ_0 \subset \PP_{0}(\EEE)$. We call elements of $\CC\ZZ_0$ \textit{isotropic proper kernels}.

Together with $\CC\ZZ_0$ we consider the extended kernels $\KK(\mbx, \mbz)$ such that for $\mbx$  fixed, $\KK(\x,\,\cdot\,)\in \CC\ZZ_0$. More precisely, we shall assume that these have compact support in $\mbx$ and all $\mbx$-derivatives $\de_\mbx^\al \KK(\mbx,\,\cdot\,)$ are isotropic proper kernels uniformly in $\mbx$. Notice that, having assumed compact support in $\mbx$, this condition can be equivalently formulated with the $\mbx$-derivatives replaced by left-invariant, or right-invariant, vector fields. We will refer to these kernels as \textit{isotropic extended kernels.}

The key fact we need is the following.
\begin{proposition}\label{pseudo-alt}
   Suppose $P$ is a pseudo-differential operator \eqref{pseudo} of order 0, with symbol $a(\mbx, \bxi )$ having compact support in $\mbx$. Then $P$ can be represented in the following three alternative forms: for any Schwartz function $f$:
\begin{enumerate}[{\rm (a)}]
\item \label{pseudo-alta} $P(f)(\mbx) = \int_{\mathbb{R}^N}\limits \KK(\mbx, \mbx-\mby) f(\mby) d\mby$;
         
\item \label{pseudo-altb}  $P(f)(\mbx) = \int_{\mathbb{R}^N}\limits \KK^L(\mbx, \mby^{-1} \mbx) f(\mby) d\mby$;
    
\item \label{pseudo-altc}  $P(f)(\mbx) = \int_{\mathbb{R}^N}\limits \KK^R(\mbx, \mbx \mby^{-1}) f(\mby) d\mby$.
\end{enumerate}
Here $\KK, \ \KK^L$, and $\KK^R$ are each isotropic extended kernels.
\end{proposition}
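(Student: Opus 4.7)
The plan is to establish (a) by the standard pseudo-differential calculus and then derive (b) and (c) via polynomial changes of variable dictated by the group law. For (a), define
$$\KK(\mbx,\mbw) = \int_{\R^N} a(\mbx,\bxi)\,e^{2\pi i \mbw\cdot\bxi}\,d\bxi,$$
the $\bxi$-inverse Fourier transform of $a$; substituting in \eqref{pseudo} produces the form (a). Because $a\in S^0$ with compact support in $\mbx$, repeated integrations by parts in $\bxi$ against the oscillation yield, for every $M,\alpha,\beta$,
$$\big|\de_\mbx^\beta\de_\mbw^\gamma\KK(\mbx,\mbw)\big|\lesssim |\mbw|^{-N-|\gamma|}(1+|\mbw|)^{-M},$$
and the identification of the $\mbw$-Fourier transform of $\de_\mbx^\beta\KK(\mbx,\cdot)$ with $\de_\mbx^\beta a(\mbx,\cdot)\in L^\infty$ gives the single cancellation condition required for membership in $\CC\ZZ_0$ (via the one-parameter case of Theorems \ref{Thm6.1} and \ref{Thm6.2}). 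Hence $\KK$ is an isotropic extended kernel.

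For (b) and (c), the key observation is that on a stratified group the maps $\Phi^L_\mbx(\mbz)=\mbx-\mbx\mbz^{-1}$ and $\Phi^R_\mbx(\mbz)=\mbx-\mbz^{-1}\mbx$ are polynomial in $(\mbx,\mbz)$. Working stratum by stratum, the Baker--Campbell--Hausdorff formula and the stratification $\fh_{k+1}=[\fh_k,\fh_1]$ give
$$(\Phi^L_\mbx(\mbz))_k = \mbz_k + Q^L_k(\mbx_1,\dots,\mbx_{k-1};\mbz_1,\dots,\mbz_{k-1}),$$
with $Q^L_k$ a polynomial vanishing at $\mbz=0$ (and analogously for $\Phi^R_\mbx$). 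Thus the Jacobian of $\Phi^L_\mbx$ is block lower-triangular with identity diagonal blocks, so its determinant is $1$ --- consistent with the change of variable $\mby\mapsto\mbz=\mby^{-1}\mbx$ using Haar (= Lebesgue) measure without correction. Setting $\KK^L(\mbx,\mbz)=\KK(\mbx,\Phi^L_\mbx(\mbz))$ and $\KK^R(\mbx,\mbz)=\KK(\mbx,\Phi^R_\mbx(\mbz))$ produces the representations (b) and (c), and it remains to verify that both are isotropic extended kernels.

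The differential inequalities for $\KK^L$ rest on the comparison $|\Phi^L_\mbx(\mbz)|\approx|\mbz|$, valid uniformly for $\mbx$ in the compact $\mbx$-support of the symbol and $\mbz$ in any fixed bounded set; this follows by induction on the strata, since $(\Phi^L_\mbx(\mbz))_1=\mbz_1$ and each correction $Q^L_k$ is controlled by lower-stratum variables. Combined with Fa\`a di Bruno's formula --- each term of $\de_\mbz^\gamma\KK^L$ being a product of some $\de_\mbw^\alpha\KK$ with $|\alpha|\leq|\gamma|$ and bounded derivatives of the polynomial $\Phi^L_\mbx$ --- this yields $|\de_\mbz^\gamma\KK^L(\mbx,\mbz)|\lesssim|\mbz|^{-N-|\gamma|}(1+|\mbz|)^{-M}$; rapid decay at infinity uses that $(\Phi^L_\mbx)^{-1}$ is itself polynomial, so $|\mbz|\to\infty$ forces $|\Phi^L_\mbx(\mbz)|\to\infty$. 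Derivatives in $\mbx$ are handled analogously, chain-rule terms introducing polynomial-in-$\mbz$ factors that are absorbed into the isotropic bound. The main obstacle will be the cancellation condition: because $\Phi^L_\mbx$ is nonlinear, a bump-function test on the $\mbz$-side does not transform into a normalized bump test on the $\mbw$-side. I would overcome this by invoking Corollary \ref{Cor8.8} to write $\KK(\mbx,\cdot)$ as a dyadic sum of mean-zero normalized bumps at each scale, and show that pulling back each bump by $\Phi^L_\mbx$ yields another uniformly bounded family of bumps with (possibly weak) cancellation --- thereby re-expressing $\KK^L(\mbx,\cdot)$ as an analogous decomposition and recognizing it, via Theorem \ref{Thm3.7} specialized to $n=1$, as an element of $\CC\ZZ_0$ with estimates uniform in $\mbx$.
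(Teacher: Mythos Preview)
Your treatment of (a) and your change of variable for (b)--(c) match the paper: setting $\KK^L(\mbx,\mbz)=\KK\big(\mbx,\Phi^L_\mbx(\mbz)\big)$ with $\Phi^L_\mbx(\mbz)=\mbx-\mbx\mbz^{-1}$ is exactly the map the paper constructs, and your handling of the differential inequalities and rapid decay is along the same lines.

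The divergence is at the cancellation step, and here you have talked yourself into a difficulty that does not exist. Your assertion that ``a bump-function test on the $\mbz$-side does not transform into a normalized bump test on the $\mbw$-side'' is false. The cancellation condition for $\CC\ZZ_0$ need only be checked against $\varphi(R\,\cdot\,)$ with $R\ge 1$ (for $R<1$ the bound follows from the rapid decay together with the $R=1$ case). Writing
\[
\big\langle\KK\circ\Phi,\varphi(R\,\cdot\,)\big\rangle=\big\langle\KK,\Psi_R\big\rangle,\qquad \Psi_R(\mbw)=\varphi\big(R\,\Phi^{-1}(\mbw)\big)\,J_{\Phi^{-1}}(\mbw),
\]
one checks directly that $\Psi_R$ is supported in $|\mbw|\le c/R$ and satisfies $|\partial^\alpha\Psi_R|\le c_\alpha R^{|\alpha|}$: $\Phi$ is a diffeomorphism fixing the origin, so on the relevant bounded region (where $|\Phi^{-1}(\mbw)|\le 1/R\le 1$) all derivatives of $\Phi^{-1}$ and of its Jacobian are bounded, and the chain rule gives the $R^{|\alpha|}$ growth. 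Thus $\Psi_R$ \emph{is} a normalized bump at scale $\sim 1/R$, and the cancellation hypothesis on $\KK$ applies immediately. This is the paper's Lemma~\ref{LEM:7.3}, which disposes of the matter in a few lines; the nonlinearity of $\Phi$ is irrelevant at these small scales.

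Your proposed detour through Corollary~\ref{Cor8.8} and Theorem~\ref{Thm3.7} is therefore unnecessary. It would also require additional work you have not addressed: in the $n=1$ specialization, weak cancellation (Definition~\ref{Def4.4}) degenerates to strong cancellation, whereas the pulled-back bumps $\varphi^j\big(2^{-j}\Phi(2^j\,\cdot\,)\big)$ have mean of order $O(2^j)$ rather than exactly zero, so you would still need to absorb that error separately.
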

The realization (\ref{pseudo-alta}) is well-known; see for instance \cite{MR0290095}, Sections 4 and 7.4 of Chapter 6. In this case $\widehat \KK(\mbx, \cdot )= a(\mbx , \cdot )$ (where the Fourier transform acts on the second variable), and reduces to the fact that a multiplier $m(\bxi )$ that satisfies the differential inequalities $\left| \partial^\alpha_\bxi m(\bxi ) \right|\le A_\alpha (1 + \| \bxi \| )^{- | \alpha |}$ arises as the Fourier transform of an isotropic proper kernel, and so this is covered by Theorems \ref{Thm6.1} and \ref{Thm6.2}. The other two representations need further analysis.

First we assert that
\begin{equation}\label{yinvx}
   \mby^{-1}  \mbx = L_{\mbx, \mby} (\mbx - \mby),
\end{equation}
where for each $(\mbx, \mby)\in G\times G$, $L_{\mbx, \mby}$ is a linear transformation that depends polynomially on $\mbx$ and $\mby$. In fact, by the homogeneity of our group and using the Campbell-Hausdorff formula we can write for each $1 \leq k \leq n$ and each $1 \leq j \leq c_{k}$,
\begin{equation*}
   \left( \mby^{-1} \mbx   \right)_{k,j} = x_{k,j} - y_{k,j} + \sum_{\ell < k}\sum_{j=1}^{c_\ell}  p^{\ell,j}_k (\mbx,\mby) \cdot (x_{\ell,j} - y_{\ell,j} )
\end{equation*}
with $p^{\ell,j}_k$ polynomials in $\mbx$ and $\mby$ jointly homogeneous at degree $k - \ell$ (see for example Section 6.6 in \cite{MR2949616}, or Chapter 1 of \cite{MR657581}). This gives \eqref{yinvx} with $L_{\mbx, \mby}$ represented by a block-triangular matrix which in particular implies that $\det(L_{\mbx, \mby}) = 1$, \ \ for all $\mbx$ and $\mby$. Now let $M_{\mbx, \mby}$ denote the inverse of $L_{\mbx, \mby}$. Then $\mbx - \mby = M_{\mbx, \mby}(\mby^{-1} \mbx)$ and $M_{\mbx, \mby}$ is again polynomially dependent on $\mbx$ and $\mby$.

Define a (non-linear) mapping $\Phi = \Phi_\mbx:\R^{N}\to \R^{N}$ by setting $\Phi_\mbx (\mbz) = M_{\mbx, \mbx \mbz^{-1}} (\mbz)$. Given $\x$ and $\w$,
\bes
\Phi_{\x}\big((\x-\w)^{-1}\x\big) = M_{\x,\x\x^{-1}(\x-\w)}\big((\x-\w)^{-1}\x\big)= \x-(\x-\w)=\w
\ees
so $\Phi_{\x}$ maps $\R^{N}$ onto $\R^{N}$ and the inverse of $\Phi_{x}$ is given by $\Phi_{\x}^{-1}(\w) =(\x-\w)^{-1}\x=L_{\x,\x-\w}(\w)$. Thus both $\Phi_\mbx$ and $\Phi_\mbx^{-1}$ are polynomial mappings of $\mathbb{R}^N$ to itself (and hence diffeomorphisms) that each depend polynomially on $\mbx$. In our applications of these mappings below, the range of $\mbx$ is restricted to a compact set, in view of the support hypothesis on the symbol $a(\mbx,\bxi)$ and the resulting fact that $\KK(\mbx, \mbz)$ has compact-support in $\mbx$. 

We set down the following two simple properties of the mappings $\Phi_\mbx$:
\begin{enumerate}[1)]
\item $ \left\| \Phi_\mbx (\mbz)  \right\| \approx \| \mbz \|$, for  $\| \mbz \| \leq 1$;
\smallskip
\item $\| \mbz \|^{\frac{1}{n}} \lesssim \left\| \Phi_\mbx (\mbz) \right\| \lesssim \| \mbz \|^n , \quad \text{ for } \quad \| \mbz \| \geq 1$.
\end{enumerate}
The first assertion is a consequence of the fact that $\Phi$ is a diffeomorphism that fixes the origin. For the second, we see first that $\| \Phi_\mbx (\mbz) \| \leq c \| \mbz \|^n$, $ \| \mbz \| \geq 1$ because of the polynomial dependence (of degree $n$) of the linear mappings $M_{\mbx,\mby}$. Since the same inequality holds for the inverse $\Phi^{-1}_\mbx$ we get the reverse inequality $\| \mbz \|^{\frac{1}{n}} \leq c \| \Phi_\mbx (\mbz) \|$, again for large $|\mbz|$.

\medskip

Our proof of the second representation (\ref{pseudo-altb}) in Proposition \ref{pseudo-alt} starts with knowing the first representation and defining $\KK^L(\mbx,\mbz) = \KK(\mbx, \Phi_\mbx (\mbz))$. Then, at least formally, we have
\begin{equation}
   \int \KK^L (\mbx, \mby^{-1} \mbx)f(\mby) d\mby = \int \KK(\mbx, \mbx-\mby) f(\mby)d\mby = P(f)(\mbx).
\end{equation}
In order to justify this it will suffice to see that $\KK^L (\mbx, \mbz)$ is, for each $\mbx$, an isotropic proper kernel, and (in both variables) an isotropic extended kernel. For this we make the following observation.

\begin{lemma}\label{LEM:7.3}
 Let $\KK(\mbz)\in \CC\ZZ_0$. Suppose $\Phi$ is a $C^\infty$ diffeomorphism of $\mathbb{R}^N$ fixing the origin, and assume that $\| \Phi (\mbz) \| \gtrsim | \mbz \|^\delta$, for some $\delta > 0$ and all $\| \mbz \| \geq 1$. Then $\KK\circ\Phi\in \CC\ZZ_0$ where the distribution $\KK\circ\Phi$ is defined by $\big\langle\KK\circ\Phi, \ \varphi \rangle = \langle \KK, \varphi ( \Phi^{-1} )J_{\Phi\inv} \big\rangle$ for every test function $\varphi$, where $J_\Phi$ is the Jacobian  determinant of $\Phi$.
\end{lemma}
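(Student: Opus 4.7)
\medskip

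\noindent\textbf{Proof proposal for Lemma \ref{LEM:7.3}.} The plan is to verify directly that $\KK\circ\Phi$ satisfies the three requirements for membership in $\CC\ZZ_0$: smoothness away from the origin, the pointwise differential inequalities $|\partial^{\gammab}(K\circ\Phi)(\mbz)|\lesssim \|\mbz\|^{-N-|\gammab|}(1+\|\mbz\|)^{-M}$ for every $\gammab$ and $M$, and the single cancellation condition inherited from the case $n=1$. Since $\Phi$ is a diffeomorphism of $\R^N$ and $\KK$ is smooth off the origin, the distribution $\KK\circ\Phi$, defined by $\langle \KK\circ\Phi,\varphi\rangle=\langle\KK,\varphi(\Phi^{-1})J_{\Phi^{-1}}\rangle$, is smooth away from $\Phi^{-1}(0)=0$, and equals integration against $K\circ\Phi$ there. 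For the applications at hand we need not carry an abstract $C^\infty$ diffeomorphism, only a polynomial mapping; we may therefore assume (and should state explicitly) that every derivative $\partial^{\betab}\Phi$ has at most polynomial growth.

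For the differential inequalities I would split $\R^N\setminus\{0\}$ into $\|\mbz\|\le 1$ and $\|\mbz\|\ge 1$. Faà di Bruno's formula expresses $\partial^{\gammab}(K\circ\Phi)(\mbz)$ as a finite sum of terms of the form $(\partial^{\betab}K)(\Phi(\mbz))\prod_{j}\partial^{\mathbf p_j}\Phi_{i_j}(\mbz)$ with $|\betab|\le|\gammab|$. Near the origin, $\Phi(0)=0$ and $D\Phi(0)$ is invertible, so $\|\Phi(\mbz)\|\approx\|\mbz\|$ and the derivatives of $\Phi$ are uniformly bounded; the isotropic estimate $|\partial^{\betab}K(\Phi(\mbz))|\lesssim\|\Phi(\mbz)\|^{-N-|\betab|}$ then gives a bound $\lesssim \|\mbz\|^{-N-|\betab|}\le \|\mbz\|^{-N-|\gammab|}$ since $\|\mbz\|\le 1$ and $|\betab|\le|\gammab|$. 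For $\|\mbz\|\ge 1$ I would exploit the rapid decay of $K$ at infinity: for any $M'$, $|\partial^{\betab}K(\Phi(\mbz))|\lesssim (1+\|\Phi(\mbz)\|)^{-M'}\lesssim \|\mbz\|^{-\delta M'}$ by the growth hypothesis on $\Phi$. Choosing $M'$ large enough to overwhelm the polynomial growth of $\prod_j\partial^{\mathbf p_j}\Phi_{i_j}$ and to produce any required factor $\|\mbz\|^{-N-|\gammab|-M}$ yields the Schwartz-type decay.

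For the cancellation condition I need that $|\langle\KK\circ\Phi,\psi_R\rangle|$ is bounded uniformly in $R>0$ for every normalized bump $\psi$ supported in the unit ball, where $\psi_R(\mbz)=\psi(R\cdot\mbz)$. By the change-of-variables definition,
\begin{equation*}
\langle \KK\circ\Phi,\psi_R\rangle=\langle \KK,\,J_{\Phi^{-1}}\cdot(\psi_R\circ\Phi^{-1})\rangle.
\end{equation*}
For large $R$ the function $\psi_R\circ\Phi^{-1}$ is supported where $\|\Phi^{-1}(\mbw)\|\lesssim R^{-1}$; because $\Phi^{-1}$ is a diffeomorphism fixing the origin this support is contained in $\{\|\mbw\|\lesssim R^{-1}\}$, and on this set $J_{\Phi^{-1}}$ is a smooth function bounded together with its derivatives. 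A routine scaling estimate shows that $J_{\Phi^{-1}}\cdot(\psi_R\circ\Phi^{-1})$ is a normalized bump at scale $R$ (up to a harmless absolute constant), so the cancellation of $\KK$ applies directly. For small $R$ the test function is supported in the large ball $\{\|\mbw\|\lesssim R^{-1}\}$, but the rapid decay built into the $\CC\ZZ_0$ estimates makes $\KK$ an integrable function with uniformly bounded $L^1$ norm at that scale, giving the required bound. Intermediate $R$ are handled by combining the two arguments with a smooth partition of unity.

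The step I expect to be the main obstacle is the cancellation verification: one must see that the pullback of a normalized bump by the nonlinear map $\Phi^{-1}$ is again a normalized bump at comparable scale, which at small scales is immediate from the invertibility of $D\Phi^{-1}(0)$ but requires some care to quantify uniformly, and at large scales requires the Schwartz tail of $\KK$ to absorb any loss from the growth of $\Phi^{-1}$ and its Jacobian.
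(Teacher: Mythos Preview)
Your approach matches the paper's: verify the differential inequalities near the origin via the local diffeomorphism property $\|\Phi(\mbz)\|\approx\|\mbz\|$, obtain rapid decay at infinity from $\|\Phi(\mbz)\|\gtrsim\|\mbz\|^\delta$, and check cancellation by showing that the pulled-back bump $\Psi_R=J_{\Phi^{-1}}\cdot(\psi_R\circ\Phi^{-1})$ is again a rescaled normalized bump. Your remark that one needs polynomial growth of $\partial^{\betab}\Phi$ for the Fa\`a di Bruno argument at infinity is well taken; the paper's statement omits this hypothesis, and its proof only asserts rapid decay of $K(\Phi(\mbz))$ itself, not of its derivatives. In the applications $\Phi$ is polynomial, so the point is harmless, but you are right that it should be stated.

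One small imprecision in your cancellation argument: for small $R$ you write that ``the rapid decay\ldots makes $\KK$ an integrable function,'' but $\KK$ is \emph{not} in $L^1$ near the origin (it has a $\|\mbz\|^{-N}$ singularity there), so you cannot bound $\langle\KK,\Psi_R\rangle$ by an $L^1$ norm alone. The fix is easy and is implicit in the paper, which only checks $R\ge 1$: for $R\le 1$ the function $\psi_R$ and all its derivatives are uniformly bounded, so its restriction to the unit ball is itself a normalized bump at scale $\approx 1$ (handled by the $R\ge 1$ case), while outside the unit ball the already-established rapid decay of $K\circ\Phi$ gives an integrable tail.
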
 
\begin{proof}
Away from the origin $\KK\circ\Phi$ is  the $C^\infty$ function $K(\Phi (\mbz))$. Since $\Phi(0) = 0$ it follows that $\| \Phi (\mbz) \| \gtrsim c \| \mbz \|$ for small $|\mbz|$, and hence $\KK\circ\Phi$  satisfies \eqref{isotropic-ineq} on the unit ball because $K$ does.  Moreover the assumption $\| \Phi (\mbz) \| \geq c \| \mbz \|^\delta$ for $\| \mbz \| \geq 1$, implies the rapid decrease at infinity of $K(\Phi (\mbz))$, since the same holds for $K(\mbz)$. It remains to verify the cancellation conditions for $\KK\circ\Phi $, \textit{i.e.} $\big\vert\big\langle \KK\circ\Phi, \varphi(R\,\cdot\,)\big\rangle\big\vert \leq C$ for all $R\geq 1$ wherever $\varphi$ is a normalized $C^\infty$ function supported in the unit ball. However $\big\vert\big\langle \KK\circ\Phi, \varphi(R\,\cdot\,)\big\rangle\big\vert=\big\langle\KK,\Psi_{R}\big\rangle$, where it is easy to verify that for each $R$, $\Psi_R (\mbx)$ is supported in the ball $\| \mbx \| \leq cR\inv$, and satisfies$ \ \ | \partial^\alpha_\mbx \Psi_R (\mbx) | \leq c_\alpha R^{ | \alpha |}$, with bounds $c \text{ and }c_\alpha$ independent of $R$. Thus we may think of $\Psi_R (\mbx)$ as of the form $c_1 \Psi (c_2 R\mbx )$, for some $c_1$ and $c_2$ independent of $R$, verifying that $\KK\circ\Phi$ satisfies the requisite cancellation condition. Thus the proof of Lemma \ref{LEM:7.3} is complete. 
\end{proof}

Hence we have proved the representation (\ref{pseudo-altb}) of Proposition \ref{pseudo-alt}. The representation (\ref{pseudo-altc}) is proved in the same way.

\subsection{Proof of Theorem \ref{the:7.1}}\label{sec:7.2}

Turning to the proof of Theorem \ref{the:7.1}, we see by the identity (\ref{pseudo-altb}) in Proposition \ref{pseudo-alt} that the pseudo-differential operator $P$ belongs to the extended class of operators, establishing the first assertion of Theorem \ref{the:7.1}. Now let $T$ be any operator in the extended class, 
\begin{equation*}
   T(f)(\mbx) = \int \LL(\mbx, \mby^{-1}\mbx) f(\mby) d\mby,
\end{equation*}
with $\LL(\mbx, \mbz)$ a kernel of the extended class. We examine the commutator $[T, P]$. Let $B$ be a ball containing the $\mbx$\textendash support of the symbol $a(\mbx, \bxi )$ of $P$, and hence containing the $\mbx$-supports of $\KK(\mbx, \mbz)$, $\KK^L (\mbx, \mbz)$ and $\KK^R (\mbx, \mbz)$. We now first decompose $T$ as $T_0 + T_\infty$ via $\LL(\mbx, \mbz) = \LL_0 (\mbx, \mbz) + \LL_\infty (\mbx, \mbz)$ where $\LL_0 (\mbx, \mbz) = \eta (\mbx) \LL(\mbx, \mbz), \ \LL_\infty (\mbx, \mbz) = \big(1 - \eta (\mbx)\big)\LL_\infty (\mbx, \mbz)$ with $\eta \in C^\infty$ of compact support which = 1 on the double $B^\ast$ at the ball $B$. Thus $\KK_0 (\mbx, \mbz)$ has compact $\mbx$\textendash support, while $\KK_\infty (\mbx, \mbz)$ is supported on the complement of $B^\ast$. (Note that this is different from the decomposition $T = T_0 + T_\infty$ in Section \ref{sec:6.1}).

Now by reasoning as in Section \ref{sec:6.1}, we can write
\begin{equation}\label{T_0-sum}
   T_0 (f) (\x)= \sum_n\limits T_n (f)(\mbx) = \sum_n\limits a_n (\mbx)(f \ast \HH_n)(\x)
\end{equation}
where $a_n (x)$ are $C^\infty$ functions, supported in a common compact set, for which $\sup_\mbx | \partial^\alpha_\mbx a_n (\mbx) |$ is rapidly decreasing in $n$, for each fixed $\alpha$. The kernels $\HH_n$ are proper kernels and their semi-norms are each uniformly bounded in $n$.

For the pseudo-differential operator $P$ we shall use the \textquotedblleft right\textendash invariant\textquotedblright \ representation (\ref{pseudo-altc}) in Proposition~\ref{pseudo-alt}. This gives by the reasoning establishing \eqref{T_0-sum}, that 
\begin{equation}\label{P-sum}
   P(f)(\x) = \sum_{n^\prime}\limits P_{n^\prime} (f)(\mbx) = \sum_{n^\prime}\limits b_{n^\prime} (\mbx) (\KK_{n^\prime} \ast f)(\mbx)\ .
\end{equation}
Here the $b_{n^\prime} (\mbx)$ are $C^\infty$ function, supported in a compact set (in view of the support property of $\KK^R (\mbx, \mbz))$, for which $\sup_\mbx | \partial^\alpha_\mbx b_{n^\prime} (\mbx) |$ is rapidly decreasing in $n^\prime$ for each fixed $\alpha$, and the $\KK_{n^\prime}$ are isotropic proper kernels and their semi-norms are each uniformly bounded in $n^\prime$. The rapid decrease in $n$ and $n^\prime$ pointed out above reduces the study of the commutator $[ T_0 , P]$ to that where only a single term of each of the sums, \eqref{T_0-sum} and \eqref{P-sum} appears. 

Thus we look at the commutator $[a \circ U,  b\circ V ]$,
where $U(f) = f * \HH$ and $ V(f) = \KK * f$.
This commutator is handled by the following

\begin{lemma}\label{LEM:7.4}\quad
 \begin{enumerate}[{\rm(1)}] 
\item $U$ and $V$ commute, 
\item $[b, U]$ is a smoothing operator,
\item $[a, V]$ is a smoothing operator,
 \end{enumerate}
 and the bounds of the mappings $[b, U]$ and $[a, V]$ depend only on finitely many of the semi-norms controlling $a, b, \HH, \text{ and }\KK$.
 \end{lemma}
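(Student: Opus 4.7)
The proof will proceed in three steps. For part (1), both $U$ and $V$ extend to bounded operators on each $L^p$ by Theorems \ref{Thm9.8} and \ref{the:6.1}, and for $f\in\SS(\R^N)$ both $U(V(f)) = (\KK*f)*\HH$ and $V(U(f)) = \KK*(f*\HH)$ are absolutely convergent double integrals. Fubini and the associativity of the group law then give $U\circ V = V\circ U$ on Schwartz functions, and a density argument extends the identity to $L^p$.

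The substance of the lemma lies in parts (2) and (3), which I would treat by a parallel commutator method. For (2), I need to show $[b,U]$ maps $L^p$ to $L^p_1$. Pick a horizontal left-invariant vector field $X_i\in\mathfrak h_1$. Using the identity $X_i(g*\HH) = g*(X_i\HH)$ and the Leibniz rule,
\[
X_i[b,U]f \;=\; (X_ib)\cdot Uf \;+\; [b,U_i]f,\qquad U_ig := g*(X_i\HH).
\]
The first term is bounded on $L^p$ since $X_ib$ is bounded and $U$ is $L^p$-bounded. For the second, although $X_i\HH$ is one degree more singular than $\HH$ in the automorphic scaling and hence not itself a proper kernel, the commutator kernel
\[
c(\mbx,\mbz)\,(X_i\HH)(\mbz),\qquad c(\mbx,\mbz) := b(\mbx) - b(\mbx\mbz^{-1}),
\]
is saved by the vanishing $c(\mbx,0)=0$. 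Applying Hadamard's lemma in the Euclidean coordinates of $\R^N$ yields a finite decomposition
\[
c(\mbx,\mbz) \;=\; \sum_{k=1}^{n}\sum_{j=1}^{c_k} z_{k,j}\,\varphi_{k,j}(\mbx,\mbz),
\]
where each $\varphi_{k,j}(\mbx,\mbz) := \int_0^1(\partial_{z_{k,j}}c)(\mbx,s\mbz)\,ds$ is jointly smooth and compactly supported in $\mbx$. Since the coordinate $z_{k,j}$ is automorphically homogeneous of degree $k\geq 1$ and $X_i\HH$ has degree $-Q-1$, each product $z_{k,j}(X_i\HH)(\mbz)$ has size comparable to a proper kernel of automorphic degree $-Q+(k-1)$; the product and chain rules then show that it satisfies the differential inequalities of $\PP_0(\EEE)$ and inherits the cancellation conditions from $\HH$. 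Consequently $c(\mbx,\mbz)(X_i\HH)(\mbz)$ is an extended kernel in the sense of Section~\ref{6:kernels}, and Theorem~\ref{the:6.1} bounds $[b,U_i]$ on $L^p$.

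Part (3) is handled symmetrically, using right-invariant vector fields $\widetilde X_i$ (whose $L^p$-Sobolev norm on $\mathfrak h_1$ is equivalent to the one defined by the $X_i$) together with the identity $\widetilde X_i(\KK*g) = (\widetilde X_i\KK)*g$. This gives
\[
\widetilde X_i[a,V]f \;=\; (\widetilde X_i a)\cdot Vf \;+\; [a,V_i]f,\qquad V_ig := (\widetilde X_i\KK)*g,
\]
and the kernel $(a(\mbx) - a(\mbx\mbz^{-1}))(\widetilde X_i\KK)(\mbz)$ is controlled by the same Hadamard decomposition, the analysis being simpler because $\KK$ is isotropic Calder\'on--Zygmund, so that the first-order vanishing of $a(\mbx)-a(\mbx\mbz^{-1})$ compensates the bound $\|\mbz\|^{-N-1}$ for $\widetilde X_i\KK$ directly in the Euclidean norm. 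The main obstacle in both (2) and (3) is the careful verification that each summand $z_{k,j}(X_i\HH)(\mbz)\varphi_{k,j}(\mbx,\mbz)$ (respectively its isotropic analogue) obeys the full system of differential inequalities and cancellation conditions for $\PP_0(\EEE)$ uniformly across the regions $\widehat E_S$; this is mechanical given the product/chain rule and the decomposition results of Sections~\ref{Kernels}--\ref{Decompositions}. Tracking constants through Theorem~\ref{the:6.1} then yields the stated quantitative dependence on finitely many seminorms of $a$, $b$, $\HH$, $\KK$.
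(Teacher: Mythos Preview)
Your overall architecture for parts (2) and (3) is the same as the paper's: use the Hadamard decomposition $c(\mbx,\mbz)=\sum z_{k,j}\varphi_{k,j}(\mbx,\mbz)$ and then argue that the resulting pieces are extended kernels. The paper simply reverses your order of operations, applying Hadamard first and $X$ second, which leads to the statement (Lemma~\ref{LEM:7.5}) that $X\big(z_{k,j}\HH\big)\in\PP_0(\EEE)$ for any proper kernel $\HH$ and any $X\in\mathfrak h_1$. Your version needs the equivalent fact that $z_{k,j}(X_i\HH)\in\PP_0(\EEE)$; these differ only by the lower-order term $(X_iz_{k,j})\HH$.

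The gap is in your justification of this key fact. You write that ``$X_i\HH$ has degree $-Q-1$'' and that ``the product and chain rules then show that it satisfies the differential inequalities of $\PP_0(\EEE)$ and inherits the cancellation conditions from $\HH$''. Neither assertion is adequate. First, $\HH$ is not an automorphic Calder\'on--Zygmund kernel; it is a general element of $\PP_0(\EEE)$ with the multi-norm structure of Definition~\ref{Def2.2}, so speaking of a single ``automorphic degree $-Q-1$'' obscures what must be checked. Second, $X_i=\partial_{z_{1,r}}+\sum_{\ell\ge2}Q_\ell(\mbz)\cdot\nabla_{\mbz_\ell}$ has polynomial coefficients depending on $\mbz_1,\dots,\mbz_{\ell-1}$, and the interaction of the factors $z_{k,j}$, $Q_\ell$, and the derivatives $\partial_{\xi_{\ell,s}}$ across different blocks is precisely where the argument has content. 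The paper isolates this as Lemma~\ref{LEM:7.5} and proves it on the Fourier side, using the specific inequality $\widehat N_j(\bxi)^k\ge\|\bxi_k\|$ for $j\le k$ (which holds for this particular two-flag matrix $\EEE$) to balance the multiplication by $\bxi_\ell$ against the differentiation $Q_\ell(2\pi i\partial_{\bxi})\partial_{\xi_{k,j}}$. This is not a routine consequence of the product and chain rules, and it simultaneously handles the cancellation conditions via the multiplier characterization. You should state and prove the analogue of Lemma~\ref{LEM:7.5} rather than asserting it as mechanical.

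For part (1), your Fubini argument is fine, but the one-line reason is that $U$ (right convolution) is left-invariant while $V$ (left convolution) is right-invariant, and left- and right-invariant operators automatically commute.
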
 

Suppose first that the lemma is proved. There $aU \circ bV = ab U \circ V + a[b, U ] V$. However $U \circ V = V \circ U$, by conclusion (1). Thus
$$
\begin{aligned}{}
[a \circ U,  b\circ V ]&=a\circ[U,  b\circ V ]+[a,  b\circ V ]\circ U\\
&=a\circ[U,  b]\circ V +ab\circ[U,  V ]+b\circ [a,  V ]\circ U+[a,  b ]\circ V\circ U\\
&=a\circ[U,  b]\circ V +b\circ [a,  V ]\circ U\ .
\end{aligned}
$$

As well as multiplication by $a$ and $b$, both $U$ and $V$ map $L^p$ to itself, since they are convolution operators with proper kernels. Thus by conclusions (2) and (3) of the lemma, $[a \circ U,  b\circ V ]$ maps $L^p$ to $L^p_1$, and we have achieved our desired result for $[T_0 , P]$.

\begin{proof} [Proof of Lemma \ref {LEM:7.4} ]
  The first conclusion (which is key) follows immediately because $U$ is left-invariant and $V$ is right-invariant. Let us now consider $[b, U]$, and begin with the representation given for $[b, U]$ in \eqref{[b,U]} (here $\HH$ plays the role of $\KK$). We have that $[b, U]$ is an operator of the extended class, represented by the kernel $\widetilde{\HH}(\mbx, \mbz) = c(\mbx, \mbz) \HH(\mbx, \mbz)$, where $c(\mbx, \mbz)$ is the $C^\infty$ function given by $b(\mbx) - b(\mbx \mbz^{-1})$. Notice however that $c(\mbx, 0) = 0$, and thus 
\begin{equation*}
   c(\mbx, \mbz) = \sum^n_{k=1}\sum_{j=1}^{c_k} z_{k,j} \cdot c_{k,j} (\mbx, \mbz),
\end{equation*}
where $c_{k,j}$  are again $C^\infty$ functions. Hence
\begin{equation}\label{tildeH}
   \widetilde{\HH}(\mbx, \mbz) = \sum_{k,j} z_{k,j} \cdot \widetilde{\HH}_{k,j} (\mbx, \mbz) 
\end{equation}
where by Lemma \ref{LEM:6.4} we know that each $\widetilde{\HH}_{k,j} (\mbx, \mbz)$ is an extended kernel. Now if $X$ is a left-invariant vector field, then 
$$
\begin{aligned}
  X[b, U] (f)(\mbx) = X \left( \int \widetilde{\HH} (\mbx,\mby^{-1} \mbx) f(\mby)d\mby \right) = \int \HH' (\mbx,\mby^{-1}\mbx) f(\mby)d\mby \ ,
\end{aligned}
$$
where by \eqref{tildeH} we have that 
$$
\HH' (\mbx,\mbz) = \sum^n_{j=1} \Big(X_\mbx \big( z_j \cdot \widetilde{\HH}_j (\mbx,\mbz) \big) + X_\mbz  \big( z_j \cdot \widetilde{\HH}_j (\mbx,\mbz)\big)\Big)\ .
$$

\noindent At this stage we need the following observation.
\begin{lemma}\label{LEM:7.5}
 Suppose $\HH$ is a proper kernel and $X$ is a left-invariant vector field of degree 1 (that is, in the subspace $\mathfrak{h}_1$ of the Lie algebra). Then for each  $(k,j)$, $1 \leq  k \leq n$, $1\le j\le c_k$, $ X \big(z_{k,j} \HH(\mbz)\big)$ is a proper kernel.
\end{lemma}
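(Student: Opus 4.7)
The plan is to prove $X(z_{k,j}\HH)\in\PP_0(\EEE)$ by passing to the Fourier-transform side and verifying $\widehat{X(z_{k,j}\HH)}\in\MM_\infty(\EEE)$, appealing to Theorems \ref{Thm6.1}--\ref{Thm6.2}. I first exploit the stratified structure of $\mathfrak g$ to write $X=\partial_{z_{1,i}}+\sum_{l\ge 2,\,s}P^{(i)}_{l,s}(\mbz)\,\partial_{z_{l,s}}$, where each $P^{(i)}_{l,s}$ is a polynomial of automorphic degree $l-1$ depending only on the coordinates of weight strictly smaller than $l$. The product rule then gives $X(z_{k,j}\HH) = (Xz_{k,j})\HH + z_{k,j}\,X\HH$, and I treat the two summands separately.

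For the first summand, $Xz_{k,j}$ is either the constant $\delta_{i,j}$ (when $k=1$) or the polynomial $P^{(i)}_{k,j}$ of automorphic degree $k-1$ in the variables of blocks of weight $<k$. Its Fourier transform is the differential operator $P^{(i)}_{k,j}(-\nabla_\xib/(2\pi i))$, of total weighted order $k-1$ and involving only the derivatives $\partial_{\xi_{l,s}}$ with $l<k$; applied to $m=\widehat\HH\in\MM_\infty(\EEE)$, each such derivative contributes only a harmless extra factor $(1+\widehat N_p)^{-[\alpha_p]}\le 1$ to the multiplier bound, so the result remains in $\MM_\infty(\EEE)$ and $(Xz_{k,j})\HH\in\PP_0(\EEE)$.

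For the second summand I would compute $\widehat{z_{k,j}\,X\HH}(\xib)=-(2\pi i)^{-1}\partial_{\xi_{k,j}}\widehat{X\HH}(\xib)$. Since $P^{(i)}_{l,s}(-\nabla_\xib/(2\pi i))$ commutes with multiplication by $\xi_{l,s}$ (its derivatives lying only in blocks $<l$), $\widehat{X\HH}$ expands as a finite sum of terms of the form $\xi_{l,s}\partial^\alpha m$, with $[\alpha]=l-1$ and $\alpha$ supported in blocks of weight $<l$. Applying $\partial_{\xi_{k,j}}$ and the product rule produces terms $\xi_{l,s}\partial^{\alpha+e_{k,j}}m$, together with residual terms $\partial^\alpha m$ arising only in the diagonal case $(l,s)=(k,j)$ when $\partial_{\xi_{k,j}}$ hits $\xi_{l,s}$; the residual terms are immediately in $\MM_\infty(\EEE)$ since $[\alpha_p]\ge 0$. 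The heart of the matter is to verify the multiplier estimate for the generic term, which reduces to the inequality $|\xib_l|\,\prod_p(1+\widehat N_p)^{-[\alpha_p]}(1+\widehat N_k)^{-k}\lesssim 1$. For $l\le k$ this is immediate from $|\xib_l|\le(1+\widehat N_k)^{l\,e(l,k)}=(1+\widehat N_k)^k$.

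The main obstacle is the case $l>k$, where the crude bound $|\xib_l|\le(1+\widehat N_k)^l$ leaves an unbounded deficit $(1+\widehat N_k)^{l-k}$. I would resolve this by replacing the single estimate on $|\xib_l|$ with a weighted one $|\xib_l|\le\prod_p(1+\widehat N_p)^{a_p}$, which holds whenever $\sum_p a_p/(l\,e(l,p))=1$ and $a_p\ge 0$, and by distributing the weights so that $a_p\le[\alpha_p]$ for $p\ne k$ and $a_k\le[\alpha_k]+k$; the existence of such an admissible choice reduces to the arithmetic check that the maximum feasible value $(l-1)+k/e(l,k)$ of $\sum_p a_p/e(l,p)$ is at least $l$, which holds for any $k\ge 1$. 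As a backup route, I would instead decompose $\HH$ dyadically via Corollary \ref{Cor8.8}, writing $\HH=\psi_0+\sum_{S,J}[\varphi_S^J]_J$, absorbing $z_{k,j}$ into each bump via the change of variables attached to the dilation (which produces a decay factor $2^{kj_r/e(k_r,k)}$ when $k\in I_r$, together with possibly reduced cancellation in the variable $\mbz_{k_r}$), applying $X$ to each piece, and invoking Theorem \ref{Thm3.7} on dyadic sums with weak cancellation to conclude.
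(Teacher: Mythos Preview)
Your approach is correct and follows the same Fourier-transform strategy as the paper: write $X=\partial_{z_{1,i}}+\sum_{\ell\ge 2}Q_\ell(\mbz)\cdot\nabla_{\mbz_\ell}$ with $Q_\ell$ homogeneous of degree $\ell-1$ in the variables of blocks $<\ell$, and verify that the Fourier transform of $X(z_{k,j}\HH)$ satisfies the $\MM_\infty(\EEE)$ inequalities. Your product-rule splitting and the treatment of the first summand and of the residual terms are fine.

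However, you are over-complicating what you call ``the main obstacle'' (the case $\ell>k$). There is no obstacle: when $\ell>k$, \emph{every} derivative appearing in $Q_\ell(\partial_{\xib})\partial_{\xi_{k,j}}$ lies in a block $p<\ell$ (since $Q_\ell$ involves only blocks $<\ell$ and $k<\ell$), and for any such $p$ the explicit formula \eqref{13.1iou} gives $(1+\widehat N_p)^{\ell}\gtrsim|\xib_\ell|$. Since the total weighted order of these derivatives is $(\ell-1)+k\ge \ell$, one obtains directly
\[
\prod_{p}(1+\widehat N_p)^{-[\alpha_p]}\,(1+\widehat N_k)^{-k}\ \lesssim\ |\xib_\ell|^{-\frac{(\ell-1)+k}{\ell}}\ \le\ |\xib_\ell|^{-1},
\]
which is exactly the bound you need. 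This is precisely the paper's observation that ``$\widehat N_j(\xib)^k\ge\|\xib_k\|$ if $j\le k$'' and that ``the situation when $k>1$\dots leads to an even better estimate.'' Your weighted-distribution argument with the arithmetic check $(\ell-1)+k/e(\ell,k)\ge\ell$ is valid but unnecessary, and the dyadic backup route via Corollary~\ref{Cor8.8} is not needed at all.
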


This can be proved by using the Fourier transform characterization of proper kernels. First write $X$~as 
\begin{equation*}
 X=  \frac{\partial}{\partial z_{1,r}} + \sum^n_{\ell = 2}\sum_{s=1}^{c_\ell} q_{\ell,s} (\mbz) \frac{\partial}{\partial z_{\ell,s}}=\frac{\partial}{\partial z_{1,r}} + \sum^n_{\ell = 2}Q_\ell(\mbz)\cdot \nabla_{\mbz_\ell}
\end{equation*}
where $Q_\ell (\mbz) =  Q_\ell (\mbz_1 , \ \cdots , \ \mbz_{\ell - 1})$  are homogeneous polynomials of degree $\ell - 1$ (in the homogeneity of the automorphic dilations).  Consider first the case $k=1$. The Fourier transform of  $X\big(z_{1,j} \HH(\mbz)  \big)$ is then a linear combination of $m=\widehat \HH$  and $\bxi_\ell \cdot Q_\ell ( 2 \pi i \de_{\bxi})\, \de_{ \xi_{1,j}} m(\bxi )$. However,  it is clear from \eqref{13.1iou} that $\widehat N_j (\bxi )^k \geq \| \bxi_k \|$ if $j \leq k$. The the effect of the  factor $\bxi_\ell$, which has size $\|\bxi_\ell\|$, is counterbalanced by the effect of  $Q_\ell ( 2 \pi i \de_\bxi   ) \de_{\xi_{k,j}}$ on $m(\bxi )$, in view of the estimates for $m$. The situation when  $k>1$ (and hence $\de_{\xi_{1,j}}$ is replaced by $\de_{\xi_{k,j}}$) leads to an even better estimate, which then establishes Lemma \ref{LEM:7.5}.

\medskip

Using the lemma we see therefore that $X_\mbx [b, U]$ is an operator with an extended kernel, and hence by Theorem \ref{the:6.1} we have that $[b, U]$ maps $L^p$ to $L^p_1$, \ establishing conclusion (2) of Lemma \ref{LEM:7.4}.

Finally, for conclusion (3) we may appeal to the standard theory of pseudo-differential operators that guarantees that the commutator $[a, V]$ is an operator of \textquotedblleft order 1\textquotedblright, and in particular maps $L^p \text{ to }L^p_1$; or we may argue in the same spirit as in the case of conclusion (2), but here the details are simpler. In any case, Lemma 7.4 is now established and this proves that $[T_0 , P]$ is bounded from $L^p$ to $L^p_1$. 
\end{proof}

To complete the proof of Theorem \ref{the:7.1} it remains to deal with $[T_\infty , P] = T_\infty P - PT_\infty$. We will see that the terms $T_\infty P$ and $PT_\infty$ are separately smoothing operators (in fact, infinitely smoothing) because of disjointness of relevant supports. Consider first $T_\infty P$. Then $T_\infty (Pf) = T_\infty (F)$, where $F$ is supported in the ball $B$ (where the symbol $a(\mbx, \bxi )$ is supported). However
\begin{equation*} 
   T_\infty (F)(\mbx) = \int \KK_\infty (\mbx, \mby^{-1}\mbx) F(\mby) d\mby
\end{equation*}
and $\KK_\infty (\mbx, \mbz)$ is supported for $\mbx$ outside $B^\ast$. Hence in the integral above we have that $| \mby^{-1}\mbx | \geq c > 0$, for an appropriate $c$. Moreover if $X$ is any left\textendash invariant operator then $X(T_\infty F)(\mbx))$ is given by $\int \KK_\infty^\prime (\mbx, \mby^{-1}\mbx) F(\mby) d\mby$ with $K_\infty^\prime (\mbx, \mbz) = X_\mbx \KK_\infty (\mbx, \mbz) + X_\mbz \KK_\infty (\mbx, \mbz)$. Since now $\KK^\prime_\infty (\mbx, \mbz)$ is bounded and has rapid decay in $\mbz$, (we can restrict ourselves here to $| \mbz | \geq c > 0$), this ensures that $T_\infty P$ maps $L^p$ to $L^p_1$. In fact the same argument shows that $X^\alpha (T_\infty P)$ maps $L^p$ to itself for any monomial $X^\alpha$ of left\textendash invariant differential operators.

Essentially the same argument works for $PT_\infty$. Here $(PT_\infty )(f) = \int K^L (\mbx, \mby^{-1} \mbx) f(\mby) d\mby$ by Proposition \ref{pseudo-alt}, with $F$ supported in the complement of $B^\ast$ and $\KK(\mbx, \mbz)$ supported where $\mbx \in B$. Then again $| \mby^{-1} \mbx | \geq c > 0$, and we may proceed as before. Altogether then, the proof of Theorem \ref{the:7.1} is completed.

\subsection{The space $L^p_1$}

We conclude by proving the equivalence of the condition that $f \in L^p_1$ with the Lipschitz\textendash type condition~\eqref{lipschitz}.

\begin{proposition} Let $1 < p < \infty$. Then
\beas
f\in L^{p}_{1} \qquad\Longleftrightarrow \qquad \text{$|| f(\mbx \h) - f(\mbx) ||_{L_p} = O\big( N_n( \h ) \big)$, as $h \to 0$}.
\eeas
\end{proposition}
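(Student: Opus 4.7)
The plan is to prove the two implications separately, exploiting the stratified structure of $G$ in an essential way.

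For the forward direction, suppose $f \in L^p_1$. I will first treat the case when $\mathbf{h}$ lies in the exponential of the generating subspace $\mathfrak{h}_1$. For $X \in \mathfrak{h}_1$ and $\mathbf{h} = \exp(sX)$, the standard identity
\[
f(\mathbf{x}\mathbf{h}) - f(\mathbf{x}) = \int_0^s (Xf)(\mathbf{x}\exp(tX))\,dt
\]
(first for smooth $f$ via a mollification argument, then extended to $f \in L^p_1$ by density) yields $\|f(\cdot\mathbf{h}) - f\|_{L^p} \le |s|\,\|Xf\|_{L^p}$. For a general small $\mathbf{h}$, I would use the fact that on a stratified group the Carnot--Carath\'eodory distance associated with $\mathfrak{h}_1$ is equivalent to $N_n$: there exist a fixed basis $X_1,\dots,X_g$ of $\mathfrak{h}_1$ and an integer $M$, depending only on the step $n$, such that every $\mathbf{h}$ with $N_n(\mathbf{h}) \le 1$ can be written as a product $\mathbf{h} = \exp(s_1 X_{j_1})\exp(s_2 X_{j_2})\cdots\exp(s_M X_{j_M})$ with $|s_k| \lesssim N_n(\mathbf{h})$. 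Writing $f(\mathbf{x}\mathbf{h}) - f(\mathbf{x})$ as a telescoping sum along this factorization, a left-translation in $L^p$ (which is an isometry) together with the one-parameter estimate above gives $\|f(\cdot\mathbf{h}) - f\|_{L^p} \lesssim N_n(\mathbf{h})\sum_j \|X_j f\|_{L^p}$, which is the desired conclusion.

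For the converse, assume $\|f(\cdot\mathbf{h}) - f\|_{L^p} = O(N_n(\mathbf{h}))$. Fix $X \in \mathfrak{h}_1$ and set $\mathbf{h}_t = \exp(tX)$. Since $N_n(\mathbf{h}_t) \approx |t|$, the difference quotients
\[
g_t(\mathbf{x}) = \frac{f(\mathbf{x}\mathbf{h}_t) - f(\mathbf{x})}{t}
\]
form a family bounded in $L^p$ as $t \to 0$. Because $1 < p < \infty$, the space $L^p$ is reflexive, so by Banach--Alaoglu a sequence $t_k \to 0$ can be chosen along which $g_{t_k}$ converges weakly to some $g \in L^p$ with $\|g\|_{L^p} \lesssim 1$. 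Testing against $\varphi \in \CC^{\infty}_0(G)$, the change of variables $\mathbf{y} = \mathbf{x}\mathbf{h}_{t_k}$ gives
\[
\int g_{t_k}(\mathbf{x})\varphi(\mathbf{x})\,d\mathbf{x} = \int f(\mathbf{y})\,\frac{\varphi(\mathbf{y}\mathbf{h}_{t_k}^{-1}) - \varphi(\mathbf{y})}{t_k}\,d\mathbf{y},
\]
and the inner difference quotient converges uniformly on the support of $f \cdot \mathbf{1}_{\mathrm{supp}\,\varphi}$ (after localization) to $-\widetilde{X}\varphi$, where $\widetilde{X}$ is the right-invariant vector field agreeing with $X$ at the identity. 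The integration-by-parts identity $\int f(-\widetilde{X}\varphi)\,d\mathbf{y} = \langle Xf, \varphi\rangle$ valid for distributions (using unimodularity of $G$) then identifies $g$ with the distributional derivative $Xf$. Hence $Xf \in L^p$ for every $X \in \mathfrak{h}_1$, so $f \in L^p_1$.

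The main obstacle I anticipate is the ball--box-type factorization used in the forward direction: one must be sure that every small $\mathbf{h}$ admits a uniform-length factorization into horizontal exponentials with the correct control by $N_n(\mathbf{h})$. This is a consequence of the stratification hypothesis (the Chow--Rashevsky theorem together with the equivalence between the Carnot--Carath\'eodory distance and the homogeneous norm on a stratified group), but it is the only nontrivial geometric input; once granted, both implications reduce to straightforward $L^p$-continuity of left translation and the reflexivity of $L^p$ for $1 < p < \infty$.
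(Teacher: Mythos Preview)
Your proof is correct and follows essentially the same route as the paper: the forward implication via the integral formula along one-parameter horizontal subgroups together with a bounded-length horizontal factorization of small $\mathbf{h}$ (the paper states this as a separate lemma, proved via a generating-set argument rather than the Carnot--Carath\'eodory distance, but the content is the same), and the converse via bounded difference quotients and weak compactness in $L^p$. One cosmetic slip: in your change-of-variables computation the limit of $\frac{\varphi(\mathbf{y}\mathbf{h}_t^{-1})-\varphi(\mathbf{y})}{t}$ is $-X\varphi$ with the \emph{left}-invariant $X$ (since $\mathbf{h}_t^{-1}=\exp(-tX)$ acts on the right), not $-\widetilde{X}\varphi$; the pairing identity you want is then simply $\int f(-X\varphi)=\langle Xf,\varphi\rangle$.
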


\begin{proof}
Let $X$ be a left-invariant vector field in the subspace $\mathfrak{h}_1$ of the Lie algebra, and set $\mbh_t = \exp (t X)$. Then $| \mbh_t |=\|\mbh_t\| \approx | t | , \ t \in \mathbb{R}$. So 
\beas
\text{$|| f(\mbx \h) - f(\mbx) ||_{L_p} = O\big( N_n( \h ) \big)$, as $h \to 0$}
\Longrightarrow
\text{$||\frac{1}{t} \big(f(\mbx \mbh_t ) - f(\mbx)\big)||_{L^{p}}= O(1)$ \ \ as $t \to 0.$}
\eeas
Thus by the weak compactness of $L^p$, there is a sequence $\{ t_n \} , \ t_n \to 0$ so that $\frac{1}{t_n} \big(  f(\mbx  \mbh_{t_n} ) - f(\mbx) \big) $ converges  weakly in $L^p$ and hence in the sense of distributions. Thus in that sense, $X(f) \in L^p$. Since this holds for all $X \in \mathfrak{h}_1$, we have that $f \in L^p_1$.

Conversely, suppose $f \in L^p_1$, \ and $\mbh = \exp (s X)$ where $X \in \mathfrak{h}_1$. Then $N_n( \h )=| \mbh | \lesssim \| s \|$  and
\bes
f(\mbx \mbh ) - f(\mbx) = \int^s_0 \frac{d}{dt} f\big(\mbx  \exp (t X)\big)\, dt = \int^s_0 Xf\big(\mbx \exp (t X)\big)\, dt\ .
\ees
Hence, $|| f(\mbx \mbh) - f(\mbx) ||_{L^p} \leq \| s \| \ || X(f) ||_{L^p} = O\big( N_n( \h ) \big)$.  To extend this to any element $\mbh$ we use the following assertion.

\begin{lemma}\label{LEM:7.7}
Suppose $G$ is a stratified group. Then there is an integer $M$, so that every $\mbh \in G$, can be written as $\mbh = \mbh_1  \mbh_2 \ \cdots \ \mbh_M$, where each $\mbh_j \in \exp \mathfrak{h}_1$ and $ | \h_j | \leq c N_n( \h )$ for all $1 \leq j \leq M$.
\end{lemma}

\noindent Assuming for a moment this to be true, the desired estimate for $f(\mbx \mbh) - f(\mbx)$ then follows for the special case when $\mbh \in\exp \fh_1$ by writing 
\bes
f(\mbx \mbh) - f(\mbx) = \sum^M_{j=1}\limits f(\mbx  \mbh_1 \cdots  \mbh_j ) - f(\mbx  \mbh_1 \cdots \mbh_{j-1} ).\qquad\qedhere
\ees
\end{proof}

\begin{proof}[Proof of Lemma \ref{LEM:7.7}]
Let $V=\exp(B_1)$, where $B_1$ is the unit ball in $\fh_1$, Then $V$ is an analytic submanifold of $G$ which generates $G$. By Proposition 1.1 of \cite{MR937632}, there exists an integer $m$ such that $V^m$ contains an open set $A$. Since $V=V\inv$, $V^{2m}$ contains a neighborhood of the identity. This proves the statement for $|\mbh|<\del$ for some $\del>0$ with $M=2m$. By homogeneity, the same holds for every $\mbh\in G$.
\end{proof}

\section{Appendix I: Properties of cones $\Gamma(\AAA)$}\label{Cones}

In this appendix we study properties of the cone $\Gamma(\EEE)$ defined in (\ref{5.1qwe}). Our results provide motivation for the basic hypothesis (\ref{2.5}) imposed on the matrix $\EEE$.

\subsection{Optimal inequalities and the basic hypothesis}

\begin{definition}
Let $\AAA = \big\{a(j,k):1\leq j,k\leq n\big\}$ be an $n\times n$ matrix of strictly positive real numbers. Then 
\beas
\Gamma^{o}(\AAA) &= \Big\{\t=(t_{1}, \ldots, t_{n})\in \R^{n}: a(j,k)t_{k}< t_{j}<0\Big\}\\
\Gamma(\AAA) &= \Big\{\t=(t_{1}, \ldots, t_{n})\in \R^{n}: a(j,k)t_{k}\leq t_{j}<0\Big\}
\eeas 
are (possibly empty) convex polyhedral cones contained in the negative orthant of $\R^{n}$. The dimension of either cone is the maximal number of linearly independent vectors it contains. Equivalently, this is the dimension of the smallest subspace of $\R^{n}$ containing the cone. 
\end{definition}

$\Gamma(\AAA)$ is defined by a collection of linear inequalities involving pairs of coordinates. It may happen that different matrices (\textit{i.e.} different sets of such inequalities) give rise to the same cone. The following Lemma shows that there is a unique optimal set of inequalities, and that the corresponding matrix satisfies the basic hypothesis (\ref{2.5}).

\begin{lemma}\label{Lem3.2}
Let $\AAA=\{a(j,k)\}$ be an $n\times n$ matrix of strictly positive real numbers, and suppose that the corresponding cone $\Gamma(\AAA)$ is not empty. Then there exists a unique $n\times n$ matrix $\widetilde{\AAA}=\big\{\tilde a(j,k)\big\}$ with the following properties:
\begin{enumerate}[{\rm(a)}]
\item \label{Lem3.2a}
The coefficients of $\widetilde{\AAA}$ satisfy the inequalities (\ref{2.5}) of the basic hypothesis:
\beas
1&= \tilde a(j,j) &&\text{for $1 \leq j \leq n$},\\
0&< \tilde a(j,k) \leq \tilde a(j,l)\tilde a(l,k) &&\text{for $1 \leq j,k,l \leq n$}.
\eeas
\item\label{Lem3.2b} 
$\Gamma(\AAA) = \Gamma(\widetilde{\AAA})$.
\end{enumerate}
Moreover, if $\widetilde{\AAA}$ satisfies {\rm(\ref{Lem3.2a})} and {\rm(\ref{Lem3.2b})}, then $\tilde a(j,k)\leq a(j,k)$ for $1 \leq j,k \leq n$.
\end{lemma}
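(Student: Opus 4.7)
The plan is to construct $\widetilde\AAA$ explicitly as the ``tightest'' coefficients obtained by optimizing over paths, and then to deduce uniqueness from a cone-intrinsic characterization.

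First, I would move to the variables $s_j = -t_j > 0$, under which $\Gamma(\AAA) = \{s \in \R_{>0}^n : s_j \leq a(j,k) s_k \text{ for all } j,k\}$. The key preliminary observation is that if $\Gamma(\AAA) \neq \emptyset$, then for every cycle $j_0 \to j_1 \to \cdots \to j_m \to j_0$ the product $a(j_0,j_1)a(j_1,j_2)\cdots a(j_m,j_0) \geq 1$, obtained by multiplying together the ratios $s_{j_i}/s_{j_{i+1}} \leq a(j_i,j_{i+1})$ around the loop. In particular $a(j,j) \geq 1$ for all $j$.

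Second, I would define
\[
\tilde a(j,k) = \inf\Big\{ a(j,l_1) a(l_1,l_2) \cdots a(l_{m-1},k) : m \geq 1,\ l_1,\ldots,l_{m-1} \in \{1,\ldots,n\}\Big\}
\]
for $j \neq k$, and $\tilde a(j,j) = 1$. The cycle estimate above ensures that inserting a cycle into any path can only increase the product; hence the infimum is attained on a simple path (there are only finitely many) and is a strictly positive minimum. The basic hypothesis for $\widetilde\AAA$ is then immediate: $\tilde a(j,j)=1$ by definition, and $\tilde a(j,k) \leq \tilde a(j,l)\tilde a(l,k)$ follows by concatenating an optimal path from $j$ to $l$ with one from $l$ to $k$. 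The equality $\Gamma(\widetilde\AAA) = \Gamma(\AAA)$ is also easy: the inclusion $\Gamma(\widetilde\AAA) \subseteq \Gamma(\AAA)$ follows from $\tilde a(j,k) \leq a(j,k)$ (take the direct path, which also gives the ``moreover'' statement), and the reverse inclusion follows by iterating the constraints along a path: for $s \in \Gamma(\AAA)$,
\[
s_j \leq a(j,l_1) s_{l_1} \leq a(j,l_1) a(l_1,l_2) s_{l_2} \leq \cdots \leq a(j,l_1)\cdots a(l_{m-1},k) s_k,
\]
and taking the infimum over paths yields $s_j \leq \tilde a(j,k) s_k$.

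Third, for uniqueness I would establish the cone-intrinsic identity
\[
\tilde a(j,k) = \sup\Big\{\tfrac{s_j}{s_k} : s \in \Gamma(\AAA)\Big\}.
\]
The inequality ``$\geq$'' is the content of the iteration above. For the reverse inequality, the candidate $s_l := \tilde a(l,k)$ (with $s_k = \tilde a(k,k) = 1$) lies in $\Gamma(\widetilde\AAA) = \Gamma(\AAA)$ by the basic hypothesis satisfied by $\widetilde\AAA$, and realizes the ratio $s_j/s_k = \tilde a(j,k)$. Since the right-hand side depends only on $\Gamma(\AAA)$, any other matrix $\widetilde\AAA'$ satisfying (a) and (b) must have entries $\tilde a'(j,k)$ equal to the same supremum, proving $\widetilde\AAA' = \widetilde\AAA$.

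The step I expect to require the most care is verifying that the candidate vector $s_l = \tilde a(l,k)$ actually lies in $\Gamma(\AAA)$ (in particular, that all coordinates are strictly positive, which in turn requires checking that the path-optimization never collapses to zero); this is where the non-degeneracy provided by the cycle estimate is essential.
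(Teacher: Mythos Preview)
Your proposal is correct, and the argument is complete as outlined. The one point you flag as delicate---positivity of the candidate vector $s_l=\tilde a(l,k)$---is actually immediate once you have restricted to simple paths: a finite minimum of finite products of positive reals is positive, and you have already used the cycle estimate to reduce to simple paths.

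However, your route differs from the paper's. The paper does not construct $\widetilde\AAA$ via path products; instead it \emph{defines} $\tilde a(j,k)=\sup_{\t\in\Gamma(\AAA)}t_j/t_k$ directly and verifies (a) and (b) in a couple of lines (the basic hypothesis follows from $t_j/t_k=(t_j/t_l)(t_l/t_k)$, and $\Gamma(\AAA)=\Gamma(\widetilde\AAA)$ is immediate from the definition of the supremum). Uniqueness is then proved by showing that for any other matrix $\CCC$ satisfying (a) and (b), the column vectors $\w_l=(-c(1,l),\ldots,-c(n,l))$ lie in $\Gamma(\CCC)=\Gamma(\widetilde\AAA)$, which forces $c(j,k)\leq\tilde a(j,k)$; the reverse inequality comes from the sup definition. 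The path-product formula you take as your starting point appears in the paper only as an unproved remark after the proof. So your approach is more constructive and yields the explicit combinatorial description of $\tilde a(j,k)$ along the way, at the cost of a longer argument; the paper's approach is shorter because the cone-intrinsic supremum is taken as the definition rather than derived as a consequence. Your uniqueness argument and the paper's are essentially the same once you have the supremum characterization in hand---note that your candidate $s_l=\tilde a(l,k)$ is exactly (the negative of) the paper's vector $\w_k$.
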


\begin{proof}
For any $\t=(t_{1}, \ldots, t_{n})\in\Gamma(\AAA)$, $t_{j}<0$ and $0<t_{j}t_{k}^{-1}\leq a(j,k)$ for $1 \leq j,k \leq n$. If $\Gamma(\AAA) \neq \emptyset$ define
\bea\label{Eqn14.2zxc}
0< \tilde a(j,k) = \sup_{\t\in \Gamma(\AAA)}\frac{t_{j}}{t_{k}}\leq a(j,k).
\eea
Then $\tilde a(j,j) = 1$ and for any $j,k,l\in \{1, \ldots, n\}$
\beas
\tilde a(j,k) &= \sup_{\t\in \Gamma(\AAA)}\frac{t_{j}}{t_{k}} =\sup_{\t\in \Gamma(\AAA)}\Big[\frac{t_{j}}{t_{l}}\frac{t_{l}}{t_{k}}\Big]\leq \sup_{\t\in \Gamma(\AAA)}\frac{t_{j}}{t_{l}}\,\sup_{\t\in \Gamma(\AAA)}\frac{t_{l}}{t_{k}}= \tilde a(j,l)\,\tilde a(l,k).
\eeas
This establishes (\ref{Lem3.2a}). 

Let $\v=(v_{1}, \ldots, v_{n})\in \Gamma(\widetilde{\AAA})$. Since $v_{k}<0$ and $\tilde a(j,k)\leq a(j,k)$, it follows that $a(j,k)v_{k}\leq \tilde a(j,k)v_{k}\leq v_{j}$, and so $v_{j}v_{k}^{-1}\leq a(j,k)$. Thus $\v\in \Gamma(\AAA)$ and it follows that $\Gamma(\widetilde{\AAA}) \subset \Gamma(\AAA)$. Conversely, if $\v=(v_{1}, \ldots, v_{n})\in \Gamma(\AAA)$, then $v_{j}v_{k}^{-1}\leq \sup_{\t\in \Gamma(\AAA)}t_{j}t_{k}^{-1}=\tilde a(j,k)$ and so $\tilde a(j,k)v_{k}\leq v_{j}$. It follows that $\Gamma(\AAA) \subset \Gamma(\widetilde{\AAA})$, and this establishes (\ref{Lem3.2b}). 

Next we show that the matrix $\widetilde{\AAA}$ is unique. Let $\CCC=\big\{c(j,k):1\leq j,k\leq n\big\}$ be any matrix satisfying (\ref{Lem3.2a}) and (\ref{Lem3.2b}). Since $\Gamma({\CCC})=\Gamma(\AAA)$, if $\t\in \Gamma({\CCC})$ then $c(j,k)t_{k}\leq t_{j}$, and so
\bes
\tilde a(j,k) = \sup_{\t\in \Gamma(\AAA)}t_{j}t_{k}^{-1}= \sup_{\t\in \Gamma({\CCC})}t_{j}t_{k}^{-1}\leq c(j,k).
\ees
To establish the reverse inequality, consider the vectors $\w_{l}=\big(-c(1,l), \ldots, -c(n,l)\big)$. Since  $\CCC$ satisfies (\ref{Lem3.2a}), we have $c(j,l)\leq c(j,k)c(k,l)$, or equivalently  $c(j,k)\big(-c(k,l)\big)\leq \big(-c(j,l)\big)$. This shows that  $\w_{l}\in\Gamma({\CCC})=\Gamma(\widetilde{\AAA})$ for $1 \leq l \leq n$. It follows that $\tilde a(j,k)(-c(k,l))\leq -c(j,l)$ for all $1 \leq j,k,l \leq n$. Letting $l=k$, we have $\tilde a(j,k)(-c(k,k))\leq -c(j,k)$, and since $c(k,k)=1$ we have $c(j,k)\leq \tilde a(j,k)$. Thus $c(j,k)=\tilde a(j,k)$, which shows that $\widetilde{\AAA}$ is unique.  We have already observed that $\tilde a(j,k)\leq a(j,k)$, and since the matrix $\widetilde \AAA$ is unique, this completes the proof.
\end{proof}
 It can be shown that, in the hypotheses of Lemma \ref{Lem3.2}, 
 \beas
 \tilde a(j,k) = 
 \begin{cases}
 \min\big\{a(j,i_1)a(i_1,i_2)\cdots a(i_r,k) : r\geq 0, \{i_1,\ldots, i_r\}\subset \{1, \ldots, n\}\big\}&\text{if $j\neq k$}\\
 1 &\text{if $j=k$}
 \end{cases}.
 \eeas

\subsection{Partial matrices}\label{PartialMatrix}

In Lemma \ref{Lem3.2} it is assumed that the cone $\Gamma(\AAA)$ is defined by the full number $n(n-1)$ inequalities $a(j,k)t_{k}\leq t_{j}$ where $1\leq j\neq k\leq n$. We also need to consider cases in which we begin with fewer inequalities. We formulate this as follows. Let $\BBB=\{b(j,k)\}$ be an $n\times n$ matrix, where this time each $b(j,k)$ is either a positive real number or the symbol $\infty$. As before we put
\beas
\Gamma(\BBB) = \Big\{\t=(t_{1}, \ldots, t_{n})\in \R^{n}: b(j,k)t_{k}\leq t_{j}<0\Big\},
\eeas
but now if $b(j,k)=\infty$, the inequality $b(j,k) t_{k}\leq t_{j}<0$ puts \emph{no} constraint on the relation between the two negative real numbers $t_{k}$ and $t_{j}$. It is still the case that $\Gamma(\BBB)$ is a polyhedral cone in the negative orthant of $\R^{n}$. We call $\BBB$ a \emph{partial matrix}.  We say that a partial matrix $\BBB=\{b(j,k)\}$ is \emph{connected} if for any two indices $j,k\in \{1, \ldots, n\}$ there are indices $i_{1}, \ldots, i_{a}\in \{1, \ldots, n\}$ (with $a$ possibly dependent on $j$ and $k$) so that 
\beas
b(j,i_{1})b(i_{1},i_{2})\cdots b(i_{l},i_{l+1})\cdots b(i_{a-1},i_{a})b(i_{a},k)<+\infty.
\eeas
It then follows that
\beas
b(j,i_{1})b(i_{1},i_{2})\cdots b(i_{j},i_{j+1})\cdots b(i_{a-1},i_{a})b(i_{a},k)t_{k}\leq t_{j
}<0
\eeas
so if $\BBB$ is connected and $\t\in \Gamma(\BBB)$ it follows that
\beas
\sup_{\t\in\Gamma(\BBB)}\frac{t_{j}}{t_{k}}\leq b(j,i_{1})b(i_{1},i_{2})\cdots b(i_{j},i_{j+1})\cdots b(i_{a-1},i_{a})b(i_{a},k)<+\infty.
\eeas

We then have the following analogue of Lemma \ref{Lem3.2}.
\begin{lemma}\label{Lem3.2zxc}
Let $\BBB=\{b(j,k)\}$ be an $n\times n$  connected, partial matrix,  and suppose that the corresponding cone $\Gamma(\BBB)$ is not empty. Then there exists a unique $n\times n$ matrix $\widetilde{\BBB}=\big\{\tilde b(j,k)\big\}$ with the following properties:
\begin{enumerate}[{\rm(a)}]
\item \label{Lem3.2azxc}
The coefficients of $\widetilde{\BBB}$ are all finite positive real numbers, and
\beas
1&= \tilde b(j,j) &&\text{for $1 \leq j \leq n$},\\
0&< \tilde b(j,k) \leq \tilde b(j,l)\tilde b(l,k) &&\text{for $1 \leq j,k,l \leq n$}.
\eeas
\item\label{Lem3.2bzxc} 
$\Gamma(\BBB) = \Gamma(\widetilde{\BBB})$.
\end{enumerate}
Moreover, if $\widetilde{\BBB}$ satisfies {\rm(\ref{Lem3.2a})} and {\rm(\ref{Lem3.2b})}, then $\tilde b(j,k)\leq b(j,k)$ for $1 \leq j,k \leq n$.
\end{lemma}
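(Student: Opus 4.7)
My plan is to mimic the proof of Lemma \ref{Lem3.2} almost verbatim, with the single new ingredient being that connectedness of $\BBB$ is precisely what guarantees finiteness of the analogue of the quantity defined in \eqref{Eqn14.2zxc}. Concretely, for each pair $(j,k)$ with $\Gamma(\BBB)\neq\emptyset$, set
\[
\tilde b(j,k)=\sup_{\t\in\Gamma(\BBB)}\frac{t_j}{t_k},
\]
which is a priori a positive real number or $+\infty$, and my first task is to show it is always finite.

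The finiteness is where connectedness enters. Given $j,k$, by assumption there exist indices $i_1,\ldots,i_a$ with
\[
b(j,i_1)\,b(i_1,i_2)\cdots b(i_{a-1},i_a)\,b(i_a,k)=C_{j,k}<\infty.
\]
Since every $\t\in\Gamma(\BBB)$ satisfies each finite inequality in the product, chaining them (each factor multiplies a negative number by a positive constant and preserves the ordering) gives $C_{j,k}t_k\le t_j<0$, hence $t_j/t_k\le C_{j,k}$. Taking the supremum shows $\tilde b(j,k)\le C_{j,k}<\infty$, and in particular $\tilde b(j,k)\le b(j,k)$ whenever $b(j,k)<\infty$.

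Once finiteness is established, the remainder of the argument is a direct transcription of the proof of Lemma \ref{Lem3.2}. The identity $\tilde b(j,j)=1$ is immediate, and the basic hypothesis
\[
\tilde b(j,k)=\sup_{\t\in\Gamma(\BBB)}\frac{t_j}{t_k}=\sup_{\t\in\Gamma(\BBB)}\frac{t_j}{t_l}\frac{t_l}{t_k}\le\tilde b(j,l)\,\tilde b(l,k)
\]
follows from the same splitting. For the cone identity $\Gamma(\BBB)=\Gamma(\widetilde\BBB)$, the inclusion $\Gamma(\widetilde\BBB)\subseteq\Gamma(\BBB)$ uses $\tilde b(j,k)\le b(j,k)$ on the finite entries of $\BBB$ (and is automatic on the $\infty$ entries since $\widetilde\BBB$ imposes only a finite constraint), while $\Gamma(\BBB)\subseteq\Gamma(\widetilde\BBB)$ is the direct consequence of the definition of $\tilde b(j,k)$ as a supremum over $\Gamma(\BBB)$. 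For uniqueness, if $\CCC=\{c(j,k)\}$ is any matrix with all entries finite positive reals satisfying (a) and (b), then on the one hand $c(j,k)\ge \sup_{\t\in\Gamma(\CCC)}t_j/t_k=\tilde b(j,k)$; on the other, the vectors $\w_l=(-c(1,l),\ldots,-c(n,l))$ lie in $\Gamma(\CCC)=\Gamma(\widetilde\BBB)$ by the basic hypothesis on $\CCC$, and evaluating $\tilde b(j,k)(-c(k,l))\le -c(j,l)$ at $l=k$ gives $c(j,k)\le\tilde b(j,k)$. Finally, the inequality $\tilde b(j,k)\le b(j,k)$ follows from the definition on finite entries and is trivially true on $\infty$ entries.

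The only genuinely new piece of work compared to Lemma \ref{Lem3.2} is the finiteness argument in the second paragraph, which is also the main conceptual point: connectedness is exactly the hypothesis needed to propagate finite constraints across all pairs of indices so that the reduced matrix $\widetilde\BBB$ has no infinite entries. Everything else is a routine re-run of the earlier proof.
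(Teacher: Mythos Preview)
Your proof is correct and follows essentially the same approach as the paper: define $\tilde b(j,k)=\sup_{\t\in\Gamma(\BBB)}t_j/t_k$, use connectedness to establish finiteness via chaining of the finite inequalities, and then run the rest of the argument from Lemma~\ref{Lem3.2} verbatim. The paper's own proof is a terse one-paragraph sketch making exactly this point, so your write-up is in fact more detailed than what appears there.
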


\begin{proof} There is only a minor change from the proof of Lemma \ref{Lem3.2}.  Since $b(j,k)$ might equal $\infty$ we cannot directly conclude  that $\sup_{\t\in \Gamma(\BBB)} t_{j}\,t_{k}^{-1}$ is finite by observing that it is bounded by $b(j,k)$ as in equation (\ref{Eqn14.2zxc}). However, since $\BBB$ is connected, it is still the case that  $\sup_{\t\in \Gamma(\BBB)} t_{j}\,t_{k}^{-1}<\infty$, and we can define $\widetilde b(j,k)$ to the this supremum. The proof of (\ref{Lem3.2azxc}) then proceeds as before, and we set $\Gamma(\widetilde\BBB) = \Big\{\t=(t_{1}, \ldots, t_{n})\in\R^{n}: \widetilde b(j,k)t_{k}\leq t_{j}<0\Big\}$. Note that if $b(j,k)<\infty$ then $\widetilde b(j,k) =\sup_{\t\in\Gamma(\BBB)}\frac{t_{j}}{t_{k}}\leq b(j,k)$ as before. The proofs that $\Gamma(\widetilde \BBB)=\Gamma(\BBB)$ and the uniqueness of $\BBB$ also follow as before.
\end{proof}

\subsection{Projections}

Let $1<p<n$ and let $\pi_{n,p}:\R^{n}\to \R^{p}$ be the projection onto the first $p$ coordinates: $\pi_{n,p}(x_{1}, \ldots, x_{n})=(x_{1}, \ldots, x_{p})$.  Let $\AAA_{n}=\{a(j,k):1\leq j,k\leq n\}$ be an $n\times n$ matrix with strictly positive entries which satisfies the basic hypotheses
\beas
a(j,j) &= 1&&&&1\leq j \leq n,\\
a(j,l)&\leq a(j,k)a(k,l) &&& &1\leq j,k,l\leq n.
\eeas
Let $\AAA_{p}=\{a(j,k):1\leq j,k\leq p\}$ be the corresponding $p\times p$ matrix (which of course also satisfies the basic hypotheses). We than have two cones:
\beas
\Gamma(\AAA_{n}) &= \Big\{\t=(t_{1}, \ldots, t_{n})\in \R^{n}: a(j,k)t_{k}\leq t_{j}<0,\,\, 1\leq j,k\leq n\Big\},\\
\Gamma(\AAA_{p}) &= \Big\{\u=(u_{1}, \ldots, u_{p})\in \R^{p}: a(j,k)u_{k}\leq u_{j}<0,\,\,\leq j,k\leq p\Big\}.
\eeas
\begin{lemma}\label{Lem14.4zxc}
The projection $\pi_{n,p}$ maps the cone $\Gamma(\AAA_{n})$ onto the cone $\Gamma(\AAA_{p})$.
\end{lemma}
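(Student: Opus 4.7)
The plan is to prove the two inclusions separately, with the nontrivial content living entirely in the surjectivity.

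First, I will observe that $\pi_{n,p}\big(\Gamma(\AAA_n)\big)\subset \Gamma(\AAA_p)$ is essentially tautological: if $\t=(t_1,\dots,t_n)\in\Gamma(\AAA_n)$ then in particular $a(j,k)t_k\le t_j<0$ holds for all $1\le j,k\le p$, since these are a subset of the defining inequalities of $\Gamma(\AAA_n)$. Hence $\pi_{n,p}(\t)\in\Gamma(\AAA_p)$.

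For the surjectivity, given $\u=(u_1,\dots,u_p)\in\Gamma(\AAA_p)$, I would extend $\u$ to an $n$-tuple $\t=(t_1,\dots,t_n)$ by setting $t_j=u_j$ for $1\le j\le p$ and, for $p<j\le n$,
\[
t_j=\max_{1\le k\le p} a(j,k)\,u_k.
\]
Since each $u_k<0$ and each $a(j,k)>0$, the quantity $t_j$ is a maximum of strictly negative numbers, hence strictly negative. So $t_j<0$ for all $j$, and it remains only to verify the inequalities $a(j,k)t_k\le t_j$ for all pairs with $1\le j,k\le n$.

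The verification splits into four cases, and this is the main (but not really hard) step: the case $j,k\le p$ is given; the case $j>p, k\le p$ is immediate from the definition of $t_j$ as a maximum. For $j,k>p$, writing $t_k=a(k,k_0)u_{k_0}$ for the index $k_0\le p$ realizing the max in the definition of $t_k$, the basic hypothesis gives $a(j,k_0)\le a(j,k)a(k,k_0)$, and multiplying by $u_{k_0}<0$ yields
\[
a(j,k)t_k=a(j,k)a(k,k_0)u_{k_0}\le a(j,k_0)u_{k_0}\le t_j,
\]
the last inequality from the definition of $t_j$. Finally, for $j\le p,\ k>p$, writing $t_k=a(k,k_0)u_{k_0}$ as above with $k_0\le p$, the basic hypothesis gives $a(j,k_0)\le a(j,k)a(k,k_0)$, hence
\[
a(j,k)t_k=a(j,k)a(k,k_0)u_{k_0}\le a(j,k_0)u_{k_0}\le u_j=t_j,
\]
where the last inequality uses that $\u\in\Gamma(\AAA_p)$ and $j,k_0\le p$.

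Thus $\t\in\Gamma(\AAA_n)$ with $\pi_{n,p}(\t)=\u$, completing the surjectivity. The main subtlety — which case-three above exploits — is the use of the submultiplicative inequality from the basic hypothesis to propagate the $\u$-constraints through intermediate indices; without this hypothesis the construction would fail.
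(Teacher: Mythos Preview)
Your proof is correct. The approach differs from the paper's in a useful way: the paper reduces to the one-step case $\pi_{n,n-1}$ by factoring $\pi_{n,p}$ as a composition, and then, given $(u_1,\dots,u_{n-1})\in\Gamma(\AAA_{n-1})$, shows that the intersection of intervals $\bigcap_{j=1}^{n-1}\big[a(n,j)u_j,\,u_j/a(j,n)\big]$ is nonempty, so that \emph{some} admissible $u_n$ exists. You instead extend all missing coordinates at once via the explicit formula $t_j=\max_{1\le k\le p}a(j,k)u_k$, which is precisely the left endpoint of the interval the paper identifies (in the one-step case). Your direct construction is slightly more economical and gives a concrete section of the projection; the paper's argument, on the other hand, makes visible the full range of possible extensions at each step. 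Both rest on the same use of the basic hypothesis $a(j,k_0)\le a(j,k)a(k,k_0)$ combined with the sign $u_{k_0}<0$ to reverse inequalities.
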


\begin{proof}
If $ a(j,k)t_{k}\leq t_{j}<0$ for $1\leq j,k\leq n$, then $ a(j,k)t_{k}\leq t_{j}<0$ for $1\leq j,k\leq p$, and so it is clear that $\pi$ maps $\Gamma(\AAA_{n})$ into $\Gamma(\AAA_{p})$. The main point of the Lemma is that the mapping is \emph{onto}. But since
\beas
\pi_{n,p}= \pi_{p+1,p}\circ\pi_{p+2,p+1}\circ\cdots\circ \pi_{n,n-1}
\eeas
it clearly suffices to show that the mapping $\pi_{n,n-1}:\Gamma(\AAA_{n})\to \Gamma(\AAA_{n-1})$ is onto.

Let $(u_{1}, \ldots, u_{n-1})\in \Gamma(\AAA_{n-1})$. We must show that there exists $u_{n}<0$ so that
\beas
1\leq j \leq n-1\quad \Longrightarrow \quad a(j,n)u_{n}\leq u_{j} \quad \text{and}\quad  a(n,j)u_{j}\leq u_{n}
\eeas
 If we let $E_{j}$ be the closed interval $\left[a(n,j)u_{j}, \frac{u_{j}}{a(j,n)}\right]$, then we must show that $\bigcap_{j=1}^{n-1}E_{j}\neq \emptyset$.  For any $j,k\in \{1, \ldots, n-1\}$ we have
\beas
a(n,j)u_{j}&\leq \frac{a(k,j)}{a(k,n)}u_{j} &&&&\text{since $a(k,j)\leq a(k,n)a(n,j)$ and $u_{j}<0$,}\\
&\leq
\frac{u_{k}}{a(k,n)} &&&&\text{since $(u_{1}, \ldots, u_{n-1})\in \Gamma(\AAA_{n-1})$ and hence $a(k,j)u_{j}\leq u_{k}$.}
\eeas
It follows that for any index $1\leq\ell\leq n-1$ we have
\beas
a(n,\ell)u_{\ell}\leq \sup\Big\{a(n,j)u_{j}:1\leq j \leq n-1\Big\} \leq \inf\Big\{\frac{u_{k}}{a(k,n)}:1\leq k \leq n-1\Big\}\leq \frac{u_{\ell}}{a(\ell,n)}
\eeas
and hence we have produce a closed, non-empty interval in $\bigcap_{j=1}^{n-1}E_{j}$:
\beas
\left[\sup\Big\{a(n,j)u_{j}:1\leq j \leq n-1\Big\},\inf\Big\{\frac{u_{k}}{a(k,n)}:1\leq k \leq n-1\Big\}\right] \subset \bigcap_{j=1}^{n-1}E_{j}.
\eeas
\end{proof}

\subsection{The dimension of $\Gamma(\EEE)$}

\begin{proposition}\label{Prop3.3} Let $\EEE$ be an $n\times n$ matrix satisfying the basic hypothesis (\ref{2.5}). Then the cone $\Gamma(\EEE)$ is not empty. In particular, the $2n$ vectors
\beas\label{v_l,w_l}
\v_{l}&=\big(-e(1,l), \ldots, -e(n,l)\big), &1\leq l \leq n\\
\w_{l}&=\big(-e(l,1)^{-1}, \ldots, -e(l,n)^{-1}\big),&1\leq l \leq n
\eeas
all belong to $\Gamma(\EEE)$.
\end{proposition}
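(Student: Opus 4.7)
The plan is to verify directly that each of the $2n$ explicit vectors $\v_l$ and $\w_l$ lies in $\Gamma(\EEE)$, which will show in particular that $\Gamma(\EEE)$ is non-empty. Both verifications reduce immediately to the second line of the basic hypothesis~(\ref{2.5}).

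First I would check that $\v_l = (-e(1,l),\ldots,-e(n,l)) \in \Gamma(\EEE)$ for each $l \in \{1,\ldots,n\}$. By definition of $\Gamma(\EEE)$, I need to show that $e(j,k)(-e(k,l)) \leq -e(j,l) < 0$ for every $1 \le j,k \le n$, equivalently,
\[
e(j,l) \leq e(j,k)\,e(k,l),
\]
together with $e(j,l) > 0$. The first inequality is precisely the basic hypothesis, while strict positivity holds because all entries of $\EEE$ are strictly positive.

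Next I would check that $\w_l = (-e(l,1)^{-1},\ldots,-e(l,n)^{-1}) \in \Gamma(\EEE)$. Here I need $e(j,k)(-e(l,k)^{-1}) \leq -e(l,j)^{-1} < 0$, i.e.,
\[
e(l,k) \leq e(l,j)\,e(j,k),
\]
which is again the basic hypothesis (applied to the triple $(l,j,k)$), plus strict positivity of $e(l,j)$.

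There is no real obstacle here; the content of the proposition is that the basic hypothesis has been framed in exactly the right way to guarantee that these natural candidates $\v_l, \w_l$ sit inside the defining cone. Non-emptiness of $\Gamma(\EEE)$ follows at once (e.g.\ from $\v_l \in \Gamma(\EEE)$, noting $e(l,l)=1$ so $\v_l \neq 0$).
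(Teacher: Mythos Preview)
Your proof is correct and follows essentially the same approach as the paper's own proof: both verify directly that the defining inequalities $e(j,k)t_k \le t_j < 0$ for $\v_l$ and $\w_l$ reduce immediately to instances of the basic hypothesis $e(j,l)\le e(j,k)e(k,l)$ (respectively $e(l,k)\le e(l,j)e(j,k)$). There is no meaningful difference between the two arguments.
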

\begin{proof}
Fix $l\in \{1, \ldots, n\}$ and let $v_{j}= -e(j,l)$. Then it follows from (\ref{2.5}) that $e(j,l)\leq e(j,k)e(k,l)$ and so $e(j,k)v_{k}=-e(j,k)e(k,l) \leq -e(j,l) = v_{j}$. This shows that $\v_{l}\in \Gamma(\EEE)$. Similarly, let $w_{j}=-e(l,j)^{-1}$. Then since $e(l,k)\leq e(l,j)e(j,k)$, $e(j,k)w_{k}= -\frac{e(j,k)}{e(l,k)} \leq -\frac{1}{e(l,j)}$, and so $\w_{l}\in \Gamma(\EEE)$. This completes the proof.
\end{proof}

Since the vectors $\v_{l}$ in Proposition \ref{Prop3.3} are the negatives of the columns of $\EEE$, we have the following corollary.

\begin{corollary}\label{Cor3.4}
Let $\EEE$ be an $n\times n$ matrix satisfying the basic hypothesis (\ref{2.5}). Then the dimension of $\Gamma(\EEE)$ is greater than or equal to the rank of $\EEE$.
\end{corollary}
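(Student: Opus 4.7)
The plan is to read off the corollary directly from Proposition~\ref{Prop3.3}, with essentially no extra work needed. Proposition~\ref{Prop3.3} produces, for each $l \in \{1,\ldots,n\}$, a vector $\v_l = (-e(1,l),\ldots,-e(n,l)) \in \Gamma(\EEE)$. The vector $\v_l$ is $(-1)$ times the $l$-th column of $\EEE$, so the family $\{\v_1,\ldots,\v_n\}$ and the family of columns $\{\mathbf c_1,\ldots,\mathbf c_n\}$ of $\EEE$ span the same linear subspace of $\R^n$; in particular they have the same maximal number of linearly independent members.

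Next, by the definition of dimension of the cone $\Gamma(\EEE)$ recalled at the start of Section~\ref{Cones}, $\dim \Gamma(\EEE)$ is the maximal number of linearly independent vectors contained in $\Gamma(\EEE)$ (equivalently, the dimension of the linear span of $\Gamma(\EEE)$). Since each $\v_l$ lies in $\Gamma(\EEE)$, the cone contains at least as many linearly independent vectors as the column system of $\EEE$, namely $\mathrm{rank}(\EEE)$. Therefore
\[
\dim \Gamma(\EEE) \;\geq\; \dim \mathrm{span}\{\v_1,\ldots,\v_n\} \;=\; \dim \mathrm{span}\{\mathbf c_1,\ldots,\mathbf c_n\} \;=\; \mathrm{rank}(\EEE),
\]
which is the desired inequality.

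There is no serious obstacle here: the entire content of the corollary is already packaged in the explicit list of vectors produced by Proposition~\ref{Prop3.3}. The only point worth stating explicitly in the written proof is the equivalence between the two natural notions of dimension of a convex cone (maximal number of linearly independent elements versus dimension of its linear span), so that one may legitimately pass from ``$\Gamma(\EEE)$ contains the $\v_l$'' to ``$\dim \Gamma(\EEE) \geq \mathrm{rank}(\EEE)$''. The auxiliary vectors $\w_l$ from Proposition~\ref{Prop3.3} are not needed for this argument, although one could symmetrically use them via the row rank of $\EEE$, which equals the column rank.
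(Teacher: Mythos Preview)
Your proof is correct and matches the paper's approach exactly: the paper simply observes that the vectors $\v_l$ from Proposition~\ref{Prop3.3} are the negatives of the columns of $\EEE$, and states the corollary as an immediate consequence. Your write-up just makes explicit the step about the dimension of the span, which the paper leaves implicit.
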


\begin{lemma}\label{Lem3.5}
 Let $\EEE$ be an $n\times n$ matrix satisfying the basic hypotheses \eqref{2.5}. The following are equivalent:
\begin{enumerate}
\item[\rm(i)] the interior of $\Gamma(\EEE)$ is non-empty;
\item[\rm(ii)] the dimension of $\Gamma(\EEE)$ is $n$;
\item[\rm(iii)]  $1<e(j,k)e(k,j)$ for every pair $1\leq j\neq k\leq n$.
\end{enumerate}
\end{lemma}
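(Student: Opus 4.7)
\textbf{Proof plan for Lemma \ref{Lem3.5}.}

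The plan is to first dispose of the equivalence (i)$\Leftrightarrow$(ii) as a standard fact about convex cones, and then prove (iii)$\Leftrightarrow$(i) by producing an explicit interior point from the vectors $\v_l$ of Proposition~\ref{Prop3.3}, and conversely by showing that any equality $e(j,k)e(k,j)=1$ forces $\Gamma(\EEE)$ into a proper linear subspace.

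For (i)$\Leftrightarrow$(ii): a convex set in $\R^n$ has non-empty topological interior if and only if its affine (equivalently linear, since $\0$ lies in its closure) span is all of $\R^n$, which for a cone is the same as having dimension $n$. One direction is trivial; the other uses that $n$ linearly independent vectors $\t_1,\dots,\t_n\in\Gamma(\EEE)$ produce, via convex combinations $\sum \lambda_i\t_i$ with $\lambda_i>0$, an open subset of the cone. I would also observe that the topological interior of $\Gamma(\EEE)$ coincides with $\Gamma^o(\EEE)$ defined in \eqref{5.1qwe}, since any point in the topological interior admits small perturbations in every coordinate direction, forcing each defining inequality to be strict.

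For (iii)$\Rightarrow$(i), I would show that the vector
\[
\t=\sum_{l=1}^{n}\v_l=\Big(-\sum_{l=1}^{n}e(1,l),\ldots,-\sum_{l=1}^{n}e(n,l)\Big)
\]
lies in $\Gamma^o(\EEE)$. For each pair $j\neq k$ the required strict inequality $e(j,k)t_k<t_j$ translates into
\[
\sum_{l=1}^{n}\bigl[e(j,k)e(k,l)-e(j,l)\bigr]>0.
\]
Every summand is $\ge 0$ by the basic hypothesis \eqref{2.5}, and the single summand with $l=j$ equals $e(j,k)e(k,j)-1$, which is strictly positive by assumption (iii). Together with $t_j<0$, this places $\t$ in $\Gamma^o(\EEE)$, proving (i).

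The remaining implication (i)$\Rightarrow$(iii) I would prove by contrapositive: assume $e(j,k)e(k,j)=1$ for some $j\neq k$ (this is the only possibility besides (iii), because \eqref{2.5} applied to $(j,k,j)$ always gives $e(j,k)e(k,j)\ge e(j,j)=1$). For any $\t\in\Gamma(\EEE)$ one has simultaneously $e(j,k)t_k\le t_j$ and $e(k,j)t_j\le t_k$; multiplying the second by $e(j,k)>0$ and using $e(j,k)e(k,j)=1$ yields $t_j\ge e(j,k)t_k$, so $t_j=e(j,k)t_k$. Hence $\Gamma(\EEE)$ is contained in the proper linear subspace $\{t_j=e(j,k)t_k\}$ and therefore has empty interior, contradicting (i). The main ``obstacle,'' if any, is bookkeeping: one must handle carefully that equality $e(j,k)e(k,j)=1$ is automatic in the basic hypothesis and that the strict/non-strict structure of $\Gamma(\EEE)$ matches the notion of topological interior; both are straightforward once written out.
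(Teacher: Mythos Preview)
Your proof is correct. The equivalence (i)$\Leftrightarrow$(ii) and the implication (i)$\Rightarrow$(iii) are handled essentially as in the paper (the paper phrases the latter via projection onto the $(x_j,x_k)$-plane, but the content is identical to your contrapositive).

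Where you genuinely differ is in (iii)$\Rightarrow$(i). The paper argues by induction on $n$: for $n=2$ it computes $\det(\EEE)=1-e(1,2)e(2,1)<0$ and invokes Corollary~\ref{Cor3.4}; for the inductive step it takes an interior point $\t'$ of the $(n-1)$-dimensional cone $\Gamma(\EEE')$ and shows the fiber of admissible $t_n$ over $\t'$ is a nondegenerate interval. Your argument is more direct: you exhibit the single explicit point $\t=\sum_{l}\v_l$ and verify the strict inequalities termwise, the key observation being that the $l=j$ summand $e(j,k)e(k,j)-1$ is strictly positive while all others are nonnegative by~\eqref{2.5}. This avoids induction entirely and makes transparent exactly where the strict inequality (iii) is used. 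The paper's inductive approach, by contrast, yields as a byproduct the fiber structure of $\Gamma(\EEE)$ over $\Gamma(\EEE')$, which is of independent use elsewhere (e.g.\ in the surjectivity of projections, Lemma~\ref{Lem14.4zxc}).
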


\begin{proof}
Obviously (i) implies (ii). Conversely, since $\Gamma(\EEE)$ is convex, if it  contains a basis of $\bR^n$ it also containes the convex hull of this basis together with 0, which is open.

In order to prove that (iii) implies (i), we argue by induction on $n$. For $n=1$, $\Gamma(\EEE)=(-\infty,0)$ which has dimension $1$. For $n=2$, $\det(\EEE) = 1-e(1,2)e(2,1)<0$, so the rank of $\EEE$ is $2$, and hence by Corollary \ref{Cor3.4} the dimension of $\Gamma(\EEE)$ is $2$ and its interior is a nontrivial angle. 

Thus assume that the Lemma is true for all $(n-1)\times (n-1)$ matrices $\EEE'$ satisfying equation (\ref{2.5}). Let $\EEE$ be an $n\times n$ matrix satisfying equation (\ref{2.5}) such that $1<e(j,k)e(k,j)$ for every pair $1\leq j\neq k\leq n$. Let $\EEE'$ be the sub-matrix consisting of the first $n-1$ rows and columns. Then clearly $\EEE'$ also satisfies equation (\ref{2.5}) and the hypothesis of the Lemma, so by the induction hypothesis, the dimension of $\Gamma(\EEE')$ is $n-1$, and $\Gamma(\EEE')$ contains a non-empty open set $U\subset\R^{n-1}$. 

Let $\t'=(t_{1}, \ldots, t_{n-1})\in U$. Then the components of $\t'$ satisfy the strict inequalities $e(k,j)t_j<t_k$ for all $j\ne k$. Since $t_{j}<0$ and $e(k,j)\leq e(k,n)e(n,j)$, it follows that $e(k,n)e(n,j)t_{j}\leq e(k,j)t_{j}< t_{k}$ and so $e(n,j)t_{j}< e(k,n)^{-1}t_{k}$ for all $j,k$. Therefore
\be\label{3.3rr}
\max\Big\{e(n,j)t_{j}:1\leq j \leq n-1\Big\}< \min\Big\{e(k,n)^{-1}t_{k}:1\leq k \leq n-1\Big\}.
\ee
But if $\t'\in \Gamma(\EEE')$ then $\t=(\t',t_{n})=(t_{1}, \ldots, t_{n-1}, t_{n})$ in $\Gamma(\EEE)$ if and only if for all $1 \leq j,k \leq n-1$,
\beas
\max\Big\{e(n,j)t_{j}:1\leq j \leq n-1\Big\}\leq t_{n}\leq \min\Big\{e(k,n)^{-1}t_{k}:1\leq k \leq n-1\Big\}.
\eeas 

It follows from (\ref{3.3rr}) that the set of $t_{n}$ for which this is true is non empty and contains the open interval $\big(m(\t'),M(t')\big)$ where 
\beas
m(\t') &=\max\Big\{e(n,j)t_{j}:1\leq j \leq n-1\Big\},\\
M(\t') &= \min\Big\{e(k,n)^{-1}t_{k}:1\leq k \leq n-1\Big\}.
\eeas

 Then
\bes
\Big\{(\t',t_{n}):\text{$\t'\in U$ and $m(\t')<t_{n} <M(\t')$}\Big\}
\ees
is an open subset of $\Gamma(\EEE)$. 

Conversely, if $1<e(j,k)e(k,j)$ for every pair $1\leq j\neq k \leq n$, it follows from Lemma \ref{Lem3.5} in Appendix I that $\Gamma(\EEE)$ has dimension $n$ and in particular has non-empty interior. On the other hand, if $\Gamma^{o}(\EEE)$ is non-empty, then the projection of $\Gamma(\EEE)$ onto the two dimensional subspace spanned by $x_{j}$ and $x_{k}$ is open and non-empty. Since this projection is contained in $\{(x_{j},x_{k})\in\R^{2}: e(j,k)x_{k}<x_{j}<e(k,j)^{-1}x_{k}\}$, it follows that $1<e(j,k)e(k,j)$.
\end{proof}

\begin{proposition}\label{Prop3.61}
Let $\EEE$ be an $n\times n$ matrix satisfying the basic hypothesis (\ref{2.5}), and suppose that $1 <e(j,k)e(k,j)$ for all $j\neq k$. Then for any indices $k_{1}, \ldots, k_{s}$ it follows that
\bes
1<e(k_{1},k_{2})e(k_{2},k_{3}) \cdots e(k_{s-1},k_{s})e(k_{s},k_{1})
\ees
provided that the indices $\{k_{1}, \ldots, k_{s}\}$ are not all the same.
\end{proposition}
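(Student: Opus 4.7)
My plan is to prove this by induction on the cycle length $s$, with the case $s=2$ being exactly the strict hypothesis $1 < e(j,k)e(k,j)$ for $j\ne k$.

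For $s\ge 3$, suppose the indices $k_1,\ldots,k_s$ are not all equal, and set $P = e(k_1,k_2)e(k_2,k_3)\cdots e(k_s,k_1)$ (indices read cyclically). I will distinguish two cases depending on whether some cyclically adjacent pair agrees.

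\emph{Case A:} There exist cyclically adjacent indices $k_i = k_{i+1}$. Then $e(k_i,k_{i+1}) = e(k_i,k_i) = 1$, and after substituting $k_{i+1} = k_i$ in $e(k_{i+1},k_{i+2})$ the product $P$ becomes the cyclic product of length $s-1$ on the indices $k_1,\ldots,k_i,k_{i+2},\ldots,k_s$. Since the original list contained at least two distinct values and the removed occurrence of $k_{i+1}$ was a duplicate of $k_i$, the reduced list is still not all equal, so the inductive hypothesis applies to give $P > 1$.

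\emph{Case B:} No cyclically adjacent pair agrees, i.e.\ $k_i \ne k_{i+1}$ for every $i$. Apply the basic hypothesis to merge the last two factors:
\[
e(k_{s-1},k_s)e(k_s,k_1) \;\ge\; e(k_{s-1},k_1),
\]
so that
\[
P \;\ge\; e(k_1,k_2)\,e(k_2,k_3)\cdots e(k_{s-2},k_{s-1})\,e(k_{s-1},k_1),
\]
a cyclic product of length $s-1$ on the indices $k_1,\ldots,k_{s-1}$. These are not all equal because $k_1 \ne k_2$ by the Case B assumption. The inductive hypothesis now gives that the right-hand side is strictly greater than $1$, and hence $P > 1$.

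The only subtle point, which I expect to be the main thing to check carefully, is that the inductive hypothesis remains applicable after reduction --- i.e.\ that in both cases the reduced cycle still has length $\ge 2$ and still fails to be constant. In Case A this is immediate since the removed index was a duplicate; in Case B it is forced by the standing assumption that consecutive indices differ. Once this bookkeeping is verified the two inequalities above close the induction.
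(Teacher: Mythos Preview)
Your proof is correct and follows essentially the same inductive strategy as the paper's: both argue by induction on $s$, handle the case of an equal adjacent pair by collapsing it (since $e(j,j)=1$), and otherwise use the basic hypothesis $e(j,l)\le e(j,k)e(k,l)$ to shorten the cycle. The only cosmetic difference is that in Case~B the paper unrolls the induction into a telescoping chain $1<e(k_1,k_2)e(k_2,k_1)\le e(k_1,k_2)e(k_2,k_3)e(k_3,k_1)\le\cdots$, whereas you merge once and invoke the inductive hypothesis directly; your bookkeeping on why the reduced cycle remains nonconstant is slightly more explicit than the paper's.
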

\begin{proof}
The basic hypothesis implies that we always have
\bes
1\leq e(k_{1},k_{2})e(k_{2},k_{3}) \cdots e(k_{s-1},k_{s})e(k_{s},k_{1}).
\ees
We argue by induction on $s$. If $s=2$, the hypothesis of the Proposition implies that we cannot have $1 = e(k_{1},k_{2})e(k_{2},k_{1})$ unless $k_{1}=k_{2}$. Now suppose the statement is true for any set of indices of length $s-1$, and let $k_{1}, \ldots, k_{s}$ be indices which are not all equal. Consider the product $e(k_{1},k_{2})e(k_{2},k_{3}) \cdots e(k_{s-1},k_{s})e(k_{s},k_{1})$. Without loss of generality we can assume $k_{1}\neq k_{2}$ for otherwise $e(k_{1},k_{2})=1$ and the result follows by induction. But then
\beas
1&<e(k_{1},k_{2})e(k_{2},k_{1})\leq e(k_{1},k_{2})e(k_{2},k_{3})e(k_{3},k_{1})
\leq \cdots \\
&\cdots \leq e(k_{1},k_{2})e(k_{2},k_{3}) \cdots e(k_{s-1},k_{s})e(k_{s},k_{1})
\eeas
which completes the proof.
\end{proof}

\subsection{The reduced matrix $\EEE^{\flat}$}\label{Reduced}
 
In this section we show that if the dimension of $\Gamma(\EEE)$ is $s<n$, there is an $s\times s$ sub-matrix $\EEE^{\flat}$ of $\EEE$ satisfying the hypothesis of Lemma \ref{Lem3.5}, so that $\Gamma(\EEE^{\flat})\subset\R^{s}$ has dimension $s$, and that $\Gamma(\EEE)$ is the graph of a linear mapping from $\R^{s}$ to $\R^{n}$.

Let $\EEE=\{e(j,k)\}$ be an $n\times n$ matrix satisfying the basic hypothesis (\ref{2.5}). If $j,k\in \{1, \ldots, n\}$ write $j \sim k$ if and only if $e(j,k)e(k,j)=1$. It is immediate that $j\sim j$ and $j\sim k \Longleftrightarrow k\sim j$. Suppose that $j\sim k$ and $k \sim l$. Then since $e(j,k)e(k,j)=1$ and $e(k,l)e(l,k)=1$,
\bes
1 \leq e(j,l)e(l,j)\leq \big(e(j,k)e(k,l)\big)\,\big(e(l,k)e(k,j)\big)=\big(e(j,k)e(k,j)\big)\,\big(e(k,l)e(l,k)\big) = 1
\ees
so that $j\sim l$. Thus `$\sim$' is an equivalence relation. 

There are other equivalent formulations of this  relation, based on the following remarks.

\begin{enumerate}
\item[(i)] If $e(j,k)e(k,j)=1$, the basic hypotheses imply that, for every $\ell$, 
$$
\frac1{e(k,j)}\le\frac{e(j,\ell)}{e(k,\ell)}\le e(j,k),\qquad \frac1{e(j,k)}\le\frac{e(\ell,j)}{e(\ell,k)}\le e(k,j).
$$
Hence the above inequalities are all equalities, i.e., the $j$-th and $k$-th row of $\EEE$ are proportional. Conversely, if the $j$-th and the $k$-th rows $\EEE_j,\EEE_k$ are proportional, then $\EEE_j=e(j,k)\EEE_k$ and $\EEE_k=e(k,j)\EEE_j$, hence $e(j,k)e(k,j)=1$.

In the same way, one proves that $j\sim k$ if and only if the $j$-th and $k$-th column are proportional.
\smallskip

\item[(ii)] If $\t=(t_{1}, \ldots, t_{n})\in \Gamma(\EEE)$ and if $j \sim k$, then $e(j,k)t_{k}\leq t_{j}=e(j,k)e(k,j)t_{j}\leq e(j,k)t_{k}$ so for all $\t\in \Gamma(\EEE)$,
\be\label{3.4rrr}
\t=(t_{1}, \ldots, t_{n}) \in \Gamma(\EEE),\,\,j\sim k \qquad \Longrightarrow \qquad t_{j}=e(j,k)t_{k}.
\ee

Conversely, if $\Gamma(\EEE)$ is contained in the hyperplane $t_j=\lambda t_k$, this condition is satisfied in particular by the vectors $\v_k,\w_k$ in \eqref{v_l,w_l}. Hence $e(j,k)=\lambda=e(k,j)^{-1}$ and therefore $j\sim k$.
\end{enumerate}

\medskip

Let $\{k_{1}, \ldots, k_{s}\}$ be representatives of the distinct equivalence classes in $\{1, \ldots, n\}$ and let $\EEE^{\flat}$ be the $s\times s$ matrix whose entries are $\{e(k_{a},k_{b}):1 \leq a,b\leq s\}$. Then $\EEE^{\flat}$ satisfies the same basic hypothesis, and if $a\neq b$ we have $e(k_{a},k_{b})e(k_{b},k_{a})>1$. The matrix $\EEE^{\flat}$ is called a \textit{reduced matrix} for $\EEE$. It is not uniquely determined by $\EEE$ since it depends on the choice of representatives of the $s$ distinct equivalence classes. The number $s$ is called the \textit{reduced rank} of $\EEE$.
\medskip

Let $\Pi:\R^{n}\to\R^{s}$ be the projection given by $\Pi(\t) = \Pi(t_{1}, \ldots, t_{n}) = (t_{k_{1}}, \ldots, t_{k_{s}})$. If $\t\in \Gamma(\EEE)$ then clearly $\Pi(\t)\in \Gamma(\EEE^{\flat})$, so $\Pi:\Gamma(\EEE)\to \Gamma(\EEE^{\flat})$. On the other hand, if $\t^{\flat}= (t_{k_{1}}, \ldots, t_{k_{s}})\in \Gamma(\EEE^{\flat})$, set $\sigma(\t^{\flat}) = (t_{1}, \ldots, t_{n})$ where $t_{j}=e(j,k_{a})t_{k_{a}}$ if $j\sim k_{a}$. We claim $\sigma(\t^{\flat})\in \Gamma(\EEE)$. To see this, let $1 \leq j,k\leq n$ and suppose $j\sim k_{a}$ and $k\sim k_{b}$. Then since $e(k_{a},k_{b})t_{k_{b}}\leq t_{k_{a}}$,
\beas
e(j,k)t_{k}
&=
e(j,k)e(k,k_{b})t_{k_{b}}
=
e(j,k)e(k,k_{b})e(k_{a},k_{b})^{-1}e(k_{a},k_{b})t_{k_{b}}\\
&\leq
e(j,k)e(k,k_{b})e(k_{a},k_{b})^{-1}t_{k_{a}}
=
e(j,k)e(k,k_{b})e(k_{a},k_{b})^{-1}e(j,k_{a})^{-1}e(j,k_{a})t_{k_{a}}\\
&=
e(j,k)e(k,k_{b})e(k_{a},k_{b})^{-1}e(j,k_{a})^{-1}t_{j}\leq t_{j}
\eeas
since $e(j,k_{a})e(k_{a},k_{b})= \frac{e(k_{a},k_{b})}{e(k_{a},j)}\leq \frac{e(k_{a},j)e(j,k_{b})}{e(k_{a},j)}\leq e(j,k)e(k,k_{b})$. It follows that $e(j,k)t_{k}\leq t_{j}$ as required, so $\sigma(\t^{\flat})\in \Gamma(\EEE)$. Thus if $\t=(t_{1}, \ldots, t_{n})\in \Gamma(\EEE)$, the coordinates $t_{j}$ for $j\notin \{k_{1}, \ldots, k_{s}\}$ are uniquely determined by $\Pi(\t)$. It follows that the mapping $\Pi:\Gamma(\EEE)\to \Gamma(\EEE^{\flat})$ is one-to-one and onto, with the inverse mapping given by $\sigma$. We have established the following.

\begin{lemma}\label{Lem3.7}
Let $\EEE$ be an $n\times n$ matrix satisfying the basic hypothesis (\ref{2.5}). Let $j\sim k$ if and only if $1=e(j,k)e(k,j)$. This is an equivalence relation, and suppose there are $s$ distinct equivalence classes. Let $\{k_{1}, \ldots, k_{s}\}$ be representatives of the distinct equivalence classes, and let $\EEE^{\flat}$ be the $s\times s$ reduced matrix whose entries are $\{e(k_{a},k_{b}):1 \leq a,b\leq s\}$.
\begin{enumerate}[{\rm a)}]

\smallskip

\item The reduced matrix $\EEE^{\flat}$ satisfies the basic hypothesis (\ref{2.5}).

\smallskip

\item $1<e(k_{a},k_{b})e(k_{b},k_{a})$ for all $1 \leq a\neq b \leq s$.

\smallskip

\item The dimension of $\Gamma(\EEE)$ is $s$.

\smallskip

\item $\Gamma(\EEE^{\flat})\subset \R^{s}$ has dimension $s$, and thus has non-empty interior.

\smallskip

\item Let $\Pi:\R^{n}\to \R^{s}$ be given by $\Pi(\t) = \Pi(t_{1}, \ldots, t_{n}) = (t_{k_{1}}, \ldots, t_{k_{s}})$. Then $\Pi:\Gamma(\EEE)\to\Gamma(\EEE^{\flat})$ is one-to-one and onto.

\smallskip

\item Let $V=\Big\{\t=(t_{1}, \ldots, t_{n})\in \R^{n}:\text{$t_{j}=e(j,k_{a})t_{k_{a}}$ if $j\sim k_{a}$}\Big\}$. Then $V$ is an $s$-dimensional subspace of $\R^{n}$, and $\Gamma(\EEE)\subset V$.
\end{enumerate}
\end{lemma}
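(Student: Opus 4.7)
The plan is to observe that most of the ingredients have already been assembled in the discussion immediately preceding the lemma statement, so my main task is to organize the six assertions in the most efficient logical order: (a), (b), (f), (d), (e), (c).

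First I would dispatch (a) and (b). Part (a) is automatic: $\EEE^\flat$ is obtained from $\EEE$ by deleting rows and columns in identical positions, so the basic hypothesis $e(j,k)\le e(j,\ell)e(\ell,k)$ (and $e(j,j)=1$) is inherited on any such principal submatrix. For (b), by choice of representatives, $k_a \not\sim k_b$ whenever $a\ne b$. The basic hypothesis always gives $e(k_a,k_b)e(k_b,k_a)\ge 1$, and the strict inequality then follows from the definition of the equivalence relation.

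Next I would handle (f) and (d), which are the geometric inputs. For (f), the key is remark~(ii) preceding the lemma: if $\t\in\Gamma(\EEE)$ and $j\sim k$, then the inequality $e(j,k)t_k\le t_j$ is forced to be an equality because $t_j=e(j,k)e(k,j)t_j\le e(j,k)t_k\le t_j$. Applying this with $k=k_a$ for each $j\sim k_a$ shows $\Gamma(\EEE)\subset V$; and $V$ is manifestly $s$-dimensional since its defining equations express every $t_j$ as a linear function of the $s$ coordinates $t_{k_1},\dots,t_{k_s}$. For (d), combining (a) and (b) with Lemma~\ref{Lem3.5} applied to the $s\times s$ matrix $\EEE^\flat$ (whose off-diagonal products $e(k_a,k_b)e(k_b,k_a)$ are all strictly greater than $1$) shows that $\Gamma(\EEE^\flat)\subset\R^s$ has dimension $s$ and non-empty interior.

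Finally I would prove (e) and deduce (c). The projection $\Pi$ clearly sends $\Gamma(\EEE)$ into $\Gamma(\EEE^\flat)$: the defining inequalities of $\Gamma(\EEE^\flat)$ are a subset of those of $\Gamma(\EEE)$. For injectivity I would invoke (f), which forces the remaining coordinates $t_j$ (with $j\sim k_a$, $j\ne k_a$) to be determined by $t_{k_a}$ via $t_j=e(j,k_a)t_{k_a}$; equivalently, any $\t\in\Gamma(\EEE)$ equals $\sigma(\Pi(\t))$, where $\sigma:\Gamma(\EEE^\flat)\to\R^n$ is defined by that same formula. For surjectivity I must verify that $\sigma(\t^\flat)\in\Gamma(\EEE)$ for every $\t^\flat\in\Gamma(\EEE^\flat)$; this is the one calculation requiring a bit of care, and is exactly the computation carried out in the paragraph before the lemma: given arbitrary $1\le j,k\le n$ with $j\sim k_a$, $k\sim k_b$, one chains the basic hypothesis inequalities through the representatives to obtain $e(j,k)t_k\le t_j$. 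This establishes (e), and then (c) follows immediately because the linear projection $\Pi$ restricts to a bijection between $\Gamma(\EEE)$ and the $s$-dimensional set $\Gamma(\EEE^\flat)$.

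The only nontrivial step is the surjectivity of $\Pi$, but this is already essentially written out just before the lemma; everything else is a rearrangement of observations already made. No further obstacle is anticipated.
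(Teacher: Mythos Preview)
Your proposal is correct and follows essentially the same approach as the paper: the paper develops all six assertions in the discussion immediately preceding the lemma (the equivalence relation, remark~(ii) giving $t_j=e(j,k_a)t_{k_a}$, the observation that $\EEE^\flat$ inherits the basic hypothesis with strict off-diagonal products, and the explicit construction and verification of the inverse map $\sigma$), and then simply states ``We have established the following.'' Your only addition is to make explicit the appeal to Lemma~\ref{Lem3.5} for part~(d) and the deduction of~(c) from the bijection in~(e), both of which the paper leaves implicit.
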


\section{Appendix II: Estimates for homogeneous norms}\label{HomogNorms}

 Consider a family of dilations on $\R^{m}$ given by $\lambda\cdot\x=(\lambda^{d_{1}}x_{1}, \ldots, \lambda^{d_{m}}x_{m})$ with homogeneous dimension $Q=d_{1}+ \cdots + d_{m}$. Let $N:\R^{m}\to [0,\infty)$ be a smooth homogeneous norm relative to this family so that $N(\x) \approx |x_{1}|^{1/d_{1}}+ \cdots + |x_{m}|^{1/d_{m}}$. For any $\gammab=(\gamma_{1}, \ldots, \gamma_{m})\in \N^{m}$ let $\[\gammab\] = d_{1}\gamma_{1}+\cdots +d_{m}\gamma_{m}$.

\medskip

\begin{proposition}\label{Prop12.1}
Let $\rho(\x)$ be homogeneous of degree $p$. For $\gammab \in \N^{m}$ there is a constant $C_{\gammab}>0$ so that\quad $|\partial^{\gammab}_{\x}\rho(\x)| \leq C_{\gammab}\, N(\x)^{p-\[\gammab\]}$.
\end{proposition}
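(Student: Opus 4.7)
The plan is to reduce the global estimate to a uniform bound on the ``unit sphere'' $\{N=1\}$ via homogeneity, exploiting the fact that partial derivatives of a homogeneous function are themselves homogeneous, with a shifted degree dictated by the weights $d_1,\dots,d_m$.

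First I would establish that $\partial^{\gammab}\rho$ is homogeneous of degree $p-\[\gammab\]$. Differentiating the identity $\rho(\lambda\cdot\x)=\lambda^p\rho(\x)$ in $x_j$ via the chain rule introduces a factor $\lambda^{d_j}$ for each application of $\partial_{x_j}$, so that
\[
\lambda^{\[\gammab\]}(\partial^{\gammab}\rho)(\lambda\cdot\x)=\lambda^{p}\,\partial^{\gammab}\rho(\x),
\]
which rearranges to $(\partial^{\gammab}\rho)(\lambda\cdot\x)=\lambda^{p-\[\gammab\]}\partial^{\gammab}\rho(\x)$ for all $\lambda>0$ and all $\x\neq 0$.

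Next I would invoke compactness. Since $\rho$ is (implicitly) smooth away from the origin and $N$ is a smooth homogeneous norm, the level set $\Sigma=\{\y\in\R^{m}:N(\y)=1\}$ is compact and lies in $\R^{m}\setminus\{0\}$. Hence $\partial^{\gammab}\rho$ is continuous and bounded on $\Sigma$, say $|\partial^{\gammab}\rho(\y)|\leq C_{\gammab}$ for all $\y\in\Sigma$.

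Finally I would piece these together. Given $\x\neq 0$, set $\lambda=N(\x)$ and $\y=\lambda^{-1}\cdot\x$, so that $N(\y)=\lambda^{-1}N(\x)=1$, i.e.\ $\y\in\Sigma$. Applying the homogeneity of $\partial^{\gammab}\rho$ gives
\[
\partial^{\gammab}\rho(\x)=\partial^{\gammab}\rho(\lambda\cdot\y)=\lambda^{p-\[\gammab\]}\partial^{\gammab}\rho(\y)=N(\x)^{p-\[\gammab\]}\partial^{\gammab}\rho(\y),
\]
and taking absolute values yields $|\partial^{\gammab}\rho(\x)|\leq C_{\gammab}N(\x)^{p-\[\gammab\]}$, as desired. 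There is no real obstacle here; the only step that requires any care is the bookkeeping of weights in the chain-rule computation showing that the effective degree of the derivative is $p-\[\gammab\]$ with $\[\gammab\]=\sum_j d_j\gamma_j$, which is exactly the weighted length defined on page \pageref{weighted length}.
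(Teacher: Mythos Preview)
Your proof is correct and follows essentially the same approach as the paper: establish that $\partial^{\gammab}\rho$ is homogeneous of degree $p-\[\gammab\]$ by differentiating the scaling identity, then use compactness of the unit sphere $\{N=1\}$ to obtain a uniform bound there and scale back. The paper phrases the compactness step slightly differently (observing that $|\partial^{\gammab}\rho|^{1/(p-\[\gammab\])}$ is homogeneous of degree one), but the substance is identical.
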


\begin{proof}
Let $\gammab=(\gamma_{1}, \ldots, \gamma_{m}) \in \N^{m}$. Differentiating the equation $\lambda^{p}\,\rho(\x)=\rho(\lambda\cdot\x)$ we obtain
\bes
\lambda^{p} \,\partial^{\gammab}\rho(\x) = \lambda^{\sum_{j=1}^{m}d_{j}\gamma_{j}}\partial^{\gammab}\rho(\lambda\cdot\x)= \lambda^{\[\gammab\]}\partial^{\gammab}\rho(\lambda\cdot\x).
\ees
It follows that $|\partial^{\gammab}_{\x}\rho(\x)|^{1/(p-\[\gamma\])}$ is homogeneous of degree $1$. Since $N$ is continuous and strictly positive on the compact set $\{\x:N(\x)=1\}$, it follows that $|\partial^{\gammab}_{\x}\rho(\x)| \leq C_{\gammab}N(\x)^{p-\[\gammab\]}$.
\end{proof}

\begin{proposition}\label{Prop12.2}
There is a constant $c$ so that $\big|\{\x\in\R^{m}:N(\x)\leq A\}\big| = c\,A^{Q}$.
\end{proposition}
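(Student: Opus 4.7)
The plan is to reduce the general case $A > 0$ to the case $A = 1$ via the dilation $\x \mapsto A \cdot \x$ and then to observe that the volume of the unit ball is finite.

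First I would define $c = \big|\{\y \in \R^m : N(\y) \leq 1\}\big|$ and verify that this quantity is finite. Since $N$ is a homogeneous norm with $N(\y) \approx |y_1|^{1/d_1} + \cdots + |y_m|^{1/d_m}$, the condition $N(\y) \leq 1$ forces $|y_j| \leq C^{d_j}$ for an appropriate constant $C$, so the set $\{\y : N(\y) \leq 1\}$ is contained in a Euclidean box and has finite Lebesgue measure.

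Next I would apply the change of variables $\x = A \cdot \y = (A^{d_1} y_1, \ldots, A^{d_m} y_m)$. Since $N$ is homogeneous of degree one, $N(A \cdot \y) = A\, N(\y)$, so $N(\x) \leq A$ if and only if $N(\y) \leq 1$. The Jacobian of this linear change of variables is $\prod_{j=1}^{m} A^{d_j} = A^{d_1 + \cdots + d_m} = A^Q$. Therefore
\[
\big|\{\x \in \R^m : N(\x) \leq A\}\big| = \int_{N(\y) \leq 1} A^Q \, d\y = c\, A^Q,
\]
which is the desired conclusion.

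There is no real obstacle here; the statement is essentially a tautological consequence of the homogeneity of $N$ under the chosen family of dilations and the product structure of the Jacobian.
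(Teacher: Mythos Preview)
Your proof is correct and follows essentially the same approach as the paper: both use the homogeneity $N(A\cdot\y)=A\,N(\y)$ to identify $\{N(\x)\leq A\}$ with the image of the unit ball under the dilation $T_A(\y)=A\cdot\y$, and then invoke $\det(T_A)=A^{Q}$. You add the explicit verification that $c=|\{N(\y)\leq 1\}|$ is finite, which the paper leaves implicit.
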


\begin{proof} Let $\B_{m}(1)= \{\x\in\R^{m}: N(\x)<1\}$. Then $N(\x)\leq A$ if and only if $N(A^{-1}\cdot\x) = A^{-1}N(\x) \leq 1$, and this is true if and only if $A^{-1}\cdot \x\in \B_{N}(1)$. If $T_{A}(\x) = A\cdot \x$, this means that $\x\in T_{A}(\B(1))$. Since $\det(T_{A}) = A^{Q}$, the Proposition follows.
\end{proof}

\begin{proposition} \label{Prop12.3}
If $\a=(a_{1}, \ldots, a_{m}) \in \N^{m}$, write $\x^{\a}=\prod_{j=1}^{m}x_{j}^{a_{j}}$. 

\begin{enumerate}[{\rm(a)}]
\smallskip

\item For $\a\in \N^{m}$ and $M<\[a\]+Q$ there exists $C>0$ so that 
\bes
\int\limits_{N(\x)\leq A}|\x^{\a}|N(\x)^{-M}\,d\x \leq C\,A^{\[\a\]+Q-M}.
\ees

\smallskip

\item For $\a\in \N^{m}$, $M >\[a\]+Q$ there exists $C>0$ so that 
\bes
\int\limits_{N(\x)\geq A}|\x^{\a}|N(\x)^{-M}\,d\x\leq C\,A^{\[\a\]+Q-M}.
\ees

\smallskip

\item For $\a\in \N^{m}$, $M =\[a\]+Q$, and $B>4A$, there exists $C>0$ so that 
\bes
\int\limits_{A<N(\x)<B}|\x^{\a}|N(\x)^{-M}\,d\x\leq C\,\log\left[\frac{B}{A}\right].
\ees
\end{enumerate}
\end{proposition}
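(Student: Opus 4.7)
The plan is to exploit the homogeneity of the integrand: since $(\lambda\cdot\x)^{\a}=\lambda^{\[\a\]}\x^{\a}$ and $N(\lambda\cdot\x)=\lambda N(\x)$, the function $|\x^{\a}|N(\x)^{-M}$ is homogeneous of degree $\[\a\]-M$, while $d\x$ transforms by the factor $\lambda^{Q}$ under $\x\mapsto\lambda\cdot\x$. Thus the entire quantity being estimated should scale like $\lambda^{\[\a\]+Q-M}$, which is exactly the claimed power of $A$ in (a) and (b), and is scale-invariant (hence logarithmic) in (c).

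The cleanest route is a dyadic annular decomposition. First I would record the pointwise bound $|\x^{\a}|\le C\,N(\x)^{\[\a\]}$: either by invoking Proposition~\ref{Prop12.1} on the homogeneous polynomial $\x^{\a}$, or directly by noting that $|\x^{\a}|^{1/\[\a\]}$ is homogeneous of degree one and continuous on the compact set $\{N(\x)=1\}$. Next, by Proposition~\ref{Prop12.2} applied to $\{N(\x)\le 2^{k+1}\}$ and $\{N(\x)\le 2^{k}\}$, the annulus $\Omega_{k}=\{2^{k}<N(\x)\le 2^{k+1}\}$ has measure at most $c\,2^{kQ}$. On $\Omega_{k}$ we have $|\x^{\a}|N(\x)^{-M}\le C\,2^{k(\[\a\]-M)}$, so
\begin{equation*}
\int_{\Omega_{k}}|\x^{\a}|\,N(\x)^{-M}\,d\x\ \le\ C'\,2^{k(\[\a\]+Q-M)}.
\end{equation*}

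For (a), choose $k_{0}\in\Z$ with $2^{k_{0}}\le A<2^{k_{0}+1}$ and sum over $k\le k_{0}$. Since $\[\a\]+Q-M>0$, the geometric series is dominated by its top term, giving $C''\,2^{k_{0}(\[\a\]+Q-M)}\le C'''\,A^{\[\a\]+Q-M}$. For (b), sum instead over $k\ge k_{0}$; now $\[\a\]+Q-M<0$ and the series is controlled by its bottom term, again $\asymp A^{\[\a\]+Q-M}$. For (c), with $\[\a\]+Q-M=0$ each annular integral is bounded by a fixed constant, and the number of indices $k$ with $\Omega_{k}\cap\{A<N(\x)<B\}\neq\emptyset$ is at most $\log_{2}(B/A)+2$, which is $\le C\log(B/A)$ thanks to the hypothesis $B>4A$. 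This accounts for all three parts.

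I do not expect a genuine obstacle here; the only small care needed is in the bookkeeping of the endpoint annuli (the ones straddling $N(\x)=A$ in (a) and (b), and the two straddling $A$ and $B$ in (c)), where one simply absorbs the boundary annular contribution into the constant and uses $B>4A$ in (c) to guarantee $\log(B/A)\ge\log 4>0$ so that the sum of $O(1)$ boundary terms is absorbed into $C\log(B/A)$.
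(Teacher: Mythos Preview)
Your proposal is correct and is essentially the same argument as the paper's: both use the pointwise bound $|\x^{\a}|\lesssim N(\x)^{\[\a\]}$, decompose into dyadic annuli whose measures are controlled by Proposition~\ref{Prop12.2}, and sum the resulting geometric (or, in case (c), constant) series. The only cosmetic difference is that the paper indexes its annuli relative to $A$ (as $\{2^{-j}A<N(\x)\le 2^{-j+1}A\}$) whereas you use an absolute dyadic grid, but the computations are otherwise identical.
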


\begin{proof} 
We have $|\x^{\a}| =\prod_{j=1}^{m}|x_{j}|^{a_{j}}\lesssim \prod_{j=1}^{m}N(\x)^{a_{j}d_{j}}= N(\x)^{\[\a\]}$. Thus if $\[\a\]+Q-M>0$,
\beas
\int\limits_{N(\x)\leq A}|\x^{\a}|N(\x)^{-M}\,d\x 
&\lesssim 
\sum_{j=1}^{\infty}\,\int\limits_{2^{-j}A<N(\x)\leq 2^{-j+1}A}N(\x)^{\[\a\]-M}\,d\x\\
&\approx
\sum_{j=1}^{\infty} (2^{-j}A)^{\[\a\]-M}\big\vert\{\x:N(\x)\leq 2^{-j+1}A\}\big\vert\\
&\approx
\sum_{j=1}^{\infty} (2^{-j}A)^{\[\a\]-M}(2^{-j+1}A)^{Q}\lesssim A^{\[\a\]+Q-M}.
\eeas
Similarly, if $\[\a\]+Q-M<0$,
\beas
\int\limits_{N(\x)\geq A}|\x^{\a}|N(\x)^{-M}\,d\x 
&\lesssim 
\sum_{j=0}^{\infty}\,\int\limits_{2^{j}A\leq N(\x)< 2^{j+1}A}N(\x)^{\[\a\]-M}\,d\x\\&\approx
\sum_{j=0}^{\infty} (2^{j}A)^{\[\a\]-M}\big\vert\{\x:N(\x)\leq 2^{-j+1}A\}\big\vert\\
&\lesssim
\sum_{j=0}^{\infty} (2^{j}A)^{\[\a\]-M}(2^{-j}A)^{Q}\\
&\lesssim A^{\[\a\]+Q-M}.
\eeas
Finally, if $\[\a\]+Q-M=0$,
\beas
\int\limits_{A\leq N(\x)\leq B}|\x^{\a}|N(\x)^{-M}\,d\x 
&\lesssim
\int\limits_{A\leq N(\x)\leq B}N(\x)^{\[\a\]-M}\,d\x.
\eeas
If $B>4A$, choose $j< k$ so that $2^{j-1}<A\leq 2^{j}\leq 2^{k}\leq B <2^{k+1}$. Then
\beas
\int\limits_{A\leq N(\x)\leq B}N(\x)^{\[\a\]-M}\,d\x
&\leq
\sum_{l=j-1}^{k+1}\int\limits_{2^{l}\leq N(\x)<2^{l+1}}N(\x)^{\[\a\]-M}\,d\x\\
&\lesssim 
\sum_{l=j-1}^{k+1}2^{l(\[\a\]-M)}\big\vert\{\x:N(\x)\leq 2^{l+1}\}\big\vert\\
&\lesssim
\sum_{l=j-1}^{k+1}2^{l(\[\a\]-M+Q)}=k-j+2 \approx \log\left[\frac{B}{A}\right].
\eeas
This completes the proof.
\end{proof}

\begin{proposition}\label{Prop12.4}
Let $m\in \CC^{\infty}(\R^{m}\setminus\{0\})$ have compact support, and let $L=\{l_{1}, \ldots, l_{r}\}\subset\{1, \ldots, m\}$. Suppose for every $\gamma\in \N$ there is a constant $C>0$ so that if $j\in L$ then 
\beas
\big\vert\partial^{\gamma}_{\xi_{j}}m(\xi_{1}, \ldots, \xi_{m})\big\vert \leq C N(\xib)^{-d_{j}\gamma}.
\eeas
Put $K(\x)=\int_{\R^{m}}e^{2\pi i \langle \x,\xib\rangle}m(\xib)\,d\xib$. There is a constant $C>0$ independent of the support of $m$ so that
\beas
\big\vert K(\x)\big\vert \leq C\, \big(\sum_{j\in L}|x_{j}|^{1/d_{j}}\big)^{-Q}.
\eeas
\end{proposition}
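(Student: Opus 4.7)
My plan is a Littlewood--Paley-style dyadic decomposition of $m$ in the frequency variable $\xib$, combined with two competing estimates for each dyadic piece: a trivial $L^{1}$-bound and an integration-by-parts bound in the direction $\xi_{k^{*}}$ where $k^{*}\in L$ is the index maximizing $|x_{k}|^{1/d_{k}}$.

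First I would pick $\chi\in\CC^{\infty}_{0}(\R^{m})$ with $\chi\equiv 1$ on $\{N\le 1\}$ and $\chi\equiv 0$ on $\{N\ge 2\}$, set $\chi_{j}(\xib)=\chi(2^{-j}\!\cdot\!\xib)-\chi(2^{-j+1}\!\cdot\!\xib)$, and $m_{j}=\chi_{j}m$. By Proposition \ref{Prop12.2} the support of $m_{j}$ has measure $\approx 2^{jQ}$, and by Proposition \ref{Prop12.1} applied to $\chi_{j}$ (which is homogeneous of degree $0$ after rescaling to $\chi_{0}$) one has $|\partial^{\gamma}_{\xi_{k}}\chi_{j}(\xib)|\lesssim N(\xib)^{-d_{k}\gamma}\approx 2^{-jd_{k}\gamma}$ on $\supp\chi_{j}$. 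Combined with the hypothesis on $m$ via the Leibniz rule this yields the uniform estimate
$$\big|\partial^{\gamma}_{\xi_{k}}m_{j}(\xib)\big|\lesssim 2^{-jd_{k}\gamma}\qquad (k\in L,\ \gamma\ge 0).$$
The $\gamma=0$ case of the hypothesis gives $|m|\le C$; together with the compact support of $m$ this places $m$ in $L^{1}(\R^{m})$ and makes the series $K=\sum_{j}K_{j}$ (with $K_{j}=\widehat{m_{j}}$) absolutely convergent. All constants produced above depend only on the constants in the hypothesis and \emph{not} on $\supp m$.

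Next I would record two bounds for $K_{j}$. The trivial one is $|K_{j}(\x)|\le\|m_{j}\|_{L^{1}}\lesssim 2^{jQ}$. Integrating by parts $\gamma$ times in $\xi_{k}$ for any $k\in L$ gives
$$|x_{k}|^{\gamma}|K_{j}(\x)|\lesssim 2^{-jd_{k}\gamma}\cdot 2^{jQ},$$
i.e.\ $|K_{j}(\x)|\lesssim 2^{jQ}\bigl(2^{jd_{k}}|x_{k}|\bigr)^{-\gamma}$. Now fix $\x$, choose $k^{*}\in L$ maximizing $|x_{k}|^{1/d_{k}}$ so that $|x_{k^{*}}|^{1/d_{k^{*}}}\approx N_{L}(\x):=\sum_{k\in L}|x_{k}|^{1/d_{k}}$, and let $j_{0}$ be the integer with $2^{j_{0}}\approx N_{L}(\x)^{-1}$. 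For $j\le j_{0}$ the trivial bound sums geometrically to $\sum_{j\le j_{0}}2^{jQ}\lesssim N_{L}(\x)^{-Q}$. For $j>j_{0}$, applying the IBP bound with $k=k^{*}$ and any $\gamma$ large enough that $d_{k^{*}}\gamma>Q$ gives
$$|K_{j}(\x)|\lesssim N_{L}(\x)^{-d_{k^{*}}\gamma}\,2^{j(Q-d_{k^{*}}\gamma)},$$
whose geometric sum is dominated by its $j=j_{0}$ term and equals $N_{L}(\x)^{-Q}$ as well. Adding the two halves produces the claimed bound.

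The main obstacle I anticipate is not conceptual but bookkeeping: one must verify that the derivative bounds for $m_{j}$ are genuinely uniform in $j$ and independent of $\supp m$. This is exactly what Proposition \ref{Prop12.1} ensures, because the estimates in the hypothesis are invariant under the dilations $\xib\mapsto 2^{j}\!\cdot\!\xib$ that rescale each annulus $\{N(\xib)\approx 2^{j}\}$ to $\{N(\xib)\approx 1\}$. Once this is observed, the argument reduces to a standard dyadic interpolation between the trivial and IBP estimates.
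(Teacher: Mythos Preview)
Your argument is correct and follows the same essential strategy as the paper: split the frequency domain at the critical scale determined by $\x$, use the trivial $L^{1}$ bound on the low-frequency part, and integrate by parts in a direction $\xi_{k}$ with $k\in L$ on the high-frequency part. The only difference is packaging: you use a full dyadic decomposition $m=\sum_{j}m_{j}$ and sum geometric series on either side of the threshold $j_{0}$, whereas the paper makes a single split $m=\varphi(\lambda N(\cdot))\,m+\bigl[1-\varphi(\lambda N(\cdot))\bigr]m$ at a parameter $\lambda$ chosen afterward as $\lambda=|x_{j}|^{1/d_{j}}$ for each $j\in L$, obtaining directly $|K(\x)|\lesssim |x_{j}|^{-Q/d_{j}}$ and then taking the minimum over $j\in L$. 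The paper's version is slightly more economical (no dyadic sum to carry out), while yours makes the interpolation between the two estimates more explicit; neither gains anything substantive over the other.
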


\begin{proof}
Let $\varphi\in\CC^{\infty}_{0}(\R)$ with $\varphi(t) \equiv 1$ if $|t|\leq 1$, $\varphi(t) \equiv 0$ if $|t|\geq 2$ and $0\leq \varphi(t)\leq 1$ for all $t$. We have
\beas
K(\x)=\int_{\R^{m}}e^{2\pi i \langle \x,\xib\rangle}\varphi\big(\lambda N(\xib)\big)m(\xib)\,d\xib +
\int_{\R^{m}}e^{2\pi i \langle \x,\xib\rangle}\big[1-\varphi\big(\lambda N(\xib)\big)\big]m(\xib)\,d\xib= I+II.
\eeas
Since $m$ is bounded, we have
\beas
\big\vert I\big\vert \leq ||m||_{\infty}\big\vert\big\{\xib:N(\xib)\leq 2\lambda^{-1}\big\}\big\vert\lesssim \lambda^{-Q}.
\eeas
On the other hand, the chain, the product rule, and Proposition \ref{Prop12.1}  show that 
\beas
\big\vert\partial^{\gamma}_{\xi_{j}}\big[1-\varphi\big(\lambda N(\xib)\big)\big]m(\xib)\big\vert \leq C N(\xib)^{-d_{j}\gamma}
\eeas
since any derivative of $\varphi(\lambda N(\xib))$ is supported where $\lambda N(\xib)\leq 2$.  For $j\in L$ we can integrate by parts $M$ times in the variable $\xi_{j}$ and obtain
\beas
\big| II \big| 
&\lesssim 
|x_{j}|^{-M}\,\int_{N(\xib)\geq \lambda^{-1}}\Big\vert \partial^{M}_{\xi_{j}}\Big[\big[1-\varphi\big(\lambda N(\xib)\big)\big]m(\xib)\Big]\Big\vert\,d\xib\\
&\lesssim
|x_{j}|^{-M}\,\int_{N(\xib)\geq \lambda^{-1}}N(\xib)^{-Md_{j}}\,d\xib\\
&\lesssim
|x_{j}|^{-M}\lambda^{-Q+d_{j}M}.
\eeas
Setting $\lambda = |x_{j}|^{1/d_{j}}$, it follows that $|K(\x)|\lesssim |x_{j}|^{-Q/d_{j}}$ for every $j\in L$, and hence that
\beas
|K(\x)| \lesssim \min_{j\in L}|x_{j}|^{-Q/d_{j}} \approx \Big(\sum_{j\in L}|x_{j}|^{1/d_{j}}\Big)^{-Q}.
\eeas 
This completes the proof.
\end{proof}

\begin{remark}
Note that in this Proposition, we only assume estimates on ``pure'' $\xi_{j}$ derivatives of $m$ for $j\in L$. The conclusion is that the inverse Fourier transform $K= \check m$ has the expected decay relative to the partial norm $N_{L}(\x) =\sum_{j\in L}|x_{j}|^{1/d_{j}}$. If $L=\{1, \ldots, n\}$, the conclusion is that $K(\x) \lesssim N(\x)^{-Q}$.
\end{remark}

\section{Appendix III: Estimates for geometric sums}

\begin{proposition}\label{Prop16.3wer}
Let $\epsilon>0$,  $a_{1}, \ldots, a_{m}>0$ with $m\geq 2$ and $\prod_{j=1}^{m}a_{j}\neq 1$. Set
\bes
 \Gamma=\Big\{I=(i_{1}, \ldots, i_{m})\in \Z^{m}:\text{$a_{j}i_{j}\leq i_{j+1}<0$ for $1 \leq j \leq m-1$,\, $a_{m}i_{m}\leq i_{1}<0$}\Big\}.
\ees
For $I=(i_{1}, \ldots, i_{m})\in \Gamma$ set $\Lambda(I) = [i_{2}-a_{1}i_{1}] +\cdots +[i_{m}-a_{m-1}i_{m-1}]+[i_{1}-a_{m}i_{m}]$. Then there is a constant $C>0$ depending on $\{\epsilon,  a_{1}, \ldots, a_{m}\}$ so that $\sum_{I\in \Gamma}2^{-\epsilon\Lambda(I)}\leq C$.
\end{proposition}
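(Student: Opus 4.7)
The first move will be to rewrite $\Lambda(I)$ as a sum of the individual slack quantities $\lambda_j(I) := i_{j+1} - a_j i_j$ (with the cyclic convention $i_{m+1} = i_1$), which are precisely the nonnegative quantities that appear in the definition of the cone $\Gamma$. Thus, on $\Gamma$, $\Lambda(I) = \sum_{j=1}^m \lambda_j(I) = \|\lambda(I)\|_{\ell^1}$, and every $\lambda_j(I) \ge 0$.

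Next, I would view $I \mapsto \lambda(I)$ as a linear map $L:\mathbb R^m \to \mathbb R^m$; its matrix is the circulant-like matrix with $-a_j$ on the diagonal and $1$'s on the cyclic superdiagonal. A short cofactor expansion shows
\[
\det L = (-1)^m\Bigl(\prod_{j=1}^m a_j - 1\Bigr),
\]
which is nonzero precisely under the hypothesis $\prod_j a_j \ne 1$. So $L$ is a linear isomorphism, and by equivalence of norms there is a constant $C_0 = C_0(a_1,\dots,a_m)$ with $\|I\|_{\ell^1} \le C_0 \|\lambda(I)\|_{\ell^1}$ for every $I\in\mathbb R^m$. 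On $\Gamma$, where $i_j < 0$, this reads
\[
|i_1| + \cdots + |i_m| \;=\; \|I\|_{\ell^1} \;\le\; C_0\, \Lambda(I).
\]

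Finally, I would simply drop the cone constraints in the sum and bound
\[
\sum_{I\in\Gamma} 2^{-\epsilon\Lambda(I)} \;\le\; \sum_{I\in\Gamma} 2^{-(\epsilon/C_0)(|i_1|+\cdots+|i_m|)} \;\le\; \prod_{j=1}^m\,\sum_{i_j \le -1} 2^{-(\epsilon/C_0)|i_j|},
\]
and each one-dimensional geometric sum is finite.

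The only non-routine step is the inequality $\|I\|_{\ell^1} \le C_0\,\|\lambda(I)\|_{\ell^1}$ for $I\in\Gamma$, and this is exactly where the hypothesis $\prod_j a_j \ne 1$ enters: if one dropped that condition, $L$ would have a nontrivial kernel, and a sequence of lattice points $I_n\in\Gamma$ with $\|I_n\|_{\ell^1}\to\infty$ but $\Lambda(I_n)$ bounded (in fact eventually zero along the kernel direction) would make the geometric sum diverge. Once the determinant computation is in hand, everything else is bookkeeping.
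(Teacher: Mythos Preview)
Your proof is correct and follows essentially the same route as the paper's. The paper reaches the key inequality $\Lambda(I)\ge\eta\sum_j|i_j|$ by a compactness/homogeneity argument (showing $\Lambda$ is strictly positive on $\Gamma\cap\{\sum|t_j|=1\}$), whereas you obtain it by explicitly computing $\det L$ and invoking equivalence of norms; these are two phrasings of the same linear-algebra fact, and the concluding geometric-series bound is identical.
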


\begin{proof}
 $\Gamma$ is a convex cone and $\{\t\in \Gamma:\sum_{j=1}^{m}|t_{j}|=1\}$ is compact. Then $\Lambda(\t)\geq 0$ on $\Gamma$, and if $\Lambda(\t)=0$ with $\t\in \Gamma$, then $t_{1}=a_{m}t_{m}$ and $t_{j+1}=a_{j}t_{j}$ for $1\leq j \leq m-1$, so that $t_{1}=\big(\prod_{j=1}^{m}a_{j}\big)t_{1}$. Since $\prod_{j=1}^{m}a_{j}\neq 1$, it follows that $\Lambda(\t)>0$ for $t\in \Gamma$. In particular, by compactness there exists $\eta >0$ so that if $\t\in\Gamma$ and $|\t|=1$ then $\Lambda(\t)>\eta$. Since $\Lambda$ is continuous, there is an open neighborhood of $\{\t\in \Gamma:\sum_{j=1}^{m}|t_{j}|=1\}$ on which $\Lambda(\t)>\eta$. By homogeneity it follows that $\Lambda(\t)\geq \eta\sum_{j=1}^{m}|t_{j}|$ for all$\t\in \Gamma$.

 If $I\in\Gamma$, then $\Lambda(I) \geq \eta\sum_{j=1}^{n}|i_{j}|$. Thus we have 
\bes
\sum_{I\in \Gamma}2^{-\epsilon\Lambda(I)} 
\leq
\sum_{I\in\Gamma}2^{-\epsilon\eta\sum_{j=1}^{m}|i_{j}|}\\
\leq
\Big[\sum_{i=0}^{\infty}2^{-\epsilon\eta i}\Big]^{m}
\ees 
which is a constant depending only on $\epsilon$ and $a_{1}, \ldots, a_{m}$. This completes the proof.
\end{proof}

Now let $\EEE=\{e(j,k)\}$ be an $n\times n$ matrix satisfying (\ref{2.5}) such that $1<e(j,k)e(k,j)$ for all $1 \leq j,k\leq n$. Let  $\epsilon >0$. Let $\tau:\{1, \ldots, n\}\to\{1, \ldots, n\}$ with $\tau(j)\neq j$ for all $j$. 
Let $0\leq b< a\leq n$. Put
\beas
\Gamma_{\Z}(\EEE) &= \Big\{I=(i_{1}, \ldots, i_{n})\in \Z^{n}: e(j,k)i_{k} \leq i_{j}, &&1\leq j,k \leq n\Big\},\\
\Gamma_{\Z}(\EEE') &=\Big\{I'=(i_{1}, \ldots, i_{a})\in \Z^{a}: e(j,k)i_{k} \leq i_{j}, &&1\leq j,k \leq a\Big\},\\
\Gamma_{\Z}(\EEE'') &=\Big\{I''=(i_{a+1}, \ldots, i_{n})\in \Z^{n-a}: e(j,k)i_{k} \leq i_{j}, &&a+1\leq j,k \leq n\Big\}.
\eeas
Note that $\Gamma_{\Z}(\EEE)\subset\Gamma_{\Z}(\EEE')\times\Gamma_{\Z}(\EEE'')$. Also, if  $I''\in \Gamma_{\Z}(\EEE'')$ then
\beas
\Lambda(I'')=\Big\{I'=(i_{1}, \ldots, i_{a})\in \Z^{a}:(I',I'')\in \Gamma_{\Z}(\EEE)\Big\}\subset\Gamma_{\Z}(\EEE').
\eeas

\begin{proposition}\label{Prop16.1wer}
Let  $R_{b+1}, \ldots, R_{a}$ be positive real numbers. Then
\bes
F= \sum_{I'\in \Gamma_{\Z}(\EEE')} 
\Bigg[\prod_{\substack{j=1}}^{b}2^{-\epsilon\left[i_{j}-e(j,\tau(j))i_{\tau(j)}\right]}\Bigg]
\Bigg[\prod_{\substack{j=b+1}}^{a}\min\Big\{(2^{i_{j}}R_{j})^{+\epsilon}, (2^{i_{j}}R_{j})^{-\epsilon}\Big\}\Bigg]
\ees
converges and is bounded by a constant $C$, depending on $\epsilon, \EEE$ but independent of  $R_{b+1}, \ldots, R_{a}$. Here it is understood that if $b=0$ the first product is empty, and if $0<b=a$ the second product is empty. \end{proposition}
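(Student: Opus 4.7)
Absorb the scale parameters into shifts: for $b<j\le a$ set $\alpha_j=-\log_2 R_j$, so that $\min\{(2^{i_j}R_j)^{+\epsilon},(2^{i_j}R_j)^{-\epsilon}\}=2^{-\epsilon|i_j-\alpha_j|}$; for $1\le j\le b$ set $v_j:=i_j-e(j,\tau(j))i_{\tau(j)}$, which is $\ge 0$ on $\Gamma_\Z(\EEE')$ by the defining inequalities of the cone. Writing
\[
\Psi(I'):=\sum_{j=1}^{b}v_j+\sum_{j=b+1}^{a}|i_j-\alpha_j|\ \ge\ 0,
\]
the quantity to estimate is $F=\sum_{I'\in\Gamma_\Z(\EEE')}2^{-\epsilon\Psi(I')}$. (In the application in Section \ref{Schwartz sums} one has $\tau(\{1,\ldots,b\})\subset\{1,\ldots,a\}$; in the general case an $i_{\tau(j)}$ with $\tau(j)>a$ is a fixed external parameter which merely shifts the discussion below.)

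\textbf{Reduction to a lattice-point count.} I would prove that there is a constant $C=C(\EEE,\tau,a)$ such that, for every $M\ge 1$ and every $\alpha\in\R^{a-b}$,
\[
\#\bigl\{I'\in\Gamma_\Z(\EEE'):\Psi(I')\le M\bigr\}\ \le\ C\,M^{a}.
\]
Once this is established, a layer-cake splitting into the shells $\{M\le\Psi<M+1\}$ yields
\[
F\ \le\ \sum_{M=0}^{\infty}C(M+1)^{a}\,2^{-\epsilon M}\ <\ \infty,
\]
the final bound being independent of $\alpha$, hence of $R_{b+1},\ldots,R_a$, which is exactly what the proposition asserts.

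\textbf{Proof of the count and main obstacle.} I would show that each coordinate $i_j$ of a point of $\{\Psi\le M\}$ is confined to an interval of length $O(M)$ whose centre may depend on $\alpha$ but whose width depends only on $\EEE$ and $\tau$; multiplying the $a$ widths gives the count. For $j\in\{b+1,\ldots,a\}$ this is just the inequality $|i_j-\alpha_j|\le M$. For $j\in\{1,\ldots,b\}$ the $\tau$-orbit $j,\tau(j),\tau^2(j),\ldots$ either reaches $\{b+1,\ldots,a\}$ in some $r\le a$ steps, or enters a $\tau$-cycle contained in $\{1,\ldots,b\}$. In the first case, iterating $i_k=e(k,\tau(k))i_{\tau(k)}+v_k$ expresses $i_j$ as a bounded linear combination of the values $v_j,\ldots,v_{\tau^{r-1}(j)}$ (each $\le M$) plus a multiple of $i_{\tau^r(j)}$ (itself in an $M$-window around $\alpha_{\tau^r(j)}$), placing $i_j$ in a window of width $O(M)$. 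The decisive case is the cycle case: if $k_1\to k_2\to\cdots\to k_s\to k_1$ is a $\tau$-cycle in $\{1,\ldots,b\}$, iterating $s$ times yields
\[
i_{k_1}\ =\ P\,i_{k_1}+W,\qquad P=\prod_{r=1}^{s}e(k_r,k_{r+1}),\qquad 0\le W\le C_{\EEE,\tau}\,M.
\]
Here the standing hypothesis $e(j,k)e(k,j)>1$ for $j\ne k$ is used in an essential way via Proposition \ref{Prop3.61}, which supplies the strict inequality $P>1$; consequently $i_{k_1}=-W/(P-1)\in[-C_{\EEE,\tau}M/(P-1),0]$, again an $O(M)$-window, and the remaining cycle vertices and the tree vertices feeding into the cycle are then controlled exactly as in the first case. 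Without $P>1$ the cycle coordinates would be free to drift, the lattice-point count would be infinite and $F$ need not even converge: this is precisely why the reduced-matrix hypothesis (equivalently, $\Gamma^{o}(\EEE)\ne\emptyset$, cf.\ Lemma \ref{Lem3.5}) is the combinatorial heart of the argument.
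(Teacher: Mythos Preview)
Your argument is correct and takes a genuinely different route from the paper's. The paper proceeds by induction on $n$: when $b=0$ the sum factors outright; when $b\ge 1$ one asks whether some $j_0\in\{1,\ldots,b\}$ lies outside $\tau(\{1,\ldots,b\})$. If so, the geometric series in $i_{j_0}$ is summed first and the problem drops by one variable; if not, $\tau$ restricted to $\{1,\ldots,b\}$ is a permutation, the product $\prod_{j=1}^{b}2^{-\epsilon v_j}$ splits over the cycles, and each cycle-sum is handled by a separate compactness lemma (Proposition \ref{Prop16.3wer}) which again rests on Proposition \ref{Prop3.61}.

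Your approach replaces the induction/factorisation by a single lattice-point count followed by a layer-cake sum. The orbit dichotomy you describe (the forward $\tau$-orbit either exits $\{1,\ldots,b\}$ or is eventually periodic inside it) subsumes both of the paper's cases at once, and the identity $i_{k_1}=-W/(P-1)$ plays exactly the role that Proposition \ref{Prop16.3wer} plays in the paper: both arguments collapse to Proposition \ref{Prop3.61} at the decisive point. What your version buys is a uniform treatment without induction and a transparent explanation of why the bound is independent of the $R_j$ (only interval \emph{centres} move with $\alpha$, not widths); what the paper's version buys is that the geometric series are summed explicitly, so no separate counting lemma is needed and the constants are slightly more explicit. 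Two small cosmetic points: the bound should be stated as $\#\{\Psi\le M\}\le C(M+1)^a$ to cover $M=0$ cleanly (you effectively do this in the layer-cake line anyway), and your parenthetical about $\tau(j)>a$ is the right disposal of that case---the external $i_{\tau(j)}$ just becomes another fixed anchor, like the $\alpha_j$.
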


\begin{proof}
We argue by induction on $n\geq 1$. When $n=1$ we have
\bes
F\leq \sum_{j\in\Z}\min\Big\{(2^{j}R)^{+\epsilon},(2^{j}R)^{-\epsilon}\Big\}.
\ees 
which converges and is independent of  $R$. Thus assume that the result is true for some $n-1\geq 1$. There are two possibilities. First suppose that $b=0$. Then 
\beas
F&= \sum_{I'\in \Gamma_{\Z}(\EEE')}\prod_{\substack{j=1}}^{a}\min\Big\{(2^{i_{j}}R_{j})^{+\epsilon}, (2^{i_{j}}R_{j})^{-\epsilon}\Big\}\\
&\leq
\sum_{(i_{1}, \ldots, i_{a})\in\Z^{a}}\prod_{k=1}^{a}\min\Big\{(2^{i_{k}}R_{k})^{+\epsilon},(2^{i_{k}}R_{k})^{-\epsilon}\Big\}\\
&=
\prod_{j=1}^{a}\Bigg[\sum_{i_{j}\in \Z}\min\Big\{(2^{i_{j}}R_{j})^{+\epsilon},(2^{i_{j}}R_{j})^{-\epsilon}\Big\}\Bigg]
\eeas
which is independent of $R_{1}, \ldots, R_{a}$. 

Next suppose that $b\geq 1$ so that the product $\prod_{\substack{j=1}}^{b}2^{-\epsilon\left[i_{j}-e(j,\tau(j))i_{\tau(j)}\right]}$ is non-empty. There are now two possibilities.

\smallskip

\noindent\textbf{Case 1:}

\medskip

Suppose there exists $j_{0}\in \{1, \ldots, b\}$ such that $j _{0}\neq \tau(k)$ for any $k\in \{1, \ldots, b\}$. Without loss of generality, assume $j_{0}=1$. Set
\beas
\widetilde \Gamma_{\Z}(\EEE')&=\Big\{\widetilde I'=(i_{2}, \ldots, i_{a})\in \Z^{a-1}: e(j,k)i_{k} \leq i_{j}, &&2\leq j,k \leq a\Big\}.
\eeas
Then $\Gamma_{\Z}(\EEE') \subset  \Big\{(i_{1},\widetilde I'):\text{$e(1,\tau(1))i_{\tau(1)} \leq i_{1}$ and $\widetilde I'\in \widetilde\Gamma_{\Z}(\EEE')$}\Big\}$.
Since the index $i_{1}$ does not appear in the product $\prod_{\substack{j=2}}^{b}2^{-\epsilon\left[i_{j}-e(j,\tau(j))i_{\tau(j)}\right]}$, the sum $F$ is dominated by
\beas
\sum_{\widetilde I'\in \widetilde\Gamma_{\Z}(\EEE')} 
\Bigg[\prod_{\substack{j=2}}^{b}2^{-\epsilon\left[i_{j}-e(j,\tau(j))i_{\tau(j)}\right]}\Bigg]&
\Bigg[\prod_{\substack{j=b+1}}^{a}\min\Big\{(2^{i_{j}}R_{j})^{+\epsilon}, (2^{i_{j}}R_{j})^{-\epsilon}\Big\}\Bigg]\\
&\times\Bigg[\sum_{\substack{i_{1}\in\Z\\e(1,\tau(1))i_{\tau(1)} \leq i_{1}}}2^{-\epsilon\left[i_{1}-e(1,\tau(1))i_{\tau(1)}\right]}\Bigg]
\eeas
But 
\bes
\sum_{\substack{i_{1}\in\Z\\e(1,\tau(1))i_{\tau(1)} \leq i_{1}}}2^{-\epsilon\left[i_{1}-e(1,\tau(1))i_{\tau(1)}\right]}\leq \Big[1-2^{-\epsilon e(1,\tau(1))}\Big]^{-1},
\ees
and so
\beas
F\leq 
\frac1{1-2^{-\epsilon e(1,\tau(1))}}\!\!\!
\sum_{\widetilde I'\in \widetilde\Gamma_{\Z}(\EEE')} 
\Bigg[\prod_{\substack{j=2}}^{b}2^{-\epsilon\left[i_{j}-e(j,\tau(j))i_{\tau(j)}\right]}\Bigg]&
\Bigg[\prod_{\substack{j=b+1}}^{a}\min\Big\{(2^{i_{j}}R_{j})^{+\epsilon}, (2^{i_{j}}R_{j})^{-\epsilon}\Big\}\Bigg].
\eeas
The result now follows by induction since in the remaining sum we have replaced $\{1, \ldots, n\}$ with $\{2, \ldots, n\}$.

\smallskip

\noindent\textbf{Case 2:}

\medskip

Now assume that each $j\in\{1, \ldots, b\}$ is equal to $\tau(k)$ for at least one $k\in \{1, \ldots, b\}$. Let $\sigma(j)\in \{1, \ldots, b\}$ be a choice of some pre-image of $j$. Then $\sigma:\{1, \ldots, b\} \to \{1, \ldots, b\}$, and since $\tau(\sigma(j))=j$, it follows that $\sigma$ is one-to-one. Thus $\sigma$ (and hence also $\tau$) is a permutation of $\{1, \ldots, b\}$. It follows that we can decompose $\{1, \ldots, b\}$ into a disjoint union of non-empty subsets $C_{1}, \ldots, C_{p}$ which are the minimal closed orbits of the mapping $\tau$. We can write $C_{l}=\{j_{l,1}, \ldots, j_{l,m_{l}}\}$ with $\tau(j_{l,k})=j_{l,k+1}$ for $1\leq k \leq m_{l}-1$ and $\tau(j_{l,k_{l}})=j_{l,1}$. Then for $1 \leq l \leq p$ let
\beas
\Gamma_{l}'=\Big\{(i_{l,1}, \ldots,i_{l,m_{l}})\in \Z^{m_{l}}:e(j_{l,k},\tau(j_{l,k}))i_{\tau(j_{l,k})}\leq i_{j_{l,k}}\,\,1\leq k \leq m_{l}\Big\}.
\eeas
Then $\Gamma_{\Z}(\EEE')\subset \Gamma_{1}'\times\cdots \times \Gamma_{p}$, and so
\beas
F\leq \prod_{l=1}^{p}\Big[\sum_{I_{l}\in\Gamma_{l}'}\prod_{k=1}^{m_{l}}2^{-\epsilon[i_{j_{l,k}}-e(j_{l,k},\tau(j_{l,k}))i_{\tau(j_{l,k})}]}\Big]
\prod_{k=a+1}^{b}\Big[\sum_{i_{k}\in \Z}\min\Big\{(2^{i_{k}}R_{k})^{+\epsilon},(2^{i_{k}}R_{k})^{-\epsilon}\Big\}\Big]
\eeas
However $\sum_{I_{l}\in\Gamma_{l}'}\prod_{k=1}^{m_{l}}2^{-\epsilon[i_{j_{l,k}}-e(j_{l,k},\tau(j_{l,k}))i_{\tau(j_{l,k})}]}$ is uniformly bounded by Proposition \ref{Prop3.61} and Proposition \ref{Prop16.3wer}. Also,  $\sum_{i_{k}\in \Z}\min\Big\{(2^{i_{k}}R_{k})^{+\epsilon},(2^{i_{k}}R_{k})^{-\epsilon}\Big\}$ is bounded independently of $R_{k}$. Thus $F$ is bounded independently of $i_{s+1}$ and $R_{a+1}, \ldots, R_{b}$.
This completes the proof.
\end{proof}

\begin{proposition}\label{Prop13.1}
Let $\EEE=\big\{e(j,k)\big\}$ be an $n\times n$ matrix satisfying (\ref{2.5}), and suppose that $1<e(j,k)e(k,j)$ for all $j\neq k$. Let $\e=\max\big\{e(j,k):1\leq j,k\leq n\big\}$. Let  $\alpha_{j}>0$, $M_{j}\geq 0$ for $1\leq j \leq n$. If $M\geq \sum_{j=1}^{n}(\e(\alpha_{j}+1)+M_{j})$  there is a constant $C=C(M_{1}, \ldots, M_{n},n, \e)$ so that for all $(A_{1}, \ldots, A_{n})$ with $A_{j}>0$,
\beas
\sum_{I\in \Gamma_{\Z}(\EEE)}\Big[\prod_{j=1}^{n}2^{-i_{j}\alpha_{j}}\Big]\Big[1+\sum_{k=1}^{n}2^{-i_{k}}A_{k}\Big]^{-M}\!\!\!\leq C\,
\prod_{j=1}^{n}\Big[A_{1}^{e(j,1)}+\cdots +A_{n}^{e(j,n)}\Big]^{-\alpha_{j}}\prod_{j=1}^{n}(1+A_{j})^{-M_{j}}.
\eeas
\end{proposition}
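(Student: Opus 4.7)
My strategy is to bound the sum by direct estimation, splitting the summation cone and using the dominance structure developed in Section~\ref{Partitions}. Setting $t_j = -i_j \ge 1$, the sum runs over
$$\Gamma^*_\Z := \{t \in \Z^n : t_j \ge 1,\; t_j \le e(j,k)t_k\ \forall j,k\}.$$
The key pointwise inequality, derived from the cone condition $t_j \le e(j,k)t_k$ together with the trivial bound $2^{t_k}A_k \le 1+\sum_l 2^{t_l}A_l$, is
$$2^{t_j}A_k^{e(j,k)} \le (2^{t_k}A_k)^{e(j,k)} \le \Bigl(1+\sum_l 2^{t_l}A_l\Bigr)^{\e},$$
and summing over $k$ yields $2^{t_j}B_j \le n(1+\sum_l 2^{t_l}A_l)^{\e}$ for every $j$, where $B_j := A_1^{e(j,1)}+\cdots+A_n^{e(j,n)}$.

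First I would handle the \emph{bulk} $\Gamma_b := \{t \in \Gamma^*_\Z : \sum 2^{t_l}A_l \le 1\}$, on which the damping is $O(1)$. The conditions $t_l \le -\log_2 A_l$ combined with the cone inequalities give $t_j \le -\log_2 \max_k A_k^{e(j,k)} \le -\log_2(B_j/n)$, so a geometric sum in each coordinate yields
$$\sum_{t \in \Gamma_b}\prod_j 2^{t_j\alpha_j} \le C\prod_j B_j^{-\alpha_j}.$$
Since $A_l \le 1$ throughout $\Gamma_b$ (else the bulk is empty), the factors $(1+A_l)^{-M_l}$ are $\asymp 1$, so this already matches the stated bound on $\Gamma_b$.

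The heart of the argument is the tail $\Gamma_t := \Gamma^*_\Z \setminus \Gamma_b$. I would decompose $\Gamma_t$ according to the marked partitions $S = \{(I_1,k_1);\ldots;(I_s,k_s)\} \in \SS(n)$ of Section~\ref{Partitions}, with $k_r$ playing the role of the dominant index of block $I_r$ in the sense that $2^{t_{k_r}}A_{k_r}$ dominates $2^{t_k}A_k$ for $k \in I_r$. On the corresponding region, the dominance combined with the cone identities of Propositions~\ref{Prop5.3} and~\ref{Prop5.8} forces $B_j \asymp A_{k_r}^{e(j,k_r)}$ for $j \in I_r$, with analogous inter-block comparisons mediated by the coefficients $\tau_S(k_p,k_r)$. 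Using the cone bound $\prod_j 2^{t_j\alpha_j} \le \prod_r 2^{t_{k_r}\sum_{j\in I_r} e(j,k_r)\alpha_j}$ and the damping $(1+\sum 2^{t_l}A_l)^{-M} \le C\prod_r (2^{t_{k_r}}A_{k_r})^{-M'_r}$ for an appropriate partition $M = \sum_r M'_r$, the resulting geometric series in each $t_{k_r}$ (starting from $t_{k_r} \gtrsim \max(1,-\log_2 A_{k_r})$) sum up to the target $\prod_j B_j^{-\alpha_j}\prod_l (1+A_l)^{-M_l}$. The extra margin $n\e$ in the hypothesis $M \ge \sum_j(\e(\alpha_j+1)+M_j)$ is precisely what permits simultaneously the convergence of each geometric series (since $\sum_j e(j,k_r)\alpha_j \le \e\sum\alpha_j$) and the extraction of the $(1+A_l)^{-M_l}$ factors via $(1+\sum 2^{t_l}A_l)^{-\sum M_l} \le \prod_l (1+A_l)^{-M_l}$.

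The main obstacle will be the bookkeeping required on the tail: one must verify on each marked-partition region that the inter-block inequalities produced by the cone really do imply $A_k^{e(j,k)} \lesssim A_{k_r}^{e(j,k_r)}$ for $k$ outside the block of $j$, so that the identification $B_j \asymp A_{k_r}^{e(j,k_r)}$ holds globally for $j \in I_r$ and the summation constants remain uniform in the $A_l$. This is the direct discrete analogue of the dominance analysis in Section~\ref{Sec3.4iou}, and its correctness relies critically on the basic hypothesis $e(j,k) \le e(j,k_r)e(k_r,k)$.
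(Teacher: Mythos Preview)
Your key pointwise inequality $2^{t_j}N_j(A)\le n\bigl(1+\sum_k 2^{t_k}A_k\bigr)^{\e}$ (with $B_j=N_j(A)$) is exactly what the paper uses, but you then take an unnecessary detour. The paper's argument is a single step from there: invert, take the product over $j$, and use also the trivial bound $1+\sum_k 2^{t_k}A_k\ge 1+A_l$ (since $t_l\ge 1$) to obtain
\[
\Bigl(1+\sum_k 2^{t_k}A_k\Bigr)^{-M}\ \lesssim\ \prod_{j=1}^{n}\bigl(1+2^{t_j}N_j(A)\bigr)^{-(\alpha_j+1)}\ \prod_{l=1}^{n}(1+A_l)^{-M_l}.
\]
After this the sum over $\Gamma_\Z(\EEE)$ is dominated by the product of $n$ \emph{independent} one-variable sums $\sum_{t_j\ge 1}2^{t_j\alpha_j}\bigl(1+2^{t_j}N_j(A)\bigr)^{-\alpha_j-1}\lesssim N_j(A)^{-\alpha_j}$, and the proof is finished. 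No bulk/tail split, no marked partitions; the factorization does all the work.

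Your bulk argument is correct, but the tail sketch has a genuine loose end. The claim $B_j\asymp A_{k_r}^{e(j,k_r)}$ for $j\in I_r$ does not follow from your dominance hypothesis (that $2^{t_{k_r}}A_{k_r}$ is maximal among $2^{t_k}A_k$ for $k\in I_r$): $B_j=N_j(A)$ depends only on $A$, whereas your partition depends on $t$ as well, so as $t$ varies within the tail the same $A$ will land in different marked-partition cells with different $k_r$'s, and there is no reason $N_j(A)$ should be comparable to a single power $A_{k_r}^{e(j,k_r)}$ across all of them. The marked-partition machinery of Section~\ref{Partitions} is tailored to the spatial variable $\x$ (where it identifies the dominant summand in each $N_j(\x)$), not to the lattice index $I$; to make your route work you would have to base the partition on $A$ alone, which is a different decomposition from the one you describe.
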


\begin{proof}
Let $N_{j}(A)=\sum_{k=1}^{n}A_{k}^{e(j,k)}$. If $I=(i_{1}, \ldots, i_{n})\in \Gamma_{\Z}(\EEE)$ then for any $1\leq j \leq n$ we have $-i_{j}<-e(j,k)i_{k}$ and so
\beas
1+2^{-i_{j}}N_{j}(A)&
\leq
1+\sum_{k=1}^{n}2^{-i_{k}e(j,k)}A_{k}^{e(j,k)}
\le
1+\sum_{k=1}^{n}\big[2^{-i_{k}}A_{k}\big]^{e(j,k)}
\lesssim
\Big[1+\sum_{k=1}^{n}2^{-i_{k}}A_{k}\Big]^{\e}.
\eeas
Therefore
\beas
\Big[1+\sum_{k=1}^{n}2^{-i_{k}}A_{k}\Big]^{-M}&\leq
\prod_{j=1}^{n}\Big[1+\sum_{k=1}^{n}2^{-i_{k}}A_{k}\Big]^{-\e (\alpha_{j}+1)}\prod_{j=1}^{n}\Big[1+A_{j}\Big]^{-M_{j}}\\
&\lesssim
\prod_{j=1}^{n}\Big[1+2^{-i_{j}}N_{j}(A)\Big]^{-(\alpha_{j}+1)}\prod_{j=1}^{n}\Big[1+A_{j}\Big]^{-M_{j}},
\eeas
and so
\beas
\sum_{I\in \Gamma_{\Z}(\EEE)}&\Big[\prod_{j=1}^{n}2^{-i_{j}\alpha_{j}}\Big]\Big[1+\sum_{k=1}^{n}2^{-i_{k}}A_{k}\Big]^{-M}\\
 &\lesssim\sum_{I\in \Gamma_{\Z}(\EEE)}\prod_{j=1}^{n}2^{-i_{j}\alpha_{j}}\Big[1+2^{-i_{j}}N_{j}(A)\Big]^{-(\alpha_{j}+1)}\prod_{j=1}^{n}\Big[1+A_{j}\Big]^{-M_{j}}\\
 &\lesssim
C\prod_{j=1}^{n}\Big[\sum_{i_{j}\leq 0}2^{-i_{j}\alpha_{j}}\Big[1+2^{-i_{j}}N_{j}(A)\Big]^{-\alpha_{j}-1}\Big]\prod_{j=1}^{n}\Big[1+A_{j}\Big]^{-M_{j}}\\
&\lesssim
\prod_{j=1}^{n}N_{j}(A)^{-\alpha_{j}}\,\prod_{j=1}^{n}\Big[1+A_{j}\Big]^{-M_{j}}.
\eeas
This completes the proof.
\end{proof}

\newpage


\printindex

\end{document}